\documentclass[11pt]{article}
\usepackage{fullpage}

\usepackage[T1]{fontenc}
\usepackage[english]{babel}

\usepackage{amsmath,amssymb,amsthm,mathtools,esint}
\usepackage{tikz-cd}
\usetikzlibrary{decorations.markings}
\usetikzlibrary{calc,intersections,positioning}
\usetikzlibrary{angles,quotes}
\usetikzlibrary{fit}
\usetikzlibrary{shapes.misc}

\usepackage{graphicx,subcaption}
\usepackage{enumerate}
\usepackage{dsfont}
\usepackage{verbatim}
\usepackage{hyperref}
\usepackage{bm}
\usepackage{blkarray}
\usepackage{mathrsfs}
\usepackage{subcaption}
\usetikzlibrary{arrows.meta}
\usepackage[normalem]{ulem}

\usepackage[dvipsnames]{xcolor}

\theoremstyle{plain}
\newtheorem{theorem}{Theorem}[section] 
\newtheorem{lemma}[theorem]{Lemma}
\newtheorem{proposition}[theorem]{Proposition}
\newtheorem{corollary}[theorem]{Corollary}

\newtheorem{assumption}[theorem]{Assumption}

\theoremstyle{definition}
\newtheorem{definition}[theorem]{Definition}

\newtheorem{example}[theorem]{Example}
\newtheorem{remark}[theorem]{Remark}
\numberwithin{equation}{section}

\mathtoolsset{showonlyrefs}

\newcommand{\ZZ}{\mathbb{Z}}
\newcommand{\RR}{\mathbb{R}}
\newcommand{\CC}{\mathbb{C}}

\newcommand{\Ordo}{O}
\newcommand{\EE}{\mathbb{E}}
\newcommand{\PP}{\mathbb{P}}

\renewcommand{\i}{\mathrm{i}}
\newcommand{\e}{\mathrm{e}}

\def\bl{\mathsf b}
\def\wh{\mathsf w}
\def\f{\mathsf f}
\def\e{\mathsf e}
\def\zz{\bm \alpha}
\def\angle{\nu}
\def\dabel{\bm d}
\def\sign{\operatorname{sign}}
\def\col{\varepsilon}
\def\x{\mathsf x}
\def\y{\mathsf y}
\def\wt{\mathsf{wt}}
\def\gauge{\lambda}

\def\line{\bm l}
\def\point{\bm p}
\def\SC{\operatorname{SC}}

\def\torus{\mathbb T}
\def\graphtor{\mathsf{G}_\torus}
\def\graphpl{{\mathsf{G}}}
\def\graphbeta{\mathsf G_{c, \f_0}}
\def\graph{\mathsf G}
\def\graphbeta{\mathsf G_{\bm \beta}}
\def\graphbetan{\mathsf G_{\bm \beta^n}}
\def\Rin{R^{\mathrm{in}}}
\def\graphregion{\mathsf{R}}
\def\sector{\mathfrak{N}}

\def\cR{R}
\def\dR{\mathsf R}
\def\compact{\mathsf C}
\def\domain{\mathcal D_c}
\def\arctic{\mathscr{A}}
\def\ls{\mathfrak h}
\def\sector{\mathfrak{N}}
\def\zero{\zeta_*}

\def\kast{\mathsf K}

\def\curve{\Sigma}
\def\jac{\operatorname{Jac}}
\def\liq{\mathcal L}

\DeclareMathOperator{\ord}{ord}
\DeclareMathOperator{\res}{res}
\DeclareMathOperator{\divisor}{div}

\DeclareMathOperator{\one}{\mathbf{1}}

\newcommand{\ii}{\mathrm{i}}

\tikzset{nvert/.style={draw,circle,fill=black,minimum size=3pt,inner sep=0pt}  }  
\tikzset{redvert/.style={draw = Orange, circle, fill=Orange, minimum size=3pt, inner sep=0pt}  } 
\tikzset{greenvert/.style={draw = black ,circle,fill=black,minimum size=3pt,inner sep=0pt}  } 
\tikzset{Greenvert/.style={draw = Green ,circle,fill=Green,minimum size=3pt,inner sep=0pt}  } 
\tikzset{bluevert/.style={draw = black,circle,fill=black,minimum size=3pt,inner sep=0pt}  }

\tikzset{bvert/.style={draw,circle,fill=black,minimum size=5pt,inner sep=0pt}  }  
\tikzset{wvert/.style={draw,circle,fill=white,minimum size=5pt,inner sep=0pt}  } 

\usetikzlibrary{arrows.meta}
\tikzset{>={Latex}}
\tikzset{
  hollowedge/.style={
    draw=black,
    line cap=round,
    line join=round,
    line width=0.5pt,
    double=white,
    double distance=1.5pt,
    -{Latex[length=6pt,width=7pt, fill=white]}
  },
  solidedge/.style={
    draw=black,
    line cap=round,
    line join=round,
    line width=2pt, 
    -{Latex[length=6pt,width=7pt]}
  }
}
\newcommand{\midp}[3]{\coordinate (#1) at ($(#2)!0.5!(#3)$);}

\tikzset{
  pics/hexpiece/.style={
    code={
      \coordinate[wvert] (w0) at (0,0);
      \coordinate[wvert] (w1) at (30:1);
      \coordinate[wvert] (w2) at (90:1);
      \coordinate[wvert] (w3) at (150:1);
      \coordinate[wvert] (w4) at (210:1);
      \coordinate[wvert] (w5) at (270:1);
      \coordinate[wvert] (w6) at (330:1);

      \coordinate[bvert] (b12) at ($(w1)!0.5!(w2)$);
      \coordinate[bvert] (b23) at ($(w2)!0.5!(w3)$);
      \coordinate[bvert] (b34) at ($(w3)!0.5!(w4)$);
      \coordinate[bvert] (b45) at ($(w4)!0.5!(w5)$);
      \coordinate[bvert] (b56) at ($(w5)!0.5!(w6)$);
      \coordinate[bvert] (b61) at ($(w6)!0.5!(w1)$);

      \draw (w0) -- (b23);
      \draw (w0) -- (b34);
      \draw (w0) -- (b45);
      \draw (w0) -- (b56);
      \draw (w0) -- (b61);

      \draw (b12) -- (w1) (b12) -- (w2);
      \draw (b23) -- (w2) (b23) -- (w3);
      \draw (b34) -- (w3) (b34) -- (w4);
      \draw (b45) -- (w4) (b45) -- (w5);
      \draw (b56) -- (w5) (b56) -- (w6);
      \draw (b61) -- (w6) (b61) -- (w1);
    }
  }
}

\tikzcdset{arrow style=tikz, diagrams={>={Stealth[round,length=4pt,width=4.95pt,inset=2.75pt]}}}
\tikzset{nvert/.style={draw,circle,fill=black,minimum size=3pt,inner sep=0pt}  } 
\usetikzlibrary{decorations.markings}

\newcommand\restr[2]{{
  \left.\kern-\nulldelimiterspace 
  #1 
  \vphantom{\big|} 
  \right|_{#2} 
  }}

\graphicspath{ {Images/} }

\title{Dimer models on astroidal zig-zag graphs}

\author{Tomas Berggren\footnote{Department of Mathematics \& Statistics, University of South Florida, USA. E-mail: tberggren@usf.edu} 
\and Alexei Borodin\footnote{Department of Mathematics, Massachusetts Institute of Technology, USA. E-mail: borodin@math.mit.edu}
\and Terrence George\footnote{Department of Mathematics, Massachusetts Institute of Technology, USA. E-mail: tegeorge@mit.edu}
}

\begin{document}

\maketitle

\begin{abstract} 
On a finite weighted graph, the dimer model is a probability
measure on its dimer covers, also known as perfect matchings, that
assigns to any cover a probability proportional to the product of
the weights of its edges. For planar bipartite graphs,
dimer correlations are encoded by the inverse of the so-called
Kasteleyn matrix, which is a version of the adjacency matrix for the
graph. 

The inverse Kasteleyn matrix for a large graph, typically
taken as a finite domain in a periodic graph, is known explicitly only 
for a handful of examples. In all previously known examples, the Newton 
polygon -- a convex lattice polygon that
classifies periodic graphs up to local moves -- is either a triangle or a quadrilateral.

Our main results are the following.

For any (minimal) periodic planar bipartite graph, we construct an
$(n-3)$-dimensional family of finite subgraphs for which we obtain an
explicit inverse Kasteleyn matrix; here $n$ is the number of sides of
the Newton polygon. Their boundaries are formed by zig-zag
paths and their overall shape is reminiscent of an astroid; we call
them astroidal zig-zag graphs (AZ graphs).
If the Newton polygon is the unit square then the corresponding AZ graph 
is the celebrated Aztec diamond with its size as the parameter. 

Our inverse Kasteleyn matrices are given by a double
contour integral on the corresponding spectral curve for any Fock weighting of the graph. 
This includes, in particular, all periodic weightings.

For periodic weightings, we asymptotically analyze the resulting
inverse Kasteleyn matrices. We establish a phase separation 
in large AZ graphs into asymptotically frozen, rough (liquid), and smooth (gaseous)
regions, and obtain an explicit parametrization of the `arctic curve'.

We also compute the deterministic limit of the height function,
known as the limit shape, and prove the convergence of the
local dimer correlations to the translation-invariant Gibbs
measure of the slope predicted by the limit shape.
\end{abstract}

\tableofcontents

\section{Introduction}

The dimer model is a statistical mechanical model that studies random perfect matchings or dimer covers of a large graph. Via Thurston's height function~\cite{Thu90}, random perfect matchings on planar bipartite graphs may be viewed as random discrete surfaces with prescribed boundary conditions. Classical examples include lozenge tilings and domino tilings, which have been studied extensively from the points of view of combinatorics, probability and integrable systems; see Kenyon's lectures~\cite{Ken09} and Gorin's book~\cite{Gor21} for an introduction.

A fundamental result in the subject is the variational principle, proved for domino tilings by Cohn--Kenyon--Propp~\cite{CKP00} and generalized by Kenyon--Okounkov--Sheffield~\cite{KOS06} and Kuchumov~\cite{Kuc17}. For a periodic planar bipartite graph with periodic edge weights, and for suitable exhaustions by finite subgraphs, the associated random height functions concentrate, after scaling, near a deterministic \emph{limit shape} characterized as the minimizer of an explicit surface tension functional. {This can also be viewed as a variant of the Wulff construction~\cite{Wulff}; see also Nienhuis~\cite{Nie84}, Dobrushin--Koteck{\'y}--Shlosman~\cite{DKS92}, and Cerf--Kenyon~\cite{CerfKenyon}.}

The limit shape is highly sensitive to the choice of finite subgraphs, or, equivalently, on the boundary conditions. For example, the limit shape of a growing sequence of rectangles on the square lattice is constant, whereas for sequences of Aztec diamonds it develops distinct phases separated by a curve known as the \emph{arctic curve}, as shown by Jockusch--Propp--Shor~\cite{JPS98}. This raises a natural problem: how does one systematically choose sequences of finite graphs that exhibit nontrivial phase transitions?

The general variational theory is not accompanied by an equally general supply of concrete models with this behavior. Outside a small number of classical examples---lozenge tilings, domino tilings, and the Schur process/rail-yard graph models of Okounkov--Reshetikhin~\cite{OR03} and Boutillier--Bouttier--Chapuy--Corteel--Ramassamy~\cite{BBCCR17}---there are no theoretical results or any simulations. The first main contribution of this paper is to identify natural finite subgraphs called \emph{astroidal zig-zag graphs} (or \emph{AZ graphs} for short) of an arbitrary (minimal) periodic planar bipartite graph exhibiting nontrivial phase transitions.

Once the limit shape has been identified, the next step is to study local and global fluctuations around it. 
{In the smooth part of the limit shape,}
it is conjectured---and proved for the hexagonal lattice with uniform weights by Aggarwal~\cite{Agg23} ({see also~Gorin~\cite{Gorin_bulk}})---that the local statistics are governed by the translation-invariant ergodic Gibbs measure of Kenyon--Okounkov--Sheffield~\cite{KOS06} whose slope matches that of the limit shape. It is also of considerable interest to study the local fluctuations near non-smooth points, such as regular points of the arctic curve as in Johansson~\cite{Joh05a}, Chhita--Dauvergne--Finn~\cite{CDF26}, Huang~\cite{Huang} and Aggarwal--Huang~\cite{AggarwalHuang}; turning points as in Johansson--Nordenstam~\cite{JN06}, Okounkov--Reshetikhin~\cite{OR06}, Aggarwal--Gorin~\cite{AG21} and Berggren--Bradinoff~\cite{BB26}; cusps as in Tracy--Widom~\cite{TracyWidomPearcey}, Okounkov--Reshetikhin~\cite{OR07}, Huang--Yang--Zhang~\cite{HYZ}; and at other special points as discussed in Astala--Duse--Prause--Zhong~\cite{ADPZ20}. Likewise, understanding the global fluctuations around the limit shape remains an important open problem; see Kenyon~\cite{Ken99} and {Kenyon--Okounkov~\cite{KO07}}. Kasteleyn theory~\cite{Kas61,TF61,Kas63} provides a powerful tool for tackling these problems through the inverse Kasteleyn matrix, which, by a result of Kenyon~\cite{Ken97}, encodes all probabilistic information about the dimer model.

Explicit formulas for the inverse Kasteleyn matrix (equivalently, for the correlation kernel of the associated nonintersecting lattice path model) are, however, known only in certain special cases, including:
\begin{itemize}
    \item domino tilings of the Aztec diamond: the uniform case by Helfgott~\cite{Helfgott} and Johansson~\cite{Joh05a}; \(q^{\mathrm{vol}}\) and other one-periodic weightings due to Chhita--Young~\cite{CY14} and Chhita--Johansson--Young~\cite{CJY15}; the two-periodic and, more generally, doubly periodic models studied by Chhita--Young~\cite{CY14}, Chhita--Johansson~\cite{CJ16}, Duits--Kuijlaars~\cite{DK21}, Berggren--Duits~\cite{BD19}, Berggren~\cite{Ber21}, Borodin--Duits~\cite{BD22}, Berggren--Borodin~\cite{BB23}, and Boutillier--de Tili\`ere~\cite{BdT24}; the double Aztec diamond of Adler--Chhita--Johansson--van Moerbeke~\cite{ACJvM}; and the split Aztec diamond of Shea~\cite{Shea25}.
    \item lozenge tilings of hexagonal and polygonal regions: the uniform hexagon by Johansson~\cite{Joh02}; \(q^{\mathrm{vol}}\)-weighted hexagons studied by Borodin--Gorin--Rains~\cite{BGR10}; polygonal domains studied by Petrov~\cite{Pet14}; holey hexagons studied by Gilmore~\cite{Gilmore}; non-convex polygons studied by Adler--Johansson--van Moerbeke~\cite{AJvM}; and periodic hexagons studied by Charlier--Duits--Kuijlaars--Lenells~\cite{CDKL19} and Kuijlaars~\cite{Kui25}.
    \item more general graph families related to the Schur processes of Okounkov--Reshetikhin~\cite{OR03}, rail-yard graphs of Boutillier--Bouttier--Chapuy--Corteel--Ramassamy~\cite{BBCCR17}, and tower graphs by Borodin--Ferrari~\cite{BF15} and Nicoletti~\cite{Nicoletti}.
\end{itemize}
The complexity of a periodic bipartite graph is captured by an invariant called the Newton polygon. All the cases listed above correspond to Newton polygons that are triangles or quadrilaterals. \emph{By contrast, AZ graphs exist for arbitrary Newton polygons and, to the best of our knowledge, provide the first analysis of finite subgraphs beyond triangles and quadrilaterals.}

The second main contribution of the present paper is a formula for the inverse Kasteleyn matrix for AZ graphs, generalizing the recent Aztec diamond formula of Boutillier--de Tili\`ere~\cite{BdT24} (which is a streamlined version of the formula obtained in Berggren--Borodin~\cite{BB23}). This formula allows for a detailed asymptotic analysis similar to that performed in Berggren--Borodin~\cite{BB23} for the Aztec diamond. In particular, we obtain an explicit parameterization of the limit shape, prove that local statistics are governed by the ergodic Gibbs measures classified in Kenyon--Okounkov--Sheffield~\cite{KOS06}, and establish geometric properties of the arctic curve analogous to Kenyon--Okounkov~\cite{KO07}, Berggren--Borodin~\cite{BB23}, and Bobenko--Bobenko~\cite{BB24}. In this sense, AZ graphs provide a general new class of dimer models whose large-scale behavior can be described with a level of detail previously available only in a few special examples.

Although we do not currently have an exact sampling algorithm for random dimer covers on general AZ graphs, we show in the appendix that certain AZ graphs can be sampled via a tropical limit of the Aztec diamond. This yields concrete simulations that can be compared with our theoretical predictions.

As we were writing this paper, we learned that Boutillier--de Tili\`ere--Deb~\cite{BdT2} have independent work that derives a formula for the inverse Kasteleyn matrix for Speyer's ``crosses and wrenches'' graphs~\cite{Speyer_oct}. We expect that our class of graphs overlaps with theirs but neither class includes the other.


\begin{figure}[t]
\centering

\begin{subfigure}{0.31\textwidth}
\centering
\begin{tikzpicture}[baseline=(current bounding box.center)]
\begin{scope}[rotate=0]
    \node[bvert] (1) at (0,1){}; 
    \node[bvert] (2) at (1,0){}; 
    \node[bvert] (3) at (2,1){}; 
    \node[bvert] (4) at (1,1){};
    \node[bvert] (5) at (0,0){}; 
    \node[bvert] (6) at (1,2){};

    \draw[->]  (5) -- (2);
    \draw[->]  (3) -- (6);
    \draw[->]  (1) -- (5);
    \draw[->]  (2) -- (3);
    \draw[->]  (6) -- (1);
\end{scope}
\end{tikzpicture}
\end{subfigure}
\hfill
\begin{subfigure}{0.31\textwidth}
\centering
\begin{tikzpicture}[baseline=(current bounding box.center),scale=1.15]

\pgfmathsetmacro{\dx}{sqrt(3)}
\pgfmathsetmacro{\hx}{sqrt(3)/2}
\pgfmathsetmacro{\hy}{3/2}

\pgfmathsetmacro{\cmA}{1/sqrt(3)}
\pgfmathsetmacro{\cmB}{1/3}
\pgfmathsetmacro{\cmC}{2/3}

\begin{scope}[cm={\cmA,0,\cmB,\cmC,(0,0)}]

  \coordinate (CellBase) at (0,0);
  \coordinate (Psw) at (CellBase);
  \coordinate (Pse) at ($(Psw)+(\dx,0)$);
  \coordinate (Pnw) at ($(Psw)+(-\hx,\hy)$);
  \coordinate (Pne) at ($(Pse)+(-\hx,\hy)$);

  \fill[gray!30] (Psw) -- (Pse) -- (Pne) -- (Pnw) -- cycle;

  \begin{scope}
    \coordinate[wvert] (w0) at (0,0);
    \coordinate[wvert] (w1) at (30:1);
    \coordinate[wvert] (w2) at (90:1);
    \coordinate[wvert] (w3) at (150:1);
    \coordinate[wvert] (w4) at (210:1);
    \coordinate[wvert] (w5) at (270:1);
    \coordinate[wvert] (w6) at (330:1);

    \coordinate[bvert] (b12) at ($(w1)!0.5!(w2)$);
    \coordinate[bvert] (b23) at ($(w2)!0.5!(w3)$);
    \coordinate[bvert] (b34) at ($(w3)!0.5!(w4)$);
    \coordinate[bvert] (b45) at ($(w4)!0.5!(w5)$);
    \coordinate[bvert] (b56) at ($(w5)!0.5!(w6)$);
    \coordinate[bvert] (b61) at ($(w6)!0.5!(w1)$);

    \draw (w0) -- (b23) (w0) -- (b34) (w0) -- (b45) (w0) -- (b56) (w0) -- (b61);
    \draw (b12) -- (w1) (b12) -- (w2);
    \draw (b23) -- (w2) (b23) -- (w3);
    \draw (b34) -- (w3) (b34) -- (w4);
    \draw (b45) -- (w4) (b45) -- (w5);
    \draw (b56) -- (w5) (b56) -- (w6);
    \draw (b61) -- (w6) (b61) -- (w1);
  \end{scope}

  \begin{scope}[shift={(1*\dx +0*\hx,0*\hy)}]
    \coordinate[wvert] (w0) at (0,0);
    \coordinate[wvert] (w1) at (30:1);
    \coordinate[wvert] (w2) at (90:1);
    \coordinate[wvert] (w3) at (150:1);
    \coordinate[wvert] (w4) at (210:1);
    \coordinate[wvert] (w5) at (270:1);
    \coordinate[wvert] (w6) at (330:1);

    \coordinate[bvert] (b12) at ($(w1)!0.5!(w2)$);
    \coordinate[bvert] (b23) at ($(w2)!0.5!(w3)$);
    \coordinate[bvert] (b34) at ($(w3)!0.5!(w4)$);
    \coordinate[bvert] (b45) at ($(w4)!0.5!(w5)$);
    \coordinate[bvert] (b56) at ($(w5)!0.5!(w6)$);
    \coordinate[bvert] (b61) at ($(w6)!0.5!(w1)$);

    \draw (w0) -- (b23) (w0) -- (b34) (w0) -- (b45) (w0) -- (b56) (w0) -- (b61);
    \draw (b12) -- (w1) (b12) -- (w2);
    \draw (b23) -- (w2) (b23) -- (w3);
    \draw (b34) -- (w3) (b34) -- (w4);
    \draw (b45) -- (w4) (b45) -- (w5);
    \draw (b56) -- (w5) (b56) -- (w6);
    \draw (b61) -- (w6) (b61) -- (w1);
  \end{scope}

  \begin{scope}[shift={(-1*\dx +0*\hx,0*\hy)}]
    \coordinate[wvert] (w0) at (0,0);
    \coordinate[wvert] (w1) at (30:1);
    \coordinate[wvert] (w2) at (90:1);
    \coordinate[wvert] (w3) at (150:1);
    \coordinate[wvert] (w4) at (210:1);
    \coordinate[wvert] (w5) at (270:1);
    \coordinate[wvert] (w6) at (330:1);

    \coordinate[bvert] (b12) at ($(w1)!0.5!(w2)$);
    \coordinate[bvert] (b23) at ($(w2)!0.5!(w3)$);
    \coordinate[bvert] (b34) at ($(w3)!0.5!(w4)$);
    \coordinate[bvert] (b45) at ($(w4)!0.5!(w5)$);
    \coordinate[bvert] (b56) at ($(w5)!0.5!(w6)$);
    \coordinate[bvert] (b61) at ($(w6)!0.5!(w1)$);

    \draw (w0) -- (b23) (w0) -- (b34) (w0) -- (b45) (w0) -- (b56) (w0) -- (b61);
    \draw (b12) -- (w1) (b12) -- (w2);
    \draw (b23) -- (w2) (b23) -- (w3);
    \draw (b34) -- (w3) (b34) -- (w4);
    \draw (b45) -- (w4) (b45) -- (w5);
    \draw (b56) -- (w5) (b56) -- (w6);
    \draw (b61) -- (w6) (b61) -- (w1);
  \end{scope}

  \begin{scope}[shift={(0*\dx -1*\hx,1*\hy)}]
    \coordinate[wvert] (w0) at (0,0);
    \coordinate[wvert] (w1) at (30:1);
    \coordinate[wvert] (w2) at (90:1);
    \coordinate[wvert] (w3) at (150:1);
    \coordinate[wvert] (w4) at (210:1);
    \coordinate[wvert] (w5) at (270:1);
    \coordinate[wvert] (w6) at (330:1);

    \coordinate[bvert] (b12) at ($(w1)!0.5!(w2)$);
    \coordinate[bvert] (b23) at ($(w2)!0.5!(w3)$);
    \coordinate[bvert] (b34) at ($(w3)!0.5!(w4)$);
    \coordinate[bvert] (b45) at ($(w4)!0.5!(w5)$);
    \coordinate[bvert] (b56) at ($(w5)!0.5!(w6)$);
    \coordinate[bvert] (b61) at ($(w6)!0.5!(w1)$);

    \draw (w0) -- (b23) (w0) -- (b34) (w0) -- (b45) (w0) -- (b56) (w0) -- (b61);
    \draw (b12) -- (w1) (b12) -- (w2);
    \draw (b23) -- (w2) (b23) -- (w3);
    \draw (b34) -- (w3) (b34) -- (w4);
    \draw (b45) -- (w4) (b45) -- (w5);
    \draw (b56) -- (w5) (b56) -- (w6);
    \draw (b61) -- (w6) (b61) -- (w1);
  \end{scope}

  \begin{scope}[shift={(1*\dx -1*\hx,1*\hy)}]
    \coordinate[wvert] (w0) at (0,0);
    \coordinate[wvert] (w1) at (30:1);
    \coordinate[wvert] (w2) at (90:1);
    \coordinate[wvert] (w3) at (150:1);
    \coordinate[wvert] (w4) at (210:1);
    \coordinate[wvert] (w5) at (270:1);
    \coordinate[wvert] (w6) at (330:1);

    \coordinate[bvert] (b12) at ($(w1)!0.5!(w2)$);
    \coordinate[bvert] (b23) at ($(w2)!0.5!(w3)$);
    \coordinate[bvert] (b34) at ($(w3)!0.5!(w4)$);
    \coordinate[bvert] (b45) at ($(w4)!0.5!(w5)$);
    \coordinate[bvert] (b56) at ($(w5)!0.5!(w6)$);
    \coordinate[bvert] (b61) at ($(w6)!0.5!(w1)$);

    \draw (w0) -- (b23) (w0) -- (b34) (w0) -- (b45) (w0) -- (b56) (w0) -- (b61);
    \draw (b12) -- (w1) (b12) -- (w2);
    \draw (b23) -- (w2) (b23) -- (w3);
    \draw (b34) -- (w3) (b34) -- (w4);
    \draw (b45) -- (w4) (b45) -- (w5);
    \draw (b56) -- (w5) (b56) -- (w6);
    \draw (b61) -- (w6) (b61) -- (w1);
  \end{scope}

  \begin{scope}[shift={(2*\dx -1*\hx,1*\hy)}]
    \coordinate[wvert] (w0) at (0,0);
    \coordinate[wvert] (w1) at (30:1);
    \coordinate[wvert] (w2) at (90:1);
    \coordinate[wvert] (w3) at (150:1);
    \coordinate[wvert] (w4) at (210:1);
    \coordinate[wvert] (w5) at (270:1);
    \coordinate[wvert] (w6) at (330:1);

    \coordinate[bvert] (b12) at ($(w1)!0.5!(w2)$);
    \coordinate[bvert] (b23) at ($(w2)!0.5!(w3)$);
    \coordinate[bvert] (b34) at ($(w3)!0.5!(w4)$);
    \coordinate[bvert] (b45) at ($(w4)!0.5!(w5)$);
    \coordinate[bvert] (b56) at ($(w5)!0.5!(w6)$);
    \coordinate[bvert] (b61) at ($(w6)!0.5!(w1)$);

    \draw (w0) -- (b23) (w0) -- (b34) (w0) -- (b45) (w0) -- (b56) (w0) -- (b61);
    \draw (b12) -- (w1) (b12) -- (w2);
    \draw (b23) -- (w2) (b23) -- (w3);
    \draw (b34) -- (w3) (b34) -- (w4);
    \draw (b45) -- (w4) (b45) -- (w5);
    \draw (b56) -- (w5) (b56) -- (w6);
    \draw (b61) -- (w6) (b61) -- (w1);
  \end{scope}

\end{scope}
\end{tikzpicture}

\end{subfigure}
\hfill
\begin{subfigure}{0.31\textwidth}
\centering
\begin{tikzpicture}[baseline=(current bounding box.center),scale=2.5,
  elab/.style={midway, sloped, font=\scriptsize, inner sep=1.2pt,
               fill=white, fill opacity=.85, text opacity=1}
]

\coordinate (S1) at (3,0);
\coordinate (S2) at (2,0);
\coordinate (S3) at (3,-1);

\coordinate[wvert]      (w0a)  at (3,0);
\coordinate[wvert]      (w0)   at (2,-1);
\coordinate             (w1a)  at ({11/3},{ 1/3});
\coordinate             (w2a)  at ({10/3},{ 2/3});
\coordinate[wvert]      (w4a)  at ({ 7/3},{-1/3});
\coordinate[wvert]      (w5a)  at ({ 8/3},{-2/3});

\coordinate[bvert]      (b34a) at ({5/2},0);
\coordinate[bvert]      (b45a) at ({5/2},{-1/2});
\coordinate[bvert]      (b56a) at (3,{-1/2});

\draw (w0a) -- node[elab, above, xshift=1pt] {$1$}    (b34a);
\draw (w0a) -- node[elab, above, yshift=1pt] {$1$}    (b45a);
\draw (w0a) -- node[elab, below, yshift=-1pt] {$1$}   (b56a);

\draw (b34a) -- node[elab, below, xshift=-2pt] {$1$}  (w4a);

\draw (b45a) -- node[elab, above] {$\phi$}            (w4a);
\draw (b45a) -- node[elab, below] {$\phi$}            (w5a);

\draw (b56a) -- node[elab, above] {$1$}               (w5a);

\coordinate[wvert]      (w0b)  at (2,0);
\coordinate             (w1b)  at ({ 8/3},{ 1/3});
\coordinate             (w5b)  at ({ 5/3},{-2/3});
\coordinate[wvert]      (w6b)  at ({ 7/3},{-1/3});

\coordinate[bvert]      (b56b) at (2,{-1/2});
\coordinate[bvert]      (b61b) at ({5/2},0);

\draw (w0b) -- node[elab, below] {$1$}               (b56b);
\draw (w0b) -- node[elab, above] {$1$}               (b61b);
\draw (b56b) -- node[elab, above] {$\phi$}           (w6b);

\draw (b56b) -- node[elab, below] {$1$}              (w0);

\coordinate[wvert]      (w0c)  at (3,-1);
\coordinate             (w2c)  at ({10/3},{-1/3});
\coordinate[wvert]      (w3c)  at ({ 8/3},{-2/3});
\coordinate             (w4c)  at ({ 7/3},{-4/3});

\coordinate[bvert]      (b23c) at (3,{-1/2});
\coordinate[bvert]      (b34c) at ({5/2},{-1});

\draw (w0c) -- node[elab, above] {$1$}               (b23c);
\draw (w0c) -- node[elab, below] {$1$}               (b34c);
\draw (b34c) -- node[elab, above, xshift=-2pt] {$\phi$} (w3c);
\draw (b34c) -- node[elab, above] {$1$}              (w0);

\end{tikzpicture}

\end{subfigure}
\caption{A pentagonal lattice polygon $N$ (left), a minimal graph $\graphpl$ with Newton polygon $N$ together with a fundamental domain (middle), and a choice of isoradial weights (modulo gauge) where the angles are chosen evenly spaced around the unit circle (right). Here $\phi=(1+\sqrt5)/2$ denotes the golden ratio. See Figure~\ref{fig:isoradial_pentagon} below for an illustration of the isoradial property. 
}
\label{fig:pentagon_0}
\end{figure}

Let us now describe our results in more detail. Let \(\graphpl\) be a minimal \(\ZZ^2\)-periodic planar bipartite graph, and let
\(
\graphtor := \graphpl/\ZZ^2
\)
be the associated torus graph. We denote the black vertices, white vertices and faces of $\graphpl$ by $B(\graphpl), W(\graphpl)$ and $F(\graphpl)$ respectively. A \emph{zig-zag path} of \(\graphpl\) or \(\graphtor\) is a path that turns maximally right at white vertices and maximally left at black vertices. Every zig-zag path of \(\graphpl\) is a lift of a zig-zag path in \(\graphtor\). We write \(\zz\) for the set of zig-zag paths of \(\graphpl\).

The homology classes of the zig-zag paths of \(\graphtor\) determine a convex lattice polygon \(N\), called the \emph{Newton polygon} of \(\graphpl\); see Section~\ref{sec:minimal_graphs}. By a theorem of Goncharov--Kenyon, minimal periodic planar bipartite graphs are classified, up to certain local moves, by their Newton polygon. These local moves do not affect the statistical-mechanical properties of the model. Figure~\ref{fig:pentagon_0} shows an example of a minimal graph for a convex lattice pentagon.

Let $V(N)$ and $E(N)$ denote the vertices and edges of $N$, respectively, where by an edge we mean a side of $N$ rather than a primitive edge vector. For each \(\alpha \in \zz\), let \(e(\alpha) \in E(N)\) be the edge of \(N\) corresponding to the homology class of \(\alpha\). For each edge \(e \in E(N)\), define
\[
\zz_e := \{\alpha \in \zz : e(\alpha)=e\}.
\]
We say that the zig-zag paths in \(\zz_e\) are \emph{parallel to} \(e\). 

\subsection{Astroidal zig-zag graphs}

\begin{figure}
\begin{center}
\begin{tikzpicture}[line cap=round, line join=round]

  \node at (4,0) {\includegraphics[width=6cm]{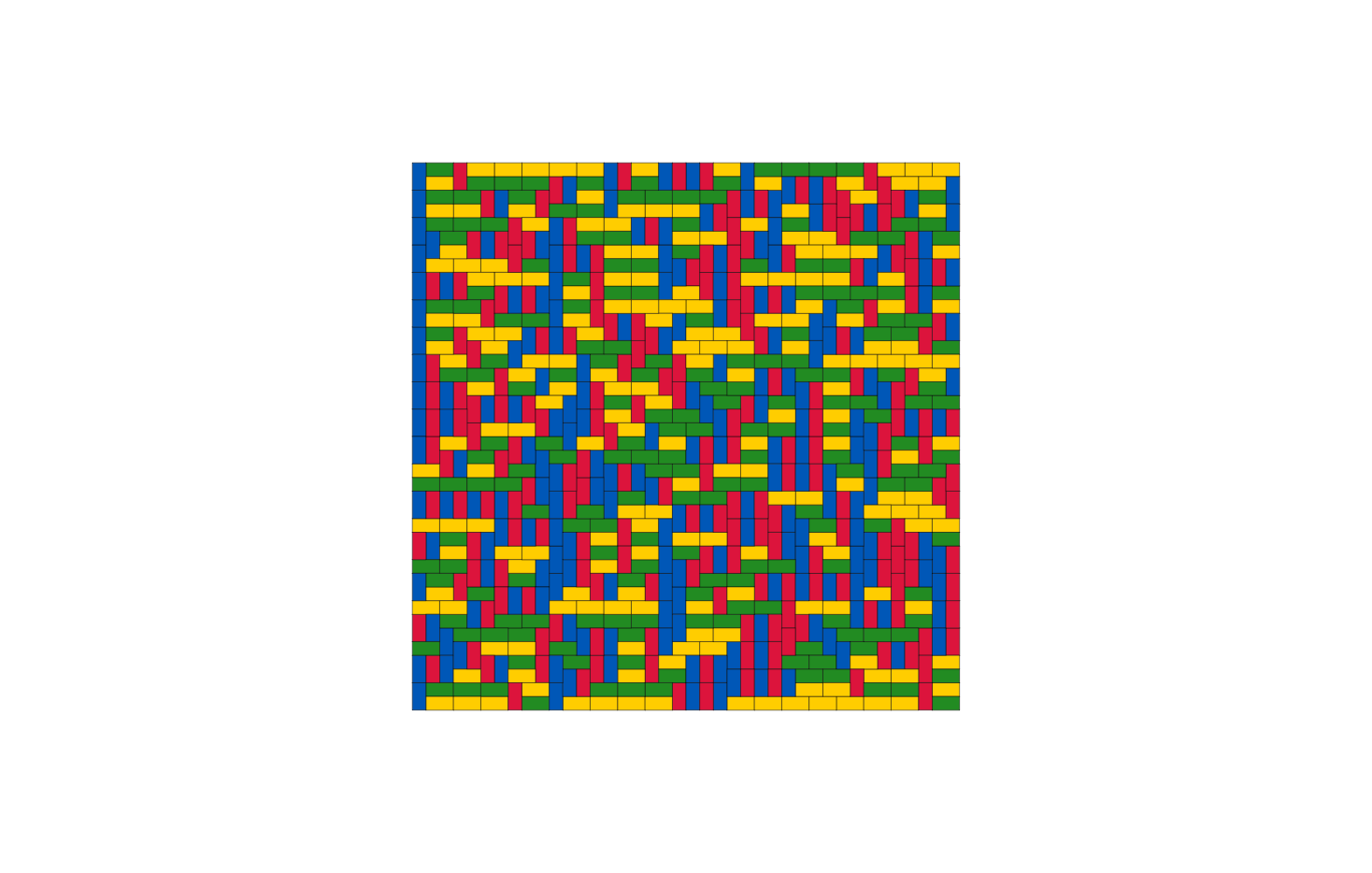}};

  \node at (12,0) {\includegraphics[width=4cm]{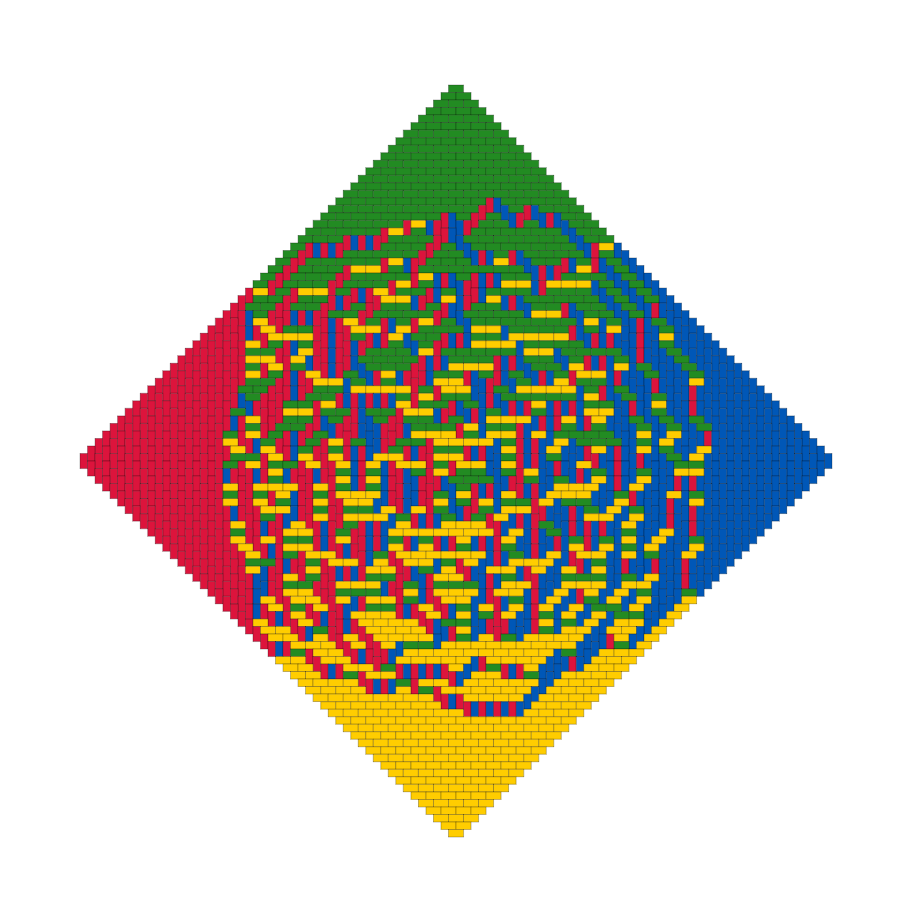}};

  \node at (0,0) {
    \begin{tikzpicture}[scale=0.57]
      \def\N{4}

      \foreach \i in {0,...,\N}{
        \foreach \j in {0,...,\N}{
          \pgfmathtruncatemacro{\parity}{mod(\i+\j,2)}
          \ifnum\parity=0
            \coordinate[bvert] (v-\i-\j) at (\i,\j);
          \else
            \coordinate[wvert] (v-\i-\j) at (\i,\j);
          \fi
        }
      }

      \foreach \i in {0,...,\N}{
        \foreach \j in {0,...,\N}{
          \ifnum\i<\N
            \pgfmathtruncatemacro{\ip}{\i+1}
            \draw (v-\i-\j) -- (v-\ip-\j);
          \fi
          \ifnum\j<\N
            \pgfmathtruncatemacro{\jp}{\j+1}
            \draw (v-\i-\j) -- (v-\i-\jp);
          \fi
        }
      }
    \end{tikzpicture}
  };

  \node at (8,0) {
    \begin{tikzpicture}[scale=0.4,rotate=45]

      \foreach \y in {0,2,4,6} {
        \foreach \x in {1,3,5} {
          \node[bvert] (b-\x-\y) at (\x,\y) {};
        }
      }

      \foreach \y in {1,3,5} {
        \foreach \x in {0,2,4,6} {
          \node[wvert] (w-\x-\y) at (\x,\y) {};
        }
      }

      \foreach \y in {0,2,4,6} {
        \foreach \x in {1,3,5} {

          \ifcsname pgf@sh@ns@w-\the\numexpr\x-1\relax-\the\numexpr\y+1\relax\endcsname
            \draw (b-\x-\y) -- (w-\the\numexpr\x-1\relax-\the\numexpr\y+1\relax);
          \fi

          \ifcsname pgf@sh@ns@w-\the\numexpr\x+1\relax-\the\numexpr\y+1\relax\endcsname
            \draw (b-\x-\y) -- (w-\the\numexpr\x+1\relax-\the\numexpr\y+1\relax);
          \fi

          \ifcsname pgf@sh@ns@w-\the\numexpr\x-1\relax-\the\numexpr\y-1\relax\endcsname
            \draw (b-\x-\y) -- (w-\the\numexpr\x-1\relax-\the\numexpr\y-1\relax);
          \fi

          \ifcsname pgf@sh@ns@w-\the\numexpr\x+1\relax-\the\numexpr\y-1\relax\endcsname
            \draw (b-\x-\y) -- (w-\the\numexpr\x+1\relax-\the\numexpr\y-1\relax);
          \fi
        }
      }

    \end{tikzpicture}
  };

\end{tikzpicture}
\end{center}
\caption{Two natural finite domains for the square lattice are the square and the Aztec diamond, together with their associated bipartite graphs. Only the Aztec diamond gives rise to a limit shape with a nontrivial phase transition. The images were generated using Leonid Petrov's domino tiling simulator; see~\url{https://lpetrov.cc/simulations/}.}
\label{fig:intro:square_and_diamond}
\end{figure}

Fix a convex lattice polygon \(N\), and let \(\graphpl\) be a graph with Newton polygon \(N\). Our first main contribution is the definition of a natural class of finite subgraphs of \(\graphpl\) that give rise to limit shapes with nontrivial phase transitions. To motivate this definition, consider the square lattice. Two natural families of finite domains are squares and Aztec diamonds. As illustrated in Figure~\ref{fig:intro:square_and_diamond}, only the Aztec diamond produces a limit shape exhibiting a phase transition. Our goal is therefore to isolate the features of the Aztec diamond responsible for this phenomenon.

\begin{figure}
  \centering
  \begin{tikzpicture}

    \node[anchor=center] (A) {
      \begin{tikzpicture}[scale=1]
       \begin{scope}
    \node[bvert] (1) at (0,0){}; 
     \node[bvert] (2) at (1,0){}; 
      \node[bvert] (3) at (1,1){}; 
       \node[bvert] (4) at (0,1){};

        \draw[solidedge] (1) -- node[below]{${e(\alpha)}$} 
       (2);
       \draw[solidedge] (3) -- node[above]{${ e(\beta)}$} 
       (4) ;
       \draw[hollowedge] (2) -- node[right]{${ e(\gamma)}$} 
       (3);
       \draw[hollowedge] (4) -- node[left]{${ e(\delta)}$} 
       (1);

       \end{scope}
      \end{tikzpicture}
    };

    \node[anchor=center] (C) at ([xshift=4.2cm]A.east) {%
      \begin{tikzpicture}[scale=0.6]

        \foreach \y in {0,2,4,6} {
          \foreach \x in {1,3,5} {
            \node[bvert] (b-\x-\y) at (\x,\y) {};
          }
        }

        \foreach \y in {1,3,5} {
          \foreach \x in {0,2,4,6} {
            \node[wvert] (w-\x-\y) at (\x,\y) {};
          }
        }

        \foreach \y in {0,2,4,6} {
          \foreach \x in {1,3,5} {

            \ifcsname pgf@sh@ns@w-\the\numexpr\x-1\relax-\the\numexpr\y+1\relax\endcsname
              \draw (b-\x-\y) -- (w-\the\numexpr\x-1\relax-\the\numexpr\y+1\relax);
            \fi

            \ifcsname pgf@sh@ns@w-\the\numexpr\x+1\relax-\the\numexpr\y+1\relax\endcsname
              \draw (b-\x-\y) -- (w-\the\numexpr\x+1\relax-\the\numexpr\y+1\relax);
            \fi

            \ifcsname pgf@sh@ns@w-\the\numexpr\x-1\relax-\the\numexpr\y-1\relax\endcsname
              \draw (b-\x-\y) -- (w-\the\numexpr\x-1\relax-\the\numexpr\y-1\relax);
            \fi

            \ifcsname pgf@sh@ns@w-\the\numexpr\x+1\relax-\the\numexpr\y-1\relax\endcsname
              \draw (b-\x-\y) -- (w-\the\numexpr\x+1\relax-\the\numexpr\y-1\relax);
            \fi
          }
        }



         


        \node at (3,6.5) {$\beta$};
        \node at (6.5,3) {$\delta$};
         \node at (3,-.5) {$\alpha$};
        \node at (-.5,3) {$\gamma$};
      \end{tikzpicture}
    };

  \end{tikzpicture}

  \caption{
  The square Newton polygon \(N\) (left) and the Aztec diamond (right). The cyclic order of zig-zag paths along the boundary of the Aztec diamond is opposite to the cyclic order of the corresponding edges of \(N\). The edges of \(N\) are colored according to whether the black or white vertices are outside.}
  \label{fig:intro:square_aztec}
\end{figure}

First of all, we observe that the boundary of the Aztec diamond is formed by zig-zag paths whose cyclic order along the boundary is opposite to the cyclic order of the corresponding edges of \(N\) (see Figure~\ref{fig:intro:square_aztec}). This condition alone, however, is not sufficient: one can also construct rectangular regions satisfying it, but such regions do not admit dimer covers. This leads to a second requirement, which we call \emph{admissibility}. These observations motivate the definition of the class of AZ graphs.

\begin{figure}
\centering


\caption{The construction of an AZ graph for the graph $\graphpl$ in Figure~\ref{fig:pentagon_0}. We choose zig-zag paths forming the boundary $\partial \graphbeta$ (middle), and obtain $\graphbeta$ as the enclosed subgraph (right). The dashed polygon is an astroidal domain that $\graphbeta$ approximates.} \label{fig:pentagon_intro_1}
\end{figure}

An AZ graph is constructed by choosing, for each edge \(e\in E(N)\), a zig-zag path \(\beta_e\) parallel to \(e\), such that these boundary zig-zag paths appear in the cyclic order opposite to that of the corresponding edges of \(N\). We write
\[
\bm\beta:=(\beta_e)_{e\in E(N)}
\]
for the resulting collection of boundary zig-zag paths. They cut out a finite region of \(\graphpl\), and the resulting subgraph \(\graphbeta\subset\graphpl\) is the portion of \(\graphpl\) enclosed by the corresponding zig-zag boundary \(\partial\graphbeta\). Each boundary zig-zag path \(\beta_e\) also determines a color of the edge \(e\), according to whether the black or white vertices lie outside \(\graphbeta\). We now give an informal definition of AZ graphs; see Definition~\ref{def:azgraph} for the precise definition. 

\begin{definition}
The graph \(\graphbeta\) is called an \emph{astroidal zig-zag graph}, or \emph{AZ graph}, if its zig-zag boundary is simple and it is \emph{admissible}, meaning that for any face, the sum of the distances to \(\beta_e\) over black edges \(e\) is equal to the sum of the distances to \(\beta_e\) over white edges \(e\).
\end{definition}

Here the distances are measured using the \emph{discrete Abel map} \[\dabel: W(\graphpl) \sqcup F(\graphpl) \sqcup B(\graphpl) \rightarrow \ZZ^{\zz},\] 
which assigns to each vertex or face an integer linear combination of zig-zag paths recording which zig-zag paths we cross as we move from a fixed reference face \(\f_0\) to that vertex or face. In Lemma~\ref{lem:four_convex_vertices}, we show that every AZ graph has exactly four convex vertices, connected by boundary arcs that may contain concave vertices. Thus, these graphs resemble the astroid curve, which explains the term ``astroidal,'' while ``zig-zag'' refers to the fact that the boundary is formed by zig-zag paths.

In Proposition~\ref{prop:AZ_graphs_height_function} and Corollary~\ref{cor:dimer_cover}, we show that among finite subgraphs whose boundary consists of zig-zag paths, one for each edge of the Newton polygon, in opposite cyclic order, the ones that admit dimer covers are precisely the AZ graphs. Thus admissibility is not an auxiliary technical condition, but exactly the condition for the graph to carry a nontrivial dimer model.

AZ graphs form a \((\#E(N)-3)\)-parameter family. Thus, when \(N\) is a triangle, no AZ graphs exist. {This can be seen directly: any finite subgraph bounded by three zig-zag paths, one in each direction, has unequal numbers of black and white vertices.} When \(N\) is the unit square, the Aztec diamond is, up to size and local moves, the unique AZ graph.

\subsection{Formula for the inverse Kasteleyn matrix}

We next turn to the dimer model itself. The periodic graph \(\graphpl\) is equipped with Fock's dimer model, built from the spectral data of an M-curve \(\curve\) called the \emph{spectral curve}, a point \(t\) in the Jacobian \(\jac(\curve)\), and an \emph{angle function} \(\nu\) assigning to each zig-zag path a point in a distinguished component \(A_0\) of the real part \(\curve(\RR)\). 
As is customary, we identify a zig-zag path with the corresponding angle in \(A_0\). We write
\[
\curve(\RR)=A_0\sqcup \left(\bigsqcup_{j=1}^g A_j\right),
\]
and denote by \(\curve_+\) the closure of one of the two connected components of \(\curve\setminus \curve(\RR)\). We also write \(\theta\) for the Riemann theta function on \(\curve\) and \(E\) for the prime form. We recall the necessary background on M-curves, the Abel map, the theta function, the prime form and Fock's dimer model in Sections~\ref{sec:M-curves}--\ref{sec:focks_dimer_model}. For the purposes of the introduction, it is sufficient to know that out of this geometric data, one can construct a Kasteleyn matrix \(\kast\) called \emph{Fock's Kasteleyn matrix}. Moreover, Fock~\cite{Foc15} showed that this construction realizes all periodic weights on $\graphpl$ (see also~Boutillier--Cimasoni--de Tili\`ere~\cite{BCT22} and~George--Goncharov--Kenyon~\cite{GGK22}).

We now describe the whole-plane inverse Kasteleyn matrices introduced in Boutillier--Cimasoni--de Tili\`ere~\cite[Section~3.5]{BCT22}, which will be used both in the definition of \(\kast^{-1}\) for finite AZ graphs and in the description of local statistics. For a black vertex \(\bl\) and a white vertex \(\wh\) of \(\graphpl\), there is an associated meromorphic form $g_{\bl,\wh}$ on \(\curve\), lying in the kernel/cokernel of \(\kast\). For \(\zeta \in \curve_+\setminus \zz\), set
\[
\mathsf A^{\zeta}_{\bl,\wh}
:=
\frac{1}{2\pi\ii}\int_{C^\zeta_{\bl,\wh}} g_{\bl,\wh},
\]
where \(C^\zeta_{\bl,\wh}\) is a contour from \(\sigma(\zeta)\) to \(\zeta\), with \(\sigma:\curve\to\curve\) the antiholomorphic involution of the M-curve; see Section~\ref{sec:whole_plane_inverse_K}.
Then, for each $\zeta$, \(\mathsf A^\zeta\) is an inverse of Fock's Kasteleyn matrix on the full graph \(\graphpl\).

For \(\zeta \in A_0\setminus \zz\), the matrix \(\mathsf A^\zeta\) depends only on the connected component of \(A_0\setminus \zz\) containing \(\zeta\). These components are naturally indexed by the vertices \(u\in V(N)\), and we denote the corresponding whole-plane inverse by \(\mathsf A^u\).

We now state the main theorem of the paper which gives an explicit formula for \(\kast^{-1}\) for AZ graphs. Since the dimer model is determinantal, this formula gives complete access to all local correlations on \(\graphbeta\). 
The boundary conditions determine a divisor \(\bm D_{\bm\beta}\), defined by recording, via the discrete Abel map, the signed distances from the reference face \(\f_0\) to the zig-zag paths forming the boundary of \(\graphbeta\). For a black vertex \(\bl\) and a white vertex \(\wh\) of \(\graphbeta\), we define
\[
\omega_{\bl,\wh}(\zeta,\eta)
:=
\frac{1}{(2\pi \ii)^2}
\frac{1}{E(\zeta,\eta)}
\frac{\theta(-t+\bm D_{\bm\beta}+\eta-\zeta)}
     {\theta(-t+\bm D_{\bm\beta})}
\frac{g_{{\f_0},\wh}(\eta)\Psi_{\bm\beta}(\eta)}
     {g_{{\f_0},\bl}(\zeta)\Psi_{\bm\beta}(\zeta)},
\]
where
\[
\Psi_{\bm\beta}(\zeta)
:=
\prod_{\alpha\in\zz} E(\alpha,\zeta)^{(\bm D_{\bm\beta})_\alpha}
\]
is the product of prime forms associated with the boundary divisor \(\bm D_{\bm\beta}\). The key point is that admissibility is precisely the condition under which this kernel is globally well-defined on \(\curve\times\curve\).

\begin{remark}
While we use the formalism of Fock's dimer model to state the results in full generality, our results are already new and interesting in the case of genus-zero spectral curves. From the probabilistic point of view, this case is the familiar critical/isoradial dimer model; see~Kenyon~\cite{Kenyonisoradial} and Kenyon--Okounkov~\cite[Section~5]{KO06}. The genus-zero case is especially concrete: the theta functions are identically equal to $1$ and the prime form $E(\alpha,\zeta)$ can be written as $\zeta-\alpha$, so the kernel reduces to an explicit rational function. For example, choosing the angles to be evenly spaced around the unit circle for the pentagonal Newton polygon in Figure~\ref{fig:pentagon_0}(left) yields the isoradial weights shown in Figure~\ref{fig:pentagon_0}(right).
\end{remark}

For each variable, the zeros and poles of \(\omega_{\bl,\wh}\) partition the boundary of \(N\) into four intervals, alternating between zeros and poles. To each of \(\bl\) and \(\wh\), we assign one of the two pole intervals, denoted
\[
I_\bl=[\ell_\bl,r_\bl],
\qquad
I_\wh=[\ell_\wh,r_\wh].
\]
We then let \(C(I_\bl)\) and \(C(I_\wh)\) be small counterclockwise-oriented contours around the corresponding angles on \(\curve\).

The inverse Kasteleyn matrix is then defined by
\begin{equation}\label{eq:intro_inverse_formula}
\kast^{-1}_{\bl,\wh}
=
\iint_{C(I_\wh)\prec C(I_\bl)} \omega_{\bl,\wh}
+\mathbf 1_{\{\ell_\bl\in I_\wh\}}\mathsf A^{\ell_\bl}_{\bl,\wh}
-\mathbf 1_{\{r_\bl\in I_\wh\}}\mathsf A^{r_\bl}_{\bl,\wh},
\end{equation}
where \(
C(I_\wh)\prec C(I_\bl)
\) denotes the product contour \(C(I_\bl)\times C(I_\wh)\) with the convention that whenever both contours contain small circles around a common angle \(\alpha\), the circle in \(C(I_\wh)\) has smaller radius than the one in \(C(I_\bl)\). This formula has the same general structure as many correlation kernels appearing in the dimer and integrable-probability literature: a double contour integral with a single-integral correction term whose integrand is the residue along the diagonal $\zeta=\eta$. 

\begin{theorem}[cf.~Theorem~\ref{thm:main}]\label{thm:intro_main}
The matrix \(\kast^{-1}\) defined by~\eqref{eq:intro_inverse_formula} is the two-sided inverse of Fock's Kasteleyn matrix \(\kast\) on \(\graphbeta\).
\end{theorem}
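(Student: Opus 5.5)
Since $\graphbeta$ is finite and admits dimer covers (Corollary~\ref{cor:dimer_cover}), $\kast$ is square. It therefore suffices to prove a one-sided identity, say $\sum_{\bl}\kast^{-1}_{\bl,\wh}\kast_{\bl',\ldots}$ in the appropriate order, $(\kast\kast^{-1})_{\wh',\wh}=\delta_{\wh',\wh}$. For each white $\wh$, the vector $\bl\mapsto \kast^{-1}_{\bl,\wh}$ must be annihilated by $\kast$ at every white vertex $\wh'\neq\wh$ and give $1$ at $\wh'=\wh$. Here we sum over black neighbours $\bl$ of $\wh'$ inside $\graphbeta$. The plan is to apply $\kast$ to each of the three terms of~\eqref{eq:intro_inverse_formula} separately. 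The key input is that $\zeta\mapsto 1/g_{\f_0,\bl}(\zeta)$ (suitably normalised, this is the kernel function of Fock's matrix) satisfies $\sum_{\bl\sim\wh'}\kast_{\wh',\bl}\, g_{\f_0,\bl}(\zeta)^{-1}=0$ identically in $\zeta$ for all $\wh'$ of the full graph $\graphpl$. This is the defining property of Fock's construction, via Fay's trisecant identity.

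First I check global well-definedness. Admissibility makes $\omega_{\bl,\wh}$ a single-valued meromorphic form on $\curve\times\curve$: the multiplicative periods of $\theta(-t+\bm D_{\bm\beta}+\eta-\zeta)$, of the prime forms in $\Psi_{\bm\beta}$ and of $g_{\f_0,\cdot}$ cancel exactly when $\sum_\alpha(\bm D_{\bm\beta})_\alpha$ balances, as stated before the theorem. Next I read off the zero/pole structure in $\zeta$. The ratio $1/(g_{\f_0,\bl}\Psi_{\bm\beta})$ has poles at angles $\alpha$ lying between $\bl$ and the boundary $\beta_e$ in one colour class, and zeros in the other. This gives the four alternating intervals and justifies the choice of $I_\bl,I_\wh$.

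Now fix interior $\wh'$, meaning all its neighbours lie in $\graphbeta$. Applying $\kast$ in the $\bl$-variable, the integrand of the double integral is killed pointwise, by the kernel property above, provided the contour $C(I_\bl)$ can be chosen uniformly for all $\bl\sim\wh'$. The intervals $I_\bl$ for neighbouring $\bl$ differ only at the one or two zig-zag angles through $\wh'$. Deforming to a common contour crosses poles at those angles. The resulting residues are exactly compensated by the single-integral terms $\mathsf A^{\ell_\bl},\mathsf A^{r_\bl}$, and this bookkeeping must be organised through the endpoints $\ell_\bl,r_\bl$. Likewise, $\mathsf A^u$ is a whole-plane inverse (Section~\ref{sec:whole_plane_inverse_K}), so $\kast\mathsf A^u=\mathrm{Id}$. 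When $\wh'=\wh$, the diagonal pole $1/E(\zeta,\eta)$ enters. Exchanging the nesting $C(I_\wh)\prec C(I_\bl)$ produces a residue along $\zeta=\eta$, which is a single integral of $g_{\bl,\wh}$ over an arc. Combined with the indicator terms, this arc should close to produce $\mathsf A^{u}$ minus $\mathsf A^{u'}$, giving a contribution of exactly $1$ from one term and $0$ from the other.

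Finally, boundary white vertices $\wh'$ have neighbours outside $\graphbeta$, across some $\beta_e$. For these I would extend the formula to $\bl\notin\graphbeta$ and show that the extension vanishes there. For $\bl$ just outside $\beta_e$, the angle $\beta_e$ switches from pole to zero of $1/\Psi_{\bm\beta}$ in $\zeta$, so the contour $C(I_\bl)$ encloses no singularity (or the $\eta$-integral degenerates). The indicator terms must cancel as well, and this uses the opposite cyclic order and the four-convex-vertex structure of Lemma~\ref{lem:four_convex_vertices}. I expect this boundary vanishing, together with the consistent choice of pole intervals near the four corners, to be the main obstacle; I would first verify it directly on the Aztec diamond case to calibrate signs.
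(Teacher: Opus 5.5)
Your outline has the same skeleton as the paper's proof. You uniformize $C(I_\bl)$ over $\bl\sim\wh'$ so that Lemma~\ref{lem:ker_K} with $\x=\f_0$ kills the double integral, use $\kast\mathsf A^u=\mathsf I$ on the correction term, and for outer $\wh'$ extend the formula to black neighbours outside $\graphbeta$ and show those entries vanish. However, the steps that actually carry the proof are either missing or described incorrectly.

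\emph{Circularity.} The reduction to a one-sided identity is circular. Corollary~\ref{cor:dimer_cover} is deduced from this theorem, so $\#B(\graphbeta)=\#W(\graphbeta)$ is not available beforehand. You need both identities. The paper proves $\kast\kast^{-1}=\mathsf I$ and obtains $\kast^{-1}\kast=\mathsf I$ by the same argument with the roles of $\bl,\wh$ exchanged, which is legitimate by Item~1 of Proposition~\ref{prop:invariance}.

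\emph{Uniformization.} More seriously, your uniformization step is not correct as stated. Suppose moving $C(I_\bl)$ to a common $C(J)$ really crossed $\zeta$-poles of $\omega_{\bl,\wh}$ at angles of some $\zz_e$. The residues would then be $\eta$-integrals over $C(I_\wh)$ involving $E(\alpha,\eta)^{-1}\theta(-t+\bm D_{\bm\beta}+\eta-\alpha)g_{\wh}(\eta)\Psi_{\bm\beta}(\eta)$, not whole-plane entries $\mathsf A^u_{\bl,\wh}$. Nothing would compensate them; the entry $\kast^{-1}_{\bl,\wh}$ would simply change.

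What must be shown is that every edge $e$ adjoined to $I_\bl$ carries \emph{no} $\zeta$-poles. Either $\sign_{\dR_\bl}(e)=+$, or $\bl$ lies on $\beta_e$ with $e$ black (Lemma~\ref{lem:sign_poles}). The bad case, $\bl$ on $\beta_e$ with $e$ white, must be excluded (Lemmas~\ref{lem:explicit_Ib} and~\ref{lem:endpoint_shift_occurs}). Then only the diagonal $\zeta=\eta$ is crossed. Its residue $\frac{1}{2\pi\ii}\int_{C(e\cap I_\wh)}g_{\bl,\wh}$ is absorbed by the change in $\langle\partial I_\bl\cap I_\wh,\mathsf A_{\bl,\wh}\rangle$, up to terms $\mathsf A^u_{\bl,\wh}$ at coinciding endpoints. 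These vanish because $u\in\sector_{\bl,\wh}$ (Lemma~\ref{lem:sector}). This null-sector vanishing, packaged as Item~2 of Proposition~\ref{prop:invariance}, is the key idea absent from your sketch.

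\emph{Diagonal entry.} The source of the diagonal entry is misattributed. Nothing special happens to the double integral at $\wh'=\wh$: your own identity $\sum_{\bl\sim\wh'}\kast_{\wh',\bl}\,g_{\f_0,\bl}(\zeta)^{-1}=0$ holds there too. So once the contour is uniform, the double integral vanishes for every $\wh'$ and no exchange of nesting is needed. All of $\mathbf 1_{\{\wh=\wh'\}}$ comes from the correction term. With $J=[\ell,r]$ independent of $\bl$, it gives $\sign(v)\bigl(\mathbf 1_{\{\ell\in I_\wh\}}-\mathbf 1_{\{r\in I_\wh\}}\bigr)\mathbf 1_{\{\wh=\wh'\}}$. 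The missing input is that this bracket equals $\sign(v)$. Concretely, $I_\wh$ and $J$ abut at the common sign change $s_v(\Rin_\wh)$, and enlarging to $J$ never moves the far endpoint into $I_\wh$. This relies on the sections $s_v$ of Lemma~\ref{lem:sign_changes_sections}.

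\emph{Boundary case.} The boundary case you leave open needs specific inputs. Take $J=e_1\cup\dots\cup e_k$, the consecutive block of edges for which $\wh'$ is outer (Condition~2 of Definition~\ref{def:azgraph}). Check that each $I_{\bl_i}$ can be legally enlarged to $J$, since $\sign_{\dR_{\bl_i}}(e_j)=+$ for every adjoined $e_j$. Then show the extended entries for $\bl\notin\graphbeta$ vanish. The double integral vanishes because $\sign_{\dR_\bl}=+$ on $J$. The correction term vanishes because $\sign_{\dR_\bl}=+$ also on the two edges flanking $J$ (Lemma~\ref{lem:sign_plus_both_sides}), so any endpoint of $I_\wh$ in $J$ lies in $\sector_{\bl,\wh}$. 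Note also that $\Psi_{\bm\beta}$ does not depend on $\bl$; what changes across $\beta_e$ is $g_{\f_0,\bl}$.
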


In particular, this immediately yields the following consequence which is not obvious from the definition of AZ graphs.

\begin{corollary}[cf. Corollary~\ref{cor:dimer_cover}]
Every AZ graph has equally many black and white vertices and admits a dimer cover.
\end{corollary}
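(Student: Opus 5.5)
The corollary should follow almost immediately from Theorem~\ref{thm:intro_main}, with no combinatorics of its own. Fix an AZ graph $\graphbeta$ and choose any admissible Fock data: an M-curve $\curve$, a point $t\in\jac(\curve)$, and an angle function $\nu$. We need $t$ generic enough that $\theta(-t+\bm D_{\bm\beta})\neq 0$, so the kernel $\omega_{\bl,\wh}$ is defined. Since $\theta$ is not identically zero on a translate, such $t$ exist. The simplest choice is genus zero, where $\theta\equiv 1$ and there is no condition at all. With this data, Fock's Kasteleyn matrix $\kast$ restricted to $\graphbeta$ is a matrix with rows indexed by $W(\graphbeta)$ and columns by $B(\graphbeta)$, or the reverse.

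\textbf{Step 1: the vertex counts agree.} Theorem~\ref{thm:intro_main} gives a matrix $\kast^{-1}$ satisfying $\kast\kast^{-1}=\mathrm{Id}$ and $\kast^{-1}\kast=\mathrm{Id}$. A rectangular matrix can have a two-sided inverse only if it is square, by rank considerations: $\operatorname{rank}(\kast)$ is at most the smaller dimension, yet both identities force it to equal both dimensions. Hence $\#B(\graphbeta)=\#W(\graphbeta)$, and $\kast$ is invertible, so $\det\kast\neq0$.

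\textbf{Step 2: a dimer cover exists.} Expand $\det\kast$ as a sum over bijections $B(\graphbeta)\to W(\graphbeta)$. A term is nonzero only if every pair $(\bl,\sigma(\bl))$ is an edge of $\graphbeta$, because Fock's Kasteleyn matrix vanishes off edges. Such a bijection is exactly a perfect matching of $\graphbeta$. Since $\det\kast\neq 0$, at least one term is nonzero, so a dimer cover exists. Note that this argument does not even use the Kasteleyn sign property, only the support of $\kast$.

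\textbf{Main obstacle.} The only point needing care is confirming that Theorem~\ref{thm:main} genuinely applies to this $\graphbeta$ with the chosen data. Its hypotheses must be the definition of AZ graph (simple zig-zag boundary plus admissibility) together with the nonvanishing of $\theta(-t+\bm D_{\bm\beta})$, and nothing more. If the full theorem instead requires real periodic positive weights, I would pick the isoradial genus-zero weights, as in Figure~\ref{fig:pentagon_0}, for which all hypotheses are visibly satisfied.
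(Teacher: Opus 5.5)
Your argument is correct and is essentially the paper's route: the corollary is read off from the two-sided invertibility in Theorem~\ref{thm:main}, and your Leibniz-expansion step is the support-level form of $Z=|\det\kast|\neq 0$. Your genericity condition on $t$ holds automatically, since $-t+\bm D_{\bm\beta}$ has real Abel image and $\theta>0$ on $\RR^g$ for an M-curve.
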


{
We obtained the formula~\eqref{eq:intro_inverse_formula} experimentally, guided by the corresponding formula for the Aztec diamond in~\cite{BdT24}. The integrand has the same structure, while the contours and the single-integral correction term are more complicated. It would, however, be very interesting to find a direct derivation explaining this formula.
}

We now turn to the asymptotic consequences of Theorem~\ref{thm:intro_main} which are obtained by a steepest descent analysis of the exact formula~\eqref{eq:intro_inverse_formula}.

\subsection{Phase regions and the arctic curve}

\begin{figure}
    \centering
    \includegraphics[width=0.7\linewidth]{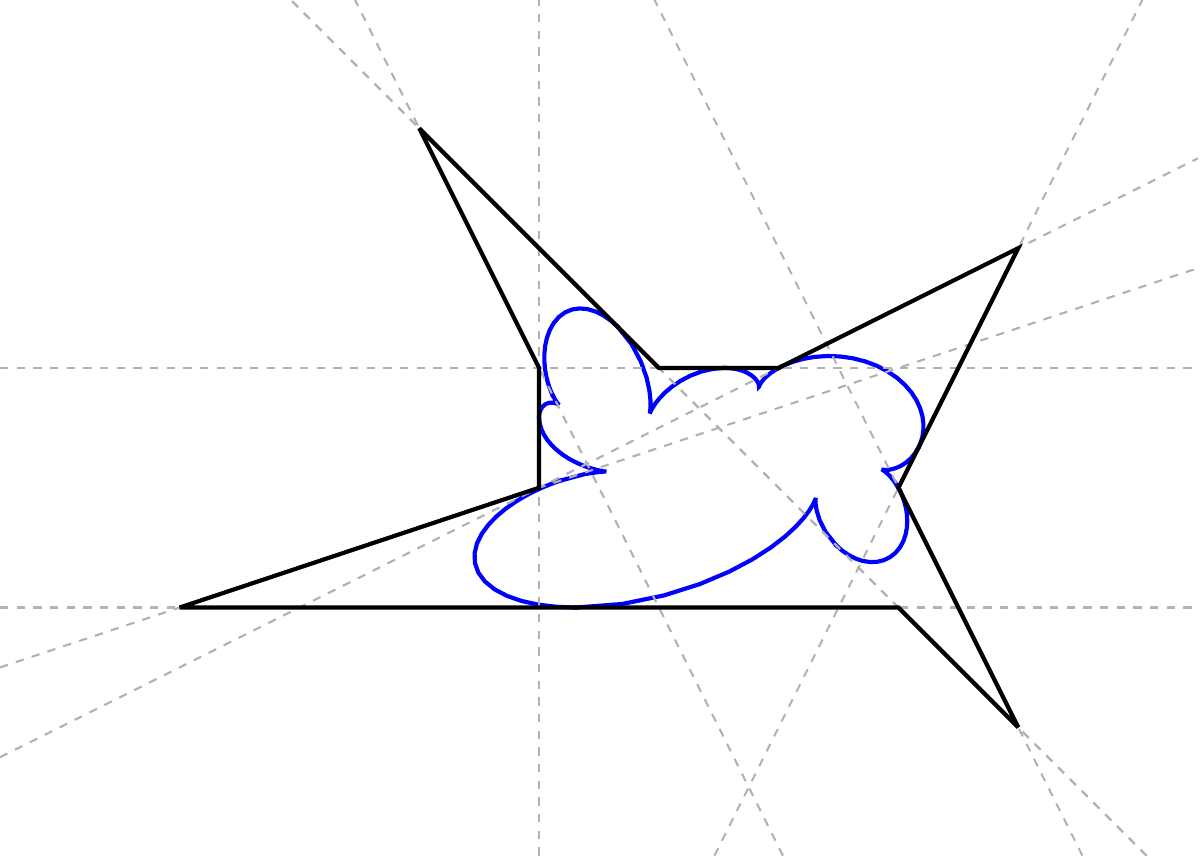}
    \caption{An astroidal domain for a decagonal Newton polygon, and an arctic curve for a choice of isoradial weights.}
    \label{fig:decagon_arctic_curve}
\end{figure}

We now consider limits of AZ graphs as the size tends to infinity. Let \(\graphbeta\) be an AZ graph. For each side \(e\in E(N)\), we label the zig-zag paths in the family \(\zz_e\) from left to right as
\[
\zz_e=\{\alpha_e^i : i\in \ZZ+\tfrac12\},
\]
choosing the indexing so that the reference face \(\f_0\) lies between \(\alpha_e^{-1/2}\) and \(\alpha_e^{1/2}\). We then define
\[
c_{\bm\beta}:E(N)\to \ZZ+\tfrac12
\]
by the requirement that
\[
\alpha_e^{ c_{\bm\beta}(e)}=\beta_e.
\]
In other words, \(c_{\bm\beta}(e)\) measures the location of the boundary zig-zag path \(\beta_e\) among the zig-zag paths parallel to \(e\).

Now consider a sequence of AZ graphs \(
\graphbetan, n=1,2,\dots, \) such that for every \(e\in E(N)\),
\[
c_{\bm\beta^n}(e)=n |e|_{\ZZ} c(e)+O(1)
\qquad\text{as }n\to\infty,
\]
for some function \(c:E(N)\to\RR\), where \(|e|_{\ZZ}\) is the lattice length of \(e\). In the limit, the admissibility condition for \(\graphbetan\) becomes
\begin{equation}\label{eq:adm_intro}
    \sum_{e\in E(N)} |e|_{\ZZ} c(e)=0.
\end{equation}

For each \(e\in E(N)\), the boundary zig-zag path \(\beta_e^n\) has as its scaling limit the line
\[
\line_e:=\{(x,y)\in \RR^2 : -b_e x+a_e y+c(e)=0\},
\qquad e\in E(N),
\]
where
\[
(a_e,b_e):=\frac{\vec e}{|e|_{\ZZ}}\in \ZZ^2
\]
is the primitive integer vector in the direction of \(e\) viewed as a vector in the plane. The resulting collection of lines \((\line_e)_{e\in E(N)}\) determines a domain
\[
\domain\subset \RR^2,
\]
called an \emph{astroidal domain}. \footnote{Rote--Santos--Streinu \cite{RSS08} called similar-looking polygons
\emph{pseudo-quadrilaterals}, in parallel to the
well-established term of \emph{pseudo-triangles}. They did not involve
the continuous version of the admissibility
condition \eqref{eq:adm_intro}, however. We are grateful to Richard Kenyon and Pavel
Galashin for pointing out the connection with
pseudo-quadrilaterals, and to Pavel Galashin for also drawing our attention
to~\cite{RSS08}.} This domain is the scaling limit of the AZ graphs \(\graphbetan\); see Figure~\ref{fig:pentagon_intro_1}(right).

In Section~\ref{sec:char_astroidal_domains}, we show that if a simple polygonal domain carries a continuous piecewise linear boundary height function whose slopes are prescribed by the Newton polygon, then it is an astroidal domain. In this sense, astroidal domains are exactly the domains compatible with the desired boundary behavior of the limit shape.

{For the asymptotic results below, we assume that the angle function is periodic. This assumption is slightly more general than
periodicity of the edge weights; see~\cite[Section~4.2]{BCT22}}. The asymptotics of the dimer model are governed by a meromorphic \(1\)-form
\[
\lambda_c(x,y)
\]
on \(\curve\), called the \emph{action \(1\)-form}, which is characterized as the unique imaginary-normalized meromorphic \(1\)-form satisfying
\[
\res_\alpha\bigl(\lambda_c(x,y)\bigr)
=
-b_{e(\alpha)}x+a_{e(\alpha)}y+c(e(\alpha)),
\qquad \text{for all}~\alpha\in \zz.
\]
The signs of the residues determine the locations of all but two zeroes of $\lambda_c(x,y)$. The two remaining zeroes are either both real or form a \(\sigma\)-conjugate pair. This leads to a phase decomposition of \(\mathcal D_c^\circ\): the \emph{rough} (or \emph{liquid}) region \(\mathcal L\), where the remaining zeroes form a \(\sigma\)-conjugate pair; the \emph{frozen} regions \(\mathcal F_{(\alpha,\beta)}\), where the remaining zeroes lie in an open interval \((\alpha,\beta)\subset A_0\) between consecutive angles with \(e(\alpha)\neq e(\beta)\); the \emph{quasi-frozen} regions \(\mathcal Q_{(\alpha,\beta)}\), where the remaining zeroes lie in an interval \((\alpha,\beta)\subset A_0\) with \(e(\alpha)=e(\beta)\); and the \emph{smooth} (or \emph{gaseous}) regions \(\mathcal G_j\), \(1\le j\le g\), where the remaining zeroes lie on the oval \(A_j\). The boundary between the rough and non-rough phases is the \emph{arctic curve}, denoted by \(\mathscr A\).

In the rough region there is a unique zero \(\zeta\in\curve_+^\circ\), and this defines a map
\[
\Omega:\mathcal L\to \curve_+^\circ,
\qquad
\Omega(x,y)=\zeta.
\]

\begin{proposition}[cf. Proposition~\ref{prop:omega_inverse}]\label{prop:omega_inverse_intro}
The map \(\Omega:\mathcal L\to \curve_+^\circ\) is a diffeomorphism. Its inverse extends smoothly to a map
\[
\Omega^{-1}:\curve_+\to\overline{\mathcal L}
\]
which restricts to a bijection
\[
\Omega^{-1}:\curve(\RR)\to\mathscr A.
\]
\end{proposition}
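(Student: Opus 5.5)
The approach is to write the inverse map explicitly and use the linearity of $\lambda_c(x,y)$ in $(x,y)$. By uniqueness of imaginary-normalized forms with prescribed residues, we can write
\[
\lambda_c(x,y)=x\,\lambda^{(1)}+y\,\lambda^{(2)}+\lambda^{(0)},
\]
where $\lambda^{(1)},\lambda^{(2)},\lambda^{(0)}$ are the imaginary-normalized meromorphic $1$-forms with residues $-b_{e(\alpha)}$, $a_{e(\alpha)}$ and $c(e(\alpha))$ at each $\alpha\in\zz$. The condition $\Omega(x,y)=\zeta$ reads $x\lambda^{(1)}(\zeta)+y\lambda^{(2)}(\zeta)+\lambda^{(0)}(\zeta)=0$. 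For $\zeta\in\curve_+^\circ$ this is one complex equation, hence two real linear equations in $(x,y)$. The first key step is to show that the $2\times2$ real system has nonzero determinant. This amounts to showing that $\lambda^{(1)}/\lambda^{(2)}$, which is a real meromorphic function on $\curve$ since both forms are $\sigma$-real, takes non-real values on $\curve_+^\circ$. Assuming this, we define
\[
\Omega^{-1}(\zeta)=(x,y)
\]
as the unique solution, which is real-analytic in $\zeta$.

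To prove the determinant claim, I would count degrees. The function $f=\lambda^{(1)}/\lambda^{(2)}$ is real, and for $s\in\RR$ the form $\lambda^{(1)}-s\lambda^{(2)}$ has residues $-b_e-s a_e$. As $\alpha$ runs around $A_0$ in the order of $E(N)$, these residues change sign exactly twice, by convexity of $N$. A count of zeros on the ovals, using that a real imaginary-normalized form has at least one zero on each $A_j$ and an even number of zeros between consecutive poles of the same sign, then shows that all zeros are real. Hence $f$ takes every real value only at real points, i.e.\ $f^{-1}(\RR)=\curve(\RR)$. The same zero-counting argument applied to $\lambda_c(x,y)$ shows that for $(x,y)\in\mathcal L$ there is exactly one zero in $\curve_+^\circ$. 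Together these give that $\Omega$ and the explicit $\Omega^{-1}$ are mutually inverse on $\mathcal L$, once we check that $\Omega^{-1}(\zeta)$ lands in $\mathcal D_c^\circ$. For that, note that $\zeta$ and $\sigma(\zeta)$ are non-real zeros of $\lambda_c(\Omega^{-1}(\zeta))$, so that point is, by definition, in the rough phase. It also lies in the domain, because the residue signs are forced: outside $\domain$ the sign pattern leaves no room for a complex pair. Smoothness of both maps follows from the implicit function theorem, since zeros of $\lambda_c$ in $\curve_+^\circ$ are simple.

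For the extension to $\curve(\RR)$, I would note that as $\zeta$ tends to a real point $\zeta_0$, the imaginary part of the equation degenerates. Dividing it by $\operatorname{Im}$ of a local coordinate, the two real equations become
\[
\lambda_c(x,y)(\zeta_0)=0,\qquad \partial\lambda_c(x,y)(\zeta_0)=0,
\]
that is, $\zeta_0$ is a double zero. This is a linear system in $(x,y)$ whose determinant is the Wronskian-type expression $f'(\zeta_0)$ (suitably normalized), which is nonzero because $f$ is a real covering map $\curve(\RR)\to\RR\cup\{\infty\}$ with nonvanishing derivative. This nonvanishing follows from the all-zeros-real statement. Near the angles $\alpha\in\zz$, where the $\lambda^{(i)}$ have poles, I would rescale by the local coordinate, and the limit is the tangency point of the arctic curve with $\line_{e(\alpha)}$.

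Finally, bijectivity onto $\mathscr A$: points with a real double zero are exactly the boundary between the rough region (complex pair) and the non-rough regions (two real zeros), which gives surjectivity. Injectivity holds because a point has at most one extra pair of zeros. I expect the main obstacle to be the zero-count establishing that $f$ is unramified and real exactly on $\curve(\RR)$, including the behavior at the angles and between parallel angles where $e(\alpha)=e(\beta)$.
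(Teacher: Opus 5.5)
Up to the smooth extension, your route is the paper's (Propositions~\ref{prop:diff}, \ref{prop:bdy_diffeomorphisms}, \ref{prop:turning_points_unique}, \ref{prop:omega_inverse}). You use the same decomposition $\lambda_c(x,y)=x\,d\log z+y\,d\log w+\omega_{\bm D_c}$, get nondegeneracy from Lemma~\ref{lem:zeroes_xdlogzydlogw}, use the sign count to place $\Omega^{-1}(\zeta)$ in $\mathcal D_c^\circ$, divide by $\Im\zeta$, and multiply by the local coordinate at the angles.

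One correction is needed there. On $\Sigma(\RR)\setminus\zz$ the determinant is $W=f_1f_2'-f_1'f_2=-f_2^2\,(f_1/f_2)'$. Unramifiedness of $f=f_1/f_2$ does follow from $f^{-1}(\RR\cup\{\infty\})=\Sigma(\RR)$, via the local normal form at a critical point, and it handles the angles correctly. However, it gives $W\neq 0$ only away from common real zeroes of $d\log z$ and $d\log w$; at such a point $W=0$ even if $f'\neq 0$. Ruling this out needs \emph{simplicity}, not just reality, of the zeroes. Some nonzero real combination of $d\log z$ and $d\log w$ would vanish there to order two. The tight count forbids this: there is exactly one zero in each same-sign interval of $A_0$ and exactly two on each $A_j$, and a double zero on $A_j$ would contradict $\int_{A_j}=0$. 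This is the paper's argument in Proposition~\ref{prop:bdy_diffeomorphisms}.

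The genuine gap is the last step, the bijection onto $\mathscr A$.

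\textbf{Surjectivity.} Your claim that the points with a real double zero are exactly the rough/non-rough boundary is unproved. It also misses the turning points, which lie on $\partial\mathcal L$ but usually carry only a simple zero at the angle; on $\partial\mathcal D_c$ this is always so, by Lemma~\ref{lem:turning_point_multiplicity}. What you need is the inclusion $\partial\mathcal L\subset\Omega^{-1}(\Sigma(\RR))$. It follows from compactness of $\Sigma_+$ and continuity of your extension: a subsequence of $\Omega(x_n,y_n)$ converges in $\Sigma_+$, and the limit must be real. This is the argument in Proposition~\ref{prop:omega_inverse}.

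\textbf{Injectivity.} The principle ``at most one extra pair of zeroes'' says nothing at points of $\mathscr A\cap\partial\mathcal D_c$. These points are turning points, since images of non-angle real points lie in $\mathcal D_c^\circ$ by the count in Proposition~\ref{prop:bdy_diffeomorphisms}. Because their cell is not in $\mathcal D_c^\circ$, Lemma~\ref{lem:forced_zeroes_and_remaining} leaves no remaining zeroes at all there, and the zero at the angle is a forced one. So your count does not prevent two different angles from sharing a turning point.

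The paper treats these points separately in Corollary~\ref{cor:turning_points_injective}. It uses local convexity of $\overline{\mathcal L}$ (Proposition~\ref{prop:local_conv}, which rests on $W<0$ from the convexity of the amoeba complement in Corollary~\ref{cor:orientation_reversing}) together with simplicity of $\partial\mathcal D_c$.

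A lighter fix works at a point in the relative interior of a side lying on $\line_e$:
\begin{itemize}
\item Each branch through the point is regular there, because the zero at the angle is simple.
\item Each branch stays in $\mathcal D_c$, hence is tangent to $\line_e$.
\item By Proposition~\ref{prop:tangent_curvature}, both angles then lie in $\zz_e$.
\item Since all zeroes are forced, the component of $A_0\setminus\{\text{poles}\}$ containing the angles of $\zz_e$ carries at most one zero, which is a contradiction.
\end{itemize}
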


Thus, the real locus \(\curve(\RR)\) gives a global parametrization of the arctic curve. Figure~\ref{fig:decagon_arctic_curve} shows an example of the arctic curve plotted using this global parameterization for a decagonal Newton polygon and a choice of isoradial weights.

{
Several methods have been developed to compute arctic curves. One prominent approach is the tangent method of
Colomo--Sportiello~\cite{ColomoSportiello16} and its variants; see Di Francesco--Lapa~\cite{DiFrancescoLapa18}, Di Francesco--Guitter~\cite{DiFrancescoGuitter18,
DiFrancescoGuitter19Aztec}, Aggarwal~\cite{AggarwalTangent}, and Ruelle
~\cite{Ruelle22}. Another approach uses analytic combinatorics in several variables, particularly for dimer models arising from the
octahedron recurrence and \(T\)-systems; see Petersen--Speyer~\cite{PSgroves}, Baryshnikov--Pemantle~\cite{BaryshnikovPemantle11},
Pemantle--Wilson--Melczer~\cite{PemantleWilson13}, Di Francesco--Soto-Garrido~\cite{DiFrancescoSotoGarrido14},
Di Francesco--Vu~\cite{DiFrancescoVu24}, and references
therein. In contrast, the inverse Kasteleyn approach gives access not
only to the arctic curve, but also to the local statistics throughout
the limit shape.
}

\subsection{Limit shape}

Let \(h_M\) be the height function on \(\graphbetan\), normalized with respect to the extremal matching corresponding to a fixed vertex \(u_0\in V(N)\). If \(\f_n\) is a face whose location is \(n(x,y)+O(1)\), define
\[
\mathfrak h(x,y):=\lim_{n\to\infty}\EE \left[\frac1n h_M(\f_n)\right].
\]
For periodic dimer models the existence of this limit is guaranteed by the variational principle. In our setting of AZ graphs, we derive an explicit parameterization for $\mathfrak h(x,y)$.

Define \(A_{(x,y)}\) by
\[
A_{(x,y)}:=
\begin{cases}
(\alpha,\beta), & \text{if \((x,y)\) lies in a frozen or quasi-frozen region corresponding to }(\alpha,\beta)\subset A_0,\\
A_j, & \text{if \((x,y)\in \mathcal G_j\).}
\end{cases}
\]

\begin{theorem}[cf. Theorem~\ref{thm:limit_shape}]\label{thm:intro_limit_shape}
For \((x,y)\in \mathcal D_c^\circ\), let \(\zero=\Omega(x,y)\) if \((x,y)\in\mathcal L\), and otherwise let \(\zero\) be any point of \(A_{(x,y)}\). Then
\[
\mathfrak h(x,y)
=
-\frac{1}{2\pi\ii}\int_{C_{u_0}^{\zero}}\lambda_c(x,y),
\]
where \(C_{u_0}^{\zero}\) is a contour from $\sigma(\zero)$ to $\zero$ intersecting the component of $A_0 \setminus \zz$ corresponding to \(u_0\).
\end{theorem}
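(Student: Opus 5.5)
The plan is to compute the expected height via the inverse Kasteleyn matrix of Theorem~\ref{thm:intro_main}, rather than through the variational principle. Fix a face \(\f_n\) at location \(n(x,y)+O(1)\). Write \(\EE[h_M(\f_n)]\) as a sum of one-point edge probabilities along a lattice path \(\gamma_n\) from a boundary face to \(\f_n\). Each term is \(\pm\kast_{\wh,\bl}\kast^{-1}_{\bl,\wh}\) minus the corresponding edge indicator of the extremal matching for \(u_0\). Since the boundary height is deterministic and piecewise linear, the starting value is explicit. The task then reduces to showing that the sum of \(\kast\kast^{-1}\) terms along \(\gamma_n\) telescopes, which is the mechanism behind~\cite{BB23}.

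The first step is this telescoping. By the kernel property of \(g_{\bl,\wh}\) and the multiplicative structure \(g_{\f_0,\wh}/g_{\f_0,\bl}\) in \(\omega_{\bl,\wh}\), summing \(\kast_{\wh,\bl}\omega_{\bl,\wh}\) over the edges crossed by \(\gamma_n\) produces, on the diagonal, a discrete primitive. I would express it as a double contour integral whose integrand contains
\[
\frac{g_{\f_n,\cdot}(\eta)}{g_{\f_n,\cdot}(\zeta)}-\frac{g_{\f',\cdot}(\eta)}{g_{\f',\cdot}(\zeta)}
\]
for the two endpoint faces, together with single-integral terms coming from the \(\mathsf A^{\ell_\bl}\), \(\mathsf A^{r_\bl}\) corrections. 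The single-integral pieces are integrals of \(g_{\f,\f'}\)-type ratio forms from \(\sigma(\cdot)\) to \(\cdot\) over contours through the component indexed by \(u_0\). That is the source of the contour \(C^{\zero}_{u_0}\) in the statement.

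The second step is asymptotic. The ratio \(g_{\f_0,\wh}(\eta)\Psi_{\bm\beta}(\eta)/(g_{\f_0,\bl}(\zeta)\Psi_{\bm\beta}(\zeta))\) at scale \(n\) is \(\exp\bigl(n(F(\eta)-F(\zeta))+O(1)\bigr)\), where \(F=\int\lambda_c(x,y)\). This holds because the exponents of \(E(\alpha,\cdot)\) equal, up to \(O(1)\), \(n\) times the residues \(-b_ex+a_ey+c(e)\). The theta quotient stays bounded by periodicity of the angle function.

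The third step is steepest descent. I would deform \(C(I_\wh)\) and \(C(I_\bl)\) to steepest descent/ascent contours through the critical points of \(\Re F\). These critical points are the zeros of \(\lambda_c\): in \(\mathcal L\) they are \(\zero\) and \(\sigma(\zero)\), and otherwise they lie on \(A_{(x,y)}\). When the contours cross each other, one picks up the residue along \(\zeta=\eta\), which is a single integral from \(\sigma(\zero)\) to \(\zero\). After normalization by \(n\) this gives \(-\frac{1}{2\pi\ii}\int_{C_{u_0}^{\zero}}\lambda_c(x,y)\), while the remaining double integral is \(o(n)\).

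In the non-rough phases, the remaining zeros lie on a real arc or oval. In that case the contour integral is independent of the choice of \(\zero\) in \(A_{(x,y)}\), because \(\lambda_c\) is imaginary-normalized: its integral along a real oval, or between points of one real interval free of poles, has vanishing real part contribution. This is exactly why \(\zero\) may be any point of \(A_{(x,y)}\).

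I expect the main obstacle to be the global contour deformation. One has to show that the level-line topology of \(\Re F\) on \(\curve\) admits descent contours homologous to \(C(I_\bl)\), \(C(I_\wh)\), for every location and all phases, including quasi-frozen and gaseous ones. It must also be shown that the crossing pattern gives exactly the claimed arc \(C^{\zero}_{u_0}\), with the correct homology class relative to the \(A_j\) cycles.

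I would first try the counting approach of~\cite{BB23}. Count the zeros of \(\lambda_c\): all but two are forced between residues of opposite sign. Then use the fact that \(\Re F\) is harmonic with logarithmic singularities to determine the regions \(\{\Re F<\Re F(\zero)\}\).

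As a fallback, it suffices to prove the formula in \(\mathcal L\) and then extend. The right-hand side is continuous across \(\mathscr A\) by Proposition~\ref{prop:omega_inverse_intro}. The limit \(\mathfrak h\) is Lipschitz by the variational principle. In the non-rough regions \(\mathfrak h\) is affine, with the slope determined by \(A_{(x,y)}\), and the formula is affine there as well, since \(\lambda_c\) depends linearly on \((x,y)\) and the residues are fixed. Both sides can then be matched via boundary values.
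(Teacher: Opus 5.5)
Your architecture is essentially the paper's: write $\EE[h_M(\f_n)]$ as a sum of $\kast_{\wh,\bl}\kast^{-1}_{\bl,\wh}-\kast_{\wh,\bl}\mathsf A^{u_0}_{\bl,\wh}$ along a dual path, telescope the double-integral part in the face variable, and do steepest descent via Lemma~\ref{lem:descent_curves}. The single-integral part $\mathsf A^{\ell_\bl}-\mathsf A^{u_0}$ becomes $\frac{1}{2\pi\ii}\int_{C([\ell_\bl,u_0])}\omega_{\dabel(\f_+)-\dabel(\f_-)}$ by Lemma~\ref{lem:fays}, and it moves the base point of the diagonal-residue contour from $\ell_\bl$ to $u_0$.

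The gap is in the central step: you cannot telescope $\sum\kast_{\wh,\bl}\omega_{\bl,\wh}$ over all of $\gamma_n$ into a single double contour integral. Each $\kast^{-1}_{\bl,\wh}$ is integrated over $C(I_\bl)\times C(I_\wh)$, where $I_\bl,I_\wh$ are sign intervals of the chambers $\Rin_\bl,\Rin_\wh$. By Lemma~\ref{lem:sign_poles}, the pole pattern of $\omega_{\bl,\wh}$ at $\zz_e$ flips each time you cross $\beta_e$. So the edge sum can be pulled inside one double integral only for edges lying in a single chamber. A path from the boundary to $\f_n$ generally crosses several $\beta_e$: once $\#E(N)\ge 5$ there are concave corners, and the lines $\line_e$ through them cut $\mathcal D_c^\circ$ into several cells.

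The missing idea is the paper's chamber decomposition. Take a macroscopic path from the position of $\f_0$ to $(x,y)$ that meets the lines $\line_e$ transversally at finitely many points $(x_k,y_k)$ satisfying Assumption~\ref{ass:limit}. The dual paths crossing $\beta_e$ near $n(x_k,y_k)$ have bounded length, so they contribute $O(1)$. On each long piece inside one chamber, telescope and apply steepest descent at both endpoints; Lemmas~\ref{lem:asymp_intervals} and~\ref{lem:descent_curves} allow this even at points of $\line_e$. Each piece then equals the difference of $-\frac{n}{2\pi\ii}\int_{C^{\zero}_{u_0}}\lambda_c$ at its endpoints, up to $O(1)$. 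Since this value at $(x_k,y_k)$ does not depend on which adjacent chamber it is computed from, the pieces telescope a second time at the macroscopic level. Without this gluing step your argument does not go through.

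Two further ingredients are misidentified. First, the kernel/cokernel property only shows that the edge flux $\kast_{\wh,\bl}g_\wh(\eta)/g_\bl(\zeta)$ is divergence-free, so that some primitive exists. An abstract primitive is just the original sum rewritten and gives no asymptotic handle. What you need is the explicit formula of Proposition~\ref{prop:telescopic}: $E(\zeta,\eta)\frac{g_\wh(\eta)}{g_\bl(\zeta)}\kast_{\wh,\bl}=G_{\f_+}-G_{\f_-}$ with $G_\f=\frac{\theta(-t-\dabel(\f)+\zeta-\eta)}{\theta(t+\dabel(\f))}\frac{g_\f(\eta)}{g_\f(\zeta)}$. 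This is Fay's trisecant identity. For positive genus the theta prefactor is essential, and the ratio $g_\f(\eta)/g_\f(\zeta)$ alone is not a primitive.

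Second, this identity is also what makes your diagonal residue of order $n$. The telescoped integrand carries $E(\zeta,\eta)^{-2}$, and the residue at this double pole is a derivative. It equals $\omega_{\dabel(\f_n)+\bm D_{\bm\beta^n}}$, whose residues are $-n\res(\lambda_c(x,y))+O(1)$, plus a bounded $\partial\log\theta$ term. By contrast, the simple-pole residue of $\omega_{\bl,\wh}$ itself only yields the $O(1)$ quantity $\mathsf A^{\zero}_{\bl,\wh}$.

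Finally, your fallback is circular. That $\ls$ is affine on $\mathcal F_{(\alpha,\beta)}$, $\mathcal Q_{(\alpha,\beta)}$ and $\mathcal G_j$, which are defined through the zeros of $\lambda_c$, is a consequence of the formula, not an input. The paper uses Lipschitz continuity only to pass from points satisfying Assumption~\ref{ass:limit} to the remaining points.
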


\subsection{Local statistics}

Fix a finite graph \(\mathsf C\subset\graphpl\) and translate it to a macroscopic point \((x,y)\in \mathcal D_c^\circ\) by setting
\[
\mathsf C_n=(j_n,k_n)+\mathsf C,
\qquad (j_n,k_n)=n(x,y)+O(1) \in \ZZ^2.
\]

\begin{theorem}[cf. Theorem~\ref{thm:inverse_convergence}]\label{thm:intro_local_stats}
Let \((x,y)\in \mathcal D_c^\circ\setminus \mathscr A\). In the rough region, let \(\zero=\Omega(x,y)\), and in a frozen, quasi-frozen or smooth region, let \(\zero\) be any point in the corresponding component \(A_{(x,y)}\subset \curve(\RR)\setminus \zz\). If \((x,y)\in \mathcal F_{(\alpha,\beta)}\), assume in addition that
\[
(x,y)\notin \line_{e(\alpha)}\cup \line_{e(\beta)}.
\] 
For a black vertex $\bl$ and a white vertex $\wh$ of $\compact$, let
\[
\bl_n:=\bl+(j_n,k_n),
\qquad
\wh_n:=\wh+(j_n,k_n)
\]
denote their translates in $\compact_n$. Then
\[
\kast^{-1}_{\bl_n,\wh_n}\rightarrow \mathsf A^{\zero}_{\bl,\wh}
\qquad\text{as }n\to\infty,
\]
uniformly over $\bl$ and $\wh$.
\end{theorem}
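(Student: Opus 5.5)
We prove Theorem~\ref{thm:intro_local_stats} by steepest descent applied to the exact formula~\eqref{eq:intro_inverse_formula} of Theorem~\ref{thm:intro_main}.

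\emph{Step 1: extract the exponential part of the integrand.} Since the angle function is periodic, the factor
\[
\frac{g_{\f_0,\wh_n}(\eta)\Psi_{\bm\beta^n}(\eta)}{g_{\f_0,\bl_n}(\zeta)\Psi_{\bm\beta^n}(\zeta)}
\]
factorizes. It equals \(\exp\bigl(n F(\eta)-nF(\zeta)\bigr)\) times the local factor \(g_{\f_0',\wh}(\eta)/g_{\f_0',\bl}(\zeta)\), up to bounded terms. Here \(F(\zeta)=\int^\zeta \lambda_c(x,y)\) is a multivalued primitive of the action form. This is because the discrete Abel map of \(\f_n\) and the boundary divisor \(\bm D_{\bm\beta^n}\) scale like \(n\) times the residue data \(-b_ex+a_ey+c(e)\). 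The theta quotient \(\theta(-t+\bm D_{\bm\beta^n}+\eta-\zeta)/\theta(-t+\bm D_{\bm\beta^n})\) is a quasi-periodic correction. The \(A\)-periods of \(F\) are imaginary, because \(\lambda_c\) is imaginary-normalized. So the theta quotient combines with the \(B\)-period multivaluedness of \(e^{nF}\) to give a single-valued integrand, whose modulus is governed by \(\exp\bigl(n\,\mathrm{Re}(F(\eta)-F(\zeta))\bigr)\) with bounded prefactors. I would check this carefully using the quasi-periodicity of \(\theta\) and \(E\).

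\emph{Step 2: deform the contours.} \(\mathrm{Re}\,F\) is harmonic on \(\curve\) minus the angles and is constant on each real oval, by the imaginary normalization and \(\sigma\)-symmetry. Its critical points are the zeros of \(\lambda_c\).
\begin{itemize}
\item In the rough region the two free zeros are \(\zero,\sigma(\zero)\).
\item In the other regions both free zeros lie on \(A_{(x,y)}\).
\end{itemize}
Following the strategy of Berggren--Borodin~\cite{BB23}, I would deform \(C(I_\bl)\) to a contour on which \(\mathrm{Re}\,F(\zeta)>\mathrm{Re}\,F(\zero)\) except at the critical points, and \(C(I_\wh)\) to one on which \(\mathrm{Re}\,F(\eta)<\mathrm{Re}\,F(\zero)\). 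In the rough region the deformed contours pass through \(\zero\) and \(\sigma(\zero)\). In the other regions they meet \(A_{(x,y)}\), and one of them may be pushed across that oval.

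The level lines \(\mathrm{Re}\,F=\mathrm{Re}\,F(\zero)\) are organized by the signs of the residues, which fix the locations of all other zeros. These signs are exactly what make the pole intervals \(I_\bl,I_\wh\) compatible with the required topology. The assumption \((x,y)\notin \line_{e(\alpha)}\cup\line_{e(\beta)}\) guarantees that no residue vanishes, so that no zero collides with an angle.

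\emph{Step 3: collect the residue along the diagonal.} During the deformation, the \(\zeta\)- and \(\eta\)-contours cross. Since \(\omega\) has a simple pole along \(\zeta=\eta\) with residue \(g_{\bl,\wh}\), the crossing produces a single integral of \(g_{\bl,\wh}\) along an arc from \(\sigma(\zero)\) to \(\zero\). The homotopy class of this arc is determined by the indicator terms in~\eqref{eq:intro_inverse_formula}. I would show that, combined with the explicit terms \(\mathsf A^{\ell_\bl}\) and \(\mathsf A^{r_\bl}\), this gives exactly \(\mathsf A^{\zero}_{\bl,\wh}\), using \(\mathsf A^{u}-\mathsf A^{u'}=\) an integral of \(g_{\bl,\wh}\) between adjacent components of \(A_0\setminus\zz\). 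The remaining double integral is \(O(n^{-1/2})\), or exponentially small off the arctic curve, since the integrand is bounded by \(e^{-cn}\) away from the critical points and by Gaussian decay near them. Uniformity over \(\bl,\wh\) follows because \(\compact\) is finite.

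\emph{Main obstacle.} I expect the hardest step to be the global topology of the level sets of \(\mathrm{Re}\,F\) for a general \(N\) and genus \(g\). One must verify that admissible contours exist homotopic to \(C(I_\bl)\) and \(C(I_\wh)\) within the allowed regions, with the crossing accounted for correctly, in every region and every case of how \(\bl,\wh\) choose their pole intervals. I would handle this by counting zeros via the residue signs, as in the Aztec case of~\cite{BB23}, and by using that \(\mathrm{Re}\,F\) is monotone along each component of \(\curve(\RR)\) between consecutive angles.
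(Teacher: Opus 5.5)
Your architecture is the same as the paper's. You deform $C(I_\bl)$ and $C(I_\wh)$ onto steepest descent/ascent contours and pick up the diagonal residue along an arc from $\sigma(\zero)$ to $\zero$. You then add it to the single-integral term, which for large $n$ reduces to $\mathsf A^{\ell_{\bl_n}}_{\bl_n,\wh_n}$ (with the normalization of Remark~\ref{rem:inverse_kasteleyn_signs}), to obtain $\mathsf A^{\zero}_{\bl,\wh}$. However, your description of the landscape is wrong in two places, and the step you call the main obstacle is where the actual work lies.

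\emph{The sign of the exponent.} By Lemma~\ref{lem:divisor_coefficients_asymptotic}, $(\dabel(\x_n)+\bm D_{\bm\beta^n})_\alpha=-n\res_\alpha\lambda_c(x,y)+O(1)$. Lemma~\ref{lem:sign_poles} forces this too: in a $+$ chamber, $1/(g_\x\Psi_{\bm\beta})$ has no poles on $\zz_e$. So, up to factors bounded in $n$, the kernel has size $\exp\bigl(n\Re(\Lambda_c(\zeta;x,y)-\Lambda_c(\eta;x,y))\bigr)$, the opposite of what you wrote. Hence $\zeta$ must go onto a steepest \emph{descent} curve, where $\Re\Lambda_c(\zeta)\le\Re\Lambda_c(\zero)$, ending at positive-residue angles in $A_{R,+}$. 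Correspondingly, $\eta$ must go onto an ascent curve ending in $A_{R,-}$. Your inequalities are reversed. Indeed, the undeformed small circles already satisfy them: $C(I_\bl)$ surrounds angles with negative residue, where $\Re\Lambda_c\to+\infty$, and $C(I_\wh)$ surrounds positive-residue angles. On those circles the integrand is exponentially large. The paper's display~\eqref{eq:prime_form_action_asymptotic} contains the same slip, but its contour assignment is the correct one.

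\emph{The behaviour of $\Re F$ on the real locus.} $\Re F$ is not constant on the real ovals. Since $\lambda_c(x,y)$ is real on $\curve(\RR)$ (Lemma~\ref{lem:third_kind_ind}), it is $\Im F$ that is locally constant along real arcs. $\Re F$ varies there, and its critical points are exactly the real zeros; these real local minima and maxima are the saddle points in the frozen, quasi-frozen and smooth phases. Nor is $\Re F$ monotone between consecutive angles. Between consecutive angles with equal residue signs there is an odd number of zeros, and in $\mathcal F_{(\alpha,\beta)}$ or $\mathcal Q_{(\alpha,\beta)}$ there are two or three. It is monotone only between consecutive zeros. So the tool you propose for your obstacle is not available.

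\emph{The missing contour construction.} The paper supplies it in Lemmas~\ref{lem:steepest_descent_curves} and~\ref{lem:descent_curves}. Start from $\zero=\Omega(x,y)$ in the rough case, or otherwise from each real local minimum (resp.\ maximum) on $A_{(x,y)}$. Follow the level line $\Im\int_{\zeta_0}^{\zeta}\lambda_c(x,y)=0$ in $\curve_+$ along which $\Re\int\lambda_c$ decreases (resp.\ increases). Monotonicity and the alternation of extrema on $A_{(x,y)}$ prevent the curve from closing up or reaching another zero on $A_{(x,y)}$. So it ends either at an angle of the right residue sign, in $A_{R,+}$ (resp.\ $A_{R,-}$), or at a zero on some oval $A_j$. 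In the latter case one runs along $A_j$ to the other zero of the right type and re-enters $\curve_+^\circ$. Then reflect by $\sigma$ and, in the non-rough phases, take the union over all real minima (resp.\ maxima). Counting intersections with the components of $A_{R,\pm}$, together with Lemma~\ref{lem:asymp_intervals}, shows that $C_{(x,y),\mathrm{desc}}$ is homologous to $C(I_{\bl_n})$ in $\curve\setminus\bigcup_{e\in I_{\bl_n,-}}\zz_e$. The analogous statement holds for the ascent curve and $C(I_{\wh_n})$. This is what guarantees that the diagonal is the only pole crossed.

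\emph{The excluded lines.} This is also where $\line_{e(\alpha)}\cup\line_{e(\beta)}$ matters, and not for the reason you give. Residues do vanish at allowed points, for instance rough points on some $\line_e$. Collisions of zeros with angles are turning points, which lie on $\mathscr A$ and are already excluded. The real issue is that on these two lines the signs $\sign_{\mathsf R_{\bl_n}}$ and $\sign_{\mathsf R_{\wh_n}}$ at $e(\alpha)$, $e(\beta)$ are not determined by $(x,y)$. Meanwhile the topology of the descent/ascent curves in $\mathcal F_{(\alpha,\beta)}$ changes across them (two versus three zeros in $(\alpha,\beta)$), so the required deformation depends on the $O(1)$ positions of $\bl_n,\wh_n$; see Remark~\ref{rem:technical_lines}.
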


\begin{corollary}[cf. Corollary~\ref{cor:local_fluctuations}]\label{cor:local_fluctuations_intro}
The local correlations in a neighborhood of~$(x,y)$ of the dimer model on~$\graphbetan$ defined by the Kasteleyn matrix \eqref{eq:kast_fock} converge to those of the 
Gibbs measure defined by the corresponding whole-plane Kasteleyn matrix with slope~$\nabla \ls(x,y)$.
\end{corollary}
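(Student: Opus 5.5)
The plan is to deduce the corollary from Theorem~\ref{thm:intro_local_stats} together with Kenyon's local statistics formula, and to identify the limiting Gibbs measure using Theorem~\ref{thm:intro_limit_shape}.

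\textbf{Step 1: convergence of correlations.} For the dimer model on $\graphbetan$ with Fock's Kasteleyn matrix, Kenyon's theorem expresses the probability that edges $e_1=(\bl_1,\wh_1),\dots,e_k=(\bl_k,\wh_k)$ all lie in the matching as
\[
\prod_{i=1}^k \kast_{\wh_i,\bl_i}\,\det\bigl(\kast^{-1}_{\bl_i,\wh_j}\bigr)_{i,j=1}^k ,
\]
with the usual sign and orientation conventions. For edges in $\compact_n$, the Kasteleyn entries $\kast_{\wh_i,\bl_i}$ agree with the whole-plane Fock entries at the untranslated edges, at least after a gauge that does not affect probabilities. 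This holds because the angle function is periodic: translation by $(j_n,k_n)$ changes the discrete Abel map by a fixed amount, so the weights are periodic up to gauge. I would check this gauge bookkeeping against the conventions fixing $\mathsf A^{\zero}$. By Theorem~\ref{thm:intro_local_stats}, each $\kast^{-1}_{\bl_{i,n},\wh_{j,n}}$ converges uniformly to $\mathsf A^{\zero}_{\bl_i,\wh_j}$. The determinant is a polynomial in finitely many entries, so all cylinder probabilities converge to
\[
\prod_i \kast_{\wh_i,\bl_i}\det\bigl(\mathsf A^{\zero}_{\bl_i,\wh_j}\bigr).
\]
By \cite{BCT22}, these are the cylinder probabilities of a translation-invariant ergodic Gibbs measure $\mu^{\zero}$ on $\graphpl$. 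Since cylinder events generate the local topology, local convergence follows.

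\textbf{Step 2: slope identification.} It remains to show that the slope of $\mu^{\zero}$ equals $\nabla\ls(x,y)$. The slope of the measure defined by $\mathsf A^{\zeta}$ can be computed from edge probabilities: the height change across a period is $\frac{1}{2\pi\ii}$ times the integral from $\sigma(\zeta)$ to $\zeta$ of a form whose residues at $\alpha$ are the homology coordinates of the zig-zag paths. On the other side, differentiate the formula of Theorem~\ref{thm:intro_limit_shape} in $x$ and $y$. The dependence on $(x,y)$ enters only through the residues $-b_{e(\alpha)}x+a_{e(\alpha)}y+c(e(\alpha))$, so $\partial_x\lambda_c$ and $\partial_y\lambda_c$ are the imaginary-normalized forms with residues $-b_{e(\alpha)}$ and $a_{e(\alpha)}$.

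The endpoint $\zero$ is a zero of $\lambda_c$ (in the rough region), or lies where the integral does not depend on the choice within $A_{(x,y)}$. Hence differentiating the endpoint contributes nothing, and
\[
\nabla\ls = -\frac{1}{2\pi\ii}\int_{C^{\zero}_{u_0}}\bigl(\partial_x\lambda_c,\partial_y\lambda_c\bigr).
\]
Matching this with the KOS/BCT slope formula for $\mathsf A^{\zero}$ is exactly the step that identifies the slope. I expect it to be the main obstacle, because of sign and normalization conventions: the reference vertex $u_0$, the orientation of the contour, and the imaginary normalization, which handles the $A_j$-periods in the smooth regions. In frozen and quasi-frozen regions, I would check directly that both sides give the slope of the corresponding vertex or edge point of $N$.
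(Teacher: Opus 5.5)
Your proposal follows the paper's route. Step~1 is how the corollary is obtained from Theorem~\ref{thm:inverse_convergence} via Kenyon's formula and the measures $\PP^{\zero}$ of Theorem~\ref{thm:gibbs_measure}. Step~2 is the paper's computation of $\nabla\ls$ from \eqref{eq:action_zw}, with the endpoint terms dropping out for the reasons you give.

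The identification you expect to be the main obstacle is immediate. By \eqref{eq:action_zw}, $\partial_x\lambda_c=d\log z$ and $\partial_y\lambda_c=d\log w$. Formula \eqref{eq:slope_integral} gives the slope of $\PP^{\zeta}$ with the same prefactor $-\frac{1}{2\pi\ii}$, the same contour $C^{\zeta}_{u_0}$ and the same reference cover $M_{u_0}$ used to normalize $h_M$. So no separate check in the frozen, quasi-frozen or smooth regions is needed.

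One claim in Step~1 is wrong, though: when $g\ge 1$, a periodic angle function does not make the weights periodic up to gauge. Translating by $(j,k)$ shifts $\dabel$ by a fixed divisor. In \eqref{eq:kast_fock} this replaces $t$ by $t$ plus the Abel image of that divisor, which is nonzero in general; this is why periodic angles are strictly more general than periodic weights. Correspondingly, $\PP^{\zero}$ need not be translation-invariant.

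You do not need the claim. The steepest descent argument in the proof of Theorem~\ref{thm:inverse_convergence} actually shows $\kast^{-1}_{\bl_n,\wh_n}-\mathsf A^{\zero}_{\bl_n,\wh_n}\to 0$. So compare Kenyon's formula on $\graphbetan$ with the $\PP^{\zero}$-probabilities of the same translated edges. Both carry the same factors $\kast_{\wh_{i,n},\bl_{i,n}}$, and all entries stay bounded because $t$ only moves in the compact real torus. This gives the corollary directly. Only for periodic weights do these translated quantities coincide with the untranslated ones you wrote down.
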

\begin{remark}
{If the edge weights of the dimer model are periodic, the Gibbs measures obtained in Corollary~\ref{cor:local_fluctuations_intro} are the translation-invariant Gibbs measures classified by Kenyon--Okounkov--Sheffield~\cite{KOS06}.}
\end{remark}

\subsection{Tropical limit and simulations}

\begin{figure}
\centering

\begin{subfigure}{0.38\textwidth}
\centering
\scalebox{1}[1.5]{\includegraphics[
  width=0.9\linewidth,
  trim=2.5mm 2.5mm 2.5mm 2.5mm,
  clip
]{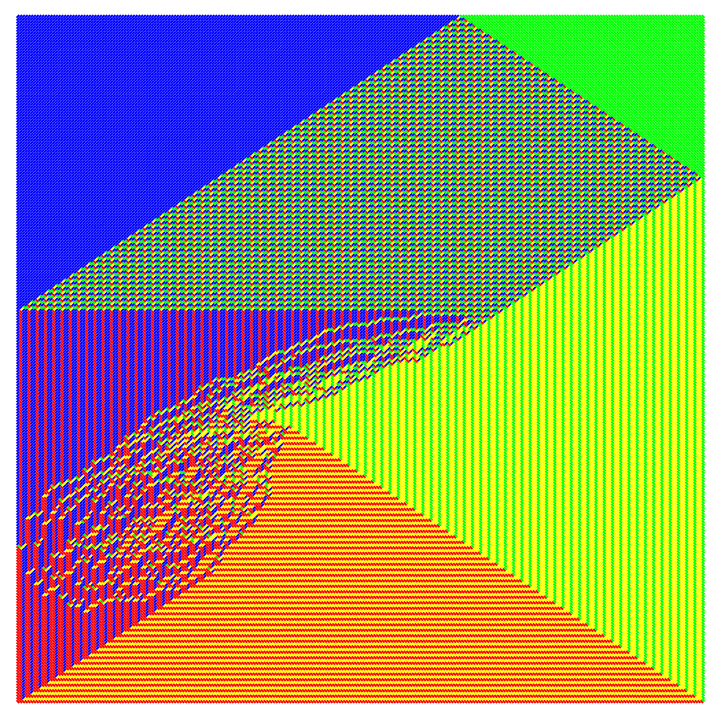}}
\end{subfigure}
\hspace{0.03\textwidth}
\begin{subfigure}{0.38\textwidth}
\centering
\includegraphics[width=0.9\linewidth]{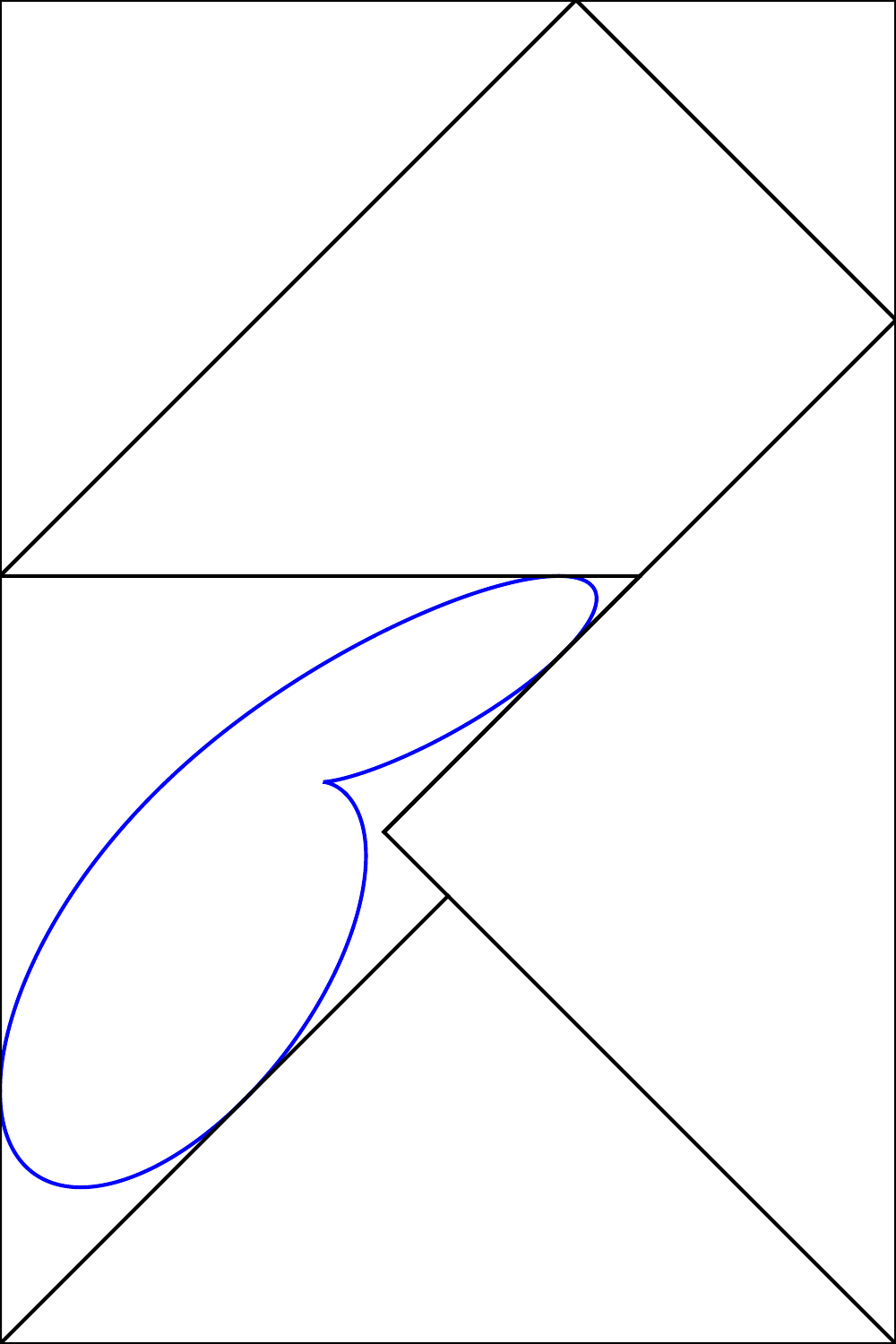}
\end{subfigure}

\caption{A simulation of the arctic curve in the tropical limit of the Aztec diamond (left) and the analytically computed arctic curve (right). The Aztec diamond has been vertically rescaled by a factor of $3/2$ to match $\mathcal D_c$.
}
\label{fig:tropical_astroidal_arctic_curve}
\end{figure}

In Appendix~\ref{appendix:tropical}, we outline, via an example, how AZ graphs can be indirectly simulated inside the Aztec diamond by choosing a suitable tropicalization in which the tropical spectral curve is not smooth. This generalizes Berggren--Borodin~\cite{BBtropical}, which treats smooth tropical curves that do not leave any dimer randomness. Figure~\ref{fig:tropical_astroidal_arctic_curve} shows one such simulation, containing an astroidal domain for the pentagonal Newton polygon in Figure~\ref{fig:pentagon_0} together with the arctic curve predicted by our theory. Understanding this phenomenon was one of the starting points for the present work, as it naturally led us to the notion of astroidal domains. Trying to understand their discrete counterparts in turn led us to AZ graphs.

\subsection{Further directions}\label{sec:further_directions}

We conclude the introduction with a few directions that we believe are promising for further study.

\begin{itemize}
\item
It might be possible to give a direct proof that the limit shape of Theorem~\ref{thm:intro_limit_shape} solves the variational problem, in the spirit of Bobenko--Bobenko~\cite{BB24} for the Aztec diamond.
\item
By construction, each edge of the Newton polygon contributes exactly one zig-zag path to the boundary of an AZ graph. However, there are natural domains that do not fit into this framework, for example lozenge tilings of a hexagon, or, more generally, the polygonal domains of Kenyon--Okounkov~\cite{KO07}, where a given parallel class of zig-zag paths may appear several times around the boundary. It would be very interesting to extend our constructions to this setting.
\item
If we drop the assumption that the zig-zag boundary of an AZ graph is simple, we obtain graphs with branching as in Lai--Musiker~\cite[Figures~46--49]{dungeons}. As the number of edges of \(N\) grows, the subset of such graphs among admissible graphs grows, and we would hope that our results extend to this setting.
\item
AZ graphs form a \((\#E(N)-3)\)-parameter family. On the other hand, the cluster modular group of generalized shufflings also gives rise to a \((\#E(N)-3)\)-parameter family, as shown by George--Inchiostro~\cite{GeorgeInchiostro} (see also Fock--Marshakov~\cite{FockMarshakov} and George--Ramassamy~\cite{GeorgeRamassamy}). It is therefore natural to expect that AZ graphs can be studied via shuffling, generalizing the relationship between domino shuffling and the Aztec diamond.
\item
Is it possible to extend the results of Berggren--Borodin~\cite{BBtropical} from the case of smooth tropical curves to the non-smooth tropical curves in Appendix~\ref{appendix:tropical}?
\item
It would be very nice to characterize which AZ graphs can be simulated inside the Aztec diamond as in Figure~\ref{fig:tropical_astroidal_arctic_curve} and Appendix~\ref{appendix:tropical}. For instance, this procedure never produces a graph with self-intersecting boundary.
\end{itemize}

\subsection*{Organization of the paper}

The paper is organized as follows. Section~\ref{sec:background} collects preliminaries on the dimer model. Section~\ref{sec:AZ_graphs} introduces AZ graphs and develops their basic combinatorics. Section~\ref{sec:inv_K} proves the exact inverse Kasteleyn formula for AZ graphs. Section~\ref{sec:scaling_limit_astroidal_domain} turns to scaling limits and introduces astroidal domains. Section~\ref{sec:phase_regions} analyzes the geometry of the phase regions and arctic curve. Section~\ref{sec:loc_stats_limit_shape} uses steepest descent analysis to derive the local statistics and the explicit formula for the limit shape. Appendix~\ref{appendix:tropical} explains how certain astroidal domains arise from a tropical limit of the Aztec diamond and provides simulations illustrating the theory.

\subsection*{Acknowledgements}
We are grateful to C\'{e}dric Boutillier, B\'{e}atrice de Tili\`ere, and Bishal Deb for communicating to us their ongoing work~\cite{BdT2}. 
We also thank C\'{e}dric Boutillier, B\'{e}atrice de Tili\`ere, and Marianna Russkikh for sharing with us the proof of Proposition~\ref{prop:telescopic}, and for allowing us to reproduce it here. {We thank Gregg Musiker for drawing our attention to further examples of AZ graphs in the literature.} A. B. was partially supported by the NSF grant DMS-2450323. A. B. and T. G. were also partially supported by the Simons Investigator program.


\section{Preliminaries}\label{sec:background}

\subsection{Conventions}\label{sec:conventions}

Since we will simultaneously refer to vertices and edges of both the graph and
the Newton polygon, we use the usual font for objects associated with the
polygon and the sans-serif font for those associated with
the graph. Whenever it makes sense, we write
\[
V(\cdot),\quad E(\cdot),\quad F(\cdot)
\]
for the sets of vertices, edges, and faces of the object under consideration
(for example, for graphs, polygons, strands, etc).

Given a lattice polygon $N \subset \RR^2$, we orient its boundary $\partial N$ counterclockwise. {This induces a cyclic order on $\partial N$: for $u,v,w \in V(N)$, we write $u<v<w$ if they appear in counterclockwise order around $\partial N$.} By an \emph{edge}
of $N$ we mean a side of the polygon (possibly containing interior lattice
points), while a \emph{vertex} always means a corner. {For \(a,b\in \partial N\), we write
\[
I=[a,b]
\]
for the closed cyclic interval in \(\partial N\) from \(a\) to \(b\), oriented counterclockwise.}

We use the subscript $-$ (resp. $+$) to indicate the object immediately clockwise (resp. counterclockwise) from a given one.  For example, when we write
\[
v = e_- \cap e_+,
\]
the edge $e_-$ (resp. $e_+$) is the edge immediately clockwise (resp. counterclockwise) from $v$.  Similarly, for an oriented edge we write
\[
e=[v_-,v_+],
\]
where $v_-$ and $v_+$ are its clockwise and counterclockwise endpoints. We freely identify edges of $N$ with the corresponding closed intervals in $\partial
N$ and with their displacement vectors in $\mathbb{R}^2$.  Thus we may write an
edge either as $e=[v_-,v_+]$ or as the vector $\vec{e}$.

For a side \(e \in E(N)\), let \(|e|_{\ZZ}\) denote the \emph{integral length} of the vector \(e\), \emph{i.e.} the number of primitive lattice segments contained in \(e\), or equivalently \(|e \cap \ZZ^2|-1\). Let
\[
(a_e,b_e):= \frac{\vec e}{|e|_\ZZ} \in \ZZ^2
\]
denote the primitive vector along \(\vec e\).

\subsection{The dimer model}

Let $\graph$ be a planar bipartite graph. We denote the black vertices, white vertices, edges and faces of $\graph$ by $B(\graph), W(\graph), E(\graph)$ and $F(\graph)$ respectively. If an edge $\e \in E(\graph)$ is incident to a black vertex $\bl \in B(\graph)$ and a white vertex 
$\wh \in W(\graph)$, we write $\e=\bl\wh$. A \emph{dimer cover} on $\graph$ is a subset of $E(\graph)$ such that every vertex is used exactly once. 
 
An \emph{edge weight} on $\graph$ is a function $\wt : E(\graphpl) \rightarrow \RR_{>0}$. Two edge weights $\wt_1$ and $\wt_2$ are said to be \emph{gauge equivalent} if there is a function $\gauge:B(\graphpl) \sqcup W(\graphpl) \rightarrow \RR_{>0}$ such that for every edge $\e=\bl\wh$, we have $\wt_2(\e) = \gauge(\bl) \wt_1(\e) \gauge(\wh)$. 

When $\graph$ is finite, the \emph{Boltzmann measure} associated to an edge weight $\wt$ assigns to each dimer cover $M$ the probability
\[
\mathbb P(M) := \frac{\wt(M)}{Z},
\]
where $\wt(M):=\prod_{\e \in M} \wt(\e)$ and $Z:=\sum_{\text{dimer covers $M$}} \wt(M)$ is the \emph{partition function}. Gauge equivalent edge weights define the same Boltzmann measure. 

Motivated by this, we define a \emph{dimer model} to be a pair $(\graph,[\wt])$, where $\graph$ is a (possibly infinite) planar bipartite graph and $[\wt]$ is a gauge equivalence class of edge weights.

\subsubsection{The height function}\label{sec:height_function}

Let $\graphpl$ be a planar bipartite graph. Each dimer cover $M$ can be viewed as a white-to-black flow $\omega_M$ on $\graph$ that sends $1$ along each edge of $M$ and $0$ along all other edges. Fix a reference face $\f_0$ and a reference dimer cover $M_0$. Following Thurston~\cite{Thu90}, we define a function 
\[
h_M:F(\graph) \rightarrow \ZZ,
\]
called the \emph{height function} of $M$, as follows. For any face $\f$, let $\gamma^*$ be a path in the dual graph $\graphpl^*$ of $\graphpl$ from $\f_0$ to $\f$ and let 
\[
h_M(\f):=(M-M_0) \wedge \gamma^* = \sum_{\e = \wh \bl } (\e \wedge \gamma^*)(\mathbf 1_{ \{\e \in M\}} - \mathbf{1}_{ \{\e \in M_0 \}}),
\]  
where $\wedge$ denotes the intersection pairing in the plane with the convention that 
\begin{equation}\label{eq:intersection_convention}
     \e \wedge \gamma^* = \begin{cases} -1, &\text{if $\gamma^*$ crosses $\e$ from left to right},\\
1, &\text{if $\gamma^*$ crosses $\e$ from right to left}.
\end{cases}
\end{equation}
The height function allows us to interpret a random dimer cover of $\graphpl$ as a random discrete surface, which is sometimes a more suitable object for studying scaling limits.

\subsubsection{The Kasteleyn matrix}

The fundamental tool in the study of the dimer model is the Kasteleyn matrix which we now introduce. A \emph{Kasteleyn sign} is a function $\kappa: E(\graph) \rightarrow \CC$ such that $|\kappa(\e)|=1$ for all $\e \in E(\graph)$ and the alternating product of $\kappa$ around the boundary $\partial \f$ of each face $\f \in F(\graph)$ is $(-1)^{\frac{|\partial \f|}{2}+1}$. The \emph{Kasteleyn matrix} is the $W(\graph) \times B(\graph)$ weighted signed bipartite adjacency matrix defined by 
\[
\kast_{\wh,\bl} := \sum_{\e = \bl \wh} \wt(\e) \kappa(\e).
\]

\begin{theorem}[\cite{Kas61,TF61}]
If $\graph$ is finite, the partition function satisfies
\[
Z = \lvert \det \kast \rvert .
\]
\end{theorem}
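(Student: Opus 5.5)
The statement to prove is the Kasteleyn--Temperley--Fisher theorem: for a finite planar bipartite graph $\graph$ with a Kasteleyn sign $\kappa$, we have $Z=\lvert\det\kast\rvert$. The plan is the classical one in three steps: expand the determinant, use Kasteleyn's condition to show that any two dimer covers contribute terms with the same phase, and then take absolute values.

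\textbf{Expanding the determinant.} If $|W(\graph)|\neq|B(\graph)|$, both sides vanish, so assume $\kast$ is square. Each nonzero term of $\det\kast$ is indexed by a bijection $W\to B$ together with a choice of edge for each matched pair. This is exactly a dimer cover $M$, and we write
\[
\det\kast=\sum_M \operatorname{sgn}(\pi_M)\,\wt(M)\prod_{\e\in M}\kappa(\e),
\]
where $\pi_M$ is the permutation induced by $M$ after fixing orderings of $W$ and $B$. It therefore suffices to show that
\[
\phi(M):=\operatorname{sgn}(\pi_M)\prod_{\e\in M}\kappa(\e)
\]
is the same unimodular constant for all $M$. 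The identity then follows from $\lvert\det\kast\rvert=\sum_M\wt(M)$.

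\textbf{Reduction to a single cycle.} Take two covers $M,M'$. Their symmetric difference is a disjoint union of even alternating cycles $\gamma_1,\dots,\gamma_k$. The permutation $\pi_{M'}\pi_M^{-1}$ is a product of cycles of lengths $|\gamma_i|/2$, so
\[
\operatorname{sgn}(\pi_{M'})/\operatorname{sgn}(\pi_M)=\prod_i(-1)^{|\gamma_i|/2+1}.
\]
The ratio of the $\kappa$-products is $\prod_i \kappa_{\mathrm{alt}}(\gamma_i)$, where $\kappa_{\mathrm{alt}}(\gamma)$ denotes the alternating product of $\kappa$ along $\gamma$, with the $M'$-edges in the numerator. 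This reduces the claim to the single-cycle identity
\[
\kappa_{\mathrm{alt}}(\gamma)=(-1)^{|\gamma|/2+1}
\]
for every such cycle $\gamma$. Here $|\gamma|$ is the number of edges of $\gamma$.

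\textbf{The single-cycle identity (the main obstacle).} This is the step that uses planarity. Let $\gamma$ be one of these simple cycles; it bounds a disk in the plane. Let $F_\gamma$ be the set of interior faces, and let $V_{\mathrm{in}}$ and $E_{\mathrm{in}}$ be the numbers of interior vertices and edges.
\begin{enumerate}
\item The alternating products satisfy $\kappa_{\mathrm{alt}}(\gamma)=\prod_{\f\in F_\gamma}\kappa_{\mathrm{alt}}(\partial\f)$. Each interior edge appears in two adjacent faces with opposite orientations, so its factors cancel. Edges are oriented white-to-black, and one should take care to orient every $\partial\f$ consistently with $\gamma$.
\item By Kasteleyn's condition, the right side equals $(-1)^{\sum_{\f}(|\partial\f|/2+1)}$.
\item Since $\sum_{\f}|\partial\f|=2E_{\mathrm{in}}+|\gamma|$, the exponent is $E_{\mathrm{in}}+|\gamma|/2+|F_\gamma|$.
\item Euler's formula for the disk, $(V_{\mathrm{in}}+|\gamma|)-(E_{\mathrm{in}}+|\gamma|)+|F_\gamma|=1$, gives $E_{\mathrm{in}}+|F_\gamma|\equiv V_{\mathrm{in}}+1 \pmod 2$.
\item The interior vertices are perfectly matched by $M$: since $\gamma$ is a cycle of $M\triangle M'$, a dimer of $M$ cannot cross it. Hence $V_{\mathrm{in}}$ is even, and the exponent is congruent to $|\gamma|/2+1$, as required.
\end{enumerate}

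The only delicate points are the orientation convention in the alternating product and the parity argument for $V_{\mathrm{in}}$. I would handle both by fixing the counterclockwise orientation throughout, which settles the convention in step 1.
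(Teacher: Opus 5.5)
The paper gives no proof of this statement; it cites it from Kasteleyn and Temperley--Fisher. Your proposal is the standard proof of that theorem and is correct. The expansion of $\det\kast$ over dimer covers checks out (including multiple edges). So do the sign $(-1)^{|\gamma|/2+1}$ of the cyclic permutation induced by each alternating cycle, and the face-by-face parity count via Euler's formula together with the evenness of $V_{\mathrm{in}}$. Your orientation convention makes step~1 exact: all boundaries are traversed counterclockwise, with edges traversed white-to-black in the numerator. The Kasteleyn condition and the target identity do not depend on this convention, because both take values $\pm1$.

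The one hypothesis you use without stating it is in step~4. The formula $(V_{\mathrm{in}}+|\gamma|)-(E_{\mathrm{in}}+|\gamma|)+|F_\gamma|=1$ requires $\gamma$ together with everything inside it to form a connected plane graph, so that every face inside $\gamma$ is a disk. If a component floated inside a face, the left side would instead equal the number of components, and the identity can genuinely fail. Connectivity holds whenever $\graph$ is connected, since a path from an interior vertex stays in the closed disk until it first meets $\gamma$. It is also the setting in which ``the boundary of a face'' is a single closed walk, so it is implicit in the definition of a Kasteleyn sign, but you should state it. For disconnected $\graph$, factor $\det\kast$ and $Z$ over components and impose the Kasteleyn condition componentwise.
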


A fundamental consequence is that the inverse Kasteleyn matrix encodes all local statistics of the dimer model.

\begin{theorem}[\cite{Ken97}] \label{thm:kenyon_inv_K}
    Let $\graph$ be finite and let $\{\e_i = \bl_i \wh_i\}_{i=1}^k \subset E(\graph)$ be a fixed collection of its edges. Then the probability that all $\e_i$ occur in a random dimer configuration $M$ is 
    \[
    \mathbb P(\e_1,\dots, \e_k \in M) = \prod_{i=1}^k \kast_{\wh_i, \bl_i} \det(\kast^{-1}_{\bl_i,\wh_j})_{1 \leq i,j \leq k}.
    \]
\end{theorem}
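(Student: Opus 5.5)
The statement is Kenyon's local statistics theorem (Theorem~\ref{thm:kenyon_inv_K}). I would prove it in three steps: expand the Kasteleyn determinant into a signed sum over dimer covers, differentiate with respect to the weights of the chosen edges, and compute the resulting derivative by Jacobi's identity for minors.

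\textbf{Step 1: expansion of the determinant.} Expanding $\det\kast$ over bijections $W(\graph)\to B(\graph)$, only bijections supported on edges contribute, and these are exactly the dimer covers. The Kasteleyn sign condition on faces implies, by the standard argument of \cite{Kas61,TF61}, that every cover contributes with the same phase. Concretely, the superposition of two covers is a union of cycles, each enclosing an even number of vertices, and the face condition forces the product of signs around each such cycle to equal the sign of the corresponding cyclic permutation. Hence
\[
\det \kast = c\sum_M \prod_{\e\in M}\wt(\e)\kappa(\e),
\qquad
\prod_{\e\in M}\kappa(\e)\cdot \sign(M) = c'
\]
for fixed unimodular constants $c,c'$ independent of $M$. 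This is precisely what underlies $Z=|\det\kast|$, so I will take it as given from that theorem.

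\textbf{Step 2: differentiation in the edge weights.} Assume the $\e_i$ are distinct and that the $\bl_i$ are distinct and the $\wh_i$ are distinct; otherwise the probability is zero, and both sides vanish. Indeed, a repeated vertex makes two rows or two columns of the determinant coincide, possibly after discarding edges with $\kast_{\wh_i,\bl_i}$ factors. First I will treat the case of no multiple edges, so that $\kast_{\wh_i,\bl_i}=\wt(\e_i)\kappa(\e_i)$. The probability that all $\e_i$ lie in $M$ is then
\[
\prod_i \wt(\e_i)\,\partial_{\wt(\e_1)}\cdots\partial_{\wt(\e_k)}\log\bigl(\text{multilinear polynomial } Z\bigr)
\]
in its multilinear form, i.e.\ $\frac{1}{Z}\prod_i\wt(\e_i)\,\partial^k Z/\partial\wt(\e_1)\cdots\partial\wt(\e_k)$. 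Because $Z$ is multilinear in the weights, this quantity equals $\prod_i\kast_{\wh_i,\bl_i}\,(\partial^k\det\kast/\prod_i\partial\kast_{\wh_i,\bl_i})/\det\kast$, and the common phase $c$ cancels in the ratio.

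\textbf{Step 3: the derivative via Jacobi's identity.} The derivative of $\det\kast$ with respect to the entries $(\wh_i,\bl_i)$ is $\pm$ the complementary minor obtained by deleting rows $\wh_1,\dots,\wh_k$ and columns $\bl_1,\dots,\bl_k$. Jacobi's complementary minor identity says this minor, divided by $\det\kast$, equals $\pm\det(\kast^{-1}_{\bl_i,\wh_j})_{i,j}$, with exactly the same sign, so the two signs cancel. This yields the formula. If there are multiple edges between $\bl_i$ and $\wh_i$, I would differentiate with respect to the specific edge weight; this produces the factor $\wt(\e_i)\kappa(\e_i)$ in place of the full entry, and the statement is understood in that sense.

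The main obstacle is the sign bookkeeping: one must check that the constant phase from Step 1 cancels, and that the sign from Jacobi's identity matches the sign of the complementary minor. I would handle both by ordering the rows and columns so that the pairs $(\wh_i,\bl_i)$ come first, which makes both signs $+1$.
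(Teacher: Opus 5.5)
Your proof is correct and is essentially Kenyon's standard argument from \cite{Ken97}; the paper only cites that result and does not reprove it. Multilinearity identifies the probability with $\prod_i \kast_{\wh_i,\bl_i}$ times the normalized mixed derivative of $\det\kast$, i.e.\ the complementary minor, and Jacobi's identity (with the pairs $(\wh_i,\bl_i)$ ordered first) turns this into $\det(\kast^{-1}_{\bl_i,\wh_j})$ with no leftover sign. Two cosmetic fixes are needed: the expression $\partial^k\log Z$ should be $Z^{-1}\partial^k Z$, as you write right after it (derivatives of $\log Z$ give connected correlations), and inverting $\kast$ presupposes that $\graph$ admits a dimer cover.
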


\subsubsection{Minimal periodic bipartite graphs} \label{sec:minimal_graphs}

Let $\graphpl \subset \RR^2$ be a $\ZZ^2$-periodic graph embedded in the plane such that every face is a topological disk. Let $\torus := \RR^2/\ZZ^2$ be the torus, $p: \RR^2 \rightarrow \torus$ be the quotient map and let $\graphtor:=\graphpl/\ZZ^2$ be the corresponding graph in $\torus$.

A \emph{zig-zag path} in $\graphpl$ is an oriented path that turns maximally right at white vertices and maximally left at black vertices. Equivalently, zig-zag paths can be viewed as certain paths in the medial graph. Recall that the (directed) medial graph $\graphpl^\times$ has a vertex at the midpoint of each edge of $\graphpl$, and an edge whenever two edges of $\graphpl$ are consecutive around a face. Thus we have a bijection
\[
V(\graphpl^\times) \cong E(\graphpl),
\]
and we will use this identification hereafter without comment. We orient the edges of $\graphpl^\times$ so that they go clockwise around black vertices and counterclockwise around white vertices. Every vertex of $\graphpl^\times$ has degree four, and the cyclic order of the incident edges determines two pairs of opposite edges. A \emph{(medial) strand} is a directed path in $\graphpl^\times$ with the property that whenever the path enters a vertex, it exits along the opposite edge (\emph{i.e.}, it goes straight through the vertex). The bijection with zig-zag paths is obtained as follows. Given a zig-zag path in $\graphpl$, record the sequence of edges it traverses; the corresponding strand is the path in $\graphpl^\times$ visiting the midpoints of these edges in order. The maximal-turn rule at black and white vertices is equivalent to the condition that the strand goes straight at each medial vertex. Zig-zag paths and strands in $\graphtor$ are defined in the same way. We say that a vertex \emph{lies on} a strand if it lies on the corresponding zig-zag path.

We say that $\graphpl$ is \emph{minimal} if:
\begin{enumerate}
    \item no strand is a closed loop,
    \item no strand has a self-intersection,
    \item there is no pair of strands $\alpha,\beta$ (or, rather, zig-zag paths) that have two edges $\e_1 \neq \e_2 \in E(\graphpl)$ in common such that both of them are oriented from $\e_1$ to $\e_2$.
\end{enumerate}
Minimal graphs have no multiple edges, leaves or isolated vertices. Informally, the strands in a minimal graph ``behave like straight lines''.

Every strand $\alpha$ in $\graphtor$ is a cycle and therefore, has a homology class $[\alpha] \in H_1(\torus,\ZZ) \cong \ZZ^2$. The unique convex lattice polygon $N \subset \RR^2$, defined modulo translation by $\ZZ^2$, whose sides consist of the vectors $[\alpha] \in \ZZ^2$ for all strands $\alpha$ in $\graphtor$ in counterclockwise cyclic order is called the \emph{Newton polygon} of $\graphpl$. This construction produces a closed polygon because the homology classes of all strands in $\graphtor$ sum to zero: a strand has the same homology class as the corresponding zig-zag path and every edge of the graph is traversed by two zig-zag paths with opposite orientations.

\begin{theorem}[{\cite[Theorem~2.5]{GK13}}]
For any convex lattice polygon $N \subset \RR^2$, there is a nonempty family of minimal graphs with Newton polygon $N$. Moreover, any two graphs in the same family are related by sequences of the two types of local moves shown in Figure~\ref{fig:moves:X}.
\end{theorem}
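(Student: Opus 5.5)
Since this is the cited theorem of Goncharov--Kenyon, the plan below reconstructs its proof in the language of strands (medial graphs) from Section~\ref{sec:minimal_graphs}. Minimality will be checked throughout via the three conditions listed there. The proof splits into an existence part and a connectedness part.

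\emph{Existence.} Write each side $e$ of $N$ as $|e|_\ZZ$ copies of the primitive vector $(a_e,b_e)$, and draw on $\torus$ one straight closed geodesic of that slope for each copy. Oriented lines with these homology classes have no closed contractible loops and no self-intersections, which gives conditions (1) and (2). Two straight lines on the torus meet exactly $|\det|$ times, all with the same sign, so no two strands form an oriented bigon; this is condition (3), and it also holds in the universal cover. Perturbing the lines into general position leaves only transverse double points.

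The arrangement still has to become the medial graph of a bipartite graph. For that, every complementary region must be bounded either by a consistently oriented cycle (these become the black and white vertices) or by an alternating cycle (these become the faces). The regions of a straight-line arrangement need not have this property. I would repair this in the style of Gulotta: order the slopes cyclically as in $\partial N$, and translate lines of adjacent slopes past one another using moves that preserve the properties above, until each region has the required form. Equivalently, one can build the graph inductively by adding one side direction at a time and checking the local pictures. The resulting bipartite graph on $\torus$ has Newton polygon $N$ by construction.

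\emph{Connectedness.} Let $\graphpl_1$ and $\graphpl_2$ be minimal graphs with Newton polygon $N$. Lift their strand diagrams to $\RR^2$. In a minimal graph, each lifted strand stays within bounded distance of a straight line in its homology direction; this is the sense in which strands ``behave like straight lines''. So both diagrams have the same asymptotic data: the same multiset of directions, and the same periodic cyclic pattern of strands at infinity once the lifts are matched by $\ZZ^2$-equivariant translation. The key step is a periodic analogue of Postnikov's and Thurston's theorem for disks, namely that minimal (reduced) strand diagrams with the same boundary data are related by $2$--$2$ moves (spider/urban renewal) and $3$--$3$ moves (contraction/expansion of $2$-valent vertices), which are exactly the moves of Figure~\ref{fig:moves:X}. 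I would prove it by induction on the number of intersection points in a fundamental domain. Choose an extremal triangle formed by three strands, i.e.\ one containing no other crossing. Minimality forces such a triangle to be a face where the $2$--$2$ move applies. Using it, push one strand across crossings, equivariantly, until the two diagrams agree on a chosen strand. Then remove that strand's parallel class and recurse on a polygon with fewer sides.

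\emph{Main obstacle.} I expect the hardest step to be the periodic Postnikov argument. One must show that an equivariant sliding always exists, and that after deleting a strand the remaining diagram is still minimal with a well-defined Newton polygon. Parallel strands (sides with $|e|_\ZZ>1$) require care, since their relative order must be matched first. My first attempt would be to use the straight-line homotopy between the two diagrams in the universal cover and track the triple points it passes through, each of which corresponds to a $2$--$2$ move.
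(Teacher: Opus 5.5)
The paper does not prove this statement; it imports it from Goncharov--Kenyon. Your sketch therefore has to stand on its own, and it has a genuine gap in the connectedness part. The inductive step ``remove that strand's parallel class and recurse on a polygon with fewer sides'' leaves the class of objects the theorem is about. As recalled in Section~\ref{sec:minimal_graphs}, the homology classes of all zig-zag paths of a bipartite graph on $\torus$ sum to zero. After you delete the strands parallel to $e$ (or even one of them), the remaining classes sum to a nonzero multiple of $(a_e,b_e)$. So there is no smaller Newton polygon, and no bipartite torus graph has these strands as its zig-zag paths. The same failure is visible locally. Every strand segment separates a vertex region from a face region, so deleting a strand merges vertex regions with face regions. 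The merged regions are in general neither consistently oriented nor alternating. The resulting diagram is then not the medial graph of any bipartite graph, the moves of Figure~\ref{fig:moves:X} are not defined on it, and the inductive hypothesis cannot be applied. In the disk case this kind of reduction works because the strand is first made boundary-parallel and the disk is cut along it, so the strands crossing it acquire endpoints on the new boundary. A closed torus has nothing to absorb those endpoints.

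Your fallback does not close the gap either. A generic homotopy between the two diagrams in $\RR^2$ also passes through tangencies that create or cancel bigons, and parallel bigons violate condition~3. Its intermediate diagrams need not be medial graphs. Even a single Reidemeister-III event across a triangular region produces a region that is neither consistently oriented nor alternating, so triple points of the homotopy are not spider moves. Note also that in a medial diagram a triangular region is necessarily a trivalent vertex, since a region with an odd number of sides cannot alternate. Your claim that an innermost triangle is ``a face where the 2--2 move applies'' therefore conflates the double-crossing picture with the triple-crossing one.

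Finally, the existence part is incomplete rather than wrong: all of its content sits in the ``repair'' step, which you only name. Translating straight lines never changes the number $|\det|$ of crossings between two classes. You must therefore do one of two things. Either show that some translate of the arrangement already has every region consistently oriented or alternating. Or specify the additional local surgery (as in Gulotta's algorithm), prove that it terminates, and check that it preserves conditions~1--3.
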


\begin{figure}
\centering

\begin{tikzpicture}[scale=0.6]

\begin{scope}[shift={(-7,0)}]

\begin{scope}[shift={(-3,0)},rotate=135]
  \def\r{2};

  \coordinate[wvert] (n1) at (0:\r);
  \coordinate[wvert] (n2) at (90:\r);
  \coordinate[wvert] (n3) at (180:\r);
  \coordinate[wvert] (n4) at (270:\r);
  \coordinate[bvert] (b1) at (0:0.5*\r);
  \coordinate[bvert] (b2) at (180:0.5*\r);

  \draw[-]
    (n1) -- (b1) -- (n2) -- (b2) -- (n4) -- (b1)
    (b2) -- (n3);
\end{scope}

\begin{scope}[shift={(3,0)},rotate=45]
  \def\r{2};

  \coordinate[wvert] (n1) at (0:\r);
  \coordinate[wvert] (n2) at (90:\r);
  \coordinate[wvert] (n3) at (180:\r);
  \coordinate[wvert] (n4) at (270:\r);
  \coordinate[bvert] (b1) at (0:0.5*\r);
  \coordinate[bvert] (b2) at (180:0.5*\r);

  \draw[-]
    (n1) -- (b1) -- (n2) -- (b2) -- (n4) -- (b1)
    (b2) -- (n3);
\end{scope}

\draw[<->] (-0.9,0) -- (0.9,0);

\end{scope}

\begin{scope}[shift={(7,0)}]

\begin{scope}[shift={(3,0)}, rotate=-45]
  \def\r{2};

  \coordinate[wvert] (n1) at (0:\r);
  \coordinate[wvert] (n2) at (90:\r);
  \coordinate[wvert] (n3) at (180:\r);
  \coordinate[wvert] (n4) at (270:\r);
  \coordinate[bvert] (n5) at (0,0);

  \draw[] (n5)--(n1) (n5)--(n2) (n5)--(n3) (n5)--(n4);
\end{scope}

\begin{scope}[shift={(-3,0)},rotate=-45]
  \def\r{2};

  \coordinate[wvert] (n1) at (0:\r);
  \coordinate[wvert] (n2) at (90:\r);
  \coordinate[wvert] (n3) at (180:\r);
  \coordinate[wvert] (n4) at (270:\r);
  \coordinate[wvert] (n5) at (0,0);
  \coordinate[bvert] (b1) at (-0.5,0.5);
  \coordinate[bvert] (b2) at (0.5,-0.5);

  \draw[-]
    (n1)--(b2)--(n4)
    (n2)--(b1)--(n3)
    (b1)--(n5)--(b2);
\end{scope}

\draw[<->] (-0.9,0) -- (0.9,0);

\end{scope}

\end{tikzpicture}

\caption{Local moves: spider move (left) and contraction-uncontraction move (right).}
\label{fig:moves:X}
\end{figure}

These local moves do not affect the statistical-mechanical properties of the model like the limit shape or local limits.

We denote by \(\zz\) (resp. \(\zz_\torus\)) the set of strands of \(\graphpl\) (resp. \(\graphtor\)). 
Each strand \(\alpha\in\zz\) projects to a strand \(p(\alpha)\in\zz_\torus\). 
To each strand \(\alpha\in\zz\), we associate the unique edge \(e(\alpha)\in E(N)\) containing the homology class \([p(\alpha)]\). 
We say that \(\alpha\) is \emph{parallel to} \(e(\alpha)\). 
For each edge \(e\in E(N)\), let
\[
\zz_e:=\{\alpha\in\zz : e(\alpha)=e\}
\]
be the set of strands parallel to \(e\). The counterclockwise orientation of $\partial N$ induces a partial cyclic order on $\zz$ with respect to which two members of $\zz_e$ for any $e$ are incomparable.

\begin{lemma}[{\cite[Lemma~8]{BCdTiso}}; see also {\cite[Lemma~3.4]{GK13}}]
\label{lem:cyclic_order_same}
Let $\graph$ be a minimal graph, and let $\x\in B(\graph)\sqcup W(\graph)$ be a vertex. Then the strands that $\x$ lies on are parallel to distinct edges of $N$. Moreover, the cyclic order of these strands (or the corresponding zig-zag paths) around $\x$ agrees with the cyclic order of the corresponding edges along $\partial N$ if $\x$ is white, and with the opposite cyclic order if $\x$ is black.
\end{lemma}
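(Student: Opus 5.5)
The plan is to argue locally at the vertex $\x$, using the medial-graph picture of strands together with the minimality axioms, and then to transfer the local cyclic order to the cyclic order of homology classes via the Newton polygon. Let $\x$ have degree $d$ with incident edges $\e_1,\dots,\e_d$ in counterclockwise order. Around $\x$ there are exactly $d$ strands passing near it: the strand $\alpha_i$ enters through the midpoint of $\e_i$ and exits through the midpoint of $\e_{i\pm1}$, the sign depending on the color, since the medial edges are oriented clockwise around black vertices and counterclockwise around white ones. This gives the cyclic order of the strands around $\x$. The claim then splits into two parts: (a) the $\alpha_i$ are parallel to pairwise distinct edges of $N$; and (b) the map $i\mapsto e(\alpha_i)$ respects the cyclic orders, directly for white $\x$ and reversed for black $\x$.

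For (a) and (b), I would lift to the universal cover $\RR^2$ and use that in a minimal graph strands behave like straight lines. Concretely, each lifted strand $\alpha$ stays within bounded distance of an affine line of direction $[p(\alpha)]$. This should follow from periodicity together with the absence of self-intersections and of closed loops, since $\alpha$ is invariant under translation by $[p(\alpha)]$. Two consecutive strands $\alpha_i,\alpha_{i+1}$ at $\x$ both cross the edge $\e_{i+1}$, in opposite directions. If they were parallel to the same edge of $N$, their homology classes would be positive multiples of a common primitive vector. Two bi-infinite lifts with parallel asymptotic directions that meet at the edge $\e_{i+1}$ would then have to meet a second time, which I expect to force a bigon oriented the same way along both strands, forbidden by axiom~3; alternatively it would force a self-intersection after projecting to the torus. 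The same argument applies to non-consecutive pairs, since any two strands at $\x$ share the corners of $\x$, in the sense that their zig-zag paths both pass through $\x$.

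For the ordering, note that the strands through $\x$ cut a small neighborhood of $\x$ into sectors. Each $\alpha_i$ turns at $\x$, so it separates the plane into a left and a right side; $\x$ lies on the left of every strand for white $\x$ and on the right for black $\x$, or conversely. Since the lifts are quasi-lines with directions $[p(\alpha_i)]$ and no two of them cross twice, the angular order of their asymptotic directions must match their order around $\x$. The reason is that if two of them were out of order, they would have to cross again far from $\x$, which again produces a forbidden bigon. The directions of the vectors $[p(\alpha_i)]$ in counterclockwise angular order are precisely the order of the sides of $N$ along $\partial N$, since $N$ is convex and its sides are these vectors in counterclockwise order. This gives the agreement of cyclic orders for white $\x$, and the color swap reverses the orientation, giving the opposite order for black $\x$.

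The main obstacle is the quasi-line property together with the step that two strands through $\x$ which are out of angular order, or parallel, must meet a second time with the same orientation. This is exactly the content of Goncharov--Kenyon's Lemma~3.4, and I would import it rather than reprove it. If needed, I would prove it via the fact that minimality is equivalent to the existence of an isoradial or, more generally, a Fock embedding with angles on the circle, in which strands are literally monotone.
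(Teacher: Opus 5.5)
The paper gives no proof of this lemma; it only cites \cite{BCdTiso} and \cite{GK13}. So importing the core step from \cite{GK13} is in line with what the paper does. The problems are in the steps you argue yourself, and the most serious one is your last sentence.

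\textbf{The black case.} Nothing in your argument reverses the order at black vertices, and the symmetry you invoke goes the other way. Your quasi-line picture does not depend on color. If $\x$ lies to the right of all the $\alpha_i$ rather than to their left, every direction involved is negated, and negation (rotation by $\pi$) preserves cyclic order. Equivalently, swapping colors turns every zig-zag path into its reverse, which replaces $N$ by $-N$ and leaves the cyclic order of the sides unchanged. Applying the white case to the recolored graph then shows that at a black vertex the strands, read counterclockwise, are \emph{also} in the counterclockwise order of $\partial N$.

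You can check this in Figure~\ref{fig:square_aztec_inv_kast}, using the coordinates of the drawing. Around the white vertex $(2,1)$ the medial face is bounded by $\alpha_1,\gamma_2,\beta_1,\delta_1$, read counterclockwise from below. Around the black vertex $(1,2)$ it is bounded by $\beta_1,\delta_1,\alpha_2,\gamma_1$. Both give the cyclic order $\alpha,\gamma,\beta,\delta$, which is the order along $\partial N$.

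So the ``opposite order'' clause is correct only if the order around a black vertex is read clockwise, i.e.\ in the direction in which the strands circulate around it. This reading is also what makes each extremal cover $M_u$ of Section~\ref{sec:slope} a perfect matching. You need to fix this convention explicitly and then obtain the black case from the white case by recoloring, not by an orientation reversal that does not occur.

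\textbf{Distinctness and the fallback.} Distinctness for non-adjacent corners of $\x$ does not follow from ``the same argument''. Such strands need not meet at all; for example, the strands on opposite sides of a vertex of the Aztec diamond are disjoint. So there is no crossing from which to start a bigon. A correct argument covering all pairs at once runs as follows.
\begin{itemize}
\item Two distinct strands parallel to the same $e$ are disjoint. If they met at $\e$, they would meet again at $\e+\tau$ for a common period $\tau$, with both running forward from $\e$ to $\e+\tau$, which is a forbidden parallel bigon.
\item This periodicity, not ``parallel asymptotic directions'', is also the right reason in your adjacent case. Two quasi-lines with the same direction can cross exactly once.
\item Being disjoint, the two strands cobound a strip lying to the left of one and to the right of the other.
\item But the medial face of $\x$ is connected, disjoint from all strands, and lies on the same side (left if $\x$ is white, right if black) of every strand bounding it. It therefore cannot be bounded by both.
\end{itemize}

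Two further remarks. Minimality as defined here excludes only parallel bigons, so ``no two of them cross twice'' is not available from the axioms, and your ordering step genuinely rests on the imported result. Your fallback via isoradial or Fock realizations does not help either. An isoradial \emph{embedding} forces strands to cross at most once, which minimality does not guarantee. Choosing angles in the cyclic order of $\partial N$ so that such a realization (or the immersions of \cite{BCdTiso}) exists essentially presupposes the lemma you are proving.
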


\subsection{M-curves} \label{sec:M-curves}
{
We briefly recall the basic material on M-curves that we will use, following the notation and conventions of \cite{BCT22}.
}

An \emph{$M$-curve} is a compact Riemann surface $\curve$ of genus $g$
together with an anti-holomorphic involution
$\sigma:\curve \to \curve$ such that the fixed-point set
\[
\curve(\RR) := \{ \zeta \in \curve : \sigma(\zeta)=\zeta \},
\]
called the \emph{real locus}, consists of $g+1$ connected components called \emph{ovals}, each homeomorphic to a circle. We denote these components by $A_0,\dots,A_g$. The real locus separates $\curve$ into two connected components
\[
\curve \setminus \curve(\RR) = \curve_+^\circ \sqcup \curve_-^\circ .
\]
Let
\(
\curve_\pm := \overline{\curve_\pm^\circ}
\) denote their closures. We fix the orientations of the ovals $A_j$ so that
\[
\partial \curve_+ = A_0 - \sum_{j=1}^g A_j .
\]

Let $B_1,\dots,B_g$ be dual cycles such that $\sigma_* B_j=-B_j$ and 
\[
A_j \wedge B_k = \delta_{jk}
\]
where $\wedge : H_1(\curve,\ZZ) \times H_1(\curve,\ZZ) \rightarrow \ZZ$ is the intersection pairing. Let $\omega_1,\dots,\omega_g$ be the dual basis of holomorphic 1-forms on $\curve$ such that 
\[
\int_{A_j} \omega_k = \delta_{jk}.
\]
It is known as the basis of normalized holomorphic $1$-forms.
\subsubsection{Abel map and Jacobian} \label{sec:abel_map_jacobian}

The $g \times g$-matrix $\Omega$ defined by $\Omega_{jk} = \int_{B_j} \omega_k$ is called the \emph{period matrix}. For an M-curve, the matrix $\Omega$ is purely imaginary. The complex torus 
\[
\jac(\curve) := \CC^g/(\ZZ^g \oplus \Omega \ZZ^g)
\]
is called the \emph{Jacobian variety} of $\curve$. 

The \emph{Abel map} with basepoint $\zeta_0 \in A_0$ is the map
\[
\mu : \curve \rightarrow \jac(\curve),
\qquad
\mu(\zeta) := \Big(\int_{\zeta_0}^{\zeta} \omega_j\Big)_{j=1}^g .
\]
The Abel map is independent of the path from $\zeta_0$ to $\zeta$ because the integrals of $\omega_j$ along cycles are contained in $\ZZ^g \oplus \Omega \ZZ^g$. 

A \emph{divisor} on $\curve$ is a finite formal $\ZZ$-linear combination of points of $\curve$,
\[
D = \sum_{j=1}^n a_j \zeta_j .
\]
Its \emph{degree} is
\[
\deg(D) := \sum_{j=1}^n a_j .
\]
If $s$ is a meromorphic section of a holomorphic line bundle on $\curve$, we define its divisor
\[
\divisor(s) := \sum_{\zeta \in \curve} \ord_\zeta(s) \zeta,
\]
where $\ord_\zeta$ denotes the order of vanishing at $\zeta$. 

We extend the Abel map to divisors by linearity,
\[
\mu(D) := \sum_{j=1}^n a_j  \mu(\zeta_j).
\]
{
By abuse of notation, we will henceforth suppress the Abel map from the notation, writing \(\zeta\) for \(\mu(\zeta)\) and similarly \(D\) for its image \(\mu(D)\in \jac(\curve)\).}

\subsubsection{Theta function and prime form}
The \emph{Riemann theta function} is the multi-valued function on $\jac(\curve)$ defined by
\[
\theta(\zeta):=\sum_{n \in \ZZ^g} e^{\ii \pi \langle n,\Omega n\rangle + 2 \pi \ii \langle n,\zeta\rangle}.
\]
Pulling back via the Abel map, we obtain a multi-valued function on \(\curve\). {By the same abuse of notation as in Section~\ref{sec:abel_map_jacobian}, we will usually suppress the Abel map and write \(\theta(\zeta)\) for \(\theta(\mu(\zeta))\).}

The \emph{prime form} $E(\zeta,\eta)$ is a holomorphic $(-\frac 1 2,-\frac 1 2)$-form on $\curve \times \curve$ that is antisymmetric, vanishes simply along the diagonal with the expansion
\[
E(\zeta,\eta) = \frac{\eta - \zeta}{\sqrt{d\zeta} \sqrt{d\eta}} (1+ O((\eta - \zeta)^2)),
\]
and has no other zeroes and poles.

\subsubsection{Meromorphic differentials of the third kind}\label{sec:third_kind_differentials}

We follow \cite[Chapter~I]{Fay73}. A \emph{meromorphic differential of the third kind} on \(\curve\) is a meromorphic \(1\)-form all of whose poles are simple. Although Fay~\cite{Fay73} states the construction for integral divisors, all formulas are linear in the coefficients, so they extend immediately to degree-zero divisors with real coefficients. If
\[
D=\sum_{j=1}^n a_j \zeta_j
\]
is a degree-zero real divisor, meaning that \(a_j\in \RR\) and
\[
\sum_{j=1}^n a_j=0,
\]
then, after fixing a basepoint \(\zeta_0\in \curve\), there exists a unique meromorphic \(1\)-form \(\omega_D\) with vanishing \(A\)-periods,
\[
\int_{A_k}\omega_D=0, \qquad 1\le k\le g,
\]
whose only singularities are simple poles at the points \(\zeta_j\), with residues
\[
\res_{\zeta_j}(\omega_D)=a_j .
\]
It is given by
\begin{equation}\label{eq:omega_D_def}
\omega_D(\zeta)
:=
d \sum_{j=1}^n a_j
\log \left(\frac{E(\zeta_j,\zeta)}{E(\zeta_0,\zeta)}\right);
\end{equation}
here $d$ is the differential. By the \emph{Riemann bilinear relations}, for \(1\le k\le g\),
\begin{equation}\label{eq:Riemann_bilinear}
\int_{B_k}\omega_D
=
2\pi \ii \sum_{j=1}^n a_j \int_{\zeta_0}^{\zeta_j}\omega_k .
\end{equation}

\subsubsection{Imaginary-normalized differentials}\label{sec:ind}

Instead of normalizing a meromorphic differential of the third kind to have vanishing $A$-periods, we may normalize it by requiring all of its periods to be purely imaginary. We say that a meromorphic $1$-form is \emph{imaginary-normalized} if 
\[
\Re \int_\gamma \omega=0
\]
for every cycle $\gamma$ in $\curve$.

\begin{lemma}[{\cite[Lemma~2.1]{Kri13}}]\label{lem:unique_ind}
Let $D=\sum_{j=1}^n a_j \zeta_j$ be a degree-$0$ real divisor on \(\curve\). Then there exists a unique imaginary-normalized meromorphic \(1\)-form on \(\curve\) whose only singularities are simple poles at the points \(\zeta_j\) with residues \(a_j\).
\end{lemma}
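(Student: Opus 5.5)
We start from the third-kind differential $\omega_D$ of \eqref{eq:omega_D_def}, which has the right poles and residues, and correct it by holomorphic differentials. Concretely, we will look for the form
\[
\lambda=\omega_D+\sum_{k=1}^g c_k\,\omega_k,\qquad c_k\in\CC .
\]
Adding holomorphic forms does not change the singular part. For residues we use the convention $\oint=2\pi\ii\,\res$.

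\textbf{Periods that must be imaginary.} The real first homology of $\curve\setminus\{\zeta_j\}$ is generated by three kinds of cycles: small loops around the $\zeta_j$, the cycles $A_k$, and the cycles $B_k$. Because the residues $a_j$ are real, $\oint$ around each $\zeta_j$ equals $2\pi\ii a_j$, which is purely imaginary automatically. Since $\omega_D$ has vanishing $A$-periods, the $A$- and $B$-periods of $\lambda$ are
\[
\int_{A_k}\lambda=c_k,\qquad \int_{B_k}\lambda=\int_{B_k}\omega_D+(\Omega c)_k .
\]
So we need two conditions:
\[
\Re c=0,\qquad \Re\bigl(\textstyle\int_{B}\omega_D\bigr)+\Re(\Omega c)=0 .
\]

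\textbf{Solving for $c$.} Write $c=\ii s$ with $s\in\RR^g$, and write $\Omega=\ii T$, where $T$ is real. The matrix $T$ is positive definite, since $\Im\Omega>0$ and $\Omega$ is purely imaginary for an M-curve. Then $\Omega c=-Ts$ is real, and the second condition reads
\[
Ts=\Re\int_B\omega_D .
\]
This has the unique solution $s=T^{-1}\Re\int_B\omega_D$, which gives existence. (For a general curve, not only an M-curve, one would instead solve a real $2g\times 2g$ linear system built from $\Im\Omega$; it is invertible for the same reason.)

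\textbf{Uniqueness.} Suppose $\lambda$ and $\lambda'$ are two imaginary-normalized forms with the same poles and residues. Their difference $\omega$ is then holomorphic with all periods purely imaginary. The Riemann bilinear relation gives
\[
\frac{\ii}{2}\int_\curve \omega\wedge\bar\omega=\Im\sum_k \overline{\int_{A_k}\omega}\,\int_{B_k}\omega ,
\]
up to sign conventions. When $\int_{A_k}\omega$ and $\int_{B_k}\omega$ are both imaginary, each product $\overline{\int_{A_k}\omega}\int_{B_k}\omega$ is real, so the right-hand side vanishes. The left-hand side is a multiple of $\|\omega\|_{L^2}^2$, hence $\omega=0$. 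Equivalently: $\Re\int\omega$ is a single-valued harmonic function on the compact surface $\curve$, so it is constant, so $\omega=0$.

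\textbf{Expected obstacle.} The only delicate step is the invertibility in the existence argument, namely positivity of $\Im\Omega$ (Riemann's second bilinear relation). We simply cite it.
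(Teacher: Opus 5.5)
Your proof is correct. The paper gives no argument of its own here, only the citation \cite[Lemma~2.1]{Kri13}. What you wrote is the standard proof behind that citation: you correct $\omega_D$ by purely imaginary multiples of the $\omega_k$, solve for the coefficients by inverting $\Im\Omega$, and get uniqueness from the fact that a holomorphic form with imaginary periods has single-valued harmonic $\Re\int\omega$ and so vanishes.

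Two minor remarks.

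\emph{The M-curve hypothesis is not needed.} You never actually use that $\Omega$ is purely imaginary. For any curve, $\Re c=0$ forces $c=\ii s$ with $s\in\RR^g$, and then $\Re(\Omega c)=-(\Im\Omega)s$. So you always get the same $g\times g$ system $(\Im\Omega)s=\Re\int_B\omega_D$, and no $2g\times 2g$ system is needed.

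\emph{The separate uniqueness argument is redundant.} Any two such forms differ by some $\sum_k c_k\omega_k$. Those coefficients must solve the homogeneous version of the same uniquely solvable system, so they vanish.
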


Recall the meromorphic $1$-forms $\omega_D$ associated to degree-$0$ divisors $D$ with real coefficients defined in Section~\ref{sec:third_kind_differentials}. We now show that, when the support of \(D\) lies on \(A_0\), this differential is also imaginary-normalized.

\begin{lemma}\label{lem:third_kind_ind}
For any degree-$0$ real divisor $D=\sum_{j=1}^n a_j \zeta_j$ such that $\zeta_j \in A_0$, the $1$-form $\omega_{D}$ is imaginary-normalized and satisfies $\overline{\sigma^* \omega_{D} } = \omega_{D}$; in particular, $\omega_D$ is real-valued on $\curve(\RR)$.
\end{lemma}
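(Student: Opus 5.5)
The natural route is to first show $\overline{\sigma^*\omega_D}=\omega_D$, and then derive imaginary normalization from this symmetry together with the vanishing of the $A$-periods. By uniqueness in Lemma~\ref{lem:unique_ind}, it would also suffice to show directly that $\omega_D$ is imaginary-normalized, but the symmetry route gives both claims at once.

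\textbf{Symmetry.} Set $\tilde\omega:=\overline{\sigma^*\omega_D}$. Since $\sigma$ is antiholomorphic, $\tilde\omega$ is again a meromorphic $1$-form. It has simple poles at $\sigma(\zeta_j)=\zeta_j$, because $\zeta_j\in A_0$ is fixed by $\sigma$.

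Its residues are $\overline{a_j}\cdot(-1)\cdot(-1)=a_j$. Indeed, $\sigma$ reverses orientation, so a small positively oriented loop around $\zeta_j$ is sent to a negatively oriented one, while complex conjugation contributes a further sign to $2\pi\ii$. The two signs cancel and the residue stays real and equal to $a_j$.

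For the $A$-periods, $\sigma$ fixes each $A_k$ pointwise, hence $\int_{A_k}\tilde\omega=\overline{\int_{A_k}\omega_D}=0$.

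Thus $\tilde\omega-\omega_D$ is a holomorphic $1$-form with vanishing $A$-periods, so it is zero. Hence $\overline{\sigma^*\omega_D}=\omega_D$. Restricted to $\curve(\RR)$, where $\sigma$ is the identity, this says $\omega_D$ is real there, i.e.\ real on real tangent vectors.

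\textbf{Periods.} $H_1(\curve\setminus\operatorname{supp}D)$ is generated by the $A_k$, the $B_k$, and small loops around the $\zeta_j$. Loops around poles give $2\pi\ii a_j$, which is purely imaginary. The $A$-periods vanish by construction.

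For $B_k$ we use $\sigma_*B_k=-B_k$. Strictly, $\sigma_*B_k$ is homologous to $-B_k$ only up to cycles around the poles and possibly $A$-cycles; the poles contribute imaginary amounts and the $A$-cycles contribute $0$. Then
\[
\int_{B_k}\omega_D=\int_{B_k}\overline{\sigma^*\omega_D}=\overline{\int_{\sigma_*B_k}\omega_D}=-\overline{\int_{B_k}\omega_D}+(\text{imaginary}).
\]
Taking real parts gives $2\Re\int_{B_k}\omega_D=0$.

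Alternatively, one can read this off from the Riemann bilinear relation~\eqref{eq:Riemann_bilinear}. With basepoint $\zeta_0\in A_0$ and $\zeta_j\in A_0$, choose the path from $\zeta_0$ to $\zeta_j$ along $A_0$. Then $\int_{\zeta_0}^{\zeta_j}\omega_k$ is real, because the normalized $\omega_k$ satisfy $\overline{\sigma^*\omega_k}=\omega_k$ by the same uniqueness argument. Since the $a_j$ are real, the $B$-period $2\pi\ii\sum_j a_j\int_{\zeta_0}^{\zeta_j}\omega_k$ is purely imaginary.

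The main obstacle I expect is the careful bookkeeping of signs and orientation in the residue computation and in the homology relation for $\sigma_*B_k$. Using the bilinear-relation argument sidesteps the homology relation entirely, so I would present that version.
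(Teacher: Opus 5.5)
Your proof is correct, but it runs in the opposite order from the paper's.

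The paper first proves imaginary normalization directly from the bilinear relation~\eqref{eq:Riemann_bilinear}, citing \cite[Lemma~14]{BCT22} for the reality of $\int_{\zeta_0}^{\zeta_j}\omega_k$. Only afterwards does it get $\overline{\sigma^*\omega_D}=\omega_D$, from the uniqueness of imaginary-normalized differentials (Lemma~\ref{lem:unique_ind}). For this it notes that $\overline{\sigma^*\omega_D}$ is again imaginary-normalized, since $\Re\int_\gamma\overline{\sigma^*\omega_D}=\Re\int_{\sigma_*\gamma}\omega_D$, and that it has the same residues.

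You get the symmetry first, from the more elementary fact that a holomorphic differential with vanishing $A$-periods is zero. This uses only that $\sigma$ fixes each $A_k$ pointwise. You then deduce imaginary normalization from the symmetry. As a result your argument is self-contained: it needs neither Krichever's uniqueness lemma nor the external reality statement, which you re-derive by applying the same trick to the $\omega_k$. The paper's version is simply shorter given those two citations.

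One presentational remark: of your two period arguments, the one via $\sigma_*B_k=-B_k$ is the cleaner, contrary to your stated preference. The right-hand side of~\eqref{eq:Riemann_bilinear} is path-sensitive: changing a path by a $B_l$-cycle shifts it by the real amount $2\pi\ii a_j\Omega_{lk}$. So using arcs of $A_0$ there needs a sentence of justification. For example, a canonical dissection of $\curve$ can be chosen avoiding a closed arc of $A_0$ containing $\zeta_0$ and all the $\zeta_j$. The paper leaves this point implicit too.
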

\begin{proof}
Since the $A$-periods of $\omega_D$ vanish by definition, it remains only to show that the $B$-periods are purely imaginary. By the Riemann bilinear relations~\eqref{eq:Riemann_bilinear},
\[
\int_{B_k}\omega_D
=
2\pi \ii \sum_{j=1}^n a_j \int_{\zeta_0}^{\zeta_j}\omega_k,
\]
which is purely imaginary since $\int_{\zeta_0}^{\zeta_j} \omega_k \in \RR$ for any $\zeta_j \in A_0$ by \cite[Lemma~14]{BCT22} as $\zeta_0 \in A_0$.

The $1$-form $\overline{\sigma^* \omega_{D}}$ is imaginary-normalized since 
\[
\Re \int_\gamma \overline{ \sigma^* \omega_{D}} = \Re \int_{\sigma_*(\gamma)} \omega_{D} = 0.
\]
Since $A_0$ is fixed by $\sigma$, $\overline{\sigma^*\omega_{D}}$ has the same poles as $\omega_{D}$. To see that the residues are also the same, let $C$ be a small counterclockwise-oriented circle around $\alpha$. Then $\sigma_* C = -C$, so
\[
\res_{\alpha}(\overline{ \sigma^*\omega_{D}}) = \frac{1}{2 \pi \ii} \int_C \overline{ \sigma^* \omega_{D}} = \frac{1}{2 \pi \ii} \overline{\int_{-C} \omega_{D}} = \res_{\alpha}(\omega_{D}).
\]
By Lemma~\ref{lem:unique_ind}, $\overline{\sigma^* \omega_{D}} = \omega_{D}$.
\end{proof}

Throughout the paper, we will count zeroes with multiplicity.

\begin{lemma}\label{lem:zeroes_of_omega_D}
Let \(D=\sum_{j=1}^n a_j \zeta_j\) be a degree-\(0\) real divisor such that $a_j\neq0$,
\(\zeta_j\in A_0\) for all \(j\), and index the points
\(\zeta_1,\dots,\zeta_n\) cyclically along \(A_0\). Then the differential
\(\omega_D\) satisfies:
\begin{enumerate}
    \item for each \(1\le k\le g\), the number of zeroes of \(\omega_D\) on \(A_k\) is even and at least \(2\),

    \item for each interval \((\zeta_j,\zeta_{j+1})\subset A_0\), the number of zeroes
    of \(\omega_D\) in \((\zeta_j,\zeta_{j+1})\) is odd
    if \(\res_{\zeta_j}(\omega_D)\) and \(\res_{\zeta_{j+1}}(\omega_D)\) have the same sign,
    and even if they have opposite signs. In particular, \(\omega_D\) has at least one
    zero in \((\zeta_j,\zeta_{j+1})\) whenever the residues at \(\zeta_j\) and
    \(\zeta_{j+1}\) have the same sign.
\end{enumerate}
\end{lemma}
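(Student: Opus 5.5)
The approach is to combine the reality of $\omega_D$ on $\curve(\RR)$ (Lemma~\ref{lem:third_kind_ind}) with intermediate-value and period arguments on each oval. Since $\overline{\sigma^*\omega_D}=\omega_D$, on each oval we can write $\omega_D = f(s)\,ds$ with $f$ real-valued, where $s$ is a local real coordinate compatible with the orientation of $A_k$. Zeroes of $\omega_D$ on $\curve(\RR)$ are exactly the zeroes of $f$. Because $\omega_D$ is holomorphic and nonvanishing near any point of the real locus that is neither a pole nor a zero, the parity of the number of zeroes (with multiplicity) on a real interval equals the parity of the number of sign changes of $f$ across that interval.

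\emph{Part 1.} The oval $A_k$, $k\ge1$, contains no poles. Since $\int_{A_k}\omega_D=0$ by the $A$-normalization, and $f$ is real and continuous on the circle $A_k$, the function $f$ cannot have constant sign on $A_k$. Hence it vanishes somewhere. On a closed circle, a continuous real function with isolated zeroes changes sign an even number of times, so the total multiplicity of zeroes on $A_k$ is even. Since there is at least one zero, there are at least $2$.

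\emph{Part 2.} Near a simple pole $\zeta_j$ on $A_0$ with real residue $a_j$, we have $f(s)\sim a_j/(s-s_j)$. So as $s$ leaves $\zeta_j$ in the positive direction, $f$ has the sign of $a_j$. Approaching $\zeta_{j+1}$ from the left, $f$ has sign $-\operatorname{sign}(a_{j+1})$. The interval $(\zeta_j,\zeta_{j+1})$ contains no poles, so the number of zeroes of $f$ there, counted with multiplicity, has the same parity as the number of sign changes. This number is odd exactly when $\operatorname{sign}(a_j)\neq -\operatorname{sign}(a_{j+1})$, that is, when the two residues have the same sign, and even otherwise. The "in particular" statement follows immediately, since an odd number is at least $1$.

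The only subtle step is the bookkeeping that a zero of odd order of the holomorphic form corresponds to a sign change of the real function $f$, while a zero of even order does not. This follows from the local expansion $f(s)=c(s-s_0)^m(1+O(s-s_0))$ with $c$ real. We also need to check that the residue sign really matches the sign of $f$ to the right of the pole under the chosen orientation of $A_0$ (the boundary orientation of $\curve_+$). A sign convention error here would swap the roles of the two sides, but not the conclusion, since the parity statement is symmetric in flipping the orientation. Indeed, under a flip both one-sided signs change together.
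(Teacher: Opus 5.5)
Your proposal is correct and follows essentially the same argument as the paper. On each $A_k$, the reality of $\omega_D$ on $\curve(\RR)$ together with the vanishing $A_k$-period forces a sign change, and the total multiplicity is even because the odd-order zeroes are exactly the sign changes. On $A_0$, the parity on $(\zeta_j,\zeta_{j+1})$ comes from comparing the one-sided signs $\sign(a_j)$ and $-\sign(a_{j+1})$ near the simple poles.

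Your orientation worry is harmless: on either side of a real simple pole, $\omega_D$ evaluated on the tangent vector pointing away from the pole has the sign of the residue.
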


\begin{proof}
Both assertions are consequences of the real-valuedness of \(\omega_D\) on
\(\curve(\RR)\) (Lemma~\ref{lem:third_kind_ind}).

Let \(1\le k\le g\). Since \(\omega_D\) is imaginary-normalized, we have
\[
\int_{A_k}\omega_D=0.
\]
Since \(\omega_D\) is not identically zero on \(A_k\), it cannot have constant sign on
\(A_k\). Thus, it must change sign, which forces the existence of at least two zeroes on
\(A_k\). Since sign changes correspond to zeroes of odd multiplicity and the total number of sign changes is even, the total number of zeroes counted with multiplicity is even.

Now consider an interval \((\zeta_j,\zeta_{j+1})\subset A_0\). Since \(\omega_D\) is
real-valued on this interval, the parity of the number of zeroes is determined by its
sign near the endpoints. Near \(\zeta_j\), the sign of \(\omega_D\) on the interval is
the sign of \(\res_{\zeta_j}(\omega_D)\), while near \(\zeta_{j+1}\), it is the opposite
of the sign of \(\res_{\zeta_{j+1}}(\omega_D)\). Therefore, these endpoint signs are
opposite precisely when \(\res_{\zeta_j}(\omega_D)\) and
\(\res_{\zeta_{j+1}}(\omega_D)\) have the same sign. In that case the number of zeroes
in \((\zeta_j,\zeta_{j+1})\) is odd, hence, at least one. If the residues have opposite
signs, then the endpoint signs agree, so the number of zeroes is even.
\end{proof}

\subsection{Fock's dimer model } \label{sec:focks_dimer_model}

In this subsection we recall Fock's construction of a dimer model from data associated to an M-curve following~\cite{Foc15, BCT22}.

\subsubsection{Angle functions}

Let $\curve$ be an M-curve. We call a function $\angle:\zz \rightarrow A_0$ an \emph{angle} function if it is compatible with the partial cyclic order on $\zz$ and the total cyclic order on $A_0$, and it maps non-parallel strands to distinct points. By a customary abuse of notation, we identify the angle $\nu(\alpha)$ with the strand $\alpha$.

\subsubsection{The discrete Abel map} 
\label{sec:d_abel_map}

Let \(\curve\) be an M-curve and let
\(
\nu
\)
be an angle function. Via \(\nu\), we henceforth identify \(\ZZ^{\zz}\) with the group of divisors supported on \(\nu(\zz)\subset A_0\). In particular, for \(\bm D\in \ZZ^{\zz}\), we define its \emph{degree} to be the degree of the corresponding divisor.

For \(\bm D\in \ZZ^{\zz}\) and \(e\in E(N)\), let
\[
\bm D_{\zz_e}\in \ZZ^{\zz_e}
\]
denote the restriction of \(\bm D\) to \(\zz_e\), so that
\[
\bm D=\sum_{e\in E(N)} \bm D_{\zz_e}.
\]
For \(\alpha\in\zz\), let
\[
\bm D_\alpha\in \ZZ^\alpha\cong \ZZ
\]
denote the restriction of \(\bm D\) to \(\alpha\); we identify this with the coefficient of \(\alpha\) in \(\bm D\).

The \emph{discrete Abel map} \[\dabel: W(\graphpl) \sqcup F(\graphpl) \sqcup B(\graphpl) \rightarrow \ZZ^{\zz}\] is defined as follows:
\begin{enumerate}
    \item Choose a reference face $\f_0$ and set $\dabel(\f_0) =0$.
    \item For a white vertex $\wh$ incident to a face $\f$ such that they are separated by a strand $\alpha$, we have $\dabel(\wh) = \dabel(\f)-\alpha$.
    \item For a black vertex $\bl$ incident to a face $\f$ such that they are separated by a strand $\alpha$, we have $\dabel(\bl) = \dabel(\f)+\alpha$.
\end{enumerate}
Equivalently, 
\begin{equation} \label{eq:dabel_intersection_version}
\dabel(\y) - \dabel(\x) = \sum_{\alpha \in \zz} (\gamma \wedge \alpha) \alpha,
\end{equation} 
where $\gamma$ is any path from $\x$ to $\y$ that intersects all strands transversely and $\wedge$ denotes the intersection pairing in the plane with the same convention as in~\eqref{eq:intersection_convention}. 
By definition, $\deg(\dabel (\x))$ is $-1$ if $\x$ is a white vertex, $0$ if $\x$ is a face and $1$ if $\x$ is a black vertex.

\subsubsection{Fock's Kasteleyn matrix}\label{sec:fock's_kast}

\begin{figure}
\centering
\begin{tikzpicture}
    \coordinate[wvert, label = left:$\wh$] (w) at (0,0);
    \coordinate[bvert, label = right:$\bl$] (b) at (2,0);
    \node at (1,1) {$\f_+$};
    \node at (1,-1) {$\f_-$};
    \draw[] (b) -- (w);
    \draw[blue, ->,thick] (0,-1) -- (2,1);
    \draw[red,->,thick] (2,-1) -- (0,1);
    \coordinate[label=45:{$\alpha$}] (no) at (2,1);
     \coordinate[label=135:{$\beta$}] (no) at (0,1);
\end{tikzpicture}
\caption{Local configuration near an edge in the definition of Fock's Kasteleyn matrix $\kast$.}\label{fig:fock_kast}
\end{figure}

Given an M-curve $\curve$, a point $t \in \RR^g/\ZZ^g \subset \jac(\curve)$ and an angle function $\nu$, \emph{Fock's Kasteleyn matrix} $\kast$ is defined by 
\begin{equation}\label{eq:kast_fock}
    \kast_{\wh, \bl} := \begin{cases}\dfrac{E(\alpha,\beta)}{\theta(t+\dabel(\f_-)) \theta(t+\dabel(\f_+))}, &\text{if $\e = \bl \wh$ is an edge},\\
0, & \text{otherwise},
\end{cases}
\end{equation}
where $\alpha,\beta$ are the two strands containing $\e$, and $\f_-,\f_+$ are the two faces incident to $\e$ (Figure~\ref{fig:fock_kast}).

\begin{proposition}[{\cite[Proposition~3]{BCT22}}]
    Fock's Kasteleyn matrix is the Kasteleyn matrix of a dimer model $(\graph,[\wt])$.
\end{proposition}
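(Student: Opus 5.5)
The plan is to check the two ingredients of a Kasteleyn matrix separately, up to gauge: that $|\kast_{\wh,\bl}|$ defines positive edge weights, and that the phases of $\kast$ satisfy the Kasteleyn sign condition around every face. Since both conditions are insensitive to gauge transformations and to multiplying all entries by a common constant, I would work with alternating products around faces and freely drop factors that depend only on a single vertex.

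\emph{Step 1: the theta factors are positive.} Since $t\in\RR^g/\ZZ^g$ and, by \cite[Lemma~14]{BCT22}, Abel images of points of $A_0$ are real, each argument $t+\dabel(\f)$ lies in $\RR^g$. The period matrix $\Omega$ of an M-curve is purely imaginary, so $\Omega=\ii\,\Omega'$ with $\Omega'$ positive definite. By Poisson summation, for $x\in\RR^g$ the function $\theta(x)=\sum_n e^{-\pi\langle n,\Omega' n\rangle}e^{2\pi\ii\langle n,x\rangle}$ equals a positive constant times the periodized Gaussian $\sum_{m\in\ZZ^g} e^{-\pi\langle x+m,\Omega'^{-1}(x+m)\rangle}$. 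Hence $\theta(t+\dabel(\f))>0$ for every face $\f$, and the denominators in \eqref{eq:kast_fock} are positive reals. They can be absorbed into the weights and play no role in the sign condition.

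\emph{Step 2: real normalization of the prime form.} I would orient $A_0$ and use a real local coordinate along $A_0$ to trivialize the half-differentials $\sqrt{d\alpha}$ there, with the spin structure chosen as in \cite{BCT22}. With this choice $E(\alpha,\beta)$ becomes real for $\alpha,\beta\in A_0$ and nonzero for $\alpha\neq\beta$, so its modulus gives a positive weight. The choice of trivialization is a gauge: at an edge $\e=\bl\wh$ lying on strands $\alpha$ and $\beta$, each half-differential at a strand is shared by consecutive edges around a face. Consequently, in the alternating product around a face every $\sqrt{d\alpha_i}$ appears once in the numerator and once in the denominator and cancels. The alternating product is therefore an intrinsic real number whose sign can be computed in the chosen trivialization.

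\emph{Step 3: the sign count, which I expect to be the main obstacle.} Let $\f$ have degree $2k$. Going around $\partial\f$, consecutive edges share a strand, so the strands bounding $\f$ form a cyclic sequence $\alpha_1,\dots,\alpha_{2k}$. The alternating product is then
\[
\prod_{i}E(\alpha_{2i-1},\alpha_{2i})\Big/\prod_i E(\alpha_{2i},\alpha_{2i+1}).
\]
By minimality and Lemma~\ref{lem:cyclic_order_same}, together with the monotonicity of the angle function, the angles $\alpha_1,\dots,\alpha_{2k}$ wind exactly once around $A_0$ in a definite direction. I would show that $\sign E(\alpha,\beta)$ can be written as $\sign(\beta-\alpha)$ in a lift of $A_0$ to an interval, with a single extra sign flip across the cut point. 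This uses that $E$ vanishes only on the diagonal, together with antisymmetry and the chosen spin structure, for which $E$ is antiperiodic around $A_0$. Given this, each ratio of consecutive factors contributes $-1$, and the unique wraparound contributes one more $-1$. The total is $(-1)^{k-1}=(-1)^{k+1}$, as required.

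The delicate point is the antiperiodicity/spin-structure bookkeeping along $A_0$, and I would follow the conventions of \cite{BCT22} for it. If needed, I would first verify the count in genus zero, where $E(\alpha,\beta)=(\beta-\alpha)/\sqrt{d\alpha\,d\beta}$ and the argument reduces to Kenyon's isoradial sign computation.
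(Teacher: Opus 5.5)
The gap is in Step~3: the corner strands of a face do not wind once around $A_0$, and your sign count is also wrong in a way that happens to cancel. Steps~1 and~2 are fine and are the standard ingredients; the paper gives no proof here and cites \cite{BCT22}. $\theta$ is positive on $\RR^g$ because the period matrix is purely imaginary. In a real trivialization along $A_0$ the prime form is real and antiperiodic, with $\sign E(\alpha,\beta)=\sign(\tilde\beta-\tilde\alpha)$ for lifts to a common fundamental interval, and the trivialization cancels in face products.

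Your claim that the corner strands $\alpha_1,\dots,\alpha_{2k}$ of a face wind exactly once around $A_0$ is false except for quadrilateral faces. Lemma~\ref{lem:cyclic_order_same} is a statement about the strands through a single vertex, where the winding is indeed one. Around a face of degree $2k$ the corner angles instead wind $k-1$ times. For example, the hexagonal lattice has only three parallel classes of strands, and the six corner strands of a hexagonal face use each class twice and wind twice around $A_0$.

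Your bookkeeping is also off. With the sign rule above, the sign of the alternating product is $\prod_{j=1}^{2k}\sign(\tilde\alpha_{j+1}-\tilde\alpha_j)=(-1)^{w}$, where $w$ is the number of times the cyclic sequence wraps around. A ratio $E(\alpha_{2i-1},\alpha_{2i})/E(\alpha_{2i},\alpha_{2i+1})$ whose two steps do not cross the cut has sign $+1$, not $-1$. So under your premise ($w=1$) every face would get sign $-1$, which violates the Kasteleyn condition at hexagonal faces. Your final $(-1)^{k+1}$ only comes out because the two errors cancel.

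What the argument needs is the combinatorial fact that, for a minimal graph with a monotone angle function, the corner angles of every face of degree $2k$ wind exactly $k-1$ times. Then the sign is $(-1)^{k-1}=(-1)^{k+1}$. This is the counterpart of the isoradial statement that the rhombus angles around a face add up to $2\pi$, i.e.\ the half-angles add up to $(k-1)\pi$. It follows, for instance, from the fact that a monotone angle function turns a minimal graph into an isoradial immersion~\cite{BCdTiso}.

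It can also be derived from the vertex statement by a global count; a purely local argument does not suffice. Take a periodic angle function; since $w_\f$ is an integer that varies continuously under deformation of the angle function, a deformation argument should then handle the general case. Parametrize $A_0$ with total length $L$. To each edge $\e$ assign the arc $t_\e\in(0,L)$ swept between its two strands when the edge is crossed. Around each vertex these arcs add up to $L$, since the strands through a vertex are parallel to distinct edges and appear in cyclic order. At each corner of a face, the two consecutive face-steps are consecutive vertex-steps, so $t_{\e_{j-1}}+t_{\e_j}\le L$, strictly at corners of degree at least three. Hence $w_\f<k_\f$, i.e.\ $w_\f\le k_\f-1$. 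Summing over the faces of $\graphtor$ and using Euler's formula on the torus gives
\[
\sum_\f w_\f=\tfrac1L\sum_{\mathsf v}\sum_{\e\ni\mathsf v}t_\e=|V(\graphtor)|=|E(\graphtor)|-|F(\graphtor)|=\sum_\f(k_\f-1),
\]
which forces $w_\f=k_\f-1$ for every face.
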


\subsubsection{Forms in the kernel of $\kast$} \label{sec:form_g}

Following~\cite{Foc15} (see also \cite{KO06}), we define some differential forms on $\curve$ that will be part of the integrand in the integral formula for $\kast^{-1}$. When $\mathsf x \in W(\graphpl) \sqcup F(\graphpl) \sqcup B(\graphpl)$, define $g_{\mathsf x}$ as follows: 
\begin{enumerate}
    \item Set $g_{\f_0}(\zeta) \equiv 1$.
    \item For a white vertex $\wh$ incident to a face $\f$ such that they are separated by a strand $\alpha$, we have \[
    g_{\wh}(\zeta) =  \frac{\theta(t+\zeta+\dabel(\wh))}{E(\alpha,\zeta)} g_{\f}(\zeta).
    \]
    \item For a black vertex $\bl$ incident to a face $\f$ such that they are separated by a strand $\alpha$, we have \[
    g_{\bl}(\zeta) =  \frac{E(\alpha,\zeta)}{\theta(-t+\zeta-\dabel(\bl))} g_{\f}(\zeta).
    \]
\end{enumerate}
More generally, for $\mathsf x, \mathsf y \in W(\graphpl) \sqcup F(\graphpl) \sqcup B(\graphpl)$, define
\begin{equation} \label{eq:g_prod}
g_{\mathsf x,\mathsf y} := \frac{g_{\mathsf y}}{g_{\mathsf x}}.
\end{equation}
In particular, $g_{\mathsf x} = g_{\f_0,\mathsf x}$.

\begin{lemma}[{\cite[Lemma~13]{BCT22}}; see also \cite{KO06}] \label{lem:ker_K}
 For any white vertex $\wh \in W(\graphpl)$, we have
    \begin{equation}
            \sum_{\bl \sim \wh} \kast_{\wh,\bl} g_{\bl,\x}=0
    \end{equation}
    for any $\x \neq \wh  \in W(\graphpl) \sqcup F(\graphpl) \sqcup B(\graphpl)$.
\end{lemma}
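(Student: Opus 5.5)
The plan is to reduce the statement to a single identity about one white vertex, and then prove that identity by telescoping with Fay's trisecant identity.

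\textbf{Reduction.} By \eqref{eq:g_prod} we have $g_{\bl,\x}=g_\x/g_\bl$. The factor $g_\x$ does not depend on $\bl$, so it suffices to prove the meromorphic identity
\[
\sum_{\bl\sim\wh}\kast_{\wh,\bl}\,\frac{g_\wh(\zeta)}{g_\bl(\zeta)}=0 .
\]
Multiplying this by $g_\x/g_\wh$ gives the claim. The hypothesis $\x\neq\wh$ only serves to keep the expression away from the degenerate diagonal case, where one obtains a delta term rather than zero. I will check at the end that, as functions of $\zeta$, no such exception arises for the identity above.

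\textbf{Local set-up.} Label the strands through $\wh$ cyclically as $\alpha_1,\dots,\alpha_n$ (by Lemma~\ref{lem:cyclic_order_same} they are pairwise non-parallel). Label the faces around $\wh$ as $\f_1,\dots,\f_n$, with $\f_i$ separated from $\wh$ by $\alpha_i$. Label the black neighbours as $\bl_1,\dots,\bl_n$, with $\bl_i$ lying on the edge between $\f_i$ and $\f_{i+1}$. The edge $\wh\bl_i$ is contained in the strands $\alpha_i$ and $\alpha_{i+1}$. The discrete Abel map then gives
\[
\dabel(\f_i)=\dabel(\wh)+\alpha_i,\qquad \dabel(\bl_i)=\dabel(\wh)+\alpha_i+\alpha_{i+1}.
\]
Compute $g_\wh/g_{\bl_i}$ by going from $\wh$ to $\f_i$ (crossing $\alpha_i$) and then from $\f_i$ to $\bl_i$ (crossing $\alpha_{i+1}$). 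Using the definitions in Section~\ref{sec:form_g} and the evenness of $\theta$, the $i$-th summand becomes, up to a common factor independent of $i$,
\[
T_i(\zeta)=\frac{E(\alpha_i,\alpha_{i+1})\,\theta\bigl(t+\dabel(\wh)+\alpha_i+\alpha_{i+1}-\zeta\bigr)}{\theta(t+\dabel(\f_i))\,\theta(t+\dabel(\f_{i+1}))\,E(\alpha_i,\zeta)\,E(\alpha_{i+1},\zeta)}.
\]
I will keep track of the orientation signs of the prime forms carefully, since they must match the Kasteleyn signs built into \eqref{eq:kast_fock}.

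\textbf{Telescoping via Fay.} Fay's trisecant identity, in its degenerate three-point form, reads for points $a,b,\zeta$ and $u\in\jac(\curve)$:
\[
\frac{E(a,b)\,\theta(u+a+b-\zeta)\,\theta(u)}{E(a,\zeta)E(b,\zeta)}
=\theta(u+a)\theta(u+b-\zeta)\frac{1}{E(b,\zeta)}\cdot(\ast)-\dots
\]
I will use it in the precise normalization for which
\[
T_i=F_{i+1}-F_i,\qquad F_i(\zeta):=\frac{\theta\bigl(t+\dabel(\f_i)-\zeta\bigr)}{\theta\bigl(t+\dabel(\f_i)\bigr)\,E(\alpha_i,\zeta)} .
\]
This decomposition can be checked by comparing quasi-periodicity and poles in $\zeta$. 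Both sides have simple poles at $\alpha_i$ and $\alpha_{i+1}$ with matching residues, and they have identical multiplicative factors along the $B$-cycles. Hence their difference is a holomorphic section of a degree-zero bundle that is nontrivial for generic $t$, so it must vanish. Summing cyclically over $i$ then gives $\sum_i T_i=0$, which is the reduced identity.

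\textbf{Main obstacle.} The main difficulty is bookkeeping. One must verify that the half-order differentials and the sign conventions of $E$, as in \cite{BCT22}, make the telescoping exact, with no leftover signs or $\sqrt{d\zeta}$ factors. One must also confirm the divisor-class computation $\dabel(\f_{i+1})=\dabel(\f_i)-\alpha_i+\alpha_{i+1}$ that links consecutive $F_i$. I would fix these conventions first in the genus-zero case, where $\theta\equiv1$ and $E(a,\zeta)=\zeta-a$, and where the identity reduces to the partial-fraction relation
\[
\frac{\alpha_{i+1}-\alpha_i}{(\zeta-\alpha_i)(\zeta-\alpha_{i+1})}=\frac{1}{\zeta-\alpha_{i+1}}-\frac{1}{\zeta-\alpha_i}.
\]
I would then lift this to general genus via Fay's identity.
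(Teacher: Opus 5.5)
\textbf{The gap is the key telescoping identity.} Your reduction to $\sum_{\bl\sim\wh}\kast_{\wh,\bl}\,g_\wh/g_\bl=0$, the local labelling, and the formula for $T_i$ are all correct; the common factor you strip off is $\theta(t+\zeta+\dabel(\wh))$. The problem is the claim
\[
T_i=F_{i+1}-F_i,\qquad F_i=\frac{\theta(t+\dabel(\f_i)-\zeta)}{\theta(t+\dabel(\f_i))E(\alpha_i,\zeta)} .
\]
This is false as soon as $g\ge 1$, and no choice of normalizing constant fixes it. The two sides are not sections of the same bundle in $\zeta$:
\begin{itemize}
\item $T_i$ carries two factors $1/E(\cdot,\zeta)$, so it has weight $1$ in $\zeta$; $F_i$ carries one factor and has weight $1/2$.
\item When $\zeta$ goes around $B_k$, $T_i$ is multiplied by $\pm e^{\pi\ii\Omega_{kk}}\exp\bigl(2\pi\ii(t+\dabel(\wh))_k\bigr)\exp\bigl(2\pi\ii\int_{\zeta_0}^{\zeta}\omega_k\bigr)$, which depends on $\zeta$. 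The multiplier of $F_i$ is $\pm\exp\bigl(2\pi\ii(t+\dabel(\wh))_k\bigr)$, which is independent of both $\zeta$ and $i$.
\end{itemize}
Since $T_i\not\equiv0$ (its residue at $\alpha_{i+1}$ is $1/\theta(t+\dabel(\f_{i+1}))\neq 0$), the quasi-periodicity check you propose fails and the Liouville argument never starts. The ``degenerate Fay formula'' you invoke is also never actually written down. Your genus-zero check is misleading for two reasons: you dropped the $\sqrt{d\zeta}$ factors, and on $\CC\mathbb P^1$ the point $\infty$ silently plays the role of an auxiliary point.

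\textbf{How to repair it.} There are two ways, and both are already in the paper.

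\emph{Additive route.} Label the faces counterclockwise around $\wh$, so that the edge $\wh\bl_i$ has $(\alpha,\beta,\f_-,\f_+)=(\alpha_i,\alpha_{i+1},\f_i,\f_{i+1})$. Lemma~\ref{lem:fays} (Fay's Proposition~2.10) then gives
\[
\kast_{\wh,\bl_i}g_{\bl_i,\wh}=\omega_{\alpha_{i+1}-\alpha_i}+\sum_{k=1}^g\Bigl(\frac{\partial\log\theta}{\partial z_k}(t+\dabel(\f_{i+1}))-\frac{\partial\log\theta}{\partial z_k}(t+\dabel(\f_i))\Bigr)\omega_k .
\]
Both parts telescope to zero cyclically around $\wh$, using linearity of $D\mapsto\omega_D$. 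Then multiply by $g_{\wh,\x}$. The correct higher-genus analogue of your $d\zeta/(\zeta-\alpha_i)$ is $\omega_{\alpha_i-p}+\sum_k\partial_k\log\theta(t+\dabel(\f_i))\,\omega_k$ for a fixed point $p$, not a theta quotient over a prime form.

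\emph{Multiplicative route.} Your ansatz does work once you keep an auxiliary point $\eta$; this is exactly Proposition~\ref{prop:telescopic}. Unwinding it gives
\[
E(\zeta,\eta)\,\theta(t+\eta+\dabel(\wh))\,T_i(\zeta)=\widetilde F_{i+1}(\zeta)-\widetilde F_i(\zeta),\qquad \widetilde F_i(\zeta):=\frac{\theta(t+\dabel(\f_i)+\eta-\zeta)\,E(\alpha_i,\eta)}{\theta(t+\dabel(\f_i))\,E(\alpha_i,\zeta)} .
\]
This identity is consistent in weights and multipliers. It telescopes and gives $\sum_i T_i=0$ for generic $\eta$. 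Your $F_i$ is $\widetilde F_i$ with $\eta$ frozen at the Abel base point, except that the $i$-dependent factor $E(\alpha_i,\eta)$ and the left-hand factor $E(\zeta,\eta)\theta(t+\eta+\dabel(\wh))$ were dropped. That omission is precisely what breaks the identity.

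Either route also shows that the identity of forms holds for $\x=\wh$. The $\delta$-term only appears after integrating over $\bl$-dependent contours.
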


\begin{lemma}[{\cite[Proposition~2.10]{Fay73} and~\cite[Equation~(28)]{BCT22}}] \label{lem:fays}
Let $\wh\in W(\graphpl)$ and $\bl\in B(\graphpl)$ be adjacent white and black vertices. Then
\begin{equation}
\kast_{\wh,\bl} g_{\bl,\wh}=\omega_{\beta-\alpha}
+\sum_{k=1}^g\left(\frac{\partial \log \theta}{\partial z_k}(t+\dabel(\f_+))-\frac{\partial \log \theta}{\partial z_k}(t+\dabel(\f_-))\right)\omega_k,
\end{equation}
where $\alpha$ and $\beta$ are the strands containing the edge $\bl\wh$, and $\f_+$ and $\f_-$ are the adjacent faces as in Figure~\ref{fig:fock_kast}.  
\end{lemma}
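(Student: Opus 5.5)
The plan is to reduce the statement to a single identity for $1$-forms in $\zeta$ and then verify that identity by comparing singularities and periods. This is essentially Fay's argument in \cite[Proposition~2.10]{Fay73}, specialised to the local data of Figure~\ref{fig:fock_kast}.

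\textbf{Step 1: explicit form of the left-hand side.} Compute $g_{\bl,\wh}=g_{\wh}/g_{\bl}$ locally, starting from the common face $\f_+$. The white vertex $\wh$ is separated from $\f_+$ by one of the two strands through the edge, and $\bl$ by the other. Reading the picture, I expect these to be $\beta$ and $\alpha$ respectively, with $\dabel(\wh)=\dabel(\f_+)-\beta$, $\dabel(\bl)=\dabel(\f_+)+\alpha$ and $\dabel(\f_-)=\dabel(\f_+)+\alpha-\beta$. Set $z:=t+\dabel(\f_+)$. The recursive definitions of Section~\ref{sec:form_g}, together with evenness of $\theta$, give
\[
\kast_{\wh,\bl}\,g_{\bl,\wh}(\zeta)=\frac{E(\alpha,\beta)\,\theta(z+\zeta-\beta)\,\theta(z+\alpha-\zeta)}{\theta(z)\,\theta(z+\alpha-\beta)\,E(\beta,\zeta)\,E(\alpha,\zeta)}.
\]
Orientation and sign conventions have to be tracked carefully here, since they determine whether the result is $\omega_{\beta-\alpha}$ or $\omega_{\alpha-\beta}$. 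The claim then becomes Fay's degenerate trisecant identity with $a=\beta$ and $b=\alpha$.

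\textbf{Step 2: the right-hand side of Step 1 is a meromorphic $1$-form.} It is a $(1,0)$-form in $\zeta$, since each prime form has weight $-\tfrac12$ in $\zeta$, and the factor $E(\alpha,\beta)$ carries the weight in the other variables. I will check that it is single-valued on $\curve$:
\begin{itemize}
\item Along $A$-cycles nothing changes.
\item Along $B_k$, the two theta factors acquire multipliers $e^{-\pi\ii\Omega_{kk}-2\pi\ii(\cdots)}$. Because $\zeta$ enters with opposite signs, these combine to $e^{-2\pi\ii(\alpha-\beta)_k}$.
\item This is exactly cancelled by the ratio of the multipliers of $E(\beta,\zeta)$ and $E(\alpha,\zeta)$.
\end{itemize}

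\textbf{Step 3: poles and residues.} The only poles are simple poles at $\zeta=\beta$ and $\zeta=\alpha$. Substituting $\zeta=\beta$, the numerator becomes $E(\alpha,\beta)\theta(z)\theta(z+\alpha-\beta)$, which cancels the denominator up to the prime-form expansion. Hence the residues are $\pm1$ with opposite signs, matching those of $\omega_{\beta-\alpha}$. The difference between our form and $\omega_{\beta-\alpha}$ is therefore holomorphic, say $\sum_k c_k\omega_k$. Since $\omega_{\beta-\alpha}$ has zero $A$-periods, $c_k$ equals the $A_k$-period of the form in Step 1.

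\textbf{Step 4: computing the $A$-periods (main obstacle).} Residue counting alone does not give the $g$ coefficients $c_k$. The route I would try first is Fay's. View the form as a function of $z\in\CC^g$ and use the identity
\[
\frac{E(a,b)\,\theta(z+\zeta-a)\,\theta(z+b-\zeta)}{\theta(z)\,\theta(z+b-a)\,E(a,\zeta)\,E(b,\zeta)}
= d_\zeta\log\frac{E(b,\zeta)}{E(a,\zeta)}+\sum_k\Bigl(\partial_k\log\theta(z+b-a)-\partial_k\log\theta(z)\Bigr)\omega_k(\zeta).
\]
This should follow by degenerating the trisecant identity: differentiate it in one point, then let two of its four points collide.

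As an independent check on the coefficients, I would compare both sides as functions of $z$. Both are quasi-periodic in $z$ with the same behaviour under $z\mapsto z+\Omega e_k$, since the difference of $\partial\log\theta$ terms shifts by constants that are cancelled by the change of the normalisation of $\omega_{b-a}$. Both have simple poles along the same theta divisors $\{\theta(z)=0\}$ and $\{\theta(z+b-a)=0\}$, with matching residues. Hence their difference is an entire periodic function of $z$, and so constant in $z$. Letting $z$ approach a generic point of the theta divisor fixes that constant to be zero.

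Finally, note that $d_\zeta\log\bigl(E(b,\zeta)/E(a,\zeta)\bigr)=\omega_{b-a}$ by \eqref{eq:omega_D_def}. Substituting $a=\beta$, $b=\alpha$ and $z=t+\dabel(\f_+)$, so that $z+b-a=t+\dabel(\f_-)$, yields the claimed formula, after reconciling the sign of the difference of the two $\partial_k\log\theta$ terms with the convention fixed in Step 1.
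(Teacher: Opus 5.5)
Your proposal is correct and follows the route behind the paper's citations. The paper gives no proof of its own; it defers to Fay's Proposition~2.10, which is the degenerate trisecant identity of your Step~4 and does follow by differentiating the trisecant identity in $d$ and setting $d=b$. It also cites \cite[Equation~(28)]{BCT22}, which is your Step~1 specialisation. The sign you left open resolves cleanly, because $E(\beta,\alpha)=-E(\alpha,\beta)$ flips both terms at once.

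One correction to Step~2, which Step~4 makes redundant anyway. When $\zeta$ is continued around $B_k$, the two theta factors pick up $e^{-2\pi\ii\Omega_{kk}+2\pi\ii(\alpha+\beta-2\zeta)_k}$: it is $z$, not $\zeta$, that cancels. This factor is cancelled by the multiplier of the product $E(\beta,\zeta)E(\alpha,\zeta)$, not of a ratio.
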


We now describe the structure of the zeroes and poles of $g_{\x,\y}$ along $A_0$.

\begin{lemma}[{\cite[Section~3.4]{BCT22}}]\label{lem:div_g}
Let $\x,\y \in W(\graphpl) \sqcup F(\graphpl) \sqcup B(\graphpl)$. Then \[\divisor(g_{\x,\y})|_{A_0} = \dabel(\y)-\dabel(\x).\]
\end{lemma}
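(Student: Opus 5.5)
The plan is to reduce to the elementary building blocks of the definition of $g_{\x,\y}$ and use multiplicativity. Since $g_{\x,\y}=g_{\y}/g_{\x}$ and the discrete Abel map is additive along paths, both sides of the claimed identity are additive under concatenation. Concretely, $\divisor(g_{\x,\z})=\divisor(g_{\x,\y})+\divisor(g_{\y,\z})$, and $\dabel(\z)-\dabel(\x)=(\dabel(\y)-\dabel(\x))+(\dabel(\z)-\dabel(\y))$. Restriction to $A_0$ respects these sums. So it suffices to treat the case where $\x=\f$ is a face and $\y$ is a vertex incident to $\f$ across a single strand $\alpha$. Any pair $\x,\y$ is then handled by joining them through a chain of faces and incident vertices; face-to-face steps pass through a common vertex.

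For a white vertex $\wh$ separated from $\f$ by $\alpha$, we have
\[
g_{\f,\wh}(\zeta)=\frac{\theta(t+\zeta+\dabel(\wh))}{E(\alpha,\zeta)} .
\]
The prime form $E(\alpha,\zeta)$ vanishes in $\zeta$ exactly at $\zeta=\alpha$, and simply there. Hence the factor $1/E(\alpha,\zeta)$ contributes $-\alpha$ to the divisor, which matches $\dabel(\wh)-\dabel(\f)=-\alpha$. It then remains to show that the theta factor $\theta(t+\zeta+\dabel(\wh))$, viewed as a function of $\zeta$, has no zeroes on $A_0$. The black case is symmetric: $E(\alpha,\zeta)$ contributes $+\alpha$, and we need $\theta(-t+\zeta-\dabel(\bl))\neq 0$ for $\zeta\in A_0$. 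The multivaluedness of $\theta$ and $E$ only contributes nonvanishing multiplicative factors, so divisors are well defined.

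The key step, and the main obstacle, is the nonvanishing of theta on $A_0$. Here I would use the reality structure of the M-curve. The point $t$ lies in $\RR^g/\ZZ^g$. The vector $\dabel(\y)$ is a divisor supported on $A_0$, so its Abel image lies in $\RR^g$ by \cite[Lemma~14]{BCT22}, since the basepoint $\zeta_0\in A_0$. For $\zeta\in A_0$ the Abel image $\mu(\zeta)$ is also real. Thus the argument of $\theta$ is a real vector $x\in\RR^g$. Because $\Omega$ is purely imaginary, write $\Omega=\ii T$ with $T$ positive definite. Then
\[
\theta(x)=\sum_{n}e^{-\pi\langle n,Tn\rangle}e^{2\pi\ii\langle n,x\rangle}
\]
is real, but positivity is not automatic from this expression. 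I would instead argue via the classical fact for M-curves, as in \cite[Section~3.4]{BCT22}, that $\theta(x+\mu(\zeta))$ for real $x$ has exactly $g$ zeroes, counted as a section in $\zeta$ by Riemann's vanishing theorem, and these lie one on each oval $A_1,\dots,A_g$. To see this, use reality: $\overline{\sigma^*}$ preserves the zero set. One then shows by a counting or continuity argument in $x$ that at least one zero lies on each $A_j$, $j\ge1$, which exhausts all $g$ zeroes, so none lie on $A_0$. The degenerate case where theta vanishes identically does not occur for real $x$, since $\theta(x)\neq 0$ for $x$ real in the Vinnikov/Dubrovin setting. If this route is too delicate, I would simply cite \cite[Section~3.4]{BCT22} for this nonvanishing.

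Combining the base cases with multiplicativity gives $\divisor(g_{\x,\y})|_{A_0}=\dabel(\y)-\dabel(\x)$ in general.
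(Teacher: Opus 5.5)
Your argument is correct, and it is the standard one behind the paper's citation of \cite[Section~3.4]{BCT22} (the paper itself gives no proof): multiplicativity reduces everything to a single face--vertex step, the prime form contributes exactly $\mp\alpha$, and the theta factor has no zeros on $A_0$ because its argument is real there. The only place you make things harder than necessary is that last step: writing $\Omega=\ii T$ with $T$ positive definite, Poisson summation gives
\[
\theta(x)=(\det T)^{-1/2}\sum_{m\in\ZZ^g}e^{-\pi\langle x+m,\,T^{-1}(x+m)\rangle}>0\qquad(x\in\RR^g),
\]
so positivity on the real torus \emph{is} immediate, and you can drop the detour through the location of the zeros on $A_1,\dots,A_g$ (whose ``counting or continuity argument'' you leave unspecified).
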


\begin{lemma} \label{lem:parallel_zz_same_sign}
Let $\x,\y \in W(\graphpl) \sqcup F(\graphpl) \sqcup B(\graphpl)$. Then $g_{\x,\y}$ cannot have both zeroes and poles at angles in $\zz_e$ for any $e \in E(N)$. 
\end{lemma}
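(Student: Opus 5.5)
By Lemma~\ref{lem:div_g}, the divisor of $g_{\x,\y}$ restricted to $A_0$ equals $\dabel(\y)-\dabel(\x)$. The statement is therefore purely combinatorial. We must show that, for each $e\in E(N)$, the restriction $(\dabel(\y)-\dabel(\x))_{\zz_e}$ has coefficients that are either all $\ge 0$ or all $\le 0$.

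We use the intersection formula~\eqref{eq:dabel_intersection_version}. Choose a path $\gamma$ from $\x$ to $\y$ crossing all strands transversely. The coefficient of $\alpha\in\zz_e$ in $\dabel(\y)-\dabel(\x)$ is $\gamma\wedge\alpha$. This is the algebraic intersection number, so it does not depend on the choice of $\gamma$: any two such paths differ by a closed loop, which has zero intersection with each strand, since strands are properly embedded bi-infinite paths in the plane. Because $\graphpl$ is minimal, strands have no self-intersections and no closed loops. Each strand $\alpha$ is therefore a proper simple curve separating the plane into a left side and a right side, and $\gamma\wedge\alpha\in\{-1,0,1\}$ records whether $\x$ and $\y$ lie on the same side of $\alpha$ and, if not, in which direction $\gamma$ crosses. (Convention: vertices lying on $\alpha$ are pushed to the appropriate side, as in the definition of $\dabel$ via faces.)

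The key geometric input is that strands in $\zz_e$ are pairwise non-crossing and consistently ordered. Two lifts of strands with the same primitive homology direction cannot intersect in a minimal graph, by the standard fact (cf.~\cite{GK13}) that parallel strands on the torus are disjoint. Thus the strands of $\zz_e$ form a family of disjoint, parallel, oriented curves, all asymptotic to the direction $(a_e,b_e)$, stacked from left to right; this is the indexing $\alpha_e^i$ used later. Since they are ordered, the left side of $\alpha_e^i$ is contained in the left side of $\alpha_e^{j}$ whenever $i<j$, or else the reverse inclusion holds, depending on convention.

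Now suppose $\x$ lies in the strip between consecutive members $\alpha_e^{i-1/2}$ and $\alpha_e^{i+1/2}$, and $\y$ lies in the strip between $\alpha_e^{j-1/2}$ and $\alpha_e^{j+1/2}$. The strands of $\zz_e$ separating $\x$ from $\y$ are exactly those with index strictly between $i$ and $j$. All of them have $\x$ on the same side, either all left or all right, by the nesting. Hence $\gamma\wedge\alpha$ has the same sign for every separating $\alpha\in\zz_e$, and is $0$ for all others. So the coefficients on $\zz_e$ are all of one sign, and $g_{\x,\y}$ has only zeroes or only poles at angles of $\zz_e$.

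The main obstacle is rigorously justifying that lifts of parallel strands are disjoint and nested in this way. I would first try to derive it from minimality condition (3) together with the homology argument: two parallel strands crossing once would, by periodicity, cross twice with the same orientation, which condition (3) forbids. If a direct argument becomes messy, I would instead cite \cite[Section~3]{GK13} or \cite{BCdTiso}.
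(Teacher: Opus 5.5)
Your proposal is correct and follows essentially the same route as the paper. Both reduce the statement, via Lemma~\ref{lem:div_g}, to the discrete Abel map, then use minimality to see that the strands parallel to $e$ are pairwise disjoint and consistently oriented. These strands cut the plane into ordered strips, so every strand separating $\x$ from $\y$ is crossed in the same direction. The only difference is cosmetic: the paper picks a path minimizing the number of crossings with these strands, whereas you use path-independence of the intersection number together with nesting of left sides. Your periodicity-plus-condition-(3) argument is a reasonable justification of the disjointness that the paper simply attributes to minimality.
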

\begin{proof}
By Lemma~\ref{lem:div_g}, we just need to check that for any $e \in E(N)$, there is a path $\gamma$ from $\x$ to $\y$ whose intersections with $\alpha \in \zz_e$ all have the same orientation. By minimality, the strands in $\zz_e$ are pairwise disjoint and hence, they divide the plane into an ordered sequence of
strips between consecutive strands. Among all paths from $\x$ to $\y$, choose $\gamma$ transverse to those strands and minimizing the number of intersections with
$\zz_e$. Then $\gamma$ cannot go back and forth along strips, so it crosses every strand $\alpha \in \zz_e$ between $\x$ and $\y$ with the same orientation exactly once.
\end{proof}

\begin{lemma}[{\cite[Lemma~20]{BCT20}; see also \cite[Lemma~34]{BCT22}}]\label{lemma:interlacing}
Let $\x,\y \in W(\graphpl) \sqcup F(\graphpl) \sqcup B(\graphpl)$.
As we move cyclically along $\partial N$, the edges corresponding to angles at which
$g_{\x,\y}$ has zeros and the edges corresponding to angles at which it has poles do not interlace.
\end{lemma}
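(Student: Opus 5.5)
By Lemma~\ref{lem:div_g}, the divisor of $g_{\x,\y}$ on $A_0$ is $\dabel(\y)-\dabel(\x)$, which by \eqref{eq:dabel_intersection_version} equals $\sum_{\alpha}(\gamma\wedge\alpha)\alpha$ for any transverse path $\gamma$ from $\x$ to $\y$. The statement therefore reduces to a purely combinatorial claim. Let $E_+$ be the set of edges $e\in E(N)$ for which some $\alpha\in\zz_e$ is crossed by $\gamma$ with sign $+1$ (zeroes), and $E_-$ the set for which the crossing sign is $-1$ (poles). By Lemma~\ref{lem:parallel_zz_same_sign}, these sets are disjoint. We must show that $E_+$ and $E_-$ do not interlace in the cyclic order of $\partial N$.

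The plan is to use the geometry of minimal graphs, in which strands ``behave like straight lines''. Choose $\gamma$ to minimize the total number of intersections with strands, as in the proof of Lemma~\ref{lem:parallel_zz_same_sign}; then $\gamma$ crosses each strand at most once. The sign of a crossing with $\alpha$ records on which side of $\alpha$ the point $\x$ lies relative to $\y$. Heuristically, a strand in $\zz_e$ is asymptotic to a line of direction $\vec e$. It separates $\x$ from $\y$ with sign $+1$ exactly when the direction $\vec e$ lies on one side of the segment $\overline{\x\y}$, that is, when $\det(\vec e,\y-\x)$ has a fixed sign, and with sign $-1$ when it lies on the other side. Since the edges of the convex polygon $N$ turn monotonically, the set of $e$ with $\det(\vec e, v)>0$ for a fixed vector $v$ is a cyclic interval of $\partial N$. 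So the positive edges all lie in one arc, the negative ones in the complementary arc, and there is no interlacing.

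To make this rigorous, I would argue by contradiction rather than through asymptotics. Suppose there are edges $e_1<e_2<e_3<e_4$ in cyclic order with $e_1,e_3\in E_+$ and $e_2,e_4\in E_-$, and pick strands $\alpha_i\in\zz_{e_i}$ crossed by $\gamma$. Strands in different parallel classes intersect. In a minimal graph, two non-parallel lifted strands intersect exactly once, and the orientation of that crossing is determined by the cyclic order of their homology classes; this is the content of Lemma~\ref{lem:cyclic_order_same} applied at the vertices on both. I would then study the planar arrangement of the four strands $\alpha_1,\dots,\alpha_4$: each is a simple bi-infinite curve, and any two meet exactly once with a prescribed orientation. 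Such an arrangement is combinatorially equivalent to an arrangement of four lines with directions $\vec e_1,\dots,\vec e_4$. In a line arrangement, a segment crossing all four lines with the sign pattern $+,-,+,-$ in cyclically ordered directions is impossible: the crossing sign for the line in direction $\vec e_i$ is $\sign\det(\vec e_i,\y-\x)$, and this changes sign at most twice as $i$ goes around. I would make the equivalence precise by a pseudoline-stretchability argument, which is valid for small arrangements (at most eight pseudolines are always stretchable), or directly by following $\gamma$ and checking which cells of the arrangement it can pass through.

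The main obstacle is the reduction to line arrangements. One must check that lifted strands from different classes in a minimal graph cross exactly once, with orientations given by the cyclic order; this follows from condition~(3) of minimality together with the homology classes. Alternatively, one can simply invoke the cited result \cite[Lemma~20]{BCT20}, whose proof follows exactly this geometric reasoning.
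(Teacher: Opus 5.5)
Your reduction to a four-line arrangement rests on a claim that is false for the graphs this lemma covers. The claim is that lifted strands from different parallel classes meet exactly once, with a crossing sign fixed by their classes. It fails in two ways.

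\emph{Antiparallel sides.} Strands parallel to antiparallel sides of $N$ need not meet at all. In the Aztec diamond (Figure~\ref{fig:square_aztec_inv_kast}) the $\alpha$- and $\beta$-strands are disjoint horizontal curves, and so are the $\gamma$- and $\delta$-strands. For the unit square, the only candidate interlacing configuration is $e_1<e_2<e_3<e_4$ with $e_1,e_3$ and $e_2,e_4$ antiparallel. So the reduction already fails in the most basic example.

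\emph{Anti-parallel bigons.} Condition~(3) of minimality only excludes \emph{parallel} bigons. It forces the common edges of two strands to occur in opposite orders along them, which gives an odd number of crossings for independent classes. It does not give exactly one, because anti-parallel bigons are allowed. For instance, the two zig-zag paths through a degree-$2$ vertex, produced by the uncontraction move of Figure~\ref{fig:moves:X}, share both of its edges and traverse them in opposite orders. In an uncontracted square lattice these two strands have antiparallel classes and meet twice.

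Lemma~\ref{lem:cyclic_order_same} only describes the cyclic order of strands at a single vertex, so it cannot yield a global ``exactly one crossing with prescribed sign'' statement. Your four strands therefore need not form a pseudoline arrangement. Stretchability presupposes that every pair crosses exactly once, so it is not available.

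The heuristic $\sign\det(\vec e,\y-\x)$ captures the right intuition. However, it is only valid when $\x$ and $\y$ are far apart compared with the bounded distance between a strand and its asymptotic line. The lemma is needed for all pairs, including adjacent ones, where local features such as anti-parallel bigons are exactly what matters.

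There is also a smaller point. Your claim that a crossing-minimizing $\gamma$ meets each strand at most once is not what the proof of Lemma~\ref{lem:parallel_zz_same_sign} provides, since that argument relies on strands within one class being disjoint. You do not need this claim anyway: the coefficient of a strand in $\dabel(\y)-\dabel(\x)$ depends only on which sides of that strand $\x$ and $\y$ lie.

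The paper gives no proof of this lemma; it imports it from \cite[Lemma~20]{BCT20} and \cite[Lemma~34]{BCT22}. Your fallback of citing it therefore agrees with the paper. The self-contained argument you sketch, however, has a genuine gap. It needs an input that survives anti-parallel bigons and antiparallel sides, not a model in which strands behave as pseudolines.
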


\begin{definition}\label{def:null_sector}
Let $\bl \in B(\graphpl)$ and $\wh \in W(\graphpl)$. We define the \emph{null sector}
\[
\sector_{\bl,\wh}\subset \partial N
\]
as follows.

\begin{enumerate}
\item Suppose $g_{\bl,\wh}$ has at least one zero in $A_0$. By Lemma~\ref{lemma:interlacing}, the zeros and poles of $g_{\bl,\wh}$ along $\partial N$ do not interlace. We then define $\sector_{\bl,\wh}$ to be the maximal closed connected interval of $\partial N$ which contains every edge $e$ such that $g_{\bl,\wh}$ has a zero at some angle $\alpha\in \zz_e$, and contains no edge $e$ such that $g_{\bl,\wh}$ has a pole at some angle $\alpha\in \zz_e$.

\item Suppose $g_{\bl,\wh}$ has no zeros in $A_0$. Then
\[
\divisor(g_{\bl,\wh})|_{A_0}=-\alpha-\beta
\]
for some angles $\alpha,\beta\in\zz$. Then there are two complementary cyclic intervals in $\partial N \setminus \{e(\alpha),e(\beta)\}$. In this case, we define $\sector_{\bl,\wh}$ by ``continuity in $(\bl,\wh)$.'' 
Precisely, by contraction moves as in Figure~\ref{fig:moves:X}(right), we may assume that at least one of $\bl$ and $\wh$ has degree at least three. We then extend a path from $\bl$ to $\wh$ into an adjacent face of one of these vertices so as to cross a strand $\gamma\neq\alpha,\beta$. We define $\sector_{\bl,\wh}$ to be the interval in $\partial N \setminus \{e(\alpha),e(\beta)\}$ containing $e(\gamma)$.
\end{enumerate}
\end{definition}

\subsubsection{Whole-plane inverse Kasteleyn matrices}\label{sec:whole_plane_inverse_K}

We recall the construction of a two-parameter family of inverse Kasteleyn matrices for graph $\graphpl$ from~\cite{BCT22}. These whole-plane inverse Kasteleyn matrices will be a basic ingredient in the definition of inverse Kasteleyn matrices of finite subgraphs as well as in the description of local statistics.

{
 Since the angle function is compatible with the partial cyclic order on $\zz$, there is an open
interval in \(A_0\setminus\zz\) corresponding to each \(v= e_-\cap e_+\in V(N)\), namely the interval between the angles in
\(\zz_{e_-}\) and those in \(\zz_{e_+}\).
}

\begin{definition}\label{def:contours_whole_plane}
 For \(v\in V(N)\) and \(\zeta\in \Sigma_+^\circ\), let \(C_v^\zeta\) be a curve from \(\sigma(\zeta)\) to \(\zeta\) intersecting \(A_0\) exactly once in the component of \(A_0\setminus\zz\) corresponding to \(v\), and such that \(\sigma_*(C_v^\zeta)=-C_v^\zeta\). If \(\zeta\in \Sigma(\RR)\setminus\zz\), let \(C_v^\zeta\) denote the natural limit as \(\zeta\) approaches \(\Sigma(\RR)\).
\end{definition}

Given \(\bl\in B(\graphpl)\), \(\wh\in W(\graphpl)\) and \(\zeta\in \Sigma_+\setminus\zz\), define
\[
C_{\bl,\wh}^\zeta:=C_v^\zeta,
\]
where \(v\) is any vertex in the null sector \(\sector_{\bl,\wh}\).

\begin{theorem}[{\cite[Theorem~40]{BCT22}}]\label{thm:whole_plane_inverse_kast}
For any \(\zeta\in \Sigma_+\setminus\zz\), the matrix
\[
\mathsf A_{\bl,\wh}^{\zeta}
:=
\frac{1}{2\pi\ii}\int_{C_{\bl,\wh}^\zeta} g_{\bl,\wh}
\]
is an inverse of \(\kast\) in the sense that $\kast \mathsf A^{\zeta} = \mathsf A^{\zeta} \kast = \mathsf{I}$.
\end{theorem}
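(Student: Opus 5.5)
The plan is to verify $\kast\mathsf A^{\zeta}=\mathsf I$ entry by entry; the identity $\mathsf A^{\zeta}\kast=\mathsf I$ should follow by the symmetric argument at a black vertex, using the cokernel analogue of Lemma~\ref{lem:ker_K}. Fix a white vertex $\wh$ and a white vertex $\wh'$. We have
\[
(\kast\mathsf A^\zeta)_{\wh,\wh'}=\frac{1}{2\pi\ii}\sum_{\bl\sim\wh}\int_{C^\zeta_{\bl,\wh'}}\kast_{\wh,\bl}\,g_{\bl,\wh'} .
\]
The first step is a contour-independence observation. If $v,v'$ are two vertices of the null sector $\sector_{\bl,\wh'}$, then $C_v^\zeta-C_{v'}^\zeta$ is homologous to a sum of small loops around angles in $\zz_e$ for edges $e\subset\sector_{\bl,\wh'}$. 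By Lemma~\ref{lem:div_g} and the definition of the null sector, $g_{\bl,\wh'}$ has zeros there, or at worst no singularities, so no residues arise. Hence $\mathsf A^\zeta_{\bl,\wh'}$ is well defined.

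\textbf{Case $\wh'\neq\wh$.} We would like to integrate all terms over one common contour $C_v^\zeta$ and apply Lemma~\ref{lem:ker_K}, which makes the summed integrand vanish identically. This requires a vertex $v$ lying in $\sector_{\bl,\wh'}$ for every $\bl\sim\wh$. More carefully, it suffices that, for each $\bl$, moving from a vertex of $\sector_{\bl,\wh'}$ to $v$ crosses no pole of $g_{\bl,\wh'}$.

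Two facts should give this. First, consecutive neighbours $\bl,\bl'$ of $\wh$ differ by crossing a single strand, so $\dabel(\bl)-\dabel(\bl')$ is a difference of two strands. Second, Lemma~\ref{lem:cyclic_order_same} says these strands occur around $\wh$ in the cyclic order of $\partial N$. Together, these should show that the null sectors of $g_{\bl,\wh'}$ for $\bl\sim\wh$ all contain the null sector of $g_{\wh,\wh'}$ up to one adjacent edge, and so share a common vertex. Lemmas~\ref{lemma:interlacing} and~\ref{lem:parallel_zz_same_sign} control the possible sign patterns. When $g_{\bl,\wh'}$ has no zeros on $A_0$, the continuity clause of Definition~\ref{def:null_sector} is designed exactly to make this work.

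\textbf{Case $\wh'=\wh$.} Here we use Lemma~\ref{lem:fays} instead. The summand $\kast_{\wh,\bl}g_{\bl,\wh}$ equals $\omega_{\beta-\alpha}$ plus a combination of holomorphic forms $\omega_k$. Around $\wh$ the faces $\f_\pm$ cycle, so the $\omega_k$ coefficients telescope to zero as long as all terms are integrated on the same contour. For the same reason, $\sum\omega_{\beta_i-\alpha_i}=0$ as a form.

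The nonzero answer therefore comes entirely from the fact that each term must be integrated along its own contour $C_{v_i}^\zeta$. Here $v_i$ is the vertex of $N$ between $e(\alpha_i)$ and $e(\beta_i)$ on the appropriate side, since $g_{\bl_i,\wh}$ has poles exactly at $\alpha_i,\beta_i$. Moving all these contours to a common one crosses poles of $\omega_{\beta_i-\alpha_i}$, each contributing a residue $\pm1$ times $2\pi\ii$. Because the strands around $\wh$ wind once around $\partial N$, the net contribution should be exactly one full turn, giving $2\pi\ii$ and hence $(\kast\mathsf A^\zeta)_{\wh,\wh}=1$. The holomorphic parts contribute nothing: changing between homologous contours from $\sigma(\zeta)$ to $\zeta$ does not change them, and their coefficients telescope. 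Finally, the limiting case $\zeta\in\curve(\RR)\setminus\zz$ follows by continuity.

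The main obstacle I expect is this contour bookkeeping. It requires proving the common-vertex property of null sectors for all neighbours of $\wh$, including the degenerate no-zero case, and checking that the residue count in the diagonal case is exactly $+1$ rather than some other winding number. For this I would first reduce by contraction moves to vertices of degree three and verify the claim by a direct case analysis on the cyclic position of the three strands around $\wh$.
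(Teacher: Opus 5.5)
The paper does not prove this statement; it quotes it from \cite{BCT22}, so there is no in-paper argument to compare with. Your two-case scheme is the natural one, and the diagonal case is sound. It is in fact simpler than a winding count. Let $\gamma_1,\dots,\gamma_d$ be the strands in the face-sectors of $\wh$ in counterclockwise order, and let $\bl_j$ be the neighbour with $\kast_{\wh,\bl_j}g_{\bl_j,\wh}=\omega_{\gamma_j-\gamma_{j-1}}+\cdots$. When $\deg\wh\ge 3$, the continuity clause makes $\sector_{\bl_j,\wh}$ the arc from $e(\gamma_j)$ to $e(\gamma_{j-1})$ that contains the other $e(\gamma_k)$. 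So a given vertex $v$ of $N$ misses exactly one of these null sectors. Only that one contour must be moved, across the single pole $\gamma_j$, and this gives $+1$. Meanwhile $\sum_{\bl}\kast_{\wh,\bl}g_{\bl,\wh}=g_{\wh}\sum_{\bl}\kast_{\wh,\bl}g_{\bl,\f_0}\equiv 0$ by Lemma~\ref{lem:ker_K}.

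The genuine gap is the off-diagonal case, and the reason you give for a common vertex is wrong. Since $\divisor g_{\bl,\wh'}|_{A_0}=\divisor g_{\wh,\wh'}|_{A_0}-\alpha-\beta$, passing to a neighbour only removes zeros and adds poles. Hence $\sector_{\bl,\wh'}$ sits \emph{inside} the pole-free region of $g_{\wh,\wh'}$, and it can lose many edges, not at most one.

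Nor is Lemma~\ref{lemma:interlacing} for $g_{\wh,\wh'}$ and the $g_{\bl,\wh'}$, together with the cyclic order at $\wh$, enough. Take $\deg\wh=3$ and $\divisor g_{\wh,\wh'}|_{A_0}=\gamma_1+\gamma_2-\delta-\delta'$, with $e(\delta),e(\delta')$ strictly between $e(\gamma_1)$ and $e(\gamma_2)$. All these functions are non-interlacing, yet the two neighbours whose edge to $\wh$ lies on $\gamma_3$ have disjoint null sectors.

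The missing input is Lemma~\ref{lemma:interlacing} applied to the \emph{faces} around $\wh$.
\begin{itemize}
\item Let $J_0$ be the arc spanned by the zero-edges of $g_{\wh,\wh'}$ that contains no pole-edge. It is nonempty because distinct vertices of a minimal graph are separated by some strand, so $g_{\wh,\wh'}$ has both zeros and poles on $A_0$.
\item Each strand $\gamma$ through $\wh$ has $\wh$ on its left, so its coefficient in $\dabel(\wh')-\dabel(\wh)$ is $0$ or $1$.
\item Suppose the coefficient is $0$. The face $\f$ in the sector of $\gamma$ has $\dabel(\f)=\dabel(\wh)+\gamma$, so $g_{\f,\wh'}$ has the zeros of $g_{\wh,\wh'}$ plus an extra pole at $\gamma$. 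By Lemma~\ref{lem:parallel_zz_same_sign}, $e(\gamma)$ carries no zero. The remaining poles lie off $J_0$, so non-interlacing of $g_{\f,\wh'}$ forces $e(\gamma)\not\subset J_0$.
\item Consequently, for every $\bl\sim\wh$ the zeros of $g_{\bl,\wh'}$ lie in $J_0$ and its poles lie off $J_0$, whence $J_0\subset\sector_{\bl,\wh'}$.
\item If $g_{\bl,\wh'}$ has no zeros, apply the continuity clause of Definition~\ref{def:null_sector}. Cross from $\bl$ a strand of $\bl\wh$ that was a zero of $g_{\wh,\wh'}$; its edge lies in $J_0$, while both poles lie off $J_0$.
\end{itemize}
An endpoint of $J_0$ is then a common vertex for every degree, so the reduction to degree three and the case analysis you propose are unnecessary.
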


We now introduce some additional notation for the case \(\zeta\in \Sigma(\RR)\setminus\zz\) which will be used in Section~\ref{sec:inv_K}.

\begin{definition}\label{def:contours_from_intervals}
  Given an oriented interval $I \subset \partial N$, let $C(I) \subset \curve$ be a curve which is the union of small counterclockwise-oriented circles that enclose all the angles $\bm \alpha_e$ for $e \subset I$.
\end{definition}

If \(\zeta\in \Sigma(\RR)\setminus\zz\), then \(\mathsf A_{\bl,\wh}^{\zeta}\) depends only on the connected component of $\Sigma(\RR)\setminus\zz$ containing \(\zeta\). If this component corresponds to \(u\in V(N)\), then we can take
\[
C_{\bl,\wh}^\zeta=C(I_{\bl,\wh}^u),
\qquad I_{\bl,\wh}^u:=[u,v],
\]
where \(v\) is any vertex in \(\sector_{\bl,\wh}\), and we write \(\mathsf A_{\bl,\wh}^u:=\mathsf A_{\bl,\wh}^\zeta\).

\subsubsection{Gibbs measures}\label{sec:slope}

In this section we assume that the angle function $\nu:\zz\to A_0$ is \emph{periodic}, \emph{i.e.} 
\[
\nu(\alpha+(i,j))=\nu(\alpha) \qquad \text{for all}~\alpha\in \zz \text{ and } (i,j)\in \ZZ^2.
\]
Equivalently, we have a well-defined angle function 
\[
\nu:\zz_\torus \to A_0
\] 
on the quotient. As explained in~\cite[Section~4.2]{BCT22}, this setting is strictly more general than assuming that the weights are periodic; see~\cite[Proposition~43]{BCT22} for a characterization of the subset of periodic weights.

\begin{theorem}[{\cite[Theorem~62]{BCT22}}]\label{thm:gibbs_measure}
For any $\zeta\in \Sigma_+\setminus \zz$, the operator $\mathsf A^\zeta$ defines a Gibbs measure $\PP^\zeta$ on dimer covers of $\graphpl$. For any fixed collection of edges $\{\e_i=\bl_i\wh_i\}_{i=1}^k\subset E(\graphpl)$, the probability that all of the edges $\e_1,\dots,\e_k$ are present is
\[
\PP^\zeta(\e_1,\dots,\e_k\in M)
=
\prod_{i=1}^k \kast_{\wh_i,\bl_i}
\det\bigl(\mathsf A^\zeta_{\bl_i,\wh_j}\bigr)_{1\le i,j\le k}.
\]
\end{theorem}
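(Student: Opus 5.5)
The plan is to realize $\mathsf A^\zeta$ as a limit of inverse Kasteleyn matrices of finite periodic models, following Kenyon--Okounkov--Sheffield~\cite{KOS06}. The Gibbs property, positivity and consistency of the finite-dimensional marginals are then inherited from those finite models. The determinantal formula then holds in the limit by Theorem~\ref{thm:kenyon_inv_K} applied to each approximant.

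\textbf{Step 1 (torus measures).} Consider the toroidal graphs $\graphpl/(m\ZZ^2)$ with the weights given by Fock's Kasteleyn matrix~\eqref{eq:kast_fock} and a magnetic field $(B_x,B_y)\in\RR^2$. When the angle function is periodic, the weights are periodic up to a gauge with multiplicative monodromy. This monodromy can be absorbed into the magnetic field, so the torus Boltzmann measures are well defined. As $m\to\infty$, these measures converge to an ergodic translation-invariant Gibbs measure, which is a Gibbs measure by construction. Its inverse Kasteleyn kernel is the Fourier integral
\[
\frac{1}{(2\pi\ii)^2}\oint_{|z|=e^{B_x}}\oint_{|w|=e^{B_y}} \frac{Q_{\bl,\wh}(z,w)}{P(z,w)}\,z^{x}w^{y}\,\frac{dz}{z}\frac{dw}{w},
\]
where $P$ is the characteristic polynomial and $Q$ the relevant cofactor.

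\textbf{Step 2 (identification with $\mathsf A^\zeta$), the main obstacle.} The spectral curve $\{P=0\}$ is identified with $\curve$, following Fock and \cite{BCT22}. Under this identification, the amoeba map sends $\curve_+^\circ$ diffeomorphically onto the interior of the amoeba, and the Harnack property holds because $\curve$ is an M-curve. Given $\zeta$, I would choose $(B_x,B_y)$ as the amoeba image of $\zeta$, perturbing slightly if necessary. In the $w$-integral, I would take residues at the roots of $P(z,\cdot)$ that lie inside the circle. This collapses the double integral to a single integral along the arc of $\curve$ over $|z|=e^{B_x}$ that enters the $w$-disk. That arc runs from $\sigma(\zeta)$ to $\zeta$, and it crosses $A_0$ in the component dictated by the null sector $\sector_{\bl,\wh}$. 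The residue of $Q/P$ restricted to $\curve$ must then be matched with the form $g_{\bl,\wh}$, up to gauge. Both lie in the kernel and cokernel of $\kast$ by Lemma~\ref{lem:ker_K}, and both have the divisor described in Lemma~\ref{lem:div_g}. Uniqueness of such forms, together with the normalization from Lemma~\ref{lem:fays}, should pin down the constant. I expect this bookkeeping to be the hardest part: the choice of contour class, the null-sector convention, and the theta-function gauge factors all have to be made consistent.

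\textbf{Step 3 (boundary and degenerate $\zeta$).} For $\zeta\in\curve(\RR)\setminus\zz$ the Fourier integral is taken over a complementary component of the amoeba: unbounded components give frozen phases and compact ovals give gaseous phases. Alternatively, one can obtain these cases as limits $\zeta'\to\zeta$ from $\curve_+^\circ$. The map $\zeta\mapsto\mathsf A^\zeta_{\bl,\wh}$ is continuous there, and weak limits of Gibbs measures are again Gibbs measures. In either approach, nonnegativity of the cylinder probabilities and consistency pass to the limit, so the formula defines a probability measure with the stated determinantal correlations.
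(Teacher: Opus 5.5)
Your Steps 1--3 are the standard Kenyon--Okounkov--Sheffield identification. For genuinely periodic weights, modulo the contour bookkeeping you flag in Step~2, they are a reasonable route.

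\textbf{The gap: quasi-periodic weights.} The statement assumes only that the \emph{angle function} is periodic, and the paper explicitly notes that this is strictly weaker than periodicity of the weights. Your claim in Step~1, that the weights are then periodic up to a gauge whose monodromy can be absorbed into a magnetic field, is false in general once $g\ge 1$.

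Translating by $(1,0)$ adds to every $\dabel(\f)$ the same degree-zero divisor $D_x$ on $\curve$, supported on the angles. The gauge-invariant face weights of~\eqref{eq:kast_fock} depend on $t$ through the factors $\theta(t+\dabel(\f'))^{\pm 1}$ at the neighbouring faces $\f'$. So translation acts on the model by $t\mapsto t+\mu(D_x)$, where $\mu(D_x)\in\RR^g/\ZZ^g$ is in general nonzero and possibly irrational.

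Such weights are only quasi-periodic, and this breaks Steps~1 and~2:
\begin{itemize}
\item There is no torus model and no characteristic polynomial $P$.
\item The functions $z,w$ of~\eqref{eq:z_and_w} are multivalued, so $\curve$ is not $\{P=0\}$.
\item The Bloch property that Step~2 needs fails, since $g_{\wh+(1,0)}/g_{\wh}$ still contains the factor $\theta(t+\zeta+\dabel(\wh)+\mu(D_x))/\theta(t+\zeta+\dabel(\wh))$.
\item $\PP^\zeta$ is then not translation-invariant, so it cannot be the KOS limit of Step~1.
\end{itemize}
Hence nonnegativity, consistency and the Gibbs property of the cylinder expressions are not proved in exactly the case that goes beyond KOS. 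Passing to a sublattice only helps when $\mu(D_x)$ and $\mu(D_y)$ are rational.

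\textbf{What is missing} is a reduction to genuinely periodic weights. One way is exact locality, which proceeds in four steps.
\begin{enumerate}
\item Note that $g_{\bl,\wh}(\zeta)=\theta(t+\zeta+\dabel(\wh))\,\theta(-t+\zeta-\dabel(\bl))\prod_{\alpha}E(\alpha,\zeta)^{(\dabel(\wh)-\dabel(\bl))_\alpha}$. Hence the cylinder expressions for edges in a finite region $\Lambda$, and the Kasteleyn weights there, depend only on $\curve$, $\zeta$, the angles of the strands meeting $\Lambda$, and $t+\dabel(\f_1)$ for one face $\f_1$ of $\Lambda$.
\item Keep the graph $\graphpl$, pass to a sublattice $m\ZZ^2$ with $m$ large, and perturb the angles of strands avoiding $\Lambda$, within the cyclic-order constraints. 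Choose the perturbation so that the Abel images of both translation divisors vanish in $\RR^g/\ZZ^g$. There is enough freedom: there are of order $m$ free strands in each parallel class, and $(\omega_1,\dots,\omega_g)$ evaluated along any arc of $A_0$ spans $\RR^g$.
\item The resulting Fock weights are exactly $m\ZZ^2$-periodic and agree with the original ones on $\Lambda$. Your KOS argument applies to this model, and its cylinder probabilities on $\Lambda$ coincide with the given determinantal expressions.
\item Kolmogorov extension and the DLR equations, which are identities among cylinder probabilities in finite regions, then follow region by region.
\end{enumerate}
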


The \emph{slope} $(s^\zeta,t^\zeta)$ of the Gibbs measure $\PP^\zeta$ is defined as the expected horizontal and vertical height change:
\begin{equation}
s^\zeta:=\EE^\zeta\left[h_M(\f_0+(1,0))-h_M(\f_0)\right], \quad t^\zeta:=\EE^\zeta\left[h_M(\f_0+(0,1))-h_M(\f_0)\right],
\end{equation}
where $\EE^\zeta$ denotes expectation with respect to the probability measure $\PP^\zeta$.

For \(u\in V(N)\), let \(\zeta\in A_0\setminus\zz\) be any point in the connected component corresponding to \(u\). Since \(\PP^\zeta\) depends only on this component, we set
\[
\PP^u:=\PP^\zeta,
\qquad
\EE^u:=\EE^\zeta,
\qquad
(s^u,t^u):=(s^\zeta,t^\zeta).
\]
By \cite[Remark~53]{BCT22}, the measure \(\PP^u\) is deterministic: more precisely,
\[
\PP^u(\e\in M)=\mathbf 1_{\e\in M_u},
\]
where \(M_u\) is the dimer cover consisting of all edges \(\e\) such that \(e(\alpha)<u<e(\beta)\) along $\partial N$, with \(\alpha,\beta\) the two strands containing \(\e\) as in Figure~\ref{fig:fock_kast}; we call $M_u$ the \emph{extremal dimer cover} corresponding to $u$.

Recall from Section~\ref{sec:conventions} that \(|e|_{\ZZ}\) is the lattice length of $e$ and \((a_e,b_e)=\vec e/|e|_{\ZZ}\in\ZZ^2\). Let $u_0 \in V(N)$ and let $M_0:=M_{u_0}$ be the reference dimer cover used to define the height function. Define
\begin{equation}\label{eq:z_and_w}
z(\zeta):=  \prod_{\alpha \in \zz_\torus } E(\alpha,\zeta)^{-b_{e(\alpha)}}, \qquad w(\zeta):= \prod_{\alpha \in \zz_\torus } E(\alpha,\zeta)^{a_{e(\alpha)}}.
\end{equation}
Then
\begin{equation}\label{eq:slope_integral}
(s^\zeta, t^\zeta)
=\left(-\frac{1}{2\pi\i}\int_{C_{u_0}^{\zeta}} d\log z , -\frac{1}{2\pi\i}\int_{C_{u_0}^{\zeta}}d \log w\right);
\end{equation}
see \cite[Equation 32]{BCT22}. 
The set of possible slopes is the Newton polygon $N$ rotated by $-\pi/2$; see~\cite[Corollary~56]{BCT22} {and \cite[Section~3.1.4]{KOS06}.}


\section{Astroidal zig-zag graphs} \label{sec:AZ_graphs}

In this section, we introduce a class of finite graphs that we will work with 
and study their basic properties.

\subsection{Definition of astroidal zig-zag graphs}\label{sec:def_AZ_graphs}

\begin{figure}
\centering

\begin{tikzpicture}
  \node[anchor=center] (A) at (0,0) {%
    \begin{tikzpicture}[baseline=(current bounding box.center)]
      \begin{scope}[scale = 0.4]
        \node[bvert] (1)  at (0,0) {};
        \node[bvert] (2)  at (2,0) {};
        \node[bvert] (3)  at (4,1) {};
        \node[bvert] (4)  at (5,3) {};
        \node[bvert] (5)  at (4,5) {};
        \node[bvert] (6)  at (3,6) {};
        \node[bvert] (7)  at (1,6) {};
        \node[bvert] (8)  at (-2,5) {};
        \node[bvert] (9)  at (-2,3) {};
        \node[bvert] (10) at (-1,1) {};

        \draw[hollowedge]  (1)  -- node[below]       {$e_1$}   (2);
        \draw[hollowedge] (2)  -- node[below right] {$e_2$}   (3);
        \draw[solidedge] (3)  -- node[right]       {$e_3$}   (4);
        \draw[solidedge] (4)  -- node[right]       {$e_4$}   (5);
        \draw[hollowedge] (5)  -- node[above right] {$e_5$}   (6);
        \draw[hollowedge] (6)  -- node[above]       {$e_6$}   (7);
        \draw[solidedge] (7)  -- node[above left]  {$e_7$}   (8);
        \draw[solidedge] (8)  -- node[left]        {$e_8$}   (9);
        \draw[solidedge] (9)  -- node[left]        {$e_9$}   (10);
        \draw[hollowedge] (10) -- node[below left]  {$e_{10}$} (1);
      \end{scope}
    \end{tikzpicture}
  };

  \node[anchor=center] (B) at ([xshift=6cm]A.east) {
    \begin{tikzpicture}[baseline=(current bounding box.center)]

    \begin{scope}[scale=0.80]

  \node[nvert] (v0) at (0,0)    {};
  \node[nvert] (v1) at (2,0)    {};
  \node[nvert] (v2) at (6,2)    {};
  \node[nvert] (v3) at (4,-2)   {};
  \node[nvert] (v4) at (6,-6)   {};
  \node[nvert] (v5) at (4,-4)   {};
  \node[nvert] (v6) at (-8,-4)  {};
  \node[nvert] (v7) at (-2,-2)  {};
  \node[nvert] (v8) at (-2,0)   {};
  \node[nvert] (v9) at (-4,4)   {};

  \draw[hollowedge] (v9) -- node[above right] {$\beta_{e_{10}}$} (v0);
  \draw[hollowedge] (v0) -- node[above]       {$\beta_{e_{1}}$}  (v1);
  \draw[hollowedge] (v1) -- node[above left]  {$\beta_{e_{2}}$}  (v2);
  \draw[solidedge]  (v2) -- node[right]       {$\beta_{e_{3}}$}  (v3);
  \draw[solidedge]  (v3) -- node[right]       {$\beta_{e_{4}}$}  (v4);

  \draw[hollowedge] (v4) -- node[left]        {$\beta_{e_{5}}$}  (v5);

  \draw[hollowedge] (v5) -- node[below]       {$\beta_{e_{6}}$}  (v6);
  \draw[solidedge]  (v6) -- node[above left]  {$\beta_{e_{7}}$}  (v7);
  \draw[solidedge]  (v7) -- node[left]        {$\beta_{e_{8}}$}  (v8);
  \draw[solidedge]  (v8) -- node[left]        {$\beta_{e_{9}}$}  (v9);

\end{scope}

    \end{tikzpicture}
  };
\end{tikzpicture}
\caption{A Newton polygon $N$ and an illustration of $\partial\graphbeta^\times$, along with the edge coloring. Note that the actual $\beta_{e_j}$'s are (pieces of) strands, but they have the same asymptotic direction as the vectors pictured on the right. 
} \label{fig:Gbeta}
\end{figure}

\begin{figure}
    \centering
\begin{tikzpicture}
    \begin{scope}
    \filldraw[fill=gray!30, draw=none]
    (1,1) -- (0,0) -- (-1,1) -- (-2,0) -- (-1,-1) -- (0,0) -- (1,-1) -- (-2,-1) -- (-2,1) -- (1,1);
 \coordinate[wvert,label=right:${\wh}$] (w) at (0,0) {};
    \coordinate[wvert] (w2) at (-2,0) {};
    \coordinate[bvert] (b1) at (1,1) {};
    \coordinate[bvert] (b2) at (1,-1) {};
    \coordinate[bvert] (b3) at (-1,1) {};
    \coordinate[bvert] (b4) at (-1,-1) {};
    \draw[line width=2pt] (w) edge (b1) edge (b2) edge (b3) edge (b4);
    \draw[line width=2pt] (w2) edge (b3) edge (b4);
    \draw[hollowedge] (1.5,0.5) -- (-1.5,0.5);
    \draw[hollowedge] (-1.5,-0.5) -- (-1.5,0.5);
     \draw[hollowedge] (-1.5,-0.5) -- (1.5,-0.5);
    \end{scope}

        \begin{scope}[shift={(7,0)}]
    \filldraw[fill=gray!30, draw=none]
    (90:1.5) -- (150:1.5) -- (210:1.5) -- (270:1.5) -- (330:1.5) -- (0,0) -- (90:1.5);

    \coordinate[wvert] (w) at (0,0) {};
 
    \coordinate[bvert] (b1) at (90:1.5) {};
    \coordinate[bvert] (b2) at (150:1.5) {};
       \coordinate[bvert] (b3) at (210:1.5) {};
    \coordinate[bvert] (b4) at (270:1.5) {};

       \coordinate[bvert] (b5) at (30:1.5) {};
    \coordinate[bvert] (b6) at (-30:1.5) {};
    \draw[line width=2pt] (w) edge (b1) edge (b2)  edge (b3) edge (b4) edge (b6);
    \draw[] (w) edge (b5) edge (b6);

\draw[hollowedge]
  
  ($ (90:0.75) - (150:0.75) + (90:0.75) $) -- (150:0.75) ; 
    \draw[hollowedge] (150:0.75) -- (210:0.75);
    \draw[hollowedge] (210:0.75) -- (270:0.75);
    \draw[hollowedge] (270:0.75) -- ($ (330:0.75) + (330:0.75) - (270:0.75) $);

    \end{scope}
\end{tikzpicture}
    \caption{The left panel shows an example illustrating the necessity of Condition~2 in Definition~\ref{def:azgraph}. The white arrows are a portion of $\partial \graphbeta^\times$, the thick edges are the corresponding portion of $\partial \graphbeta$ and the shaded region consists of faces of $\graphbeta$. The white vertex $\wh$ on the left-hand side is visited twice by non-consecutive zig-zag paths so $\partial \graphbeta$ has a self-intersection. The right panel shows the neighborhood of an outer white vertex that is allowed. }
    \label{fig:condition_2}
\end{figure}

Let \(\bm \beta =(\beta_e)_{e \in E(N)}\) be a collection of strands such that \(\beta_e \in \bm \alpha_e\).
For each vertex \(v = e_- \cap e_+ \in V(N)\), choose
\[
\e_v \in \beta_{e_-} \cap \beta_{e_+} \in V(\graphpl^\times) \cong E(\graphpl),
\]
where \(e_-\) (resp. \(e_+\)) is the edge immediately clockwise (resp. counterclockwise) from \(v\). 

We first define the boundary cycle in the medial graph (see Figure~\ref{fig:Gbeta}). Let \(\partial \graphbeta^\times\) be the counterclockwise-oriented cycle in \(\graphpl^\times\) obtained as follows: it visits the medial vertices \(\e_v\) in the clockwise cyclic order of \(v \in V(N)\), and for each edge \(e=[v_-,v_+] \in E(N)\) it follows the strand \(\beta_e\) from \(\e_{v_+}\) to \(\e_{v_-}\). We assume throughout that \(\partial \graphbeta^\times\) is a simple cycle.

From this we obtain a subgraph \(\partial \graphbeta\) of \(\graphpl\) by defining
\[
\partial \graphbeta
:=
\bigcup_{\e \in V(\partial \graphbeta^\times)} \e
\subset \graphpl,
\]
\emph{i.e.} the union of the corresponding zig-zag path segments.

Even if the medial cycle \(\partial \graphbeta^\times\) is simple, the resulting subgraph \(\partial \graphbeta\) need not be: a vertex of \(\graphpl\) may lie on several strands \(\beta_e\). This is harmless provided such occurrences are consecutive along \(\partial \graphbeta^\times\). The problematic situation is when two visits to a vertex are separated by other parts of the boundary, which would produce a genuine self-intersection in \(\graphpl\) (see Figure~\ref{fig:condition_2}(left)). Throughout, we assume this does not happen; \emph{i.e.} we assume that whenever a vertex of \(\graphpl\) is visited more than once by \(\partial \graphbeta\), the corresponding strands appear consecutively in the cyclic order of \(\partial \graphbeta^\times\).

We then define the graph \(\graphbeta\) to be the finite portion of \(\graph\) contained weakly inside \(\partial \graphbeta\). The reader may find it helpful to look ahead at the examples in Section~\ref{sec:AZ_examples} before proceeding with the definitions below.

We next assign a color to each edge \(e \in E(N)\). We say that \(e\) is \emph{black} if its induced orientation agrees with the counterclockwise orientation of \(\partial\graphbeta^\times\), and \emph{white} otherwise (see Figure~\ref{fig:Gbeta}). We also define a sign function
\begin{equation} \label{eq:sign_convention}
\sign_{\bm \beta} : E(N)\rightarrow\{+,-\},
\qquad
\sign_{\bm\beta}(e) :=
\begin{cases}
+1, & \text{if \(e\) is black},\\
-1, & \text{if \(e\) is white}.
\end{cases}
\end{equation}
We will use the terminology ``black/white'' and ``sign \(\pm1\)'' interchangeably.

For each edge \(e \in E(N)\), the portion of the strand \(\beta_e\) between \(\e_{v_-}\) and \(\e_{v_+}\) separates the vertices of the corresponding zig-zag path into two sides. We call the vertices on the exterior of \(\partial\graphbeta^\times\) \emph{\(\beta_e\)-outer} vertices. A vertex of \(\graphbeta\) that is not \(\beta_e\)-outer for any \(e \in E(N)\) is called an \emph{inner} vertex. Note that a vertex can be \(\beta_e\)-outer for several \(e \in E(N)\). By Lemma~\ref{lem:cyclic_order_same}, such edges all have the same color and form a cyclic interval in \(\partial N\) (see Figure~\ref{fig:condition_2}(right)).

Informally, we think of \(\partial \graphbeta\) as defining a polygonal region whose ``vertices'' are the edges \(\e_v\) and whose ``sides'' run along the zig-zag paths corresponding to \(\beta_e\). An edge \(e\) has the color of the corresponding \(\beta_e\)-outer vertices.

\begin{definition}
   Given such a \(\bm \beta\), define an element \(\bm D_{\bm \beta} \in \ZZ^{\zz}\) by
\begin{equation} \label{eq:d_beta_defn}
     (\bm D_{\bm \beta})_{\zz_e} := -\dabel(\x)_{\zz_e}
\end{equation}
for all \(e \in E(N)\), where \(\x\) is any \(\beta_e\)-outer vertex. We say that \(\graphbeta\) is \emph{admissible} if
\begin{equation} \label{eq:adm_defn}
    \deg(\bm D_{\bm \beta})=0.
\end{equation} 
\end{definition}
Since
\[
\deg(\bm D_{\bm\beta})
=
\sum_{e\in E(N)} \deg\bigl((\bm D_{\bm\beta})_{\zz_e}\bigr),
\]
the admissibility condition~\eqref{eq:adm_defn} says that the total number of strands between the reference face and \(\beta_e\), summed over white edges \(e\), is equal to the corresponding total summed over black edges.

The role of admissibility will be clarified in Lemma~\ref{lem:kernel_is_a_form}.

\begin{definition} \label{def:azgraph}
We call \(\graphbeta\) an \emph{astroidal zig-zag graph} (AZ graph) if:
\begin{enumerate}
\item the medial cycle \(\partial \graphbeta^\times\) is simple,
\item whenever a vertex of \(\graphpl\) is visited more than once as we go around \(\partial \graphbeta\), all such visits occur consecutively along \(\partial \graphbeta^\times\),
\item \(\graphbeta\) is admissible.
\end{enumerate}
\end{definition}

The reason for the name is that the boundary \(\partial \graphbeta\) resembles an astroid curve (see Lemma~\ref{lem:four_convex_vertices}) and is made up of zig-zag paths. Since \(\graphpl\) is \(\ZZ^2\)-periodic and we have one constraint~\eqref{eq:adm_defn}, AZ graphs form a \((\# E(N)-3)\)-parameter family. {In concrete examples, AZ graphs are easy to construct directly. A general construction showing that the class is nonempty for every Newton polygon with at least four sides will be given in Remark~\ref{rem:existence_astroidal_domain}.}

\subsection{Examples of AZ graphs} \label{sec:AZ_examples}

\begin{example}
When $N$ is any triangle, the $\graph$ is the hexagonal lattice~\cite[Proposition~11.3]{IU}, we have a $0$-parameter family, and there are no AZ graphs.
\end{example}

\begin{figure}
  \centering
  \begin{tikzpicture}

    \node[anchor=center] (A) {
      \begin{tikzpicture}[scale=1]
       \begin{scope}
    \node[bvert](1) at (0,0){}; 
     \node[bvert](2) at (1,0){}; 
      \node[bvert](3) at (1,1){}; 
       \node[bvert](4) at (0,1){};
  \draw[solidedge] (1) -- 
       (2);
       \draw[solidedge] (3) --
       (4) ;
       \draw[hollowedge] (2) --
       (3);
       \draw[hollowedge] (4) -- 
       (1);

       \end{scope}
      \end{tikzpicture}
    };

    \node[anchor=center] (B) at ([xshift=5.0cm]A.east) {%
      \begin{tikzpicture}[scale=0.6]

        \foreach \y in {0,2,4,6} {
          \foreach \x in {1,3,5} {
            \node[bvert] (b-\x-\y) at (\x,\y) {};
          }
        }

        \foreach \y in {1,3,5} {
          \foreach \x in {0,2,4,6} {
            \node[wvert] (w-\x-\y) at (\x,\y) {};
          }
        }

        \foreach \y in {0,2,4,6} {
          \foreach \x in {1,3,5} {

            \ifcsname pgf@sh@ns@w-\the\numexpr\x-1\relax-\the\numexpr\y+1\relax\endcsname
              \draw[] (b-\x-\y) -- (w-\the\numexpr\x-1\relax-\the\numexpr\y+1\relax);
            \fi

            \ifcsname pgf@sh@ns@w-\the\numexpr\x+1\relax-\the\numexpr\y+1\relax\endcsname
              \draw[] (b-\x-\y) -- (w-\the\numexpr\x+1\relax-\the\numexpr\y+1\relax);
            \fi

            \ifcsname pgf@sh@ns@w-\the\numexpr\x-1\relax-\the\numexpr\y-1\relax\endcsname
              \draw[] (b-\x-\y) -- (w-\the\numexpr\x-1\relax-\the\numexpr\y-1\relax);
            \fi

            \ifcsname pgf@sh@ns@w-\the\numexpr\x+1\relax-\the\numexpr\y-1\relax\endcsname
              \draw[] (b-\x-\y) -- (w-\the\numexpr\x+1\relax-\the\numexpr\y-1\relax);
            \fi

          }
        }

      \end{tikzpicture}
    };

\node[anchor=center] (C) at ([xshift=4.2cm]B.east) {%
      \begin{tikzpicture}[scale=0.6]

        \foreach \y in {0,2,4,6} {
          \foreach \x in {1,3,5} {
            \node[bvert] (b-\x-\y) at (\x,\y) {};
          }
        }

        \foreach \y in {1,3,5} {
          \foreach \x in {0,2,4,6} {
            \node[wvert] (w-\x-\y) at (\x,\y) {};
          }
        }

        \foreach \y in {0,2,4,6} {
          \foreach \x in {1,3,5} {

            \ifcsname pgf@sh@ns@w-\the\numexpr\x-1\relax-\the\numexpr\y+1\relax\endcsname
              \draw[] (b-\x-\y) -- (w-\the\numexpr\x-1\relax-\the\numexpr\y+1\relax);
            \fi

            \ifcsname pgf@sh@ns@w-\the\numexpr\x+1\relax-\the\numexpr\y+1\relax\endcsname
              \draw[] (b-\x-\y) -- (w-\the\numexpr\x+1\relax-\the\numexpr\y+1\relax);
            \fi

            \ifcsname pgf@sh@ns@w-\the\numexpr\x-1\relax-\the\numexpr\y-1\relax\endcsname
              \draw[] (b-\x-\y) -- (w-\the\numexpr\x-1\relax-\the\numexpr\y-1\relax);
            \fi

            \ifcsname pgf@sh@ns@w-\the\numexpr\x+1\relax-\the\numexpr\y-1\relax\endcsname
              \draw[] (b-\x-\y) -- (w-\the\numexpr\x+1\relax-\the\numexpr\y-1\relax);
            \fi

          }
        }
\foreach \y in {1,3,5}
{
    \pgfmathtruncatemacro{\k}{(\y+1)/2}

    \ifnum\k=1
        \draw[solidedge] (0.5,\y-0.5) -- (5.5,\y-0.5);
    \else

    \fi

    \ifnum\k=3
        \draw[solidedge]  (5.5,\y+0.5) -- (0.5,\y+0.5) ;
    \else
 
    \fi

    \ifnum\k=1
        \draw[hollowedge]  (\y-0.5,0.5) -- (\y-0.5,5.5);
    \else
        
    \fi

    \ifnum\k=3
        \draw[hollowedge] (\y+0.5,5.5) --  (\y+0.5,0.5);
    \else
       
    \fi
 
}
      \end{tikzpicture}
};

  \end{tikzpicture}

  \caption{A square Newton polygon $N$ (left), the Aztec diamond $\graphbeta$ (middle) and its medial cycle $\partial \graphbeta^\times$ (right). 
  }
  \label{fig:square_aztec}
\end{figure}

\begin{figure}
\centering

\begin{tikzpicture}

\node[anchor=center] (A) {
\begin{tikzpicture}[baseline=(current bounding box.center)]
\begin{scope}[rotate=0]
   \node[bvert] (1) at (0,0){}; 
    \node[bvert] (2) at (0,-1){}; 
    \node[bvert] (3) at (1,-2){}; 
    \node[bvert] (4) at (1,-1){};
    \node[bvert] (5) at (1,0){};

           \draw[solidedge] (2) -- (3);
           \draw[solidedge] (5) --  (1)
           ;

  \draw[hollowedge] (1) -- (2);
   \draw[hollowedge]   (3) --(4)
  ;
     \draw[hollowedge]   (4)-- (5)
  ;
  \end{scope}
\end{tikzpicture}
};

\node[anchor=center] (B) at ([xshift=4.2cm]A.east) {%
\begin{tikzpicture}[scale=1.2]
\begin{scope}[cm={0.25,-0.125,0,0.5,(0.625,0.1875)}]

 \fill[gray!30]
   (-2.5,-1) -- (-2.5,1) -- (1.5,2) -- (1.5,0) -- cycle;

  \node[wvert] (L1)  at (-4, 1) {};
  \node[wvert] (L0)  at (-4,-1) {};

  \node[bvert] (B1)  at (-3, 2) {};
  \node[bvert] (B0)  at (-3, 0) {};
  \node[bvert] (Bm1) at (-3,-2) {};

  \node[wvert] (W1)  at (-2, 2) {};
  \node[wvert] (W0)  at (-2, 0) {};
  \node[wvert] (Wm1) at (-2,-2) {};

  \node[bvert] (C1)  at (-1, 3) {};
  \node[bvert] (C0)  at (-1, 1) {};
  \node[bvert] (Cm1) at (-1,-1) {};
  \node[bvert] (Cm2) at (-1,-3) {};

  \node[wvert] (D1)  at ( 0, 2) {};
  \node[wvert] (D0)  at ( 0, 0) {};
  \node[wvert] (Dm1) at ( 0,-2) {};

  \node[bvert] (E1)  at ( 1, 3) {};
  \node[bvert] (E0)  at ( 1, 1) {};
  \node[bvert] (Em1) at ( 1,-1) {};
  \node[bvert] (Em2) at ( 1,-3) {};

  \node[wvert] (F1)  at ( 2, 3) {};
  \node[wvert] (F0)  at ( 2, 1) {};
  \node[wvert] (Fm1) at ( 2,-1) {};
  \node[wvert] (Fm2) at ( 2,-3) {};

  \node[bvert] (G1)  at ( 3, 4) {};
  \node[bvert] (G0)  at ( 3, 2) {};
  \node[bvert] (Gm1) at ( 3, 0) {};
  \node[bvert] (Gm2) at ( 3,-2) {};
  \node[bvert] (Gm3) at ( 3,-4) {};

  \node[wvert] (H1)  at ( 4, 3) {};
  \node[wvert] (H0)  at ( 4, 1) {};
  \node[wvert] (Hm1) at ( 4,-1) {};
  \node[wvert] (Hm2) at ( 4,-3) {};

  \draw
    (L1)--(B1) (L1)--(B0)
    (L0)--(B0) (L0)--(Bm1)
    (B1)--(W1) (B0)--(W0) (Bm1)--(Wm1)

    (W1)--(C1) (W1)--(C0)
    (W0)--(C0) (W0)--(Cm1)
    (Wm1)--(Cm1) (Wm1)--(Cm2)

    (C1)--(D1)
    (C0)--(D1) (C0)--(D0)
    (Cm1)--(D0) (Cm1)--(Dm1)
    (Cm2)--(Dm1)

    (D1)--(E1) (D1)--(E0)
    (D0)--(E0) (D0)--(Em1)
    (Dm1)--(Em1) (Dm1)--(Em2)

    (E1)--(F1) (E0)--(F0) (Em1)--(Fm1) (Em2)--(Fm2)

    (F1)--(G1) (F1)--(G0)
    (F0)--(G0) (F0)--(Gm1)
    (Fm1)--(Gm1) (Fm1)--(Gm2)
    (Fm2)--(Gm2) (Fm2)--(Gm3)

    (H1)--(G1) (H1)--(G0)
    (H0)--(G0) (H0)--(Gm1)
    (Hm1)--(Gm1) (Hm1)--(Gm2)
    (Hm2)--(Gm2) (Hm2)--(Gm3);

\end{scope}
\end{tikzpicture}

};

\end{tikzpicture}

\caption{A quadrilateral Newton polygon and an example of a corresponding AZ graph.}
\label{fig:quad}
\end{figure}

\begin{example}
When $N$ is a convex lattice quadrilateral, there is a one-parameter family of AZ graphs. If $N$ is the unit square, one choice of minimal graph $\graphpl$ is the square lattice. A general choice of $\bm\beta$ produces a rectangular subgraph, and admissibility forces this subgraph to be the Aztec diamond (Figure~\ref{fig:square_aztec}). Any other choice of minimal graph is related to the square lattice by local moves and has the same large-scale statistical-mechanical behavior. Thus, up to local moves, the Aztec diamonds are the only AZ graphs associated with the unit square Newton polygon. Figure~\ref{fig:quad} shows another quadrilateral Newton polygon together
with an associated AZ graph, called a \emph{tower graph}, which is the unique AZ graph up to size, {and it has been studied in~\cite{BF15,BNR23,Nicoletti}
and in the physics literature as the brane tiling/dimer model for the suspended pinch point singularity; see for example,~\cite{FHKVW06,FHMSVW06,Kennaway07,EF12}.} {Another example is given by the \emph{pinecone graphs} associated with Gale--Robinson sequences studied in \cite{BMPW09,JMZ13,CM26}.}
\end{example}

\begin{figure}
\centering

\begin{tikzpicture}

\node[anchor=center] (A) {%
\begin{tikzpicture}[baseline=(current bounding box.center)]
\begin{scope}[rotate=0]
   \node[bvert] (1) at (0,1){}; 
    \node[bvert] (2) at (1,0){}; 
    \node[bvert] (3) at (2,1){}; 
    \node[bvert] (4) at (1,1){};
    \node[bvert] (5) at (0,0){}; 
    \node[bvert] (6) at (1,2){};

           \draw[solidedge]  (5) -- (2);
           \draw[solidedge] (3) -- (6);
           \draw[solidedge] (1) --  (5)
           ;

  \draw[hollowedge] (2) -- (3);
   \draw[hollowedge]   (6) -- (1)
  ;
  \end{scope}
\end{tikzpicture}
};

\pgfmathsetmacro{\dx}{sqrt(3)}
\pgfmathsetmacro{\hx}{sqrt(3)/2}
\pgfmathsetmacro{\hy}{3/2}

\pgfmathsetmacro{\cmA}{1/sqrt(3)}
\pgfmathsetmacro{\cmB}{1/3}
\pgfmathsetmacro{\cmC}{2/3}

\node[anchor=center] (B) at ([xshift=4.2cm]A.east) {%
\begin{tikzpicture}[scale=2,cm={1,0,1,1,(0,0)}]

\coordinate (S1) at (3,0);
\coordinate (S2) at (2,0);
\coordinate (S3) at (4,-1);

\coordinate[wvert] (wh1) at (S1);
\coordinate[wvert] (wh2) at ($(S1)+(-2/3, 1/3)$);
\coordinate[wvert] (wh3) at (S2);
\coordinate[wvert] (wh4) at ($(S1)+(-1/3,-1/3)$);
\coordinate[wvert] (wh5) at ($(S1)+( 1/3,-2/3)$);
\coordinate[wvert] (wh6) at (S3);
\coordinate[wvert] (wh7) at ($(S1)+( 2/3,-1/3)$);

\coordinate (w1a) at ($(S1)+( 1/3, 1/3)$);
\coordinate (w2a) at ($(S1)+(-1/3, 2/3)$);

\coordinate (w1b) at ($(S2)+( 1/3, 1/3)$);
\coordinate (w5b) at ($(S2)+( 1/3,-2/3)$);
\coordinate[wvert] (w6b) at ($(S2)+( 2/3,-1/3)$);

\coordinate (w2c) at ($(S3)+(-1/3, 2/3)$);
\coordinate (w4c) at ($(S3)+(-1/3,-1/3)$);


\coordinate[bvert] (bl1) at ($(w2a)!0.5!(wh2)$);
\coordinate[bvert]  (bl2) at ($(wh2)!0.5!(wh4)$);
\coordinate[bvert] (bl3) at ($(w5b)!0.5!(w6b)$);
\coordinate[bvert](bl4) at ($(wh4)!0.5!(wh5)$);
\coordinate[bvert]  (bl5) at ($(wh5)!0.5!(w4c)$); 
\coordinate[bvert] (bl6) at ($(wh5)!0.5!(wh7)$);
\coordinate[bvert] (bl7) at ($(wh7)!0.5!(w1a)$);

\coordinate[bvert] (b61b) at ($(w6b)!0.5!(w1b)$);
\coordinate[bvert] (b23c) at ($(w2c)!0.5!(wh5)$); 

\draw (wh1) -- (bl1);
\draw (wh1) -- (bl2);
\draw (wh1) -- (bl4);
\draw (wh1) -- (bl6);
\draw (wh1) -- (bl7);

\draw (bl1) -- (wh2);

\draw (bl2) -- (wh2);
\draw (bl2) -- (wh4);

\draw (bl4) -- (wh4);
\draw (bl4) -- (wh5);

\draw (bl6) -- (wh5);
\draw (bl6) -- (wh7);

\draw (bl7) -- (wh7);

\draw (wh3) -- (bl3);
\draw (wh3) -- (b61b);
\draw (bl3) -- (w6b);

\draw (wh6) -- (b23c);
\draw (wh6) -- (bl5);
\draw (bl5) -- (wh5);

\midp{m_bl1_wh1}{bl1}{wh1}
\midp{m_bl2_wh1}{bl2}{wh1}
\midp{m_bl4_wh1}{bl4}{wh1}
\midp{m_bl6_wh1}{bl6}{wh1}
\midp{m_bl7_wh1}{bl7}{wh1}

\midp{m_bl1_wh2}{bl1}{wh2}
\midp{m_bl1_wh7}{bl1}{wh7}

\midp{m_bl2_wh1}{bl2}{wh1}
\midp{m_bl2_wh2}{bl2}{wh2}
\midp{m_bl2_wh3}{bl2}{wh3}
\midp{m_bl2_wh4}{bl2}{wh4}

\midp{m_bl3_wh3}{bl3}{wh3}
\midp{m_bl3_wh4}{bl3}{wh4}

\midp{m_bl4_wh1}{bl4}{wh1}
\midp{m_bl4_wh4}{bl4}{wh4}
\midp{m_bl4_wh5}{bl4}{wh5}

\midp{m_bl5_wh6}{bl5}{wh6}
\midp{m_bl5_wh5}{bl5}{wh5}

\midp{m_bl6_wh1}{bl6}{wh1}
\midp{m_bl6_wh5}{bl6}{wh5}
\midp{m_bl6_wh6}{bl6}{wh6}
\midp{m_bl6_wh7}{bl6}{wh7}

\midp{m_bl7_wh7}{bl7}{wh7}
\midp{m_bl7_wh6}{bl7}{wh6}

\end{tikzpicture}
};

\node[anchor=center] (C) at ([xshift=4.2cm]B.east) {%

\begin{tikzpicture}[scale=2,cm={1,0,1,1,(0,0)}]

\coordinate (S1) at (3,0);
\coordinate (S2) at (2,0);
\coordinate (S3) at (4,-1);

\coordinate[wvert] (wh1) at (S1);
\coordinate[wvert] (wh2) at ($(S1)+(-2/3, 1/3)$);
\coordinate[wvert] (wh3) at (S2);
\coordinate[wvert] (wh4) at ($(S1)+(-1/3,-1/3)$);
\coordinate[wvert] (wh5) at ($(S1)+( 1/3,-2/3)$);
\coordinate[wvert] (wh6) at (S3);
\coordinate[wvert] (wh7) at ($(S1)+( 2/3,-1/3)$);

\coordinate (w1a) at ($(S1)+( 1/3, 1/3)$);
\coordinate (w2a) at ($(S1)+(-1/3, 2/3)$);

\coordinate (w1b) at ($(S2)+( 1/3, 1/3)$);
\coordinate (w5b) at ($(S2)+( 1/3,-2/3)$);
\coordinate[wvert] (w6b) at ($(S2)+( 2/3,-1/3)$);

\coordinate (w2c) at ($(S3)+(-1/3, 2/3)$);
\coordinate (w4c) at ($(S3)+(-1/3,-1/3)$);


\coordinate[bvert] (bl1) at ($(w2a)!0.5!(wh2)$);
\coordinate[bvert]  (bl2) at ($(wh2)!0.5!(wh4)$);
\coordinate[bvert] (bl3) at ($(w5b)!0.5!(w6b)$);
\coordinate[bvert](bl4) at ($(wh4)!0.5!(wh5)$);
\coordinate[bvert]  (bl5) at ($(wh5)!0.5!(w4c)$); 
\coordinate[bvert] (bl6) at ($(wh5)!0.5!(wh7)$);
\coordinate[bvert] (bl7) at ($(wh7)!0.5!(w1a)$);

\coordinate[bvert] (b61b) at ($(w6b)!0.5!(w1b)$);
\coordinate[bvert] (b23c) at ($(w2c)!0.5!(wh5)$); 

\draw (wh1) -- (bl1);
\draw (wh1) -- (bl2);
\draw (wh1) -- (bl4);
\draw (wh1) -- (bl6);
\draw (wh1) -- (bl7);

\draw (bl1) -- (wh2);

\draw (bl2) -- (wh2);
\draw (bl2) -- (wh4);

\draw (bl4) -- (wh4);
\draw (bl4) -- (wh5);

\draw (bl6) -- (wh5);
\draw (bl6) -- (wh7);

\draw (bl7) -- (wh7);

\draw (wh3) -- (bl3);
\draw (wh3) -- (b61b);
\draw (bl3) -- (w6b);

\draw (wh6) -- (b23c);
\draw (wh6) -- (bl5);
\draw (bl5) -- (wh5);

\midp{m_bl1_wh1}{bl1}{wh1}
\midp{m_bl2_wh1}{bl2}{wh1}
\midp{m_bl4_wh1}{bl4}{wh1}
\midp{m_bl6_wh1}{bl6}{wh1}
\midp{m_bl7_wh1}{bl7}{wh1}

\midp{m_bl1_wh2}{bl1}{wh2}
\midp{m_bl1_wh7}{bl1}{wh7}

\midp{m_bl2_wh1}{bl2}{wh1}
\midp{m_bl2_wh2}{bl2}{wh2}
\midp{m_bl2_wh3}{bl2}{wh3}
\midp{m_bl2_wh4}{bl2}{wh4}

\midp{m_bl3_wh3}{bl3}{wh3}
\midp{m_bl3_wh4}{bl3}{wh4}

\midp{m_bl4_wh1}{bl4}{wh1}
\midp{m_bl4_wh4}{bl4}{wh4}
\midp{m_bl4_wh5}{bl4}{wh5}

\midp{m_bl5_wh6}{bl5}{wh6}
\midp{m_bl5_wh5}{bl5}{wh5}

\midp{m_bl6_wh1}{bl6}{wh1}
\midp{m_bl6_wh5}{bl6}{wh5}
\midp{m_bl6_wh6}{bl6}{wh6}
\midp{m_bl6_wh7}{bl6}{wh7}

\midp{m_bl7_wh7}{bl7}{wh7}
\midp{m_bl7_wh6}{bl7}{wh6}

\draw[hollowedge]
  plot[smooth] coordinates {
    (m_bl3_wh3)
      (m_bl2_wh3)
       (m_bl2_wh2)
      (m_bl1_wh2)
  };

\draw[hollowedge]
  plot[smooth] coordinates {
    (m_bl7_wh7)
      (m_bl6_wh7)
       (m_bl6_wh6)
      (m_bl5_wh6)
  };

  \draw[solidedge]
  plot[smooth] coordinates {
  (m_bl7_wh7)
    (m_bl1_wh7)
      (m_bl1_wh1)
       (m_bl1_wh2)
  };

   \draw[solidedge]
  plot[smooth] coordinates {
  (m_bl4_wh1)
    (m_bl4_wh5)
      (m_bl5_wh5)
       (m_bl5_wh6)
  };

\draw[solidedge]
  plot[smooth] coordinates {
  (m_bl3_wh3)
    (m_bl3_wh4)
      (m_bl4_wh4)
       (m_bl4_wh1)
  };

\end{tikzpicture}
};

\end{tikzpicture}
\caption{
An example of an AZ graph for the pentagonal Newton polygon in Figure~\ref{fig:pentagon_0} which we will use as our running example.
}
\label{fig:pentagon}
\end{figure}

\begin{example}
Figures~\ref{fig:pentagon_intro_1} and~\ref{fig:pentagon} show two examples of AZ graphs for the pentagonal Newton polygon in Figure~\ref{fig:pentagon_0}. For this choice of $\graphpl$, the space of AZ graphs forms a two-parameter family, which was studied {from the point of view of cluster algebras} in~\cite{GLVY}.

\end{example}

\begin{figure}
\centering
    
\raisebox{-0.5\height}{
\begin{tikzpicture}[shift={(-6,-1)}]
    \node[bvert](1) at (-1,1){}; 
     \node[bvert](2) at (1,0){}; 
      \node[bvert](3) at (1,1){}; 
       \node[bvert](4) at (0,1){};
           \node[bvert](5) at (0,0){}; 
           \node[bvert](6) at (-1,2){};
            \node[bvert](7) at (0,2){};

           \draw[solidedge]  (5) --
           (2);
           \draw[hollowedge] (3) --
           (7);

  \draw[solidedge] (2) --
  (3);
 \draw[solidedge] (7) --
 (6);
 \draw[solidedge] (6)--
 (1);
 \draw[hollowedge] (1)-- 
 (5)
  ;
       \end{tikzpicture} } \hspace{2cm} \raisebox{-0.5\height}{
\begin{tikzpicture}[scale = 1.5]
\begin{scope}[cm={1/sqrt(3),0,-1/3,2/3,(0,0)}]

  \pgfmathsetmacro{\dx}{sqrt(3)}      
  \pgfmathsetmacro{\hx}{sqrt(3)/2}    
  \pgfmathsetmacro{\hy}{3/2}

  \coordinate (C) at ({-1+0.35*\dx + -1*\hx},{-0.5*\hy});

  \fill[gray!30]
    ($(C)+(210:1)$) --
    ($(C)+(330:1)$) --
    ($(C)+($(330:1)+(90:1)-(210:1)$)$) --
    ($(C)+(90:1)$) --
    cycle;

\begin{scope}
   \coordinate[wvert] (w0) at (0,0);
      \coordinate[wvert] (w1) at (30:1);
      \coordinate[wvert] (w2) at (90:1);
      \coordinate[wvert] (w3) at (150:1);
      \coordinate[wvert] (w4) at (210:1);
      \coordinate[wvert] (w5) at (270:1);
      \coordinate[wvert] (w6) at (330:1);

      \coordinate[bvert] (b12) at ($(w1)!0.5!(w2)$);
      \coordinate[bvert] (b23) at ($(w2)!0.5!(w3)$);
      \coordinate[bvert] (b34) at ($(w3)!0.5!(w4)$);
      \coordinate[bvert] (b45) at ($(w4)!0.5!(w5)$);
      \coordinate[bvert] (b56) at ($(w5)!0.5!(w6)$);
      \coordinate[bvert] (b61) at ($(w6)!0.5!(w1)$);

      \draw (w0) -- (b12);
      \draw (w0) -- (b23);
      \draw (w0) -- (b34);
      \draw (w0) -- (b45);
      \draw (w0) -- (b56);
      \draw (w0) -- (b61);

      \draw (b12) -- (w1) (b12) -- (w2);
      \draw (b23) -- (w2) (b23) -- (w3);
      \draw (b34) -- (w3) (b34) -- (w4);
      \draw (b45) -- (w4) (b45) -- (w5);
      \draw (b56) -- (w5) (b56) -- (w6);
      \draw (b61) -- (w6) (b61) -- (w1);
\end{scope}

\begin{scope}[shift={(1*\dx +0*\hx,0*\hy)}]
   \coordinate[wvert] (w0) at (0,0);
      \coordinate[wvert] (w1) at (30:1);
      \coordinate[wvert] (w2) at (90:1);
      \coordinate[wvert] (w3) at (150:1);
      \coordinate[wvert] (w4) at (210:1);
      \coordinate[] (w5) at (270:1);
      \coordinate[wvert] (w6) at (330:1);

      \coordinate[bvert] (b12) at ($(w1)!0.5!(w2)$);
      \coordinate[bvert] (b23) at ($(w2)!0.5!(w3)$);
      \coordinate[bvert] (b34) at ($(w3)!0.5!(w4)$);
      \coordinate[bvert] (b45) at ($(w4)!0.5!(w5)$);
      \coordinate[bvert] (b56) at ($(w5)!0.5!(w6)$);
      \coordinate[bvert] (b61) at ($(w6)!0.5!(w1)$);

      \draw (w0) -- (b12);
      \draw (w0) -- (b23);
      \draw (w0) -- (b34);
      \draw (w0) -- (b45);
      \draw (w0) -- (b56);
      \draw (w0) -- (b61);

      \draw (b12) -- (w1) (b12) -- (w2);
      \draw (b23) -- (w2) (b23) -- (w3);
      \draw (b34) -- (w3) (b34) -- (w4);
      \draw (b45) -- (w4) ;
      \draw  (b56) -- (w6);
      \draw (b61) -- (w6) (b61) -- (w1);
\end{scope}

\begin{scope}[shift={(0*\dx +1*\hx,1*\hy)}]
   \coordinate[wvert] (w0) at (0,0);
      \coordinate[wvert] (w1) at (30:1);
      \coordinate[wvert] (w2) at (90:1);
      \coordinate[] (w3) at (150:1);
      \coordinate[wvert] (w4) at (210:1);
      \coordinate[wvert] (w5) at (270:1);
      \coordinate[wvert] (w6) at (330:1);

      \coordinate[bvert] (b12) at ($(w1)!0.5!(w2)$);
      \coordinate[bvert] (b23) at ($(w2)!0.5!(w3)$);
      \coordinate[bvert] (b34) at ($(w3)!0.5!(w4)$);
      \coordinate[bvert] (b45) at ($(w4)!0.5!(w5)$);
      \coordinate[bvert] (b56) at ($(w5)!0.5!(w6)$);
      \coordinate[bvert] (b61) at ($(w6)!0.5!(w1)$);

      \draw (w0) -- (b12);
      \draw (w0) -- (b23);
      \draw (w0) -- (b34);
      \draw (w0) -- (b45);
      \draw (w0) -- (b56);
      \draw (w0) -- (b61);

      \draw (b12) -- (w1) (b12) -- (w2);
      \draw (b23) -- (w2) ;
      \draw  (b34) -- (w4);
      \draw (b45) -- (w4) (b45) -- (w5);
      \draw (b56) -- (w5) (b56) -- (w6);
      \draw (b61) -- (w6) (b61) -- (w1);
\end{scope}

\begin{scope}[shift={(-1*\dx +1*\hx,-1*\hy)}]
   \coordinate[wvert] (w0) at (0,0);
      \coordinate[wvert] (w1) at (30:1);
      \coordinate[wvert] (w2) at (90:1);
      \coordinate[wvert] (w3) at (150:1);
      \coordinate[wvert] (w4) at (210:1);
      \coordinate[wvert] (w5) at (270:1);
      \coordinate[] (w6) at (330:1);

      \coordinate[bvert] (b12) at ($(w1)!0.5!(w2)$);
      \coordinate[bvert] (b23) at ($(w2)!0.5!(w3)$);
      \coordinate[bvert] (b34) at ($(w3)!0.5!(w4)$);
      \coordinate[bvert] (b45) at ($(w4)!0.5!(w5)$);
      \coordinate[bvert] (b56) at ($(w5)!0.5!(w6)$);
      \coordinate[bvert] (b61) at ($(w6)!0.5!(w1)$);

      \draw (w0) -- (b12);
      \draw (w0) -- (b23);
      \draw (w0) -- (b34);
      \draw (w0) -- (b45);
      \draw (w0) -- (b56);
      \draw (w0) -- (b61);

      \draw (b12) -- (w1) (b12) -- (w2);
      \draw (b23) -- (w2) (b23) -- (w3);
      \draw (b34) -- (w3) (b34) -- (w4);
      \draw (b45) -- (w4) (b45) -- (w5);
      \draw (b56) -- (w5) ;
      \draw  (b61) -- (w1);
\end{scope}

\begin{scope}[shift={(-1*\dx +0*\hx,0*\hy)}]
   \coordinate[wvert] (w0) at (0,0);
      \coordinate[wvert] (w1) at (30:1);
      \coordinate[] (w2) at (90:1);
      \coordinate[wvert] (w3) at (150:1);
      \coordinate[wvert] (w4) at (210:1);
      \coordinate[wvert] (w5) at (270:1);
      \coordinate[wvert] (w6) at (330:1);

      \coordinate[bvert] (b12) at ($(w1)!0.5!(w2)$);
      \coordinate[bvert] (b23) at ($(w2)!0.5!(w3)$);
      \coordinate[bvert] (b34) at ($(w3)!0.5!(w4)$);
      \coordinate[bvert] (b45) at ($(w4)!0.5!(w5)$);
      \coordinate[bvert] (b56) at ($(w5)!0.5!(w6)$);
      \coordinate[bvert] (b61) at ($(w6)!0.5!(w1)$);

      \draw (w0) -- (b12);
      \draw (w0) -- (b23);
      \draw (w0) -- (b34);
      \draw (w0) -- (b45);
      \draw (w0) -- (b56);
      \draw (w0) -- (b61);

      \draw (b12) -- (w1);
      \draw (b23) -- (w3);
      \draw (b34) -- (w3) (b34) -- (w4);
      \draw (b45) -- (w4) (b45) -- (w5);
      \draw (b56) -- (w5) (b56) -- (w6);
      \draw (b61) -- (w6) (b61) -- (w1);
\end{scope}

\begin{scope}[shift={(-2*\dx +0*\hx,0*\hy)}]
   \coordinate[wvert] (w0) at (0,0);
      \coordinate[wvert] (w1) at (30:1);
      \coordinate[] (w2) at (90:1);
      \coordinate[] (w3) at (150:1);
      \coordinate[] (w4) at (210:1);
      \coordinate[] (w5) at (270:1);
      \coordinate[] (w6) at (330:1);

      \coordinate[bvert] (b12) at ($(w1)!0.5!(w2)$);
     
      \coordinate[bvert] (b61) at ($(w6)!0.5!(w1)$);

      \draw (w0) -- (b12);
     
      \draw (w0) -- (b61);

      \draw (b12) -- (w1) ;

      \draw (b61) -- (w1);
\end{scope}

\begin{scope}[shift={(-2*\dx +1*\hx,-1*\hy)}]
   \coordinate[wvert] (w0) at (0,0);
      \coordinate[wvert] (w1) at (30:1);
      \coordinate[] (w2) at (90:1);
      \coordinate[] (w3) at (150:1);
      \coordinate[] (w4) at (210:1);
      \coordinate[] (w5) at (270:1);
      \coordinate[] (w6) at (330:1);

      \coordinate[bvert] (b12) at ($(w1)!0.5!(w2)$);
     
      \coordinate[bvert] (b61) at ($(w6)!0.5!(w1)$);

      \draw (w0) -- (b12);
     
      \draw (w0) -- (b61);

      \draw (b12) -- (w1) ;

      \draw (b61) -- (w1);
\end{scope}

\begin{scope}[shift={(-2*\dx +2*\hx,-2*\hy)}]
   \coordinate[wvert] (w0) at (0,0);
      \coordinate[wvert] (w1) at (30:1);
      \coordinate[] (w2) at (90:1);
      \coordinate[] (w3) at (150:1);
      \coordinate[] (w4) at (210:1);
      \coordinate[] (w5) at (270:1);
      \coordinate[] (w6) at (330:1);

      \coordinate[bvert] (b12) at ($(w1)!0.5!(w2)$);
     
      \coordinate[bvert] (b61) at ($(w6)!0.5!(w1)$);

      \draw (w0) -- (b12);
     
      \draw (w0) -- (b61);

      \draw (b12) -- (w1) ;

      \draw (b61) -- (w1);
\end{scope}

\begin{scope}[shift={(2*\dx +0*\hx,0*\hy)}]
   \coordinate[wvert] (w0) at (0,0);
      \coordinate[] (w1) at (30:1);
      \coordinate[] (w2) at (90:1);
      \coordinate[] (w3) at (150:1);
      \coordinate[wvert] (w4) at (210:1);
      \coordinate[] (w5) at (270:1);
      \coordinate[] (w6) at (330:1);

      \coordinate[bvert] (b34) at ($(w3)!0.5!(w4)$);
      \coordinate[bvert] (b45) at ($(w4)!0.5!(w5)$);
 

      \draw (w0) -- (b34);
      \draw (w0) -- (b45);

      \draw  (b34) -- (w4);
      \draw (b45) -- (w4) ;
 
\end{scope}

\begin{scope}[shift={(1*\dx +1*\hx,1*\hy)}]
   \coordinate[wvert] (w0) at (0,0);
      \coordinate[] (w1) at (30:1);
      \coordinate[] (w2) at (90:1);
      \coordinate[] (w3) at (150:1);
      \coordinate[wvert] (w4) at (210:1);
      \coordinate[] (w5) at (270:1);
      \coordinate[] (w6) at (330:1);

      \coordinate[bvert] (b34) at ($(w3)!0.5!(w4)$);
      \coordinate[bvert] (b45) at ($(w4)!0.5!(w5)$);
 

      \draw (w0) -- (b34);
      \draw (w0) -- (b45);

      \draw  (b34) -- (w4);
      \draw (b45) -- (w4) ;
 
\end{scope}

\begin{scope}[shift={(0*\dx +2*\hx,2*\hy)}]
   \coordinate[wvert] (w0) at (0,0);
      \coordinate[] (w1) at (30:1);
      \coordinate[] (w2) at (90:1);
      \coordinate[] (w3) at (150:1);
      \coordinate[wvert] (w4) at (210:1);
      \coordinate[] (w5) at (270:1);
      \coordinate[] (w6) at (330:1);

      \coordinate[bvert] (b34) at ($(w3)!0.5!(w4)$);
      \coordinate[bvert] (b45) at ($(w4)!0.5!(w5)$);
 

      \draw (w0) -- (b34);
      \draw (w0) -- (b45);

      \draw  (b34) -- (w4);
      \draw (b45) -- (w4) ;
 
\end{scope}

\end{scope}
\end{tikzpicture}}
\caption{A hexagonal Newton polygon and an example of an AZ graph. }\label{fig:hexagon}
\end{figure}

\begin{example}
Let $N$ be the hexagon shown in Figure~\ref{fig:hexagon}(left). Figure~\ref{fig:hexagon}(right) shows a choice of fundamental domain for $\graphtor$ and an example of an AZ graph. The class of AZ graphs for this $\graphpl$ has been studied extensively {from the cluster algebra point of view} in~\cite{CY10,dP315,dP32014,dP32017,dP324}.
\end{example}

\newcommand{\DrawMyTileFig}{%
  \coordinate (LT) at (0,4);
  \coordinate (LU) at (0,3);
  \coordinate (LM) at (0,2);
  \coordinate (LB) at (0,0);

  \coordinate (TL) at (1,4);
  \coordinate (TR) at (3,4);
  \coordinate (RT) at (4,4);
  \coordinate (RU) at (4,3);
  \coordinate (RM) at (4,2);
  \coordinate (RB) at (4,0);

  \coordinate (BL) at (1,0);
  \coordinate (BR) at (3,0);

  \coordinate (aL) at (1.32,3.35);
  \coordinate (bL) at (1.30,2.32);
  \coordinate (cL) at (0.55,2.00);
  \coordinate (dL) at (1.30,1.55);
  \coordinate (eL) at (1.32,0.52);

  \coordinate (ct) at (2,2.95);
  \coordinate (cm) at (2,2.00);
  \coordinate (cb) at (2,0.95);

  \coordinate (aR) at (2.68,3.35);
  \coordinate (bR) at (2.70,2.32);
  \coordinate (cR) at (3.45,2.00);
  \coordinate (dR) at (2.70,1.55);
  \coordinate (eR) at (2.68,0.52);

  \draw (LT) -- (TL);
  \draw (TR) -- (RT);
  \draw (RU) -- (RM) -- (RB);
  \draw (LU) -- (LM) -- (LB);
  \draw (LB) -- (BL);
  \draw (BR) -- (RB);

  \draw (LT) -- (bL);
  \draw (LU) -- (bL);
  \draw (LM) -- (cL);

  \draw (TL) -- (aL);
  \draw (aL) -- (ct);
  \draw (aL) -- (bL);

  \draw (bL) -- (cL);
  \draw (cL) -- (dL);
  \draw (bL) -- (cm);
  \draw (dL) -- (cm);

  \draw (dL) -- (eL);
  \draw (BL) -- (eL);
  \draw (eL) -- (cb);

  \draw (ct) -- (cm) -- (cb);

  \draw (ct) -- (aR);
  \draw (aR) -- (TR);
  \draw (aR) -- (bR);

  \draw (RT) -- (bR);
  \draw (RU) -- (bR);
  \draw (RM) -- (cR);

  \draw (bR) -- (cR);
  \draw (cR) -- (dR);
  \draw (bR) -- (cm);
  \draw (dR) -- (cm);

  \draw (dR) -- (eR);
  \draw (BR) -- (eR);
  \draw (eR) -- (cb);

  \foreach \p in {LM,RM,TL,TR,BL,BR,bL,dL,ct,cb,bR,dR}
    \node[bvert] at (\p) {};

  \foreach \p in {LT,LU,LB,RT,RU,RB,aL,cL,eL,cm,aR,cR,eR}
    \node[wvert] at (\p) {};
}

\newcommand{\OctCoordsFig}{%
  \coordinate (LT) at (0,4);
  \coordinate (LU) at (0,3);
  \coordinate (LM) at (0,2);
  \coordinate (LB) at (0,0);

  \coordinate (TL) at (1,4);
  \coordinate (TR) at (3,4);
  \coordinate (RT) at (4,4);
  \coordinate (RU) at (4,3);
  \coordinate (RM) at (4,2);
  \coordinate (RB) at (4,0);

  \coordinate (BL) at (1,0);
  \coordinate (BR) at (3,0);

  \coordinate (aL) at (1.32,3.35);
  \coordinate (bL) at (1.30,2.32);
  \coordinate (cL) at (0.55,2.00);
  \coordinate (dL) at (1.30,1.55);
  \coordinate (eL) at (1.32,0.52);

  \coordinate (ct) at (2,2.95);
  \coordinate (cm) at (2,2.00);
  \coordinate (cb) at (2,0.95);

  \coordinate (aR) at (2.68,3.35);
  \coordinate (bR) at (2.70,2.32);
  \coordinate (cR) at (3.45,2.00);
  \coordinate (dR) at (2.70,1.55);
  \coordinate (eR) at (2.68,0.52);
}

\newcommand{\OctFullFig}{%
  \OctCoordsFig
  \draw (LT) -- (TL);
  \draw (RM) -- (RB);
  \draw (LU) -- (LM) -- (LB);
  \draw (LB) -- (BL);
  \draw (BR) -- (RB);

  \draw (LT) -- (bL);
  \draw (LU) -- (bL);
  \draw (LM) -- (cL);

  \draw (TL) -- (aL);
  \draw (aL) -- (ct);
  \draw (aL) -- (bL);

  \draw (bL) -- (cL);
  \draw (cL) -- (dL);
  \draw (bL) -- (cm);
  \draw (dL) -- (cm);

  \draw (dL) -- (eL);
  \draw (BL) -- (eL);
  \draw (eL) -- (cb);

  \draw (ct) -- (cm) -- (cb);

  \draw (ct) -- (aR);
  \draw (aR) -- (TR);
  \draw (aR) -- (bR);

  \draw (RM) -- (cR);

  \draw (bR) -- (cR);
  \draw (cR) -- (dR);
  \draw (bR) -- (cm);
  \draw (dR) -- (cm);

  \draw (dR) -- (eR);
  \draw (BR) -- (eR);
  \draw (eR) -- (cb);

  \foreach \p in {LM,RM,TL,TR,BL,BR,bL,dL,ct,cb,bR,dR}
    \node[bvert] at (\p) {};
  \foreach \p in {LT,LU,LB,RB,aL,cL,eL,cm,aR,cR,eR}
    \node[wvert] at (\p) {};
}

\newcommand{\OctFullPFig}{%
  \OctCoordsFig
  \draw (LB) -- (BL) -- (eL) -- (cb) -- (eR) -- (BR)
        (cb) -- (cm) -- (dR) -- (eR);
  \foreach \p in {BL,cb,BR,dR}
    \node[bvert] at (\p) {};
  \foreach \p in {LB,eL,eR,cm}
    \node[wvert] at (\p) {};
}

\newcommand{\OctWFig}{%
  \OctCoordsFig
  \draw (bR) -- (RU)
        (bR) -- (RT);
  \foreach \p in {RM,bR}
    \node[bvert] at (\p) {};
  \foreach \p in {RT,RU}
    \node[wvert] at (\p) {};
}

\newcommand{\OctOOFig}{%
  \OctCoordsFig
  \draw (RU) -- (RM) -- (RB);
  \draw (LU) -- (LM);
  \draw (BR) -- (RB);

  \draw (LT) -- (bL);
  \draw (LU) -- (bL);
  \draw (LM) -- (cL);

  \draw (aL) -- (ct);
  \draw (aL) -- (bL);

  \draw (bL) -- (cL);
  \draw (cL) -- (dL);
  \draw (bL) -- (cm);
  \draw (dL) -- (cm);

  \draw (dL) -- (eL);
  \draw (BL) -- (eL);
  \draw (eL) -- (cb);

  \draw (ct) -- (cm) -- (cb);

  \draw (ct) -- (aR);
  \draw (aR) -- (bR);

  \draw (RT) -- (bR);
  \draw (RU) -- (bR);
  \draw (RM) -- (cR);

  \draw (bR) -- (cR);
  \draw (cR) -- (dR);
  \draw (bR) -- (cm);
  \draw (dR) -- (cm);

  \draw (dR) -- (eR);
  \draw (BR) -- (eR);
  \draw (eR) -- (cb);

  \foreach \p in {LM,RM,BL,BR,bL,dL,ct,cb,bR,dR}
    \node[bvert] at (\p) {};
  \foreach \p in {LT,LU,RT,RU,RB,aL,cL,eL,cm,aR,cR,eR}
    \node[wvert] at (\p) {};
}

\newcommand{\OctOMFig}{%
  \OctCoordsFig
  \draw (TR) -- (RT);

  \draw (TL) -- (aL);
  \draw (aL) -- (ct);
  \draw (aL) -- (bL);

  \draw (bL) -- (cL);
  \draw (cL) -- (dL);
  \draw (bL) -- (cm);
  \draw (dL) -- (cm);

  \draw (dL) -- (eL);
  \draw (eL) -- (cb);

  \draw (ct) -- (cm) -- (cb);

  \draw (ct) -- (aR);
  \draw (aR) -- (TR);
  \draw (aR) -- (bR);

  \draw (RT) -- (bR);
  \draw (RU) -- (bR);

  \draw (bR) -- (cR);
  \draw (cR) -- (dR);
  \draw (bR) -- (cm);
  \draw (dR) -- (cm);

  \draw (dR) -- (eR);
  \draw (eR) -- (cb);

  \foreach \p in {TL,TR,bL,dL,ct,cb,bR,dR}
    \node[bvert] at (\p) {};
  \foreach \p in {RT,RU,aL,cL,eL,cm,aR,cR,eR}
    \node[wvert] at (\p) {};
}

\newcommand{\OctSEFig}{%
  \OctCoordsFig
  \draw (LT) -- (TL);
  \draw (TR) -- (RT);

  \draw (LT) -- (bL);
  \draw (LU) -- (bL);

  \draw (TL) -- (aL);
  \draw (aL) -- (ct);
  \draw (aL) -- (bL);

  \draw (bL) -- (cL);
  \draw (cL) -- (dL);
  \draw (bL) -- (cm);
  \draw (dL) -- (cm);

  \draw (dL) -- (eL);
  \draw (eL) -- (cb);

  \draw (ct) -- (cm) -- (cb);

  \draw (ct) -- (aR);
  \draw (aR) -- (TR);
  \draw (aR) -- (bR);

  \draw (RT) -- (bR);
  \draw (RU) -- (bR);

  \draw (bR) -- (cR);
  \draw (cR) -- (dR);
  \draw (bR) -- (cm);
  \draw (dR) -- (cm);

  \draw (dR) -- (eR);
  \draw (eR) -- (cb);

  \foreach \p in {TL,TR,bL,dL,ct,cb,bR,dR}
    \node[bvert] at (\p) {};
  \foreach \p in {LT,LU,RT,RU,aL,cL,eL,cm,aR,cR,eR}
    \node[wvert] at (\p) {};
}

\newcommand{\OctSEEFig}{%
  \OctCoordsFig
  \draw (LT) -- (TL);

  \draw (LT) -- (bL);
  \draw (LU) -- (bL);

  \draw (TL) -- (aL);
  \draw (aL) -- (ct);
  \draw (aL) -- (bL);

  \draw (bL) -- (cL);
  \draw (cL) -- (dL);
  \draw (bL) -- (cm);
  \draw (dL) -- (cm);

  \draw (dL) -- (eL);
  \draw (eL) -- (cb);

  \draw (ct) -- (cm) -- (cb);

  \draw (bR) -- (cR);
  \draw (cR) -- (dR);
  \draw (bR) -- (cm);
  \draw (dR) -- (cm);

  \draw (dR) -- (eR);
  \draw (eR) -- (cb);

  \foreach \p in {TL,bL,dL,ct,cb,bR,dR}
    \node[bvert] at (\p) {};
  \foreach \p in {LT,LU,aL,cL,eL,cm,cR,eR}
    \node[wvert] at (\p) {};
}

\begin{figure}
\centering

\begin{subfigure}{0.2\textwidth}
\begin{tikzpicture}[baseline=(current bounding box.center), line cap=round, line join=round, scale=0.72]
  \node[bvert] (1) at (0,0) {};
  \node[bvert] (2) at (1,0) {};
  \node[bvert] (3) at (2,1) {};
  \node[bvert] (4) at (2,2) {};
  \node[bvert] (5) at (1,3) {};
  \node[bvert] (6) at (0,3) {};
  \node[bvert] (7) at (-1,2) {};
  \node[bvert] (8) at (-1,1) {};
  \node[bvert] at (0,1) {};
  \node[bvert] at (0,2) {};
  \node[bvert] at (1,1) {};
  \node[bvert] at (1,2) {};

  \draw[hollowedge] (1) -- (2);
  \draw[hollowedge] (6) -- (7);
  \draw[hollowedge] (5) -- (6);
  \draw[solidedge]  (8) -- (1);
  \draw[solidedge]  (7) -- (8);
    \draw[solidedge]  (3) -- (4);
     \draw[solidedge]  (4) -- (5);
       \draw[solidedge]  (2) -- (3);
\end{tikzpicture}
\end{subfigure}
\begin{subfigure}{0.33\textwidth}
\centering
\begin{tikzpicture}[baseline=(current bounding box.center), line cap=round, line join=round, scale=0.55]
  \def\dx{1.65}
  \def\dy{-1.14}
  \fill[gray!30](\dx,\dy) rectangle ++(4,4);

  \begin{scope}[shift={(0,0)}]
    \OctOOFig
  \end{scope}
  \begin{scope}[shift={(4,0)}]
    \OctFullFig
  \end{scope}
  \begin{scope}[shift={(0,-4)}]
    \OctOMFig
  \end{scope}
  \begin{scope}[shift={(-4,0)}]
    \OctWFig
  \end{scope}
  \begin{scope}[shift={(4,4)}]
    \OctFullPFig
  \end{scope}
  \begin{scope}[shift={(4,-4)}]
    \OctSEFig
  \end{scope}
  \begin{scope}[shift={(8,-4)}]
    \OctSEEFig
  \end{scope}
\end{tikzpicture}

\end{subfigure}




\caption{An octagonal Newton polygon \(N\) and an AZ graph.}
\label{fig:octagon-fd-graph}
\end{figure}

\begin{example}
    Figure~\ref{fig:octagon-fd-graph} shows an example of an AZ graph for an octagonal Newton polygon. The corresponding torus graph was studied in~\cite{octagon}.
\end{example}

\subsection{Criterion for admissibility}

For any $e \in E(N)$, we label the strands $\bm \alpha_e = \{\alpha_e^i: i \in \ZZ +\frac 1 2\}$ parallel to $e$ from left to right so that the reference face $\f_0$ lies in between $\alpha_e^{-\frac 1 2}$ and $\alpha_e^{\frac 1 2}$. Define 
\[
c_{\bm \beta} : E(N) \rightarrow \ZZ+\tfrac 1 2
\]
by $\alpha_e^{c_{\bm \beta}(e)} = \beta_e$.

\begin{lemma}\label{lem:c_admissibility}
For each \(e\in E(N)\),
\[
\deg\bigl((\bm D_{\bm\beta})_{\zz_e}\bigr)
=
-c_{\bm\beta}(e)+
\begin{cases}
-\frac12,& \text{if \(e\) is black},\\
\phantom{-}\frac12,& \text{if \(e\) is white}.
\end{cases}
\]
In particular, \(\graphbeta\) is admissible if and only if
\begin{equation}\label{eq:c_admissible}
\sum_{e\in E(N)} c_{\bm\beta}(e)
=
\frac12\Bigl(\#\{e\in E(N): e\text{ is white}\}-\#\{e\in E(N): e\text{ is black}\}\Bigr).
\end{equation}
\end{lemma}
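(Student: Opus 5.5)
The plan is to compute $\deg\bigl((\bm D_{\bm\beta})_{\zz_e}\bigr)=-\deg\bigl(\dabel(\x)_{\zz_e}\bigr)$ directly from the intersection formula~\eqref{eq:dabel_intersection_version}, where $\x$ is a $\beta_e$-outer vertex. First I reduce to faces. Choose a path $\gamma$ from $\f_0$ to $\x$ that ends by passing through a face $\f$ adjacent to $\x$. I take $\f$ to lie on the inner side of $\beta_e$, so that $\f$ and $\x$ are separated exactly by the strand $\beta_e$. By the definition of the discrete Abel map, $\dabel(\x)=\dabel(\f)+\beta_e$ if $\x$ is black and $\dabel(\x)=\dabel(\f)-\beta_e$ if $\x$ is white. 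Restricting to $\zz_e$, the change is $\pm1$ in degree, with sign $+1$ for black $e$ and $-1$ for white $e$. By definition, the color of $e$ is the color of its $\beta_e$-outer vertices.

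Next I compute $\deg\bigl(\dabel(\f)_{\zz_e}\bigr)$ for a face $\f$ lying between $\beta_e=\alpha_e^{c}$ and the neighbouring strand on the inner side. As in the proof of Lemma~\ref{lem:parallel_zz_same_sign}, the strands of $\zz_e$ are pairwise disjoint and cut the plane into ordered strips. A path from $\f_0$ to $\f$ minimizing the number of crossings with $\zz_e$ therefore crosses each intermediate strand exactly once, and always in the same direction. The face $\f_0$ sits in the strip between $\alpha_e^{-1/2}$ and $\alpha_e^{1/2}$. Hence $\f$ lies in the strip between $\alpha_e^{c-1/2}$ and $\alpha_e^{c}$, or between $\alpha_e^{c}$ and $\alpha_e^{c+1/2}$, depending on which side is inner. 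The number of strands crossed, counted with sign, is then $c\mp\tfrac12$ up to an overall sign fixed by the convention~\eqref{eq:intersection_convention} and the left-to-right labelling.

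I expect the main obstacle to be this sign bookkeeping. The claim to verify is that the inner side of $\beta_e$ is determined by the color of $e$. For a black edge, the $\beta_e$-outer vertices are black, and since $\partial\graphbeta^\times$ is counterclockwise and $e$'s orientation agrees with it, the interior lies to the left of $\beta_e$. Which side counts as ``left'' relative to the labelling direction must be checked against the orientation of strands and the convention that $\dabel(\bl)=\dabel(\f)+\alpha$ for a strand separating them. I would fix this by examining one local picture, for instance the Aztec diamond in Figure~\ref{fig:square_aztec}, and checking that the combined contribution is $\deg\dabel(\x)_{\zz_e}=c_{\bm\beta}(e)+\tfrac12$ for black $e$ and $c_{\bm\beta}(e)-\tfrac12$ for white $e$. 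This is consistent with the total degree being $+1$ for black vertices and $-1$ for white vertices. Negating gives the stated formula.

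Finally, I sum over $e\in E(N)$. The condition $\deg(\bm D_{\bm\beta})=0$ becomes
\[
-\sum_e c_{\bm\beta}(e)-\tfrac12\#\{\text{black}\}+\tfrac12\#\{\text{white}\}=0,
\]
which is exactly~\eqref{eq:c_admissible}.
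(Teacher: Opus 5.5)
Your proposal is correct and is essentially the paper's proof. The paper writes \(\dabel(\x)_{\zz_e}\) for a \(\beta_e\)-outer vertex explicitly as \(\pm\) a run of consecutive strands \(\alpha_e^{1/2}+\cdots\) or \(-\cdots-\alpha_e^{-1/2}\), splitting into the cases \(c_{\bm\beta}(e)\gtrless 0\), and then counts them. That is exactly your strip count to the adjacent inner face followed by the final \(\pm\beta_e\) step.

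The sign bookkeeping you defer needs no example check; write \(c=c_{\bm\beta}(e)\). Strands run clockwise around black vertices, so \(\dabel(\bl)=\dabel(\f)+\alpha\) says that crossing a strand from its left to its right adds \(+\alpha\). Labels increase from left to right, and for black \(e\) the interior lies to the left of \(\beta_e\). Hence the inner face sits between \(\alpha_e^{c-1}\) and \(\alpha_e^{c}\), giving \(c-\tfrac12+1\); symmetrically, white \(e\) gives \(c+\tfrac12-1\). Note also that \(\alpha_e^{c\pm1/2}\), as written in your proposal, are not strands; the neighbours of \(\alpha_e^{c}\) are \(\alpha_e^{c\pm1}\).
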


\begin{proof}
We have, for a $\beta_e$-outer vertex $\bl$,
\[
\dabel(\bl)_{\zz_e} = \begin{cases}
-\alpha^{c_{\bm \beta}(e)+1}_{e}- \cdots - \alpha_e^{-\frac 1 2},   &\text{if $c_{\bm \beta}(e) <0$},\\
 \alpha_e^{\frac 1 2}+ \cdots +\alpha_e^{c_{\bm \beta}(e)},  &\text{if $c_{\bm \beta}(e) > 0$},\\
\end{cases}
\]
if $e$ is black, so $\deg ((\bm D_{\bm \beta})_{\zz_e})=-c_{\bm \beta}(e)-\frac 1 2$.
Similarly, for a $\beta_e$-outer vertex $\wh$,
\[
\dabel(\wh)_{\zz_e} = \begin{cases}
-\alpha^{c_{\bm \beta}(e)}_e- \cdots - \alpha_e^{-\frac 1 2},   &\text{if $c_{\bm \beta}(e) <0$},\\
 \alpha_e^{\frac 1 2}+ \cdots +\alpha_e^{c_{\bm \beta}(e)-1},  &\text{if $c_{\bm \beta}(e) > 0$},\\
\end{cases}
\]
if $e$ is white, so $\deg ((\bm D_{\bm \beta})_{\zz_e}) = -c_{\bm \beta}(e)+\frac 1 2$. Summing over all edges gives~\eqref{eq:c_admissible}.
\end{proof}

\begin{figure}
\centering


};

\end{tikzpicture}

\caption{Classes of parallel strands for the graph in Figure~\ref{fig:pentagon}. {The shaded face is the reference face $\f_0$.}}

\label{fig:pentagon_strands}
\end{figure}

\begin{example}
Let us verify that the graph in Figure~\ref{fig:pentagon} is admissible.
The strands are shown in Figure~\ref{fig:pentagon_strands}, from which we read
\[
c_{\bm \beta}(e_1) = -\frac12,\quad
c_{\bm \beta}(e_2) = -\frac12,\quad
c_{\bm \beta}(e_3) = \frac12,\quad
c_{\bm \beta}(e_4) = -\frac12,\quad
c_{\bm \beta}(e_5) = \frac12.
\]
Summing gives
\[
\sum_{i=1}^5 c_{\bm \beta}(e_i)
= -\frac12
= \frac12(2-3),
\]
so $\graphbeta$ is admissible by Lemma~\ref{lem:c_admissibility}.
Alternatively, from Figure~\ref{fig:pentagon} and Figure~\ref{fig:pentagon_strands} we see that
\begin{equation} \label{eq:bigD}
\bm D_{\bm \beta} = \alpha_{e_2}^{-\frac12}-\alpha_{e_3}^{\frac12}+\alpha_{e_4}^{-\frac12}-\alpha_{e_5}^{\frac12},
\end{equation}
hence $\deg(\bm D_{\bm \beta}) = 0$, which again shows admissibility.
\end{example}

\subsection{Characterization of AZ graphs}


\begin{figure}
\centering

\begin{tikzpicture}
    \filldraw[fill=gray!30, draw=none]
(-1,-1) -- (-1,5) -- (1,5) -- (0,4) -- (1,3) -- (0,2) -- (1,1) -- (2,0) -- (3,1) -- (4,0) -- (5,1) -- (5,-1) -- (-1,-1);

    \coordinate[bvert] (b0) at (0,0) {};
     \coordinate[bvert] (b1) at (2,0) {};
      \coordinate[bvert] (b2) at (4,0) {};
      \coordinate[bvert] (b3) at (0,2) {};
     \coordinate[bvert] (b4) at (0,4) {};

     \coordinate[wvert] (w0) at (1,1) {};
     \coordinate[wvert] (w1) at (3,1) {};
      \coordinate[wvert] (w2) at (5,1) {};
      \coordinate[wvert] (w3) at (1,3) {};
     \coordinate[wvert] (w4) at (1,5) {};

\draw[] (w4) -- (b4) -- (w3) -- (b3) -- (w0) -- (b1) -- (w1) -- (b2) -- (w2)
(b0) -- node[above left]{$\e_v$} (w0)
;

\draw[] (w1) edge (3.5,1.75) edge (2.5,1.75);
\draw[] (w3) edge (1.75,3.5) edge (1.75,2.5);
\draw[] (w0) edge (1.18,1.88) edge (1.88,1.18);

\draw[] (b1) edge (2.5,-0.75) edge (1.5,-0.75);
\draw[] (b2) edge (4.5,-0.75) edge (3.5,-0.75);

\draw[] (b3) edge (-0.75,2.5) edge (-0.75,1.5);
\draw[] (b4) edge (-0.75,4.5) edge (-0.75,3.5);

\draw[] (b0) edge (-0.18,-0.88) edge (-0.88,-0.18);

\tikzset{
  xvert/.style={
    cross out,
    draw,
    line width=0.6pt,
    minimum size=5pt,
    inner sep=0pt,
    outer sep=0pt
  }
}

\draw[line width = 0.7mm, red] 
(b0) -- (w0)
(b1) -- (w1)
(b2) -- (w2)
($(b3)+(0.03,-0.03)$) -- ($(w3)+(0.03,-0.03)$)
(b4) -- (w4)
;

\draw[line width = 0.7mm, blue] 
(b0) -- (-0.18,-0.88)
(b1) -- (w0)
(b2) -- (w1)
($(b3)+(-0.03,0.03)$) -- ($(w3)+(-0.03,0.03)$)
(b4) -- (-0.75,4.5)
;

   \coordinate[bvert] (b0) at (0,0) {};
     \coordinate[bvert] (b1) at (2,0) {};
      \coordinate[bvert] (b2) at (4,0) {};
      \coordinate[bvert] (b3) at (0,2) {};
     \coordinate[bvert] (b4) at (0,4) {};

     \coordinate[wvert] (w0) at (1,1) {};
     \coordinate[wvert] (w1) at (3,1) {};
      \coordinate[wvert] (w2) at (5,1) {};
      \coordinate[wvert] (w3) at (1,3) {};
     \coordinate[wvert] (w4) at (1,5) {};

     \coordinate[xvert] (f0) at (1.5,1.5) {};
     \coordinate[xvert] (f1) at (2,.5) {};
     \coordinate[xvert] (f2) at (3,1.5) {};
     \coordinate[xvert] (f3) at (4,.5) {};
     \coordinate[xvert] (f5) at (.5,2) {};
     \coordinate[xvert] (f6) at (1.5,3) {};
     \coordinate[xvert] (f7) at (.5,4) {};

     \draw[dotted, very thick] (f3) -- (f2) -- (f1) -- (f0) -- (f5);

\end{tikzpicture}

\caption{
The faces marked by $\times$ are the \(\beta_{e_-}\)- and \(\beta_{e_+}\)-outer faces associated with the vertex \(v=e_-\cap e_+\in V(N)\). The shaded region indicates \(\graphbeta\). The blue edges form a dimer cover \(M\) of \(\graphbeta\), while the red edges form the extremal dimer cover \(M_v\). Any dual path between two marked outer faces, such as the dotted path shown, crosses no edges of either \(M\) or \(M_v\). Hence, the difference \(h_{M_v}(\f)-\partial h(\f)\) in~\eqref{eq:const_height_diff} is constant on these outer faces.
}
\label{fig:char_fig} 
\end{figure}

In this section we prove that, among graphs whose boundary consists of zig-zag paths in the prescribed order, the only ones that admit dimer covers are the AZ graphs. The converse, namely that every AZ graph admits a dimer cover, is proved later in Corollary~\ref{cor:dimer_cover}.

Let \(\partial \graphbeta^\times\), \(\partial \graphbeta\), and \(\graphbeta\) be as in Section~\ref{sec:def_AZ_graphs}. {We say that a face \( \f \in F(\graphpl) \setminus F(\graphbeta) \) is \emph{outer} if it is adjacent to a vertex of \(\graphbeta\), and \emph{\(\beta_e\)-outer} for $e \in E(N)$ if it is adjacent to a vertex in the zig-zag path corresponding to the strand \(\beta_e\).}

\begin{proposition}\label{prop:AZ_graphs_height_function}
Assume that Items~$1$ and~$2$ in Definition~\ref{def:azgraph} hold. If~$\graphbeta$ admits a dimer cover, then it is an AZ graph.
\end{proposition}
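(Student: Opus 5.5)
We argue via height functions, following the picture in Figure~\ref{fig:char_fig}. Suppose $M$ is a dimer cover of $\graphbeta$. Extend the height function $h_M$ to the outer faces: an outer face $\f$ is joined to $\f_0$ by a dual path that stays inside $\graphbeta$ except for its last step, and that last step crosses only edges of $\graphpl$ not in $\graphbeta$, which belong to neither $M$ nor the reference cover. On the outer faces we then compare $h_M$ with the height functions $h_{M_v}$ of the extremal dimer covers $M_v$, $v\in V(N)$, of the whole plane graph $\graphpl$. The goal is to show that going once around the boundary produces a total change equal to $\deg(\bm D_{\bm\beta})$. Since $h_M$ is single-valued, this change must vanish, which is exactly admissibility~\eqref{eq:adm_defn}.

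\textbf{Step 1: the local picture at a boundary segment.} Fix $e\in E(N)$ and consider the outer faces along the segment of $\beta_e$. Each $\beta_e$-outer vertex has degree one or two in $\graphbeta$, and its only edges there are edges of the zig-zag path $\beta_e$. Walking along the boundary, these vertices are forced to be matched in a single alternating pattern along $\beta_e$. Concretely, near $v=e_-\cap e_+$ the cover $M$ uses the edges of $\beta_{e_\pm}$ that are shifted by one step relative to $M_v$. Consequently a dual path running through consecutive outer faces near $v$ crosses no edge of $M$ or of $M_v$, so $h_M-h_{M_v}$ is constant on the $\beta_{e_-}$- and $\beta_{e_+}$-outer faces near $v$, as in~\eqref{eq:const_height_diff}. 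Using Lemma~\ref{lem:cyclic_order_same} and the description of $M_u$ in Section~\ref{sec:slope}, I would check that along all of $\beta_e$ the height $h_M$ on outer faces agrees, up to an additive constant, with both $h_{M_{v_-}}$ and $h_{M_{v_+}}$. This uses that $M_{v_-}$ and $M_{v_+}$ coincide on edges of the $\beta_e$ zig-zag path, up to the alternation determined by the colour of $e$.

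\textbf{Step 2: telescoping around the boundary.} Traverse $\partial\graphbeta^\times$ around the vertices $v$, passing from $\beta_{e_+}$ to $\beta_{e_-}$. The additive constant $h_M-h_{M_v}$ then jumps to $h_M-h_{M_{v'}}$ at the next vertex $v'$. The jump equals $h_{M_{v'}}-h_{M_v}$ evaluated at an outer face $\f$ of $\beta_e$, where $e=[v',v]$ or $[v,v']$. By the intersection-number definition~\eqref{eq:dabel_intersection_version}, this difference counts, with sign, the strands of $\zz_e$ separating $\f_0$ from $\f$. That count is $\pm\deg\bigl(\dabel(\x)_{\zz_e}\bigr)$ for a $\beta_e$-outer vertex $\x$, corrected by $\tfrac12$ depending on the colour, in the same way as in Lemma~\ref{lem:c_admissibility}. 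Summing over all $e$ gives total monodromy $\pm\deg(\bm D_{\bm\beta})$. Single-valuedness of $h_M$ around the simple closed boundary, which is guaranteed by Items~1 and~2, forces this total to be $0$.

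\textbf{Main obstacle.} The hardest step is the bookkeeping in Step~2. One must identify exactly which strands of $\zz_e$ separate $\f_0$ from an outer face of $\beta_e$, including the half-integer offsets coming from black versus white colour and the orientation sign. Only then does the per-edge jump equal $\deg((\bm D_{\bm\beta})_{\zz_e})$ rather than something off by a constant.

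I would first compute $h_{M_u}$ explicitly via the slope formula~\eqref{eq:slope_integral}, in its combinatorial form: $h_{M_u}(\f)$ is a signed count of strands crossed, weighted by whether $e(\alpha)$ lies before or after $u$. With that formula, the difference $h_{M_{v_+}}-h_{M_{v_-}}$ isolates the strands of $\zz_e$ alone. A secondary subtlety is vertices visited twice by $\partial\graphbeta$ (Item~2). For these I would check that consecutive visits merely merge the constancy regions of Step~1 and create no extra jump.
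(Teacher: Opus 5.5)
Your skeleton is the paper's: the constants $d_v$ of \eqref{eq:const_height_diff}, and the telescoping identity $\sum_{e=[v_-,v_+]}(d_{v_+}-d_{v_-})=0$, whose terms are $h_{M_{v_+}}(\f)-h_{M_{v_-}}(\f)$ at a $\beta_e$-outer face $\f$. The genuine difference is how this jump is evaluated. The paper writes it through the whole-plane inverses $\mathsf A^{v_\pm}$ and Lemma~\ref{lem:fays}. Your direct count also works and is more elementary.

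By the description of $M_u$ in Section~\ref{sec:slope}, an edge lies in exactly one of $M_{v_+},M_{v_-}$ if and only if one of its two strands lies in $\zz_e$. So $M_{v_+}\triangle M_{v_-}$ is the disjoint union of the zig-zag paths of $\zz_e$, and on each of them $M_{v_+}-M_{v_-}$ is an alternating unit flow. The height difference therefore changes by $\pm1$ exactly across these paths. Comparing with \eqref{eq:dabel_intersection_version}, with a sign independent of $e$, gives $d_{v_+}-d_{v_-}=-\deg(\dabel(\f)_{\zz_e})$, as in the paper.

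This also settles your ``main obstacle'': no colour-dependent $\tfrac12$ appears. The face $\f$ is separated from an adjacent $\beta_e$-outer vertex $\x$ by a strand through $\x$ other than $\beta_e$, because the corner of $\x$ cut by $\beta_e$ lies in an inner face. By Lemma~\ref{lem:cyclic_order_same} that strand is not parallel to $e$. Hence $\dabel(\f)_{\zz_e}=\dabel(\x)_{\zz_e}=-(\bm D_{\bm\beta})_{\zz_e}$ exactly, and the jump is exactly $\deg((\bm D_{\bm\beta})_{\zz_e})$. The half-integers of Lemma~\ref{lem:c_admissibility} only arise when converting to the labels $c_{\bm\beta}(e)$. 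A colour-dependent $\pm\tfrac12$ would in fact ruin the conclusion: it would add $\pm\tfrac12(\#\text{black}-\#\text{white})$, which is nonzero in the pentagonal example following Lemma~\ref{lem:c_admissibility}.

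Two of your supporting claims are false, although the conclusions you draw from them survive.

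First, $M$ is not forced along $\beta_e$. Already in the smallest Aztec diamond, a $4$-cycle, every boundary vertex is matched differently in the two dimer covers. Also, an outer vertex may have degree $k+1$ in $\graphbeta$ (Figure~\ref{fig:nhd_outer_white}). The constancy of $h_M-h_{M_v}$ needs no information about $M$ at all. A dual path through consecutive $\beta_{e_\pm}$-outer faces crosses only edges with exactly one endpoint in $\graphbeta$. Such edges are never in $M$. They are not in $M_v$ either, because $M_v$ contains every second edge of $\beta_{e_\pm}$ and therefore matches every vertex of these zig-zag paths along them.

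Second, a dual path from a face of $\graphbeta$ to an outer face must cross an edge of $\partial\graphbeta$. That edge belongs to $\graphbeta$ and may lie in $M$, and edges leaving $\graphbeta$ may lie in the whole-plane reference cover $M_{u_0}$. So your description of how $h_M$ extends to outer faces is wrong. What makes the boundary heights consistent all the way around is that $M-M_{u_0}$ is divergence-free at every vertex of $\graphbeta$. In other words, this step uses the existence of $M$; Items~1 and~2 only make the region a disk. With these repairs your argument is complete.
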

\begin{proof}
We prove that if $\graphbeta$ admits a dimer cover, then Item 3 in Definition~\ref{def:azgraph}, \emph{i.e.} the admissibility condition, is satisfied. 

Fix a vertex~$u_0\in V(N)$ and let $M_0:=M_{u_0}$ denote the extremal dimer cover from Section~\ref{sec:slope}. Let~$M$ be a dimer cover of~$\graphbeta$ and let~$h_M$ be the associated height function on~$F(\graphbeta)$ with reference dimer cover $M_{0}$ and reference face $\f_0$. {The union of two dimer covers $M$ and $M'$ of \(\graphbeta\) forms a collection of loops. The nontrivial loops can be interpreted as level lines of the height difference \(h_M - h_{M'}\). In particular, this difference vanishes on the outer faces, so \(h_M\) and \(h_{M'}\) induce the same height function on the outer faces, \emph{i.e.} the boundary values of \(h_M\) are independent of \(M\).}
We denote this common boundary height function by \(\partial h\).


Recall that for \(v\in V(N)\), the extremal dimer cover \(M_v\) is, by definition, the dimer cover consisting of all edges \(\e\) such that \(e(\alpha)<v<e(\beta)\) along \(\partial N\), where \(\alpha\) and \(\beta\) are the two strands passing though \(\e\) as in Figure~\ref{fig:fock_kast}. In particular, if $e \in E(N)$ is an edge containing $v$, then every second edge of $\beta_e$ is contained in $M_v$. Hence, 
\begin{equation}\label{eq:const_height_diff}
h_{M_v}(\f) - \partial h(\f)
\end{equation}
is constant for all $\beta_{e_-}$- and $\beta_{e_+}$-outer faces $\f$, where $v = e_- \cap e_+$; see Figure~\ref{fig:char_fig}. We denote this constant value by $d_v$. 

Now let $\f$ be any $\beta_e$-outer face for $e = [v_-,v_+]$. Then
\begin{equation}\label{eq:dv_dv}
d_{v_+}-d_{v_-}=\sum_{\e\in E(\graphpl)}(\e \wedge \gamma^*)\EE[\one_{\e\in M_{v_+}}-\one_{\e\in M_{v_-}}],
\end{equation}
where $\gamma^*$ is any dual path from the reference face $\f_0$ to $\f$. The right-hand side of~\eqref{eq:dv_dv} can be expressed in terms of the whole-plane inverse Kasteleyn matrices from Section~\ref{sec:whole_plane_inverse_K} as
\begin{align*}
\sum_{\e\in E(\graphpl)}(\e \wedge \gamma^*)\EE[\one_{\e\in M_{v_+}}-\one_{\e\in M_{v_-}}]
&=\sum_{\e=\bl\wh\in E(\graphpl)}(\e \wedge \gamma^*)\left(\mathsf{A}_{\bl,\wh}^{v_+}\kast_{\wh,\bl}-\mathsf{A}_{\bl,\wh}^{v_-}\kast_{\wh,\bl}\right) \\
&=-\sum_{\e\in E(\graphpl)}(\e \wedge \gamma^*)\frac{1}{2\pi\i}\int_{C([v_-,v_+])}g_{\bl,\wh}\cdot\kast_{\wh,\bl}.
\end{align*}
If~$\e=\bl \wh \in E(\graphpl)$, and~$\f_+,\f_-\in F(\graphpl)$ are adjacent to~$\e$ as in Figure~\ref{fig:fock_kast}, then by Lemma~\ref{lem:fays},
\begin{equation}
\frac{1}{2\pi\i}\int_{C([v_+,v_-])}g_{\bl,\wh}\cdot\kast_{\wh,\bl}=\frac{1}{2\pi\i}\int_{C([v_+,v_-])}\omega_{\dabel(\f_+)-\dabel(\f_-)}=\deg((\dabel(\f_+)-\dabel(\f_-))_{\zz_e}).
\end{equation}
Summing over the edges crossing~$\gamma^*$ yields
\begin{equation}\label{eq:difference_constant_terms}
d_{v_+}-d_{v_-}=-\deg(\dabel(\f)_{\zz_e}).
\end{equation}

Finally, summing~\eqref{eq:difference_constant_terms} over all~$e = [v_-,v_+]\in E(N)$, and using the definition of~$\bm D_{\bm\beta}$ in~\eqref{eq:d_beta_defn}, we obtain
\begin{equation}
0=\sum_{e=[v_-,v_+]\in E(N)}(d_{v_+}-d_{v_-})
=\deg (\bm D_{\bm\beta}),
\end{equation}
which is the admissibility condition.
\end{proof}

\begin{remark}
The proof of Proposition~\ref{prop:AZ_graphs_height_function} shows more generally that, if Items~$1$ and~$2$ of Definition~\ref{def:azgraph} hold, then~$\graphbeta$ is an AZ graph whenever the boundary height function is single-valued on the~$\beta_e$-outer faces; equivalently, the height increments along the boundary close up.
\end{remark}

\subsection{Convex and concave vertices}

\begin{figure}
\centering
\begin{tikzpicture}
\begin{scope}[shift={(-2,-2.5)},scale = 2]
    \coordinate[bvert] (1) at (0,0);
    \coordinate[bvert, label=below:$v$] (2) at (1,0);

    \coordinate[bvert] (3) at (2,0.5);
    \coordinate (d) at (2,0); 

    \draw[->] (1) -- node[above]{$\vec e_-$} (2);
    \draw[->] (2) -- node[above]{$\vec e_+$} (3);
    \draw[dashed] (2) -- (d);

    \pic[draw, angle radius=10mm, angle eccentricity=1.3, "$\theta$", ->] {angle = d--2--3};
    \node at (1,-.5) {Newton polygon};
\end{scope}

\begin{scope}[shift={(7,0)}, scale = 2]
    \node[] (no) at (0.7,-0.2) {$\beta_{e_-}$};
    \coordinate[nvert] (1) at (0,0);
    \coordinate[nvert] (2) at (1,0);
    \coordinate[nvert] (3) at (1,0.5);
     \coordinate (d) at (-1,-0.5);
   \draw[dashed] (1) -- (d);

    \draw[solidedge] (1) --  (2);
    \draw[hollowedge] (1) -- node[above]{$\beta_{e_+}$} (3);

\pic[draw, angle radius=10mm, angle eccentricity=1.3, "$\pi-\theta$",->] {angle = d--1--2};
  
\end{scope}

\begin{scope}[shift={(10,0)}, scale = 2]
    \coordinate[nvert] (1) at (0,0);
    \coordinate[nvert] (2) at (1,0);
    \coordinate[nvert] (3) at (2,0.5);
    \coordinate (d) at (0,-0.5); 

    \draw[hollowedge] (1) -- node[above]{$\beta_{e_-}$} (2);
    \draw[hollowedge] (2) -- node[above]{$\beta_{e_+}$} (3);
    \draw[dashed] (2) -- (d);

\pic[draw, angle radius=10mm, angle eccentricity=1.3, "$\theta$",<-] {angle = 1--2--d};

\end{scope}

\begin{scope}[shift={(5,-5)}, scale = 2]
    \coordinate[nvert] (1) at (0,0);
    \coordinate[nvert] (2) at (0,0.5);
    \coordinate[nvert] (3) at (1,0.5);
     \coordinate (d) at (2,1);
   \draw[dashed] (3) -- (d);
    \node[] (no) at (1-0.7,0.5+0.2) {$\beta_{e_-}$};

    \draw[hollowedge] (2) --  (3);
    \draw[solidedge] (1) -- node[below]{$\beta_{e_+}$} (3);

\pic[draw, angle radius=10mm, angle eccentricity=1.3, "$\pi-\theta$",->] {angle = d--3--2};
    \node at (1,-.5) {convex vertices};
\end{scope}

\begin{scope}[shift={(10,-5)}, scale = 2]
    \coordinate[nvert] (1) at (0,0);
    \coordinate[nvert] (2) at (2,0.5);
    \coordinate[nvert] (3) at (1,0.5);
    \coordinate (d) at (2,1); 

    \draw[solidedge] (3) -- node[below]{$\beta_{e_-}$} (2);
    \draw[solidedge] (1) -- node[below]{$\beta_{e_+}$} (3);
    \draw[dashed] (3) -- (d);

\pic[draw, angle radius=10mm, angle eccentricity=1.3, "$\theta$",<-] {angle = 2--3--d};
    \node at (1,-.5) {concave vertices};
\end{scope}

    \end{tikzpicture}
\caption{Change of Gauss map $\gamma$ at a vertex $v = e_- \cap e_+$, where $\theta = \arg(\vec e_+) - \arg(\vec{e}_-)$. 
} \label{fig:argument}
\end{figure}

\begin{definition}
    We call a vertex $v=e_-\cap e_+\in V(N)$ \emph{convex} if $e_-$ and $e_+$ have opposite colors (equivalently, $\sign_{\bm\beta}(e_-)\neq \sign_{\bm\beta}(e_+)$), and \emph{concave} otherwise.
\end{definition}

\begin{lemma}\label{lem:four_convex_vertices}
There are exactly four convex vertices in $V(N)$.
\end{lemma}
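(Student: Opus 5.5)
The plan is a turning-number (Gauss map) argument along the simple closed medial cycle $\partial\graphbeta^\times$, as suggested by Figure~\ref{fig:argument}. Model the cycle as a closed polygonal curve whose sides are the pieces of $\beta_e$. These pieces are traversed in the clockwise cyclic order of $V(N)$. Each side points in direction $\sign_{\bm\beta}(e)\,\vec e$, because a black edge has its strand oriented along the counterclockwise traversal and a white edge has it reversed. Since $\partial\graphbeta^\times$ is simple (Item~1 of Definition~\ref{def:azgraph}) and oriented counterclockwise, its total turning is $+2\pi$. After passing to a large-scale picture, where each strand $\beta_e$ is replaced by a straight segment in the asymptotic direction $\vec e$, this identity has to be converted into a count of colour changes.

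At a vertex $v=e_-\cap e_+$, let $\theta_v\in(0,\pi)$ be the exterior angle of $N$. Then $\sum_v\theta_v=2\pi$. The curve passes from the side along $\beta_{e_+}$ to the side along $\beta_{e_-}$, so the direction changes from $\sign(e_+)\vec e_+$ to $\sign(e_-)\vec e_-$.
\begin{itemize}
\item If the colours agree (a concave vertex), the turn is $-\theta_v$.
\item If the colours differ (a convex vertex), the direction is rotated by $\pi-\theta_v$, and the turn is $\pm(\pi-\theta_v)$.
\end{itemize}
The sign in the convex case must be fixed from the local picture: the region $\graphbeta$ lies on the left of the counterclockwise cycle, and Lemma~\ref{lem:cyclic_order_same} controls how consecutive strands cross. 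Figure~\ref{fig:argument} indicates the turn is $+(\pi-\theta_v)$. Writing $k$ for the number of convex vertices, summing gives
\[
2\pi=\sum_{v\text{ convex}}(\pi-\theta_v)-\sum_{v\text{ concave}}\theta_v=k\pi-2\pi,
\]
so $k=4$.

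To make this rigorous without appealing to asymptotics, I would compute the turning of the actual simple closed curve $\partial\graphbeta^\times$. By minimality, each strand $\beta_e$ is a quasi-geodesic that stays within bounded distance of a straight line in direction $\vec e$. The medial segments can then be homotoped, relative to the corner points $\e_v$, to straight segments. The total rotation of a simple closed curve is a homotopy invariant provided the curve stays immersed, which holds once the corner turning angles are defined through the local crossing geometry at $\e_v$.

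The main obstacle is the sign determination at convex vertices. Here the two strands $\beta_{e_-}$ and $\beta_{e_+}$ cross transversally at $\e_v$, and the boundary follows one strand into the crossing and leaves along the other. One must show that the resulting corner has exterior turn $\pi-\theta_v$ rather than $-(\pi+\theta_v)$, that is, that the region does not wrap around. I would first try to read this off from the medial orientation conventions: medial edges go clockwise around black vertices and counterclockwise around white ones, and together with the colour rule this determines on which side of each strand the interior lies. A local check at $\e_v$ then fixes the sign. Parity alone already shows that $k$ is even, since colours alternate across convex vertices.
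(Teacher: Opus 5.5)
Your proposal is correct and is essentially the paper's own argument. The paper uses the same Gauss map $\e\mapsto\arg(\sign_{\bm\beta}(e)\vec e)$ along $\partial\graphbeta^\times$, reads off jumps of $\pi-\theta_v$ at convex vertices and $-\theta_v$ at concave vertices from Figure~\ref{fig:argument}, and combines total turning $2\pi$ (from simplicity) with $\sum_v\theta_v=2\pi$ to get $k\pi-2\pi=2\pi$. The sign issue you flag is handled there simply by taking corner turns in $(-\pi,\pi)$, which excludes $-(\pi+\theta_v)$; your intermediate ``$\pm(\pi-\theta_v)$'' should accordingly read ``$\pi-\theta_v$ or $-(\pi+\theta_v)$''.
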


\begin{proof}
Consider the ``Gauss map''
\[
\gamma: V(\partial \graphbeta^\times)\setminus\{\e_v\}_{v\in V(N)} \rightarrow \RR/2 \pi \ZZ,
\qquad
\gamma(\e)
:=
\arg(\sign_{\bm\beta}(e) \vec{e})
\quad\text{if }\e\in V(\beta_e),
\]
sending a medial vertex $\e\in V(\partial\graphbeta^\times)$ to the oriented ``tangent direction'' when $\partial \graphbeta^\times$ is oriented counterclockwise.
At $v=e_-\cap e_+$, the map $\gamma$ changes by (see Figure~\ref{fig:argument})
\[
\pi-(\arg(\vec{e}_+)-\arg(\vec{e}_-)) \quad\text{if $v$ is convex,}
\qquad
-(\arg(\vec{e}_+)-\arg(\vec{e}_-)) \quad\text{if $v$ is concave}. 
\]
Since $\partial\graphbeta^\times$ is simple, the total change of $\gamma$ along $\partial\graphbeta^\times$ is $2\pi$. Therefore,
\[
\pi\cdot \#\{\text{convex vertices}\}
-\sum_{v=e_-\cap e_+\in V(N)}(\arg(\vec{e}_+)-\arg(\vec{e}_-))= 2\pi.
\]
On the other hand, since $\partial N$ is a simple closed curve,
\[
\sum_{v=e_-\cap e_+\in V(N)}(\arg(\vec{e}_+)-\arg(\vec{e}_-)) = 2\pi.
\]
Substituting gives $\pi\cdot \#\{\text{convex vertices}\}=4\pi$, hence, there are exactly four convex vertices.
\end{proof}

Thus, the convex vertices partition $\partial N$ and $\partial \graphbeta^\times$ into four intervals that alternate in color (see Figure~\ref{fig:Gbeta}).

\subsection{Chambers}\label{sec:chambers_and_sign_changes}

Unlike the case of the Aztec diamond, the relative position of vertices with respect to strands need not be uniform. Thus, we introduce ``{local} sign functions'' in addition to the global sign function $\sign_{\bm\beta}$ from~\eqref{eq:sign_convention}.

The collection of strands $\bm\beta$ partitions the plane into several (open) \emph{chambers}. Precisely, the chambers are the connected components of 
\[
\RR^2 \setminus \bigcup_{e \in E(N)} \beta_e.
\]
We call the chambers contained inside (resp. outside) $\partial \graphbeta^\times$ \emph{inner} (resp. \emph{outer}) chambers. Each chamber $\graphregion$ corresponds to a \emph{sign function}
\begin{equation}\label{eq:local_sign}
\sign_{\graphregion} : E(N)\rightarrow\{+,-\},
\qquad
\sign_{\graphregion}(e) :=
\begin{cases}
+,& \text{if ${\graphregion}$ lies to the left of $\beta_e$,}\\
-,& \text{if ${\graphregion}$ lies to the right of $\beta_e$.}
\end{cases}
\end{equation}
Recall that we identify signs with colors, and therefore, $\sign_{\graphregion}$ equivalently defines a coloring of $E(N)$ for each chamber.

\begin{definition}
Let $v=e_-\cap e_+\in V(N)$ be a vertex of the Newton polygon. We say that a chamber ${\graphregion}$ has a \emph{sign (or color) change} at $v$ if
\[
\sign_{\graphregion}(e_-)\neq \sign_{\graphregion}(e_+).
\]
We write $\SC({\graphregion})\subset V(N)$ for the set of sign changes of ${\graphregion}$.
\end{definition}

\begin{lemma}\label{lem:four_signs_locally}
An inner chamber has exactly four sign changes, while an outer chamber has exactly two.
\end{lemma}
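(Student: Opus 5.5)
The plan is to reduce the count of sign changes of a chamber to a winding-number computation, in the spirit of Lemma~\ref{lem:four_convex_vertices}. The reduction step will compare a chamber with a neighbouring one across a single strand, and then treat the region $\graphbeta$ and the outer chambers separately.

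\emph{Reduction to the global sign function.} The inner chamber containing $\graphbeta$ (the region enclosed by $\partial\graphbeta^\times$ near its boundary arcs) has $\sign_{\graphregion}=\sign_{\bm\beta}$. Indeed, $e$ is black exactly when the orientation of $\beta_e$ agrees with the counterclockwise orientation of $\partial\graphbeta^\times$. That agreement is exactly the statement that the interior lies to the left of $\beta_e$. So $\SC(\graphregion)$ is the set of convex vertices, and Lemma~\ref{lem:four_convex_vertices} gives four sign changes.

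\emph{Crossing a single strand.} For a general chamber, crossing $\beta_e$ flips only $\sign(e)$. This toggles membership of both endpoints $v_\pm$ of $e$ in $\SC$, so the parity of $\#\SC$ is preserved, but the count can change by $-2$, $0$ or $+2$. Parity alone is therefore not enough, and the argument needs more geometry.

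\emph{Main argument via winding numbers.} Since strands in a minimal graph behave like lines with asymptotic direction $\vec e$, I would model each $\beta_e$ as an oriented curve asymptotic to a line with direction $\vec e$. Two strands $\beta_e,\beta_{e'}$ with $e\neq e'$ then intersect an odd number of times, essentially once. A chamber $\graphregion$ has $\sign_\graphregion(e)=+$ iff it lies to the left of $\beta_e$. Take a point $p$ in $\graphregion$ and consider the map $e\mapsto \sign_\graphregion(e)\vec e$ around $\partial N$. The two cases then go as follows.
\begin{itemize}
\item \textbf{Outer chambers.} Far away in direction $\theta$, a point lies to the left of $\beta_e$ iff $\vec e$ points counterclockwise relative to $\theta$. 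As $e$ runs around $\partial N$, $\sign(e)$ then changes exactly twice, at the directions $\pm\theta$. This gives two sign changes for unbounded chambers. Every outer chamber is unbounded or can be connected to infinity avoiding $\partial\graphbeta^\times$; one should check that outer chambers touching $\graphbeta$ only at corners are unbounded too.
\item \textbf{Inner chambers.} The count should follow from the Gauss map argument of Lemma~\ref{lem:four_convex_vertices} applied to $\partial\graphregion$ traversed counterclockwise. Each side of $\partial\graphregion$ lies on some $\beta_e$ with tangent $\sign_\graphregion(e)\vec e$. Summing turning angles gives $2\pi$, and the same computation yields $\pi\#\SC(\graphregion)=4\pi$. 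This needs the sides of $\partial\graphregion$ to appear in the opposite cyclic order to $\partial N$, with each $e$ contributing at most once.
\end{itemize}

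\emph{Main obstacle.} The hardest step is justifying that bounded chambers have boundary sides appearing in the reversed cyclic order, each strand at most once. I would derive this from the simplicity of $\partial\graphbeta^\times$ and the fact that non-parallel strands of $\bm\beta$ cross essentially once. Failing that, I would fall back on induction: start from the chamber $\graphbeta$ and move across strands, using that crossing $\beta_e$ from an inner to an inner chamber happens only at an edge where both neighbours keep the local convexity pattern, so that $\#\SC$ is preserved.
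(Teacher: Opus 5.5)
There is a genuine gap, in the step you yourself call the main one. The turning count of Lemma~\ref{lem:four_convex_vertices} works because $\partial\graphbeta^\times$ runs along \emph{every} $\beta_e$ exactly once, with consecutive sides on strands of adjacent edges $e_\pm$ of $N$. Only then is each exterior angle equal to $\pi-\theta_v$ or $-\theta_v$, with $\theta_v=\arg\vec e_+-\arg\vec e_-\in(0,\pi)$ and $\sum_v\theta_v=2\pi$.

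The boundary of a chamber $\graphregion$, by contrast, runs only along the strands that actually bound it. The extended strands cut the region inside $\partial\graphbeta^\times$ into several inner chambers with different sign functions (compare $\sign_{\graphregion_{\bl_1}}$ and $\sign_{\graphregion_{\bl_5}}$ in Section~\ref{sec:pentagon_example}). So your reduction identifies the sign function of at most one particular chamber, if such a chamber exists at all. A turning count along $\partial\graphregion$ can only detect sign changes of $\sign_{\graphregion}$ restricted to the bounding strands. Strands not touching $\graphregion$ still contribute to $\SC(\graphregion)$, and at corners where strands of non-adjacent edges of $N$ meet, the exterior angle is no longer of the above form. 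For example, an inner chamber bounded by only three strands occurs next to a concave corner. It can show at most two sign changes among those strands, yet it must have four. So even proving that the sides of $\partial\graphregion$ come in reversed cyclic order, each strand once, would not finish the argument. Your fallback induction does not help either: it presupposes that crossing a strand between inner chambers preserves $\#\SC$, which is Lemma~\ref{lem:sign_change_cts}, and the paper derives that \emph{from} the present lemma.

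Outer chambers have the same problem, because bounded ones exist. For the tower graphs of Figure~\ref{fig:quad}, the limiting domain is a trapezoid. The strands through its two non-parallel opposite sides cross outside it, and together with the strand through one of the parallel sides they enclose a bounded outer chamber adjacent to a whole side of the domain. Your asymptotic count does not reach this chamber. Any path from it to infinity crosses extended strands, where, as you note, $\#\SC$ may jump by $\pm 2$.

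The missing idea is to keep the \emph{same} cycle $\partial\graphbeta^\times$ for every chamber and change only the Gauss map, to $\e\mapsto\arg(\sign_{\graphregion}(e)\vec e)$ for $\e\in V(\beta_e)$. That is, traverse each side in the direction that puts $\graphregion$ on its left. For $p\in\graphregion$ (idealizing strands as lines, as in Lemma~\ref{lem:four_convex_vertices}), this direction stays counterclockwise of the direction from $p$ to the moving point, by an angle in $(0,\pi)$. Hence the total turning equals $2\pi$ times the winding number of $\partial\graphbeta^\times$ about $p$: $2\pi$ for inner chambers and $0$ for outer ones. At $\e_v$ the jump is $\pi-\theta_v$ if $v\in\SC(\graphregion)$ and $-\theta_v$ otherwise. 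Therefore $\pi\,\#\SC(\graphregion)-2\pi$ equals $2\pi$ or $0$, i.e.\ $\#\SC(\graphregion)=4$ or $2$. This is the paper's proof: it treats all chambers, bounded or not, uniformly and never needs to analyze $\partial\graphregion$.
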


\begin{proof}
The argument is identical to that of Lemma~\ref{lem:four_convex_vertices}, with the Gauss map replaced by 
\[
\e \mapsto \arg(\sign_{\graphregion}(e) \vec{e})
\quad\text{if }\e\in V(\beta_e),
\]
\emph{i.e.} the tangent direction as seen from ${\graphregion}$ and using that the total winding is $2\pi$ for an inner chamber and $0$ for an outer chamber.
\end{proof}

\begin{figure}
\centering
\begin{tikzpicture}[scale=0.5, rotate=45]
\filldraw[fill=gray!30, draw=none]
 (-0.5,0.5) --  (5.5,0.5) --(-0.5,6.5)-- (-0.5,0.5) ;
\filldraw[fill=gray!30, draw=none]
 (-0.5,0.5) -- (-6.5,0.5) --(-0.5,-5.5) -- (-0.5,0.5) ;

\draw[blue,->,thick] (-6.5,0.5) -- (5.5,0.5);
\draw[red,->,thick] (-0.5,-5.5) -- (-0.5,6.5);

\coordinate[bvert] (b-3) at (-6,0) {};
\coordinate[wvert] (w-3) at (-5,1) {};
\coordinate[bvert] (b-2) at (-4,0) {};
\coordinate[wvert] (w-2) at (-3,1) {};
\coordinate[bvert] (b-1) at (-2,0) {};
\coordinate[wvert] (w-1) at (-1,1) {}; 
\coordinate[bvert] (b0)  at (0,0) {};  
\coordinate[wvert] (w1)  at (1,1) {};
\coordinate[bvert] (b1)  at (2,0) {};
\coordinate[wvert] (w2)  at (3,1) {};
\coordinate[bvert] (b2)  at (4,0) {};
\coordinate[wvert] (w3)  at (5,1) {};

\coordinate[bvert] (vb1) at (0,2) {};
\coordinate[wvert] (vw2) at (-1,3) {};
\coordinate[bvert] (vb2) at (0,4) {};
\coordinate[wvert] (vw3) at (-1,5) {};
\coordinate[bvert] (vb3) at (0,6) {};

\coordinate[wvert] (vw-1) at (-1,-1) {};
\coordinate[bvert] (vb-1) at (0,-2) {};
\coordinate[wvert] (vw-2) at (-1,-3) {};
\coordinate[bvert] (vb-2) at (0,-4) {};
\coordinate[wvert] (vw-3) at (-1,-5) {};

\draw (b-3)--(w-3)--(b-2)--(w-2)--(b-1)--(w-1)--(b0)--(w1)--(b1)--(w2)--(b2)--(w3);
\draw (vw-3)--(vb-2)--(vw-2)--(vb-1)--(vw-1)--(b0)--(w-1)--(vb1)--(vw2)--(vb2)--(vw3)--(vb3);

\end{tikzpicture}

\caption{The four conical regions. The shaded regions are the inconsistently oriented cones.}\label{fig:four_conical_regions}
\end{figure}

We now describe the sign changes geometrically. Let $v=e_-\cap e_+\in V(N)$. The strands $\beta_{e_-}$ and $\beta_{e_+}$ subdivide $\RR^2$ into four conical regions (Figure~\ref{fig:four_conical_regions}). Two of these cones have boundaries that are consistently oriented and two have boundaries that are inconsistently oriented. By definition, a chamber ${\graphregion}$ has a sign change at $v$ if and only if ${\graphregion}$ is contained in one of the two inconsistently oriented cones determined by $\beta_{e_-}$ and $\beta_{e_+}$ at $v$.

{Consider the adjacency graph $\mathcal R$ of inner chambers: its vertices are the inner chambers, and two vertices are joined by an edge if and only if the boundaries of the corresponding chambers share a nontrivial segment of a strand $\beta_e$. In particular, chambers meeting only at a crossing, such as the two gray regions in Figure~\ref{fig:four_conical_regions}, are not considered adjacent.}


\begin{lemma}\label{lem:sign_change_cts}
Across each edge of $\mathcal R$, exactly one of the four sign changes moves one step clockwise or counterclockwise along $\partial N$.
\end{lemma}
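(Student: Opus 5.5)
Two inner chambers $\graphregion$ and $\graphregion'$ adjacent in $\mathcal R$ share a nontrivial segment of a single strand $\beta_e$. The plan is to compare the sign functions directly: crossing that segment flips $\sign_{\graphregion}(e)$ and leaves $\sign_{\graphregion}(e')$ unchanged for every other $e'\neq e$. The reason is that $\beta_{e'}$ cannot be crossed when we pass through the interior of a segment of $\beta_e$ away from crossings. Minimality guarantees that the segment contains medial vertices that lie on $\beta_e$ and not on $\beta_{e'}$; alternatively we may perturb the crossing point.

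Hence $\sign_{\graphregion'}$ differs from $\sign_{\graphregion}$ exactly at the one edge $e=[v_-,v_+]$. The sign-change set can therefore change only at the two endpoints $v_-$ and $v_+$, whose status (change or no change) both toggle. By Lemma~\ref{lem:four_signs_locally}, both chambers have exactly four sign changes, so the count must stay at four. The possible cases are as follows.
\begin{itemize}
\item If both $v_-,v_+\in\SC(\graphregion)$, flipping would remove both, leaving two. This is a contradiction.
\item If neither endpoint is a sign change, flipping would create both, giving six. This is also a contradiction.
\item Hence exactly one of $v_\pm$ lies in $\SC(\graphregion)$. After the flip it is removed and the other endpoint is added.
\end{itemize}
Since $v_-$ and $v_+$ are adjacent vertices of $\partial N$, this means that one sign change moves one step clockwise or counterclockwise along $\partial N$, while the other three stay fixed.

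The main subtle point is the first step: verifying that adjacent inner chambers differ by crossing exactly one strand. This is precisely why adjacency in $\mathcal R$ is defined by sharing a nontrivial segment rather than a crossing point. Two chambers meeting only at a crossing of $\beta_{e_-}$ and $\beta_{e_+}$ would differ in two signs, which is the excluded configuration of Figure~\ref{fig:four_conical_regions}. I would also check that the orientation conventions in~\eqref{eq:local_sign} make ``left/right of $\beta_e$'' genuinely complementary on the two sides of the shared segment. This holds because chambers are components of the complement of the union of the strands and each $\beta_e$ is a simple bi-infinite curve by minimality.
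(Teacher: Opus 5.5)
Your proof is correct and follows essentially the same route as the paper. Crossing the shared segment of $\beta_e$ flips only the sign at $e$, so only the status at the two endpoints of $e$ toggles, and Lemma~\ref{lem:four_signs_locally} then forces exactly one of those endpoints to be a sign change; you also explicitly rule out the case where both endpoints are sign changes (which would leave only two), a case the paper's proof passes over, since it spells out only the ``six sign changes'' contradiction.
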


\begin{proof}
Let ${\graphregion}$ and ${\graphregion}'$ be adjacent chambers, separated by a segment of a strand $\beta_e$. Then $\sign_{{\graphregion}'}(e)=-\sign_{\graphregion}(e)$, while $\sign_{{\graphregion}'}(e')=\sign_{\graphregion}(e')$ for all $e'\neq e$.

If $e$ is not adjacent to the boundary of one of the four sign intervals for $\sign_{\graphregion}$ (equivalently, if $e$ is not adjacent along $\partial N$ to a vertex in $\SC({\graphregion})$), then changing only the sign at $e$ would create two additional sign changes (one at each endpoint of $e$ along $\partial N$), while the previous four sign changes persist. This would give $\#\SC({\graphregion}')\ge 6$, contradicting Lemma~\ref{lem:four_signs_locally}. Hence, $e$ must lie at the boundary of exactly one sign interval, so exactly one sign change shifts by one step along $\partial N$, and the other three remain fixed.
\end{proof}

In particular, for every edge ${\graphregion}{\graphregion}'$ of $\mathcal R$ there is a canonical bijection
\begin{equation} \label{eq:connection}
\SC({\graphregion}) \cong \SC({\graphregion}')
\end{equation}
which is the identity on three of the four elements. Consider the discrete bundle over $\mathcal R$ whose fiber over an inner chamber ${\graphregion}$ is the four-element set $\SC({\graphregion})$, equipped with the connection determined by the bijections~\eqref{eq:connection} along edges.

\begin{lemma}\label{lem:gamma_trivial}
The connection thus defined is flat, \emph{i.e.} the parallel transport around any closed loop in $\mathcal R$ is the identity.
\end{lemma}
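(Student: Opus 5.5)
We plan to reduce flatness to a local statement about the smallest cycles of $\mathcal R$, and then use simple-connectedness of the region enclosed by $\partial\graphbeta^\times$.

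\textbf{Setup.} The inner chambers tile the closed disk bounded by the simple cycle $\partial\graphbeta^\times$. The only places where more than two chambers meet are the crossing points of two boundary strands $\beta_e,\beta_{e'}$ lying in the interior. Since two strands of a minimal graph cross at most once in a consistent way, each such crossing is a transverse point with four local quadrants. Hence $\mathcal R$ is the dual graph of a cell decomposition of a disk. Its cycle space is generated by the small loops around interior crossing points, together with the trivial back-and-forth loops across a single segment. Here we use that two chambers sharing several segments of the same strand can be handled by the same local argument. The back-and-forth loops clearly have identity holonomy, because the bijection~\eqref{eq:connection} is an involution-compatible choice: crossing back shifts the same sign change back.

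\textbf{Local check.} Fix an interior crossing point of $\beta_e$ and $\beta_{e'}$ with $e\neq e'$. Only those quadrants that are inner chambers, and that are adjacent along nontrivial segments, contribute to the loop. Going around the crossing, we flip the sign at $e$, then at $e'$, then at $e$ again, then at $e'$ again. Consider first the case where $e$ and $e'$ are not adjacent along $\partial N$. By the proof of Lemma~\ref{lem:sign_change_cts}, flipping $e$ moves the unique sign change adjacent to $e$ across $e$. Flipping $e'$ moves a sign change adjacent to $e'$, and this must be a different element of the four-element set, since the moves occur in disjoint parts of $\partial N$. The moves therefore commute, and the transport is the identity. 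Now consider the case where $e$ and $e'$ share the vertex $v$. Here I would use the conical description in Figure~\ref{fig:four_conical_regions}: exactly two of the four quadrants contain a sign change at $v$. I would then track the labels explicitly through the four steps, in which the sign change sitting at $v$ and the ones at the far endpoints of $e$ and $e'$ get exchanged. The goal is to verify that each label returns to its starting element rather than being permuted. If some quadrant is outer, the loop passes through fewer chambers, and I would check that it is then either trivial or not a cycle of $\mathcal R$.

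\textbf{Main obstacle.} The main difficulty is the adjacent case $v=e\cap e'$, where a naive labeling could swap two sign changes. I would resolve it by labeling sign changes by their cyclic position among the four convex-type transitions, in the order $+\to-$ and $-\to+$ around $\partial N$. The alternation of colors forces any transport to preserve this cyclic order. A cyclic-order-preserving bijection of four points that fixes at least one point (and a single step always fixes three) must be the identity after any loop, because each step is a shift of a single element that cannot pass another. Combining this order argument with the generation of loops by crossing loops, which uses that the disk is simply connected, gives flatness.
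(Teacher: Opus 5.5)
Your reduction to the bounded faces of the planar graph $\mathcal R$, and your treatment of a crossing of $\beta_e,\beta_{e'}$ with $e,e'$ non-adjacent, are exactly the paper's proof.

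\textbf{The gap: the adjacent case.} When $v=e\cap e'$, you leave the work as ``I would track the labels,'' and then try to settle it with an order argument that does not work as stated. Preservation of cyclic order only shows that the holonomy around a loop is a cyclic rotation of the four elements of $\SC(\graphregion)$. The fact that each single step fixes three elements says nothing about whether a composite of several steps fixes any. If that reasoning were valid ``after any loop'', it would prove flatness with no local check at all. When $\#V(N)\ge 5$, ordering alone does not rule out a collective rotation of the four sign changes around $\partial N$. For a four-step face loop you could rule it out by counting: a rotation by $k\neq 0$ needs total displacement $|k|\,\#V(N)\ge 5$, while four unit moves give at most $4$. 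You did not make that argument.

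\textbf{The missing idea: the adjacent case never arises.} This is the one-line core of the paper's proof. The strands $\beta_{e_-}$ and $\beta_{e_+}$ already meet at the corner $\e_v$ of $\partial\graphbeta^\times$. As you yourself note, two strands cross at most once. Hence every crossing in the interior of the disk involves non-adjacent edges, and these are the only crossings that bound faces of $\mathcal R$.

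You can also see this without using uniqueness of intersections. Take a crossing of $\beta_e$ and $\beta_{e'}$ with $e=[u,v]$ and $e'=[v,w]$. Among the four quadrants, only the sign changes at $u$, $v$, $w$ can differ. A direct check shows that one quadrant has a number of sign changes differing by two from the other three. By Lemma~\ref{lem:four_signs_locally}, the four quadrants therefore cannot all be inner chambers.

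With this observation your proof is complete and coincides with the paper's.
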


\begin{proof}
Since $\mathcal R$ is planar, it suffices to check triviality around each face. Each face is a $4$-cycle of inner chambers surrounding an intersection point of two strands $\beta_{e_1}$ and $\beta_{e_2}$ in $\graphpl$. Label the chambers cyclically by
\[
{\graphregion} \xrightarrow{\beta_{e_1}} {\graphregion}_1 \xrightarrow{\beta_{e_2}} {\graphregion}_{12}
\xrightarrow{\beta_{e_1}} {\graphregion}_2 \xrightarrow{\beta_{e_2}} {\graphregion}.
\]
Crossing $\beta_{e_i}$ flips the sign at $e_i$ and, by Lemma~\ref{lem:sign_change_cts}, moves exactly one sign change by one step along $\partial N$, fixing the other three.
Since the intersection $\beta_{e_1}\cap \beta_{e_2}$ is not on $\partial\graphbeta$, the edges $e_1$ and $e_2$ are not adjacent along $\partial N$, hence, the moved sign change for $\beta_{e_1}$ is different from the moved sign change for $\beta_{e_2}$.
Therefore, going once around the $4$-cycle moves the $e_1$-sign change one step and then moves it back, and similarly for the $e_2$-sign change; all other sign changes remain fixed throughout.
Thus, the composed bijection around the face is the identity, and the connection is flat.
\end{proof}

The discrete bundle of sign changes has four canonical sections which we now define. Each convex vertex $v\in V(N)$ determines a section
\[
s_v:\{\text{inner chambers}\}\rightarrow V(N),
\qquad
s_v({\graphregion})\in \SC({\graphregion}),
\]
as follows. Let ${\graphregion}_v$ be the inner chamber adjacent to $v$. Then $v\in \SC({\graphregion}_v)$, and we set $s_v({\graphregion}_v)=v$.
For any other inner chamber ${\graphregion}$, define $s_v({\graphregion})$ by parallel transporting $v$ along any path in $\mathcal R$ from ${\graphregion}_v$ to ${\graphregion}$.
By Lemma~\ref{lem:gamma_trivial}, this is well-defined.

\begin{figure}
\centering
\begin{tikzpicture}
  \filldraw[fill=gray!30, draw=none]
 (-3,3) -- (-1,-1) -- (-5,-1) -- (-3,3);
 \coordinate[nvert, label=above:{$v_1$}]  (1) at (-3,3) {};
 \coordinate[nvert]  (2) at (-2,1) {};
    \coordinate[nvert]  (3) at (-1,0) {};
    \coordinate[nvert]  (4) at (1,0) {};
    \coordinate[nvert]  (5) at (2,1) {};
    \coordinate[nvert, label=above:{$v_2 = s_{v_2}({\graphregion}_{v_2})$}] (6) at (3,3) {};
    \coordinate[label = 10:${s_{v_3}({\graphregion}_{v_2})}$] (no) at (4,1) {};
 \coordinate[label = 170:${s_{v_1}({\graphregion}_{v_2})}$] (no) at (2,1) {};
 
    \draw[hollowedge] (1) -- (2);
    \draw[hollowedge] (2) -- (3);
    \draw[hollowedge] (3) -- (4);
    \draw[hollowedge] (4) -- (5);
    \draw[hollowedge] (5) -- (6);
    \draw[solidedge] (1) -- (-4,1) ;
     \draw[solidedge] (4,1) -- (6);
          \draw[solidedge] (5,0) -- (4,1) ;
          \draw[solidedge]  (-4,1) -- (-5,0) ;

     \draw[dashed] (2) -- (-1,-1)
     (3) -- (0,-1)
     (4) -- (3,0)
     (5) -- (4,3)
     (-4,1) -- (-5,-1)
     ;

\end{tikzpicture}
\caption{The chosen inconsistently oriented cones in $[v_1,v_2)$ whose boundary orientations match the boundary orientation of the reference cone at $v_1$ (shaded).} \label{fig:fig_opp_cones}
\end{figure}

\begin{lemma}\label{lem:sign_changes_sections}
For every inner chamber ${\graphregion}$, the four sections associated to the four convex vertices are pairwise distinct at ${\graphregion}$, together exhaust $\SC({\graphregion})$, and appear in the same cyclic order along $\partial N$ as the convex vertices.
\end{lemma}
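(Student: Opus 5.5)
We propose to prove the statement by propagating along the connected adjacency graph $\mathcal R$, starting from the chambers ${\graphregion}_v$ adjacent to the convex vertices. The key observation is that the bundle connection~\eqref{eq:connection} is a bijection $\SC({\graphregion})\cong\SC({\graphregion}')$ that moves one sign change by a single step along $\partial N$. It therefore preserves both distinctness and the cyclic order of the four elements of $\SC$. Indeed, a sign change moving one step cannot pass another one, because after the move the chamber still has exactly four distinct sign changes by Lemma~\ref{lem:four_signs_locally}.

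Consequently, the set map $\SC({\graphregion})\to\SC({\graphregion}')$ is a cyclic-order-preserving bijection of four-element subsets of the cyclically ordered set $V(N)$. Hence if the sections $s_{v_1},\dots,s_{v_4}$ are pairwise distinct and cyclically ordered at one chamber, the same holds at every chamber, by connectivity of $\mathcal R$. The connectivity of $\mathcal R$ will be argued from the fact that the interior of the simple cycle $\partial\graphbeta^\times$ is connected. Two inner chambers meeting only at a crossing can be connected around that crossing through the other two cones, which are also inner chambers since the crossing lies strictly inside.

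It remains to check a single chamber, and this is the main obstacle. A single chamber ${\graphregion}_{v_1}$ sees $v_1$ directly, but not obviously the other three convex vertices. We plan to compare with the global coloring. Every edge $e$ has $\sign_{\bm\beta}(e)=\sign_{\graphregion}(e)$ for chambers ${\graphregion}$ lying just inside $\partial\graphbeta^\times$ along $\beta_e$. So for the inner chamber ${\graphregion}_v$ adjacent to a convex vertex $v$, the signs on $e_\pm$ agree with the global ones.

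Then we transport along the chain of inner chambers bordering $\partial\graphbeta^\times$ from ${\graphregion}_{v_1}$ towards ${\graphregion}_{v_2}$, passing the concave vertices on the arc $[v_1,v_2]$ one at a time. Figure~\ref{fig:fig_opp_cones} suggests the bookkeeping. At each crossing of $\beta_{e}$ with the next boundary strand, track which inconsistently oriented cones the current chamber lies in. Show that the transported $s_{v_1}$ moves monotonically within the arc $[v_1,v_2)$ and never reaches $v_2$. Meanwhile $s_{v_2}({\graphregion}_{v_2})=v_2$, and $s_{v_3},s_{v_4}$ stay in their own arcs. Concretely, at ${\graphregion}_{v_2}$ we aim to show $s_{v_1}({\graphregion}_{v_2})\in[v_1,v_2)$ and $s_{v_3}({\graphregion}_{v_2})\in(v_2,v_3]$, as in the figure.

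Doing this for each of the four arcs places the four sections at ${\graphregion}_{v_2}$ in the four distinct cyclically ordered arcs $[v_1,v_2),\{v_2\},(v_2,v_3],\ldots$, which gives distinctness and the correct order there. Exhaustion of $\SC({\graphregion})$ is then automatic, since four distinct elements of a four-element set exhaust it. If the monotonicity argument along the boundary is messy, the fallback is a counting argument: a sign change can only move when a boundary strand $\beta_e$ with $e$ adjacent to it is crossed. Using Lemma~\ref{lem:sign_change_cts}, we would check that strands from the arc $[v_1,v_2]$ can only shift sign changes lying in that arc.
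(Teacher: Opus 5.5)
Your proposal is essentially the paper's proof. The one-step connection preserves distinctness and cyclic order of the transported sign changes, so it suffices to check a single chamber. The check at ${\graphregion}_{v_2}$ is then done exactly as in the paper, by transporting along the boundary arc from ${\graphregion}_{v_1}$ and tracking the inconsistently oriented cones of Figure~\ref{fig:fig_opp_cones}, which gives $s_{v_1}({\graphregion}_{v_2})\in[v_1,v_2)$ and $s_{v_3}({\graphregion}_{v_2})\in(v_2,v_3]$.

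One point needs tightening (the paper also leaves it implicit): the arc-by-arc transport does not place $s_{v_4}({\graphregion}_{v_2})$ directly. Running the same transport at ${\graphregion}_{v_1}$, ${\graphregion}_{v_2}$ and ${\graphregion}_{v_3}$ yields the cyclically ordered triples of distinct elements $(s_{v_4},s_{v_1},s_{v_2})$, $(s_{v_1},s_{v_2},s_{v_3})$ and $(s_{v_2},s_{v_3},s_{v_4})$. By the preservation property, these give all six pairwise distinctions and force the cyclic order $s_{v_1},s_{v_2},s_{v_3},s_{v_4}$ at every chamber.
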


\begin{proof}
Let $v_1,v_2,v_3,v_4$ be the convex vertices in cyclic order along $\partial N$.
Consider any edge ${\graphregion}{\graphregion}'$ of $\mathcal R$. By Lemma~\ref{lem:sign_change_cts}, under the
connection bijection $\SC({\graphregion})\cong\SC({\graphregion}')$ exactly one element of $\SC({\graphregion})$ moves by
one step along $\partial N$, while the other three are fixed.
In particular, no two transported elements can collide or cross: a collision would
force $\#\SC({\graphregion}')<4$, contradicting Lemma~\ref{lem:four_signs_locally}, and a crossing
would require two elements to pass each other even though at most one moves.
Consequently, if $s_{v_i}({\graphregion})\neq s_{v_j}({\graphregion})$ for $i\neq j$ at some chamber ${\graphregion}$, then
$s_{v_i}$ and $s_{v_j}$ remain distinct over all chambers, and their cyclic order
along $\partial N$ is preserved throughout $\mathcal R$.

It therefore suffices to verify the distinctness and the cyclic ordering in a single chamber. Without loss of generality, we check the cyclic order at the inner chamber ${\graphregion}_{v_2}$ adjacent to $v_2$. Consider a path in $\mathcal R$ that moves clockwise along the boundary $\partial\graphbeta$ from the inner chamber ${\graphregion}_{v_1}$ adjacent to $v_1$ to ${\graphregion}_{v_2}$. Initially, we have $s_{v_1}({\graphregion}_{v_1})=v_1$. Along this path, the section $s_{v_1}$ evolves as follows.
For each vertex $u \in [v_1,v_2)$ there are two inconsistently oriented cones. Among them, choose the one whose boundary orientation agrees with that of the
reference cone at $v_1$ containing $\graphbeta$ (\emph{i.e.}, in both cones the strands point either toward or away from the apex). Then for every chamber ${\graphregion}$ encountered along the path, we set $s_{v_1}({\graphregion})=u$, where $u$ is the unique vertex in $[v_1,v_2)$ such that ${\graphregion}$ lies in this chosen
cone at $u$ (see Figure~\ref{fig:fig_opp_cones}). In particular, when the path reaches ${\graphregion}_{v_2}$, the section $s_{v_1}$ has advanced monotonically to the vertex immediately counterclockwise of $v_2$. Applying the same argument with $v_3$ in place of $v_1$, we get that $s_{v_3}({\graphregion}_{v_2})$ is the vertex immediately clockwise of $v_2$. Hence, the sections $s_{v_1},s_{v_2},s_{v_3}$ are distinct and appear in the correct cyclic order at ${\graphregion}_{v_2}$. 

Finally, since $\#\SC({\graphregion})=4$ for every chamber ${\graphregion}$, the four sections exhaust $\SC({\graphregion})$ for all ${\graphregion}$.
\end{proof}

\section{Exact inverse Kasteleyn formula for AZ graphs} \label{sec:inv_K}

In this section, we define $\kast^{-1}$ by a double integral plus single-integral correction term.

\subsection{Formula for $\kast^{-1}$}\label{sec:def_inverse}

Let $\bl \in B(\graphbeta)$ and $\wh \in W(\graphbeta)$. Define the kernel 
\begin{equation} \label{eq:kernel}
\omega_{\bl,\wh}(\zeta,\eta)
:= \frac{1}{(2\pi \ii)^2}
\frac{1}{E(\zeta,\eta)}
\frac{\theta(-t+\bm D_{\bm \beta}+\eta-\zeta)}
     {\theta(-t+\bm D_{\bm \beta})}
\frac{g_{\wh}(\eta) \Psi_{\bm \beta}(\eta)}
     {g_{\bl}(\zeta) \Psi_{\bm \beta}(\zeta)},
\end{equation}
where
\[
\Psi_{\bm \beta}(\zeta)
:=\prod_{\alpha\in\bm\alpha} E(\alpha,\zeta)^{(\bm D_{\bm\beta})_\alpha}
\]
encodes the dependence on the boundary data. As in Section~\ref{sec:abel_map_jacobian}, we identify $\bm D_{\bm \beta}+\eta-\zeta$ with its image $\mu(\bm D_{\bm \beta}+\eta-\zeta)$. The parameter $t$ here is as in Section~\ref{sec:fock's_kast}.

\begin{lemma}\label{lem:kernel_is_a_form}
The kernel $\omega_{\bl,\wh}(\zeta,\eta)$ is a meromorphic $(1,1)$-form on $\curve \times \curve$.
\end{lemma}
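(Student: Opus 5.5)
The plan is to check, separately in each variable, that the expression is single-valued on $\curve$ and has the correct conformal weight. The only question is whether the multiplicative factors picked up around cycles of $\curve$ cancel.

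We first record the weights. $E(\zeta,\eta)^{-1}$ is a $(\tfrac12,\tfrac12)$-form. By construction $g_{\wh}(\eta)$ is a $(\deg\dabel(\wh))\cdot(-\tfrac12)$-form in $\eta$, i.e. a $\tfrac12$-form, since each factor $\theta/E(\alpha,\cdot)$ carries weight $+\tfrac12$ and $\deg\dabel(\wh)=-1$. Likewise $g_{\bl}(\zeta)^{-1}$ is a $\tfrac12$-form in $\zeta$. The factor $\Psi_{\bm\beta}$ has weight $-\tfrac12\deg(\bm D_{\bm\beta})$, which vanishes by the admissibility condition \eqref{eq:adm_defn}, so $\Psi_{\bm\beta}(\eta)/\Psi_{\bm\beta}(\zeta)$ is a scalar. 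Adding up, the kernel has weight $(1,1)$. This is the first place where admissibility enters.

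Next we check single-valuedness. Around $A$-cycles, $\theta$ and $E$ are invariant up to the sign ambiguities of the half-order differentials. These cancel in pairs because the total weight in each variable is an integer, and the spin structure choices in $E$ and in the $g$'s are the same ones. Around $B_k$, use the standard transformation laws:
\[
E(\alpha,\eta)\mapsto E(\alpha,\eta)\,e^{-\pi\ii\Omega_{kk}-2\pi\ii(\eta-\alpha)_k},\qquad
\theta(z+\Omega e_k)=e^{-\pi\ii\Omega_{kk}-2\pi\ii z_k}\theta(z).
\]
Then collect the exponential factors in $\eta$.
\begin{itemize}
\item From $E(\zeta,\eta)^{-1}$ we get $e^{\pi\ii\Omega_{kk}+2\pi\ii(\eta-\zeta)_k}$.
\item From $\theta(-t+\bm D_{\bm\beta}+\eta-\zeta)$ we get $e^{-\pi\ii\Omega_{kk}-2\pi\ii(-t+\bm D_{\bm\beta}+\eta-\zeta)_k}$.
\item From $g_{\wh}(\eta)$ we get the factor that makes it a section twisted by $-t-\dabel(\wh)$ (Fock's forms are quasi-periodic). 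Concretely, this is a product of $\theta(t+\eta+\dabel(\cdot))$ ratios and $E(\alpha,\eta)^{\pm1}$ along a path from $\f_0$. Telescoping gives the factor $e^{2\pi\ii(t)_k}\cdot e^{\pm\ldots}$, with the $E$-contributions giving $e^{-2\pi\ii\,\mu(\dabel(\wh))_k}$-type terms plus $\Omega_{kk}$ terms weighted by $\deg\dabel(\wh)=-1$.
\item From $\Psi_{\bm\beta}(\eta)$ we get $e^{2\pi\ii\,\mu(\bm D_{\bm\beta})_k}$, and no $\Omega_{kk}$ term because $\deg\bm D_{\bm\beta}=0$.
\end{itemize}
Summing the exponents, the $t$, $\bm D_{\bm\beta}$, $\eta$, $\zeta$ and $\Omega_{kk}$ terms should cancel. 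The $\zeta$-variable is handled the same way, using the sign reversal there.

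I expect the main obstacle to be the bookkeeping of the quasi-periodicity of $g_{\wh}$ and $g_{\bl}$. I would first prove by induction along a dual path from $\f_0$ that $g_{\x}$ transforms around $B_k$ by $\exp\bigl(2\pi\ii(\deg\dabel(\x))(\ldots)+2\pi\ii\,\mu(\dabel(\x))_k\cdot(\pm1)+\ldots\bigr)$. This is a clean statement, and each elementary step (multiplying by $\theta(t+\zeta+\dabel(\wh))/E(\alpha,\zeta)$, or its black analogue) is checked directly. Meromorphy is then immediate, since every factor is meromorphic and $\theta(-t+\bm D_{\bm\beta})$ is a nonzero constant, which we assume or verify from $t$ real and $\bm D_{\bm\beta}$ real.

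Alternatively, and more cleanly, we can use a known fact: $\theta(-t+\bm D+\eta-\zeta)/E(\zeta,\eta)$ times $\prod E(\alpha,\eta)^{\bm D_\alpha}/\prod E(\alpha,\zeta)^{\bm D_\alpha}$ is Fay's Szegő-type kernel twisted by the degree-zero divisor $\bm D$. Its quasi-periodicity is exactly dual to that of $g_{\wh}(\eta)/g_{\bl}(\zeta)$, because the total divisor on $A_0$ of the $g$-part is $-\dabel(\wh)+\dabel(\bl)$ shifted by $t$.
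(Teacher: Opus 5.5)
Your route is sound, and it is genuinely different from the paper's. The paper never computes multipliers. It fixes $\eta$ and computes the degree of the $\zeta$-divisor, which is $-1+g+(g-1)=2g-2$ using $\deg\Psi_{\bm\beta}=0$. It then computes the Abel image, $-\eta+(-t+\bm D_{\bm\beta}+\eta+\Delta)+(t+\Delta)-\bm D_{\bm\beta}=2\Delta=K_\curve$, via Riemann's theorem for the zeros of $\theta(e+\zeta)$ and $\mu(\divisor g_\bl)=-t-\Delta$, and concludes by Abel's theorem. Tracking Abel images of divisors is the same bookkeeping as tracking $B$-period multipliers, so the two proofs have the same content. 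The paper's version is shorter and more conceptual. Yours needs only the transformation laws of $\theta$ and $E$, checks the actual factors of automorphy rather than only their class in $\jac(\curve)$, and shows that admissibility is used twice: in the weight count, and to remove the $\Omega_{kk}$ and $\eta_k$ terms from the multiplier of $\Psi_{\bm\beta}$.

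The step you left open is the multiplier of $g_\wh$, and as you wrote it, it is wrong. No induction is needed. Passing from a face to an adjacent face through a white (resp. black) vertex multiplies $g$ by $E(\beta,\cdot)/E(\alpha,\cdot)$ (resp. its inverse), because the theta factors cancel. Hence $g_\f=\prod_\alpha E(\alpha,\cdot)^{\dabel(\f)_\alpha}$, and
\[
g_\wh(\eta)=\theta\bigl(t+\eta+\dabel(\wh)\bigr)\prod_{\alpha\in\zz}E(\alpha,\eta)^{\dabel(\wh)_\alpha},
\]
which is the form the paper itself uses in \eqref{eq:form_double_integral}. Write $(\cdot)_k$ for the $k$-th coordinate of the Abel image. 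Around $B_k$, the theta factor gives $e^{-\pi\ii\Omega_{kk}-2\pi\ii(t+\eta+\dabel(\wh))_k}$. Because $\deg\dabel(\wh)=-1$, the prime forms give $e^{\pi\ii\Omega_{kk}+2\pi\ii(\eta+\dabel(\wh))_k}$. The total is exactly $e^{-2\pi\ii t_k}$, with no leftover $\Omega_{kk}$ or $\dabel(\wh)$ terms. There could not be any, since no other factor of the kernel depends on $\wh$.

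Meanwhile $E(\zeta,\eta)^{-1}\theta(-t+\bm D_{\bm\beta}+\eta-\zeta)\Psi_{\bm\beta}(\eta)$ contributes $e^{2\pi\ii t_k}$. With your stated factor $e^{+2\pi\ii t_k}$ for $g_\wh$, the $t$-terms would double instead of cancel. With the correct sign, everything cancels in $\eta$. Symmetrically, $1/g_\bl(\zeta)=\theta(-t+\zeta-\dabel(\bl))\prod_\alpha E(\alpha,\zeta)^{-\dabel(\bl)_\alpha}$ has multiplier $e^{2\pi\ii t_k}$. This cancels the $e^{-2\pi\ii t_k}$ coming from $E(\zeta,\eta)^{-1}\theta(-t+\bm D_{\bm\beta}+\eta-\zeta)/\Psi_{\bm\beta}(\zeta)$. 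With this correction your proof is complete.
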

\begin{proof}
This is standard; see for instance \cite[Lemma~32]{BCT22}.
Fix $\eta$ and view $\omega_{\bl,\wh}(\zeta,\eta)$ as a function of $\zeta$.
To say it is a meromorphic $1$-form in $\zeta$ is equivalent to saying that its divisor is linearly equivalent to the canonical divisor:
\[
\divisor_\zeta(\omega_{\bl,\wh}) \sim K_\curve .
\]
By Abel's theorem, this holds provided
\[
\deg(\divisor_\zeta(\omega_{\bl,\wh}))=\deg(K_\curve)=2g-2, \qquad \mu(\divisor_\zeta(\omega_{\bl,\wh}))=K_\curve\in \jac(\curve).
\]
By Riemann's theorem~\cite[Theorem~1.1]{Fay73}, for any $e\in \CC^g$ such that $\theta(e+\zeta)$ is not identically $0$,
\[
\deg(\divisor_\zeta (\theta(e+\zeta)))=g,
\qquad
\mu(\divisor_\zeta (\theta(e+\zeta)))=-e+\Delta,
\]
where $\Delta$ is the vector of Riemann constants, satisfying $2\Delta=K_\curve$.

As a function of $\zeta$,
\begin{itemize}
\item $E(\zeta,\eta)^{-1}$ has a simple pole at $\zeta=\eta$, hence degree $-1$, and $\mu(\divisor_\zeta( E(\zeta,\eta)))=\eta$ (recall that we often suppress the Abel map; the $\eta$ on the right-hand side stands for $\mu(\eta)$).
\item $\theta(-t+\bm D_{\bm\beta}+\eta-\zeta)$ contributes degree $g$, and $\mu(\divisor_\zeta(\theta(-t+\bm D_{\bm\beta}+\eta-\zeta))) = -t+\bm D_{\bm\beta}+\eta+\Delta$. 
\item $(g_{\bl}\Psi_{\bm\beta})^{-1}$ contributes degree $g-1+\deg(\divisor(\Psi_{\bm\beta}))$, and  

\[
\mu(\divisor (g_{\bl}))=-t-\Delta, \qquad \mu(\divisor(\Psi_{\bm\beta})) = \bm D_{\bm\beta}.
\]
\end{itemize}
By admissibility, $\deg(\divisor(\Psi_{\bm\beta}))=\deg(\bm D_{\bm\beta})=0$. Thus,
\[
\deg(\divisor_\zeta(\omega_{\bl,\wh}))
= -1+g + (g-1 + \deg(\divisor(\Psi_{\bm\beta}))) = 2g-2.
\]
Moreover, 
\begin{align*}
\mu(\divisor_\zeta(\omega_{\bl,\wh}))
&= - \mu(\divisor_\zeta (E(\zeta,\eta))) + \mu(\divisor_\zeta (\theta(-t+\bm D_{\bm\beta}+\eta-\zeta)))
   -\mu(\divisor_\zeta (g_{\bl}(\zeta)))
   -\mu(\divisor_\zeta (\Psi_{\bm\beta}(\zeta))) \\
&= -\eta+(-t+\bm D_{\bm\beta}+\eta+\Delta)  + (t+\Delta) -\bm D_{\bm\beta} \\
&= 2\Delta = K_\curve.
\end{align*}
The same argument in the $\eta$-variable shows that $\omega_{\bl,\wh}(\zeta,\eta)$ is a meromorphic $(1,1)$-form on $\curve\times\curve$.
\end{proof}

The residue of $\omega_{\bl,\wh}$, viewed as a meromorphic $1$-form in $\zeta$, along the diagonal $\zeta = \eta$, is 
\[
\res_{\zeta=\eta}(\omega_{\bl,\wh})
=-\frac{1}{2\pi \ii} g_{\bl,\wh}.
\]
For an interval $I = [\ell,r] \subset \partial N$, define its boundary to be the $0$-chain 
\[
\partial I:=r-\ell.
\]
Recall from Section~\ref{sec:whole_plane_inverse_K} the definitions of the contour \(C(I_{\bl,\wh}^u)\) and the whole-plane inverse \(\mathsf A_{\bl,\wh}^u\). By integration, we obtain a linear functional on \(0\)-chains defined by
\begin{equation} \label{eq:functional}
u \mapsto \langle u,\mathsf A_{\bl,\wh}\rangle
:= \mathsf A_{\bl,\wh}^u
= \frac{1}{2\pi \ii}\int_{C(I_{\bl,\wh}^u)} g_{\bl,\wh},
\end{equation}
and extended to arbitrary \(0\)-chains by linearity.

Let $v$ be a convex vertex and let \(\bl \in B(\graphbeta)\) and \(\wh \in W(\graphbeta)\). For $\{\x,\y\}=\{\bl,\wh\}$, define
\begin{align}
\kast^{-1}_{\bl,\wh} &:= \sign(v)\left( \iint_{C(I_{\y}) \prec  C(I_{\x})} \omega_{\bl,\wh} - \col(\x) \langle \partial I_\x \cap I_\y,\mathsf A_{\bl,\wh} \rangle \right) \label{eq:formula_inverse_K}\\
&= \sign(v)\left( \iint_{C(I_{\y}) \prec  C(I_{\x})} \omega_{\bl,\wh} - \col(\x) \Bigl(\mathbf 1_{\{r_\x \in I_{\y}\}} {\mathsf A}^{r_\x}_{\bl,\wh} -\mathbf 1_{\{\ell_\x \in I_{\y}\} } {\mathsf A}^{\ell_\x}_{\bl,\wh} \Bigr) \right),\label{eq:intersection_concrete}
\end{align}
where:
\begin{enumerate}
\item We set $\sign(v)=+1$ if the color changes from black to white at the section $s_v$, and $\sign(v)=-1$ otherwise.
\item $\col(\x)=+1$ (resp. $-1$) if $\x$ is black (resp. white) in consistency with our sign/color convention~\eqref{eq:sign_convention}.
\item For each \(\x \in B(\graphbeta)\sqcup W(\graphbeta)\), let \({\graphregion}_\x\) denote the chamber containing \(\x\). We associate to \(\x\) an inner chamber \(\Rin_\x\) by setting
\[
\Rin_\x :=
\begin{cases}
{\graphregion}_\x, & \text{if \(\x\) is inner},\\[4pt]
\text{the inner chamber adjacent to \(\x\) across \(\beta_e\)}, & \text{if \(\x\) is \(\beta_e\)-outer}.
\end{cases}
\]
If \(\x\) is \(\beta_e\)-outer for more than one \(e \in E(N)\), we choose one such \(e\) arbitrarily. The resulting constructions are independent of this choice; see Proposition~\ref{prop:invariance}.

The interval $I_\x=[\ell_\x,r_\x]\subset\partial N$ is defined as the sign interval of $\sign_{\Rin_\x}$ whose color is opposite to that of $\x$, and which is adjacent to $s_v(\Rin_\x)$; the interval $I_\y$ is defined analogously. The contours $C(I_\x)$ and $C(I_\y)$ are then defined by Definition~\ref{def:contours_from_intervals}.

Equivalently, instead of specifying the convex vertex $v$, we can choose $I_\x$ to be either of the sign intervals for $\sign_{\Rin_\x}$ with opposite color as $\x$, and similarly for $I_\y$. The convex vertex is then uniquely determined as the common endpoint of both intervals when they are parallel transported to the same chamber.

\item {Throughout, $\zeta$ is integrated over $C(I_\bl)$ and $\eta$ is integrated over $C(I_\wh)$.} The notation 
\[C(I_{\y}) \prec  C(I_{\x})\]
denotes the product $C(I_{\bl}) \times  C(I_{\wh})$ with the convention that if both contours $C(I_\x)$ and $C(I_\y)$ contain small circles around the same angle $\alpha$, we choose them so that the circle in $C(I_\y)$ has smaller radius than the one in $C(I_\x)$.
\end{enumerate}

\begin{remark}\label{rem:inverse_kasteleyn_signs}
Since there are four choices of convex vertices and two choices of $(\x,\y) = (\bl,\wh)$ or $(\wh,\bl)$, the definition~\eqref{eq:formula_inverse_K} gives eight different expressions. We will prove in Proposition~\ref{prop:invariance} that the resulting quantities for all of these choices are equal. If we take $\x=\bl$ and choose the intervals $I_\bl$ and $I_\wh$ so that $v$ is a convex vertex at which $\sign_{\bm\beta}$ changes from black to white, then the signs $\sign(v)$ and $\varepsilon(\x)$ in \eqref{eq:formula_inverse_K}--\eqref{eq:intersection_concrete} are $+1$. However, the freedom to vary these choices will be important in our proofs.
\end{remark}

Our main result of this section is:
\begin{theorem}\label{thm:main}
The matrix $\kast^{-1}$ defined by~\eqref{eq:formula_inverse_K} is the two-sided inverse of Fock's Kasteleyn matrix $\kast$ on $\graphbeta$. 
\end{theorem}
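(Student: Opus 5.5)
Since $\graphbeta$ is finite, a one-sided inverse suffices once we know $\#B(\graphbeta)=\#W(\graphbeta)$. The cleanest route is to prove both identities $\kast\kast^{-1}=\mathsf I$ (summing over black vertices adjacent to a fixed white vertex) and $\kast^{-1}\kast=\mathsf I$ (summing over white vertices adjacent to a fixed black vertex). Together these force the matrix to be square and invertible, and yield the corollary on dimer covers as a by-product.

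The first step is the independence statement promised in Remark~\ref{rem:inverse_kasteleyn_signs} (Proposition~\ref{prop:invariance}): the eight expressions in~\eqref{eq:formula_inverse_K} coincide, and the choice of $e$ for multiply outer vertices is irrelevant. To show this, I would move the contours. Changing the convex vertex $v$, or swapping which of $\bl,\wh$ plays the role of $\x$, changes $C(I_\x)$ or $C(I_\y)$ by a full set of small circles around a block of angles. Deforming across the diagonal $\zeta=\eta$ picks up the residue $-\frac{1}{2\pi\ii}g_{\bl,\wh}$. This residue exactly reproduces the change in the correction term $\col(\x)\langle\partial I_\x\cap I_\y,\mathsf A_{\bl,\wh}\rangle$, via the functional~\eqref{eq:functional} and the identity $\mathsf A^{u}-\mathsf A^{u'}=\frac{1}{2\pi\ii}\int_{C([u,u'])}g_{\bl,\wh}$. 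The remaining change is a contour that can be collapsed because, by Lemma~\ref{lem:div_g} and the definition of $\bm D_{\bm\beta}$ as minus the Abel map of outer vertices, $g_{\bl}\Psi_{\bm\beta}$ (respectively $g_\wh\Psi_{\bm\beta}$) has no poles in the complementary sign intervals for the chamber $\Rin_\x$. The sign bookkeeping uses Lemma~\ref{lem:sign_changes_sections}.

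Next, I would verify $\sum_{\bl\sim\wh'}\kast_{\wh',\bl}\kast^{-1}_{\bl,\wh}=\delta_{\wh',\wh}$. Using the freedom just established, I would choose $\x=\wh$ for this sum, so that the contour $C(I_\wh)$ and the $\eta$-dependence are fixed. All $\bl\sim\wh'$ then share a common convex vertex and, ideally, a common interval $I_\bl$. The reason is that the $\bl$'s adjacent to $\wh'$ lie in chambers differing by at most the crossing of strands through $\wh'$, and Lemma~\ref{lem:sign_change_cts} controls how $I_\bl$ shifts. With a common contour, Lemma~\ref{lem:ker_K} gives $\sum_\bl\kast_{\wh',\bl}/g_\bl(\zeta)=0$ as a function of $\zeta$ in the double-integral part. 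The same lemma kills the single-integral part, since $\sum_{\bl}\kast_{\wh',\bl}\mathsf A^{u}_{\bl,\wh}=\delta_{\wh',\wh}$ by Theorem~\ref{thm:whole_plane_inverse_kast}. The delta then arises from whether $\ell_\x$ or $r_\x$ lies in $I_\y$, that is, from the correction term. The case in which $\wh'$ is inner is therefore straightforward. The case $\kast^{-1}\kast=\mathsf I$ is symmetric, taking $\x=\bl$.

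The main obstacle is boundary vertices. For a $\beta_e$-outer $\wh'$, the sum over neighbours in $\graphbeta$ omits black vertices of $\graphpl$ outside $\graphbeta$. I would therefore add and subtract the missing terms, so that Lemma~\ref{lem:ker_K} applies to the full sum, and then show that the missing terms vanish. The intended mechanism for their vanishing is as follows. For such $\wh'$, the factor $\Psi_{\bm\beta}$ cancels the poles of $1/g_\bl(\zeta)$ at the angles in $\zz_e$ (this is exactly the definition~\eqref{eq:d_beta_defn} with outer vertices), so the integrand in $\zeta$ is holomorphic inside $C(I_\bl)$ near those angles. This should make the contributions of the phantom outer black vertices zero after the contours are arranged so that $e\subset I_\bl$, or it should show that the phantom contributions cancel against the changes of $I_\bl$ across $\beta_e$. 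Handling the corners $\e_v$, and vertices that are outer for several consecutive $e$, requires the careful case analysis I expect to be hardest. There, the chamber, the intervals and the single-integral term change simultaneously, and one must check consistency with the convex and concave structure from Lemma~\ref{lem:four_convex_vertices}.
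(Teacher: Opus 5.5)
Your skeleton matches the paper's. First establish the invariance statements of Proposition~\ref{prop:invariance}. Then check $\sum_{\bl\sim\wh'}\kast_{\wh',\bl}\kast^{-1}_{\bl,\wh}=\mathbf 1_{\{\wh=\wh'\}}$ row by row: Lemma~\ref{lem:ker_K} handles the double integral and Theorem~\ref{thm:whole_plane_inverse_kast} the correction term once all $\bl\sim\wh'$ share one interval. For outer $\wh'$, adjoin the neighbours outside $\graphbeta$ and show their terms vanish.

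What is missing is the mechanism that closes each of these steps: the null sector. Since $\mathsf A^u_{\bl,\wh}$ is computed on $C([u,v])$ with $v\in\sector_{\bl,\wh}$, one has $\mathsf A^u_{\bl,\wh}=0$ whenever $u\in\sector_{\bl,\wh}$. Lemma~\ref{lem:sector} supplies the sign conditions that force this, e.g.\ $\sign_{\dR_\bl}(e)=+$ and $\sign_{\dR_\wh}(e)=-$ for the edge $e$ next to $u$.

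Your claim that the diagonal residue ``exactly reproduces'' the change in the correction term is false as stated. By Corollary~\ref{cor:arc_chain_identity}, swapping $\x,\y$ leaves the extra terms $\mathbf 1_{\{\ell_\bl=\ell_\wh\}}\mathsf A^{\ell_\bl}_{\bl,\wh}-\mathbf 1_{\{r_\bl=r_\wh\}}\mathsf A^{r_\bl}_{\bl,\wh}$. Expanding $I_\bl$ across $e=[\ell'',\ell']$ leaves $\mathbf 1_{\{\ell'=\ell_\wh\}}\mathsf A^{\ell'}_{\bl,\wh}-\mathbf 1_{\{\ell''=r_\wh\}}\mathsf A^{\ell''}_{\bl,\wh}$. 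These are values of the whole-plane inverse, not integrals of $\omega_{\bl,\wh}$, so no pole-freeness of the integrand removes them. They vanish only because the coincident endpoint lies in $\sector_{\bl,\wh}$.

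The same issue arises for the phantom neighbours of an outer $\wh'$. Your pole-cancellation observation is right: the strands through a dashed edge are not parallel to any $e_i$, so $1/(g_\bl\Psi_{\bm\beta})$ is regular on $\zz_{e_i}$. With $C(J)$ nested inside $C(I_\wh)$, the double integral is $0$. But this says nothing about the correction term $\langle\partial I_\wh\cap J,\mathsf A_{\bl,\wh}\rangle$. The paper kills it by proving $\sign_{\dR_\bl}=+$ also on the two edges adjacent to $J$ (Lemma~\ref{lem:sign_plus_both_sides}, where Condition~2 of Definition~\ref{def:azgraph} is used). Then any endpoint of $I_\wh$ in $J$ lies in $\sector_{\bl,\wh}$ by Case~1 of Lemma~\ref{lem:sector}.

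The inner case is also not ``straightforward''. Different $\bl\sim\wh'$ generally have different $I_\bl$, and you must show each can be enlarged to $J=\bigcup_{\bl\sim\wh'}I_\bl$ by legitimate moves. Two facts are needed:
\begin{itemize}
\item Each $I_\bl$ is the $-$ interval of $\sign_{\dR_{\wh'}}$ at $s_v$ with a subset of its two neighbouring edges adjoined (Lemma~\ref{lem:explicit_Ib}). An endpoint cannot jump two steps, since then $\bl\wh'$ would lie on $\beta_e$ and $\beta_{e'}$ with $e,e'$ consecutive, hence on $\partial\graphbeta$.
\item Every $e\subset J\setminus I_\bl$ is pole-free for $1/(g_\bl\Psi_{\bm\beta})$.
\end{itemize}
Pole-freeness must be read off from the true chamber $\dR_\bl$, not from $\Rin_\bl$ as in your sketch. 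Also, the relevant object is $1/(g_\bl\Psi_{\bm\beta})$, not $g_\bl\Psi_{\bm\beta}$. The admissible cases are $\sign_{\dR_\bl}(e)=+$ (Lemma~\ref{lem:sign_poles}) or $\bl\in\beta_e$ with $e$ black, where the order on $\zz_e$ is $0$. The bad case, $\bl\in\beta_e$ with $e$ white, must be excluded by showing such an $e$ is already in $I_\bl$ (Lemma~\ref{lem:endpoint_shift_occurs}).

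Finally, you never check that the surviving correction term equals exactly $1$ when $\wh=\wh'$. This uses that $J$ and $I_\wh$ are adjacent sign intervals meeting at $s_v$. For outer $\wh'$ it also uses the choice of $v$ as the nearest convex vertex to the block $e_1<\dots<e_k$ with $\sign(v)=1$. Without the null-sector ingredient, the boundary analysis you defer to ``careful case analysis'' cannot close, because the leftover whole-plane terms need not vanish unless their endpoints are shown to lie in $\sector_{\bl,\wh}$.
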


\begin{corollary}\label{cor:dimer_cover}
The graph $\graphbeta$ has an equal number of black and white vertices, and admits a dimer cover.
\end{corollary}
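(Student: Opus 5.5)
The corollary follows from Theorem~\ref{thm:main}; I would prove it in two steps.

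First, equal numbers of vertices. Theorem~\ref{thm:main} says the matrix $\kast^{-1}$ defined by~\eqref{eq:formula_inverse_K} is a two-sided inverse of Fock's Kasteleyn matrix $\kast$ on $\graphbeta$. Here $\kast$ is a $W(\graphbeta)\times B(\graphbeta)$ matrix and $\kast^{-1}$ is a $B(\graphbeta)\times W(\graphbeta)$ matrix. The identities $\kast\kast^{-1}=\mathsf I_{W}$ and $\kast^{-1}\kast=\mathsf I_{B}$ then give
\[
\#W(\graphbeta)=\operatorname{rank}(\kast\kast^{-1})\le\operatorname{rank}\kast\le \#B(\graphbeta),
\]
and symmetrically $\#B(\graphbeta)\le\#W(\graphbeta)$. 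So $\kast$ is square. Since it has a two-sided inverse, $\det\kast\neq0$.

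Second, existence of a dimer cover. Fock's Kasteleyn matrix on $\graphpl$ is a Kasteleyn matrix for the dimer model with some positive edge weights and Kasteleyn signs, by~\cite[Proposition~3]{BCT22}. The key point to check is that this remains true after restricting to the finite subgraph $\graphbeta$: the matrix $\kast$ on $\graphbeta$ should still be a Kasteleyn matrix for $\graphbeta$. The sign condition around each face is local, so it holds for every inner face of $\graphbeta$, because these are faces of $\graphpl$. The outer face imposes no condition in the Kasteleyn theorem for a planar graph (or one checks it by the standard argument that the condition on all bounded faces suffices). Thus the Kasteleyn theorem~\cite{Kas61,TF61} applies and gives $Z=|\det\kast|>0$.

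Alternatively, one can avoid any appeal to Kasteleyn signs. Expand $\det\kast$ as a sum over bijections $W(\graphbeta)\to B(\graphbeta)$. A term is nonzero only if every pair $(\wh,\sigma(\wh))$ is an edge, that is, only if the bijection is a perfect matching. Hence $\det\kast\neq0$ forces at least one dimer cover to exist. This is the argument I would actually write, since it is immediate.

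The only genuine content is Theorem~\ref{thm:main} itself. The sole subtlety is making sure that ``two-sided inverse'' is meant for the rectangular matrices, so that squareness follows rather than being assumed. Nothing here is a serious obstacle.
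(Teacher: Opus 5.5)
Your proposal is correct and matches the paper's implied argument: the paper states the corollary without a separate proof, as an immediate consequence of Theorem~\ref{thm:main}. Your rank argument is legitimate because the paper establishes both $\kast\kast^{-1}=\mathsf I$ and $\kast^{-1}\kast=\mathsf I$ directly, the latter via Item~1 of Proposition~\ref{prop:invariance}. From squareness you get $\det\kast\neq0$, and the permutation expansion of the determinant then produces a dimer cover without having to verify the Kasteleyn sign condition on the finite subgraph.
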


\begin{figure}
  \centering
  \begin{tikzpicture}

    \node[anchor=center] (A) {
      \begin{tikzpicture}[scale=1]
       \begin{scope}
    \coordinate[bvert, label=225:${v_1}$](1) at (0,0){}; 
     \coordinate[bvert, label=-45:${v_2}$](2) at (1,0){}; 
     \coordinate[bvert, label=45:${v_3}$](3) at (1,1){}; 
      \coordinate[bvert, label=135:${v_4}$](4) at (0,1){};
       \draw[solidedge] (1) -- node[below]{${\bm \alpha}$} 
       (2);
       \draw[solidedge] (3) -- node[above]{${\bm \beta}$} 
       (4) ;
       \draw[hollowedge] (2) -- node[right]{${\bm \gamma}$} 
       (3);
       \draw[hollowedge] (4) -- node[left]{${\bm \delta}$} 
       (1);
       
       \end{scope}
      \end{tikzpicture}
    };

    \node[anchor=center] (B) at ([xshift=5.0cm]A.east) {%
      \begin{tikzpicture}[scale=0.6]

\fill[gray!30]
(1,0) -- (0,1) -- (-1,0) -- (0,-1) -- cycle;
      
        \foreach \y in {0,2,4,6} {
          \foreach \x in {1,3,5} {
            \node[bvert] (b-\x-\y) at (\x,\y) {};
          }
        }

        \foreach \y in {1,3,5} {
          \foreach \x in {0,2,4,6} {
            \node[wvert] (w-\x-\y) at (\x,\y) {};
          }
        }

        \foreach \y in {0,2,4,6} {
          \foreach \x in {1,3,5} {

            \ifcsname pgf@sh@ns@w-\the\numexpr\x-1\relax-\the\numexpr\y+1\relax\endcsname
              \draw[] (b-\x-\y) -- (w-\the\numexpr\x-1\relax-\the\numexpr\y+1\relax);
            \fi

            \ifcsname pgf@sh@ns@w-\the\numexpr\x+1\relax-\the\numexpr\y+1\relax\endcsname
              \draw[] (b-\x-\y) -- (w-\the\numexpr\x+1\relax-\the\numexpr\y+1\relax);
            \fi

            \ifcsname pgf@sh@ns@w-\the\numexpr\x-1\relax-\the\numexpr\y-1\relax\endcsname
              \draw[] (b-\x-\y) -- (w-\the\numexpr\x-1\relax-\the\numexpr\y-1\relax);
            \fi

            \ifcsname pgf@sh@ns@w-\the\numexpr\x+1\relax-\the\numexpr\y-1\relax\endcsname
              \draw[] (b-\x-\y) -- (w-\the\numexpr\x+1\relax-\the\numexpr\y-1\relax);
            \fi

          }
        }
\foreach \y in {1,3,5}
{
    \pgfmathtruncatemacro{\k}{(\y+1)/2}

    \ifnum\k=1
        \draw[solidedge] (0.5,\y-0.5) -- (5.5,\y-0.5);
    \else
        \draw[->,red] (0,\y-0.5) -- (6,\y-0.5);
    \fi
    \node at (-.5,\y-0.5) {$\alpha_{\k}$};

    \ifnum\k=3
        \draw[solidedge] (5.5,\y+0.5) -- (0.5,\y+0.5) ;
    \else
        \draw[<- ,red] (0,\y+0.5) -- (6,\y+0.5);
    \fi
    \node at (6.5,\y+0.5) {$\beta_{\k}$};

    \ifnum\k=1
        \draw[hollowedge] (\y-0.5,0.5) -- (\y-0.5,5.5);
    \else
        \draw[->,red] (\y-0.5,0) -- (\y-0.5,6);
    \fi
    \node at (\y-0.5,-.5) {$\gamma_{\k}$};

    \ifnum\k=3
        \draw[hollowedge] (\y+0.5,5.5) -- (\y+0.5,0.5) ;
    \else
        \draw[<- ,red] (\y+0.5,0) -- (\y+0.5,6);
    \fi
    \node at (\y+0.5,6.5) {$\delta_{\k}$};
}

      \end{tikzpicture}
    };

  \end{tikzpicture}

  \caption{The sign intervals (left) and strands (right) in the Aztec diamond. {The shaded face is the reference face $\f_0$.} 
  }
  \label{fig:square_aztec_inv_kast}
\end{figure}

\subsection{Example: Aztec diamond} \label{sec:Aztec_diamond}
Consider the Aztec diamond with reference face $\f_0$ and strands labeled as shown in Figure~\ref{fig:square_aztec_inv_kast}(right). We switch to the notation in~\cite{BdT24} and denote the four families of parallel strands by $\bm \alpha,\bm \beta,\bm \gamma$ and $\bm \delta$ as shown in Figure~\ref{fig:square_aztec_inv_kast}(left). We check that~\eqref{eq:formula_inverse_K} recovers the formula
\begin{align}\label{eq:aztec_inv_kast_cbdt}
K^{-1}_{\bl,\wh}
&=
\frac{1}{(2\pi \ii)^2}
\int_{C_\wh }\int_ {C_\bl}
\frac{1}{E(\zeta,\eta)}
\frac{\theta(-t+\bm D_{\bm \beta}+\eta-\zeta)}
     {\theta(-t+\bm D_{\bm \beta})}
\frac{g_{\wh}(\eta)}
     {g_{\bl}(\zeta)}
\prod_{i=1}^n
\frac{E(\delta_i,\eta)}{E(\beta_i,\eta)}
\frac{E(\beta_i,\zeta)}{E(\delta_i,\zeta)}
\\
&\qquad
-\mathbf{1}_{\{\bl \text{ right of } \wh\}}
\frac{1}{2 \pi \ii}
\int_{C_\wh} g_{\bl,\wh}
\end{align}
in \cite[(17)]{BdT24}, where $C_\bl$ (resp. $C_\wh$) is a counterclockwise oriented curve that encloses only the angles $\bm \gamma$ (resp. $\bm \alpha$).

In this example,
\[
\bm D_{\bm\beta} = \sum_{i=1}^n \delta_i - \sum_{i=1}^n \beta_i,
\qquad 
\Psi_{\bm \beta}(\zeta) = \prod_{i=1}^n \frac{E(\delta_i,\zeta)}{E(\beta_i,\zeta)},
\]
so the integrand inside the double integral is the same as in~\eqref{eq:formula_inverse_K}. There is only one inner chamber with sign intervals as shown in Figure~\ref{fig:square_aztec_inv_kast}(left). Thus, we can take 
\[
I_\bl = [v_2,v_3],\qquad I_\wh = [v_1,v_2],\qquad v=v_2, \qquad \sign(v)=1, 
\]
so that $C_\bl = C(I_\bl)$ and $C_\wh = C(I_\wh)$ have no common angles. Therefore, the double integral terms are identical.

Our single integral term is 
\[
 - \langle \partial I_\bl \cap I_\wh,\mathsf A_{\bl,\wh} \rangle = A_{\bl,\wh}^{v_2}.
\]
If $\bl$ is on the left of (resp. above) $\wh$, $g_{\bl,\wh}$ has no poles at angles in $\bm \gamma$ (resp. $\bm \alpha$), so
\[
-\mathbf{1}_{\{\bl \text{ right of } \wh\}}
\frac{1}{2 \pi \ii}
\int_{C_\wh} g_{\bl,\wh} =0.
\]
On the other hand, in both cases, $v_2 \in \sector_{\bl,\wh}$ so $A_{\bl,\wh}^{v_2}=0$ as well. If $\bl$ is below and to the right of $\wh$, then $g_{\bl,\wh}$ has no poles at angles in $\bm \beta$ and $\bm \delta$, so $v_1 \in \sector_{\bl,\wh}$, so we can take $I_{\bl,\wh}^{v_2} = [v_2,v_1]$. Since the total sum of residues of $g_{\bl,\wh}$ is $0$,
\[
\frac{1}{2 \pi \ii}
\int_{C([v_1,v_2])} g_{\bl,\wh} + \frac{1}{2 \pi \ii}
\int_{C([v_2,v_1])} g_{\bl,\wh} =0.
\]
Hence,
\[
A_{\bl,\wh}^{v_2} = - 
\frac{1}{2 \pi \ii}
\int_{C_\wh} g_{\bl,\wh}. 
\]
Therefore, the single integral terms agree in all cases, and the two formulas for $K^{-1}_{\bl,\wh}$ coincide.

\begin{remark}
In this example, the eight \emph{a priori} different choices reduce to four distinct
formulas: when we interchange the roles of $\bl$ and $\wh$, the double integral is unchanged since $C(I_\bl)$ and $C(I_\wh)$ do not enclose any common angles, and the single integral is unchanged since $I_\bl \cap I_\wh = \{v\}$ and so changing $\col(\bl)$ to $\col(\wh)$ cancels the sign coming from changing $\partial I_\bl$ to $\partial I_\wh$. The resulting expressions, indexed by the choice of convex vertex $v$, are
\begin{align}
v_1: \qquad  &- \int_{C([v_1,v_2])} \int_{C([v_4,v_1])} \omega_{\bl,\wh}
               + \mathsf A_{\bl,\wh}^{v_1}, \label{eq:v1}\\
v_2: \qquad  & \int_{C([v_1,v_2])} \int_{C([v_2,v_3])} \omega_{\bl,\wh}
               + \mathsf A_{\bl,\wh}^{v_2},\label{eq:v2}\\
v_3: \qquad  &- \int_{C([v_3,v_4])} \int_{C([v_2,v_3])} \omega_{\bl,\wh}
               + \mathsf A_{\bl,\wh}^{v_3},\nonumber \\
v_4: \qquad  & \int_{C([v_3,v_4])} \int_{C([v_4,v_1])} \omega_{\bl,\wh}
               + \mathsf A_{\bl,\wh}^{v_4}.\nonumber
\end{align}
{
Let us check that the expressions \eqref{eq:v1} and \eqref{eq:v2}, corresponding to
\(v_1\) and \(v_2\), agree.
Keep \(C(I_{\wh})=C([v_1,v_2])\) fixed and deform \(C(I_{\bl})\) from \(C([v_4,v_1])\) to
\(-C([v_2,v_3])\). During this deformation, the only pole of \(\omega_{\bl,\wh}\) crossed is along the diagonal \(\zeta=\eta\), so the change in the
double integral is the residue contribution
\[
- \frac{1}{2 \pi \ii} \int_{C([v_1,v_2])} g_{\bl,\wh}
= \mathsf A_{\bl,\wh}^{v_2}-\mathsf A_{\bl,\wh}^{v_1}.
\]
Finally, replacing the inner contour \(-C([v_2,v_3])\) by \(C([v_2,v_3])\) reverses the sign of the double integral. Therefore, the formulas
\eqref{eq:v1} and \eqref{eq:v2} coincide. The general case, proved in Proposition~\ref{prop:invariance}, uses
the same underlying idea.
}
\end{remark}

\begin{remark}
    It is worth emphasizing that the indicators that appear in our formula~\eqref{eq:intersection_concrete} are of a different nature than the indicator in \eqref{eq:aztec_inv_kast_cbdt}; indeed, as we have seen above, the latter indicator term is simply the whole-plane inverse $A_{\bl,\wh}^{v_2}$.
\end{remark}

\subsection{Example: a pentagonal Newton polygon with isoradial weights} \label{sec:pentagon_example}

\begin{figure}
    \centering
        
\raisebox{-0.5\height}{

\begin{tikzpicture}

\begin{scope}[shift={(0,1.5)},rotate=0]

 \coordinate[bvert, label = left:$v_4$](1) at (0,1){}; 
    \coordinate[bvert, label = -45:$v_1$](2) at (1,0){}; 
     \coordinate[bvert, label = 45:$v_2$](3) at (2,1){}; 
      \coordinate[bvert](4) at (1,1){};
          \coordinate[bvert, label = below:$v_5$](5) at (0,0){}; 
          \coordinate[bvert, label = above:$v_3$](6) at (1,2){};

           \draw[solidedge]  (5) --node[below]{$\alpha$} (2);
           \draw[solidedge] (3) --node[above right]{$\gamma$} (6);
           \draw[solidedge] (1) -- node[left]{$\epsilon$} (5)
           ;

  \draw[hollowedge] (2) --node[below right]{$\beta$} (3);
   \draw[hollowedge]   (6) --node[above left]{$\delta$} (1)
  ;
       \end{scope}
\end{tikzpicture}}\hspace{1.5cm}    
\raisebox{-0.5\height}{

\begin{tikzpicture}[scale=5,
  elab/.style={midway, sloped, font=\scriptsize, inner sep=1.2pt,
               fill=white, fill opacity=.85, text opacity=1}
]

\coordinate (S1) at (3,0);
\coordinate (S2) at (2,0);
\coordinate (S3) at (3,-1);

\coordinate[] (wh1) at (S1);
\coordinate[] (wh5) at ({ 8/3},{-2/3});
\coordinate[](bl4) at ({5/2},{-1/2});
\coordinate[] (bl6) at (3,{-1/2});
\midp{f0}{wh5}{wh1}
\def\t{0.0}
\fill[gray!30]
  ($(wh1)!\t!(f0)$) --
  ($(bl4)!\t!(f0)$) --
  ($(wh5)!\t!(f0)$) --
  ($(bl6)!\t!(f0)$) -- cycle;
\node at (f0) {$\f_0$};

\coordinate[wvert,label=45:${\wh_1}$]      (w0a)  at (3,0);
\coordinate                               (w1a)  at ({11/3},{ 1/3});
\coordinate                               (w2a)  at ({10/3},{ 2/3});
\coordinate[wvert,label=135:{$\wh_2$}]    (w3a)  at ({ 8/3},{ 1/3});
\coordinate[wvert,label=below:{$\wh_4$}]  (w4a)  at ({ 7/3},{-1/3});
\coordinate[wvert,label=-45:{$\wh_5$}]    (w5a)  at ({ 8/3},{-2/3});
\coordinate[wvert,label=right:{$\wh_7$}]  (w6a)  at ({10/3},{-1/3});

\coordinate[bvert,label=above:{$\bl_1$}]  (b23a) at (3,{ 1/2});
\coordinate[bvert,label=-45:{$\bl_2$}]     (b34a) at ({5/2},0);
\coordinate[bvert,label=left:{$\bl_4$}]  (b45a) at ({5/2},{-1/2});
\coordinate[bvert,label=below right:{$\bl_6$}]  (b56a) at (3,{-1/2});
\coordinate[bvert,label=above right:{$\bl_7$}]  (b61a) at ({7/2},0);

\draw (w0a) -- node[elab, above, yshift=1pt] {$\delta-\gamma$} (b23a);
\draw (w0a) -- node[elab, above, xshift=1pt] {$\epsilon-\delta$} (b34a);
\draw (w0a) -- node[elab, above, yshift=1pt] {$\alpha-\epsilon$} (b45a);
\draw (w0a) -- node[elab, above, yshift=1pt] {$\beta-\alpha$} (b56a);
\draw (w0a) -- node[elab, below, xshift=1pt] {$\gamma-\beta$} (b61a);

\draw (b23a) -- node[elab, above] {$\gamma-\beta$} (w3a);

\draw (b34a) -- node[elab, below, xshift=-2pt] {$\beta-\epsilon$} (w3a);
\draw (b34a) -- node[elab, below, xshift=-2pt] {$\delta-\gamma$} (w4a);

\draw (b45a) -- node[elab, above] {$\gamma-\alpha$} (w4a);
\draw (b45a) -- node[elab, below] {$\epsilon-\gamma$} (w5a);

\draw (b56a) -- node[elab, below] {$\gamma-\beta$} (w5a);
\draw (b56a) -- node[elab, below] {$\alpha-\delta$} (w6a);

\draw (b61a) -- node[elab, above] {$\delta-\gamma$} (w6a);

\coordinate[wvert,label=above:{$\wh_3$}] (w0b) at (2,0);
\coordinate                              (w1b) at ({ 8/3},{ 1/3});
\coordinate                              (w5b) at ({ 5/3},{-2/3});
\coordinate[wvert]                       (w6b) at ({ 7/3},{-1/3});

\coordinate[bvert,label=left:{$\bl_3$}]  (b56b) at (2,{-1/2});
\coordinate[bvert]                       (b61b) at ({5/2},0);

\draw (w0b) -- node[elab, below] {$\beta-\alpha$} (b56b);
\draw (w0b) -- node[elab, above] {$\gamma-\beta$} (b61b);
\draw (b56b) -- node[elab, above] {$\alpha-\delta$} (w6b);

\coordinate[wvert,label=right:{$\wh_6$}] (w0c) at (3,-1);
\coordinate                              (w2c) at ({10/3},{-1/3});
\coordinate[wvert]                       (w3c) at ({ 8/3},{-2/3});
\coordinate                              (w4c) at ({ 7/3},{-4/3});

\coordinate[bvert]                       (b23c) at (3,{-1/2});
\coordinate[bvert,label=left:{$\bl_5$}]  (b34c) at ({5/2},{-1});

\draw (w0c) -- node[elab, above] {$\delta-\gamma$} (b23c);
\draw (w0c) -- node[elab, below] {$\epsilon-\delta$} (b34c);
\draw (b34c) -- node[elab, above, xshift=-2pt] {$\beta-\epsilon$} (w3c);

\end{tikzpicture}
}

    \caption{A pentagonal Newton polygon with edges labeled by angles (right) and an Aztec graph with isoradial weights (left).}
    \label{fig:example_graph}
\end{figure}

\begin{figure}
\centering
\tikzset{
  bip/.style={draw=black,line width=1pt},
  elab/.style={font=\scriptsize, inner sep=1pt, fill=white, text=black},
}

\colorlet{colalpha}{Green}
\colorlet{colbeta}{Red}
\colorlet{colgamma}{Cerulean}
\colorlet{coldelta}{Purple}
\colorlet{colepsilon}{Dandelion}

\newcommand{\Rh}[6]{%
  \draw[line width=.55pt,draw=#3] (#1) -- ($(#1)+(#2)$);
  \draw[line width=.55pt,draw=#5] ($(#1)+(#2)$) -- (#6);
  \draw[line width=.55pt,draw=#3] (#6) -- ($(#1)+(#4)$);
  \draw[line width=.55pt,draw=#5] ($(#1)+(#4)$) -- (#1);
}

\begin{tikzpicture}[scale=1.6]

  \def\R{1.0}
  \coordinate (va) at (\R,0);
  \coordinate (vb) at ({\R*cos(72)},{\R*sin(72)});
  \coordinate (vg) at ({\R*cos(144)},{\R*sin(144)});
  \coordinate (vd) at ({\R*cos(216)},{\R*sin(216)});
  \coordinate (ve) at ({\R*cos(288)},{\R*sin(288)});

  \coordinate (W1) at (0,0);

  \coordinate (B1) at ($(W1)+(vg)+(vd)$); 
  \coordinate (B2) at ($(W1)+(vd)+(ve)$); 
  \coordinate (B4) at ($(W1)+(ve)+(va)$); 
  \coordinate (B6) at ($(W1)+(va)+(vb)$); 
  \coordinate (B7) at ($(W1)+(vb)+(vg)$); 

  \coordinate (W2) at ($(B1)-(vg)-(vb)$); 
  \coordinate (W4) at ($(B4)-(vg)-(va)$); 
  \coordinate (W5) at ($(B4)-(ve)-(vg)$); 
  \coordinate (W7) at ($(B6)-(vd)-(va)$); 
  \coordinate (W3) at ($(B2)-(vg)-(vb)$); 
  \coordinate (W6) at ($(B6)-(vg)-(vd)$); 

  \coordinate (B3) at ($(W3)+(va)+(vb)$); 
  \coordinate (B5) at ($(W6)+(vd)+(ve)$); 

  \coordinate (F0) at ($(W1)+(va)$);

  \Rh{W1}{vg}{colgamma}{vd}{coldelta}{B1}
  \Rh{W1}{vd}{coldelta}{ve}{colepsilon}{B2}
  \Rh{W1}{ve}{colepsilon}{va}{colalpha}{B4}
  \Rh{W1}{va}{colalpha}{vb}{colbeta}{B6}
  \Rh{W1}{vb}{colbeta}{vg}{colgamma}{B7}

  \Rh{W2}{vb}{colbeta}{vg}{colgamma}{B1}
  \Rh{W2}{ve}{colepsilon}{vb}{colbeta}{B2}

  \Rh{W4}{vg}{colgamma}{vd}{coldelta}{B2}
  \Rh{W4}{va}{colalpha}{vg}{colgamma}{B4}
  \Rh{W4}{vd}{coldelta}{va}{colalpha}{B3}

  \Rh{W5}{vg}{colgamma}{ve}{colepsilon}{B4}
  \Rh{W5}{vb}{colbeta}{vg}{colgamma}{B6}
  \Rh{W5}{ve}{colepsilon}{vb}{colbeta}{B5}

  \Rh{W7}{vd}{coldelta}{va}{colalpha}{B6}
  \Rh{W7}{vg}{colgamma}{vd}{coldelta}{B7}

  \Rh{W3}{va}{colalpha}{vb}{colbeta}{B3}
  \Rh{W3}{vb}{colbeta}{vg}{colgamma}{B2}

  \Rh{W6}{vg}{colgamma}{vd}{coldelta}{B6}
  \Rh{W6}{vd}{coldelta}{ve}{colepsilon}{B5}

  \draw[bip] (W1) -- node[elab,sloped,above,yshift=1pt] {$\delta-\gamma$} (B1);
  \draw[bip] (W1) -- node[elab,sloped,above,xshift=1pt] {$\epsilon-\delta$} (B2);
  \draw[bip] (W1) -- node[elab,sloped,above,yshift=1pt] {$\alpha-\epsilon$} (B4);
  \draw[bip] (W1) -- node[elab,sloped,above,yshift=1pt] {$\beta-\alpha$} (B6);
  \draw[bip] (W1) -- node[elab,sloped,below,xshift=1pt] {$\gamma-\beta$} (B7);

  \draw[bip] (B1) -- node[elab,sloped,above] {$\gamma-\beta$} (W2);

  \draw[bip] (B2) -- node[elab,sloped,below,xshift=-2pt] {$\beta-\epsilon$} (W2);
  \draw[bip] (B2) -- node[elab,sloped,below,xshift=-2pt] {$\delta-\gamma$} (W4);

  \draw[bip] (B4) -- node[elab,sloped,above] {$\gamma-\alpha$} (W4);
  \draw[bip] (B4) -- node[elab,sloped,below] {$\epsilon-\gamma$} (W5);

  \draw[bip] (B6) -- node[elab,sloped,below] {$\gamma-\beta$} (W5);
  \draw[bip] (B6) -- node[elab,sloped,below] {$\alpha-\delta$} (W7);

  \draw[bip] (B7) -- node[elab,sloped,above] {$\delta-\gamma$} (W7);

  \draw[bip] (W3) -- node[elab,sloped,below] {$\beta-\alpha$} (B3);
  \draw[bip] (W3) -- node[elab,sloped,above] {$\gamma-\beta$} (B2);

  \draw[bip] (B3) -- node[elab,sloped,above] {$\alpha-\delta$} (W4);

  \draw[bip] (W6) -- node[elab,sloped,above] {$\delta-\gamma$} (B6);
  \draw[bip] (W6) -- node[elab,sloped,below] {$\epsilon-\delta$} (B5);
  \draw[bip] (B5) -- node[elab,sloped,above,xshift=-2pt] {$\beta-\epsilon$} (W5);

  \foreach \P in {W1,W2,W3,W4,W5,W6,W7}
    \node[wvert] at (\P) {};
  \foreach \P in {B1,B2,B3,B4,B5,B6,B7}
    \node[bvert] at (\P) {};

  \node[right =1pt] at (W1) {$\wh_1$};
  \node[below left =1pt] at (W2) {$\wh_2$};
  \node[below      =2pt] at (W3) {$\wh_3$};
  \node[below left     =2pt] at (W4) {$\wh_4$};
  \node[below right] at (W5) {$\wh_5$};
  \node[right      =2pt] at (W6) {$\wh_6$};
  \node[above right=1pt] at (W7) {$\wh_7$};

  \node[left       =2pt] at (B1) {$\bl_1$};
  \node[above left =2pt] at (B2) {$\bl_2$};
  \node[below right=2pt] at (B3) {$\bl_3$};
  \node[above      =2pt] at (B4) {$\bl_4$};
  \node[right      =2pt] at (B5) {$\bl_5$};
  \node[above right      =2pt] at (B6) {$\bl_6$};
  \node[left       =2pt] at (B7) {$\bl_7$};

  \fill[gray!70] (F0) circle (0.6pt);
  \node[above left] at (F0) {$\f_0$};

  \begin{scope}[shift={(-4.6,-0.8)}]
    \draw[gray!60] (0,0) circle (1);

    \draw[->,thin,draw=colalpha] (0,0) -- (0:1);
    \node[] at (0:1.18) {$\alpha$};

    \draw[->,thin,draw=colbeta] (0,0) -- (72:1);
    \node[] at (72:1.18) {$\beta$};

    \draw[->,thin,draw=colgamma] (0,0) -- (144:1);
    \node[] at (144:1.18) {$\gamma$};

    \draw[->,thin,draw=coldelta] (0,0) -- (216:1);
    \node[] at (216:1.18) {$\delta$};

    \draw[->,thin,draw=colepsilon] (0,0) -- (288:1);
    \node[] at (288:1.18) {$\epsilon$};
  \end{scope}

\end{tikzpicture}
\caption{
A choice of uniformly spaced angles (left) and the corresponding isoradial embedding (right) of the graph in Figure~\ref{fig:example_graph}. The colored edges form the quad graph $\graph^\diamond$.}\label{fig:isoradial_pentagon}
\end{figure}

For simplicity, we consider the setting of \emph{isoradial weights} from~\cite{Kenyonisoradial}. Namely, we take $\curve=\CC\mathbb P^1$ and let
\(
A_0=\{\zeta\in\CC\mathbb P^1:\ |\zeta|=1\}
\)
be the unit circle. In this case, the theta function is identically equal to $1$, and the prime form is
\[
E(\zeta,\eta)=\frac{\eta-\zeta}{\sqrt{d\zeta}\sqrt{d\eta}}.
\]
After performing a gauge transformation that removes the factor
$\frac{1}{\sqrt{d\zeta}\sqrt{d\eta}}$, Fock’s Kasteleyn matrix~\eqref{eq:kast_fock}
simplifies to
\begin{equation} \label{eq:isoradial_weight}
\kast_{\wh,\bl}=\beta-\alpha.
\end{equation}
{
These weights have the following geometric interpretation. We associate to $\graph$ its \emph{quad graph} $\graph^\diamond$, whose vertices are the vertices and faces of $\graph$, and whose edges are the pairs $(\x,\f)$ with
\(
\x \in B(\graph)\sqcup W(\graph), \f\in F(\graph),
\)
such that $\x$ lies on the boundary of $\f$. Thus, every face of $\graph^\diamond$ is a quadrilateral, and each edge of $\graph$ appears as one of its diagonals. The choice of angles determines an embedding of $\graph^\diamond$ in which every quadrilateral is a rhombus; see Figure~\ref{fig:isoradial_pentagon}(left). Precisely, if an edge $\e=\bl\wh$ is crossed by the strands $\alpha,\beta$, then the corresponding rhombus has opposite sides in the directions given by the unit complex numbers $\alpha$ and $\beta$. In this
embedding, the Kasteleyn matrix entry~\eqref{eq:isoradial_weight} is the complex vector representing the dual diagonal of the rhombus. In particular, the isoradial weight is its modulus, \emph{i.e.} the Euclidean length of the dual edge.
} 

{For small graphs, the formula~\eqref{eq:formula_inverse_K} can be evaluated explicitly using residues.} 

Consider the graph shown in Figure~\ref{fig:example_graph}. For each edge
$e\in E(N)$, we assume that all angles in $\zz_e$ are
equal. To simplify notation, we denote these angles by
$\alpha,\beta,\gamma,\delta,\epsilon$, and we label each edge of $N$ by its
associated angle. Then, Fock’s Kasteleyn matrix is
\[
\kast=
\begin{blockarray}{cccccccc}
& \bl_1 & \bl_2 & \bl_3 & \bl_4 & \bl_5 & \bl_6 & \bl_7 \\
\begin{block}{c[ccccccc]}
\wh_1 & \delta-\gamma & \epsilon-\delta & 0 & \alpha-\epsilon & 0 & \beta-\alpha & \gamma-\beta \\
\wh_2 & \gamma-\beta  & \beta-\epsilon & 0 & 0 & 0 & 0 & 0 \\
\wh_3 & 0 & \gamma-\beta & \beta-\alpha & 0 & 0 & 0 & 0 \\
\wh_4 & 0 & \delta-\gamma & \alpha-\delta & \gamma-\alpha & 0 & 0 & 0 \\
\wh_5 & 0 & 0 & 0 & \epsilon-\gamma & \beta-\epsilon & \gamma-\beta & 0 \\
\wh_6 &0 & 0 & 0 & 0 & \epsilon-\delta &  \delta-\gamma & 0 \\
\wh_7 & 0 & 0 & 0 & 0 & 0  &\alpha-\delta &  \delta-\gamma\\
\end{block}
\end{blockarray}.
\]
Directly computing the inverse, we find, for instance, 
\begin{equation} \label{eq:K11}
    \kast_{\bl_1,\wh_1}^{-1} = -\frac{(\alpha -\beta ) (\beta -\epsilon ) (\gamma -\delta )}{(\alpha -\gamma ) (\beta -\delta )^2 (\gamma
   -\epsilon )}, \qquad  \kast_{\bl_5,\wh_1}^{-1}= \frac{(\beta -\gamma ) (\gamma -\delta )^2}{(\alpha-\gamma)
   (\beta -\delta )^2 (\gamma -\epsilon )}.
\end{equation}
We now verify that these expressions agree with the evaluation of
\eqref{eq:formula_inverse_K}. From Figure~\ref{fig:pentagon_strands}, we compute the discrete Abel map:
\[
\begin{array}{c|c}
& \dabel \\ \hline
\wh_1 & -\alpha \\
\wh_2 & -\alpha-\beta+\delta \\
\wh_3 & -\alpha-\beta-\gamma+\delta+\epsilon \\
\wh_4 & -\alpha+\gamma+\epsilon \\
\wh_5 & -\gamma \\
\wh_6 & \beta-\gamma-\delta \\
\wh_7 & -\alpha+\beta-\delta
\end{array}
\qquad
\begin{array}{c|c}
 & \dabel \\ \hline
\bl_1 & -\alpha+\gamma+\delta \\
\bl_2 & -\alpha+\delta+\epsilon \\
\bl_3 & -\gamma+\delta+\epsilon \\
\bl_4 & \epsilon \\
\bl_5 & \beta-\gamma+\epsilon \\
\bl_6 & \beta \\
\bl_7 & -\alpha+\beta+\gamma
\end{array}
\]
With our notation,~\eqref{eq:bigD} is
\(
\bm D_{\bm \beta} = \beta-\gamma+\delta-\epsilon,
\)
so the boundary term
\[
\Psi_{\bm \beta}(\zeta) = \frac{(\zeta-\beta) (\zeta-\delta)}{(\zeta-\gamma) (\zeta-\epsilon)}.
\]
We first consider $\kast^{-1}_{\bl_1,\wh_1}$. We have
\[
\sign_{{\graphregion}_{\bl_1}}=\sign_{{\graphregion}_{\wh_1}}
=
\vcenter{
\hbox{
\begin{tikzpicture}[scale=0.9]
  \begin{scope}[rotate=0]
  \coordinate[bvert, label = left:$v_4$](1) at (0,1){}; 
    \coordinate[bvert, label = -45:$v_1$](2) at (1,0){}; 
     \coordinate[bvert, label = 45:$v_2$](3) at (2,1){}; 
      \coordinate[bvert](4) at (1,1){};
          \coordinate[bvert, label = below:$v_5$](5) at (0,0){}; 
          \coordinate[bvert, label = above:$v_3$](6) at (1,2){};
          \draw[solidedge]  (5) --node[below]{$\alpha$} (2);
           \draw[solidedge] (3) --node[above right]{$\gamma$} (6);
           \draw[solidedge] (1) -- node[left]{$\epsilon$} (5)
           ;

  \draw[hollowedge] (2) --node[below right]{$\beta$} (3);
   \draw[hollowedge]   (6) --node[above left]{$\delta$} (1)
  ;
    
  \end{scope}
\end{tikzpicture}
}}
\]
so we can take
\[
I_{\bl_1} = [v_1,v_2], \qquad I_{\wh_1} = [v_4,v_1], \qquad v = v_1, \qquad \sign(v)=1.
\]
The double integral is
\begin{align*}
&\frac{1}{(2 \pi \ii)^2} \int_{C(I_{\wh_1})} \int_{C(I_{\bl_1})} \frac{1}{(\eta-\zeta)} \frac{(\zeta-\alpha)(\zeta-\epsilon)} {(\zeta-\beta) (\zeta-\delta)^2} \frac{(\eta-\beta) (\eta-\delta)}{(\eta-\alpha)(\eta-\gamma) (\eta-\epsilon)}  d\zeta d\eta \\
&=-\frac{(\alpha-\beta) (\beta - \epsilon)}{(\beta-\delta)^2}        \frac{1}{2 \pi \ii} \int_{C(I_{\wh_1})}  \frac{ (\eta-\delta)}{(\eta-\alpha)(\eta-\gamma) (\eta-\epsilon)}   d\eta \\
&= -\frac{(\alpha-\beta)(\beta - \epsilon)}{(\beta-\delta)^2} \left(\frac{(\alpha-\delta)}{(\alpha-\gamma)(\alpha-\epsilon)}-\frac{(\delta-\epsilon)}{(\alpha-\epsilon)(\gamma-\epsilon)} \right)\\
&=  -\frac{(\alpha -\beta ) (\beta -\epsilon ) (\gamma -\delta )}{(\alpha -\gamma ) (\beta -\delta )^2 (\gamma
   -\epsilon )}.
\end{align*}
Since $\dabel(\wh_1)-\dabel(\bl_1) = -\gamma-\delta$, the null sector $\sector_{\bl_1,\wh_1} = [v_4,v_2]$. Since \[\partial I_\bl \cap I_\wh = (v_2-v_1) \cap [v_4,v_1] = -v_1, \]the single integral term is
\[
- \langle \partial I_\bl \cap I_\wh,\mathsf A_{\bl,\wh} \rangle = \mathsf A_{\bl,\wh}^{v_1} = 0
\]
since $v_1 \in \sector_{\bl_1,\wh_1}$. Thus, the total agrees with~\eqref{eq:K11}.

We now consider $\kast^{-1}_{\bl_5,\wh_1}$. We have
\[
\sign_{{\graphregion}_{\bl_5}}
=
\vcenter{
\hbox{
\begin{tikzpicture}[scale=0.9]
  \begin{scope}[rotate=0]
 \coordinate[bvert, label = left:$v_4$](1) at (0,1){}; 
    \coordinate[bvert, label = -45:$v_1$](2) at (1,0){}; 
     \coordinate[bvert, label = 45:$v_2$](3) at (2,1){}; 
      \coordinate[bvert](4) at (1,1){};
          \coordinate[bvert, label = below:$v_5$](5) at (0,0){}; 
          \coordinate[bvert, label = above:$v_3$](6) at (1,2){};

         \draw[hollowedge]  (5) --node[below]{$\alpha$} (2);
           \draw[solidedge] (3) --node[above right]{$\gamma$} (6);
           \draw[solidedge] (1) -- node[left]{$\epsilon$} (5)
           ;

  \draw[hollowedge] (2) --node[below right]{$\beta$} (3);
   \draw[hollowedge]   (6) --node[above left]{$\delta$} (1)
  ;

  \end{scope}
\end{tikzpicture}
}}
\]
so we can take
\[
I_{\bl_5} = [v_3,v_4], \qquad I_{\wh_1} = [v_2,v_3], \qquad v = v_3, \qquad \sign(v)=1.
\]
The double integral is 
\begin{align*}
&\frac{1}{(2 \pi \ii)^2} \int_{C(I_{\wh_1})} \int_{C(I_{\bl_5})} \frac{1}{(\eta-\zeta)}  \frac{(\zeta-\gamma)^2}{(\zeta-\beta)^2(\zeta-\delta)} \frac{(\eta-\beta)(\eta-\delta)}{(\eta-\alpha)(\eta-\gamma)(\eta-\epsilon)} d\zeta d\eta \\
&=\frac{(\gamma-\delta)^2}{(\beta-\delta)^2}\frac{1}{2 \pi \ii} \int_{C(I_{\wh_1})}   \frac{(\eta-\beta)}{(\eta-\alpha)(\eta-\gamma)(\eta-\epsilon)}  d\eta \\
&=\frac{(\beta -\gamma ) (\gamma -\delta )^2}{(\alpha-\gamma)
   (\beta -\delta )^2 (\gamma -\epsilon )},
\end{align*}
Since $\dabel(\wh_1) -\dabel(\bl_5) =-\alpha- \beta+\gamma-\epsilon$, the null sector $\sector_{\bl_5,\wh_1} = [v_2,v_4]$. Since \[ \partial I_\bl \cap I_\wh = (v_4-v_3) \cap [v_2,v_3] = -v_3,\] the single integral term is
\[
- \langle \partial I_\bl \cap I_\wh,\mathsf A_{\bl,\wh} \rangle = \mathsf A_{\bl,\wh}^{v_3} = 0
\] 
since $v_3 \in \sector_{\bl_5,\wh_1}$. Thus, the total again agrees with~\eqref{eq:K11}.

\subsection{Correctness of the definition of $\kast^{-1}$}

\begin{figure}
    \centering
\begin{tikzpicture}
    \begin{scope}
    \coordinate[bvert, label = left:$\bl$] (w) at (0,0) {};

    \coordinate[wvert] (b2) at (1,-1) {};
    \coordinate[wvert] (b4) at (-1,-1) {};
    \draw[] (w) edge (b2)  edge (b4);
    \draw[solidedge] (1.5,-0.5) -- (-1.5,-0.5);
    \node at (2,-0.5) {$\beta_e$};
    \coordinate[label=below:${\x}$] (xx) at (-1,-2) {};
    \draw[->] (xx) to[out=70, in=-90] (w);
    \end{scope}

        \begin{scope}[shift={(7,0)}]
    \coordinate[bvert, label = left:$\bl$] (w) at (0,0) {};

    \coordinate[wvert] (b2) at (1,-1) {};
    \coordinate[wvert] (b4) at (-1,-1) {};
    \draw[] (w) edge (b2)  edge (b4);
    \draw[solidedge] (1.5,-0.5) -- (-1.5,-0.5);
    \node at (2,-0.5) {$\beta_e$};
 \coordinate[label=above:${\x}$] (xx) at (-1,2) {};
     \draw[->] (xx) to[out=-70, in=90] (w);
  \coordinate[label=below:${\f}$] (f) at (0,-1.3) {};
  \draw[->] (w) -- (f);
 
    \end{scope}
\end{tikzpicture}
    \caption{The two cases in the proof of Lemma~\ref{lem:sign_poles}: $\sign_{{\graphregion}_\x}(e) = +$ (left) and $\sign_{{\graphregion}_\x}(e)=-$ (right).} 
    \label{fig:sign_poles}
\end{figure}

\begin{lemma} \label{lem:sign_poles}
Let $e \in E(N)$. If $\x \in V(\graph)$ is a vertex and $\sign_{{\graphregion}_{\x}}(e)=+$ (resp. $-$), then
    \[
        \ord_{\alpha}\left(\frac{1}{g_{\x}\Psi_{\bm \beta}}\right) \geq 0
        \quad (\text{resp. } \leq 0)
    \]
    for all $\alpha \in \zz_e$.  
\end{lemma}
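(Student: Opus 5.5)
The plan is to reduce the statement to a crossing count for the discrete Abel map. I will combine Lemma~\ref{lem:div_g} with the definition~\eqref{eq:d_beta_defn} of $\bm D_{\bm\beta}$, and then use the strip argument from the proof of Lemma~\ref{lem:parallel_zz_same_sign}.

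First I identify the order. Since $g_\x=g_{\f_0,\x}$, Lemma~\ref{lem:div_g} gives $\ord_\alpha(g_\x)=\dabel(\x)_\alpha$ for every $\alpha\in\zz$. By definition of $\Psi_{\bm\beta}$, $\ord_\alpha(\Psi_{\bm\beta})=(\bm D_{\bm\beta})_\alpha$. For $\alpha\in\zz_e$ we have $(\bm D_{\bm\beta})_\alpha=-\dabel(\y)_\alpha$, where $\y$ is any $\beta_e$-outer vertex. Hence
\[
\ord_\alpha\Bigl(\frac{1}{g_\x\Psi_{\bm\beta}}\Bigr)=\dabel(\y)_\alpha-\dabel(\x)_\alpha ,\qquad \alpha\in\zz_e .
\]
By~\eqref{eq:dabel_intersection_version}, the right-hand side is the coefficient of $\alpha$ in $\sum_{\alpha'}(\gamma\wedge\alpha')\alpha'$, for any transverse path $\gamma$ from $\x$ to $\y$. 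So it suffices to find one path $\gamma$ from $\x$ to a suitable $\beta_e$-outer vertex $\y$ such that every crossing of $\gamma$ with a strand of $\zz_e$ has the same sign, and that sign is $+$ (resp.\ $-$) when $\sign_{\graphregion_\x}(e)=+$ (resp.\ $-$).

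Next I construct the path. By minimality, the strands of $\zz_e$ are pairwise disjoint and cut the plane into ordered strips. As in the proof of Lemma~\ref{lem:parallel_zz_same_sign}, a path from $\x$ to $\y$ minimizing the number of crossings with $\zz_e$ crosses each strand between them exactly once, all in the same direction. I will choose $\y$ to be the $\beta_e$-outer vertex closest to $\x$ across $\beta_e$, following Figure~\ref{fig:sign_poles}. The $\beta_e$-outer vertices lie on the zig-zag path of $\beta_e$, on the side opposite to $\graphbeta$.

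If $\sign_{\graphregion_\x}(e)=+$, then $\x$ lies to the left of $\beta_e$. The path $\gamma$ travels toward $\beta_e$, ending at the outer vertex $\y$, and crosses the intermediate strands of $\zz_e$, all in one direction. If $\sign_{\graphregion_\x}(e)=-$, then $\x$ lies to the right of $\beta_e$, and $\gamma$ approaches from the other side. In that case one extra step into an adjacent face $\f$, as in Figure~\ref{fig:sign_poles} (right), is needed to relate $\y$ to the crossing with $\beta_e$ itself. I then read off the sign using convention~\eqref{eq:intersection_convention}, namely whether $\gamma$ crosses $\alpha$ from right to left or from left to right, together with the vertex-type rules in Section~\ref{sec:d_abel_map}: a white vertex gets $-\alpha$ and a black vertex gets $+\alpha$ relative to an adjacent face.

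The main obstacle is the boundary bookkeeping at $\beta_e$ itself and at the strands adjacent to it. The outer vertex $\y$ sits on the zig-zag path of $\beta_e$, and its color depends on the color of $e$. I must check that the final half-step from $\y$'s face contributes with the same sign as the other crossings, rather than contributing the opposite sign to the coefficient of $\beta_e$. I would handle this by splitting into the four cases given by the color of $e$ and $\sign_{\graphregion_\x}(e)$. In each case I would write $\dabel(\y)_{\zz_e}$ explicitly, exactly as in the proof of Lemma~\ref{lem:c_admissibility}, and compare it with $\dabel(\x)_{\zz_e}$ strand by strand. I expect that inner vertices and vertices lying on $\beta_e$ itself reduce to the same comparison, giving a coefficient in $\{0,1,\dots\}$ in one case and in $\{0,-1,\dots\}$ in the other.
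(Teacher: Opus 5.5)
Your reduction to $(\dabel(\y)-\dabel(\x))_\alpha$ for a $\beta_e$-outer vertex $\y$ is exactly the paper's proof. So is the strip argument of Lemma~\ref{lem:parallel_zz_same_sign}, together with the auxiliary face $\f$ across $\beta_e$ from $\y$ when $\x$ and $\y$ lie on the same side of $\beta_e$. The obstacle you anticipate is not real: for black $e$ with $\sign_{\graphregion_\x}(e)=-$ one has $(\dabel(\f)-\dabel(\x))_{\beta_e}=-1$, which already forces every $\zz_e$-coefficient to be $\le 0$, and the final half-step $+\beta_e$ from $\f$ to $\y$ has the opposite sign but only brings the $\beta_e$-coefficient back to $0$, so no explicit four-case computation is needed.
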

\begin{proof}
By~\eqref{eq:d_beta_defn} and Lemma~\ref{lem:div_g}, we have 
\begin{equation} \label{eq:order_at_alpha}
     \ord_{\alpha}\left(\frac{1}{g_{\x}\Psi_{\bm \beta}}\right)= (\dabel(\y)-\dabel(\x))_\alpha
\end{equation}
where $\y$ is any $\beta_e$-outer vertex. Assume first that $e$ is black so that $\y = \bl$ is a black vertex. We have two cases (see Figure~\ref{fig:sign_poles}):
\begin{enumerate}
    \item $\sign_{{\graphregion}_\x}(e) = +$: Any path from $\x$ to $\bl$ crosses $\beta_e$ positively, \emph{i.e.} 
    $(\dabel(\bl)-\dabel(\x))_{\beta_e}=1$. By Lemma~\ref{lem:parallel_zz_same_sign}, $(\dabel(\bl)-\dabel(\x))_\alpha \geq 0$ for all $\alpha \in \zz_e$.
    \item $\sign_{{\graphregion}_\x}(e) = -$: Let $\f$ be the face separated from $\bl$ by $\beta_e$. Any path from $\x$ to $\f$ crosses $\beta_e$ negatively, \emph{i.e.} $(\dabel(\f)-\dabel(\x))_{\beta_e}=-1$. By Lemma~\ref{lem:parallel_zz_same_sign}, $(\dabel(\f)-\dabel(\x))_\alpha \leq 0$ for all $\alpha \in \zz_e$. Since $\dabel(\bl)-\dabel(\x) = \dabel(\f)-\dabel(\x) + \beta_e$, we get $(\dabel(\bl)-\dabel(\x))_\alpha \leq 0$ for all $\alpha \in \zz_e$.
\end{enumerate}
The argument for white $e$ is identical, with $\bl$ replaced by $\wh$.
\end{proof}







Recall Definition~\ref{def:null_sector} of the null sector $\sector_{\bl,\wh}$.

\begin{lemma}\label{lem:sector}
Let $e\in E(N)$, $\bl\in B(\graphbeta)$ and $\wh\in W(\graphbeta)$.
Assume that one of the following holds:
\begin{enumerate}
\item $\sign_{{\graphregion}_\bl}(e)=+$ and $\sign_{{\graphregion}_\wh}(e)=-$.
\item $\sign_{{\graphregion}_\wh}(e)=-$ and $\bl$ lies on $\beta_e$.
\item $\sign_{{\graphregion}_\bl}(e)=+$ and $\wh$ lies on $\beta_e$.
\end{enumerate}
Then $e\subset \sector_{\bl,\wh}$.
\end{lemma}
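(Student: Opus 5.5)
The plan is to show that, under each hypothesis, $g_{\bl,\wh}$ has no poles at any angle in $\zz_e$. We then check that this forces $e\subset\sector_{\bl,\wh}$ via Definition~\ref{def:null_sector}. By Lemma~\ref{lem:div_g}, the order of $g_{\bl,\wh}$ at $\alpha\in\zz_e$ is $(\dabel(\wh)-\dabel(\bl))_\alpha$. We split it through the $\beta_e$-outer vertex $\y$ used in Lemma~\ref{lem:sign_poles}:
\[
(\dabel(\wh)-\dabel(\bl))_\alpha=(\dabel(\y)-\dabel(\bl))_\alpha-(\dabel(\y)-\dabel(\wh))_\alpha .
\]
By \eqref{eq:order_at_alpha}, the two terms are the orders of $1/(g_\bl\Psi_{\bm\beta})$ and $1/(g_\wh\Psi_{\bm\beta})$ at $\alpha$. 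In case~1, Lemma~\ref{lem:sign_poles} makes the first term $\ge 0$ and the second $\le 0$. Hence $\ord_\alpha g_{\bl,\wh}\ge 0$ for every $\alpha\in\zz_e$.

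For cases~2 and~3 we redo the crossing argument of Lemma~\ref{lem:sign_poles} directly. Suppose $\bl$ lies on $\beta_e$ and $e$ is black. If $\bl$ is $\beta_e$-outer, we may take $\y=\bl$, so the first term vanishes and the claim follows from $\sign_{{\graphregion}_\wh}(e)=-$. If $\bl$ is the inner vertex on $\beta_e$, then $\dabel(\bl)$ and $\dabel(\y)$ differ along $\zz_e$ only by the single step across $\beta_e$. We then show, as in Figure~\ref{fig:sign_poles}, that a minimal path from $\bl$ to $\y$ crosses parallel strands only with the favourable orientation. The white-edge case and case~3 follow by the symmetric argument, exchanging the roles of $\bl$ and $\wh$ and of black and white. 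The hard part is the bookkeeping of which side of $\beta_e$ a vertex lying on $\beta_e$ counts as being on. I would first try to reduce to case~1 by replacing the vertex on $\beta_e$ with an adjacent face on the appropriate side, which changes $\dabel$ only by $\pm\beta_e$.

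Having shown that $g_{\bl,\wh}$ has no poles in $\zz_e$, we conclude with Definition~\ref{def:null_sector}. If $g_{\bl,\wh}$ has a zero on $A_0$, the zero edges and pole edges do not interlace (Lemma~\ref{lemma:interlacing}). The maximal pole-free interval containing all zero edges then contains every edge without poles, in particular $e$. This holds because such an interval is a complement of the minimal interval spanned by the pole edges, and $e$ lies outside that minimal interval. Here I must argue that $e$ cannot sit strictly between two pole edges; I would get this from Lemma~\ref{lemma:interlacing} together with the degree count $\deg(\dabel(\wh)-\dabel(\bl))=-2$. If instead the divisor is $-\alpha'-\beta'$, then $e\neq e(\alpha'),e(\beta')$, and $e$ lies in one of the two complementary arcs. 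I would identify that arc as the null sector by checking that the extension step in the definition, crossing a strand $\gamma$ near $\bl$ or $\wh$, lands on the same side as $e$. I expect this last case, together with the "between two pole edges" issue, to be the main obstacle. The fallback is a local analysis near the boundary strand $\beta_e$ using Lemma~\ref{lem:cyclic_order_same}.
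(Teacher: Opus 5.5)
Your proposal has a genuine gap in its last step. The computations showing $\ord_\alpha g_{\bl,\wh}\ge 0$ on $\zz_e$ are fine. The trouble is the reduction you want to use them for: \emph{absence of poles on $\zz_e$ does not imply $e\subset\sector_{\bl,\wh}$}, and neither of your suggested repairs closes this.

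In the zero-free case of Definition~\ref{def:null_sector}, every side other than $e(\alpha'),e(\beta')$ is pole-free, yet only one of the two complementary arcs is the null sector. For example, take the edge $\bl_4\wh_4$ of Section~\ref{sec:pentagon_example}, with entry $\gamma-\alpha$. Both arcs $[v_1,v_2]$ and $[v_3,v_5]$ contain sides, so some pole-free side lies outside the sector whichever arc is chosen.

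When $g_{\bl,\wh}$ has zeros, Lemma~\ref{lemma:interlacing} applied to $g_{\bl,\wh}$ only constrains sides that carry zeros or poles. The count $\#\text{poles}-\#\text{zeros}=2$ is compatible with a pole-free side sitting strictly between two pole sides; a pattern such as pole, empty, pole, zero, pole along $\partial N$ is not excluded by either fact. So your target is wrong. The hypotheses must be used to produce an actual \emph{zero} in $\zz_e$, either for $g_{\bl,\wh}$ itself or for an auxiliary function to which Lemma~\ref{lemma:interlacing} can be applied.

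Here is how this works.

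\textbf{Case~1.} $\bl$ and $\wh$ lie on opposite sides of $\beta_e$, so $(\dabel(\wh)-\dabel(\bl))_{\beta_e}=1$. Your decomposition gives this too if you track the $\beta_e$-coefficient and not only signs. Hence $g_{\bl,\wh}$ vanishes at $\beta_e$, and $e\subset\sector_{\bl,\wh}$ by the definition itself. The zero-free case never arises here.

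\textbf{Case~2.} Let $\f$ be the face across $\beta_e$ from $\bl$. Then $\dabel(\wh)-\dabel(\f)=\dabel(\wh)-\dabel(\bl)+\beta_e$, so $g_{\f,\wh}$ has the divisor of $g_{\bl,\wh}$ on $A_0$ plus a zero at $\beta_e$.
\begin{itemize}
\item Suppose $g_{\bl,\wh}$ has a zero. The side $e$ carries no pole of $g_{\bl,\wh}$, by Lemma~\ref{lem:parallel_zz_same_sign} applied to $g_{\f,\wh}$. So if $e\not\subset\sector_{\bl,\wh}$, then $e$ is separated from that zero by pole sides in both directions. The zeros and poles of $g_{\f,\wh}$ then interlace, contradicting Lemma~\ref{lemma:interlacing} for the pair $(\f,\wh)$.
\item Suppose $g_{\bl,\wh}$ has no zeros. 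Then $\bl$ and $\wh$ lie on the same side of $\beta_e$, so $\beta_e$ is not one of its two poles. Extending the path from $\bl$ into $\f$ therefore crosses an admissible strand $\gamma=\beta_e$ in Case~2 of Definition~\ref{def:null_sector}, and the sector is the arc containing $e$.
\end{itemize}

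\textbf{Case~3} is symmetric, using the face across $\beta_e$ from $\wh$.

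Your idea of replacing the vertex on $\beta_e$ by an adjacent face is the right move. Its purpose, though, is to manufacture a zero at $\beta_e$ for the interlacing lemma to act on, not to bound pole orders.
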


\begin{proof}
We consider each of the three cases:

\begin{enumerate}
\item Suppose $\sign_{{\graphregion}_\bl}(e)=+$ and $\sign_{{\graphregion}_\wh}(e)=-$. Then
\[
(\dabel(\wh)-\dabel(\bl))_{\beta_e}=1,
\]
so Lemma~\ref{lem:div_g} shows that $g_{\bl,\wh}$ has a zero at $\beta_e$. The conclusion therefore follows from the definition of $\sector_{\bl,\wh}$.

\item Suppose $\sign_{{\graphregion}_\wh}(e)=-$ and $\bl$ lies on $\beta_e$. Let $\f$ be the face on the other side of $\beta_e$ from $\bl$, as in Figure~\ref{fig:sign_poles}(right). Then
\[
\dabel(\wh)-\dabel(\f)
=
\dabel(\wh)-\dabel(\bl)+\beta_e.
\]
Thus, \(g_{\f,\wh}\) has the same zeros and poles in \(A_0\) as \(g_{\bl,\wh}\), except for one additional zero at \(\beta_e\).

We now distinguish the two cases in~Definition~\ref{def:null_sector} of \(\sector_{\bl,\wh}\).

\begin{enumerate}
\item {If $g_{\bl,\wh}$ has at least one zero, then \(e\not\subset \sector_{\bl,\wh}\) would imply that the edges carrying zeros and poles of \(g_{\f,\wh}\) interlace along \(\partial N\), contradicting Lemma~\ref{lemma:interlacing}.}


\item {Suppose $g_{\bl,\wh}$ has no zeros in $A_0$ and
\[
\divisor(g_{\bl,\wh})|_{A_0}=-\alpha-\beta,
\]
where $\alpha,\beta\in\zz$ are as in Definition~\ref{def:null_sector}. 
Since \(\sign_{{\graphregion}_\wh}(e)=-\), the vertex \(\wh\) does not lie on the other side of \(\beta_e\) from \(\bl\), so \(\alpha\) and \(\beta\) are distinct from \(\beta_e\). 
Choosing $\gamma=\beta_e$ in Case~2 of Definition~\ref{def:null_sector}, we get \(e\subset \sector_{\bl,\wh}\).}
\end{enumerate}

\item This is symmetric to Case~2.
\end{enumerate}
\end{proof}

We say that a cyclic interval $[a,b] \subset \partial N$ is \emph{nontrivial} if $a\neq b$; equivalently, $[a,b]$ is neither a single point nor all of $\partial N$.

\begin{lemma}\label{lem:boundary_intersection_indicator}
Let \(J=[a,b]\) and \(K=[c,d]\) be nontrivial cyclic closed intervals in \(\partial N\) with endpoints in $V(N)$. Then, as \(0\)-chains,
\begin{equation}
\partial(J\cap K)
=
\mathbf 1_{\{b\in K\}} b
-\mathbf 1_{\{a\in K\}} a
+\mathbf 1_{\{d\in J\}} d
-\mathbf 1_{\{c\in J\}} c
+\mathbf 1_{\{a=c\}} a
-\mathbf 1_{\{b=d\}} b .
\label{eq:star_indicator}
\end{equation}
\end{lemma}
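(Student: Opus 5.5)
We will prove \eqref{eq:star_indicator} by a direct case analysis on the combinatorial type of $J\cap K$. The intersection of two nontrivial closed cyclic arcs on the circle $\partial N$ is a disjoint union of at most two closed arcs, some possibly degenerate (single points); note that $J\cap K$ can even be all of $\partial N$ only if $J=K=\partial N$, which is excluded by nontriviality. We use the convention $\partial[p,p]=p-p=0$ for a degenerate arc and $\partial(A\sqcup B)=\partial A+\partial B$. The key observation is that the endpoints of the components of $J\cap K$ are among $\{a,b,c,d\}$. A left endpoint of a component is either $a$ (when $a\in K$) or $c$ (when $c\in J$), and a right endpoint is either $b$ (when $b\in K$) or $d$ (when $d\in J$). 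Generically each component is of the form $[x,y]$ with $x\in\{a,c\}$ lying in the other interval and $y\in\{b,d\}$ lying in the other interval.

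\textbf{Generic case.} Suppose the four points $a,b,c,d$ are pairwise distinct. Then $a\in K$ holds exactly when $a$ is the left endpoint of some component, and similarly for $b,c,d$. Each such endpoint is counted once with the correct sign, so the first four terms of \eqref{eq:star_indicator} equal $\partial(J\cap K)$, and the last two correction terms vanish. This already covers both the one-component case (for instance $a<c<b<d$, giving $[c,b]$) and the two-component case (for instance $a<d<c<b$ cyclically, giving $[a,d]\sqcup[c,b]$).

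\textbf{Coincident endpoints.} This is the main point to check carefully. We go through the coincidences $a=c$, $b=d$, $a=d$, and $b=c$, together with their combinations.
\begin{itemize}
\item If $a=c$, then both $\mathbf 1_{\{a\in K\}}a$ and $\mathbf 1_{\{c\in J\}}c$ fire, subtracting $a$ twice. The term $+\mathbf 1_{\{a=c\}}a$ restores the correct single $-a$. The case $b=d$ is symmetric and is handled by the term $-\mathbf 1_{\{b=d\}}b$.
\item If $a=d$ with $b\neq c$, then $J\cap K$ has a degenerate component $\{a\}$ with boundary $0$. Correspondingly, $-\mathbf 1_{\{a\in K\}}a$ and $+\mathbf 1_{\{d\in J\}}d$ cancel, which is consistent. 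The case $b=c$ is analogous.
\item If $a=c$ and $b=d$ simultaneously, then $J=K$ and $\partial(J\cap K)=b-a$. The right-hand side gives $b-a+b-a+a-b=b-a$, as required.
\item Remaining combinations, such as $a=d$ and $b=c$ together (so $J\cap K=\{a,b\}$), give $b-a+a-b=0$, matching the boundary of two points.
\end{itemize}

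The only delicate part is this bookkeeping of coincidences, so we will organize it as a short finite table indexed by the cyclic order of the four points with ties allowed. By rotation and the symmetry $(J,K)\leftrightarrow(K,J)$, which preserves the formula, the number of cases reduces, and each remaining case is verified by direct substitution into \eqref{eq:star_indicator}.
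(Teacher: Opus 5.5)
Your proof is correct and follows essentially the same route as the paper. Both arguments read off the case of pairwise distinct endpoints directly, use the correction terms exactly for $a=c$ and $b=d$, and observe that the contributions for $a=d$ or $b=c$ cancel on their own. Your explicit enumeration of the combined coincidences ($J=K$ and $J\cap K=\{a,b\}$) is just a more detailed version of the paper's argument.
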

\begin{proof}
If \(J\) and \(K\) have no endpoints in common, then the formula is immediate: the boundary of \(J\cap K\) consists of those endpoints of \(J\) that lie in \(K\), together with those endpoints of \(K\) that lie in \(J\), with the usual signs. This gives
\[
\partial(J\cap K)
=
\mathbf 1_{\{b\in K\}} b
-\mathbf 1_{\{a\in K\}} a
+\mathbf 1_{\{d\in J\}} d
-\mathbf 1_{\{c\in J\}} c.
\]
In general, the same expression remains correct except when two endpoints of the same type coincide. If \(a=c\), then the point \(a\) is counted twice as a left endpoint, so one copy must be added back; this produces the term \(\mathbf 1_{\{a=c\}} a\). Similarly, if \(b=d\), then the point \(b\) is counted twice as a right endpoint, so one copy must be subtracted; this produces the term \(-\mathbf 1_{\{b=d\}} b\). By contrast, if \(a=d\) or \(b=c\), the two contributions occur with opposite signs and therefore cancel automatically, so no further correction is needed. This proves \eqref{eq:star_indicator}.
\end{proof}

\begin{corollary}\label{cor:arc_chain_identity}
Let $\bl \in B(\graphbeta)$ and $\wh \in W(\graphbeta)$. Let $J=[a,b]$ and $K=[c,d]$ be any two nontrivial closed intervals in $\partial N$ with endpoints in $V(N)$. Then
\begin{equation}\label{eq:arc_chain_identity_integral}
\begin{aligned}
\frac{1}{2 \pi \ii} \int_{C(J \cap K)} g_{\bl,\wh}
= -\langle \partial J \cap K,\mathsf A_{\bl,\wh} \rangle
-\langle \partial K \cap J,\mathsf A_{\bl,\wh} \rangle 
-\mathbf 1_{\{a=c\}} \mathsf A_{\bl,\wh}^{a}
+\mathbf 1_{\{b=d\}} \mathsf A_{\bl,\wh}^{b}.
\end{aligned}
\end{equation}
\end{corollary}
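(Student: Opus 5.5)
The plan is to express the contour integral over $C(J\cap K)$ as a combination of whole-plane inverses evaluated at the endpoints of $J\cap K$, and then substitute the chain identity of Lemma~\ref{lem:boundary_intersection_indicator}. The key observation is that for any nontrivial interval $I=[\ell,r]$ with endpoints in $V(N)$,
\[
\frac{1}{2\pi\ii}\int_{C(I)} g_{\bl,\wh} = \mathsf A^{\ell}_{\bl,\wh}-\mathsf A^{r}_{\bl,\wh} = -\langle \partial I,\mathsf A_{\bl,\wh}\rangle .
\]
To verify this, fix a vertex $v\in\sector_{\bl,\wh}$, so that $\mathsf A^u_{\bl,\wh}=\frac{1}{2\pi\ii}\int_{C([u,v])}g_{\bl,\wh}$ by~\eqref{eq:functional}. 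As $0$-chains of angles, $C([\ell,v])-C([r,v])$ equals $C([\ell,r])$ modulo the full cycle $C(\partial N)$. The latter encloses every pole of $g_{\bl,\wh}$ in $A_0$, and these are all its poles, so its integral vanishes by the residue theorem. The same residue-sum argument shows the formula is insensitive to which $v$ is chosen and to whether $v$ lies inside $[\ell,r]$; this is exactly the manipulation already used in Section~\ref{sec:Aztec_diamond}. Since $C(\cdot)$ adds angle sets, the identity extends by linearity to finite unions of nontrivial intervals, i.e. to $1$-chains.

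Next I would apply this to $J\cap K$. In general $J\cap K$ is a union of at most two closed intervals. It can also contain degenerate point components, for instance when $a=d$ or $b=c$; such a point encloses no angle and contributes $0$ to both sides. Linearity then gives
\[
\frac{1}{2\pi\ii}\int_{C(J\cap K)} g_{\bl,\wh}=-\langle\partial(J\cap K),\mathsf A_{\bl,\wh}\rangle .
\]
If $J\cap K$ is all of $\partial N$, I will handle it separately: both sides vanish, since the full residue sum is zero.

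Finally, substitute~\eqref{eq:star_indicator} for $\partial(J\cap K)$. The terms $\mathbf 1_{\{b\in K\}}b-\mathbf 1_{\{a\in K\}}a$ are by definition the $0$-chain $\partial J\cap K$, and likewise $\mathbf 1_{\{d\in J\}}d-\mathbf 1_{\{c\in J\}}c=\partial K\cap J$. Applying $-\langle\cdot,\mathsf A_{\bl,\wh}\rangle$ to the two remaining corrections yields $-\mathbf 1_{\{a=c\}}\mathsf A^a_{\bl,\wh}+\mathbf 1_{\{b=d\}}\mathsf A^b_{\bl,\wh}$, which is~\eqref{eq:arc_chain_identity_integral}.

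The main point needing care is the bookkeeping for degenerate intersections: coincident endpoints, point components of $J\cap K$, and $J\cup K=\partial N$. I would check that Lemma~\ref{lem:boundary_intersection_indicator} is an identity of $0$-chains valid in these cases, and that point components contribute zero to the contour side, so that the two sides match exactly.
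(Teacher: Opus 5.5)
Your proposal is correct and is essentially the paper's argument: the paper's one-line proof applies the functional $\langle\cdot,\mathsf A_{\bl,\wh}\rangle$ to the chain identity of Lemma~\ref{lem:boundary_intersection_indicator}. That step implicitly uses your observation $\frac{1}{2\pi\ii}\int_{C(I)}g_{\bl,\wh}=-\langle\partial I,\mathsf A_{\bl,\wh}\rangle$, which follows from the vanishing total residue of $g_{\bl,\wh}$, and you have made it explicit (the case $J\cap K=\partial N$ cannot occur for nontrivial $J,K$, so that separate check is unnecessary).
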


\begin{proof}
We apply the linear functional $\langle \cdot, \mathsf A_{\bl,\wh} \rangle$ from~\eqref{eq:functional} to~\eqref{eq:star_indicator}; see also Definition~\ref{def:contours_from_intervals}.
\end{proof}

\begin{proposition}\label{prop:invariance}
The formula \eqref{eq:formula_inverse_K} is:
\begin{enumerate}
\item Invariant under swapping $\x$ and $\y$.
\item Invariant under fixing $I_\wh$ and expanding $I_\bl$ across any edge $e$ 
such that:
\begin{itemize}
    \item $\sign_{{\graphregion}_\bl}(e)=+$, or
    \item $\bl$ lies on $\beta_e$ and $e$ is black (in particular, this holds whenever $\bl$ is $\beta_e$-outer).
\end{itemize}
Precisely, if $I_\bl' = [\ell_{\bl}',r_\bl']$ is obtained from $I_\bl$ by a sequence of such expansions, then
\begin{equation}\label{eq:Kinv_expansion}
\kast^{-1}_{\bl,\wh}
=\sign(v)\left(
\iint_{C(I_{\wh}) \prec C(I_{\bl}')} \omega_{\bl,\wh}
-
\langle \partial I_\bl' \cap I_\wh,\mathsf A_{\bl,\wh} \rangle
\right).
\end{equation}
Similarly, we can fix $I_\bl$ and expand $I_\wh$ across any edge such that:
\begin{itemize}
    \item $\sign_{{\graphregion}_\wh}(e)=-$, or
    \item $\wh$ lies on $\beta_e$ and $e$ is white (in particular, this holds whenever $\wh$ is $\beta_e$-outer).
\end{itemize}
\item Independent of the choice of convex vertex $v$.
\item Invariant under changing the choice of $\Rin_\x$ for an outer vertex $\x$ whenever it is not unique.
\end{enumerate}
\end{proposition}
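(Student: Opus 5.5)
The plan is to reduce all four statements to two elementary operations on the double integral in \eqref{eq:formula_inverse_K}. Both produce the residue $\res_{\zeta=\eta}\omega_{\bl,\wh}=-\frac{1}{2\pi\ii}g_{\bl,\wh}$ along the diagonal, and the resulting single integrals are then rewritten as whole-plane terms $\langle\cdot,\mathsf A_{\bl,\wh}\rangle$ using Corollary~\ref{cor:arc_chain_identity}.
\begin{enumerate}
\item[(a)] Reorder the radii of the small circles around a common angle.
\item[(b)] Add or remove the circles around a whole class $\zz_e$ in one variable.
\end{enumerate}
The analytic input for (b) is Lemma~\ref{lem:sign_poles}. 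If $\sign_{\graphregion_\bl}(e)=+$, then $\zeta\mapsto 1/(g_\bl\Psi_{\bm\beta})(\zeta)$ has no poles at the angles of $\zz_e$. Hence, in $\zeta$, the only singularity of $\omega_{\bl,\wh}$ enclosed by the circles around $\zz_e$ is the diagonal, and only when $\eta$ runs over a smaller circle there. The same holds in $\eta$ when $\sign_{\graphregion_\wh}(e)=-$.

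\emph{Item 1.} I would compare $C(I_\wh)\prec C(I_\bl)$ with $C(I_\bl)\prec C(I_\wh)$. Interchanging the radii on the common circles, i.e.\ over $C(I_\bl\cap I_\wh)$, changes the double integral by $\pm\frac{1}{2\pi\ii}\int_{C(I_\bl\cap I_\wh)}g_{\bl,\wh}$. By Corollary~\ref{cor:arc_chain_identity} with $J=I_\bl$ and $K=I_\wh$, this equals $-\langle\partial I_\bl\cap I_\wh,\mathsf A\rangle-\langle\partial I_\wh\cap I_\bl,\mathsf A\rangle$ plus the coincident-endpoint corrections. Those corrections vanish: after parallel transport, $I_\bl$ and $I_\wh$ are sign intervals of opposite colours meeting at the section $s_v$, so the right endpoint of one is the left endpoint of the other, and $a=c$ or $b=d$ cannot occur. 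Since $\col(\bl)=-\col(\wh)$, the sign flip from interchanging the roles of $\x$ and $\y$ exactly absorbs this residue term.

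\emph{Item 2.} Expand $I_\bl$ across an edge $e$ with $\sign_{\graphregion_\bl}(e)=+$, say $r_\bl\mapsto r_\bl'$ with $e=[r_\bl,r_\bl']$. By the observation above, adding the $\zz_e$ circles to the $\zeta$-contour contributes only the diagonal residue $-\frac{1}{2\pi\ii}\int_{C(e\cap I_\wh)}g_{\bl,\wh}$. I would then check, case by case according to whether $r_\bl$ and $r_\bl'$ lie in $I_\wh$, that this cancels the change $\langle\partial I_\bl'\cap I_\wh,\mathsf A\rangle-\langle\partial I_\bl\cap I_\wh,\mathsf A\rangle$. In the cases where one endpoint lies in $I_\wh$ and the other does not, the identity needs $\mathsf A^{r_\bl}=\mathsf A^{r_\bl'}$, i.e.\ that $g_{\bl,\wh}$ has zero total residue on $\zz_e$. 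I would get this from Lemma~\ref{lem:sector}: when $e$ borders $I_\wh$, the colour hypotheses of its Case~1 (or Cases~2--3 when $\bl$ or $\wh$ lies on $\beta_e$) put $e$ inside the null sector $\sector_{\bl,\wh}$.

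For the second bullet, where $\bl$ lies on $\beta_e$ with $e$ black, Lemma~\ref{lem:sign_poles} need not apply directly. I would compute $\ord_\alpha(1/(g_\bl\Psi_{\bm\beta}))$ from \eqref{eq:order_at_alpha}, taking $\y=\bl$ itself as the $\beta_e$-outer vertex, which gives order $0$. The expansion of $I_\wh$ is symmetric.

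\emph{Items 3 and 4.} For Item~3 it suffices to pass from a convex vertex $v$ to the next one, $v'$, exactly as in the Aztec remark. Using Item~2 I would first enlarge $I_\bl$ to the union of two consecutive sign intervals, which Lemma~\ref{lem:sign_changes_sections} places around $s_v$ and $s_{v'}$. Next, replace the $\zeta$-contour by minus the contour over the complementary interval: the sum of all residues of the $1$-form in $\zeta$ on the compact curve $\curve$ is zero, and the diagonal contribution is converted by Corollary~\ref{cor:arc_chain_identity}. Finally, shrink back with Item~2. The flip of $\sign(v)$ between neighbouring convex vertices, which alternate black-to-white and white-to-black, then produces the overall sign. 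Item~4 follows from Item~2: two admissible choices of $\Rin_\x$ for an outer $\x$ differ by crossing strands $\beta_e$ of the colour of $\x$ with $\x$ on $\beta_e$, and the corresponding intervals differ by exactly these allowed expansions.

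I expect the main obstacle to be the sign and endpoint bookkeeping in Items~2 and~3, particularly verifying in every configuration that the edge being crossed lies in $\sector_{\bl,\wh}$ whenever a whole-plane term must be unchanged.
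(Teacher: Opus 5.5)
Your plan matches the paper's proof: diagonal residues, Corollary~\ref{cor:arc_chain_identity}, and killing whole-plane terms via null sectors. Items~2--4 go through essentially as you sketch. The gap is in Item~1, where you discard the correction terms
\[
\mathbf 1_{\{\ell_\bl=\ell_\wh\}}\mathsf A^{\ell_\bl}_{\bl,\wh}-\mathbf 1_{\{r_\bl=r_\wh\}}\mathsf A^{r_\bl}_{\bl,\wh}
\]
by claiming that $I_\bl$ and $I_\wh$ can never share a left or a right endpoint. This is false.

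Corollary~\ref{cor:arc_chain_identity} is applied to the actual intervals in $\partial N$. These are sign intervals of two \emph{different} chambers $\Rin_\bl$ and $\Rin_\wh$, and only their parallel transports to a common chamber are guaranteed to abut at $s_v$. The sections themselves move from chamber to chamber, so values of different sections at different chambers can coincide.

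For instance, suppose exactly one (concave) vertex $u$ lies between consecutive convex vertices $v_1<v_2$, as happens for every pentagonal $N$. Then the proof of Lemma~\ref{lem:sign_changes_sections} puts both $s_{v_1}(\graphregion_{v_2})$ and $s_{v_2}(\graphregion_{v_1})$ at $u$. Take $\Rin_\bl=\graphregion_{v_1}$, $\Rin_\wh=\graphregion_{v_2}$, and $v\in\{v_1,v_2\}$ with $\sign(v)=+1$. This gives $r_\bl=r_\wh=u$ (if $v=v_1$) or $\ell_\bl=\ell_\wh=u$ (if $v=v_2$).

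So these terms really occur and must be shown to vanish. Suppose $\ell_\bl=\ell_\wh$, and let $e$ be the edge immediately clockwise of this point. By definition of the intervals, $\sign_{\Rin_\bl}(e)=+$ and $\sign_{\Rin_\wh}(e)=-$.
\begin{itemize}
\item If both signs agree with those of the true chambers $\graphregion_\bl,\graphregion_\wh$, Case~1 of Lemma~\ref{lem:sector} applies.
\item Otherwise exactly one of $\bl,\wh$ is $\beta_e$-outer; not both, since $\beta_e$-outer vertices all have the colour of $e$. That vertex lies on $\beta_e$, and Case~2 or~3 applies.
\end{itemize}
Either way $e\subset\sector_{\bl,\wh}$, so $\mathsf A^{\ell_\bl}_{\bl,\wh}=0$. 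Right endpoints are symmetric. This is the same case analysis you already use in Item~2, so no new tool is needed.

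Two smaller points. In Item~2, the one-endpoint cases actually require that $\mathsf A$ vanish at the endpoint of $e$ lying in $I_\wh$, not that $\mathsf A^{r_\bl}=\mathsf A^{r'_\bl}$. Your conclusion $e\subset\sector_{\bl,\wh}$ gives both, so the argument survives. In Item~3, with the $\prec$ convention the $\zeta$-contour around all angles also encloses $\eta$. Hence $C(J)$ can be replaced by $-C(J')$, for $J'$ the complementary interval, with no diagonal term at all. Invoking Corollary~\ref{cor:arc_chain_identity} there is unnecessary and would only reintroduce coincident-endpoint terms of the same kind.
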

\begin{proof}
\begin{enumerate}
\item 
It suffices to take $(\x,\y) = (\bl,\wh)$. Exchanging the order of integration requires deforming contours so that, at common angles, the circles in $C(I_\wh)$ lie strictly inside those in $C(I_\bl)$. The only new contribution arises from crossing the simple pole on the diagonal $\zeta=\eta$, giving
\begin{align*}
\kast^{-1}_{\bl,\wh} &:= \sign(v)\left( \iint_{C(I_{\wh})\prec C(I_{\bl})} \omega_{\bl,\wh} - \langle \partial I_\bl \cap I_\wh,\mathsf A_{\bl,\wh} \rangle \right)\\
&= \sign(v)\left( \iint_{C(I_\bl) \prec C(I_\wh)}\omega_{\bl,\wh}
-\frac{1}{2 \pi \ii}\int_{C(I_\bl\cap I_\wh)} g_{\bl,\wh} - \langle \partial I_\bl \cap I_\wh,\mathsf A_{\bl,\wh} \rangle \right).
\end{align*}
Applying Corollary~\ref{cor:arc_chain_identity} to $I_\bl=[\ell_\bl,r_\bl]$ and $I_\wh=[\ell_\wh,r_\wh]$, we get 
\begin{align*}
-\frac{1}{2 \pi \ii}\int_{C(I_\bl\cap I_\wh)} g_{\bl,\wh} - \langle \partial I_\bl \cap I_\wh,\mathsf A_{\bl,\wh} \rangle = \langle \partial I_\wh \cap I_\bl,\mathsf A_{\bl,\wh} \rangle
+\mathbf 1_{\{\ell_\bl=\ell_\wh\}}\mathsf A_{\bl,\wh}^{\ell_\bl}
-\mathbf 1_{\{r_\bl=r_\wh\}}\mathsf A_{\bl,\wh}^{r_\bl}.
\end{align*}
It remains to show that the two terms with indicators vanish. Consider the term $\mathbf 1_{\{\ell_\bl=\ell_\wh\}}\mathsf A_{\bl,\wh}^{\ell_\bl}$. If $\ell_\bl=\ell_\wh$, let $e$ be the boundary edge immediately clockwise from the common point $\ell_\bl=\ell_\wh$. Then, by definition of $I_\bl$ and $I_\wh$,
\[
\sign_{\Rin_\bl}(e)=+,
\qquad
\sign_{\Rin_\wh}(e)=-.
\]
We have the following cases:
\begin{itemize}
\item $\sign_{\Rin_\bl}(e) = \sign_{{\graphregion}_\bl}(e)$ and $\sign_{\Rin_\wh}(e) = \sign_{{\graphregion}_\wh}(e)$: Lemma~\ref{lem:sector} implies that $\ell_\bl=\ell_\wh \in \sector_{\bl,\wh}$ and therefore, \(\mathsf A_{\bl,\wh}^{\ell_\bl}
 =0.\)
\item $\sign_{\Rin_\bl}(e) \neq \sign_{{\graphregion}_\bl}(e)$ and $\sign_{\Rin_\wh}(e) = \sign_{{\graphregion}_\wh}(e)$: Then $\bl$ is $\beta_e$-outer; in particular it lies on $\beta_e$. Therefore, Lemma~\ref{lem:sector} again implies that $\ell_\bl=\ell_\wh \in \sector_{\bl,\wh}$ and therefore, \(\mathsf A_{\bl,\wh}^{\ell_\bl}
 =0.\)
\item  $\sign_{\Rin_\bl}(e) = \sign_{{\graphregion}_\bl}(e)$ and $\sign_{\Rin_\wh}(e) \neq \sign_{{\graphregion}_\wh}(e)$: This is symmetric to the previous case.
\item $\sign_{\Rin_\bl}(e) \neq \sign_{{\graphregion}_\bl}(e)$ and $\sign_{\Rin_\wh}(e) \neq \sign_{{\graphregion}_\wh}(e)$: This case cannot happen since it would mean that $\bl$ and $\wh$ are both $\beta_e$-outer but $\beta_e$-outer vertices must all have the same color.
\end{itemize}
 
Similarly, $\mathbf 1_{\{r_\bl=r_\wh\}} \mathsf A_{\bl,\wh}^{r_\bl} =0$.

\begin{figure}
    \centering
   \begin{tikzpicture}
   \begin{scope}
  \coordinate (O) at (0,0);
  \draw[] ($(O)+(0:1.5)$) arc[start angle=0,end angle=90,radius=1.5];
  \draw[dashed] ($(O)+(90:1.5)$) arc[start angle=90,end angle=120,radius=1.5];

  \draw ($(O)+(0:1.)$) arc[start angle=0,end angle=90,radius=1.];
  \draw[dashed] ($(O)+(90:1.)$) arc[start angle=90,end angle=120,radius=1.];

  \draw ($(O)+(0:1.5)$) arc[start angle=0,end angle=-30,radius=1.5];

  \coordinate[nvert,label = left: {$\ell_\wh$}] (1) at (0:1) {};
    \coordinate[nvert,label = right: {$\ell_\bl'$}] (2) at (0:1.5) {};
      \coordinate[nvert,label = right: {$\ell_\bl''$}] (3) at (-30:1.5) {};
    \end{scope}

     \begin{scope}[shift={(5,0)}]
  \coordinate (O) at (0,0);
  \draw ($(O)+(0:1.5)$) arc[start angle=0,end angle=90,radius=1.5];
  \draw[dashed] ($(O)+(90:1.5)$) arc[start angle=90,end angle=120,radius=1.5];

  \draw ($(O)+(-30:1.)$) arc[start angle=-30,end angle=-90,radius=1.];
  \draw[dashed] ($(O)+(-90:1.)$) arc[start angle=-90,end angle=-120,radius=1.];
   \draw  ($(O)+(0:1.5)$) arc[start angle=0,end angle=-30,radius=1.5];
 \coordinate[nvert,label = left: {$r_\wh$}] (1) at (-30:1) {};
    \coordinate[nvert,label = right: {$\ell_\bl'$}] (2) at (0:1.5) {};
      \coordinate[nvert,label = right: {$\ell_\bl''$}] (3) at (-30:1.5) {};
    \end{scope}
\end{tikzpicture}
\caption{The two possible configurations in the case 
\(\sign_{\Rin_\wh}(e)=\sign_{{\graphregion}_\wh}(e)\) considered in the proof of Item~2. The inner (resp. outer) cyclic interval is $I_\wh$ (resp. $I_{\bl}''$). The left panel shows the case \(\ell'_\bl=\ell_\wh\), and the right panel the case \(\ell''_\bl=r_\wh\). In both cases \(\sign_{{\graphregion}_\wh}(e)=-\) by definition of $I_\wh$.}
    \label{fig:item_2_case_1}
\end{figure}

\item By Item~1, it suffices to consider the case where $(\x,\y) = (\bl,\wh)$ and we expand $I_\bl$. Moreover, {since moving the right endpoint is analogous}, it suffices to show that~\eqref{eq:Kinv_expansion} is invariant under moving the left endpoint 
of $I_\bl'$ one step, \emph{i.e.} let
\[
 I''_\bl=[\ell''_\bl,r_\bl'],
\qquad e=[\ell''_\bl,\ell_\bl'] \in E(N).
\]
We deform the contour $C(I'_\bl)$ to $C(I''_\bl)$, keeping $C(I_\wh)$ fixed. Since $\zz_e$ does not contain any poles of the integrand (by Lemma~\ref{lem:sign_poles} when $\sign_{{\graphregion}_\bl}(e)=+$, or because $\eqref{eq:order_at_alpha}=0$ if $\bl$ lies on $\beta_e$ and $e$ is black), the only poles that the deformation crosses are along the diagonal $\zeta=\eta$. Therefore,
\begin{align*}
&\sign(v)\left( \iint_{C(I_{\wh}) \prec C(I_{\bl}')} \omega_{\bl,\wh} - \langle \partial I_\bl' \cap I_\wh,\mathsf A_{\bl,\wh} \rangle \right)\\
&=\sign(v)\left( \iint_{C(I_\wh)\prec C(I_\bl'')}\omega_{\bl,\wh}
+\frac{1}{2 \pi \ii}\int_{C(e\cap I_\wh)} g_{\bl,\wh} - \langle \partial I_\bl' \cap I_\wh,\mathsf A_{\bl,\wh} \rangle \right).
\end{align*}
Thus, it suffices to show
\[
\frac{1}{2 \pi \ii}\int_{C(e\cap I_\wh)} g_{\bl,\wh} -\langle \partial I_\bl' \cap I_\wh,\mathsf A_{\bl,\wh} \rangle + \langle \partial I_\bl'' \cap I_\wh,\mathsf A_{\bl,\wh} \rangle =\frac{1}{2 \pi \ii}\int_{C(e\cap I_\wh)} g_{\bl,\wh} + \langle \partial e \cap I_\wh,\mathsf A_{\bl,\wh} \rangle =0.
\]
Applying Corollary~\ref{cor:arc_chain_identity} to the pair of intervals
$e$ and $I_\wh=[\ell_\wh,r_\wh]$, 
\begin{align} \label{eq:two_brackets}
&\frac{1}{2 \pi \ii}\int_{C(e\cap I_\wh)} g_{\bl,\wh} + \langle \partial e \cap I_\wh,\mathsf A_{\bl,\wh} \rangle \\ &= -\langle \partial I_\wh \cap e,\mathsf A_{\bl,\wh} \rangle  -\mathbf 1_{\{\ell''_\bl=\ell_\wh\}}\mathsf A^{\ell''_\bl}_{\bl,\wh} +\mathbf 1_{\{\ell'_\bl=r_\wh\}}\mathsf A^{\ell'_\bl}_{\bl,\wh}\\&=
\Big(\mathbf 1_{\{\ell_\wh\in e\}}\mathsf A^{\ell_\wh}_{\bl,\wh}
-\mathbf 1_{\{\ell''_\bl=\ell_\wh\}}\mathsf A^{\ell''_\bl}_{\bl,\wh}\Big)
 + 
\Big(-\mathbf 1_{\{r_\wh\in e\}}\mathsf A^{r_\wh}_{\bl,\wh}
+\mathbf 1_{\{\ell'_\bl=r_\wh\}}\mathsf A^{\ell'_\bl}_{\bl,\wh}\Big).
\end{align}
Since $e=[\ell''_\bl,\ell'_\bl]$ is an edge,
\[
\mathbf 1_{\{\ell_\wh\in e\}} = \mathbf 1_{\{\ell_\wh = \ell_\bl'\}} + \mathbf 1_{\{\ell_\wh = \ell_\bl''\}}, \qquad \mathbf 1_{\{r_\wh\in e\}} = \mathbf 1_{\{r_\wh = \ell_\bl'\}} + \mathbf 1_{\{r_\wh = \ell_\bl''\}}.
\]
Thus,~\eqref{eq:two_brackets} becomes
\begin{equation} \label{eq:two_indicator_terms}
\mathbf 1_{\{\ell'_\bl=\ell_\wh\}}\mathsf A^{\ell'_\bl}_{\bl,\wh} - \mathbf 1_{\{\ell''_\bl=r_\wh\}}\mathsf A^{\ell''_\bl}_{\bl,\wh}.
\end{equation}
We now have two cases:
\begin{itemize}
    \item $\sign_{\Rin_\wh}(e) = \sign_{{\graphregion}_\wh}(e)$: If $\ell'_\bl=\ell_\wh$ or if $\ell''_\bl=r_\wh$, then $e$ is adjacent to an endpoint of $I_\wh$, hence $\sign_{{\graphregion}_\wh}(e) = -$ by definition of $I_\wh$ (see Figure~\ref{fig:item_2_case_1}). Thus, the common endpoint lies in $\sector_{\bl,\wh}$ by Cases~1 and 2 of Lemma~\ref{lem:sector}, corresponding exactly to the two cases in the hypothesis of Item~2.
    \item $\sign_{\Rin_\wh}(e) \neq \sign_{{\graphregion}_\wh}(e)$: Then $\wh$ is $\beta_e$-outer; in particular, $e$ is white so the second hypothesis case that $\bl$ lies on $\beta_e$ and $e$ is black cannot occur. In the first hypothesis case where $\sign_{{\graphregion}_\bl}(e) = +$, any common endpoints are again, as in the previous bullet point, in $\sector_{\bl,\wh}$ by Case~1 of Lemma~\ref{lem:sector}. 
\end{itemize}
Thus, in both cases, both terms in~\eqref{eq:two_indicator_terms} are $0$, completing the proof of Item~2. 

\item
By Item~1, it suffices to show that the formula is invariant under replacing $I_\bl$ with the other white sign interval $I_\bl'$ for $\Rin_\bl$. We can enlarge $I_\bl$ until it becomes the complement of $I_\bl'$. For every edge $e$ that we absorb along the way, either 
\begin{itemize}
\item $\sign_{{\graphregion}_\bl}(e)= \sign_{\Rin_\bl}(e) = + $, or
\item $\sign_{{\graphregion}_\bl}(e) \neq \sign_{\Rin_\bl}(e)$ so $\bl$ is $\beta_e$-outer.
\end{itemize}
Thus,~\eqref{eq:Kinv_expansion} is unchanged by Item~2. We then reverse the orientation, which complements the interval and flips $\sign(v)$.

\item By Item~1, we may assume that $\x=\bl$ is a black vertex.
By Condition~2 of Definition~\ref{def:azgraph}, the edges
$e\in E(N)$ for which $\bl$ is $\beta_e$-outer form a consecutive
sequence
\[
e_1 < e_2 < \cdots < e_k
\]
along $\partial N$; see Figure~\ref{fig:condition_2}(right). Let $v_1<v_2<v_3<v_4$ be the convex vertices of $N$
such that $e_1,\dots,e_k \subset [v_2,v_3]$.

If we choose $e_i$ to define $\Rin_\bl$, then 
\[
\sign_{\Rin_\bl}(e_i)=+,
\qquad
\sign_{\Rin_\bl}(e_j)=- \quad (j\neq i).
\]
Considering the inconsistently oriented cones as in the proof of Lemma~\ref{lem:sign_changes_sections} and Figure~\ref{fig:fig_opp_cones}, we obtain that $s_{v_2}(\Rin_\bl)$ is the left endpoint of $e_i$ and
$s_{v_3}(\Rin_\bl)$ is the right endpoint of $e_i$. If $v\in\{v_1,v_2\}$, then
\[
I_\bl = [s_{v_1}(\Rin_\bl),  s_{v_2}(\Rin_\bl)].
\]
As $e_i$ varies, the right endpoint of $I_\bl$ moves across the edges $e_1,\dots,e_k$. By Item~2, expanding or shrinking $I_\bl$ across these edges does not change the value of~\eqref{eq:formula_inverse_K}. The case $v\in\{v_3,v_4\}$ is analogous.

\end{enumerate}
\end{proof}

\subsection{Proof of Theorem~\ref{thm:main}}

Our proof is based on the same general ideas as those used for the Aztec diamond in~\cite{BdT24}. Proposition~\ref{prop:invariance} allows us to simplify parts of the argument from that setting. The added complexity of our proof comes from features of the general setting that do not arise in the Aztec diamond case.

It suffices to check that $\kast \kast^{-1} = \mathsf{I}$; then $\kast^{-1} \kast = \mathsf{I}$ (or, rather, its transpose) follows by Item~1 of Proposition~\ref{prop:invariance}. Let $\wh, \wh' \in W(\graphbeta)$. We need to show that 
\begin{equation} \label{eq:inverse_defn}
\sum_{\bl \sim \wh'}\kast_{\wh',\bl} \kast^{-1}_{\bl,\wh}=\mathbf{1}_{\{\wh = \wh' \}}.
\end{equation}

We distinguish two cases:
\begin{enumerate}
    \item $\wh'$ is an inner vertex: Suppose first that $\Rin_\bl= \Rin_{\wh'}$ for all $\bl \sim \wh'$ so that $I_\bl$ is the same interval $J = [\ell,r]$ for all of them. Then, we get  
\begin{equation}\label{eq:kast_cancel}
    \sum_{\bl \sim \wh'}\kast_{\wh',\bl} \left( \iint_{C(I_{\wh}) \prec C(I_{\bl})} \omega_{\bl,\wh} \right) = \iint_{C(I_{\wh}) \prec C(J)} \left( \sum_{\bl \sim \wh'}\kast_{\wh',\bl}  \omega_{\bl,\wh} \right)=0
\end{equation}
by Lemma~\ref{lem:ker_K} applied to $\x=\f_0$ so that $\frac{1}{g_\bl(\zeta)} = g_{\bl,\f_0}(\zeta)$. Moreover, by~\eqref{eq:intersection_concrete} and Theorem~\ref{thm:whole_plane_inverse_kast}, 
\[
-\sum_{\bl \sim \wh'}\kast_{\wh',\bl} \sign(v) \langle \partial I_\bl \cap I_\wh,\mathsf A_{\bl,\wh} \rangle = \sign(v)\big(\mathbf 1_{\{\ell \in I_{\wh}\}}
-\mathbf 1_{\{r \in I_{\wh}\}}\big)\mathbf{1}_{\{\wh = \wh' \}}.
\]
In order for the right-hand side to be nonzero, we need $\wh=\wh'$. In that case, we show that
\begin{equation} \label{eq:sing_cancels}
    \mathbf 1_{\{\ell \in I_{\wh}\}}
-\mathbf 1_{\{r \in I_{\wh}\}}
=\sign(v).
\end{equation}

Indeed, if $\sign(v)=+1$, then
$J$ (resp. $I_\wh$) is the interval immediately counterclockwise (resp. clockwise)
of $s_v(\Rin_\bl)=s_v(\Rin_\wh)$, so $\ell \in I_\wh$ and $r \notin I_\wh$, hence, the left-hand side equals $1$. If instead $\sign(v)=-1$, then $J$ (resp. $I_\wh$) is the interval immediately clockwise (resp. counterclockwise) of $s_v(\Rin_\bl)$, so $r \in I_\wh$ and $\ell \notin I_\wh$, hence, the left-hand side equals $-1$.

Now we consider the general case where the intervals $I_\bl$ may depend on $\bl \sim \wh'$. We first claim that we can replace $I_\bl$ for all $\bl \sim \wh'$ by the single uniform choice of interval 
\[
 J:=\bigcup_{\bl \sim \wh'} I_\bl 
\] 
without changing the value of~\eqref{eq:formula_inverse_K}.

Let $I$ denote the $-$ sign interval for $\sign_{{\graphregion}_{\wh'}}$ that is adjacent to $s_v({\graphregion}_{\wh'})$. Let $e_-$ (resp. $e_+$) denote the edge immediately clockwise (resp. counterclockwise) of $I$.

\begin{lemma} \label{lem:explicit_Ib}
For every $\bl \sim \wh'$, the interval $I_\bl$ is obtained from $I$
by adjoining a (possibly empty) subset of $\{e_-,e_+\}$.
\end{lemma}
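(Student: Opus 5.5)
The plan is to compare the sign function $\sign_{\Rin_\bl}$ with $\sign_{{\graphregion}_{\wh'}}$ for each neighbor $\bl\sim\wh'$. Since $\wh'$ is inner, $\Rin_{\wh'}={\graphregion}_{\wh'}$, and by definition $I$ is the $-$ (white) sign interval of $\sign_{{\graphregion}_{\wh'}}$ adjacent to $s_v({\graphregion}_{\wh'})$. For black $\bl$, $I_\bl$ is the white sign interval of $\sign_{\Rin_\bl}$ adjacent to $s_v(\Rin_\bl)$. So the task is to control how $\Rin_\bl$ differs from ${\graphregion}_{\wh'}$ and how the section $s_v$ and the white interval move under this change.

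The first step is to show that $\Rin_\bl$ is either ${\graphregion}_{\wh'}$ itself or a chamber adjacent to it across a single strand $\beta_e$. For this, consider the edge $\bl\wh'$; it lies on exactly two strands $\alpha,\gamma$ (as in Figure~\ref{fig:fock_kast}). The only boundary strands separating $\bl$ from $\wh'$ are among these two, so ${\graphregion}_\bl$ and ${\graphregion}_{\wh'}$ differ by flipping the signs at a subset of $\{e(\alpha),e(\gamma)\}$ intersected with those $e$ for which the strand is $\beta_e$. If $\bl$ is $\beta_e$-outer, then $\Rin_\bl$ is obtained from ${\graphregion}_\bl$ by crossing $\beta_e$ back. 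Because $\wh'$ is inner, the crossing to reach $\bl$ from $\wh'$ must be undone by this step. Hence $\Rin_\bl$ and ${\graphregion}_{\wh'}$ differ in sign at most at the edges $e$ with $\beta_e$ in $\{\alpha,\gamma\}$ that separate $\wh'$ from an inner $\bl$.

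Next I would use Lemma~\ref{lem:cyclic_order_same} at $\wh'$. The strands through the edge $\bl\wh'$ are consecutive around $\wh'$, hence correspond to edges $e(\alpha),e(\gamma)$ of $N$ that are consecutive in the cyclic order of strands through $\wh'$. Crossing $\beta_{e(\alpha)}$ (or $\beta_{e(\gamma)}$) from $\wh'$ to $\bl$ flips the sign at that edge from the value seen by $\wh'$. Lemma~\ref{lem:sign_change_cts} then says that exactly one sign change moves one step, and along the connection $s_v$ is transported accordingly. The key claim is that if a flipped edge $e$ changes the $-$ interval adjacent to $s_v$, it does so by \emph{adding} an edge of sign $+$ to it. This happens precisely when $e=e_-$ or $e=e_+$, the edges flanking $I$. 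The alternative, removing an edge of $I$, must be excluded: it would mean $\bl$ lies across $\beta_e$ with $e\subset I$, $\sign_{{\graphregion}_{\wh'}}(e)=-$. I would try to rule this out with the white/black orientation of zig-zag paths. Since $\wh'$ lies on $\beta_e$'s zig-zag path with $\beta_e$ to its right in the $-$ case, and $\bl$ is its neighbor along that zig-zag path, $\bl$ sits on the same side, so no crossing occurs. This is the analogue of Lemma~\ref{lem:sign_poles}.

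The main obstacle is the case where both strands $\alpha,\gamma$ of the edge $\bl\wh'$ are boundary strands, so that two flips could occur. I expect to handle it by noting that the edge $\bl\wh'$ is then $\e_u$ for a vertex $u\in V(N)$ of $\partial\graphbeta^\times$. Both flips then add edges on the two sides of the same sign change, namely $e_-$ and $e_+$. This gives the "possibly both" part of the statement, with the subset empty when no boundary strand separates the two vertices. The case analysis would be organized by the number of boundary strands among $\{\alpha,\gamma\}$: zero, one, or two.
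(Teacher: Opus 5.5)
Your framework matches the paper's: compare $\sign_{\Rin_\bl}$ with $\sign_{{\graphregion}_{\wh'}}$, note that only boundary strands through the edge $\bl\wh'$ can flip a sign, and for outer $\bl$ define $\Rin_\bl$ using a $\beta_e$ that passes through $\bl\wh'$, so that only the other strand is crossed. The gap is in the case you single out as the main obstacle: both strands through $\bl\wh'$ are boundary strands $\beta_{e_1},\beta_{e_2}$ and $\bl$ is inner.

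You claim the edge is then a corner edge $\e_u$ with $u\in V(N)$; this is false. Boundary strands of non-adjacent edges of $N$ can cross at an interior edge surrounded by four inner chambers (these crossings are exactly the faces of $\mathcal R$ in Lemma~\ref{lem:gamma_trivial}). That is the only way both $e_-$ and $e_+$ get adjoined.

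Conversely, the configuration you describe, with $e_1,e_2$ consecutive at $u$, is precisely what must be \emph{excluded}. If two consecutive $+$ edges next to $I$ both flipped (say $e_-$ and the edge clockwise of it), the left endpoint of $I$ would move two steps, and $I_\bl$ would not have the stated form. Also, $e_-$ and $e_+$ flank the two different endpoints of $I$, so they are not on the two sides of one sign change.

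The missing step is the paper's. For consecutive $e,e'$, the strands $\beta_e,\beta_{e'}$ meet only at $\e_u$, which lies on $\partial\graphbeta$; this contradicts that $\bl$ and $\wh'$ are inner. So the two flipped edges are never consecutive, and each endpoint of $I$ moves at most one step. For the same reason your first step fails at interior crossings. There $\Rin_\bl={\graphregion}_\bl$ is diagonally opposite ${\graphregion}_{\wh'}$, not equal or adjacent to it across a single strand, so $s_v$ must be transported through an intermediate chamber.

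Your argument that no edge of $I$ is removed is also wrong as written. Consecutive vertices of a zig-zag path lie on \emph{opposite} sides of the medial strand, and if $\sign_{{\graphregion}_{\wh'}}(e)=-$ then $\wh'$ does not lie on $\beta_e$ at all. The correct fact is simpler: medial strands go counterclockwise around white vertices and clockwise around black ones, so every strand through $\bl\wh'$ has $\wh'$ on its left and $\bl$ on its right. Hence passing from ${\graphregion}_{\wh'}$ to $\Rin_\bl$ only changes signs from $+$ to $-$, and $I$ can only grow. Inner chambers have exactly four sign changes, so $I$ cannot merge with the other $-$ interval. Together with the one-step bound above, this gives the lemma.
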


\begin{proof}
We have two cases.
\begin{itemize}
\item {$\bl$ is an inner vertex}:
As we move from ${\graphregion}_{\wh'}$ to ${\graphregion}_\bl$, we cross exactly the two strands
passing through the edge $\bl\wh'$. If one of the strands is $\beta_e$ for some $e \in E(N)$, then $\sign{R_{\wh'}}(e) = +$ changes to $\sign{R_{\bl}}(e) = -$. In particular, if we cross $\beta_e$ for $e \in \{e_-,e_+\}$, $I_\bl$ is obtained from $I$ by shifting the corresponding endpoint outwards. 

We claim that an endpoint cannot shift by more than one step.
If it did, then the two crossed strands would correspond to consecutive
edges $e$ and $e'$ along $\partial N$.
In that case the edge $\bl\wh'$ would lie on both the zig-zag
paths $\beta_e$ and $\beta_{e'}$, hence would lie on
$\partial \graphbeta$. This contradicts the assumption that both $\wh'$ and $\bl$ are inner. Therefore, each endpoint moves by at most one step.

\item {$\bl$ is an outer vertex}:
Since $\wh'$ is inner by assumption and $\bl \sim \wh'$, the edge $\bl\wh'$ lies on
at least one strand $\beta_e$ for which $\bl$ is $\beta_e$-outer.
We may use such an $e$ to define $\Rin_\bl$. Then, moving from ${\graphregion}_{\wh'}$ to $\Rin_\bl$ crosses exactly one strand, namely
the strand through $\bl\wh'$ other than $\beta_e$. Hence, at most one endpoint of $I$ shifts and by at most one step.
\end{itemize}
\end{proof}

We claim that every such adjoining step is allowed by Item~2 of
Proposition~\ref{prop:invariance}.

\begin{lemma} \label{lem:endpoint_shift_occurs}
Let $\bl \sim \wh'$ and let $e \in \{e_-,e_+\}$. If $\bl\wh'$ lies on $\beta_e$ and $e$ is white, then $e \subset I_\bl$.
\end{lemma}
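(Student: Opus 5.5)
We have to show: for $\bl\sim\wh'$ with $\wh'$ inner and $e\in\{e_-,e_+\}$, if the edge $\bl\wh'$ lies on $\beta_e$ and $e$ is white, then $e\subset I_\bl$. Recall that $I$ is the $-$ (white) sign interval of $\sign_{{\graphregion}_{\wh'}}$ adjacent to $s_v({\graphregion}_{\wh'})$, and $e_\pm$ are the edges just outside $I$. In particular $\sign_{{\graphregion}_{\wh'}}(e)=+$, and $I_\bl$ is the white interval of $\sign_{\Rin_\bl}$ adjacent to $s_v(\Rin_\bl)$. The plan is to show $\sign_{\Rin_\bl}(e)=-$ and that the sign change at the corresponding endpoint of $I$ has moved across $e$, so that $e$ is absorbed into $I_\bl$.

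\emph{Step 1: locate $\wh'$ relative to $\beta_e$.} Since $\wh'$ is inner and lies on the zig-zag path of $\beta_e$, it is on the inner side of $\beta_e$. Here $e$ is white, so the $\beta_e$-outer vertices are white, and white vertices on the zig-zag path lie on one fixed side of the strand. The inner ones are therefore on the side opposite to the outer white vertices, namely the side with sign $+$; this matches $\sign_{{\graphregion}_{\wh'}}(e)=+$. Then $\bl$ lies across $\beta_e$ from $\wh'$, since consecutive vertices of a zig-zag path alternate sides of the strand, so $\sign_{{\graphregion}_\bl}(e)=-$.

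\emph{Step 2: compute $\Rin_\bl$ and its sign at $e$.} There are two cases.

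If $\bl$ is inner, then $\Rin_\bl={\graphregion}_\bl$ and $\sign_{\Rin_\bl}(e)=-$. Passing from ${\graphregion}_{\wh'}$ to ${\graphregion}_\bl$ crosses only the two strands through $\bl\wh'$, one of which is $\beta_e$. By Lemma~\ref{lem:sign_change_cts} and the flatness of Lemma~\ref{lem:gamma_trivial}, flipping the sign at $e$ moves exactly the sign change at the endpoint of $I$ adjacent to $e$ by one step outward. The sign change $s_v$ is transported along with it, so the white interval adjacent to $s_v$ becomes $I\cup e$, possibly also extended at the other end. The key point is that the transported $s_v$ stays adjacent to the enlarged interval; this is exactly the content of Lemma~\ref{lem:explicit_Ib}, whose proof shows that $I_\bl$ is obtained from $I$ by shifting endpoints outward.

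If $\bl$ is outer, it is $\beta_{e'}$-outer for some black $e'$; it cannot be $\beta_e$-outer because $e$ is white. Then $\bl\wh'$ lies on $\beta_e$ and on $\beta_{e'}$. We take $\Rin_\bl$ to be the inner chamber across $\beta_{e'}$; by Proposition~\ref{prop:invariance} Item~4 this choice is permitted. Moving from ${\graphregion}_{\wh'}$ to $\Rin_\bl$ then crosses only the other strand through $\bl\wh'$, which is $\beta_e$. Hence $\sign_{\Rin_\bl}(e)=-$, and by the same transport argument $e$ is adjoined to $I$.

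\emph{Main obstacle.} The delicate point is the outer case: the other strand through $\bl\wh'$ might also be some $\beta_{e''}$, in which case one must check that the crossing is indeed of $\beta_e$ and not a different boundary strand. I would handle this using Lemma~\ref{lem:cyclic_order_same} together with Condition~2 of Definition~\ref{def:azgraph}, which say that the boundary strands through a vertex are consecutive along $\partial N$ and all have the same outer color. These force the edges $e,e'$ to be adjacent along $\partial N$ with opposite colors. It then remains to verify that flipping the sign at $e$ cannot instead move the sign change at the far side of $e$; that possibility is ruled out because $e$ is adjacent to an endpoint of $I$ and the count of sign changes is fixed by Lemma~\ref{lem:four_signs_locally}.
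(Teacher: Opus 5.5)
Your argument is essentially the paper's: you obtain $\sign_{\Rin_\bl}(e)=-$ and then Lemma~\ref{lem:explicit_Ib} (i.e.\ $I\subseteq I_\bl\subseteq I\cup e_-\cup e_+$), together with maximality of the sign interval $I_\bl$, forces $e\subset I_\bl$; the sign step is simpler than your case split, since $\bl$ lies to the right of $\beta_e$, and because $e$ is white $\bl$ is not $\beta_e$-outer, so passing from ${\graphregion}_\bl$ to $\Rin_\bl$ crosses only a black strand and gives $\sign_{\Rin_\bl}(e)=\sign_{{\graphregion}_\bl}(e)=-$ directly. Two side remarks: first, your Step~1 justification is wrong, since every white vertex on the zig-zag path of $\beta_e$, outer ones included, lies to the left of $\beta_e$ ($\wh'$ is inner because $\bl\wh'$ lies on $\beta_e$ away from its boundary segment, not because it is on the other side), although the conclusion $\sign_{{\graphregion}_{\wh'}}(e)=+$ already follows from $e$ being adjacent to $I$; second, your ``main obstacle'' is vacuous, because the two strands through $\bl\wh'$ are exactly $\beta_e$ and $\beta_{e'}$, so the (unjustified) claim that $e$ and $e'$ must be adjacent along $\partial N$ is never needed.
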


\begin{proof}
Since $\bl$ lies on $\beta_e$, $\sign_{{\graphregion}_\bl}(e) = -$. Since $e$ is white, $\bl$ is not $\beta_e$-outer, so $\sign_{\Rin_\bl}(e) = \sign_{{\graphregion}_\bl}(e) = -$. On the other hand, $e$ is one of the two boundary edges adjacent to the interval
$I$. By Lemma~\ref{lem:explicit_Ib}, the only way to obtain $I_\bl$ from $I$ is
by adjoining some subset of $\{e_-,e_+\}$. Since $\sign_{\Rin_\bl}(e)=-$, the edge $e$ must be adjoined. Therefore, $e \subset I_\bl$.
\end{proof}

Let $e\in\{e_-,e_+\}$ with $e\subset J\setminus I_\bl$. Since $e$ is adjacent to the white sign interval $I$ for $\sign_{{\graphregion}_{\wh'}}$, we have $\sign_{{\graphregion}_{\wh'}}(e)=+$.
If $\bl$ does not lie on $\beta_e$, then passing from ${\graphregion}_{\wh'}$ to ${\graphregion}_\bl$
does not cross $\beta_e$, so
\[
\sign_{{\graphregion}_\bl}(e)=\sign_{{\graphregion}_{\wh'}}(e)=+,
\]
and the first hypothesis of Item~2 of Proposition~\ref{prop:invariance} applies. If instead $\bl$ lies on
$\beta_e$ and $e$ is black, then the second hypothesis of Item~2 of Proposition~\ref{prop:invariance} applies.
Finally, if $\bl$ lies on $\beta_e$ and $e$ is white, then
Lemma~\ref{lem:endpoint_shift_occurs} implies that $e\subset I_\bl$ already,
contrary to $e\subset J\setminus I_\bl$. Thus, this case cannot occur.

Hence, every edge needed to enlarge $I_\bl$ to $J$ can be adjoined using
Item~2 of Proposition~\ref{prop:invariance}, and therefore, we may replace
every $I_\bl$ by the common interval $J$ without changing the value of
$\kast^{-1}_{\bl,\wh}$. 

To complete the proof for inner $\wh'$, it remains to verify~\eqref{eq:sing_cancels} when $\wh = \wh'$. If $\sign(v)=+1$, then $I$ (resp. $I_\wh$) is the interval immediately counterclockwise (resp. clockwise) of $s_v(\Rin_\wh)$. 
Regardless of whether the endpoints of $J = [\ell,r]$ 
are the same as $I$ or pushed out one step, $\ell \in I_\wh$ and $r$ is contained in the other black sign interval, so $r \notin I_\wh$ and therefore, $\mathbf 1_{\{\ell \in I_{\wh}\}}
-\mathbf 1_{\{r \in I_{\wh}\}} =1$. The case when $\sign(v)=-1$ is analogous.

\begin{figure}
    \centering
\begin{tikzpicture}

        \begin{scope}[shift={(7,0)}]
    \filldraw[fill=gray!30, draw=none]
    (90:1.5) -- (150:1.5) -- (210:1.5) -- (270:1.5) -- (90:1.5);

    \coordinate[wvert, label = right:${\wh'}$] (w) at (0,0) {};
 
    \coordinate[bvert, label=above:$\bl_1$] (b1) at (90:1.5) {};
    \coordinate[bvert, label=left:$\bl_2$] (b2) at (150:1.5) {};
       \coordinate[bvert, label=left:$\bl_3$] (b3) at (210:1.5) {};
    \coordinate[bvert, label=below:$\bl_4$] (b4) at (270:1.5) {};

       \coordinate[bvert] (b5) at (30:1.5) {};
    \coordinate[bvert] (b6) at (-30:1.5) {};
    \draw[] (w) edge (b1) edge (b2)  edge (b3) edge (b4);
    \draw[dashed] (w) edge (b5) edge (b6);

\draw[hollowedge]
  
  ($ (90:0.75) - (150:0.75) + (90:0.75) $) -- (150:0.75) ; 
    \draw[hollowedge] (150:0.75) -- (210:0.75);
    \draw[hollowedge] (210:0.75) --  ($ (270:0.75) + (270:0.75) - (210:0.75) $);

    \coordinate[label=right:$\beta_{e_1}$] (no) at  ($ (90:0.75) - (150:0.75) + (90:0.75) $) {};

    \coordinate[label=right:$\beta_{e_3}$] (no) at  ($ (270:0.75) + (270:0.75) - (210:0.75) $) {};

    \node[] (no) at (-1.1,0) {${\beta_{e_2}}$};
  
    \end{scope}
\end{tikzpicture}
    \caption{The labeling of black vertices in a neighborhood of an outer white vertex $\wh'$ when $k=3$. The shaded region represents faces of $\graphbeta$. Solid edges belong to $E(\graphbeta)$ while dashed edges do not. 
    }
    \label{fig:nhd_outer_white}
\end{figure}

\item $\wh'$ is an outer vertex:
By Condition~2 of Definition~\ref{def:azgraph}, the edges
\(e\in E(N)\) for which \(\wh'\) is \(\beta_e\)-outer form a consecutive block
\[
e_1 < e_2 < \cdots < e_k
\]
along \(\partial N\). Let \(v\) be the nearest convex vertex to this block such that \(\sign(v)=1\), and set
\begin{equation}\label{eq:J}
   J:=\bigcup_{i=1}^k e_i. 
\end{equation}
Let \(\bl_1,\dots,\bl_{k+1}\) be the black vertices adjacent to \(\wh'\) that lie on at least one of the strands \(\beta_{e_i}\), \(1\le i\le k\), listed in counterclockwise order around \(\wh'\), as in Figure~\ref{fig:nhd_outer_white}. By considering the inconsistently oriented cones, exactly as in the proof of Lemma~\ref{lem:sign_changes_sections} and in Figure~\ref{fig:fig_opp_cones}, we obtain
\begin{equation}\label{eq:intervalsa}
I_{\bl_1}=e_1,\quad
I_{\bl_2}=e_1\cup e_2,\quad
\dots,\quad
I_{\bl_k}=e_{k-1}\cup e_k,\quad
I_{\bl_{k+1}}=e_k.
\end{equation}

Equivalently, one can argue as follows. Passing from \(\wh'\) to \(\bl_i\), the number of sign changes increases from two to four by Lemma~\ref{lem:four_signs_locally}. The only signs that change are those on the intervals appearing in~\eqref{eq:intervalsa}, and they change from \(+\) to \(-\). These intervals must therefore be precisely the negative sign intervals.

Now suppose that, in enlarging \(I_{\bl_i}\) to \(J\), one has to adjoin an edge \(e_j\). Figure~\ref{fig:nhd_outer_white} shows that \(\bl_i\) lies to the left of \(\beta_{e_j}\), hence, \(\sign_{{\graphregion}_{\bl_i}}(e_j)=+\). We may therefore apply Case~1 of Item~2 in Proposition~\ref{prop:invariance}.

To use the same calculation as in~\eqref{eq:kast_cancel}, we extend the definition of $\kast^{-1}_{\bl,\wh}$ to the black vertices $\bl \sim \wh'$ such that $\bl \in B(\graphpl) \setminus B(\graphbeta)$ (the black vertices joined to \(\wh'\) by dashed edges in Figure~\ref{fig:nhd_outer_white}) by setting
\begin{equation} \label{eq:bdy_k_inv}
\kast^{-1}_{\bl,\wh}
:= \sign(v)\left(
\iint_{C(J)\prec C(I_{\wh})}\omega_{\bl,\wh}
+
\langle \partial I_\wh \cap J, \mathsf A_{\bl,\wh}\rangle
\right).
\end{equation}
We claim that~\eqref{eq:bdy_k_inv} vanishes.

Indeed, since $\bl$ lies to the left of each strand $\beta_{e_i}$, the sign function
$\sign_{{\graphregion}_\bl}$ is equal to $+$ on $J$. Hence, by Lemma~\ref{lem:sign_poles},
the contour $C(J)$ encloses no poles of $\omega_{\bl,\wh}$, and therefore,
\[
\iint_{C(J)\prec C(I_{\wh})}\omega_{\bl,\wh}=0.
\]

For the remaining term, we need to know that the sign stays positive also on the two boundary edges adjacent to $J$.

\begin{lemma}\label{lem:sign_plus_both_sides} Assume $\bl \sim \wh'$ such that $\bl \in B(\graphpl) \setminus B(\graphbeta)$. Let $e_-$ (resp. $e_+$) denote the edge immediately clockwise (resp. counterclockwise) of $J$. Then
\[
\sign_{{\graphregion}_\bl}(e_-)=\sign_{{\graphregion}_\bl}(e_+)=+.
\]
\end{lemma}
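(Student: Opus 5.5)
The plan is to use the local geometry at the outer white vertex $\wh'$ together with the cyclic-order statement of Lemma~\ref{lem:cyclic_order_same}. Let $\bl\sim\wh'$ with $\bl\notin B(\graphbeta)$, i.e. $\bl$ is joined to $\wh'$ by a dashed edge as in Figure~\ref{fig:nhd_outer_white}. Since $\bl$ lies outside $\graphbeta$ and on the far side of each $\beta_{e_i}$ from the inner chamber, we already know $\sign_{{\graphregion}_\bl}(e_i)=+$ for $1\le i\le k$. We must show the sign is still $+$ on $e_-$ and $e_+$, i.e. that $\bl$ lies to the left of $\beta_{e_-}$ and $\beta_{e_+}$.

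The first step is to locate the strands $\beta_{e_\pm}$ near $\wh'$. The boundary medial cycle $\partial\graphbeta^\times$ passes through $\wh'$ along $\beta_{e_1},\dots,\beta_{e_k}$ consecutively. It enters the neighbourhood of $\wh'$ along $\beta_{e_-}$ (or $\beta_{e_+}$, depending on orientation) at the medial vertex $\e_{v}$ for $v=e_-\cap e_1$, and leaves along $\beta_{e_+}$ at $\e_{v'}$ for $v'=e_k\cap e_+$. These corner edges are the edges $\wh'\bl_1$ and $\wh'\bl_{k+1}$, or the edges just beyond them, on the sides of $\wh'$ inside $\graphbeta$. By Lemma~\ref{lem:cyclic_order_same}, the strands through $\wh'$ appear around $\wh'$ in the same cyclic order as the edges of $N$. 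The strands through the dashed edges then correspond to edges of $N$ lying in the complementary arc, between $e_k$ and $e_1$ going around past $e_+,\dots,e_-$, and not equal to the $e_i$.

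The second step is a sign computation via the conical regions of Figure~\ref{fig:four_conical_regions}. Consider the crossing $\e_v$ of $\beta_{e_-}$ and $\beta_{e_1}$. Since $v$ is either convex or concave, we know which of the four cones contains $\graphbeta$ locally. The vertex $\bl$ lies across $\beta_{e_1}$ from $\graphbeta$, and it lies on the same side of $\beta_{e_-}$ as $\wh'$ does. Since $\beta_{e_-}$ passes at distance one edge from $\wh'$, the dashed edges emanate on the $\wh'$-side of $\beta_{e_-}$. Hence $\sign_{{\graphregion}_\bl}(e_-)=\sign_{{\graphregion}_{\wh'}}(e_-)$. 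It then remains to show that this common sign is $+$. The white vertex $\wh'$ sits on $\beta_{e_1}$ while being $\beta_{e_1}$-outer, with $e_-$ adjacent to $e_1$. The edge $\e_v$ is a boundary edge containing $\wh'$, and $\wh'$ lies on $\beta_{e_-}$ as well. Using the maximal-turn rule for zig-zag paths at white vertices, a white vertex on a zig-zag path lies to the left of the corresponding strand, which gives $+$. Alternatively, one can argue as in the equivalent argument after~\eqref{eq:intervalsa}: the outer chamber of $\wh'$ has exactly two sign changes by Lemma~\ref{lem:four_signs_locally}. These are at the endpoints of $J$ on the sign function of the chamber across, which forces the sign to be constant $+$ on the complement of $J$'s minus-interval as seen from $\bl$.

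Concretely, I would carry out the second argument as the main line. The chamber ${\graphregion}_\bl$ is an outer chamber, so it has exactly two sign changes. We know $\sign_{{\graphregion}_\bl}=+$ on all of $J$. Compare with ${\graphregion}_{\bl_1}$, an inner-adjacent chamber whose minus-interval is $e_1$ by~\eqref{eq:intervalsa}, and show that ${\graphregion}_\bl$ and ${\graphregion}_{\bl_1}$ differ only in strands parallel to edges of $J$. This is the expected main obstacle: the dashed edges could cross some strand $\beta_{e}$ with $e\notin J$ between $\bl_1$ and $\bl$, and this must be ruled out. Condition~2 of Definition~\ref{def:azgraph} together with simplicity of $\partial\graphbeta^\times$ prevents any other $\beta_e$ from passing through $\wh'$'s dashed edges. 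With that in hand, ${\graphregion}_\bl$ has the same signs as ${\graphregion}_{\bl_1}$ off $J$. In particular it has the same signs at $e_\pm$, and those are $+$ because they are adjacent to the minus-interval $I_{\bl_1}=e_1$ (respectively $I_{\bl_{k+1}}=e_k$) and lie outside it.
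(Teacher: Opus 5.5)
There is a genuine gap, and it cannot be closed as stated, because the lemma fails in one configuration.

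\textbf{Where your argument breaks.} Your main line, like the paper's two-line proof, transports signs from ${\graphregion}_{\bl_1}$. It then asserts $\sign_{{\graphregion}_{\bl_1}}(e_-)=+$ because $e_-$ is adjacent to the minus interval $I_{\bl_1}=e_1$. But $I_{\bl_1}$ is a sign interval of $\sign_{\Rin_{\bl_1}}$, not of $\sign_{{\graphregion}_{\bl_1}}$, and the two differ at $e_-$ whenever $\bl_1$ is $\beta_{e_-}$-outer.

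This happens when $\wh'\bl_1$ is the corner edge $\e_v$ at the convex vertex $v=e_-\cap e_1$. In that case $\beta_{e_-}$ runs straight through the midpoint of $\wh'\bl_1$ into the dashed edge $\wh'\bl$, where $\bl$ is the dashed neighbour of $\wh'$ next to $\bl_1$. Two of your claims fail here:
\begin{itemize}
\item Condition~2 does not keep boundary strands off the dashed edges. It only constrains the part of $\beta_{e_-}$ lying on $\partial\graphbeta^\times$, and the boundary turns away from $\wh'$ at the corner.
\item Your first route fails for the same reason: this $\bl$ is not on the $\wh'$-side of $\beta_{e_-}$.
\end{itemize}
This $\bl$ is a black vertex of the zig-zag path of $\beta_{e_-}$, hence lies to its right, so $\sign_{{\graphregion}_\bl}(e_-)=-$.

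\textbf{Counterexample.} The Aztec diamond of Figure~\ref{fig:square_aztec_inv_kast} already shows this. Take $\wh'$ to be the white vertex at $(0,1)$, which is $\gamma_1$-outer, so $J=\bm\gamma$ and $e_-=\bm\alpha$. Then $\bl_1$ is the $\alpha_1$-outer black vertex at $(1,0)$. The dashed neighbour at $(-1,0)$ lies below the rightward strand $\alpha_1=\{y=\tfrac12\}$, so its sign at $\bm\alpha$ is $-$. In this corner configuration, for the other dashed neighbours ${\graphregion}_\bl$ and ${\graphregion}_{\bl_1}$ even differ by one crossing of $\beta_{e_-}$. So ${\graphregion}_{\bl_1}$ is the wrong reference point.

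\textbf{What is true.} The correct statement, which suffices where the lemma is used, is a dichotomy: for $e\in\{e_-,e_+\}$, either $\sign_{{\graphregion}_\bl}(e)=+$ or $\bl$ lies on $\beta_e$. To prove it, compare with ${\graphregion}_{\wh'}$ instead.
\begin{itemize}
\item Since $\wh'$ is $\beta_{e_1}$-outer, ${\graphregion}_{\wh'}$ is an outer chamber with two sign changes.
\item The inner chamber across $\beta_{e_1}$ (the one used to define $\Rin_{\wh'}$) differs from it only at $e_1$, and has four sign changes by Lemma~\ref{lem:four_signs_locally}.
\item Flipping the single sign at $e_1$ from $+$ to $-$ adds two sign changes only if both neighbours of $e_1$ along $\partial N$ carry $+$ in ${\graphregion}_{\wh'}$. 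In particular $\sign_{{\graphregion}_{\wh'}}(e_-)=+$, and the same argument with $e_k$ gives $\sign_{{\graphregion}_{\wh'}}(e_+)=+$.
\item The only strands separating $\bl$ from $\wh'$ are the two through the edge $\wh'\bl$. Hence $\sign_{{\graphregion}_\bl}(e_\pm)=+$ unless $\beta_{e_\pm}$ passes through $\wh'\bl$, in which case $\bl$ lies on $\beta_{e_\pm}$.
\end{itemize}
In the subsequent argument, this second alternative is handled by Case~2 of Lemma~\ref{lem:sector} instead of Case~1, since one already has $\sign_{{\graphregion}_\wh}(e')=-$ there.
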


\begin{proof}
As we move from $\bl_1$ to $\wh'$ to $\bl$, Condition~2 of
Definition~\ref{def:azgraph} implies that the only boundary strand crossed is
$\beta_{e_1}$. Therefore,
\[
\sign_{{\graphregion}_\bl}(e_-)=\sign_{{\graphregion}_{\bl_1}}(e_-)=+.
\]
Applying the same argument to $\bl_{k+1}$ gives
\[
\sign_{{\graphregion}_\bl}(e_+)=\sign_{{\graphregion}_{\bl_{k+1}}}(e_+)=+.
\]
\end{proof}
Write \(I_\wh=[\ell_\wh,r_\wh]\). Then,
\[
- \langle \partial I_\wh \cap J, \mathsf A_{\bl,\wh}\rangle
=
\mathbf 1_{\{\ell_\wh \in J\}} \mathsf A_{\bl,\wh}^{\ell_\wh}
-
\mathbf 1_{\{r_\wh \in J\}} \mathsf A_{\bl,\wh}^{r_\wh}.
\]
We show that both terms on the right-hand side vanish. Consider first the term
\(\mathbf 1_{\{\ell_\wh \in J\}} \mathsf A_{\bl,\wh}^{\ell_\wh}\), and assume that \(\ell_\wh\in J\). Let \(e'\) be the edge immediately clockwise from \(I_\wh\). Since \(e'\) is adjacent to \(I_\wh\), we have
\(
\sign_{{\graphregion}_\wh}(e')=-.
\)
On the other hand, \(\sign_{{\graphregion}_\bl}\) is positive on \(J\) and on the two edges adjacent to \(J\), so
\(
\sign_{{\graphregion}_\bl}(e')=+.
\)
It follows from Case~1 of Lemma~\ref{lem:sector} that \(\ell_\wh\in \sector_{\bl,\wh}\), and therefore,
\[
\mathbf 1_{\{\ell_\wh \in J\}} \mathsf A_{\bl,\wh}^{\ell_\wh}=0.
\]
A similar argument shows that 
\[
\mathbf 1_{\{r_\wh \in J \} } {\mathsf A}^{r_\wh}_{\bl,\wh} =0.
\]
  
We have thus shown that both terms in~\eqref{eq:bdy_k_inv} are $0$. Therefore,
\[
\sum_{ \bl \wh' \in E(\graphbeta)}\kast_{\wh',\bl} \kast^{-1}_{\bl,\wh} = \sum_{ \bl \wh' \in E(\graphpl)}\kast_{\wh',\bl} \kast^{-1}_{\bl,\wh} = \sign(v)\big(\mathbf 1_{\{\ell \in I_{\wh}\}}
-\mathbf 1_{\{r\in I_{\wh}\}}\big)\mathbf{1}_{\{\wh = \wh' \}}.
\]
as in~\eqref{eq:kast_cancel}. 

Finally, we need to verify~\eqref{eq:sing_cancels} when $\wh = \wh'$. Recall the definition of $v$ above~\eqref{eq:J}. Suppose we use $e_1$ to define $\Rin_{\wh'}$, so that 
\[
\Rin_{\wh'} = {\graphregion}_{\bl_1}.
\]
Then $I_{\wh}$ and $I_{\bl_1} = e_1$ are adjacent sign intervals that meet at the vertex $s_v({\graphregion}_{\bl}) = \ell$, so 
\[\mathbf 1_{\{\ell \in I_{\wh}\}}
-\mathbf 1_{\{r\in I_{\wh}\}}=1 = \sign(v).\]
\end{enumerate}
The proof of Theorem~\ref{thm:main} is complete.
\qed

\section{Scaling limits and astroidal domains} \label{sec:scaling_limit_astroidal_domain}

We now turn to the asymptotic consequences of the exact inverse Kasteleyn formula~\eqref{eq:formula_inverse_K}. Throughout the remainder of the paper, we assume that the angle function is periodic as in Section~\ref{sec:slope}.

\subsection{Astroidal domains}\label{sec:def_Astroidal_domains}

In this section, we consider scaling limits of AZ graphs. Recall from Section~\ref{sec:conventions} that \(|e|_{\ZZ}\) is the lattice length of $e \in E(N)$ and \((a_e,b_e)=\vec e/|e|_{\ZZ}\in\ZZ^2\). Let
\[
c:E(N) \to \RR
\]
be a function. For each \(e \in E(N)\), consider the line
\begin{equation}\label{eq:line_l_e}
    \line_e :=\{(x,y) \in \RR^2: -b_e x + a_e y + c(e) =0\}.
\end{equation}
For a vertex \(v=e_- \cap e_+ \in V(N)\), let
\[
\point_v := \line_{e_-} \cap \line_{e_+}
\]
denote the point of intersection.

We first define a cycle in \(\RR^2\) analogous to the medial boundary cycle construction. Let \(\partial \mathcal D_c\) be the counterclockwise-oriented cycle obtained as follows: it visits the points \(\point_v\) in the clockwise cyclic order of \(v \in V(N)\), and for each edge \(e=[v_-,v_+] \in E(N)\), it goes from \(\point_{v_+}\) to \(\point_{v_-}\) along \(\line_e\). We do not assume that \(\point_{v_+}\neq \point_{v_-}\).

Assume that \(\partial \mathcal D_c\) is a simple cycle, and let \(\mathcal D_c \subset \RR^2\) be the closed set bounded by \(\partial \mathcal D_c\). We say that $\mathcal D_c$ is \emph{admissible} if 
\begin{equation} \label{eq:sum_c_zero}
    \sum_{e \in E(N)} |e|_\ZZ c(e)=0.
\end{equation}

\begin{definition}\label{def:astroidal_domain}
    We call $\mathcal D_c$ an \emph{astroidal domain} if:
    \begin{enumerate}
        \item the cycle $\partial \mathcal D_c$ is simple,
        \item $\mathcal D_c$ is admissible.
    \end{enumerate}
\end{definition}
Astroidal domains naturally arise as scaling limits of AZ graphs. Let \(
\graphbetan, n=1,2,\dots,
\) be a family of AZ graphs such that for each \(e \in E(N)\),
\begin{equation}\label{eq:scaling_limit_AZ}
c_{\bm \beta^n}(e) = n |e|_\ZZ c(e) + O(1) \qquad \text{as $n \to \infty$}.
\end{equation}
By~Lemma~\ref{lem:c_admissibility}, $\mathcal D_c$ is admissible. If $\partial \mathcal D_c$ is also simple, then $\mathcal D_c$ is an astroidal domain.
\begin{remark}\label{rem:existence_astroidal_domain}
{
For every Newton polygon \(N\), one can construct an astroidal domain. Choose two consecutive edges \(e_1,e_2\in E(N)\), and write
\[
    e_1=[v_0,v_1], \qquad e_2=[v_1,v_2],
\]
where \(v_0,v_1,v_2\in V(N)\) occur consecutively in the cyclic order on
\(\partial N\). We first choose \(c:E(N)\to\RR\) so that
\(\partial\mathcal D_c\) is simple and so that the three vertices of \(\partial\mathcal D_c\) corresponding to \(v_0,v_1,v_2\) are three consecutive convex vertices. To arrange this, first place the oriented lines \(\line_{e_1}\) and
\(\line_{e_2}\). Among the four cones determined by these two oriented lines at
\(
    \point_{v_1}=\line_{e_1}\cap \line_{e_2},
\)
choose one of the inconsistently oriented cones. Then choose the
remaining lines \(\line_e\), \(e\neq e_1,e_2\), in their cyclic order,
so that the rest of \(\partial\mathcal D_c\) is a simple polygonal
chain contained in this cone.
}
{
We color the sides as in the definition of AZ graphs: an edge
\(e\in E(N)\) is black if the orientation of \(\line_e\) induced by
\(e\) agrees with the orientation of the corresponding side of
\(\partial\mathcal D_c\), and white otherwise. The color changes occur precisely at the convex vertices of \(\mathcal D_c\). Hence, the two sides corresponding to \(e_1\) and \(e_2\) have opposite colors. Moreover, either of them can be translated arbitrarily far outward while keeping \(\partial\mathcal D_c\) simple. Replacing \(c(e)\) by \(c(e)+t\) for a black side, or by
\(c(e)-t\) for a white side, translates that side outward. Thus, by translating $e_1$ and $e_2$ outward, we can make the domain admissible.
}

{
Astroidal domains can also be obtained from the tropical limit of the
Aztec diamond, as in Appendix~\ref{appendix:tropical}. 
}

{
Once
an astroidal domain \(\mathcal D_c\) is fixed, one can choose boundary
zig-zag paths so as to obtain a sequence of AZ graphs \(\graphbetan\)
satisfying~\eqref{eq:scaling_limit_AZ}.
}

{
We expect the inverse Kasteleyn formula and the asymptotic analysis
developed in this paper to extend to more general domains, including domains whose boundary cycle is not simple. For this reason, we do not pursue a more systematic construction of astroidal domains here beyond the above argument showing that the class is nonempty.
}
\end{remark}

\subsection{Cell decomposition and sign changes}

The collection of lines $\{\line_e : e \in E(N)\}$ determines a cell decomposition of $\RR^2$; see Figure~\ref{fig:decagon_arctic_curve}. To each (0-, 1- or 2-) cell $R$, we associate a \emph{sign function}
\[
\sign_R : E(N) \rightarrow \{+,-,0\},
\]
defined exactly as in~\eqref{eq:local_sign}, except that we use the oriented line $\line_e$ (oriented in the direction of $\vec e$) in place of the strand $\beta_e$. Explicitly, for $e \in E(N)$,
\[
\sign_R(e)=
\begin{cases}
+, & \text{if $R$ lies on the left-hand side of $\line_e$,} \\
-, & \text{if $R$ lies on the right-hand side of $\line_e$,} \\
0, & \text{if $R \subset \line_e$.}
\end{cases}
\]
The \emph{chambers} are precisely those cells for which $\sign_R(e)\neq 0$ for all $e \in E(N)$. We say that a cell $R$ has a \emph{sign change} 
\begin{enumerate}
\item at a vertex \(v \in V(N)\) if \(\sign_R\) takes opposite values on the two edges of \(\partial N\) incident to \(v\).
\item along a closed interval \([v_-,v_+]\subset \partial N\) with $v_-,v_+ \in V(N)$ if \(\sign_R=0\) on \([v_-,v_+]\) and \(\sign_R\) takes opposite values on the two adjacent boundary intervals.
\end{enumerate}

With this convention, all definitions and results of Section~\ref{sec:chambers_and_sign_changes} admit direct continuous analogues. In particular, we have the following continuous analogue of Lemma~\ref{lem:four_signs_locally}.

\begin{lemma}\label{lem:sign_R_continuous}
For each cell $R \subset \mathcal D_c^\circ$ (resp. $R  \subset \RR^2 \setminus \mathcal D_c^\circ$), $\sign_R$ has four (resp. two) sign changes. 
\end{lemma}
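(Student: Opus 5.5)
We will mimic the Gauss map argument of Lemma~\ref{lem:four_convex_vertices} and Lemma~\ref{lem:four_signs_locally}, now for straight oriented lines. First treat a chamber $R$, i.e.\ a $2$-cell with $\sign_R(e)\neq 0$ for all $e$. Pick a point $p\in R$ and traverse the simple polygonal cycle $\partial\mathcal D_c$ counterclockwise. Along the side lying on $\line_e$ we record the direction $\arg(\sign_R(e)\vec e)$. Because $p$ lies to the left of $\line_e$ exactly when $\sign_R(e)=+$, this is the direction of the line oriented so that $p$ is on its left. At a corner $\point_v$, $v=e_-\cap e_+$, the recorded direction jumps by $\pi-\theta_v$ if $\sign_R(e_-)\ne\sign_R(e_+)$ and by $-\theta_v$ otherwise, where $\theta_v=\arg\vec e_+-\arg\vec e_-$ (the same case analysis as Figure~\ref{fig:argument}). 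Summing over $v$ and using $\sum_v\theta_v=2\pi$ gives
\[
\pi\cdot\#\{\text{sign changes}\}=2\pi+(\text{total turning}).
\]

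The crux is to identify the total turning with $2\pi$ when $p\in\mathcal D_c^\circ$ and with $0$ when $p\notin\mathcal D_c$. Measure it as twice the winding of the boundary around $p$ computed via the direction vector normalized relative to $p$. Concretely, I would define the recorded direction as the tangent of the cycle if $p$ is to its left and the reversed tangent otherwise. This coincides with $\arg(\sign_R(e)\vec e)$ up to the global orientation conventions in the definition of colors. A cleaner route is to compare with the winding of the vector from $p$ to the boundary point: along each side, the side direction oriented with $p$ on its left differs from the radial direction by an angle in $(0,\pi)$. Hence the total change of the recorded direction equals the total change of the radial argument, which is $2\pi\cdot\operatorname{wind}(\partial\mathcal D_c,p)$. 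This is $2\pi$ inside and $0$ outside. Since the cycle is simple, this gives $4$ and $2$ sign changes respectively.

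The degenerate configurations need care, and I expect this to be the main obstacle. Corners may coincide ($\point_{v_+}=\point_{v_-}$, so a side has zero length), and cells may be $0$- or $1$-cells lying on some lines. For a zero-length side, its line passes through the corner. The jump contributions still add correctly provided the side's direction is treated as an intermediate value, and the radial comparison is unaffected. For a lower-dimensional cell $R$, I would use a perturbation argument. Choose a chamber $R'$ adjacent to $R$ and compare $\sign_R$ with $\sign_{R'}$: $\sign_R$ is obtained from $\sign_{R'}$ by setting to $0$ the entries for lines through $R$. For a $1$-cell these lines are parallel, hence a single edge, or a block of consecutive edges, by the cyclic order. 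For a $0$-cell, the lines through $R$ form an interval of consecutive edges, by the same argument as Lemma~\ref{lem:sign_change_cts} applied to the chambers around $R$. The zero interval then sits between two sign intervals. Using the chambers on both sides of it together with the convention that a sign change "along $[v_-,v_+]$" counts once, the counts of $4$ or $2$ transfer from the chambers. Here I would check that the chambers surrounding $R$ all lie on the same side of $\partial\mathcal D_c$, which holds because $R\subset\mathcal D_c^\circ$ or $R\subset\RR^2\setminus\mathcal D_c$.
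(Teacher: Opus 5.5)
Your treatment of chambers is correct and is the paper's argument. The paper justifies this lemma only as the continuous analogue of Lemma~\ref{lem:four_signs_locally}, i.e.\ the Gauss-map count with total turning $2\pi$ or $0$. Your radial comparison is a clean proof of that winding claim: the direction of $\line_e$ with $p$ on its left differs from $\arg(q-p)$ by an angle in $(0,\pi)$. This pins each corner jump to $\pi-\theta_v$ or $-\theta_v$ and makes the total turning $2\pi\,\mathrm{wind}(\partial\mathcal D_c,p)$. It also covers zero-length sides, since $\point_{v_\pm}\in\line_e$.

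The gap is in the lower-dimensional cells, which the statement explicitly covers. There are two problems.
\begin{enumerate}
\item Your structural claim about zero sets is false. Consecutive lines $\line_{e_-},\line_{e_+}$ meet only at $\point_v\in\partial\mathcal D_c$. Hence at a $0$-cell off $\partial\mathcal D_c$ the edges with $\sign_R(e)=0$ are pairwise non-adjacent; this is the same observation used in the proof of Lemma~\ref{lem:gamma_trivial}. Similarly, if $\line_e$ and $\line_{e'}$ with $e\neq e'$ both contain a $1$-cell, then $\line_e=\line_{e'}$, so $e,e'$ are antiparallel and never consecutive. So there is no single ``zero interval''. More importantly, you never prove the fact the transfer depends on: each zeroed entry must be flanked by opposite signs, since otherwise setting it to $0$ lowers the count by two.
\item You omit boundary cells. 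The second alternative of the lemma is $R\subset\RR^2\setminus\mathcal D_c^\circ$, which includes cells on $\partial\mathcal D_c$. These are used later, via Lemma~\ref{lem:forced_zeroes_and_remaining} in Lemma~\ref{lem:turning_point_multiplicity}, Item~1. Your argument assumes all surrounding chambers lie on one side of $\partial\mathcal D_c$, which fails exactly there.
\end{enumerate}

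The missing idea is the elementary one-entry count, which the paper itself uses in the proof of Proposition~\ref{prop:turning_points_unique}. In a cyclic sign sequence, flipping the entry at $e$ changes the number of sign changes by $0$ if the nearest nonzero neighbors of $e$ have opposite signs, and by $\pm 2$ if they agree. Setting that entry to $0$ preserves the count in the first case and yields the smaller of the two counts in the second.

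Apply this to a $1$-cell on a single line $\line_e$; the chambers on its two sides differ only at $e$.
\begin{itemize}
\item If both chambers lie in $\mathcal D_c^\circ$ (resp.\ outside $\mathcal D_c$), their counts agree. This forces opposite neighbors, so $\sign_R$ keeps $4$ (resp.\ $2$) sign changes.
\item If $R\subset\partial\mathcal D_c$, the counts are $4$ and $2$. This forces equal neighbors, so $\sign_R$ has $2$.
\end{itemize}
For a $0$-cell off $\partial\mathcal D_c$, apply this to each line through $R$ separately: its zeros are isolated, and the neighboring signs are constant near $R$.

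Two cases still need separate treatment. Boundary $0$-cells (corners $\point_v$, possibly with zero-length sides or further lines through them) need a short direct check, e.g.\ listing the sign patterns of the four sectors at a corner. Coincident antiparallel lines, where two entries flip at once, should be excluded or handled separately.
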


\subsection{Characterization of astroidal domains}\label{sec:char_astroidal_domains}

The limit shape offers a natural perspective on the astroidal domains, which we discuss in this section. Throughout this discussion, we assume that the points~$\point_v$ are pairwise distinct for all~$v\in V(N)$. 

Let~$N^{\perp}$ be the polygon obtained (up to translation) by rotating~$N$ by~$-\frac{\pi}{2}$. For any~$v\in V(N)$ we denote the corresponding vertex of~$N^\perp$ by~$(s^v,t^v)\in V(N^\perp)$.
\begin{proposition}
Let~$\partial \mathcal D\subset \RR^2$ be a simple polygon and assume there is an order-preserving bijection~$v\mapsto \point_v$ from~$V(N)$ to~$V(\partial \mathcal D)$. Assume further that there is a continuous piecewise linear function~$\ls_0:\partial \mathcal D\to \RR$ such that, for all~$v\in V(N)$,
\begin{equation}\label{eq:height_boundary}
\ls_0(x,y)=(x,y)\cdot(s^v,t^v)-d_v
\end{equation}
along the two line segments of $\partial \mathcal D$ adjacent to~$\point_v$, for some constant~$d_v$. Then the enclosed region~$\mathcal D$ is an astroidal domain~$\domain$ with
\begin{equation}
c(e)=\frac{d_{v_+}-d_{v_-}}{|e|_{\ZZ}}
\end{equation}
for~$e=[v_-,v_+]\in E(N)$.
\end{proposition}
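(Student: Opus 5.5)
The plan is to read off everything from continuity of $\ls_0$ at the vertices and along the sides of $\partial\mathcal D$. Fix an edge $e=[v_-,v_+]\in E(N)$. Since $v\mapsto\point_v$ is order preserving and the boundary cycle visits the $\point_v$ in the clockwise order of $v$, the side of $\partial\mathcal D$ between $\point_{v_+}$ and $\point_{v_-}$ is adjacent to both vertices. On this side \eqref{eq:height_boundary} gives two linear expressions for $\ls_0$, one with slope $(s^{v_-},t^{v_-})$ and one with slope $(s^{v_+},t^{v_+})$. Equating them, the whole segment lies on
\[
(x,y)\cdot\bigl((s^{v_+},t^{v_+})-(s^{v_-},t^{v_-})\bigr)=d_{v_+}-d_{v_-}.
\]
Since $N^\perp$ is $N$ rotated by $-\pi/2$, the difference $(s^{v_+},t^{v_+})-(s^{v_-},t^{v_-})$ is $\vec e$ rotated by $-\pi/2$, namely $(b_e,-a_e)\,|e|_\ZZ$. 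Dividing by $|e|_\ZZ$, the segment lies on $-b_ex+a_ey+c(e)=0$ with $c(e)=(d_{v_+}-d_{v_-})/|e|_\ZZ$, which is the line $\line_e$ of \eqref{eq:line_l_e}. The sign convention of the rotation needs to be checked against that of $\line_e$. If it comes out opposite, I would use that \eqref{eq:height_boundary} only fixes $d_v$ through the stated normalization and recheck that $h=(x,y)\cdot(s,t)-d$ produces the $+c(e)$ form. I expect this sign bookkeeping to be the only delicate point.

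Consequently each side of $\partial\mathcal D$ between $\point_{v_+}$ and $\point_{v_-}$ is contained in $\line_e$. Each vertex $\point_v$, lying on the sides for $e_-$ and $e_+$, equals $\line_{e_-}\cap\line_{e_+}$; these lines are not parallel because consecutive edges of $N$ are not parallel. Hence $\partial\mathcal D$ coincides with the cycle $\partial\mathcal D_c$ constructed in Section~\ref{sec:def_Astroidal_domains}, and that cycle is simple by hypothesis. This gives Condition~1 of Definition~\ref{def:astroidal_domain}.

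For admissibility, sum over all edges. The sum telescopes around the cycle $V(N)$:
\[
\sum_{e\in E(N)}|e|_\ZZ\,c(e)=\sum_{e=[v_-,v_+]}(d_{v_+}-d_{v_-})=0,
\]
which is \eqref{eq:sum_c_zero}. Therefore $\mathcal D=\mathcal D_c$ is an astroidal domain. This argument mirrors the continuous analogue of the proof of Proposition~\ref{prop:AZ_graphs_height_function}, where the constants $d_v$ played the same role.
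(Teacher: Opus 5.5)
Your parallelism and admissibility steps are correct and match Steps~1 and~3 of the paper's proof. The sign you were worried about resolves directly: $(s^{v_+}-s^{v_-},t^{v_+}-t^{v_-})=|e|_{\ZZ}(b_e,-a_e)$, so equating the two linear expressions gives exactly $-b_ex+a_ey+c(e)=0$.

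The gap is the sentence ``the boundary cycle visits the $\point_v$ in the clockwise order of $v$,'' which you take as given.
\begin{itemize}
\item The hypothesis only says that $v\mapsto\point_v$ respects the cyclic order up to orientation. The paper explicitly remarks that orientation reversal is \emph{not} assumed and must be derived from \eqref{eq:height_boundary}.
\item The definition of $\partial\mathcal D_c$, however, requires that the \emph{counterclockwise} boundary visit the $\point_v$ in \emph{clockwise} order of $v$.
\item This does not follow from your parallelism step. A convex polygon with sides on lines parallel to the edges of $N$, traversed in the same cyclic direction as $N$, is simple, has sides on lines $\line_e$, and has the wrong orientation. What excludes it is admissibility, i.e.\ the height function, and you never use that to control orientation.
\end{itemize}
So as written you have only identified $\partial\mathcal D$ with $\partial\mathcal D_c$ as unoriented sets. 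Everything that depends on the orientation is then unsupported, for example the coloring of $E(N)$ and the four sign changes inside $\mathcal D_c$ in Lemma~\ref{lem:sign_R_continuous}.

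The missing idea is a turning-number count, which is the main step of the paper's proof.
\begin{itemize}
\item Put $\vec w_e:=\point_{v_+}-\point_{v_-}$, which is nonzero because the $\point_v$ are assumed distinct. By your first step it is orthogonal to the edge $\vec u_e$ of $N^\perp$ from $(s^{v_-},t^{v_-})$ to $(s^{v_+},t^{v_+})$.
\item Let $\theta_v\in(0,\pi)$ be the exterior angle of $N$ at $v$. The turning angle from $\vec w_{e_-}$ to $\vec w_{e_+}$ is then $\theta_v$ or $\theta_v-\pi$. The second case occurs exactly when $\vec w$ switches between outward and inward normals of $N^\perp$ at $v$; call this a reversal.
\item Traversing in the counterclockwise order of $v$, the total turning is $2\pi-\pi k$, where $k$ is the number of reversals. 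Simplicity forces this to be $\pm2\pi$, so $k\in\{0,4\}$.
\item To rule out $k=0$, use the height function: $\sum_e\vec w_e\cdot(s^{v_+},t^{v_+})=\sum_e\bigl(\ls_0(\point_{v_+})-\ls_0(\point_{v_-})\bigr)=0$. Since $\sum_e\vec w_e=0$, also $\sum_e\vec w_e\cdot\bigl((s^{v_+},t^{v_+})-(s,t)\bigr)=0$ for any interior point $(s,t)$ of $N^\perp$.
\item With no reversals, all $\vec w_e$ would be outward normals (or all inward), so every term would have the same strict sign, a contradiction.
\end{itemize}
Hence $k=4$ and the total turning is $-2\pi$. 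So the counterclockwise traversal of $\partial\mathcal D$ visits the $\point_v$ in clockwise order, which is what you need before invoking the definition of $\partial\mathcal D_c$.
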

\begin{remark}
We do not assume that the edges of~$\partial \mathcal D$ are parallel to the edges of $N$, nor that the map~$v \mapsto \point_v$ reverses orientation. Both properties follow from~\eqref{eq:height_boundary}.
\end{remark}
\begin{remark}
In Section~\ref{sec:limit_shape} below, we obtain the limit shape for astroidal domains, and that limit shape indeed satisfies the boundary condition~\eqref{eq:height_boundary}. 
\end{remark}
\begin{proof}
Recall the conventions from Section~\ref{sec:conventions}. We divide the proof into three steps. First, we show that each edge of~$\partial \mathcal D$ is parallel to its corresponding edge of $N$. Next, we show that the orientation of~$\partial \mathcal D$ is reversed compared to that of~$\partial N$. Finally, we verify admissibility.
\begin{enumerate}
\item Let~$e=[v_-,v_+] \in E(N)$ and let~$\line_e$ be the line through~$\point_{v_-}$ and~$\point_{v_+}$. For any~$(x,y)\in \line_e$, \eqref{eq:height_boundary} holds with~$v=v_-$ and with~$v=v_+$. Subtracting the two expressions, we find that the equation for the line~$\line_e$ is 
\begin{equation}
(x,y)\cdot(s^{v_+}-s^{v_-},t^{v_+}-t^{v_-})-d_{v_+}+d_{v_-}=0.
\end{equation}
By the definition of~$(s^v,t^v)\in V(N^\perp)$,
\begin{equation}
(s^{v_+}-s^{v_-},t^{v_+}-t^{v_+})=|e|_{\ZZ}(b_e,-a_e),
\end{equation}
and therefore,
\begin{equation}\label{eq:line_general_domain}
(x,y)\cdot|e|_{\ZZ}(b_e,-a_e)-d_{v_+}+d_{v_-}=0.
\end{equation}
In particular,~$\vec e=|e|_{\ZZ}(a_e,b_e)$ and~$\line_e$ are parallel. 
\item For any~$e=[v_-,v_+]\in E(N)$, let
\begin{equation}
\vec{u}_e:=(s^{v_+}-s^{v_-},t^{v_+}-t^{v_-}), \quad \text{and} \quad \vec{w}_e:=\point_{v_+}-\point_{v_-}.
\end{equation}
For any~$v=e_-\cap e_+\in V(N)$, let~$\theta_v {\in (-\pi,\pi)}$ be the angle between~$\vec u_{e_-}$ and~$\vec u_{e_+}$ and let~$\varphi_v {\in (-\pi,\pi)}$ be the angle between~$\vec w_{e_-}$ and~$\vec w_{e_+}$. {Since $N$ is convex and oriented counterclockwise, $\theta_v\in (0,\pi)$. It follows, using~$\vec{u}_e \perp \vec{w}_e$, that}
\begin{equation}
\varphi_v=\theta_v \quad \text{or} \quad \varphi_v=\theta_v-\pi.
\end{equation}
We say that~$\vec w_e$ is \emph{reversed} at~$v$ if~$\varphi_v=\theta_v-\pi$. If~$\vec w_e$ is reversed at~$v$, it means that~$\vec w_{e}$ changes from being an inward pointing normal to an outward pointing normal of~$N^{\perp}$, or the other way around. 

Since~$\partial N$ is simple and oriented counterclockwise,
\begin{equation}
\sum_{v\in V(N)}\theta_v=2\pi.
\end{equation}
So, by summing over the angles~$\varphi_v$, we get
\begin{equation}
\sum_{v\in V(N)}\theta_v-\pi \#\{v\in V(N):\vec w_e\text{ is reversed at $v$}\}=2\pi-\pi \#\{v\in V(N):\vec w_e\text{ is reversed at $v$}\}.
\end{equation}
Since~$\partial D$ is simple, the above sum is equal to~$2\pi$ or~$-2\pi$ according to whether~$\partial D$ is oriented counterclockwise or clockwise. We therefore only need to show that~$\vec w_e$ is reversed at least once. 

By the assumption~\eqref{eq:height_boundary},
\begin{equation}
\ls_0(\point_{v_+})-\ls_0(\point_{v_-})=\vec w_e \cdot (s^{v_+},t^{v_+}).
\end{equation}
Summing over all~$e\in E(N)$ yields
\begin{equation}
\sum_{e=[v_-,v_+]\in E(N)}\vec w_e \cdot (s^{v_+},t^{v_+})=0.
\end{equation}
Now fix~$(s,t)\in N^{\perp}$. Since~$\sum_{e\in E(N)}\vec w_e=0$, we have
\begin{equation}
\sum_{e=[v_-,v_+]\in E(N)}\vec w_e \cdot (s^{v_+}-s,t^{v_+}-t)=0.
\end{equation}
Since $N^\perp$ is convex, the quantity 
\begin{equation}
\vec w_e \cdot (s^{v_+}-s,t^{v_+}-t)
\end{equation}
changes sign at~$v\in V(N)$ only if~$\vec w_e$ is reversed at $v$. Hence,~$\vec w_e$ is reversed at least once and so $\partial \mathcal D$ is oriented opposite to $\partial N$.
\item Let~$e=[v_-,v_+]$. With~$c(e)=(d_{v_+}-d_{v_-})/|e|_{\ZZ}$ as in the statement, the line~$\line_e$ given by the equation~\eqref{eq:line_general_domain} coincides with the corresponding line given before Definition \ref{def:astroidal_domain}. The domain is admissible since
\begin{equation}
\sum_{e\in E(N)}|e|_{\ZZ}c(e)=\sum_{e=[v_-,v_+]\in E(N)}(d_{v_+}-d_{v_-})=0.
\end{equation}
\end{enumerate}
\end{proof}

\begin{remark}
The previous proposition shows that, in the language of~\cite{ADPZ20}, astroidal domains are natural domains with natural boundary values.
\end{remark}

\section{Phase regions and the arctic curve}\label{sec:phase_regions}

In this section, we study the geometry of the phase regions and the arctic curve, building on the methods developed in{~\cite{Ber21, BB23, BB24, CJ16, DK21} in the context of dimer models. See also \cite{Kri13} for similar arguments in a different context.} {Recall that we assume that the angle function is periodic as in Section~\ref{sec:slope}.}

\subsection{Action \(1\)-form and action function}\label{sec:action_one_form}

Fix an astroidal domain $\mathcal D_c$ and a sequence of AZ graphs
\(
\graphbetan, n = 1,2,\dots,
\)
such that
\[
c_{\bm\beta^n}(e)=n|e|_\ZZ c(e)+O(1),
\qquad e\in E(N).
\]
Recall the notion of imaginary-normalized differentials from Section~\ref{sec:ind}. We also recall the (multi-valued) functions
\[
z(\zeta):=  \prod_{\alpha \in \zz_\torus} E(\alpha,\zeta)^{-b_{e(\alpha)}},
\qquad
w(\zeta):= \prod_{\alpha \in \zz_\torus} E(\alpha,\zeta)^{a_{e(\alpha)}},
\]
introduced in~\eqref{eq:z_and_w}. By Lemma~\ref{lem:third_kind_ind}, the logarithmic differentials \(d\log z\) and \(d\log w\) are
imaginary-normalized meromorphic \(1\)-forms with residues
\[
\res_\alpha(d\log z)=-b_{e(\alpha)},
\qquad
\res_\alpha(d\log w)=a_{e(\alpha)}.
\]

\begin{definition}\label{def:lambda_c}
For \((x,y)\in\RR^2\), we define the \emph{action \(1\)-form} \(\lambda_c(x,y)\) to be the unique
imaginary-normalized meromorphic \(1\)-form such that
\[
\res_\alpha\bigl(\lambda_c(x,y)\bigr)
=
-b_{e(\alpha)}x+a_{e(\alpha)}y+c(e(\alpha))
\qquad\text{for all }\alpha\in\zz.
\]
\end{definition}

Let
\begin{equation}\label{eq:div_c}
\bm D_c:=\sum_{\alpha\in\zz} c(e(\alpha)) \alpha.
\end{equation}
Then the differential
\[
x d\log z+y d\log w+\omega_{\bm D_c}
\]
is imaginary-normalized by Lemma~\ref{lem:third_kind_ind} and it has residue at \(\alpha\) equal to
\[
-b_{e(\alpha)}x+a_{e(\alpha)}y+c(e(\alpha)).
\]
Hence, by Lemma~\ref{lem:unique_ind},
\begin{equation}\label{eq:action_zw}
\lambda_c(x,y)=x d\log z+y d\log w+\omega_{\bm D_c}.
\end{equation}

The reason for introducing \(\lambda_c(x,y)\) is that it governs the leading exponential factor in the inverse Kasteleyn matrix formula~\eqref{eq:formula_inverse_K}. To make this explicit, we introduce its multi-valued primitive.

\begin{definition}\label{def:action_function}
The \emph{action function} is the multi-valued function on \(\curve\) defined by
\begin{equation}\label{eq:action_function}
\Lambda_c(\zeta;x,y)
:=
\sum_{\alpha\in\zz} \bigl(-b_{e(\alpha)} x+a_{e(\alpha)} y+c(e(\alpha))\bigr)\log E(\alpha,\zeta).
\end{equation}
\end{definition}

By construction,
\[
d\Lambda_c(\cdot;x,y)=\lambda_c(x,y).
\]

\begin{lemma}\label{lem:divisor_coefficients_asymptotic}
Let \((x,y)\in\RR^2\) and let \((j_n,k_n)\in\ZZ^2\) be such that
\[
(j_n,k_n)=n(x,y)+O(1).
\]
For every \(\alpha\in\zz\),
\begin{align*}
\frac1n\Bigl(-\dabel(\f_0+(j_n,k_n))-\bm D_{\bm\beta^n}\Bigr)_\alpha
&=
-b_{e(\alpha)}x+a_{e(\alpha)}y+c(e(\alpha))
+O\left(\frac1n\right)\\
&=\res_\alpha\bigl(\lambda_c(x,y)\bigr)+O\left(\frac1n\right).
\end{align*}
\end{lemma}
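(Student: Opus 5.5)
Fix $\alpha\in\zz$ and write $e=e(\alpha)$. The proof splits the coefficient $(-\dabel(\f_0+(j_n,k_n))-\bm D_{\bm\beta^n})_\alpha$ into its two contributions and computes each to leading order in $n$. The second equality is immediate from Definition~\ref{def:lambda_c}, so all the work is in the first.

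\emph{Step 1: the divisor term.} By \eqref{eq:d_beta_defn}, $(\bm D_{\bm\beta^n})_{\zz_e}=-\dabel(\x)_{\zz_e}$ for a $\beta_e^n$-outer vertex $\x$. As in the proof of Lemma~\ref{lem:c_admissibility}, $\dabel(\x)_{\zz_e}$ is, up to an $O(1)$ correction at the end, the signed sum of the strands $\alpha_e^i$ strictly between $\f_0$ and $\beta_e^n=\alpha_e^{c_{\bm\beta^n}(e)}$. Hence $(\bm D_{\bm\beta^n})_\alpha$ is $\pm1$ or $0$ according to whether $\alpha=\alpha_e^i$ has index $i$ between $0$ and $c_{\bm\beta^n}(e)$. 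This counting is per strand of $\zz$, but the statement concerns the coefficient at an angle. Since the angle function is periodic, strands in $\zz_e$ that are translates of a given torus strand share the same angle. So I will read the coefficient at the angle $\nu(\alpha)$ as the sum over all strands in $\zz_e$ projecting to the same torus strand $p(\alpha)$.

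\emph{Step 2: the count for the divisor term.} In a $\ZZ^2$-period, the strands of $\zz_e$ cycle through the $|e|_\ZZ$ torus strands parallel to $e$. Each torus strand therefore accounts for a fraction $1/|e|_\ZZ$ of consecutive indices, up to $O(1)$. With $c_{\bm\beta^n}(e)=n|e|_\ZZ c(e)+O(1)$, the coefficient at angle $p(\alpha)$ is $-n c(e)\cdot(\text{sign})+O(1)$. The sign conventions are fixed by comparing with Lemma~\ref{lem:c_admissibility}, where the degree over $\zz_e$ is $-c_{\bm\beta^n}(e)+O(1)$. This gives the contribution $n\,c(e)+O(1)$ to $(-\bm D_{\bm\beta^n})_\alpha$.

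\emph{Step 3: the face term.} By \eqref{eq:dabel_intersection_version}, $\dabel(\f_0+(j_n,k_n))_\alpha$ is the signed intersection number of a path from $\f_0$ to the translated face with the strands parallel to $e$ that project to $p(\alpha)$. Strands of homology class $(a_e,b_e)$ cut the torus into parallel bands. A straight path with displacement $(j_n,k_n)$ therefore crosses the lifts of $p(\alpha)$ exactly $(j_n,k_n)\wedge(a_e,b_e)$ times up to $O(1)$, since each lift is a bounded distance from a straight line of direction $(a_e,b_e)$. Using the convention \eqref{eq:intersection_convention} and the fact that $\dabel$ subtracts for white-side crossings, this number is $\pm(a_e k_n-b_e j_n)+O(1)$. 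Dividing by $n$ gives $-\frac1n\dabel(\f_0+(j_n,k_n))_\alpha=-b_e x+a_e y+O(1/n)$. The overall sign is calibrated against \eqref{eq:slope_integral}, or by checking that it matches the residues of $d\log z$ and $d\log w$ in \eqref{eq:z_and_w}.

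Adding the two contributions gives the first equality, and the second follows from Definition~\ref{def:lambda_c}. The main obstacle is bookkeeping: keeping track of orientation signs in \eqref{eq:intersection_convention}, and identifying coefficients at angles with sums over periodic translates. I would handle both by verifying each sign on the Aztec diamond example of Section~\ref{sec:Aztec_diamond}.
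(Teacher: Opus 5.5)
Your proposal is correct and follows the paper's proof. Like you, the paper obtains $(\bm D_{\bm\beta^n})_\alpha=-c_{\bm\beta^n}(e(\alpha))/|e(\alpha)|_\ZZ+O(1)$ from Lemma~\ref{lem:c_admissibility}; your Step~2 spells out the equidistribution over the $|e|_\ZZ$ torus strands, which the paper leaves implicit. It then combines this with the translation rule $\dabel(\x+(j,k))=\dabel(\x)+\sum_{\alpha}(b_{e(\alpha)}j-a_{e(\alpha)}k)\alpha$, which is your Step~3.

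The sign in Step~3 needs no calibration. Crossing a strand from left to right adds $+\alpha$ to $\dabel$, and each torus strand parallel to $e$ has primitive class $(a_e,b_e)$. So the count is exactly $b_ej_n-a_ek_n$, with no $O(1)$ error.
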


\begin{proof}
Lemma~\ref{lem:c_admissibility} implies that
\[
(\bm D_{\bm\beta^n})_\alpha
=
-\frac{c_{\bm\beta^n}(e(\alpha))}{|e(\alpha)|_\ZZ}+O(1).
\]
On the other hand,
\[
\dabel(\x+(j,k))
=
\dabel(\x)+\sum_{\alpha\in\zz}\bigl(b_{e(\alpha)}j-a_{e(\alpha)}k\bigr)\alpha,
\]
so
\[
\Bigl(-\dabel(\f_0+(j_n,k_n))-\bm D_{\bm\beta^n}\Bigr)_\alpha
=
-b_{e(\alpha)}j_n+a_{e(\alpha)}k_n
+\frac{c_{\bm\beta^n}(e(\alpha))}{|e(\alpha)|_\ZZ}
+O(1).
\]
Using
\[
(j_n,k_n)=n(x,y)+O(1),
\qquad
c_{\bm\beta^n}(e)=n|e|_\ZZ c(e)+O(1),
\]
we obtain
\[
\frac1n\Bigl(-\dabel(\f_0+(j_n,k_n))-\bm D_{\bm\beta^n}\Bigr)_\alpha
=
-b_{e(\alpha)}x+a_{e(\alpha)}y+c(e(\alpha))
+O\left(\frac1n\right).
\]
\end{proof}

The previous lemma identifies the asymptotic exponent of the prime-form factor in the kernel~\eqref{eq:kernel}. More precisely, if \(\bl_n\) and \(\wh_n\) are vertices whose macroscopic location is \((x,y)\), then
\[
(\dabel(\bl_n)+\bm D_{\bm\beta^n})_\alpha
=
n \res_\alpha(\lambda_c(x,y))+O(1),
\qquad
(\dabel(\wh_n)+\bm D_{\bm\beta^n})_\alpha
=
n \res_\alpha(\lambda_c(x,y))+O(1),
\]
and therefore,
\begin{equation}\label{eq:prime_form_action_asymptotic}
\frac{
\prod_{\alpha\in\zz}E(\alpha,\eta)^{(\dabel(\wh_n)+\bm D_{\bm\beta^n})_\alpha}
}{
\prod_{\alpha\in\zz}E(\alpha,\zeta)^{(\dabel(\bl_n)+\bm D_{\bm\beta^n})_\alpha}
}
=
\exp\Bigl(
n\bigl(\Lambda_c(\eta;x,y)-\Lambda_c(\zeta;x,y)\bigr)
+O(1)\Bigr).
\end{equation}
In other words, \(\Lambda_c\) is the action governing the exponential part of the kernel.

\subsection{Phase regions and boundary strata of the limit shape}

The continuous analogue of Lemma~\ref{lem:sign_poles} is:

\begin{lemma} \label{lem:sign_poles_cts}
Let $e \in E(N)$ and let $R$ be a cell. Then, for all $\alpha \in \zz$ and $(x,y) \in R$,
    \[
      \sign(\res_{\alpha}(\lambda_c(x,y)))  =\sign_R(e(\alpha)).
    \]
    
\end{lemma}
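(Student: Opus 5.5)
The plan is a direct computation: by Definition~\ref{def:lambda_c}, for $\alpha\in\zz_e$ we have $\res_\alpha(\lambda_c(x,y))=-b_ex+a_ey+c(e)$, and this depends only on $e=e(\alpha)$. So the claim reduces to identifying the sign of the affine function
\[
L_e(x,y):=-b_ex+a_ey+c(e)
\]
with the side of the oriented line $\line_e$ on which $(x,y)$ lies.

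First, $\line_e=\{L_e=0\}$ by \eqref{eq:line_l_e}. Hence $L_e$ vanishes exactly when $R\subset\line_e$, which is the case $\sign_R(e)=0$.

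Second, I determine which side is positive. The gradient of $L_e$ is $(-b_e,a_e)$, which is $(a_e,b_e)$ rotated by $+\pi/2$. Since $\line_e$ is oriented along $\vec e$, i.e.\ along $(a_e,b_e)$, this gradient points to the left of $\line_e$. Therefore $L_e>0$ on the open half-plane to the left of $\line_e$ and $L_e<0$ on the half-plane to the right. This matches the definition of $\sign_R$ via \eqref{eq:local_sign}, with $\line_e$ in place of $\beta_e$.

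Third, a cell $R$ is by construction contained either in $\line_e$ or in one of its two open half-planes. So the sign of $L_e$ is constant on $R$ and equals $\sign_R(e)$ for every $(x,y)\in R$, which gives the statement for all $\alpha\in\zz_e$.

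The only step needing care is the orientation convention: $\line_e$ must be oriented by $\vec e=|e|_\ZZ(a_e,b_e)$, and "left" must match the convention used for strands in \eqref{eq:local_sign}. That convention was fixed in the definition of $\sign_R$ for cells, so I expect no real obstacle. As a sanity check against the discrete Lemma~\ref{lem:sign_poles}, one can verify via Lemma~\ref{lem:divisor_coefficients_asymptotic} that the discrete pole orders of $1/(g_\x\Psi_{\bm\beta})$ have the same sign as $L_e$.
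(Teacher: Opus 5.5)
Your proposal is correct and follows the paper's proof: the residue at $\alpha\in\zz_e$ is the affine function $-b_ex+a_ey+c(e)$ defining $\line_e$. Since its gradient $(-b_e,a_e)$ is the left normal to the orientation $(a_e,b_e)$, it is positive to the left of $\line_e$, negative to the right, and zero on the line. The sanity check via the discrete lemma is not needed.
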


\begin{proof}
By~Definition~\ref{def:lambda_c}, for every $\alpha\in\zz_e$ we have
\[
\res_\alpha(\lambda_c(x,y))=-b_{e(\alpha)} x+a_{e(\alpha)} y+c(e(\alpha)),
\]
which is the defining linear function of \(\line_{e(\alpha)}\). Since \(\line_e\) is oriented along \((a_{e(\alpha)},b_{e(\alpha)})\), this linear function is positive on the left-hand side of \(\line_{e(\alpha)}\), negative on the right-hand side, and zero on \(\line_{e(\alpha)}\). Therefore,
\[
\sign\bigl(\res_\alpha(\lambda_c(x,y))\bigr)=\sign_R({e(\alpha)})
\]
for all \((x,y)\in R\).
\end{proof}

Recall that we always count zeroes with multiplicity.

\begin{lemma}\label{lem:forced_zeroes_and_remaining}
Let \((x,y)\in \RR^2\) and let \(R\) be the cell containing \((x,y)\). Then: 
\begin{itemize}
   \item For each \(1\le j\le g\), the number of zeroes of \(\lambda_c(x,y)\) in \(A_j\) is even and at least two.
    \item For consecutive poles $\alpha,\beta \in A_0$ with \(e(\alpha)\neq
    e(\beta)\), \(\lambda_c(x,y)\) has an odd (resp. even) number of zeroes in \((\alpha,\beta)\subset A_0\) when the residues at \(\alpha\) and
    \(\beta\) have the same (resp. opposite) sign. In particular, there is at least one zero in \((\alpha,\beta)\) when its residues have {the same sign.} 
    \item For consecutive poles $\alpha,\beta \in A_0$ with $e(\alpha) = e(\beta)$ {and \(\res_\alpha(\lambda_c(x,y)) = \res_\beta(\lambda_c(x,y)) \neq 0\)}, \(\lambda_c(x,y)\) has an odd number of zeroes in $(\alpha,\beta) \subset A_0$. In particular, there is at least one zero.
\end{itemize}
The above accounts for all but possibly two zeroes of $\lambda_c(x,y)$. If \(R\subset \mathcal D_c^\circ\), there are exactly two remaining zeroes, while if \(R\subset \RR^2\setminus \mathcal D_c^\circ\), then all zeroes are accounted for.
\end{lemma}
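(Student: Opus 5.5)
The first three items follow by the real-valuedness argument of Lemma~\ref{lem:zeroes_of_omega_D}, applied to \(\lambda_c(x,y)\). By~\eqref{eq:action_zw} and Lemma~\ref{lem:third_kind_ind}, \(\lambda_c(x,y)\) is a sum of forms \(\omega_D\) with real divisors supported on \(A_0\), so it is imaginary-normalized and real on \(\curve(\RR)\). Two features of \(\lambda_c(x,y)\) need care. Its residues may vanish: at \(\alpha\) with \((x,y)\in\line_{e(\alpha)}\) the pole disappears. Also, a family \(\zz_e\) contributes several poles sharing the same residue. For \(A_j\), \(j\ge1\), I use \(\int_{A_j}\lambda_c=0\) together with real-valuedness. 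This gives at least two zeroes, and an even number of them. For an interval between consecutive genuine poles on \(A_0\), the sign near each endpoint is read off from the residue as before. That gives the parity statements in the second and third items. In the third item, equal nonzero residues force an odd count, hence at least one zero.

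Next comes the count. The degree of the canonical divisor gives \(\#\text{zeroes}=\#\text{poles}+2g-2\). Let \(P\) be the number of genuine poles, so \(P=\sum_e\#\{\alpha\in\zz_e\}\) over the edges \(e\) with \(\sign_R(e)\neq0\), working on the torus quotient since the angle function is periodic. I then count the forced zeroes. There are at least \(2g\) on the ovals \(A_1,\dots,A_g\). On \(A_0\), the poles are grouped into blocks \(\zz_e\), ordered cyclically as the edges of \(\partial N\), with the zero-residue edges removed. Inside a block all residues are equal and nonzero, which forces at least \(|\zz_e|-1\) zeroes. Between consecutive blocks \(e,e'\) of nonzero sign, a zero is forced exactly when the signs agree.

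By Lemma~\ref{lem:sign_poles_cts}, these signs are \(\sign_R\). The number of sign changes around \(\partial N\), where a run of zeros flanked by opposite signs counts as a single change, is given by Lemma~\ref{lem:sign_R_continuous}: four in \(\mathcal D_c^\circ\) and two outside. Suppose there are \(m\) nonzero blocks. Then the forced zeroes between blocks number \(m-\#\text{changes}\), because a run of zero-edges with equal flanking signs creates no change. I should double-check that the boundary interval convention matches this. The forced total is therefore \(2g+(P-m)+(m-s)=P+2g-s\), where \(s\) is the number of sign changes. Comparing with \(P+2g-2\) leaves \(s-2\) zeroes unaccounted for: two when \(s=4\) and none when \(s=2\). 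Parity constraints then make the forced counts exact in the outer case.

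The main obstacle is the degenerate cells, meaning lines, points, and in particular the points \(\point_v\) where several residues vanish. There I have to verify that Lemma~\ref{lem:sign_R_continuous}'s convention for sign changes along zero intervals matches the pole bookkeeping above. One case needs separate attention: when all residues vanish on a whole arc, the two flanking intervals merge. I would handle this by working directly with the cyclic sequence of nonzero-residue blocks and showing that it has exactly \(s\) sign alternations.
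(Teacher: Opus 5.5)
Your proposal is correct and follows the paper's argument. The paper also counts $2g+\sum_{e:\sign_R(e)\neq 0}|\zz_e|-\#\{\text{sign changes of }\sign_R\}$ forced zeroes via Lemma~\ref{lem:zeroes_of_omega_D} and Lemma~\ref{lem:sign_poles_cts}, compares this with the total of $2g-2$ plus the number of poles, and concludes with Lemma~\ref{lem:sign_R_continuous}. Your worry about degenerate cells goes away: under the paper's convention (a vertex change between opposite nonzero signs, or a maximal zero run flanked by opposite signs), the sign changes of $\sign_R$ are by definition exactly the alternations in the cyclic sequence of nonzero signs, so your block bookkeeping matches directly.
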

\begin{proof} 
By Lemma~\ref{lem:zeroes_of_omega_D}, together with
Lemma~\ref{lem:sign_poles_cts}, the zeroes forced by the above conditions account for
\[
2g+\sum_{e\in E(N):\sign_R(e)\neq 0} |\zz_e|
-\#\{\text{sign changes of }\sign_R\}
\]
zeroes of \(\lambda_c(x,y)\). Since \(\deg(\lambda_c(x,y))=2g-2\) and \(\lambda_c(x,y)\) has
\[
\sum_{e \in E(N):\sign_R(e)\neq 0} |\zz_e|
\]
simple poles, its total number of zeroes is
\[
2g+\sum_{e \in E(N):\sign_R(e)\neq 0} |\zz_e|-2.
\]
Hence, the number of zeroes of \(\lambda_c(x,y)\) not accounted for by the above
conditions is exactly
\[
\#\{\text{sign changes of $\sign_R$}\}-2.
\]
The final claim now follows from Lemma~\ref{lem:sign_R_continuous}, which gives four sign changes when \(R\subset \mathcal D_c^\circ\) and two sign changes when \(R\subset \RR^2\setminus \mathcal D_c^\circ\).
\end{proof}

\begin{lemma}
Let \((x,y)\in \mathcal D_c^\circ\). With the conclusion of Lemma~\ref{lem:forced_zeroes_and_remaining}, exactly one of the following holds:
\begin{enumerate}
    \item The two remaining zeroes form a \(\sigma\)-conjugate pair
    \[
\{\zeta,\sigma(\zeta)\},
\qquad
\zeta\in \curve_+^\circ,\ \sigma(\zeta)\in \curve_-^\circ.
\]
    \item The two remaining zeroes are real and lie in the same connected
    component of \[\curve(\RR)\setminus \{\text{poles of $\lambda_c(x,y)$}\}.\]
\end{enumerate}
\end{lemma}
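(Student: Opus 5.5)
The plan is to use the symmetry $\overline{\sigma^*\lambda_c(x,y)}=\lambda_c(x,y)$, which holds by Lemma~\ref{lem:third_kind_ind} via the representation~\eqref{eq:action_zw}: all three summands are of the form $\omega_D$ with real divisors supported on $A_0$. Two consequences follow. The zero divisor of $\lambda_c(x,y)$ is $\sigma$-invariant, and $\lambda_c(x,y)$ is real on $\curve(\RR)$. In particular, non-real zeroes come in $\sigma$-conjugate pairs $\{\zeta,\sigma(\zeta)\}$ with $\zeta\in\curve_+^\circ$. Since $\sigma$ swaps $\curve_+^\circ$ and $\curve_-^\circ$, exactly one member of each pair lies in $\curve_+^\circ$.

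Now take the two remaining zeroes from Lemma~\ref{lem:forced_zeroes_and_remaining}. The forced zeroes all lie on $\curve(\RR)$. The complete set of zeroes is $\sigma$-invariant, so the non-real zeroes are exactly the non-real members among the two remaining ones. If one remaining zero is non-real, its conjugate must also be a zero and must also be non-real, so it is the other remaining zero. This gives alternative~1. If neither is non-real, both lie on $\curve(\RR)$, and they cannot be poles. It remains to show that in this case they lie in the same component of $\curve(\RR)\setminus\{\text{poles}\}$, and that the two alternatives are mutually exclusive. Exclusivity is immediate, since a zero cannot be both real and non-real.

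The main obstacle is the same-component claim, which I would prove by a parity argument on each component. On an oval $A_j$, $j\ge1$, the total number of zeroes is even, because a real differential with no poles changes sign an even number of times (Lemma~\ref{lem:zeroes_of_omega_D}). The forced count on $A_j$ is two, or more precisely an even number that is at least two. So the leftover zeroes there number $0$ or $2$.

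On an interval $(\alpha,\beta)\subset A_0$ between consecutive poles, the parity of the number of zeroes is determined by the residue signs, again by Lemma~\ref{lem:zeroes_of_omega_D}. The forced count was chosen with exactly that parity, namely $1$ when the signs agree and $0$ when they are opposite. So every interval also contains an even number of leftover zeroes.

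Intervals whose endpoints carry zero residue need care. When a residue vanishes the pole disappears, and the relevant interval merges with its neighbours. For these I would rerun the parity count on the merged component, relying on the bookkeeping already done in Lemma~\ref{lem:forced_zeroes_and_remaining}: the forced zero count there was arranged to match parity on each component of $\curve(\RR)\setminus\{\text{poles}\}$, including quasi-frozen intervals where $e(\alpha)=e(\beta)$.

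With every component carrying an even number of leftover zeroes and only two leftovers in total, both real leftover zeroes must lie in a single component. This proves alternative~2.
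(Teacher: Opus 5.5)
Your proposal is correct and follows essentially the same route as the paper: $\sigma$-invariance of the zero divisor of $\lambda_c(x,y)$ (Lemma~\ref{lem:third_kind_ind}) gives the dichotomy. Since the forced count on every component of $\curve(\RR)\setminus\{\text{poles of }\lambda_c(x,y)\}$ already has the parity prescribed by Lemma~\ref{lem:zeroes_of_omega_D}, each component carries an even number of leftover zeroes, which forces both real leftovers into one component. Your extra care with components where some residues vanish only makes explicit what the paper's brief parity remark leaves implicit (and note that the forced count on each $A_j$ is exactly two; ``even and at least two'' describes the actual count, not the forced one).
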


\begin{proof}

By Lemma~\ref{lem:third_kind_ind}, the zero set of \(\lambda_c(x,y)\) is invariant under \(\sigma\). Hence, these two remaining zeroes either both lie on \(\curve(\RR)\) or else they form a \(\sigma\)-conjugate pair.

Assume they lie on \(\curve(\RR)\). By Lemma~\ref{lem:forced_zeroes_and_remaining}, on each connected
component of $\curve(\RR)\setminus \{\text{poles of $\lambda_c(x,y)$}\}$, the number of zeroes of
\(\lambda_c(x,y)\) has a prescribed parity, and the parity of the number of zeroes already forced is correct, so the two remaining zeroes must lie in the same connected component of \(\curve(\RR)\setminus \{\text{poles of $\lambda_c(x,y)$}\}\).
\end{proof}

We now give a partial classification of regions in $\mathcal D_c^\circ$ according to the locations of these two remaining zeroes. For some of these regions, we also define a map $\Omega$ to $\curve_+$. The reader is encouraged to look ahead to Section~\ref{sec:tangent_arctic_curve} where we give a geometric reformulation.


\begin{definition} \label{def:regions_in_limit_shape}

We say that \((x,y)\in \mathcal D_c^\circ\):

\begin{enumerate}
\item lies in the \emph{rough region} \(\liq\) if \(\lambda_c(x,y)\) has a
\(\sigma\)-conjugate pair of zeroes. In this case, there is a unique zero
\(\zeta\in \curve_+^\circ\), and we define
\[
\Omega:\liq \to \curve_+^\circ,
\qquad
\Omega(x,y):=\zeta.
\]

\item lies in the \emph{frozen region} \(\mathcal F_{(\alpha,\beta)}\), where
\((\alpha,\beta)\subset A_0\) is the open interval between two consecutive
angles \(\alpha,\beta\) with \(e(\alpha)\neq e(\beta)\), such that
\(\lambda_c(x,y)\) has exactly two simple zeroes in \((\alpha,\beta)\) when
the vertex \(e(\alpha)\cap e(\beta)\in V(N)\) is convex, and {two or} three
simple zeroes when it is concave.

\item lies in the \emph{quasi-frozen region} \(\mathcal Q_{(\alpha,\beta)}\),
where \((\alpha,\beta)\subset A_0\) is the open interval between two consecutive
angles \(\alpha,\beta\) with \(e(\alpha)=e(\beta)\), if \(\lambda_c(x,y)\) has
three simple zeroes in \((\alpha,\beta)\).

\item lies in the \emph{smooth region} \(\mathcal G_j\), for \(1\le j\le g\), if
\(\lambda_c(x,y)\) has four simple zeroes in \(A_j\).

\item lies on the \emph{frozen-rough boundary}
\(\mathscr{A}_{(\alpha,\beta)}\), where
\((\alpha,\beta)\subset A_0\) is the open interval between two consecutive
angles \(\alpha,\beta\) with \(e(\alpha)\neq e(\beta)\), if \(\lambda_c(x,y)\)
has a zero \(\zeta\) of order two or three in \((\alpha,\beta)\). In this case,
\(\zeta\) is unique, and we define
\[
\Omega:\mathscr{A}_{(\alpha,\beta)}\to (\alpha,\beta),
\qquad
\Omega(x,y):=\zeta.
\]
If \(\zeta\) is a zero of order three, which can occur only when the vertex
\(e(\alpha)\cap e(\beta)\in V(N)\) is concave, we call \((x,y)\) a
\emph{frozen cusp}.

\item lies on the \emph{quasi-frozen-rough boundary}
\(\mathscr{A}_{(\alpha,\beta)}\), where
\((\alpha,\beta)\subset A_0\) is the open interval between two consecutive
angles \(\alpha,\beta\) with \(e(\alpha)=e(\beta)\), if \(\lambda_c(x,y)\) has
a zero \(\zeta\) of order two or three in \((\alpha,\beta)\). In this case,
\(\zeta\) is unique, and we define
\[
\Omega:\mathscr{A}_{(\alpha,\beta)}\to (\alpha,\beta),
\qquad
\Omega(x,y):=\zeta.
\]
If \(\zeta\) is a zero of order three, we call \((x,y)\) a
\emph{quasi-frozen cusp}.

\item lies on the \emph{smooth-rough boundary} \(\mathscr{A}_{A_j}\), for
\(1\le j\le g\), if \(\lambda_c(x,y)\) has a total of four zeroes in $A_j$ (counted with multiplicity), one of which is a zero \(\zeta\) of order two or three. In this case, \(\zeta\) is unique: if \(\lambda_c(x,y)\) had
two double zeroes on \(A_j\), then it would have constant sign on \(A_j\),
forcing
\[
\int_{A_j}\lambda_c(x,y)\neq 0,
\]
contradicting imaginary-normalization. We therefore define
\[
\Omega:\mathscr{A}_{A_j}\to A_j,
\qquad
\Omega(x,y):=\zeta.
\]
If \(\zeta\) is a zero of order three, we call \((x,y)\) a
\emph{smooth cusp}; an order four zero is excluded by the same argument that rules out two double zeroes.
\end{enumerate}

\end{definition}

\begin{remark}
The above list is only a partial classification of the various singularities that can occur and does not in general exhaust $\mathcal D_c^\circ$; for example, it does not include the frozen--frozen boundary in Figure~\ref{fig:tropical_astroidal_arctic_curve}. We do not attempt a complete classification of all possibilities in this paper.
\end{remark}

We set
\[
\mathscr{A}_{A_0\setminus\zz}
:=
\bigsqcup_{(\alpha,\beta)\subset A_0}\mathscr{A}_{(\alpha,\beta)},
\]
where the union ranges over all open intervals \((\alpha,\beta)\) between
consecutive angles on \(A_0\). We also set
\[
\mathscr{A}^\circ
:=
\left(\bigsqcup_{j=1}^g \mathscr{A}_{A_j}\right)
\sqcup
\mathscr{A}_{A_0\setminus\zz}.
\]

The next few subsections describe the geometry of the map \(\Omega\).
We first show in Section~\ref{sec:rough_region} that \(\Omega\) is a diffeomorphism from the rough region \(\liq\) onto \(\curve_+^\circ\). We then extend the inverse map $\Omega^{-1}$ to
\(\curve(\RR)\setminus \zz\) in Section~\ref{sec:extension_bdy}. Finally, in Section~\ref{sec:turning_points}, we extend $\Omega^{-1}$ to the points of $\zz$, whose images will be denoted by $\mathscr{A}_{\zz}$; these points are called \emph{turning points}. Once $\mathscr{A}_{\zz}$ has been defined, we define the \emph{arctic curve}
\[
\mathscr{A}:=\mathscr{A}^\circ\sqcup \mathscr{A}_{\zz}.
\]

We then show in Proposition~\ref{prop:omega_inverse} that these constructions glue together to give a smooth map
\[
\Omega^{-1}:\Sigma_+\to \overline{\liq}.
\]
In particular, we will prove that the arctic curve is precisely the boundary of the rough region, \emph{i.e.}
\[
\mathscr{A} = \partial \liq,
\]
and moreover that the restriction
\[
\Omega^{-1}\big|_{\curve(\RR)}:\curve(\RR)\to \mathscr{A}
\]
is a smooth parametrization of $\mathscr{A}$. {By a smooth parameterization, we mean that the components of $\Omega^{-1}$ are smooth functions on $\curve(\RR)$. The image $\mathscr{A}$ need not be smooth;
indeed, it typically has cusps at the critical values of $\Omega^{-1}$.}

\begin{remark}
{By general theory~\cite{ADPZ20}, the arctic curve is algebraic. The characterization of the arctic curve via the zeros of \(\lambda_c\) fits naturally into the framework developed in~\cite{Pio24}. It may provide an alternative proof that the arctic curve is algebraic while simultaneously determining the degree of the curve. We will not pursue this argument here.}
\end{remark}

\subsection{The rough region and the interior parametrization} \label{sec:rough_region}

Recall from~\eqref{eq:z_and_w} the definition of the functions $z(\zeta)$ and $w(\zeta)$. While these functions may be multi-valued, the logarithmic differentials $d\log z$ and $d\log w$ are nevertheless well-defined meromorphic 1-forms.

\begin{lemma}\label{lem:zeroes_xdlogzydlogw}
Let \((x,y)\in\mathbb R^2\setminus\{(0,0)\}\). All zeroes of the
\(1\)-form
\[
    x d\log z+y d\log w
\]
lie in {\(\Sigma(\mathbb R)\setminus \zz.\)} Moreover, all zeroes that lie in the oval \(A_0\) are simple.
\end{lemma}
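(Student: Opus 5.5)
The argument is a zero count. We locate at least $2g-2+\#\{\text{poles}\}$ zeroes on $\curve(\RR)\setminus\zz$ and compare with the total number of zeroes forced by the degree.

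Set $\lambda:=x\,d\log z+y\,d\log w$. By \eqref{eq:z_and_w} and Lemma~\ref{lem:third_kind_ind}, $\lambda$ equals $\omega_D$ for the real degree-zero divisor
\[
D=\sum_{\alpha\in\zz_\torus}\bigl(-b_{e(\alpha)}x+a_{e(\alpha)}y\bigr)\alpha
\]
supported on $A_0$. Hence $\lambda$ is imaginary-normalized and real on $\curve(\RR)$.

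The residue at $\alpha$ is the linear form $\ell_e(x,y):=-b_ex+a_ey$ with $e=e(\alpha)$. Since the primitive vectors $(a_e,b_e)$ are the directions of the sides of the convex polygon $N$, the sign of $\ell_e(x,y)=(x,y)\wedge(a_e,b_e)$ has the following behaviour as $e$ runs counterclockwise around $\partial N$. It is nonnegative on one cyclic block of sides and nonpositive on the complementary block. It vanishes on at most two sides, namely those parallel to $(x,y)$, and these occur only at the two switches. So, after deleting the angles with zero residue, the nonzero residues change sign exactly twice around $A_0$. Consecutive angles within the same $\zz_e$ carry equal residues.

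Let $P$ be the number of actual poles, that is, angles $\alpha$ with $\ell_{e(\alpha)}(x,y)\ne0$. Then $P\ge 2$, because $(x,y)\ne0$ and at least two sides of $N$ are not parallel to $(x,y)$. The total number of zeroes of $\lambda$ is $2g-2+P$. We apply Lemma~\ref{lem:zeroes_of_omega_D} to $D$ with its zero coefficients removed.
\begin{itemize}
\item Each $A_j$ with $j\ge1$ contains at least $2$ zeroes, which gives $2g$ in total.
\item On $A_0$ there are $P$ arcs between consecutive poles. Exactly two of them have endpoints with residues of opposite sign. Each of the remaining $P-2$ arcs contains an odd number, hence at least one, of zeroes.
\end{itemize}
This already accounts for $2g+P-2$ zeroes, which is the total. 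Therefore the lower bounds are all attained with equality. In particular:
\begin{itemize}
\item there are no zeroes off $\curve(\RR)$;
\item each of the $P-2$ same-sign arcs of $A_0$ contains exactly one zero, which is therefore simple;
\item the two opposite-sign arcs contain none.
\end{itemize}
The angles with zero residue are not poles, so a zero could a priori sit at such an angle. Such points lie in the interior of a pole-free arc, so they must be handled separately below.

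Both claims of the statement follow from this count, provided we also exclude zeroes that lie exactly at angles in $\zz$. Those angles fall into two kinds.
\begin{itemize}
\item At poles, $\lambda$ has simple poles, so there is nothing to check.
\item At zero-residue angles, i.e.\ angles in $\zz_e$ with $e\parallel(x,y)$, the form $\lambda$ is holomorphic and could a priori vanish.
\end{itemize}
These zero-residue angles lie in the arcs between the two sign switches, which are exactly the two opposite-sign arcs. We showed above that those arcs contain no zeroes, so $\lambda$ does not vanish at any angle of $\zz$. This gives zeroes in $\curve(\RR)\setminus\zz$, and simplicity on $A_0$.

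The main obstacle is the bookkeeping of the sign-switch structure when a side of $N$ is parallel to $(x,y)$. One must check that the zero-residue angles sit precisely inside the two opposite-sign arcs, including the case where both sides parallel to $(x,y)$ exist. Convexity of $N$ makes this straightforward.
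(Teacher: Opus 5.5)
Your proposal is correct and follows the paper's own argument: convexity of $N$ gives exactly two sign changes of the residues, the forced zeroes from Lemma~\ref{lem:zeroes_of_omega_D} then exhaust all $2g-2+P$ zeroes, and the zero-residue angles are excluded because convexity places them inside the two opposite-sign arcs, which contain no zeroes. The paper leaves that degree count implicit, whereas you write it out. One small bookkeeping point: $P$ should count distinct pole points on $A_0$, since parallel strands may share an angle, but this does not affect the argument.
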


\begin{proof}
The $1$-form has residues 
\[
\res_\alpha(x d\log z +y d \log w) = - b_{e(\alpha)} x +  a_{e(\alpha)} y = \frac{1}{|{e(\alpha)}|_\ZZ} \langle {e(\alpha)} , (y,-x) \rangle. 
\]
Since $N$ is convex, the residues have exactly two sign changes, and therefore, Lemma~\ref{lem:zeroes_of_omega_D} accounts for all of its zeroes. {In particular, there are no zeroes in any interval of} 
\begin{equation}\label{eq:intervals}
A_0 \setminus \{ \alpha \in \zz: -b_{e(\alpha)}x+a_{e(\alpha)} y \neq 0\}
\end{equation}
{
at whose endpoints the residues of $x d\log z +y d \log w$ have opposite signs. It remains only to exclude zeroes at angles \(\alpha\in\bm\alpha\) where the residue vanishes. Such an angle satisfies
\[
    -b_{e(\alpha)}x+a_{e(\alpha)}y=0,
\]
so the edge \(e(\alpha)\) is parallel to \((x,y)\). By convexity of \(N\), $\alpha$ lies in an interval of~\eqref{eq:intervals} at whose endpoints the residues have opposite sign, and these intervals contain no zeroes as shown above.
}
\end{proof}

Given a local coordinate \(\zeta\) on \(\curve^\circ_+\), write
\begin{equation} \label{eq:3_forms}
d\log z = f_1(\zeta) d\zeta,\qquad
d\log w = f_2(\zeta) d\zeta,\qquad
\omega_{\bm{D}_c}
= f_3(\zeta) d\zeta.
\end{equation}

\begin{proposition}\label{prop:diff}
The map $\Omega:\liq \rightarrow \curve_+^\circ$ is a diffeomorphism with inverse 
$\Omega^{-1}: \curve_+^\circ \rightarrow \liq$ given, for $\zeta\in \curve_+^\circ$, by
\begin{equation}\label{eq:imff}
x=
\frac{\Im(\overline{f_2}f_3)}{\Im(\overline{f_1}f_2)},
\qquad
y=
\frac{\Im(f_1\overline{f_3})}{\Im(\overline{f_1}f_2)},
\end{equation}
where all functions are evaluated at $\zeta$.
\end{proposition}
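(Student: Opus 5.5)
The plan is to derive the formula for $\Omega^{-1}$ from the zero condition first, and only afterwards prove bijectivity and smoothness. By \eqref{eq:action_zw}, in a local coordinate on $\curve_+^\circ$ we have $\lambda_c(x,y)=(xf_1+yf_2+f_3)\,d\zeta$. Hence $\zeta\in\curve_+^\circ$ is a zero of $\lambda_c(x,y)$ if and only if
\[
x f_1(\zeta)+y f_2(\zeta)+f_3(\zeta)=0 .
\]
Since $x,y$ are real, this complex equation is a pair of real linear equations in $(x,y)$. Its determinant is proportional to $\Im(\overline{f_1}f_2)$. Whenever this determinant is nonzero, Cramer's rule gives exactly \eqref{eq:imff}, a smooth (real-analytic) function of $\zeta$. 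The cross-ratio-type invariance under change of coordinate shows that \eqref{eq:imff} is independent of the local coordinate, since all $f_i$ are multiplied by the same factor.

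\textbf{Nondegeneracy.} I need $\Im(\overline{f_1}f_2)\neq 0$ on $\curve_+^\circ$. Suppose it vanishes at $\zeta$. Then $f_1(\zeta),f_2(\zeta)$ are real-linearly dependent: there is $(x_0,y_0)\neq(0,0)$ real with $x_0f_1+y_0f_2=0$ at $\zeta$. (If $f_1=f_2=0$, any $(x_0,y_0)$ works.) This says $\zeta$ is a zero of $x_0\,d\log z+y_0\,d\log w$, contradicting Lemma~\ref{lem:zeroes_xdlogzydlogw}, which puts all zeroes on $\curve(\RR)$.

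\textbf{Injectivity and surjectivity.} By definition $\Omega(x,y)$ is the unique zero of $\lambda_c(x,y)$ in $\curve_+^\circ$. The above shows $(x,y)$ is determined by $\zeta$, so $\Omega$ is injective and $\Omega^{-1}$ is given by \eqref{eq:imff} on $\Omega(\liq)$. Conversely, for any $\zeta\in\curve_+^\circ$, define $(x,y)$ by \eqref{eq:imff}. Then $\lambda_c(x,y)$ vanishes at $\zeta$, and by $\sigma$-symmetry (Lemma~\ref{lem:third_kind_ind}) also at $\sigma(\zeta)$. It remains to check $(x,y)\in\mathcal D_c^\circ$, so that the point is in $\liq$. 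By Lemma~\ref{lem:forced_zeroes_and_remaining}, if $(x,y)$ lay in a cell outside $\mathcal D_c^\circ$, all zeroes would be real forced ones, which is a contradiction. The delicate case is $(x,y)$ on a line $\line_e$ (on $\partial\mathcal D_c$ or elsewhere), where some residues vanish and the pole count drops. Here I would redo the counting of Lemma~\ref{lem:forced_zeroes_and_remaining} with the vanished poles removed, using the sign-change convention along intervals for cells. On the boundary this should leave no non-real zeroes, since a boundary cell has the sign pattern of an outer cell. I expect this to be the main obstacle: making the degenerate count rigorous, including possible simultaneous vanishing of residues on several lines.

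\textbf{Smoothness of $\Omega$.} Having a smooth bijection $\Omega^{-1}:\curve_+^\circ\to\liq$, I will show its Jacobian is invertible. Differentiate $F(x,y,\zeta)=xf_1+yf_2+f_3=0$. At a simple zero, $\partial_\zeta F\neq0$, and the implicit function theorem gives $\zeta$ as a smooth function of $(x,y)$. The zero is simple because $\lambda_c(x,y)$ has exactly two non-forced zeroes, namely $\zeta$ and $\sigma(\zeta)\neq\zeta$, so $\zeta$ is not a double zero. Thus $\Omega$ is smooth with smooth inverse. This also shows $\liq$ is open, being the preimage under $\Omega^{-1}$ of an open set, which is consistent with the definition.
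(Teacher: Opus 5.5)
Your proposal is correct and follows the paper's proof essentially step for step: the same real $2\times 2$ system with determinant $\Im(\overline{f_1}f_2)$, nondegeneracy via Lemma~\ref{lem:zeroes_xdlogzydlogw}, membership in $\mathcal D_c^\circ$ via the zero count, and local invertibility from the simplicity of the zero. The paper computes $\det\mathscr{J}\Omega$ explicitly by implicit differentiation, which is your implicit function theorem argument written out. The ``delicate case'' you flag needs no new work, because Lemma~\ref{lem:forced_zeroes_and_remaining} is already stated for the possibly lower-dimensional cell containing $(x,y)$, with sign changes counted along intervals. Lemma~\ref{lem:sign_R_continuous} then gives exactly two sign changes for every cell outside $\mathcal D_c^\circ$, including cells on the lines $\line_e$ and on $\partial\mathcal D_c$.
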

\begin{proof}
Let $\zeta \in \curve_+^\circ$. The vanishing of $\lambda_c(x,y)$ at $\zeta$ is equivalent to the system of linear equations
\begin{equation} \label{eq:linear_system_1}
\begin{bmatrix}
 \Re f_1(\zeta)  & \Re f_2(\zeta)   \\
  \Im f_1(\zeta) & \Im f_2(\zeta)   
\end{bmatrix}\begin{bmatrix}x \\ y
\end{bmatrix} = -\begin{bmatrix} \Re f_3(\zeta)\\\Im f_3(\zeta)
\end{bmatrix}.
\end{equation}
The system has a unique solution $(x,y) \in \RR^2$ provided the determinant
\begin{equation} \label{eq:matrix}
    \det \begin{bmatrix}
 \Re f_1(\zeta)  & \Re f_2(\zeta)   \\
  \Im f_1(\zeta) & \Im f_2(\zeta)   
\end{bmatrix} = \Im (\overline{f_1(\zeta)} f_2(\zeta))  \neq 0.
\end{equation}

If this determinant vanishes, then 
\[
r:=\frac{f_1(\zeta)}{f_2(\zeta)}  \in \RR.
\]
By Lemma~\ref{lem:zeroes_xdlogzydlogw}, the $1$-form 
\[
d\log z - r d \log w
\]
cannot vanish at $\zeta$, a contradiction. 

Thus, the determinant is nonzero, so for every $\zeta\in \curve_+^\circ$
the system~\eqref{eq:linear_system_1} has a unique solution $(x,y)\in\mathbb R^2$, which is given by~\eqref{eq:imff}. 

We now show that $(x,y) \in \mathcal D_c^\circ$, and therefore, $(x,y) \in \liq$. If $(x,y) \notin \mathcal D_c^\circ$, then there are only two sign changes in the residues of $\lambda_c(x,y)$ and so Lemma~\ref{lem:forced_zeroes_and_remaining} accounts for all the zeroes which must therefore all be in $\curve(\RR)$, contradicting that \(\lambda_c(x,y)\) has a
\(\sigma\)-conjugate pair of zeroes. Thus, $(x,y) \in \mathcal D_c^\circ$.

Hence, we obtain a map
\[
\Omega^{-1}:\curve_+^\circ\to\liq
\]
sending $\zeta$ to the unique solution~\eqref{eq:imff}. By construction, $\Omega^{-1}$ is a right inverse to $\Omega$. Moreover, uniqueness of the solution to~\eqref{eq:linear_system_1} implies that $\Omega$ is injective, so $\Omega$ is a bijection with inverse $\Omega^{-1}$.

Let 
\begin{equation}\label{eq:Fxyzeta}
    F(x,y;\zeta) := \frac{\lambda_c(x,y)}{d\zeta} =x f_1(\zeta)+yf_2(\zeta)+f_3(\zeta).
\end{equation}
Let $\zeta = u+\ii v$ and let 
\[
G(x,y;u,v):= \begin{bmatrix} \Re F(x,y;u+\ii v)\\\Im F(x,y;u+\ii v)
\end{bmatrix}.
\]
Implicitly differentiating $G(x,y;\Omega(x,y))=0$, we obtain
\[
\mathscr{J}_{(x,y)} G + \mathscr{J}_{(u,v)}G \mathscr{J} \Omega =0,
\]
where \(\mathscr{J}\) denotes the Jacobian matrix. Hence,
\[
\mathscr{J} \Omega = - (\mathscr{J}_{(u,v)}G)^{-1}\mathscr{J}_{(x,y)} G,
\]
and, therefore,
\[
\det \mathscr{J}\Omega = \frac{\det \mathscr{J}_{(x,y)}G} { \det \mathscr{J}_{(u,v)} G}.
\]
The numerator is~\eqref{eq:matrix} and the denominator is \[\left|\frac{d F(x,y;\zeta)}{d \zeta}\right|^2\] since \(F\) is holomorphic in \(\zeta\). Therefore,
\begin{equation}\label{eq:jacobian_Omega}
\det \mathscr{J}\Omega
=
\frac{\Im(\overline{f_1}f_2)}
{|x f_1'+y f_2'+f_3'|^2}.
\end{equation}
We have already seen that \(\Im(\overline{f_1}f_2)\neq 0\), and the denominator is nonzero because \(\zeta\) is a simple zero of \(\lambda_c(x,y)\). Therefore, \(\Omega\) is a local diffeomorphism. Since it is a bijection, it is a diffeomorphism.
\end{proof}

\subsection{Boundary parametrization away from angles} \label{sec:extension_bdy}

For $\zeta_0\in \curve(\RR)\setminus \zz$, choose a local coordinate $\zeta = u+\ii v$ centered at $\zeta_0$ such that $\sigma(\zeta)=\overline{\zeta}$ and such that \(\Sigma_+\) is locally given by \(v\ge 0\). By Lemma~\ref{lem:third_kind_ind}, the \(1\)-forms in~\eqref{eq:3_forms} are \(\sigma\)-real, so \(f_j(\bar\zeta)=\overline{f_j(\zeta)},\) and hence, \[f_j(u)\in\RR\] for \(v=0\).

\begin{proposition}\label{prop:bdy_diffeomorphisms}
The map
\(
\Omega:\mathscr{A}^\circ \to \curve(\RR)\setminus \zz
\)
is a homeomorphism with inverse 
\(
\Omega^{-1}:\curve(\RR)\setminus \zz \to \mathscr{A}^\circ
\)
given, for $\zeta=u\in \curve(\RR)\setminus\zz$, by
\begin{equation}\label{eq:fsol}
    x=
\frac{f_2f_3'-f_2'f_3}{f_1f_2'-f_1'f_2},
\qquad
y=
\frac{f_1'f_3-f_1f_3'}{f_1f_2'-f_1'f_2},
\end{equation}
where all functions are evaluated at $u$.
\end{proposition}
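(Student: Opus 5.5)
The plan mirrors the proof of Proposition~\ref{prop:diff}, with the transversality condition ``$\Im(\overline{f_1}f_2)\neq 0$'' replaced by a condition on the Wronskian-type determinant $W(u):=f_1f_2'-f_1'f_2$. Fix $\zeta_0=u\in\curve(\RR)\setminus\zz$ and work in the real local coordinate above, in which $f_1,f_2,f_3$ are real on $v=0$.

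\textbf{Step 1: the linear system.} A point $(x,y)\in\mathcal D_c^\circ$ lies in $\mathscr A^\circ$ with $\Omega(x,y)=u$ exactly when $\lambda_c(x,y)$ has a zero of order at least two at $u$. This amounts to the two real linear equations
\[
x f_1+y f_2+f_3=0,\qquad x f_1'+yf_2'+f_3'=0
\]
at $u$. If $W(u)\neq0$, the solution is unique and is given by Cramer's rule, which is exactly~\eqref{eq:fsol}.

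\textbf{Step 2: $W(u)\neq 0$.} Suppose $W(u)=0$. Then the real vectors $(f_1,f_1')$ and $(f_2,f_2')$ at $u$ are linearly dependent, so some nonzero real pair $(p,q)$ satisfies
\[
p\,f_1+q\,f_2=0=p\,f_1'+q\,f_2'.
\]
Hence $p\,d\log z+q\,d\log w$ has a zero of order at least two at $u$. This contradicts Lemma~\ref{lem:zeroes_xdlogzydlogw} directly when $u\in A_0$. For $u\in A_j$, I need a sharper count. The residues of $p\,d\log z+q\,d\log w$ have exactly two sign changes, so the zero count in the proof of Lemma~\ref{lem:zeroes_xdlogzydlogw} (via Lemma~\ref{lem:zeroes_of_omega_D}) shows each $A_j$ carries exactly two zeroes. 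Because the form integrates to zero over $A_j$, it must change sign there, so these two zeroes are simple. Either way, no double zero is possible.

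\textbf{Step 3: the solution lies in $\mathscr A^\circ$.} I must show that the solution $(x,y)$ is in $\mathcal D_c^\circ$ and that the double (or triple) zero at $u$ has the type required by Definition~\ref{def:regions_in_limit_shape}. If $(x,y)\notin\mathcal D_c^\circ$, then Lemma~\ref{lem:forced_zeroes_and_remaining} accounts for every zero, with prescribed parities and minimal counts. I would then argue that a double zero cannot be accommodated. This is the delicate point, because parity alone allows extra pairs, so an exact count is needed. I would handle it the same way as in Step 2: outside $\mathcal D_c^\circ$ there are exactly two sign changes, which forces exactly the minimal number of zeroes on each component, and those zeroes are simple.

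The same counting, applied inside $\mathcal D_c^\circ$, shows that the two remaining zeroes sit at $u$. This gives the correct region type, with order at most three, and also the uniqueness of $\zeta$ used in defining $\Omega$. One subtlety: on a cell boundary some residues vanish. Lemma~\ref{lem:sign_R_continuous} handles this through its sign-change-along-an-interval convention, and I would check the zero count in that case separately.

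\textbf{Step 4: homeomorphism.} Steps 1--3 give a well-defined two-sided inverse of $\Omega$ on $\mathscr A^\circ$. Formula~\eqref{eq:fsol} is smooth in $u$ since $W\neq0$, so $\Omega^{-1}$ is continuous. For continuity of $\Omega$, I would use that a double zero of a family of real forms depending continuously on $(x,y)$ moves continuously, locally by Hurwitz/Rouché. The uniqueness of the double zero rules out jumps between components.

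I expect Step 3 to be the main obstacle, namely the exact zero counting outside $\mathcal D_c^\circ$ and on cell boundaries.
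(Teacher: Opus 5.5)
Your proposal is correct and follows the paper's proof. The paper uses the same $2\times 2$ real linear system solved by Cramer's rule, gets invertibility from Lemma~\ref{lem:zeroes_xdlogzydlogw}, and uses the fact that for $(x,y)\notin\mathcal D_c^\circ$ the two sign changes make Lemma~\ref{lem:forced_zeroes_and_remaining} account for all zeroes, which rules out a zero of order at least two at $u$ (on $A_0$ by simplicity, on $A_j$ by the vanishing $A_j$-period).

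Your additional care supplies details the paper leaves implicit: the simplicity argument on $A_j$ in Step~2, the check that the zero has order at most three and the right type, and the Hurwitz argument for continuity of $\Omega$. In that last argument you should also rule out the double zero drifting into an angle; near an angle whose limiting residue vanishes, the nearby zero is simple.
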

\begin{proof}
The condition \(\Omega(x,y)=u\) means that \(\lambda_c(x,y)\) vanishes at \(u\) to order at least two, which is equivalent to the linear system
\begin{equation}\label{eq:linear_system_2}
\begin{bmatrix}
f_1(u) & f_2(u)\\
f_1'(u) & f_2'(u)
\end{bmatrix}
\begin{bmatrix}
x\\ y
\end{bmatrix}
=
-\begin{bmatrix}
f_3(u)\\ f_3'(u)
\end{bmatrix}.
\end{equation}

We claim that the matrix is invertible. Indeed, if its determinant
were zero, there would exist a nonzero vector \((a,b)\in \RR^2\) such that
\[
\begin{bmatrix}
f_1(u) & f_2(u)\\
f_1'(u) & f_2'(u)
\end{bmatrix}
\begin{bmatrix}
a\\ b
\end{bmatrix}
=0.
\]
Equivalently, the \(1\)-form
\[
a d\log z+b d\log w
\]
would vanish at \(u\) to order at least two, contradicting
Lemma~\ref{lem:zeroes_xdlogzydlogw}. Thus, the matrix in
\eqref{eq:linear_system_2} is invertible, and the system has a unique solution which is given by~\eqref{eq:fsol}.

If \((x(u),y(u)) \notin \mathcal D_c^\circ\), then there are only two sign changes in the residues of \(\lambda_c(x(u),y(u))\), and so Lemma~\ref{lem:forced_zeroes_and_remaining} accounts for all the zeroes. Hence:
\begin{itemize}
    \item if $u \in A_0$, then $u$ must be a simple zero, contradicting that it has order at least two.
    \item if $u \in A_j$ and is a zero of order two, then since there are no other zeroes in $A_j$, $\lambda_c(x,y)$ would have constant sign on \(A_j\), forcing
\[
\int_{A_j}\lambda_c(x,y)\neq 0,
\]
contradicting imaginary-normalization.
\end{itemize} 
Thus, \((x(u),y(u)) \in \mathcal D_c^\circ\), and hence, \((x(u),y(u)) \in \mathscr{A}^\circ\).

Hence, we obtain a map
\[
\Omega^{-1}:\curve(\RR)\setminus \zz\rightarrow
\mathscr{A}^\circ
\]
sending \(u\) to this unique solution. By construction, $\Omega^{-1}$ is a right inverse to $\Omega$. Moreover, uniqueness of the solution to \eqref{eq:linear_system_2} implies that \(\Omega\) is injective. Therefore, \(\Omega\) is a homeomorphism with inverse \(\Omega^{-1}\).
\end{proof}

\subsection{Turning points}\label{sec:turning_points}

\begin{definition}\label{def:turning_point}
    We say that $(x,y) \in \domain$ is a \emph{turning point corresponding to $\alpha \in \zz$} if $\lambda_c(x,y)$ has a zero at $\alpha$. We denote by $\mathscr{A}_{\zz}$ the set of turning points.
\end{definition}

 
For \(\alpha\in\zz\), choose a local coordinate \(\zeta\) centered at \(\alpha\) such that $\sigma(\zeta)=\overline{\zeta}$, and
write
\[
f_j(\zeta)=\frac{r_j}{\zeta}+s_j+O(|\zeta|),
\qquad r_j,s_j\in\RR,
\qquad j=1,2,3,
\]
where, by definitions~\eqref{eq:z_and_w} of $z$ and $w$ and~\eqref{eq:div_c} of $\bm{D}_c$,  
\begin{equation} \label{eq:r_explicit}
    r_1 = -b_{e(\alpha)}, \qquad r_2 = a_{e(\alpha)}, \qquad r_3 = c({e(\alpha)}).
\end{equation}
We then define 
\[
\Omega^{-1}:\zz\to \mathscr{A}_{\zz}
\]
by sending \(\alpha\) to 
\begin{equation}\label{eq:turning_point_formula}
x=\frac{r_2s_3-s_2r_3}{r_1s_2-s_1r_2},
\qquad
y=\frac{s_1r_3-r_1s_3}{r_1s_2-s_1r_2}.
\end{equation}

\begin{proposition}\label{prop:turning_points_unique}
For each \(\alpha\in\zz\), the point \((x,y) = \Omega^{-1}(\alpha)\) is the unique turning point corresponding to $\alpha$, and it lies on \(\line_e\). 
\end{proposition}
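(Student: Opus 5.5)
We work in the local coordinate \(\zeta\) centered at \(\alpha\) with \(\sigma(\zeta)=\overline\zeta\), and write \(F(x,y;\zeta)=xf_1+yf_2+f_3\) as in~\eqref{eq:Fxyzeta}. Expanding,
\[
F(x,y;\zeta)=\frac{r_1x+r_2y+r_3}{\zeta}+(s_1x+s_2y+s_3)+O(|\zeta|).
\]
The plan is to show that \(\lambda_c(x,y)\) vanishes at \(\alpha\) exactly when both coefficients above vanish. The first coefficient is the residue \(\res_\alpha(\lambda_c(x,y))=-b_{e(\alpha)}x+a_{e(\alpha)}y+c(e(\alpha))\) by~\eqref{eq:r_explicit}. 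If it is nonzero, \(\alpha\) is a pole, not a zero. If it is zero, \(\lambda_c(x,y)\) is holomorphic at \(\alpha\) with value \(s_1x+s_2y+s_3\) there, so \(\alpha\) is a zero iff that value is \(0\).

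A turning point corresponding to \(\alpha\) is therefore a solution of the real \(2\times 2\) system
\[
r_1x+r_2y=-r_3,\qquad s_1x+s_2y=-s_3 .
\]
The first equation is precisely the defining equation of \(\line_{e(\alpha)}\), so any turning point lies on \(\line_e\). By Cramer's rule the solution is~\eqref{eq:turning_point_formula}, and it is unique as long as \(r_1s_2-s_1r_2\neq0\).

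The main step is proving this determinant is nonzero, and I would argue as in the proofs of Propositions~\ref{prop:diff} and~\ref{prop:bdy_diffeomorphisms}. Suppose \(r_1s_2-s_1r_2=0\). Then there is a nonzero \((a,b)\in\RR^2\) with \(ar_1+br_2=0\) and \(as_1+bs_2=0\), so the \(1\)-form \(a\,d\log z+b\,d\log w\) has zero residue at \(\alpha\) and vanishes there. This contradicts Lemma~\ref{lem:zeroes_xdlogzydlogw}, which says its zeroes lie in \(\curve(\RR)\setminus\zz\). Note that \((a,b)\) cannot be \((0,0)\), since \((r_1,r_2)=(-b_e,a_e)\neq 0\).

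Finally, the definition of turning point requires \((x,y)\in\domain\), so I would check that the solution lies in \(\mathcal D_c\). If \((x,y)\notin\mathcal D_c\), then by Lemma~\ref{lem:forced_zeroes_and_remaining} (with the convention that a point on \(\line_e\) has a zero sign at \(e\)) all zeroes of \(\lambda_c(x,y)\) are the forced ones. The zeroes forced inside the intervals counted there do not include angles where the residue vanishes, which should give a contradiction. I expect this membership check to be the most delicate bookkeeping. If the statement only asks for uniqueness among points of \(\domain\), it can be skipped, with the point simply defined by~\eqref{eq:turning_point_formula}.
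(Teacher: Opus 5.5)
Your treatment of the linear system matches the paper's proof. You show that $\alpha$ is a zero of $\lambda_c(x,y)$ exactly when $r_1x+r_2y+r_3=0$ (the equation of $\line_{e(\alpha)}$) and $s_1x+s_2y+s_3=0$. You also show $r_1s_2-s_1r_2\neq 0$, since otherwise some nontrivial $a\,d\log z+b\,d\log w$ would vanish at $\alpha\in\zz$, contradicting Lemma~\ref{lem:zeroes_xdlogzydlogw}.

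The gap is in the last step, and you cannot skip it. By Definition~\ref{def:turning_point}, turning points are points of $\domain$. Without showing $\Omega^{-1}(\alpha)\in\domain$, you only prove that there is \emph{at most} one turning point for $\alpha$. You do not prove that $\Omega^{-1}(\alpha)$ is one, and the map $\Omega^{-1}:\zz\to\mathscr A_{\zz}$ would not be well defined.

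Moreover, your sketch of membership rests on a false claim. Lemma~\ref{lem:forced_zeroes_and_remaining} only fixes the \emph{parity} of the number of zeroes in each component of $A_0\setminus\{\text{poles of }\lambda_c(x,y)\}$. Once $\res_\alpha\lambda_c(x,y)=0$, the angle $\alpha$ is an interior point of such a component. If the residues at the two endpoints of that component have the same sign, the single forced zero there can be $\alpha$ itself. This is exactly what happens at turning points on $\partial\mathcal D_c\cap\line_{e(\alpha)}$, where the simple zero at $\alpha$ is a forced zero (Lemma~\ref{lem:turning_point_multiplicity}).

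The missing idea is to show that, for $(x,y)$ \emph{strictly} outside $\mathcal D_c$, the component containing $\alpha$ has endpoints with \emph{opposite} residues. The paper gets this from counting sign changes:
\begin{enumerate}
\item Let $R$ be the cell of $(x,y)$, let $I\ni e(\alpha)$ be the maximal interval of $\partial N$ on which $\sign_R=0$, and let $e_\pm$ be the nearest edges on either side with nonzero sign.
\item Since $\RR^2\setminus\mathcal D_c$ is open, the cells on both sides of $\line_{e(\alpha)}$ at $(x,y)$ also lie outside $\mathcal D_c^\circ$. By Lemma~\ref{lem:sign_R_continuous}, each of these cells has exactly two sign changes.
\item Crossing $\line_{e(\alpha)}$ through $(x,y)$ flips only the sign at $e(\alpha)$. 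If $\sign_R(e_-)=\sign_R(e_+)=s$, one side would read $s,-s,s$ there and have two more sign changes than the other. Hence $\sign_R(e_-)=-\sign_R(e_+)$.
\item So the residues at the two ends of the component containing $\alpha$ have opposite signs. The number of zeroes in that component is then even, with none forced.
\item Outside $\mathcal D_c^\circ$ all zeroes are forced, so the component contains no zero at all. This contradicts $\alpha$ being a zero.
\end{enumerate}
Strictness is essential here: it is precisely what separates this situation from the boundary case above.
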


\begin{proof}
The \(1\)-form \(\lambda_c(x,y)\) has a zero at \(\alpha\) if and only if both the pole
term and the constant term vanish. Equivalently, \((x,y)\) satisfies
\begin{equation}\label{eq:linear_system_turning}
\begin{bmatrix}
r_1 & r_2\\
s_1 & s_2
\end{bmatrix}
\begin{bmatrix}
x\\ y
\end{bmatrix}
=
-
\begin{bmatrix}
r_3\\ s_3
\end{bmatrix}.
\end{equation}
Solving~\eqref{eq:linear_system_turning} gives~\eqref{eq:turning_point_formula}, provided
the matrix is invertible.

We claim that the matrix in~\eqref{eq:linear_system_turning} is invertible.
Indeed, if its determinant were zero, the $1$-form
\[
a_{e(\alpha)} d\log z+ b_{e(\alpha)} d\log w
\]
would have a zero at \(\alpha\), contradicting Lemma~\ref{lem:zeroes_xdlogzydlogw}. Therefore, \eqref{eq:linear_system_turning} has a unique solution.

The point $(x,y)$ is contained in $\line_{e(\alpha)}$ since $\res_\alpha\bigl(\lambda_c(x,y)\bigr)=-b_{e(\alpha)} x+a_{e(\alpha)} y+c({e(\alpha)}) =0,$ and this is exactly the linear function defining $\line_{e(\alpha)}$.

Finally, we show that \((x,y)\in \domain\). Suppose that \((x,y)\notin \domain\), and let \(R\) be the cell containing \((x,y)\). Since
\((x,y)\in \line_{e(\alpha)}\), we have \(\sign_R(e(\alpha))=0\). Let
\(I\subset \partial N\) be the maximal closed interval containing \(e(\alpha)\)
on which \(\sign_R=0\), and let \(e_-\) and \(e_+\) be the nearest edges on the
two sides of \(I\) such that
\[
\sign_R(e_-)\neq 0,
\qquad
\sign_R(e_+)\neq 0.
\]
Since \(R\subset \RR^2\setminus \mathcal D_c^\circ\), Lemma~\ref{lem:sign_R_continuous}
implies that \(\sign_R\) has exactly two sign changes. Crossing the line
\(\line_{e(\alpha)}\) through \((x,y)\) changes the sign at $e(\alpha)$ and leaves all other signs unchanged. The two adjacent cells are still contained
in \(\RR^2\setminus \mathcal D_c^\circ\), and hence their sign functions also
have exactly two sign changes. Therefore, the number of sign changes cannot
change under this crossing, which is possible only if
\[
\sign_R(e_-)=-\sign_R(e_+).
\]
Thus, \(\sign_R\) has a sign change along the closed interval \(I\).

By Lemma~\ref{lem:forced_zeroes_and_remaining}, all zeroes of
\(\lambda_c(x,y)\) are accounted for. Moreover, the connected component of
\[
A_0\setminus \{\text{poles of }\lambda_c(x,y)\}
\]
corresponding to \(I\) has endpoints with opposite residues, and therefore
contains no zeroes of \(\lambda_c(x,y)\). In particular,
\(\lambda_c(x,y)\) has no zero at \(\alpha\), contradicting the definition of
\((x,y)\). Hence \((x,y)\in \domain\).
\end{proof}

\begin{lemma}\label{lem:turning_point_multiplicity}
Let \(\alpha\in \zz_e\), and let \((x,y)=\Omega^{-1}(\alpha)\in \mathscr A_{\zz}\)
be the turning point corresponding to \(\alpha\). Then:
\begin{enumerate}
\item If \((x,y)\in \line_{e(\alpha)}\cap \partial \mathcal D_c\), then \(\alpha\) is a simple zero of
\(\lambda_c(x,y)\).

\item If \((x,y)\in \line_{e(\alpha)}\cap \mathcal D_c^\circ\), then \(\alpha\) is a zero of \(\lambda_c(x,y)\) of order at
most two.
\end{enumerate}
\end{lemma}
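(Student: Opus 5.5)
We argue by zero counting, in the same way as Lemma~\ref{lem:forced_zeroes_and_remaining}, but with the modification that at $(x,y)\in\line_{e(\alpha)}$ the residues at all angles in $\zz_{e(\alpha)}$ vanish. Let $R$ be the cell containing $(x,y)$, so that $\sign_R(e(\alpha))=0$ and the angles in $\zz_{e(\alpha)}$ are no longer poles of $\lambda_c(x,y)$. The total number of zeroes is then $2g-2+\#\{\text{poles}\}$. Lemma~\ref{lem:zeroes_of_omega_D} forces at least two zeroes on each $A_j$, $j\ge1$. On $A_0$ it forces an odd number of zeroes in each interval between consecutive poles whose residues have the same sign. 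The intervals now merge across the block of angles in $\zz_{e(\alpha)}$ and across any other edges $e'$ with $\sign_R(e')=0$. As in the proof of Lemma~\ref{lem:forced_zeroes_and_remaining}, the unaccounted zeroes number $\#\{\text{sign changes of }\sign_R\}-2$, where a sign change along a zero-interval counts as one sign change.

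For Item~1, $(x,y)\in\partial\mathcal D_c$ lies on a boundary cell. We compare $R$ with the adjacent outer chamber obtained by crossing $\line_{e(\alpha)}$ outward. Taking the continuous analogue of Lemma~\ref{lem:four_signs_locally}, together with the fact that crossing $\line_{e(\alpha)}$ from the outside only changes the sign at $e(\alpha)$, we expect $\sign_R$ to have exactly two sign changes in the convention above, possibly along $I\ni e(\alpha)$. We separate two cases. If $\sign_R$ has no sign change along the zero-interval $I$ containing $e(\alpha)$, then the merged interval of $A_0$ containing $\alpha$ is bounded by poles with equal-sign residues. It therefore carries an odd number of zeroes, and since no zeroes are unaccounted it carries exactly one. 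Since $\alpha$ is a zero, $\alpha$ is that single, simple zero. If there is a sign change along $I$, then, exactly as at the end of the proof of Proposition~\ref{prop:turning_points_unique}, the merged interval contains no zeroes at all, contradicting that $\alpha$ is a zero. The delicate point is showing that the boundary cell has only two sign changes, or else that any additional sign changes sit away from $I$. I would try to get this by perturbing $(x,y)$ slightly outward along a direction transverse to $\line_{e(\alpha)}$ and using continuity of zeroes; in particular, the multiplicity at $\alpha$ can only split into nearby zeroes, which are all accounted for outside $\mathcal D_c^\circ$.

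For Item~2, $(x,y)\in\mathcal D_c^\circ$, so $\sign_R$ has four sign changes and exactly two zeroes are unaccounted. The merged interval containing $\alpha$ has a forced parity. If it has equal-sign endpoints, it carries at least one forced zero plus possibly the two extra ones, giving at most three zeroes there. We then use the parity structure to place $\alpha$: by symmetry under $\sigma$ and real-valuedness, the zero at $\alpha$ has some multiplicity $m$. To rule out $m=3$, we consider the residue-free local expansion $\lambda_c=(s+t\zeta+\dots)d\zeta$ and perturb $(x,y)$ off $\line_{e(\alpha)}$. This restores simple poles at each angle of $\zz_{e(\alpha)}$ with small residues of a common sign. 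A zero of order $m$ at $\alpha$ then splits into $m$ zeroes near the now-nearby poles, and the forced zeroes between the restored poles of $\zz_{e(\alpha)}$ (third bullet of Lemma~\ref{lem:forced_zeroes_and_remaining}) absorb some of them. Counting, the $\ge 1$ forced zero on the appropriate side plus at most two remaining zeroes yields $m\le2$. If the endpoints have opposite signs, the forced count is even and at most two zeroes sit there, so again $m\le2$.

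The main obstacle is making this perturbation and counting argument rigorous: tracking how many of the zeroes near $\alpha$ are forced by the poles restored after perturbation versus counted among the two free ones, uniformly in the position of $\alpha$ within $\zz_{e(\alpha)}$.
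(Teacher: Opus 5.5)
Your Item~1 is the paper's argument, and the point you flag as delicate needs no perturbation. A point of $\partial\mathcal D_c$ lies in a cell contained in $\RR^2\setminus\mathcal D_c^\circ$. Lemma~\ref{lem:sign_R_continuous} gives that cell exactly two sign changes (with the zero-interval convention), so Lemma~\ref{lem:forced_zeroes_and_remaining} says every zero of $\lambda_c(x,y)$ is forced. Since the forced lower bounds add up to the total, each component of $A_0\setminus\{\text{poles}\}$ carries exactly its minimal number of zeroes. The component containing $\alpha$ therefore carries exactly one zero, and $\alpha$ is simple.

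Item~2 has a genuine gap. You allow the component of $A_0\setminus\{\text{poles of }\lambda_c(x,y)\}$ containing $\alpha$ to have endpoint residues of the same sign. In that case you only get ``one forced zero plus the two free ones'', so counting alone permits a triple zero at $\alpha$. You then try to exclude $m=3$ by a splitting argument that you do not complete. The bookkeeping you sketch is also off: when $\alpha$ regains a small residue, an order-$m$ zero there becomes $m+1$ nearby zeroes (those of $r+\zeta g(\zeta)$), not $m$, and they need not all be real.

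The missing observation is that the equal-sign case never occurs at an interior point. Suppose $(x,y)\in\line_{e(\alpha)}\cap\mathcal D_c^\circ$ and write $e=e(\alpha)$. The chambers adjacent to $(x,y)$ on the two sides of $\line_e$ are inner, so both have exactly four sign changes, and their sign functions differ only at $e$. If $e$ lies on other lines through $(x,y)$ as well, compare instead two opposite chambers at $(x,y)$, whose signs on the zero block are negatives of each other. If the nearest nonzero edges $e_-,e_+$ on either side of the zero block containing $e$ had equal signs, one of these chambers would have two more sign changes than the other. This is the continuous analogue of Lemma~\ref{lem:sign_change_cts}, and combined with Lemma~\ref{lem:sign_R_continuous} it gives a contradiction. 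Hence $\sign_R(e_-)\neq\sign_R(e_+)$, and there is a sign change along the zero interval. The component containing $\alpha$ then has endpoint residues of opposite sign and contains no forced zero, so any zeroes in it are among the two unaccounted ones. This gives order at most two immediately: your ``opposite signs'' case is the only case, and it is exactly the paper's proof.
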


\begin{proof}
If \((x,y)\in \line_{e(\alpha)} \cap \partial \mathcal D_c\), then \((x,y)\) lies on the boundary of \(\mathcal D_c\). By Lemma~\ref{lem:forced_zeroes_and_remaining}, \(\alpha\) is simple.

Now suppose \((x,y)\in \line_{e(\alpha)}\cap \mathcal D_c^\circ\) and let $R$ be the cell containing $(x,y)$. Let \(e_-\) (resp. \(e_+\)) denote the edges of \(N\)
immediately clockwise (resp. counterclockwise) of \(e\). Then
\[
\sign_R(e)=0,
\qquad
\sign_R(e_-)\neq \sign_R(e_+).
\]
By Lemma~\ref{lem:forced_zeroes_and_remaining}, no zero in the connected component of 
\[
A_0 \setminus \{\text{poles of }\lambda_c(x,y)\}
\]
containing $\alpha$ is forced, so there are either none or two zeroes in that component. In particular, $\alpha$ is a zero of order at most two.
\end{proof}

\begin{definition}\label{def:tp_cusp}
For \(\alpha\in \zz\), we call the turning point \((x,y)=\Omega^{-1}(\alpha)\in \mathscr A_{\zz}\) a \emph{turning-point cusp} if \(\alpha\) is a zero of order two of \(\lambda_c(x,y)\).
\end{definition}

\subsection{Global extension of \texorpdfstring{$\Omega^{-1}$}{Omega_inverse} and the arctic curve}

Let
\(
\overline{\liq}
\)
denote the closure of the rough region \(\liq\) in \(\RR^2\) and let
\(
\partial \liq:=\overline{\liq}\setminus \liq
\)
denote the boundary of \(\liq\).

\begin{proposition}\label{prop:omega_inverse}
The maps
\[
\Omega^{-1}:\curve_+^\circ \to \liq,\qquad
\Omega^{-1}:\curve(\RR)\setminus \zz \to \mathscr{A}^\circ,\qquad
\Omega^{-1}:\zz\to \mathscr{A}_{\zz}
\]
glue together to a smooth map
\[
\Omega^{-1}:\Sigma_+ \to \overline{\liq}.
\]
Moreover,
\[
\Omega^{-1}\bigl(\curve(\RR)\bigr)=\partial \liq=\mathscr{A}.
\]
\end{proposition}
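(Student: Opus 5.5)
The plan is to view $\Omega^{-1}$ uniformly as the solution of a linear system in $(x,y)$ that depends smoothly on $\zeta\in\curve_+$, and then read off both smoothness and the identification of the image.

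\emph{Smoothness of the glued map.} Near a point of $\curve_+^\circ$, smoothness is Proposition~\ref{prop:diff}. Near $u_0\in\curve(\RR)\setminus\zz$, take the local coordinate $\zeta=u+\ii v$ with $\sigma(\zeta)=\bar\zeta$ and $f_j$ real on $v=0$. For $v>0$ the system~\eqref{eq:linear_system_1} is equivalent to $xf_1+yf_2+f_3=0$ together with its conjugate, i.e.\ $x\,\Delta f_1+y\,\Delta f_2+\Delta f_3=0$, where $\Delta f(\zeta):=(f(\zeta)-f(\bar\zeta))/(\zeta-\bar\zeta)$. Each $\Delta f_j$ is a smooth (real-analytic) function of $(u,v)$ up to $v=0$ and restricts to $f_j'(u)$ there. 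So, on a neighbourhood of $u_0$ in $\curve_+$, $\Omega^{-1}$ is the solution of the $2\times2$ real system with rows $(\Re f_1,\Re f_2\mid -\Re f_3)$ and $(\Re\Delta f_1,\Re\Delta f_2\mid -\Re\Delta f_3)$, equivalently rows $(f_j)$ and $(\Delta f_j)$. At $v=0$ this is exactly~\eqref{eq:linear_system_2}, whose determinant is nonzero by the argument of Proposition~\ref{prop:bdy_diffeomorphisms}. By continuity it stays nonzero nearby, and Cramer's rule gives a smooth extension agreeing with~\eqref{eq:fsol}.

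Near $\alpha\in\zz$, multiply by $\zeta$: set $\tilde f_j:=\zeta f_j=r_j+s_j\zeta+O(\zeta^2)$, which is holomorphic. Apply the same construction to the rows $(\tilde f_j)$ and $(\Delta\tilde f_j)$; for $v>0$ this system is equivalent to the original one. At $\zeta=0$ it reduces to~\eqref{eq:linear_system_turning}, whose matrix is invertible by Proposition~\ref{prop:turning_points_unique}. The extension is therefore smooth and equals~\eqref{eq:turning_point_formula}. Since the extension is continuous, it lands in $\overline{\liq}$.

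\emph{Image of the boundary.} First show $\Omega^{-1}(\curve(\RR))=\partial\liq$. For the inclusion $\subseteq$: points of $\mathscr A$ are limits of points of $\liq$ by continuity, and they are not in $\liq$, because at such points $\lambda_c$ has a real multiple zero (or a zero at an angle), and the zero count of Lemma~\ref{lem:forced_zeroes_and_remaining} leaves no room for a conjugate pair. For the reverse inclusion, take $p\in\partial\liq$ with $p=\lim\Omega^{-1}(\zeta_n)$. By compactness of $\curve_+$, pass to a limit $\zeta_n\to\zeta_*$. If $\zeta_*\in\curve_+^\circ$, then $p\in\liq$ by continuity, which is a contradiction. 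Hence $\zeta_*\in\curve(\RR)$ and $p=\Omega^{-1}(\zeta_*)$ by continuity.

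\emph{Main obstacle.} The delicate step is the case $p\in\partial\liq\cap\partial\mathcal D_c$, where $\Omega^{-1}(\zeta_n)$ might escape to a boundary point outside the defined strata. Continuity of the glued map on compact $\curve_+$ handles this automatically, provided the invertibility of all the local systems has been established. Finally, $\mathscr A=\mathscr A^\circ\sqcup\mathscr A_{\zz}$ equals $\Omega^{-1}(\curve(\RR))$ by Propositions~\ref{prop:bdy_diffeomorphisms} and~\ref{prop:turning_points_unique}, which gives $\partial\liq=\mathscr A$.
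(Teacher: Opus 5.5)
Your proposal is correct and follows essentially the same route as the paper. Dividing the imaginary row by $v$ via the divided difference $\Delta f_j=\Im f_j/v$ is the same as the paper's factorization $\Im(\overline{f_j}f_k)=vB_{jk}$: your $2\times 2$ determinant is exactly $B_{12}$, and the same holds with $h_j=\zeta f_j$ at the angles. The compactness argument for $\partial\liq\subset\Omega^{-1}(\curve(\RR))$ is identical to the paper's, and your zero-counting justification that $\mathscr A$ is disjoint from $\liq$ fills in a step the paper only asserts.
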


\begin{proof}
First let \(\zeta_0\in \curve(\RR)\setminus \zz\). As in Section~\ref{sec:extension_bdy}, choose a local coordinate \(\zeta=u+\ii v\) centered at \(\zeta_0\) such that \(\sigma(\zeta)=\overline{\zeta}\) and such that \(\Sigma_+\) is locally given by \(v\ge 0\). Since \(f_j(u+\ii v)\in\RR\) when \(v=0\), the functions
\[
\Im (\overline{f_j(\zeta)} f_k(\zeta))
\]
vanish on \(v=0\). By Taylor's theorem, there exist smooth functions
\(B_{jk}(u,v)\) such that locally
\[
\Im(\overline{f_j(\zeta)}f_k(\zeta))=v B_{jk}(u,v).
\]
Hence, for \(v>0\),~\eqref{eq:imff} becomes
\[
x=\frac{B_{23}}{B_{12}},\qquad
y=\frac{B_{31}}{B_{12}}.
\]
Locally expanding \(f_j(\zeta)\) and \(f_k(\zeta)\), we obtain
\begin{equation}\label{eq:b_local}
    B_{jk}(u,v) = f_j(\zeta_0) f_k'(\zeta_0)-f_j'(\zeta_0) f_k(\zeta_0) + O(|\zeta|).
\end{equation}
Thus,
\[
B_{12}(0,0)=f_1(\zeta_0)f_2'(\zeta_0)-f_1'(\zeta_0)f_2(\zeta_0)\neq 0
\]
by Proposition~\ref{prop:bdy_diffeomorphisms}, so the above
formulas define a smooth extension of \(\Omega^{-1}\) across \(\zeta_0\), and its
value at \(\zeta_0\) is exactly~\eqref{eq:fsol}.

Now let \(\alpha\in\zz\), and choose a local coordinate \(\zeta=u+\ii v\) centered
at \(\alpha\) such that \(\sigma(\zeta)=\bar\zeta\). Write
\[
f_j(\zeta)=\frac{r_j}{\zeta}+s_j+O(|\zeta|),
\qquad r_j,s_j\in\RR,
\]
and let
\[
h_j(\zeta):=\zeta f_j(\zeta)=r_j+s_j\zeta+O(|\zeta|^2).
\]
Then each \(h_j\) is holomorphic and satisfies
\[
h_j(\bar\zeta)=\overline{h_j(\zeta)},
\]
so \(h_j(u)\in\RR\) for \(v=0\). Since \(|\zeta|^2\) is real, the formula~\eqref{eq:imff} can be rewritten for \(v>0\) as
\[
x=\frac{\Im(\overline{h_2}h_3)}{\Im(\overline{h_1}h_2)},
\qquad
y=\frac{\Im(h_1 \overline{h_3})}{\Im(\overline{h_1}h_2)}.
\]
Again by Taylor's theorem, locally
\[
\Im (\overline{h_j(\zeta)} h_k(\zeta))=v \widetilde B_{jk}(u,v)
\]
for smooth \(\widetilde B_{jk}\). Hence, 
\[
x=\frac{\widetilde B_{23}}{\widetilde B_{12}},
\qquad
y=\frac{\widetilde B_{31}}{\widetilde B_{12}}.
\]
At \((u,v)=(0,0)\),
\[
\widetilde B_{12}(0,0)=r_1s_2-s_1r_2\neq 0
\]
by Proposition~\ref{prop:turning_points_unique}. Therefore, these formulas extend
smoothly across \(\alpha\), and their value at \(\alpha\) is exactly given by~\eqref{eq:turning_point_formula}. 
We have therefore obtained a smooth map
\[
\Omega^{-1}:\Sigma_+\to \overline{\liq}
\]
whose restrictions to \(\curve_+^\circ\), \(\curve(\RR)\setminus\zz\), and
\(\zz\) are the three maps above.

Since \(\Omega^{-1}(\curve(\RR))\subset \mathscr{A}^\circ\sqcup\mathscr{A}_{\zz}\),
it is disjoint from \(\liq\), and hence,
\[
\Omega^{-1}(\curve(\RR))\subset \partial\liq.
\]

Conversely, let \((x,y)\in \partial\liq\), and choose \((x_n,y_n)\in \liq\) with
\((x_n,y_n)\to (x,y)\). Set \(\zeta_n:=\Omega(x_n,y_n)\in \curve_+^\circ\).
By compactness of \(\Sigma_+\), after passing to a subsequence, we may assume
\(\zeta_n\to \zeta\in \Sigma_+\). By continuity of \(\Omega^{-1}\),
\[
(x,y)=\lim_{n\to\infty}(x_n,y_n)
=\lim_{n\to\infty}\Omega^{-1}(\zeta_n)
=\Omega^{-1}(\zeta).
\]
If \(\zeta\in \curve_+^\circ\), then \((x,y)=\Omega^{-1}(\zeta)\in \liq\), a
contradiction. Hence, \(\zeta\in \curve(\RR)\), and therefore
\[
\partial\liq\subset \Omega^{-1}(\curve(\RR)).
\]
Thus,
\[
\partial\liq=\Omega^{-1}(\curve(\RR)).
\]
Since
\[
\Omega^{-1}(\curve(\RR)\setminus\zz)=\mathscr{A}^\circ,
\qquad
\Omega^{-1}(\zz)=\mathscr{A}_{\zz},
\]
we conclude that
\[
\partial\liq=\mathscr{A}^\circ\sqcup \mathscr{A}_{\zz}=\mathscr{A}.
\]
\end{proof}

\subsection{Geometry of the arctic curve}

The map
\[
\gamma:\curve \to \mathbb{CP}^1,
\qquad
\zeta \mapsto \frac{d\log w}{d\log z}(\zeta)
\]
is called the \emph{logarithmic Gauss map}.

By Proposition~\ref{prop:omega_inverse}, the restriction
\[
\Omega^{-1}\big|_{\curve(\RR)}:\curve(\RR)\to \mathbb R^2
\]
is a smooth parametrization of the arctic curve \(\mathscr{A}\). We now identify its critical points and compute tangent lines to $\mathscr{A}$. {We phrase the statement about tangent lines in terms of branches since we have not yet established injectivity of the parameterization; we will show this in Corollary~\ref{cor:turning_points_injective}.}

\begin{proposition}\label{prop:tangent_curvature}
Let \(\zeta_0\in \curve(\RR)\). Then:
\begin{enumerate}
    \item The point \(\zeta_0\) is a critical point of
    \(\Omega^{-1}\big|_{\curve(\RR)}\) if and only if
    \(\Omega^{-1}(\zeta_0)\) is one of the four types of cusps in Definition~\ref{def:regions_in_limit_shape} and Definition~\ref{def:tp_cusp}. 
    
    \item Each of these cusps is an ordinary cusp in the geometric sense, \emph{i.e.} after a smooth change of coordinates, it is locally given by $y^2=x^3$.
    
    \item The tangent line to the branch of \(\mathscr{A}\) at \( \Omega^{-1}(\zeta_0) \)
    determined by \(\zeta_0\) has slope
    \[
    -\frac{1}{\gamma(\zeta_0)}=-\frac{f_1(\zeta_0)}{f_2(\zeta_0)}.
    \]
    In particular, if $\zeta_0 = \alpha \in \zz$, then the tangent line is $\line_{e(\alpha)}$.
\end{enumerate}
\end{proposition}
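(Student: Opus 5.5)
The plan is to work with the real-analytic curve $u\mapsto (x(u),y(u))=\Omega^{-1}(u)$ on $\curve(\RR)$ in a local real coordinate $u$ with $\sigma(\zeta)=\bar\zeta$. Away from angles the curve is characterized by the identity
\[
F(x(u),y(u);u)=0,\qquad \partial_u F(x(u),y(u);u)=0,
\]
with $F=xf_1+yf_2+f_3$ as in~\eqref{eq:Fxyzeta}. At an angle $\alpha$ we use instead the regularized functions $h_j=\zeta f_j$ and $H=xh_1+yh_2+h_3$, so that $H(x(u),y(u);u)=0$ and $\partial_uH=0$ for $u\neq 0$. These extend across $u=0$, and the turning point is exactly where $H$ and $H'$ vanish at $u=0$. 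Every step below is then a differentiation of these identities.

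\textbf{Tangent lines (Item 3).} Differentiate $F(x(u),y(u);u)\equiv 0$ in $u$. Since $\partial_uF=0$ along the curve, this gives
\[
x'f_1+y'f_2=0.
\]
Hence, whenever $(x',y')\neq 0$, the tangent direction has slope $y'/x'=-f_1/f_2=-1/\gamma$. Here $f_1$ and $f_2$ do not vanish simultaneously on $\curve(\RR)\setminus\zz$, by Lemma~\ref{lem:zeroes_xdlogzydlogw}. At a critical point the branch tangent is defined as the limit of secant directions. The same relation $x'f_1+y'f_2=0$ holds identically nearby, so the limiting direction is still $(f_2,-f_1)$ by continuity. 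At $\alpha$ the same computation with $h_j$ gives $x'h_1+y'h_2=0$ at $u=0$. Since $h_j(0)=r_j$, the slope is $-r_1/r_2=b_e/a_e$, which is the direction of $\line_{e(\alpha)}$. The turning point lies on $\line_{e(\alpha)}$ by Proposition~\ref{prop:turning_points_unique}, so the tangent line is $\line_{e(\alpha)}$.

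\textbf{Critical points (Item 1).} Differentiate $\partial_uF(x(u),y(u);u)\equiv0$:
\[
x'f_1'+y'f_2'+\partial_u^2F=0.
\]
Together with $x'f_1+y'f_2=0$, this is a $2\times2$ linear system for $(x',y')$ with right-hand side $(0,-F'')$. Its matrix is invertible, as shown in the proof of Proposition~\ref{prop:bdy_diffeomorphisms}. Therefore $(x',y')=0$ if and only if $F''(u)=0$, that is, if and only if $u$ is a zero of order at least three of $\lambda_c(x,y)$. By Definition~\ref{def:regions_in_limit_shape}, order three is exactly a frozen, quasi-frozen or smooth cusp, and higher order is excluded. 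For example, on $A_j$ there would be too many zeros, and on $A_0$ a zero of order four exceeds the count of remaining zeroes in Lemma~\ref{lem:forced_zeroes_and_remaining}. At $\alpha$ the analogous computation with $H$ shows that $(x',y')=0$ if and only if $H''(0)=0$. This says that $\lambda_c$ vanishes to order two at $\alpha$, which is the turning-point cusp of Definition~\ref{def:tp_cusp}. Order three at $\alpha$ is excluded by Lemma~\ref{lem:turning_point_multiplicity}.

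\textbf{Ordinary cusps (Item 2), the main obstacle.} I would show that at a critical point $(x'',y'')\neq0$ and that $(x''',y''')$ is not parallel to $(x'',y'')$; this is the standard criterion for an ordinary (semicubical) cusp. The plan is as follows.
\begin{itemize}
\item Differentiate the first identity twice and use $x'=y'=0$. This gives $x''f_1+y''f_2=0$, so $(x'',y'')$ lies along the tangent direction $(f_2,-f_1)$.
\item Differentiate the second identity twice. This gives $x''f_1'+y''f_2'=-F'''$.
\item Since the zero has order exactly three, $F'''\neq0$, so $(x'',y'')\neq0$.
\item Differentiate the first identity three times. This gives $x'''f_1+y'''f_2=-2(x''f_1'+y''f_2')=2F'''\neq0$. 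So $(x''',y''')$ has a nonzero component transverse to the tangent direction, which is exactly the criterion.
\end{itemize}
The delicate part is bookkeeping the derivatives of the coefficients correctly, and redoing the computation in the $h_j$-coordinates at angles. There one must use that $\alpha$ is a double zero, in place of a triple zero, because of the simple pole of $f_j$. I expect the same structure, with $H'''(0)\neq0$ playing the role of $F'''$.
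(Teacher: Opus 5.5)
Your proposal is correct and is essentially the paper's proof. The paper also differentiates the two defining identities to get $(x',y')=\frac{H}{W}(f_2,-f_1)$ with $W=f_1f_2'-f_1'f_2\neq0$. It reads off critical points as zeros of order three and verifies the ordinary-cusp criterion via $\det\begin{bmatrix}x''&y''\\x'''&y'''\end{bmatrix}=2(xf_1'''+yf_2'''+f_3''')^2/W\neq0$, which is equivalent to your transversality computation $x'''f_1+y'''f_2=2F'''$. It then redoes everything with $h_j=\zeta f_j$ at the angles. One small correction: an order-four zero on $A_j$ is not ruled out by counting, since up to four zeros may lie on $A_j$. It is excluded because $\lambda_c(x,y)$ would then have constant sign on $A_j$, contradicting $\int_{A_j}\lambda_c(x,y)=0$, as noted in Definition~\ref{def:regions_in_limit_shape}.
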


\begin{proof}
First, we consider the case \(\zeta_0\in \curve(\RR)\setminus \zz\). Choose a local coordinate
\(\zeta=u+\ii v\) centered at \(\zeta_0\) such that
\(\sigma(\zeta)=\overline{\zeta}\). Write
\[
(x(u),y(u)):=\Omega^{-1}(u),
\]
and set
\begin{equation}\label{eq:delta_and_H}
W:=f_1f_2'-f_1'f_2,
\qquad
H:=x f_1''+y f_2''+f_3'',
\end{equation}
evaluated at \(\zeta_0\). Since \(\lambda_c(x(u),y(u))\) has a zero of order at least two at \(u\), we have
\[
x(u)f_1(u)+y(u)f_2(u)+f_3(u)=0,
\qquad
x(u)f_1'(u)+y(u)f_2'(u)+f_3'(u)=0.
\]
Differentiating with respect to \(u\) gives
\begin{align*}
f_1x'+f_2y'&=0, \\
f_1'x'+f_2'y'&=-H. 
\end{align*}
Solving this linear system yields
\begin{equation}\label{eq:slope_tangent}
(x',y')=\frac{H}{W}(f_2,-f_1).
\end{equation}
By Proposition~\ref{prop:bdy_diffeomorphisms}, we have \(W\neq 0\) and
\((f_2,-f_1)\neq 0\). Hence, \((x',y')=0\) if and only if \(H=0\), \emph{i.e.} if and only if \(\lambda_c(x,y)\) has a zero of order at least three at \(\zeta_0\). This proves Item~1 in this case.

A standard criterion for an ordinary cusp (see, for example,
\cite[Definition~5.3.9]{Duistermaat}) is that
\[
(x',y')=0
\qquad\text{and}\qquad
\det \begin{bmatrix}
x''&y''\\
x'''&y'''\end{bmatrix} \neq 0;
\]
Suppose \(\Omega^{-1}(\zeta_0)\) is one of the cusp types from
Definition~\ref{def:regions_in_limit_shape}. Since \(\lambda_c(x,y)\) has a zero of exactly order three at \(\zeta_0\),
\[
H=0,
\qquad
x f_1'''+y f_2'''+f_3''' \neq 0.
\]
Therefore,
\begin{equation} \label{eq:x'y'}
(x',y') = \frac{H}{W}(f_2,-f_1) = 0.
\end{equation}
Differentiating~\eqref{eq:slope_tangent}, we get
\begin{align}
(x'',y'') &= \left(\frac{H}{W}\right)'(f_2,-f_1) + \frac{H}{W}(f_2',-f_1'), \label{eq:x''}\\
(x''',y''') &= \left(\frac{H}{W}\right)'' (f_2,-f_1) + 2 \left(\frac{H}{W}\right)'(f_2',-f_1') + \frac{H}{W}(f_2'',-f_1''). \nonumber
\end{align}
By~\eqref{eq:x'y'},
\[
H' =x' f_1''+y'f_2''+ x f_1'''+y f_2'''+f_3''' = x f_1'''+y f_2'''+f_3'''.
\]
Using $H=0$ and simplifying, we get
\[
\det
\begin{bmatrix}
x'' & y''\\
x''' & y'''
\end{bmatrix} = \frac{2 (x f_1'''+y f_2'''+f_3''')^2}{W} \neq 0,
\]
so \(\Omega^{-1}(\zeta_0)\) is an ordinary cusp in the geometric sense, proving Item~2 in this case.

If \(\zeta_0\) is not a critical point, then the tangent line to the branch of \(\mathscr{A}\) at \((x,y)\) determined by \(\zeta_0\) has slope
\[
\frac{y'}{x'}=-\frac{f_1}{f_2}=-\frac{1}{\gamma(\zeta_0)}.
\]
When $\zeta_0$ is a critical point, the ordinary cusp $\Omega^{-1}(\zeta_0)$ still has a well-defined tangent line spanned by 
\[
(x'',y'') = \frac{H'}{W}(f_2,-f_1)
\]
which gives the same slope $-{1}/{\gamma(\zeta_0)}$, completing the proof of Item~3 in this case.

Now we consider the case \(\zeta_0=\alpha\in \zz\). Define
\[
h_j(\zeta):=\zeta f_j(\zeta).
\]
All items now follow exactly as in the previous case with \(h_j\) in place of \(f_j\).

Finally, the tangent vector at $\Omega^{-1}(\alpha)$ is 
\[
(h_2,-h_1) = (a_{e(\alpha)},b_{e(\alpha)}),
\]
which is the vector along $\line_{e(\alpha)}$. Since $\Omega^{-1}(\alpha) \in \line_{e(\alpha)}$, the tangent line is $\line_{e(\alpha)}$.
\end{proof}

\begin{corollary}\label{cor:orientation_reversing}
The map \(\Omega:\curve_+^\circ \to \liq\) is orientation reversing.
\end{corollary}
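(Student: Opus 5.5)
The plan is to compute the sign of the Jacobian $\det\mathscr J\Omega$ from~\eqref{eq:jacobian_Omega}. The denominator $|xf_1'+yf_2'+f_3'|^2$ is positive, so the orientation of $\Omega$ is determined entirely by the sign of $\Im(\overline{f_1}f_2)$ on $\curve_+^\circ$. Here $\curve_+^\circ$ carries its complex orientation and $\liq\subset\RR^2$ the standard one. Since $\Omega$ is a diffeomorphism (Proposition~\ref{prop:diff}) and $\curve_+^\circ$ is connected, $\Im(\overline{f_1}f_2)\neq 0$ holds everywhere on $\curve_+^\circ$. Hence the sign is constant, and it suffices to determine it at a single convenient point.

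The natural choice is to examine $\zeta$ near an angle $\alpha\in\zz$. Take a local coordinate $\zeta=u+\ii v$ centered at $\alpha$, with $\sigma(\zeta)=\bar\zeta$ and $\curve_+$ given locally by $v\ge 0$. Write $h_j=\zeta f_j=r_j+s_j\zeta+O(|\zeta|^2)$ as in the proof of Proposition~\ref{prop:omega_inverse}. Then
\[
\Im(\overline{f_1}f_2)=|\zeta|^{-2}\Im(\overline{h_1}h_2)=|\zeta|^{-2}\bigl(v(r_1s_2-s_1r_2)+O(|\zeta|^2 v)\bigr),
\]
so for small $v>0$ its sign equals $\sign(r_1s_2-s_1r_2)$. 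By~\eqref{eq:r_explicit} this is $\sign(-b_{e(\alpha)}s_2-a_{e(\alpha)}s_1)$. The task therefore reduces to computing the constant terms $s_1,s_2$ of $d\log z$ and $d\log w$ at $\alpha$.

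This computation is the main obstacle. The constant terms are global quantities, and they depend on the coordinate. The combination $-(a s_1+b s_2)$ is, up to the positive factor coming from the chosen orientation of the coordinate, the value at $\alpha$ of the regularized form $\rho:=a_{e(\alpha)}d\log z+b_{e(\alpha)}d\log w$. The residue of $\rho$ at $\alpha$ is $-a b+b a=0$, so $\rho$ is holomorphic at $\alpha$, and by Lemma~\ref{lem:zeroes_xdlogzydlogw} it is nonvanishing there. Its sign along $A_0$ is then read off via the parity argument of Lemma~\ref{lem:zeroes_of_omega_D}. The residues of $\rho$ are $\langle e(\beta),(b,-a)\rangle/|e(\beta)|_\ZZ$, which is a cross product with $e(\alpha)$; by convexity of $N$ they are negative on one side of $e(\alpha)$ along $\partial N$ and positive on the other. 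Since $\alpha$ lies in an interval whose endpoint residues have opposite signs, $\rho$ has no zero on that interval. Near the endpoint residue $r$ on the counterclockwise side, the sign of $\rho$ along the interval is determined by the sign of $r$ together with the orientation convention $\partial\curve_+=A_0-\sum A_j$. I expect this bookkeeping to show $\rho(\alpha)$ has sign making $r_1s_2-s_1r_2<0$, i.e. $\det\mathscr J\Omega<0$.

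As a sanity check on the sign, I would verify the genus-zero Aztec case, where $\curve_+$ is the upper half-plane, $A_0=\RR$, and $z,w$ are explicit rational functions. There $\Im(\overline{f_1}f_2)$ can be computed at a single point such as $\zeta=\ii$, which catches a global sign error. Concluding $\det\mathscr J\Omega<0$ everywhere then gives that $\Omega$ is orientation reversing.
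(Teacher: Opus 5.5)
Your argument is correct and takes a genuinely different route from the paper's. The one step you leave open, the final sign, does come out as you expect.

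\textbf{Completing the sign.} In your coordinate $\zeta=u+\ii v$, with $v\ge 0$ on $\curve_+$, the variable $u$ increases along the orientation of $A_0\subset\partial\curve_+$. Use the convention that this orientation induces the counterclockwise order of $\partial N$ on the angles. Then the first pole of $\rho$ after $\alpha$ lies in $\zz_{e_+}$, where $e_+$ is the edge following $e(\alpha)$. Its residue is $r_+=-\det(\vec e(\alpha),\vec e_+)/(|e(\alpha)|_\ZZ|e_+|_\ZZ)<0$ by convexity. Hence $\rho/du\sim r_+/(u-u_+)>0$ just before that pole. Since $\rho$ has no zeros on the interval containing $\alpha$, you get $as_1+bs_2=\rho/d\zeta|_{\alpha}>0$. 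Therefore $r_1s_2-s_1r_2=-(as_1+bs_2)<0$, so $\Im(\overline{f_1}f_2)<0$ and $\det\mathscr J\Omega<0$.

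Your genus-zero check agrees. For the unit square with angles $-2,-1,1,2$ and $\curve_+$ the upper half-plane, one finds $\Im(\overline{f_1}f_2)(\ii)=-0.54$.

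You should state the orientation convention explicitly, since reversing it reverses the conclusion. Two harmless slips: the value of $\rho/d\zeta$ at $\alpha$ is $as_1+bs_2$, not $-(as_1+bs_2)$. Also, the remainder in $\Im(\overline{h_1}h_2)$ is $O(|\zeta|v)$ rather than $O(|\zeta|^2v)$, because the quadratic Taylor terms of $h_j$ contribute a term of order $uv$.

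\textbf{Comparison with the paper.} The paper works at a regular point $\zeta_0\in A_0\setminus\zz$. Using that $\Omega^{-1}(u+\ii v)$ is even in $v$, it obtains $\det(\partial_u\Omega^{-1},\partial_v\Omega^{-1})=vH^2/W+O(v^3)$. This reduces the claim to $W=f_1f_2'-f_1'f_2<0$, which it derives from convexity of the complement components of the amoeba, citing Krichever.

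Your route avoids both the second-order expansion and the external convexity input. It also removes the need to choose $\zeta_0$ away from cusps, where $H=0$. At a pole, the relevant quantity $r_1s_2-s_1r_2$ is first order. Its sign follows from residue signs alone, via Lemmas~\ref{lem:zeroes_of_omega_D} and~\ref{lem:zeroes_xdlogzydlogw}, which are already in the paper.

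What the paper's route buys is the inequality $W<0$ along the real locus, which it reuses in Proposition~\ref{prop:local_conv}. You can recover that inequality from your result: combine the paper's expansion with $W\neq 0$ on $\curve(\RR)\setminus\zz$ (Proposition~\ref{prop:bdy_diffeomorphisms}).
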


\begin{proof}
Choose \(\zeta_0\in A_0\setminus \zz\) and local coordinates
\(\zeta=u+\ii v\) with \(v\ge 0\) on \(\Sigma_+\). Since $f_j$ is real, $\Omega^{-1}(u + \ii v)$ given by~\eqref{eq:imff} is an even function of $v$. Therefore, locally we have
\[
(x(u+\ii v), y(u+ \ii v)) = \Omega^{-1}(u + \ii v) = (x(u),y(u)) + \frac 1 2 v^2 (\partial_{vv}x(u),\partial_{vv}y(u))+O(v^4). 
\]
Expanding 
\[
F(x(u+\ii v), y(u+\ii v); u+\ii v)=0 
\]
in $v$, with $F(x,y;\zeta)$ as in~\eqref{eq:Fxyzeta}, and using the vanishing of the $v^2$-term, we obtain
\[
(f_1(u),f_2(u)) \cdot (\partial_{vv}x(u),\partial_{vv}y(u)) = H \neq 0
\]
where $H$ is as in~\eqref{eq:delta_and_H}. In particular, 
\[
\partial_v \Omega^{-1} (u+\ii v) = v (\partial_{vv}x(u),\partial_{vv}y(u)) + O(v^3) \neq 0.
\]
Differentiating with respect to $u$ and using \eqref{eq:slope_tangent}, we also get 
\[
\partial_u \Omega^{-1}(u+\ii v) = (\partial_u x(u), \partial_u y(u)) + O(v^2) = \frac{H}{W}(f_2(u),-f_1(u)) + O(v^2) \neq 0.
\]
It follows that
\[
\det(\partial_u\Omega^{-1}(u+\ii v), \partial_v\Omega^{-1}(u+\ii v))=v\frac{H^2}{W}+\Ordo(v^3).
\] 
In particular, \(\Omega^{-1}\) is orientation reversing if~$W<0$.

We use the associated \emph{amoeba} to show that \(W<0\). Define
\[
X(\zeta):=\Re\left(\int_{\zeta_0}^{\zeta} d\log z\right),
\qquad
Y(\zeta):=\Re\left(\int_{\zeta_0}^{\zeta} d\log w\right),
\]
where \(\zeta_0\) is a fixed point on \(A_0\). Since \(d\log z\) and \(d\log w\) are imaginary-normalized differentials,~\(X\) and~\(Y\) are well-defined single-valued functions on \(\Sigma\). The amoeba is the image of the map \( \curve \ni \zeta \mapsto (X(\zeta),Y(\zeta))\in\mathbb{R}^2\). Under this map, \(\Sigma(\mathbb{R})\setminus \zz\) is mapped to the boundary of the amoeba, while \(\Sigma_+^\circ\) is mapped to its interior. In the algebraic (periodic) setting, this agrees with the classical amoeba of the spectral curve; see~\cite{Kri13} for a proper introduction.

The connected components of the complement of the amoeba are convex~\cite[Proposition 2.7]{Kri13}. This convexity implies that, along the boundary, the tangent vector turns to the right. Consequently, for \(\zeta\in \Sigma(\mathbb{R})\), the vector \((-Y'(\zeta),X'(\zeta))\) is an inward-pointing normal, and
\[
(-Y'(\zeta),X'(\zeta))\cdot (X''(\zeta),Y''(\zeta))<0.
\]
Locally we may write
\[
X'(\zeta)=f_1(\zeta), \qquad Y'(\zeta)=f_2(\zeta),
\]
so that
\[
(-Y'(\zeta),X'(\zeta))\cdot (X''(\zeta),Y''(\zeta))
=(-f_2(\zeta),f_1(\zeta))\cdot (f_1'(\zeta),f_2'(\zeta))
= W.
\]
This proves that \(W<0\) and, hence, that $\Omega^{-1}$ is orientation reversing.
\end{proof}

\begin{proposition}\label{prop:local_conv}
    If $(x,y) = \Omega^{-1}(\zeta_0)$ is a smooth point of $\mathscr{A}$, then the curvature $\kappa(x,y) < 0$. In particular, $\liq$ is locally convex at $(x,y)$.
\end{proposition}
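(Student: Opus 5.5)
We will compute the signed curvature of the parametrized curve $u\mapsto (x(u),y(u))=\Omega^{-1}(u)$ directly from the derivative formulas already obtained in the proof of Proposition~\ref{prop:tangent_curvature}, and then fix its sign using $W<0$ from Corollary~\ref{cor:orientation_reversing}. We first treat the case $\zeta_0\in\curve(\RR)\setminus\zz$, with local coordinate $\zeta=u+\ii v$ centered at $\zeta_0$ and $f_j$ real on $v=0$. Since $(x,y)$ is a smooth point, $\zeta_0$ is not critical, so $H\neq 0$ by Item~1 of Proposition~\ref{prop:tangent_curvature}.

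\textbf{Step 1: the curvature formula.} From~\eqref{eq:slope_tangent} we have $(x',y')=\frac{H}{W}(f_2,-f_1)$. Differentiating as in~\eqref{eq:x''} gives
\[
(x'',y'')=\Bigl(\tfrac{H}{W}\Bigr)'(f_2,-f_1)+\tfrac{H}{W}(f_2',-f_1').
\]
The first term is parallel to $(x',y')$ and so drops out of the determinant. Hence
\[
x'y''-y'x''=\Bigl(\tfrac{H}{W}\Bigr)^2\bigl(f_2\cdot(-f_1')-(-f_1)f_2'\bigr)=\Bigl(\tfrac{H}{W}\Bigr)^2 W=\frac{H^2}{W}.
\]
It follows that
\[
\kappa=\frac{x'y''-y'x''}{(x'^2+y'^2)^{3/2}}=\frac{H^2/W}{|H/W|^3(f_1^2+f_2^2)^{3/2}},
\]
whose sign is the sign of $W$.

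\textbf{Step 2: orientation.} We must make sure this signed curvature is taken with respect to the orientation of $\mathscr A=\partial\liq$ as the boundary of $\liq$. The parameter $u$ increases along $\curve(\RR)$ oriented as $\partial\curve_+$ (the orientation of $A_0$ fixed in Section~\ref{sec:M-curves}). By Corollary~\ref{cor:orientation_reversing}, $\Omega^{-1}$ is orientation reversing, so it carries this orientation to the clockwise orientation of $\partial\liq$, i.e.\ the one opposite to the boundary orientation. Matching these orientations is the main bookkeeping obstacle. There are two interacting signs: the orientation reversal itself, and the convention that $u$ runs along $\partial\curve_+$ positively. We would verify them by checking the local picture $\Omega^{-1}(u+\ii v)\approx(x(u),y(u))+\tfrac12 v^2(\partial_{vv}x,\partial_{vv}y)$ from the proof of Corollary~\ref{cor:orientation_reversing}. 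There $(f_1,f_2)\cdot(\partial_{vv}x,\partial_{vv}y)=H$ gives the side of the tangent line on which $\liq$ lies, namely the side where $(f_1,f_2)\cdot(\cdot)$ has the sign of $H$. We would compare this with the direction of the acceleration $(x'',y'')$ relative to the tangent. That comparison shows directly whether $\liq$ lies on the concave side, which is the claimed local convexity, and it yields $\kappa<0$ in the paper's sign convention once $W<0$ is used.

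\textbf{Step 3: turning points.} For $\zeta_0=\alpha\in\zz$, we repeat the computation with $h_j=\zeta f_j$ in place of $f_j$, exactly as in the proofs of Propositions~\ref{prop:omega_inverse} and~\ref{prop:tangent_curvature}. The sign of $h_1h_2'-h_1'h_2$ near $\alpha$ agrees with the sign of $W$ at nearby real points $u\neq 0$, since the two expressions differ by the positive factor $u^2$. Continuity then gives the same sign at $\alpha$, unless the corresponding second-order quantity vanishes, which would make $\alpha$ a turning-point cusp and hence not a smooth point. Finally, local convexity of $\liq$ follows because nonvanishing curvature of fixed sign means $\liq$ lies locally on the concave side of its smooth boundary.
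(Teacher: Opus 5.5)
Your proposal matches the paper's proof: the same determinant computation giving $\kappa=\frac{|W|^3}{|H|(f_1^2+f_2^2)^{3/2}}\cdot\frac{1}{W}$, the sign from $W<0$ (proof of Corollary~\ref{cor:orientation_reversing}), the clockwise traversal of $\partial\liq$ under $\Omega^{-1}$ so that negative curvature means bending toward $\liq$, and the substitution $h_j=\zeta f_j$ at angles. One slip in Step~3: the sign of $h_1h_2'-h_1'h_2=u^2W$ at $\alpha$ needs no exceptional case, because its value there is $r_1s_2-s_1r_2\neq 0$ by Proposition~\ref{prop:turning_points_unique}; ruling out turning-point cusps via smoothness is what gives nonvanishing of the $h$-analogue of $H$, which you need for the curvature formula itself.
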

\begin{proof}
Let $\zeta_0 \in \curve(\RR) \setminus \zz$. Choose a local coordinate
\(\zeta=u+\ii v\) centered at \(\zeta_0\) such that \(\sigma(\zeta)=\bar\zeta\),
so that \(\Sigma_+\) is locally given by \(v\ge 0\). Let $W$ and $H$ be as in~\eqref{eq:delta_and_H}. Using~\eqref{eq:slope_tangent} and~\eqref{eq:x''}, we get that the curvature is
\[
\kappa(x,y)
=
\frac{\det
\begin{bmatrix}
x' & y'\\
x'' & y''
\end{bmatrix}}{|(x',y')|^3}
=
\frac{|W|^3}{ |H|(f_1^2+f_2^2)^{3/2}} \frac{1}{W}.
\]
Note that $H \neq 0$ since $(x,y)$ is assumed to be a smooth point and therefore, not a cusp. {We saw in the proof of Corollary~\ref{cor:orientation_reversing} that~\(W<0\), which proves that $\kappa(x,y)<0$.}


Since \(\Omega^{-1}\) is orientation reversing, the boundary is traversed clockwise, and therefore, negative curvature means that \(\mathscr{A}\) bends towards the right, \emph{i.e.} towards the rough region.

The argument for $\zeta_0 = \alpha \in \zz$ is similar with $h_j(\zeta) = \zeta f_j(\zeta)$ in place of $f_j(\zeta)$ as in Proposition~\ref{prop:tangent_curvature}.
\end{proof}

\begin{corollary}\label{cor:turning_points_injective}
The map
\[
\Omega^{-1}:\curve(\RR)\to \mathscr A
\]
is a bijection.
\end{corollary}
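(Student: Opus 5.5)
Surjectivity is already given by Proposition~\ref{prop:omega_inverse}, which shows $\Omega^{-1}(\curve(\RR))=\mathscr A$. So the whole content of the corollary is injectivity. I will argue by contradiction: suppose $\zeta_1\neq\zeta_2\in\curve(\RR)$ and $\Omega^{-1}(\zeta_1)=\Omega^{-1}(\zeta_2)=(x,y)$, and derive too many zeroes of $\lambda_c(x,y)$.

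\emph{Step 1 (zero bookkeeping).} Attach a zero to each $\zeta_i$ by type. If $\zeta_i\in\curve(\RR)\setminus\zz$, then $\lambda_c(x,y)$ vanishes to order at least two at $\zeta_i$. If $\zeta_i=\alpha\in\zz$, then $\lambda_c(x,y)$ vanishes at $\alpha$, so $\alpha$ is no longer a pole and $\sign_R(e(\alpha))=0$. By Lemma~\ref{lem:turning_point_multiplicity}, a boundary turning point can occur only if $(x,y)\in\partial\mathcal D_c$. Otherwise $(x,y)\in\mathcal D_c^\circ$, where Lemma~\ref{lem:forced_zeroes_and_remaining} leaves exactly two unforced zeroes, and all others are accounted for by parity.

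\emph{Step 2 (counting).} Let $R$ be the cell containing $(x,y)$.
\begin{itemize}
\item If $\zeta_1,\zeta_2\notin\zz$, then each double zero uses both remaining zeroes in its component. For the smooth ovals $A_j$, the argument of Definition~\ref{def:regions_in_limit_shape}~(7) excludes two double zeroes on one oval. For two different components, the combined excess is at least four unforced zeroes, contradicting the count of two.
\item If one $\zeta_i$ is an angle, the component of $A_0$ minus the poles containing it has no forced zero (as in the proof of Lemma~\ref{lem:turning_point_multiplicity}), so that zero is itself unforced. The analogous count gives at least three unforced zeroes, again a contradiction.
\item If both are angles $\alpha\ne\beta$, the line $\line_{e(\alpha)}$ and the line $\line_{e(\beta)}$ both pass through $(x,y)$. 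Each forms its own unforced component, giving two unforced zeroes in separate places, which appears consistent. This sub-case therefore needs an extra argument.
\end{itemize}
In each sub-case I will use the degree formula $2g-2$ together with $\#\{\text{sign changes}\}-2$ from the proof of Lemma~\ref{lem:forced_zeroes_and_remaining}. I must also adapt the count when some $\sign_R(e)=0$, since the sign-change count then includes sign changes along intervals.

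\emph{Step 3 (main obstacle: two angles, or self-intersections not visible in the count).} For this I expect to need global geometry rather than local counting. Proposition~\ref{prop:local_conv} says $\mathscr A$ has negative curvature at smooth points, so the clockwise-traversed curve always turns right. Proposition~\ref{prop:tangent_curvature} gives tangent slopes $-1/\gamma(\zeta)$. As $\zeta$ runs over $A_0$, the tangent direction is monotone, and its total turning is controlled by the logarithmic Gauss map $\gamma$ restricted to $A_0$, which is injective-like by convexity of the amoeba complement~\cite{Kri13}. The same holds on each $A_j$. The plan is to show that the total turning of each closed component $\Omega^{-1}(A_j)$ equals exactly $2\pi$ minus the cusp contributions ($\pi$ each), in the manner of Lemma~\ref{lem:four_convex_vertices}. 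Then a locally convex closed curve whose cusps all point outward, with total turning matching a simple curve, must be simple. For two distinct angles I will additionally use that $(x,y)$ would lie on two tangent lines $\line_{e(\alpha)},\line_{e(\beta)}$, each tangent to $\mathscr A$ at $(x,y)$. Tangent slopes are distinct for non-parallel edges, which contradicts a single tangent direction at an ordinary point. At a cusp the tangent is also unique. The delicate part is making the turning-number argument rigorous near cusps; I would handle it by smoothing each ordinary cusp via the $y^2=x^3$ normal form.

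Finally, the images of different ovals are disjoint: if $\Omega^{-1}(A_j)$ met $\Omega^{-1}(A_k)$, the resulting double zeroes on two ovals would again contradict the count in Step 2.
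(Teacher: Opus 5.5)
You have surjectivity right, and the zero count is indeed what the paper uses at interior points. The injectivity argument, however, has genuine gaps.

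\emph{The two-angle subcase is miscounted.} Suppose $\Omega^{-1}(\alpha)=\Omega^{-1}(\beta)=(x,y)\in\mathcal D_c^\circ$ with $e(\alpha)\neq e(\beta)$. As in the proof of Lemma~\ref{lem:turning_point_multiplicity}, the component of $A_0$ minus the poles containing $\zz_{e(\alpha)}$ has residues of opposite sign at its two ends. So it carries an \emph{even} number of zeros, none of them forced, and a zero at $\alpha$ forces a second zero in that component. The same holds for $\beta$. That gives four unforced zeros against the two available, so this case needs no global geometry at all. The configuration where the count really is consistent is $\alpha\neq\beta$ in the \emph{same} $\zz_e$: both remaining zeros then sit at $\alpha$ and $\beta$ in the single component containing $\zz_e$. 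You never isolate this case, since you speak of two lines $\line_{e(\alpha)},\line_{e(\beta)}$. The paper's one-line interior counting argument does not address it explicitly either.

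\emph{Boundary points are never treated.} Step~1 mentions boundary turning points and then drops them. At a point of $\partial\mathcal D_c$ only angles can occur, because $\Omega^{-1}(\curve(\RR)\setminus\zz)=\mathscr A^\circ\subset\mathcal D_c^\circ$. Lemma~\ref{lem:forced_zeroes_and_remaining} leaves no unaccounted zeros there, so a refined parity count can be made to work, but you do not carry it out. The paper argues geometrically instead. Two branches through a point of $\partial\mathcal D_c$ share the side of $\partial\mathcal D_c$ as tangent line. Injectivity of $\Omega^{-1}$ on $\curve_+^\circ$ makes their rough germs disjoint. By local convexity (Proposition~\ref{prop:local_conv}) they would then lie on opposite sides of that line, so one of them would leave $\mathcal D_c$.

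Step~3 does not supply what is missing, for three reasons.
\begin{itemize}
\item The principle you invoke fails once cusps are allowed. A locally convex closed curve with cusps can have the total turning of a simple curve and still self-intersect: a swallowtail adds one self-crossing together with two cusps, without changing the curvature sign or the turning of the tangent line.
\item You never actually compute the turning of $\Omega^{-1}(A_0)$.
\item The tangent argument is circular. At a self-intersection each branch has its own tangent line, so two different tangents contradict nothing. Moreover, in the one case the count leaves open (two angles of the same $\zz_e$), both tangents are $\line_e$ anyway.
\end{itemize}
What local convexity plus injectivity of $\Omega^{-1}$ on $\curve_+^\circ$ does give you in that case is that the two branches cannot touch $\line_e$ from the same side. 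A touching from opposite sides at a point of $\line_e\cap\mathcal D_c^\circ$ is exactly what a complete argument still has to exclude.
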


\begin{proof}
Surjectivity follows from Proposition~\ref{prop:omega_inverse}. It remains to
show that \(\Omega^{-1}\) is injective.

By Proposition~\ref{prop:tangent_curvature}, every point of \(\mathscr A\) is
locally either a smooth point of a branch of $\mathscr{A}$ or an ordinary cusp. In particular,
\(\Omega^{-1}\big|_{\curve(\RR)}\) is locally injective. Therefore, if
\(\Omega^{-1}\) were not injective, then \(\mathscr A\) would have a
self-intersection.

We first claim that no self-intersection can occur at a point of \(\partial \mathcal D_c\).
Indeed, there are no cusps on \(\partial \mathcal D_c\), so such a point would have to be
a smooth point of two distinct branches of \(\mathscr A\) with a common tangent
line. Since \(\overline{\liq}\) is locally convex by
Proposition~\ref{prop:local_conv} and the map $\Omega^{-1}\big|_{\curve_+^\circ}$ is a diffeomorphism, the two branches would have to lie on opposite
sides of this common tangent line. This is impossible because \(\partial \mathcal D_c\) is
simple.

Therefore, any self-intersection \((x,y)\) of \(\mathscr A\) would have to lie
in the interior \(\mathcal D_c^\circ\). But then each branch through \((x,y)\) imposes a
different condition on the location of the two remaining zeroes of
\(\lambda_c(x,y)\), which is impossible.

Thus,
\[
\Omega^{-1}:\curve(\RR)\to \mathscr A
\]
is globally injective, and therefore, a bijection.
\end{proof}



\subsection{Zeroes of \texorpdfstring{\(\lambda_c(x,y)\)}{lambda} and tangent lines to the arctic curve}\label{sec:tangent_arctic_curve}

\begin{figure}[t]
\centering

\begin{subfigure}{0.48\textwidth}
\centering
\begin{tikzpicture}[scale=0.1]
    \node  (0) at (0, 31) {};
    \node  (1) at (0, 0) {};
    \node  (2) at (37, 0) {};
    \node[bluevert]  (3) at (0, 27) {};
    \node[bluevert]  (4) at (32.75, 0) {};
    \node  (5) at (0, 24.5) {};

    \draw (2.center) -- (1.center);
    \draw (1.center) -- (0.center);
    \draw [blue] (4.center) to[out=180,in=270] (3.center);

    \node [nvert] (7) at (9, 5) {};
    \node [redvert] (8) at (20.75, 1.75) {};
    \node [redvert] (9) at (3.5, 14.25) {};
    \draw (7) -- (9);
    \draw (7) -- (8);
\end{tikzpicture}
\caption*{(A)}
\end{subfigure}
\hfill
\begin{subfigure}{0.48\textwidth}
\centering
\begin{tikzpicture}
    \clip (-1,-1.5) rectangle (3.7,2.7);

    \node  (0) at (0, -1.5) {};
    \node  (1) at (0, 0) {};
    \node  (2) at (3.7, 0) {};
    \node  (3) at (0, 2.7) {};
    \node[bluevert]  (4) at (3.275, 0) {};
    \node  (5) at (0, 2.45) {};
    \node [nvert] (7) at (0.3, 0.7) {};
    \node  (10) at (0, 1.8) {};
    \node[greenvert]  (11) at (-0.6, 1.6) {};
    \node[bluevert]  (12) at (0, 0.6) {};
    \node  (13) at (-2.8, -1.5) {};

    \draw (2.center) -- (1.center);
    \draw (1.center) -- (0.center);
    \draw[dashed,gray] (1.center) -- (10.center);
    \draw[blue] (4.center) to[out=180,in=-45] (11.center);
    \draw[blue] (12.center) to[out=90,in=-45] (11.center);
    \draw[blue] (12.center) to[out=-90,in=0,looseness=0.75] (13.center);
    \node[greenvert]  (11) at (-0.6, 1.6) {};
    \node[redvert] (8) at (1.225, 0.325) {};
    \node[redvert] (9) at (-0.275, -0.15) {};
    \node[redvert] (14) at (-0.175, 1.2) {};
    \draw (7) -- (9);
    \draw (7) -- (8);
    \draw (7) -- (14);
\end{tikzpicture}
\caption*{(B)}
\end{subfigure}

\vspace{1em}

\begin{subfigure}{0.48\textwidth}
\centering
\begin{tikzpicture}[scale=1.3]
    \node (1) at (0, 0) {};
    \node (2) at (3.7, 0) {};
    \node (3) at (0, 2.7) {};
    \node[bluevert] (4) at (3.275, 0) {};
    \node (5) at (0, 2.45) {};
    \node [nvert] (7) at (2.05, 0.6) {};
    \node[greenvert] (11) at (1.9, 1.5) {};
    \node[bluevert] (12) at (0.4, 0) {};
    \node (14) at (2.425, 0.3) {};
    \node (15) at (1.95, 1.075) {};

    \draw (2) -- (1);
    \draw [blue] (4.center) to[out=180,in=-90] (11.center);
    \draw [blue] (12.center) to[out=0,in=-90,looseness=0.75] (11.center);
    \node[greenvert] (11) at (1.9, 1.5) {};
    \node[redvert] (8) at (0.875, 0.125) {};
    \node[redvert] (14r) at (2.425, 0.3) {};
    \node[redvert] (15r) at (1.95, 1.075) {};
    \draw (7) -- (8);
    \draw (7) -- (14r);
    \draw (7) -- (15r);
\end{tikzpicture}
\caption*{(C)}
\end{subfigure}
\hfill
\begin{subfigure}{0.48\textwidth}
\centering
\begin{tikzpicture}[scale=1.3]
    \node[greenvert] (4) at (3.275, 0) {};
    \node [nvert] (7) at (1.75, 0.35) {};
    \node[greenvert] (11) at (2, 1.525) {};
    \node[greenvert] (13) at (0.3, 0) {};
    \node[greenvert] (15) at (2.1, -1) {};

    \draw [blue] (4.center) to[out=180,in=-90] (11.center);
    \draw (13.center) to[out=0,in=-90,looseness=1.25] (11.center);
    \draw [blue] (15.center) to[out=105,in=180,looseness=1.25] (4.center);
    \draw [blue] (15.center) to[out=105,in=0] (13.center);
    \node[greenvert] (4) at (3.275, 0) {};
    \node[greenvert] (11) at (2, 1.525) {};
    \node[greenvert] (13) at (0.3, 0) {};
    \node[greenvert] (15) at (2.1, -1) {};

    \node[redvert] (8) at (1.275, 0.15) {};
    \node[redvert] (9) at (2.925, 0.05) {};
    \node[redvert] (14) at (1.9, 0.825) {};
    \node[redvert] (16) at (2.05, -0.85) {};

    \draw (7) -- (9);
    \draw (7) -- (8);
    \draw (7) -- (14);
    \draw (7) -- (16);
\end{tikzpicture}
\caption*{(D)}
\end{subfigure}

\caption{Local tangent configurations associated with the zeroes of
\(\lambda_c(x,y)\). (A)--(D) correspond respectively to the frozen region
at a convex vertex, the frozen region at a concave vertex, the quasi-frozen
region and the smooth region. {In (B), the number of simple zeroes in \((\alpha,\beta)\) is either two or three, depending on which side of the
dashed line \((x,y)\) lies; here the frozen region is \(\mathcal F_{(\alpha,\beta)}\).} Orange points denote zeroes of \(\lambda_c(x,y)\) under the identification \(\Omega^{-1}\bigl|_{\curve(\RR)}\).
}
\label{fig:local-configurations}

\end{figure}

The zeroes of \(\lambda_c(x,y)\) admit a simple geometric interpretation in terms of tangent lines to the arctic curve. This gives a convenient dictionary between the phase classification from Definition~\ref{def:regions_in_limit_shape} and the geometry of \(\mathscr A\), generalizing the picture of Kenyon and Okounkov~\cite[Section~1.8]{KO07}; see also~\cite[Proposition~15]{BB24}.

\begin{proposition}\label{prop:tangent_zero_iff}
Let \((x_0,y_0)\in \mathscr{A}\) and let
\[
\zeta_0:=\Omega(x_0,y_0).
\]
For \((x,y)\in \RR^2\), the following are equivalent:
\begin{enumerate}
\item \((x,y)\) lies on the tangent line to \(\mathscr{A}\) at \((x_0,y_0)\).
\item \(\zeta_0\) is a zero of \(\lambda_c(x,y)\).
\end{enumerate}
\end{proposition}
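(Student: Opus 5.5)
The key observation is that, for a fixed point \(\zeta_0\in\curve(\RR)\), the condition ``\(\lambda_c(x,y)\) vanishes at \(\zeta_0\)'' is a single real affine-linear equation in \((x,y)\). We will show that this line is the tangent line at \((x_0,y_0)\).

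First suppose \(\zeta_0\in\curve(\RR)\setminus\zz\). Choose a local coordinate \(\zeta=u+\ii v\) centered at \(\zeta_0\) with \(\sigma(\zeta)=\bar\zeta\), and write \(\lambda_c(x,y)=F(x,y;\zeta)\,d\zeta\) as in~\eqref{eq:Fxyzeta}. By Lemma~\ref{lem:third_kind_ind}, the numbers \(f_1(\zeta_0),f_2(\zeta_0),f_3(\zeta_0)\) are real. Hence item~2 is equivalent to
\[
x f_1(\zeta_0)+y f_2(\zeta_0)+f_3(\zeta_0)=0 .
\]
By Lemma~\ref{lem:zeroes_xdlogzydlogw}, \((f_1(\zeta_0),f_2(\zeta_0))\neq(0,0)\), since otherwise \(d\log z\) and \(d\log w\) would both vanish at \(\zeta_0\). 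So this equation defines a genuine line \(L_{\zeta_0}\).

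By construction \(\Omega(x_0,y_0)=\zeta_0\), so \(\lambda_c(x_0,y_0)\) vanishes at \(\zeta_0\) (to order at least two), and therefore \((x_0,y_0)\in L_{\zeta_0}\). The normal vector of \(L_{\zeta_0}\) is \((f_1,f_2)(\zeta_0)\), so its slope is \(-f_1(\zeta_0)/f_2(\zeta_0)\). By Item~3 of Proposition~\ref{prop:tangent_curvature}, this is exactly the slope of the tangent line to \(\mathscr A\) at \((x_0,y_0)\). Two lines through the same point with the same slope coincide, which gives the equivalence. This also covers cusps, where Proposition~\ref{prop:tangent_curvature} still supplies a well-defined tangent.

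Now suppose \(\zeta_0=\alpha\in\zz\). In the local coordinate we have \(f_j=r_j/\zeta+s_j+O(|\zeta|)\), and \(\lambda_c(x,y)\) has a zero at \(\alpha\) exactly when both the pole coefficient \(-b_{e(\alpha)}x+a_{e(\alpha)}y+c(e(\alpha))\) and the constant coefficient \(xs_1+ys_2+s_3\) vanish. By Proposition~\ref{prop:turning_points_unique}, these two conditions together single out one point, namely the turning point \(\Omega^{-1}(\alpha)\), not a line. This is the main obstacle: the literal statement needs interpretation at turning points.

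The natural resolution is to read ``zero'' in the sense of removable singularity: require only that the pole vanish, i.e.\ that \(\alpha\) is not a pole of \(\lambda_c(x,y)\). That condition is exactly \((x,y)\in\line_{e(\alpha)}\). By Item~3 of Proposition~\ref{prop:tangent_curvature}, \(\line_{e(\alpha)}\) is precisely the tangent line at \(\Omega^{-1}(\alpha)\), which again gives the equivalence. Equivalently, one can pass to \(h_j=\zeta f_j\), as in the proof of Proposition~\ref{prop:omega_inverse}, and apply the same linear-equation argument with \(h_j(\alpha)=r_j\).

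I would state this convention for \(\zeta_0\in\zz\) explicitly in the proof. Injectivity, Corollary~\ref{cor:turning_points_injective}, ensures that \(\zeta_0=\Omega(x_0,y_0)\) is well defined, so the tangent line at \((x_0,y_0)\) is unambiguous.
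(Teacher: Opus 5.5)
Your argument is the paper's proof: the paper also takes the tangent-line equation \((x-x_0)f_1(\zeta_0)+(y-y_0)f_2(\zeta_0)=0\) from Proposition~\ref{prop:tangent_curvature} and subtracts \(x_0f_1(\zeta_0)+y_0f_2(\zeta_0)+f_3(\zeta_0)=0\). The paper applies these three lines uniformly, including at \(\zeta_0=\alpha\in\zz\) where the \(f_j\) have poles, so your extra remark about turning points is correct and more careful than the text. There, the same computation (via \(h_j=\zeta f_j\)) only gives vanishing of the residue, i.e.\ \((x,y)\in\line_{e(\alpha)}\), and item~2 must be read as ``the pole at \(\alpha\) cancels'', since by Proposition~\ref{prop:turning_points_unique} \(\alpha\) is a genuine zero only at \(\Omega^{-1}(\alpha)\) itself.
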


\begin{proof}
By Proposition~\ref{prop:tangent_curvature}, \((x,y)\) lies on the tangent line through
\((x_0,y_0)\) if and only if
\begin{equation} \label{eq:tangent_condition}
    (x-x_0)f_1(\zeta_0)+(y-y_0)f_2(\zeta_0)=0.
\end{equation}
Since $\zeta_0=\Omega(x_0,y_0)$, we have
\[
x_0f_1(\zeta_0)+y_0f_2(\zeta_0)+f_3(\zeta_0)=0.
\]
Thus,~\eqref{eq:tangent_condition} is equivalent to
\[
x f_1(\zeta_0)+y f_2(\zeta_0)+f_3(\zeta_0)=0,
\]
\emph{i.e.} to \(\zeta_0\) being a zero of \(\lambda_c(x,y)\).
\end{proof}

Proposition~\ref{prop:tangent_zero_iff} translates the conditions on the zeroes of \(\lambda_c(x,y)\) in Definition~\ref{def:regions_in_limit_shape} into statements about tangent lines to \(\mathscr A\). The possible degenerations as we approach $\mathscr{A}$ may be summarized as follows.

\begin{enumerate}
\item As \((x,y)\) approaches the arctic curve from outside, two real tangents merge into a double tangent. When it crosses into the rough region, this double tangent splits into a \(\sigma\)-conjugate pair.

\item As \((x,y)\) approaches a cusp of \(\mathscr A\), three tangents merge into a triple tangent.

\item As $(x,y)$ approaches a turning point corresponding to \(\alpha\in\zz\) that is not a turning-point cusp, two tangents collide with the pole at \(\alpha\) and cancel, producing a simple zero.

\item At a turning-point cusp, a triple tangent collides with the pole, producing a double zero.
\end{enumerate}

\section{Asymptotics: local statistics and the limit shape}  \label{sec:loc_stats_limit_shape}

We now turn to the asymptotic analysis of the exact inverse Kasteleyn formula~\eqref{eq:formula_inverse_K}. The argument follows the general steepest-descent framework developed in~\cite{BB23}; see also~\cite{BN25}, with the necessary modifications for our general setting.

\subsection{Local statistics}\label{sec:steepest}

Fix an astroidal domain $\mathcal D_c$ and a sequence of AZ graphs
\(
\graphbetan, n = 1,2,\dots,
\)
such that
\[
c_{\bm\beta^n}(e)=n|e|_\ZZ c(e)+O(1),
\qquad e\in E(N),
\]
see Sections~\ref{sec:def_AZ_graphs} and~\ref{sec:def_Astroidal_domains} for definitions.
Let~$\compact\subset \graphpl$ be a finite subgraph, and let~$(j_n,k_n)\in \ZZ^2$ satisfy
 \[(j_n,k_n)=n(x,y)+\Ordo(1)\] for some~$(x,y) \in \domain^\circ$. Set~\[\compact_n:=(j_n,k_n)+\compact.\] Recall the phase regions~$\liq$,~$\mathcal F_{(\alpha,\beta)}$,~$\mathcal Q_{(\alpha,\beta)}$ and~$\mathcal G_j$ from Definition~\ref{def:regions_in_limit_shape}. We define \(A_{(x,y)}\) by
\[
A_{(x,y)}:=
\begin{cases}
(\alpha,\beta), & \text{if \((x,y)\) lies in a frozen or quasi-frozen region corresponding to }(\alpha,\beta)\subset A_0,\\
A_j, & \text{if \((x,y)\in \mathcal G_j\)},
\end{cases}
\]
where
\[
\curve(\RR) = A_0 \sqcup \left(\bigsqcup_{j=1}^g A_j\right)
\]
as in Section~\ref{sec:M-curves}.
\begin{definition}\label{def:zero}
We say that a point~$(x,y)\in \mathcal D_c^\circ$ lies in a \emph{phase region} if it belongs to a rough, smooth, frozen, or quasi-frozen region. For such a point~$(x,y)$, let
\begin{equation} \label{eq:zeta_*}
\zero=\zero(x,y) := \begin{cases} \Omega(x,y), &\text{if $(x,y) \in \liq$},\\
\text{any point of $A_{(x,y)}$}, &\text{otherwise}.
\end{cases}
\end{equation}
\end{definition}
Convergence of local correlations is captured by the following statement.
\begin{theorem}\label{thm:inverse_convergence}
{Assume that the angle function is periodic.} Let~$(x,y)\in \mathcal D_c^\circ$ lie in a phase region and let~$\zero$ be as in Definition~\ref{def:zero}. If \((x,y)\in \mathcal F_{(\alpha,\beta)}\), assume also that
\[
(x,y)\notin \line_{e(\alpha)}\cup \line_{e(\beta)}.
\] 
For a black vertex $\bl$ and a white vertex $\wh$ of $\compact$, let
\[
\bl_n:=\bl+(j_n,k_n),
\qquad
\wh_n:=\wh+(j_n,k_n)
\]
denote their translates in $\compact_n$. Then
\[
\kast^{-1}_{\bl_n,\wh_n}\rightarrow \mathsf A^{\zero}_{\bl,\wh}
\qquad\text{as }n\to\infty,
\]
uniformly over $\bl$ and $\wh$.
\end{theorem}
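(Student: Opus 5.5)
We follow the steepest-descent scheme of~\cite{BB23}, applied to the exact formula~\eqref{eq:formula_inverse_K}. The first step is to rewrite the integrand in exponential form. Using~\eqref{eq:prime_form_action_asymptotic} and the definition of $g_\bl,g_\wh$ through the discrete Abel map, we write
\[
\omega_{\bl_n,\wh_n}(\zeta,\eta)=\frac{1}{(2\pi\ii)^2}\frac{1}{E(\zeta,\eta)}\frac{\theta(-t+\bm D_{\bm\beta^n}+\eta-\zeta)}{\theta(-t+\bm D_{\bm\beta^n})}\,\Phi_{\bl,\wh}(\zeta,\eta)\,\exp\bigl(n(\Lambda_c(\eta;x,y)-\Lambda_c(\zeta;x,y))\bigr),
\]
where $\Phi_{\bl,\wh}$ collects $O(1)$ prime-form and theta factors depending only on $\bl,\wh\in\compact$ and the $O(1)$ corrections. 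Periodicity of the angle function is used here: $\bm D_{\bm\beta^n}$ and $\dabel(\bl_n)$ differ from $n$ times a fixed real divisor by a bounded, finitely-valued correction. Hence the theta ratios stay in a compact family, and they are bounded on contours avoiding their zeroes after passing to subsequences. For the single-integral terms we use Proposition~\ref{prop:invariance}, in particular Item~3, which lets us pick the convex vertex $v$ and orientation. We choose the one for which the intervals $I_{\bl_n},I_{\wh_n}$ are adapted to $\sign_R$ at the cell $R\ni(x,y)$. Translation invariance of $g_{\bl_n,\wh_n}=g_{\bl,\wh}$ in the periodic setting then identifies the correction terms with $\mathsf A^{u}_{\bl,\wh}$ for $u$ an endpoint vertex.

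The second step is to deform the contours. Since $\Re\Lambda_c(\cdot;x,y)$ is single valued (imaginary normalization) and $\lambda_c$ has real residues of sign $\sign_R(e)$ by Lemma~\ref{lem:sign_poles_cts}, we deform $C(I_\bl)$ into a contour $\gamma_\zeta$ on which $\Re\Lambda_c(\zeta)\ge\Re\Lambda_c(\zero)$. We deform $C(I_\wh)$ into $\gamma_\eta$ on which $\Re\Lambda_c(\eta)\le\Re\Lambda_c(\zero)$, with equality only at $\zero$ and $\sigma(\zero)$. The existence of these contours comes from counting zeroes via Lemma~\ref{lem:forced_zeroes_and_remaining}. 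Each forced zero on an interval of $A_0$ or on $A_j$ is a saddle separating ascent and descent regions, and the level set $\{\Re\Lambda_c=\Re\Lambda_c(\zero)\}$ has the topology dictated by the four sign changes. There are three cases.

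\begin{itemize}
\item In $\liq$, the two contours cross transversally at $\zero$ and $\sigma(\zero)$.
\item In frozen or quasi-frozen regions, the two remaining real zeroes in $A_{(x,y)}$ pin both contours to cross $A_0$ inside $A_{(x,y)}$.
\item In $\mathcal G_j$, the contours pass through $A_j$.
\end{itemize}

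The hypothesis $(x,y)\notin\line_{e(\alpha)}\cup\line_{e(\beta)}$ ensures that the residues at the endpoint angles are nonzero, so the endpoints of $A_{(x,y)}$ are genuine poles.

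During the deformation we cross the diagonal pole $\zeta=\eta$ along an arc from $\sigma(\zero)$ to $\zero$. This produces $\frac{1}{2\pi\ii}\int g_{\bl,\wh}$ along that arc. Combined with the chosen correction term $\mathsf A^{u}$, the contour $C^{u}$ is replaced by $C^{\zero}_{\bl,\wh}$, which gives exactly $\mathsf A^{\zero}_{\bl,\wh}$ by Theorem~\ref{thm:whole_plane_inverse_kast}. The remaining double integral is $O(n^{-1/2})$ in $\liq$, using the nondegenerate critical point and the fact that $\zero$ is a simple zero. Outside $\liq$ it is exponentially small, apart from neighbourhoods of the crossing points, where the standard local estimate applies. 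In the smooth case, the theta factor's dependence on $n$ must be controlled.

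The main obstacle is the global topological step: proving that the descent and ascent contours exist, are homologous to $C(I_{\bl_n})$ and $C(I_{\wh_n})$ modulo only the diagonal residue, and cross no angles carrying poles of $1/(g_\bl\Psi)$ or $g_\wh\Psi$. We would attack it by tracking the level curves of $\Re\Lambda_c$ emanating from each real zero on $\Sigma_+$. We would then use $\sigma$-symmetry and the four-sign-change structure to show that these curves connect in the single pattern compatible with the sign sections $s_v$ of Lemma~\ref{lem:sign_changes_sections}.
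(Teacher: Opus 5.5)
Your outline follows the paper's proof in structure: the convention of Remark~\ref{rem:inverse_kasteleyn_signs}, deformation to steepest-descent/ascent contours, the diagonal residue turning the single whole-plane correction into $\mathsf A^{\zero}_{\bl,\wh}$, and decay of the remaining double integral. However, the roles of the two contours in your second step are reversed, and with the actual integrand your estimate fails. Lemma~\ref{lem:divisor_coefficients_asymptotic} gives $(\dabel(\x_n)+\bm D_{\bm\beta^n})_\alpha=-n\res_\alpha\lambda_c(x,y)+O(1)$. So the prime-form factor is $\exp\bigl(n(\Lambda_c(\zeta;x,y)-\Lambda_c(\eta;x,y))+O(1)\bigr)$; the display~\eqref{eq:prime_form_action_asymptotic} you quoted has the opposite sign and is inconsistent with that lemma. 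You can confirm this from the poles. $C(I_{\bl_n})$ encloses poles of $1/(g_{\bl_n}\Psi_{\bm\beta^n})$, which by Lemmas~\ref{lem:sign_poles} and~\ref{lem:asymp_intervals} sit at angles with $\sign_R=-$, i.e.\ where $\res\lambda_c<0$ and $\Re\Lambda_c\to+\infty$. Hence the $\zeta$-contour must be a descent contour, $\Re\Lambda_c(\zeta)\le\Re\Lambda_c(\zero)$, leaving the saddle towards the angles in $A_{R,+}$, which are not poles of the $\zeta$-integrand. The $\eta$-contour must correspondingly be an ascent contour. This is what the paper does: $C(I_{\bl_n})\to C_{(x,y),\mathrm{desc}}$ and $C(I_{\wh_n})\to C_{(x,y),\mathrm{asc}}$. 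With your assignment, $\bigl|e^{n(\Lambda_c(\zeta)-\Lambda_c(\eta))}\bigr|\ge 1$ on the whole product contour. Moreover, the natural ascent paths from the saddle end in $A_{R,-}$, among the poles of the $\zeta$-integrand, so they are not admissible deformations of $C(I_{\bl_n})$ in the first place.

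Second, the step you flag as the main obstacle is the heart of the paper's proof, and your sketch of it (level sets of $\Re\Lambda_c$ organized by the sections $s_v$) is not yet an argument; the $s_v$ are not needed. The paper follows the level line $\Im\int_{\zeta_0}^\zeta\lambda_c=0$ into $\Sigma_+^\circ$, starting from $\Omega(x,y)$ or from each relevant real zero. When this line hits a real zero on another oval $A_j$, it continues along $A_j$ to the second zero there and re-enters the interior. Monotonicity of $\Re\int\lambda_c$ (single valued by imaginary normalization) does the rest (Lemma~\ref{lem:steepest_descent_curves}): it makes the curve simple, forbids revisiting an oval, forces termination in $A_{R,+}$ (resp.\ $A_{R,-}$), and makes descent and ascent curves disjoint. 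Lemma~\ref{lem:descent_curves} then reads off, phase by phase, where the $\sigma$-symmetrized curves meet $A_0$. The bridge to finite $n$, absent from your plan, is Lemma~\ref{lem:asymp_intervals}: wherever $\sign_R(e)\neq 0$, the chambers of $\bl_n,\wh_n$ carry the same sign for large $n$, so $I_{(x,y),\pm}\subseteq I_{\x_n,\pm}$. This guarantees that the deformations cross no poles. Combined with Lemma~\ref{lem:sector}, it also reduces the correction term to exactly $\mathsf A^{\ell_{\bl_n}}_{\bl_n,\wh_n}$, which you only assert: $r_{\bl_n}\notin I_{\wh_n}$, and if $\ell_{\bl_n}\notin I_{\wh_n}$ then $\ell_{\bl_n}\in\sector_{\bl_n,\wh_n}$, so $\mathsf A^{\ell_{\bl_n}}_{\bl_n,\wh_n}=0$. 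It also explains the excluded lines, which is more than making the endpoints of $A_{(x,y)}$ genuine poles. On $\line_{e(\alpha)}\cup\line_{e(\beta)}$ the sign at the edge adjacent to $A_{(x,y)}$ is not determined by $(x,y)$, and the topology of the descent/ascent curves depends on which side $\bl_n,\wh_n$ lie (Remark~\ref{rem:technical_lines}).
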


\begin{remark}
An equivalent way of stating Theorem~\ref{thm:inverse_convergence} without formulas is that the local statistics are described by the Gibbs measure of the corresponding slope; see Corollary~\ref{cor:local_fluctuations} below for a precise statement.
\end{remark}

\begin{remark}
If~$(x,y) \in \mathcal F_{(\alpha,\beta)} \cap (\line_{e(\alpha)} \cup \line_{e(\beta)})$, then we expect that $(x,y)$ lies either in the frozen region or on a frozen-frozen boundary. The analysis of these points requires additional technical arguments which we do not pursue here {(see Remark~\ref{rem:technical_lines} below)}; we expect that it does not lead to interesting new phenomena.
\end{remark}

The rest of the section is dedicated to the proof of Theorem~\ref{thm:inverse_convergence}. We begin by introducing notation of the relevant sign intervals. For $(x,y) \in \domain^\circ$, let $I_{(x,y),+}$ (resp. $I_{(x,y),-}$) denote the union of the two $+$ (resp. $-$) sign intervals for $\sign_{R}$ where $R$ is the cell containing $(x,y)$ as in Section~\ref{sec:action_one_form}.
Let~$A_{\cR,+}\subset A_0$ (resp. $A_{\cR,-}\subset A_0$) be the union of the two smallest intervals in~$A_0$ containing all angles~$\bm \alpha_e$ for all~$e\in I_{(x,y),+}$ (resp. $e\in I_{(x,y),-}$) and any $(x,y)\in \cR$. Note that if~\((x,y)\in \line_e\) for some edge \(e\), then some signs vanish, and hence,~$I_{(x,y),+}\cup I_{(x,y),-} \neq \partial N$. 

Similarly, for $\x \in B(\graphbeta) \sqcup W(\graphbeta)$, let $I_{{\x},+}$ (resp. $I_{\x,-}$) denote the union of the two $+$ (resp. $-$) sign intervals for $\sign_{\mathsf{R}_\x}$ as in Section~\ref{sec:chambers_and_sign_changes}.

\begin{lemma}\label{lem:asymp_intervals}
Let \(\cR\subset \domain^\circ\) be a cell, and let
\[
\x_n=\x+(j_n,k_n)\in B(\compact_n)\sqcup W(\compact_n),\qquad \x \in B(\compact)\sqcup W(\compact),
\]
be a sequence such that
\(
(j_n,k_n)=n(x,y)+O(1)
\) for some \((x,y)\in \cR\). Then, for any \(e\in E(N)\) such that $\sign_{\cR}(e)\neq 0$, we have
\(
\sign_{\dR_{\x_n}}(e)=\sign_{\cR}(e)
\) for all sufficiently large \(n\). In particular,
\[
I_{(x,y),+}\subseteq I_{{\x_n},+},
\qquad
I_{(x,y),-}\subseteq I_{{\x_n},-}
\]
for all sufficiently large \(n\).
\end{lemma}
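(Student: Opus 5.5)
We compare the discrete sign of $\x_n$ relative to the strand $\beta^n_e$ with the continuous sign of $(x,y)$ relative to the line $\line_e$, using the discrete Abel map. The sign $\sign_{\dR_{\x_n}}(e)$ records on which side of $\beta^n_e$ the chamber containing $\x_n$ lies. That side is detected by the coefficient of $\beta^n_e$ in $\dabel(\y)-\dabel(\x_n)$, where $\y$ is a $\beta_e^n$-outer vertex, exactly as in the proof of Lemma~\ref{lem:sign_poles}. By Lemma~\ref{lem:parallel_zz_same_sign}, all coefficients of $\dabel(\y)-\dabel(\x_n)$ on $\zz_e$ share one sign. Consequently $\sign_{\dR_{\x_n}}(e)$ is the sign of the integer $\deg\bigl((\dabel(\y)-\dabel(\x_n))_{\zz_e}\bigr)$, up to an $O(1)$ boundary correction (of size at most one) that fixes which side $\beta_e^n$ itself is counted on. Equivalently, it is the sign of $\deg\bigl((-\dabel(\x_n)-\bm D_{\bm\beta^n})_{\zz_e}\bigr)+O(1)$, by \eqref{eq:d_beta_defn}.

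Next we compute this degree asymptotically. Lemma~\ref{lem:divisor_coefficients_asymptotic} gives, coefficientwise on $\zz_e$,
\[
\frac1n\bigl(-\dabel(\f_0+(j_n,k_n))-\bm D_{\bm\beta^n}\bigr)_\alpha=-b_e x+a_e y+c(e)+O(1/n).
\]
Replacing $\f_0+(j_n,k_n)$ by $\x_n$ changes $\dabel$ only by $\dabel(\x)-\dabel(\f_0)$, a fixed finite divisor since $\compact$ is finite. Summing over the $|e|_\ZZ$ strand classes of $\zz_\torus$ parallel to $e$ (using periodicity), the relevant quantity is
\[
n|e|_\ZZ\bigl(-b_e x+a_e y+c(e)\bigr)+O(1).
\]
The quantity in parentheses is the defining linear function of $\line_e$, and by Lemma~\ref{lem:sign_poles_cts} its sign is $\sign_\cR(e)$. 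If $\sign_\cR(e)\neq0$, the leading term has size of order $n$ and dominates the $O(1)$ errors, so $\sign_{\dR_{\x_n}}(e)=\sign_\cR(e)$ for all large $n$.

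The step needing the most care is the bookkeeping of the $O(1)$ terms and of the sign conventions. We must check three things: that the count of strands in $\zz_e$ separating $\x_n$ from $\beta_e^n$ really equals the degree above up to a bounded error; that its sign matches the left/right convention of \eqref{eq:local_sign} versus the orientation of $\line_e$ along $\vec e$; and that when $\x_n$ is outer the choice of $\Rin$ plays no role. The last point holds because the statement concerns $\dR_{\x_n}$, and in any case only finitely many strands near $\x_n$ are involved.

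Finally, for the interval inclusions, take any edge $e\subset I_{(x,y),+}$. Then $\sign_\cR(e)=+$, hence $\sign_{\dR_{\x_n}}(e)=+$ for large $n$. The same holds for the finitely many such edges simultaneously, so $I_{(x,y),+}\subseteq I_{\x_n,+}$ for all large $n$, and likewise for the $-$ intervals.
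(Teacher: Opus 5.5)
Your proposal is correct and follows essentially the paper's argument. You shift Lemma~\ref{lem:divisor_coefficients_asymptotic} from $\f_0+(j_n,k_n)$ to $\x_n$ (the shift $\dabel(\x)-\dabel(\f_0)$ is bounded), read off that $-\dabel(\x_n)-\bm D_{\bm\beta^n}$ on $\zz_e$ has sign $\sign_{\cR}(e)$ and size of order $n$, and convert this into the side of $\beta_e$ via the strip argument of Lemmas~\ref{lem:parallel_zz_same_sign} and~\ref{lem:sign_poles}. The only difference is bookkeeping: you work with the degree over $\zz_e$ and make the off-by-one at $\beta_e$ explicit, whereas the paper argues coefficient by coefficient; your version is, if anything, slightly more careful.
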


\begin{proof}
Since \(\compact\) is a fixed finite subgraph,
\[
(\dabel(\x)-\dabel(\f_0))_\alpha=O(1), \quad \text{as } n\to \infty,
\]
for every \(\alpha\in\zz\). Hence,
\[
\bigl(-\dabel(\x_n)-\bm D_{\bm\beta^n}\bigr)_\alpha
=
\bigl(-\dabel(\f_0+(j_n,k_n))-\bm D_{\bm\beta^n}\bigr)_\alpha
-(\dabel(\x)-\dabel(\f_0))_\alpha.
\]
Dividing by \(n\) and using Lemma~\ref{lem:divisor_coefficients_asymptotic}, we obtain
\[
\frac1n\bigl(-\dabel(\x_n)-\bm D_{\bm\beta^n}\bigr)_\alpha
=
-b_e x+a_e y+c(e)+O\left(\frac1n\right)
\qquad\text{for all }\alpha\in\zz_e.
\]
Therefore, {recalling Lemma~\ref{lem:sign_poles_cts}}, {for all~$e\in E(N)$ such that} 
\[
{\sign_{\cR}(e)=\sign\bigr(-b_e x+a_e y+c(e)\bigl)\neq 0}
\]
{and} all sufficiently large \(n\),
\[
\sign\bigl((-\dabel(\x_n)-\bm D_{\bm\beta^n})_\alpha\bigr)
=
\sign\bigl(-b_e x+a_e y+c(e)\bigr)
=
\sign_{\cR}(e)
\qquad\text{for all }\alpha\in\zz_e.
\]
As in the proof of Lemma~\ref{lem:parallel_zz_same_sign}, this implies that \(\x_n\) lies in one of the strips on the corresponding side of \(\beta_e\), and hence,
\[
\sign_{\dR_{\x_n}}(e)=\sign_{\cR}(e)
\]
for all sufficiently large \(n\). The inclusions
\[
I_{(x,y),+}\subseteq I_{\x_n,+},
\qquad
I_{(x,y),-}\subseteq I_{\x_n,-}
\]
follow immediately.
\end{proof}

Fix~$(x,y)\in \domain^\circ$. We call a curve~$\gamma\subset \Sigma$ a \emph{steepest descent} (resp. \emph{ascent}) \emph{curve} if: 
\begin{itemize}
\item it passes through a zero~$\zeta_0$ of~$\lambda_c(x,y)$,
\item for~$\zeta\in \gamma$, we have\[\Im \int_{\zeta_0}^\zeta\lambda_c(x,y)=0,\] where the integral is taken along~$\gamma$,
\item the function~\[\gamma\ni \zeta\mapsto \Re \int_{\zeta_0}^\zeta\lambda_c(x,y)\] is locally decreasing (resp. locally increasing) as~$\zeta$ leaves~$\zeta_0$ along~$\gamma$. 
\end{itemize}

By Lemma~\ref{lem:third_kind_ind} a steepest descent/ascent curve is invariant (up to orientation) under the involution~$\sigma$. We therefore first describe the portion of the curve lying in~$\Sigma_+$ and postpone specifying its orientation until Lemma~\ref{lem:descent_curves} below. 

We say that~$\zeta_0\in \Sigma(\RR)$ is a \emph{local minimum} (resp. \emph{maximum}) if~$\zeta_0$ is a simple zero of~$\lambda_c(x,y)$ and a local minimum (resp. maximum) of the function \[\Sigma(\RR)\ni \zeta \mapsto \Re\int_{\zeta_0}^\zeta \lambda_c(x,y).\] 
Such a point is a saddle for~$\zeta \mapsto \Re\int_{\zeta_0}^\zeta \lambda_c(x,y)$: along the curve going perpendicular to $\Sigma(\RR)$, it is a local maximum (resp. minimum). Our convention is that the term local minimum (resp. maximum) refers to the behavior along $\curve(\RR)$.

\begin{lemma}\label{lem:steepest_descent_curves}
Let~$(x,y)\in \cR\subset \domain^\circ$ and assume that~$(x,y)$ lies in a phase region.
\begin{enumerate}
    \item If~$(x,y)$ is in the frozen, quasi-frozen, or smooth region, and~$\zeta_0\in A_{(x,y)}$ is a local minimum (resp. maximum), there exists a simple curve $C_{\zeta_0,\mathrm{desc}}^+\subset \Sigma_+$ (resp. $C_{\zeta_0,\mathrm{asc}}^+\subset \Sigma_+$) going between~$\zeta_0$ and~$A_{\cR,+}$ (resp. $A_{\cR,-}$), such that~$\Im\int_{\zeta_0}^\zeta\lambda_c(x,y)=0$ and~$\Re\int_{\zeta_0}^\zeta\lambda_c(x,y)$ is decreasing (resp. increasing) along the curve.
\item If~$(x,y) \in \liq$ and~$\zeta_0=\Omega(x,y)\in \Sigma_+^{\circ}$, there exists a simple curve~$C_{\zeta_0,\mathrm{desc}}^+\subset \Sigma_+$ (resp. $C_{\zeta_0,\mathrm{asc}}^+\subset \Sigma_+$) with both endpoints in \(A_{\cR,+}\) (resp. \(A_{\cR,-}\)), passing through \(\zeta_0\), such that~$\Im\int_{\zeta_0}^\zeta\lambda_c(x,y)=0$ and~$\Re\int_{\zeta_0}^\zeta\lambda_c(x,y)$ is decreasing (resp. increasing) along the curve as we move away from $\zeta_0$.
\end{enumerate}
Moreover, if \(\zeta_0\in A_{(x,y)}\) is a local minimum and \(\zeta_0'\in A_{(x,y)}\) is a local maximum, then
\[
C_{\zeta_0,\mathrm{desc}}^+\cap C_{\zeta_0',\mathrm{asc}}^+=\varnothing.
\]
If instead~$(x,y) \in \liq$ and~\(\zeta_0=\Omega(x,y)\in \Sigma_+^\circ\), then
\[
C_{\zeta_0,\mathrm{desc}}^+\cap C_{\zeta_0,\mathrm{asc}}^+=\{\zeta_0\}.
\]
\end{lemma}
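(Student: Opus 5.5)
We would study the level set $\Gamma:=\{\zeta\in\Sigma_+ : \Im\int_{\zeta_0}^\zeta \lambda_c(x,y)=0\}$ of the harmonic function $\phi(\zeta):=\Im\int_{\zeta_0}^\zeta\lambda_c(x,y)$ and the flow of its conjugate $\psi:=\Re\int_{\zeta_0}^\zeta\lambda_c(x,y)$. Since $\lambda_c$ is imaginary-normalized, $\psi$ is single-valued on $\Sigma$ minus the poles; $\phi$ is multivalued, but only by imaginary periods and residue jumps. We would therefore work on $\Sigma_+$, which is simply connected away from the ovals, after cutting along arcs of $\Sigma(\RR)$, and use that $\lambda_c$ is $\sigma$-real (Lemma~\ref{lem:third_kind_ind}). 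In particular, $\phi$ is locally constant on each arc of $\Sigma(\RR)$ between poles and zeroes, so $\Sigma(\RR)$ itself consists of level/gradient lines. Steepest descent curves are integral curves of $-\nabla\psi$ along which $\phi$ is constant. Such a curve can only stop at a zero of $\lambda_c$ (a critical point), or at a pole. At a pole $\alpha$, $\psi\approx \res_\alpha(\lambda_c)\log|\zeta-\alpha|\to -\infty$ precisely when $\res_\alpha(\lambda_c)>0$. By Lemma~\ref{lem:sign_poles_cts}, that means $e(\alpha)\in I_{(x,y),+}$, so $\alpha\in A_{\cR,+}$. Ascent curves end analogously at poles with negative residue, in $A_{\cR,-}$.

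In case~1, $\zeta_0\in A_{(x,y)}\subset\Sigma(\RR)$ is a simple zero, hence a saddle. Exactly one ray of $\{\phi=0\}$ leaves $\zeta_0$ perpendicularly into $\Sigma_+^\circ$. If $\zeta_0$ is a local minimum along $\Sigma(\RR)$, then $\psi$ decreases along this ray. We follow the descent flow and argue that the curve cannot return to $\Sigma(\RR)\setminus\{\text{poles}\}$. If it did, it would meet $\Sigma(\RR)$ at a point where $\phi$ equals the constant value of that real arc. By $\sigma$-symmetry the curve would then be reflected, which forces that meeting point to be a zero of $\lambda_c$. By Lemma~\ref{lem:forced_zeroes_and_remaining}, the interior of $\Sigma_+$ contains no zeroes, and all real zeroes are accounted for. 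A zero reached by strict descent from $\zeta_0$ would then give a loop in $\Sigma_+$ bounded by level and real arcs on which the harmonic function $\psi$ attains an interior extremum, contradicting the maximum principle. Since $\psi$ is strictly monotone along the curve and $\Sigma_+$ is compact, the curve can neither cycle nor accumulate. So it must end at a pole, which by the residue sign lies in $A_{\cR,+}$.

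In case~2, $\zeta_0=\Omega(x,y)$ is a simple interior zero, so four level rays of $\phi$ leave it, alternating between descent and ascent. The two descent rays are followed exactly as above, with no other interior zeroes available (Lemma~\ref{lem:forced_zeroes_and_remaining}), and land in $A_{\cR,+}$. Their union is the simple curve $C^+_{\zeta_0,\mathrm{desc}}$, and the ascent rays give $C^+_{\zeta_0,\mathrm{asc}}$ similarly. The disjointness claims follow by monotonicity. On a descent curve from $\zeta_0$, $\psi<\psi(\zeta_0)$, and on an ascent curve from $\zeta_0'$, $\psi>\psi(\zeta_0')$. In case~2 this already gives that the curves meet only at $\zeta_0$. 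In case~1 we use instead that $\phi$-level curves through distinct saddles cannot cross in the absence of further critical points: a crossing point would itself have to be a critical point of the analytic function $\int\lambda_c$, and there is none in $\Sigma_+^\circ$.

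The main obstacle is the global topology in the smooth and frozen cases. We must exclude the possibility that the descent curve from a real saddle ends at a pole in $A_{\cR,-}$, which the residue sign rules out, or at another real zero, or winds around a handle of $\Sigma_+$ when $g>0$. We would handle this by counting. There are exactly two unforced real zeroes in $A_{(x,y)}$, one a local minimum and one a local maximum, and every other real zero is forced into a gap between same-sign poles. Applying the argument principle to a domain in $\Sigma_+$ bounded by the candidate curves and $\Sigma(\RR)$ then shows that the trajectory must terminate at the positive-residue poles, i.e.\ in $A_{\cR,+}$.
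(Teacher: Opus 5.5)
Your proposal has a genuine gap at its central step: the claim that the descent trajectory can never meet $\Sigma(\RR)$ away from the poles. You correctly observe that a level line of $\phi$ can reach a real arc only at a zero of $\lambda_c(x,y)$. What you cannot do is exclude such zeros.

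Since $\phi$ is constant on each oval $A_k$ and on each arc of $A_0$ between consecutive angles, the trajectory from $\zeta_0$ runs into a zero on $A_k\neq A_{(x,y)}$ as soon as that constant equals $0$. The same happens with a forced zero in a gap of $A_{\cR,+}$. This is a codimension-one condition on $(x,y)$ that nothing in the setup rules out, and Figure~\ref{fig:ascent/descent_contours} shows exactly such configurations.

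Your loop/maximum-principle argument does not apply in these cases. A zero on another oval is not joined to $A_{(x,y)}$ by any real arc, so no closed loop is formed. A real arc of $A_0$ joining $\zeta_0$ to a zero in a different gap passes through poles, where $\psi$ is unbounded. At such a zero the gradient trajectory simply terminates. So your conclusion that the curve must end at a pole is false in general, and this is exactly why the lemma only asserts that the curve goes between $\zeta_0$ and $A_{\cR,+}$.

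What is missing is the paper's continuation step.
\begin{itemize}
\item If the curve reaches a zero $\zeta_0'$ on $A_k\neq A_{(x,y)}$, then $\zeta_0'$ is a local maximum along $A_k$. You continue along $A_k$, where $\Im\int\lambda_c$ stays $0$ and $\psi$ decreases, to the neighbouring local minimum, and re-enter $\Sigma_+^\circ$ along its perpendicular level line.
\item Strict monotonicity of $\psi$ forbids visiting the same oval twice. So after finitely many passages the curve ends either at an angle of positive residue or at a zero lying in $A_{\cR,+}$.
\item The one collision that really is impossible is with another zero of $A_{(x,y)}$. It is excluded without any maximum principle: by alternation of extrema, $\psi$ increases along the real arc from the local minimum $\zeta_0$ to the adjacent local maximum. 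This contradicts its strict decrease along the interior curve.
\end{itemize}

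Once the curves may contain real arcs and real zeros, disjointness in case~1 must come from monotonicity: $\psi<\psi(\zeta_0)$ on one curve and $\psi>\psi(\zeta_0')>\psi(\zeta_0)$ on the other. The absence of interior critical points is no longer enough.

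Your counting paragraph also needs correcting. $A_{(x,y)}$ may contain three zeros (quasi-frozen or concave frozen) or four (smooth), not two. $\Sigma_+$ is planar, a sphere with $g+1$ holes, so there are no handles to wind around. The argument-principle step is asserted rather than carried out.
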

See Figure~\ref{fig:ascent/descent_contours} below for examples of the curves.

\begin{proof}
We first consider the case~$\zeta_0\in \Sigma(\RR)$, that is,~$(x,y)$ is in the frozen, quasi-frozen, or smooth region. Since~$(x,y)\not \in \arctic$, the zero~$\zeta_0$ of~$\lambda_c(x,y)$ is simple. Hence, there is one level line of
\[
\Im \int_{\zeta_0}^{\zeta}\lambda_c(x,y)=0
\]
along~$\Sigma(\RR)$ and one level line entering~$\Sigma_+^{\circ}$. If~$\zeta_0$ is a local minimum (resp. maximum), the function
\begin{equation}\label{eq:real_part_of_action}
\zeta \mapsto \Re \int_{\zeta_0}^{\zeta}\lambda_c(x,y)
\end{equation}
is decreasing (resp. increasing) along the level line entering~$\Sigma_+^{\circ}$. Following that level line, we eventually reach either another zero of $\lambda_c(x,y)$ which has to be in $\Sigma(\RR)$, since $(x,y)\notin \liq$, and is a local maximum (resp. minimum) (recall that the maximum (resp. minimum) refers to the behavior along~$\Sigma(\RR)$)
or a point where~\eqref{eq:real_part_of_action} tends to~$-\infty$ (resp.~$+\infty$). In the latter case, the endpoint must be an angle~$\alpha\in \bm \alpha_e$ with~$e\in I_{\cR,+}$ (resp. $e\in I_{\cR,-}$), which means that~$\alpha\in A_{\cR,+}$ (resp. $\alpha\in A_{\cR,-}$). 

We define the curve~$C_{\zeta_0,\mathrm{desc}}^+\subset \Sigma_+$ (resp. $C_{\zeta_0,\mathrm{asc}}^+\subset \Sigma_+$) to be this level line, extended if necessary until it hits~$A_{\cR,+}$ (resp. $A_{\cR,-}$); see Figure~\ref{fig:ascent/descent_contours}(B)--(D) for an illustration. If it first hits another zero~$\zeta_0'$ of~$\lambda_c(x,y)$, then either:
\begin{enumerate}
\item $\zeta_0'\in A_{\cR,+}$ (resp. $\zeta_0'\in A_{\cR,-}$), and we are done.
\item $\zeta_0' \in A_{(x,y)}$, which is impossible{. Indeed,~$\zeta_0'$ would be a local maximum (resp. minimum) and since the types of extrema alternate along~$A_{(x,y)}$,~$\zeta_0$ and~$\zeta_0'$ would be joined by a steepest ascent (resp. descent) curve along $\Sigma(\RR)$ with no zero between them. This is a contradiction, since they are also joined by a steepest descent (resp. ascent) curve in $\Sigma_+^\circ$.}
\item $\zeta_0'\in A_j \neq A_{(x,y)}$, for some~$j=1,\dots,g$, in which case we continue the curve along~$A_j$ in either direction until it hits the other zero of~$\lambda_c(x,y)$~in~$A_j$ which is a local minimum (resp. maximum), and then continue again into~$\Sigma_+^{\circ}$. 
\end{enumerate}

Since a curve constructed by this procedure cannot return to the same~$A_j$ twice, it eventually has to end up at an angle~$\alpha\in A_{\cR,+}$ (resp. $\alpha\in A_{\cR,-}$) or a local maximum (resp. minimum) which lies in~$A_{\cR,+}$ (resp. $A_{\cR,-}$).

The resulting curve is simple because the function~\eqref{eq:real_part_of_action} is monotone along it (recall that this function is well-defined since $\lambda_c(x,y)$ is imaginary-normalized).
If~$\zeta_0\in A_{(x,y)}$ is a local minimum and~$\zeta_0'\in A_{(x,y)}$ is a local maximum, there cannot be another local minimum or maximum in $A_{(x,y)}$ between $\zeta_0$ and $\zeta_0'$. Thus, the same monotonicity argument shows that
\[
C_{\zeta_0,\mathrm{desc}}^+\cap C_{\zeta_0',\mathrm{asc}}^+=\varnothing.
\]

Now we consider the case~$\zeta_0\in \Sigma_+^{\circ}$, \emph{i.e.} $(x,y) \in \liq$. The local level set~$\Im\int_{\zeta_0}^\zeta \lambda_c(x,y)=0$ consists of two curves going through~$\zeta_0$ orthogonally. One of those curves is used to define~$C_{\zeta_0,\mathrm{desc}}^+\subset \Sigma_+$ and the other is used to define~$C_{\zeta_0,\mathrm{asc}}^+\subset \Sigma_+$. To construct~$C_{\zeta_0,\mathrm{desc}}^+\subset \Sigma_+$ (resp. $C_{\zeta_0,\mathrm{asc}}^+\subset \Sigma_+$), we follow the level line for which~$\zeta \mapsto \Re\int_{\zeta_0}^\zeta\lambda_c(x,y)$ is decreasing (resp. increasing); see Figure~\ref{fig:ascent/descent_contours}(A) for an illustration. The same argument as in the first case now proves the statement for this second case. 
\end{proof}

\begin{figure}
\centering

\tikzset{
  hollowedgem/.style={
    draw=black,
    line cap=round,
    line join=round,
    line width=0.5pt,
    double=white,
    double distance=1.5pt
  },
  solidedgem/.style={
    draw=black,
    line cap=round,
    line join=round,
    line width=2pt
  }
}

\begin{subfigure}{0.48\textwidth}
\centering

\begin{tikzpicture}[scale=0.17]
	
		\node  (1) at (12, 0) {};
		\node  (2) at (0, 12) {};
		\node  (3) at (0, -12) {};
		\node  (4) at (-12, 0) {};
		\node  (6) at (-2, 4) {};
		\node  (7) at (-5, 7) {};
		\node  (8) at (-5, 1) {};
		\node  (9) at (-8, 4) {};
		\node  (10) at (8, -3) {};
		\node  (11) at (3, 2) {};
		\node  (12) at (3, -8) {};
		\node  (13) at (-2, -3) {};
		\coordinate (14) at (2, 7.5);
		\coordinate (15) at (-3.25, 6.5);
		\coordinate (18) at (-6, 1.25);
		\coordinate (19) at (3, 2);
		\coordinate (22) at (0.5, -7.5);
		\coordinate (23) at (6, 10.5);
		\coordinate (24) at (0, -12);
		\coordinate (25) at (11.25, 4.25);
		\coordinate (26) at (-12, 0);

\draw (0,0) circle[radius=12];
\draw[solidedgem] (55:12) arc(55:80:12);
\draw[hollowedgem] (100:12) arc(100:215:12);
\draw[solidedgem] (235:12) arc(235:305:12);
\draw[hollowedgem] (325:12) arc(325:395:12);

		\draw [in=270, out=0] (8.center) to (6.center);
		\draw [in=0, out=90] (6.center) to (7.center);
		\draw [in=90, out=-180] (7.center) to (9.center);
		\draw [in=-180, out=-90] (9.center) to (8.center);
		\draw [in=270, out=0] (12.center) to (10.center);
		\draw [in=0, out=90] (10.center) to (11.center);
		\draw [in=90, out=-180] (11.center) to (13.center);
		\draw [in=-180, out=-90] (13.center) to (12.center);
		\draw [in=30, out=180, Orange, very thick] (14) to (15);
		\draw [in=-150, out=0, Orange, very thick] (14) to (25);
		\draw [in=-120, out=90, looseness=0.75, very thick, Cerulean] (14) to (23);
		\draw [in=90, out=-90, very thick, Cerulean] (14) to (19);
		\draw [in=0, out=-120, looseness=1.25, Orange, very thick] (18) to (26);
		\draw [in=90, out=-120, very thick, Cerulean] (22) to (24);

		\node[nvert, label=135:$\zeta_0$] at (14) {};
		\node[Greenvert, label={[font=\scriptsize]above:min}] at (15) {};
		\node[Greenvert, label={[font=\scriptsize]-70:max}] at (18) {};
		\node[Greenvert, label={[font=\scriptsize]45:max}] at (19) {};
		\node[Greenvert, label={[font=\scriptsize]45:min}] at (22) {};
		\node[nvert,label=above:$A_{R,+}$] at (23) {};
		\node[nvert,label=below:$A_{R,+}$] at (24) {};
		\node[nvert,label=right:$A_{R,-}$] at (25) {};
		\node[nvert,label=left:$A_{R,-}$] at (26) {};

\end{tikzpicture}

\caption*{(A)}
\end{subfigure}
\hfill
\begin{subfigure}{0.48\textwidth}
\centering

\begin{tikzpicture}[scale=0.17]

		\node  (1) at (12, 0) {};
		\node  (2) at (0, 12) {};
		\node  (3) at (0, -12) {};
		\node  (4) at (-12, 0) {};
		\node  (6) at (-2, 4) {};
		\node  (7) at (-5, 7) {};
		\node  (8) at (-5, 1) {};
		\node  (9) at (-8, 4) {};
		\node  (10) at (8, -3) {};
		\node  (11) at (3, 2) {};
		\node  (12) at (3, -8) {};
		\node  (13) at (-2, -3) {};
		\coordinate (16) at (-8, 4);
		\coordinate (17) at (-2.5, 5.75);
		\coordinate (23) at (9, 8);
		\coordinate (26) at (-11.25, -4);
		\node  (27) at (9, -6) {};
		\coordinate (28) at (6, 10.5);
		\coordinate (29) at (-11, 5);

\draw (0,0) circle[radius=12];
\draw[solidedgem] (90:12) arc(90:120:12);
\draw[hollowedgem] (140:12) arc(140:170:12);
\draw[solidedgem] (190:12) arc(190:295:12);
\draw[hollowedgem] (315:12) arc(315:360:12);

		\draw [in=270, out=0] (8.center) to (6.center);
		\draw [in=0, out=90] (6.center) to (7.center);
		\draw [in=90, out=-180] (7.center) to (9.center);
		\draw [in=-180, out=-90] (9.center) to (8.center);
		\draw [in=270, out=0] (12.center) to (10.center);
		\draw [in=0, out=90] (10.center) to (11.center);
		\draw [in=90, out=-180] (11.center) to (13.center);
		\draw [in=-180, out=-90] (13.center) to (12.center);
		\draw [in=30, out=-135, looseness=0.75, very thick, Cerulean] (23) to (26);
		\draw [in=30, out=-120, very thick, Orange] (28) to (17);
		\draw [in=345, out=180, very thick, Orange] (16) to (29.center);

		\node[Greenvert, label={[font=\scriptsize]-70:max}] at (16) {};
		\node[Greenvert, label={[font=\scriptsize]above:min}] at (17) {};
		\node[Greenvert, label={[font=\scriptsize]45:min}] at (23) {};
		\node[nvert,label=left:$A_{R,+}$] at (26) {};
		\node[Greenvert, label={[font=\scriptsize]45:max}] at (28) {};
		\node[nvert,label=left:$A_{R,-}$]at (29) {};
        \node[nvert,label=above:{$\beta$}] at (90:12) {};
        \node[nvert,label=right:{$\alpha$}] at (0:12) {};
	\coordinate (21) at (7.25, -0.25);
		\coordinate (22) at (0.5, -7.5);
        \node[Greenvert, label={[font=\scriptsize]10:min}] at (21) {};
		\node[Greenvert, label={[font=\scriptsize]45:max}] at (22) {};
\end{tikzpicture}

\caption*{(B)}
\end{subfigure}

\vspace{1em}

\begin{subfigure}{0.48\textwidth}
\centering

\begin{tikzpicture}[scale=0.17]
	
		\node  (1) at (12, 0) {};
		\node  (2) at (0, 12) {};
		\node  (3) at (0, -12) {};
		\node  (4) at (-12, 0) {};
		\node  (6) at (-2, 4) {};
		\node  (7) at (-5, 7) {};
		\node  (8) at (-5, 1) {};
		\node  (9) at (-8, 4) {};
		\node  (10) at (8, -3) {};
		\node  (11) at (3, 2) {};
		\node  (12) at (3, -8) {};
		\node  (13) at (-2, -3) {};
		\coordinate (16) at (-8, 4);
		\coordinate (17) at (-2.75, 6);
		\coordinate (21) at (7.25, -0.25);
		\coordinate (22) at (0.5, -7.5);
		\coordinate (23) at (9, 8);
		\coordinate (24) at (0, -12);
		\coordinate (25) at (11.25, 4.25);
		\coordinate (26) at (-12, 0);
		\node  (27) at (9, -6) {};
		\coordinate (28) at (6, 10.5);
		\coordinate (29) at (-11, 5);
	
\draw (0,0) circle[radius=12];
\draw[solidedgem] (-20:12) arc(-20:110:12);
\draw[hollowedgem] (140:12) arc(140:165:12);
\draw[solidedgem] (175:12) arc(175:215:12);
\draw[hollowedgem] (235:12) arc(235:305:12);

		\draw [in=270, out=0] (8.center) to (6.center);
		\draw [in=0, out=90] (6.center) to (7.center);
		\draw [in=90, out=-180] (7.center) to (9.center);
		\draw [in=-180, out=-90] (9.center) to (8.center);
		\draw [in=270, out=0] (12.center) to (10.center);
		\draw [in=0, out=90] (10.center) to (11.center);
		\draw [in=90, out=-180] (11.center) to (13.center);
		\draw [in=-180, out=-90] (13.center) to (12.center);
		\draw [in=90, out=-120, Orange, very thick]  (22) to (24);
		\draw [in=0, out=-135, Cerulean, very thick]  (23) to (26);
		\draw [in=45, out=-120, looseness=0.75, Orange, very thick] (28) to (17);
		\draw [in=345, out=180, Orange, very thick]  (16) to (29);
		\draw [in=30, out=-150, looseness=1.25, Orange, very thick]  (25) to (21);

		\node[Greenvert, label={[font=\scriptsize]-20:max}] at (16) {};
		\node[Greenvert, label={[font=\scriptsize]right:min}] at (17) {};
		\node[Greenvert, label={[font=\scriptsize]-20:min}] at (21) {};
		\node[Greenvert, label={[font=\scriptsize]45:max}] at (22) {};
		\node[Greenvert, label={[font=\scriptsize]45:min}] at (23) {};
		\node[nvert,label=below:$A_{R,-}$] at (24) {};
		\node[Greenvert, label={[font=\scriptsize]45:max}] at (25) {};
		\node[nvert,label=left:$A_{R,+}$] at (26) {};
		\node[Greenvert, label={[font=\scriptsize]45:max}] at (28) {};
		\node[nvert,label=left:$A_{R,-}$] at (29) {};

        \node[nvert,label=above:{$\beta$}] at (90:12) {};
        \node[nvert,label=right:{$\alpha$}] at (0:12) {};

\end{tikzpicture}

\caption*{(C)}
\end{subfigure}
\hfill
\begin{subfigure}{0.48\textwidth}
\centering

\begin{tikzpicture}[scale=0.17]
	
		\node  (1) at (12, 0) {};
		\node  (2) at (0, 12) {};
		\node  (3) at (0, -12) {};
		\node  (4) at (-12, 0) {};
		\node  (6) at (-2, 4) {};
		\node  (7) at (-5, 7) {};
		\node  (8) at (-5, 1) {};
		\node  (9) at (-8, 4) {};
		\node  (10) at (8, -3) {};
		\node  (11) at (3, 2) {};
		\node  (12) at (3, -8) {};
		\node  (13) at (-2, -3) {};
		\coordinate (15) at (-2.25, 3);
		\node  (16) at (-8, 4) {};
		\node  (17) at (-5, 7) {};
		\coordinate (18) at (-6, 1.25);
		\coordinate (19) at (-1, 0);
		\coordinate (20) at (7, 0.25);
		\coordinate (21) at (3.75, 2);
		\coordinate (22) at (0.5, -7.5);
		\coordinate (23) at (6, 10.5);
		\coordinate (24) at (0, -12);
		\coordinate (25) at (11.25, 4.25);
		\coordinate (26) at (-12, 0);
		\node  (27) at (9, -6) {};
	
\draw (0,0) circle[radius=12];
\draw[solidedgem] (55:12) arc(55:80:12);
\draw[hollowedgem] (100:12) arc(100:215:12);
\draw[solidedgem] (235:12) arc(235:305:12);
\draw[hollowedgem] (325:12) arc(325:395:12);

		\draw [in=270, out=0] (8.center) to (6.center);
		\draw [in=0, out=90] (6.center) to (7.center);
		\draw [in=90, out=-180] (7.center) to (9.center);
		\draw [in=-180, out=-90] (9.center) to (8.center);
		\draw [in=270, out=0] (12.center) to (10.center);
		\draw [in=0, out=90] (10.center) to (11.center);
		\draw [in=90, out=-180] (11.center) to (13.center);
		\draw [in=-180, out=-90] (13.center) to (12.center);
		\draw [in=0, out=-120, looseness=1.25, Orange, very thick] (18) to (26);
		\draw [in=90, out=-120, Cerulean, very thick] (22) to (24);
		\draw [in=-30, out=135, Orange, very thick] (19) to (15);
		\draw [in=-120, out=90, Cerulean, very thick] (21) to (23);
		\draw [in=-150, out=30, Orange, very thick] (20) to (25);

		\node[Greenvert, label={[font=\scriptsize]135:min}] at (15) {};
		\node[Greenvert, label={[font=\scriptsize]left:max}] at (18) {};
		\node[Greenvert, label={[font=\scriptsize]225:max}] at (19) {};
		\node[Greenvert, label={[font=\scriptsize]-50:max}] at (20) {};
		\node[Greenvert, label={[font=\scriptsize]below:min}] at (21) {};
		\node[Greenvert, label={[font=\scriptsize]45:min}] at (22) {};
		\node[nvert,label=above:$A_{R,+}$] at (23) {};
		\node[nvert,label=below:$A_{R,+}$] at (24) {};
		\node[nvert,label=right:$A_{R,-}$] at (25) {};
		\node[nvert,label=left:$A_{R,-}$] at (26) {};

\end{tikzpicture}

\caption*{(D)}
\end{subfigure}

\caption{
Steepest descent (blue) and steepest ascent (orange) curves in $\curve_+$ for a genus-$2$ example. The outer circle is $A_0$, while the inner circles are $A_1$ and $A_2$. {Green points indicate zeros of $\lambda_c(x,y)$. (A)--(D) correspond respectively to $(x,y)$ lying in the rough region, frozen region $\mathcal F_{(\alpha,\beta)}$ such that $\lambda_c(x,y)$ has two simple zeroes in $(\alpha,\beta)$, quasi-frozen region (when $e(\alpha) = e(\beta)$) or frozen region $\mathcal F_{(\alpha,\beta)}$ such that $\lambda_c(x,y)$ has three simple zeroes in $(\alpha,\beta)$ (when $e(\alpha)\neq e(\beta)$), and smooth region.} In keeping with our sign/color convention, the black and white intervals in $A_0$ correspond to $A_{R,+}$ and $A_{R,-}$ respectively. 
}
\label{fig:ascent/descent_contours}

\end{figure}

We now extend the curves defined in Lemma~\ref{lem:steepest_descent_curves} to $\curve$. Define $C_{\zeta_0,\mathrm{desc}}\subset \Sigma$ (resp. $C_{\zeta_0,\mathrm{asc}}\subset \Sigma$) to be the $\sigma$-invariant extension of $C_{\zeta_0,\mathrm{desc}}^+\subset \Sigma_+$ (resp. $C_{\zeta_0,\mathrm{asc}}^+\subset \Sigma_+$) such that 
\[\sigma_*(C_{\zeta_0,\mathrm{desc}})=-C_{\zeta_0,\mathrm{desc}} \qquad (\text{resp. }\sigma_*(C_{\zeta_0,\mathrm{asc}})=-C_{\zeta_0,\mathrm{asc}}).
\] 
Note that we have not specified the orientation of the curves yet, however, we assume they are given some orientation. Any segment of the curve along~$\Sigma(\RR)$ is traversed in both directions, so its contribution to the integral cancels. 
By Lemma~\ref{lem:steepest_descent_curves}, the curve~$C_{\zeta_0,\mathrm{desc}}$ (resp. $C_{\zeta_0,\mathrm{asc}}$) is a curve of steepest descent (resp. ascent) and intersects~$A_0$ in at most two points. 

We then define \(C_{(x,y),\mathrm{desc}}\) and \(C_{(x,y),\mathrm{asc}}\) by
\[
C_{(x,y),\mathrm{desc}}
:=
\begin{cases}
C_{\zeta_0,\mathrm{desc}}, & \text{if } \zeta_0=\Omega(x,y)\in \Sigma_+^\circ,\\
\displaystyle\bigsqcup_{\zeta_0\text{ local minimum in $A_{(x,y)}$}}
C_{\zeta_0,\mathrm{desc}},
& \text{otherwise,}
\end{cases}
\]
and
\[
C_{(x,y),\mathrm{asc}}
:=
\begin{cases}
C_{\zeta_0,\mathrm{asc}}, & \text{if } \zeta_0=\Omega(x,y)\in \Sigma_+^\circ,\\
\displaystyle\bigsqcup_{\zeta_0\text{ local maximum in $A_{(x,y)}$}}
C_{\zeta_0,\mathrm{asc}},
& \text{otherwise}.
\end{cases}
\]

\begin{figure}
\centering

\tikzset{
  hollowedgem/.style={
    draw=black,
    line cap=round,
    line join=round,
    line width=0.5pt,
    double=white,
    double distance=1.5pt
  },
  solidedgem/.style={
    draw=black,
    line cap=round,
    line join=round,
    line width=2pt
  }
}

\begin{tikzpicture}[scale=0.17]
	
		\node  (1) at (12, 0) {};
		\node  (2) at (0, 12) {};
		\node  (3) at (0, -12) {};
		\node  (4) at (-12, 0) {};
		\node  (6) at (-2, 4) {};
		\node  (7) at (-5, 7) {};
		\node  (8) at (-5, 1) {};
		\node  (9) at (-8, 4) {};
		\node  (10) at (8, -3) {};
		\node  (11) at (3, 2) {};
		\node  (12) at (3, -8) {};
		\node  (13) at (-2, -3) {};
		\coordinate (16) at (-8, 4);
		\coordinate (17) at (-2.75, 6);
		\coordinate (21) at (7.25, -0.25);
		\coordinate (22) at (0.5, -7.5);
		\coordinate (23) at (9, 8);
		\coordinate (24) at (0, -12);
		\coordinate (25) at (11.25, 4.25);
		\coordinate (26) at (-12, 0);
		\node  (27) at (9, -6) {};
		\coordinate (28) at (6, 10.5);
		\coordinate (29) at (-11, 5);
	
\draw (0,0) circle[radius=12];
\draw[solidedgem] (-20:12) arc(-20:110:12);
\draw[hollowedgem] (140:12) arc(140:165:12);
\draw[solidedgem] (175:12) arc(175:215:12);
\draw[hollowedgem] (235:12) arc(235:305:12);

\draw[solidedgem] (-40:16) arc(-40:120:16);
\draw[hollowedgem] (120:16) arc(120:170:16);
\draw[solidedgem] (170:16) arc(170:230:16);
\draw[hollowedgem] (230:16) arc(230:320:16);

		\draw [in=270, out=0] (8.center) to (6.center);
		\draw [in=0, out=90] (6.center) to (7.center);
		\draw [in=90, out=-180] (7.center) to (9.center);
		\draw [in=-180, out=-90] (9.center) to (8.center);
		\draw [in=270, out=0] (12.center) to (10.center);
		\draw [in=0, out=90] (10.center) to (11.center);
		\draw [in=90, out=-180] (11.center) to (13.center);
		\draw [in=-180, out=-90] (13.center) to (12.center);
		\draw [in=0, out=-135, Cerulean, very thick]  (23) to (26);

\coordinate (P) at (230:12);
\coordinate (Q) at (250:12);

\draw[Green, very thick]
  let
    \p1 = (P),
    \p2 = (Q),
    \p3 = ($(P)!0.5!(Q)$),
    \n1 = {veclen(\x1-\x2,\y1-\y2)/2},
    \n2 = {atan2(\y1-\y3,\x1-\x3)},
    \n3 = {\n2-180}
  in
    (P) arc[start angle=\n2, end angle=\n3, radius=\n1];

\coordinate (P) at (300:12);
\coordinate (Q) at (320:12);

\draw[Green, very thick]
  let
    \p1 = (P),
    \p2 = (Q),
    \p3 = ($(P)!0.5!(Q)$),
    \n1 = {veclen(\x1-\x2,\y1-\y2)/2},
    \n2 = {atan2(\y1-\y3,\x1-\x3)},
    \n3 = {\n2-180}
  in
    (P) arc[start angle=\n2, end angle=\n3, radius=\n1];

\coordinate (P) at (265:12);
\coordinate (Q) at (285:12);

\draw[Green, very thick]
  let
    \p1 = (P),
    \p2 = (Q),
    \p3 = ($(P)!0.5!(Q)$),
    \n1 = {veclen(\x1-\x2,\y1-\y2)/2},
    \n2 = {atan2(\y1-\y3,\x1-\x3)},
    \n3 = {\n2-180}
  in
    (P) arc[start angle=\n2, end angle=\n3, radius=\n1];

\end{tikzpicture} \hspace{5mm} \begin{tikzpicture}[ scale=0.17]
	
		\node  (1) at (12, 0) {};
		\node  (2) at (0, 12) {};
		\node  (3) at (0, -12) {};
		\node  (4) at (-12, 0) {};
		\node  (6) at (-2, 4) {};
		\node  (7) at (-5, 7) {};
		\node  (8) at (-5, 1) {};
		\node  (9) at (-8, 4) {};
		\node  (10) at (8, -3) {};
		\node  (11) at (3, 2) {};
		\node  (12) at (3, -8) {};
		\node  (13) at (-2, -3) {};
		\coordinate (16) at (-8, 4);
		\coordinate (17) at (-2.75, 6);
		\coordinate (21) at (7.25, -0.25);
		\coordinate (22) at (0.5, -7.5);
		\coordinate (23) at (9, 8);
		\coordinate (24) at (0, -12);
		\coordinate (25) at (11.25, 4.25);
		\coordinate (26) at (-12, 0);
		\node  (27) at (9, -6) {};
		\coordinate (28) at (6, 10.5);
		\coordinate (29) at (-11, 5);
	
\draw (0,0) circle[radius=12];
\draw[solidedgem] (-20:12) arc(-20:110:12);
\draw[hollowedgem] (140:12) arc(140:165:12);
\draw[solidedgem] (175:12) arc(175:215:12);
\draw[hollowedgem] (235:12) arc(235:305:12);

		\draw [in=270, out=0] (8.center) to (6.center);
		\draw [in=0, out=90] (6.center) to (7.center);
		\draw [in=90, out=-180] (7.center) to (9.center);
		\draw [in=-180, out=-90] (9.center) to (8.center);
		\draw [in=270, out=0] (12.center) to (10.center);
		\draw [in=0, out=90] (10.center) to (11.center);
		\draw [in=90, out=-180] (11.center) to (13.center);
		\draw [in=-180, out=-90] (13.center) to (12.center);
		\draw [in=90, out=-120, Orange, very thick]  (22) to (24);
		\draw [in=45, out=-120, looseness=0.75, Orange, very thick] (28) to (17);
		\draw [in=345, out=180, Orange, very thick]  (16) to (29);
		\draw [in=30, out=-150, looseness=1.25, Orange, very thick]  (25) to (21);
        \node[nvert,label=above:{$\beta$}] at (90:12) {};
        \node[nvert,label=right:{$\alpha$}] at (0:12) {};

\draw[solidedgem] (-40:16) arc(-40:120:16);
\draw[hollowedgem] (120:16) arc(120:170:16);
\draw[solidedgem] (170:16) arc(170:230:16);
\draw[hollowedgem] (230:16) arc(230:320:16);

\coordinate (P) at (170:12);
\coordinate (Q) at (190:12);

\draw[Green, very thick]
  let
    \p1 = (P),
    \p2 = (Q),
    \p3 = ($(P)!0.5!(Q)$),
    \n1 = {veclen(\x1-\x2,\y1-\y2)/2},
    \n2 = {atan2(\y1-\y3,\x1-\x3)},
    \n3 = {\n2-180}
  in
    (P) arc[start angle=\n2, end angle=\n3, radius=\n1];

\coordinate (P) at (210:12);
\coordinate (Q) at (230:12);

\draw[Green, very thick]
  let
    \p1 = (P),
    \p2 = (Q),
    \p3 = ($(P)!0.5!(Q)$),
    \n1 = {veclen(\x1-\x2,\y1-\y2)/2},
    \n2 = {atan2(\y1-\y3,\x1-\x3)},
    \n3 = {\n2-180}
  in
    (P) arc[start angle=\n2, end angle=\n3, radius=\n1];    

\end{tikzpicture}
\caption{
{
On the left-hand side, the union of the two black (resp. white) intervals in the outer circle corresponds to the angles in $I_{\bl_n,+}$ (resp. $I_{\bl_n,-}$), while union of the two black (resp. white) intervals in $A_0$ corresponds to $A_{R,+}$ (resp. $A_{R,-}$) as in Figure~\ref{fig:ascent/descent_contours}. The steepest descent curve (blue) in Figure~\ref{fig:ascent/descent_contours}(C) can be deformed to the curve $C(I_{\bl_n})$ (green) without crossing any angles in $I_{\bl_n,-}$. Similarly, the right-hand side shows that the steepest ascent curve (orange) can be deformed to $C(I_{\wh_n})$ (green) without crossing any angles in $I_{\wh_n,+}$ (note that $\alpha$ and $\beta$ are consecutive angles).
}
}\label{fig:deformation_sd_sa}
\end{figure}

The following lemma shows that the contours appearing in the inverse Kasteleyn formula~\eqref{eq:formula_inverse_K} may be deformed to the steepest descent and steepest ascent curves respectively without crossing the poles of the integrand as specified in the following claim {(see Figure~\ref{fig:deformation_sd_sa}).}

\begin{lemma}\label{lem:descent_curves}
Let~$(x,y)\in \domain^\circ$ lie in a phase region, and let~$\zeta_0=\zero(x,y)$. If~$(x,y)\in \mathcal F_{(\alpha,\beta)}$, where~$(\alpha,\beta)\subset A_0$ is the interval between consecutive angles~$\alpha,\beta$, assume that~$(x,y)\notin \line_{e(\alpha)}\cup \line_{e(\beta)}$. Then we may choose orientations of~$C_{(x,y),\mathrm{desc}}$ and~$C_{(x,y),\mathrm{asc}}$ such that, for all sufficiently large~$n$ and all
\(
\bl_n\in B(\compact_n),
\wh_n\in W(\compact_n),
\)
the curve~$C_{(x,y),\mathrm{desc}}$ is {homologous} to~$C(I_{\bl_n})$ in
\[
\Sigma\setminus \bigcup_{e\in I_{\bl_n,-}} \bm\alpha_e,
\]
and the curve~$C_{(x,y),\mathrm{asc}}$ is {homologous} to~$C(I_{\wh_n})$ in
\[
\Sigma\setminus \bigcup_{e\in I_{\wh_n,+}} \bm\alpha_e.
\]
\end{lemma}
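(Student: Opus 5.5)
The proof will be a topological comparison of cycles on \(\Sigma\) punctured at finitely many angles. Homology classes in \(\Sigma\setminus P\), for a finite set \(P\subset A_0\), are determined by intersection numbers with the \(A_j,B_j\) cycles together with the winding around each puncture. Both \(C(I_{\bl_n})\) and \(C_{(x,y),\mathrm{desc}}\) are \(\sigma\)-anti-invariant, since \(\sigma_*C=-C\) holds for small circles around real points and, by construction, for the descent curves. For such cycles the \(A_j\)-periods vanish automatically. So it should suffice to check two things: the winding numbers around the punctures \(\bm\alpha_e\), \(e\in I_{\bl_n,-}\), agree; and the \(B\)-type data agree, which we can read off from how the curves cross the ovals \(A_1,\dots,A_g\).

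First we fix \(n\) large. By Lemma~\ref{lem:asymp_intervals}, for \(n\) large we have \(I_{(x,y),\pm}\subseteq I_{\bl_n,\pm}\), and \(I_{\bl_n}\) is one of the \(-\) sign intervals of \(\sign_{\Rin_{\bl_n}}\), hence contained in \(I_{\bl_n,-}\). (We adopt the convention \(\col(\bl)=+\), so \(I_\bl\) has the opposite color, i.e.\ sign \(-\).)

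Hmm — this must be re-examined against Figure~\ref{fig:deformation_sd_sa}. There, \(C(I_{\bl_n})\) encircles angles in the black, i.e.\ \(+\), arcs, and the descent curve ends in \(A_{R,+}\). So the relevant punctures to avoid are those in \(I_{\bl_n,-}\), and \(I_{\bl_n}\) lies in \(I_{\bl_n,+}\). The statement is consistent with this reading, and we proceed with it. The only edges whose sign may differ between \(\Rin_{\bl_n}\) and the cell \(R\) are those with \((x,y)\in\line_e\). By the hypothesis \((x,y)\notin\line_{e(\alpha)}\cup\line_{e(\beta)}\), together with the phase-region assumptions, these edges lie away from the endpoints of \(A_{(x,y)}\).

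Next we orient the descent curve. In the rough case, \(C_{\zeta_0,\mathrm{desc}}\) is a single closed curve through \(\zeta_0\) and \(\sigma(\zeta_0)\), meeting \(A_0\) at two points of \(A_{R,+}\). One in each of the two \(+\) arcs is forced, because the curve separates the \(-\) arcs, where \(\Re\Lambda_c\to+\infty\), from the \(+\) ones, and the ascent curve meets it only at \(\zeta_0\). We orient it so that it is homologous to a counterclockwise loop around the \(+\) arc containing the angles of \(I_{\bl_n}\). Here we must use that \(I_{\bl_n}\) is the \(+\) interval adjacent to \(s_v\), matching the side of \(\zeta_0\) on which the curve encircles.

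In the non-rough case, each local minimum on \(A_{(x,y)}\) yields a curve from \(A_{R,+}\) through \(\sigma\) back. The union of these pieces, glued along arcs of \(\Sigma(\RR)\) whose contributions cancel, should again encircle exactly one \(+\) arc of \(A_0\). The pieces passing through ovals \(A_j\) contribute the same \(B\)-class as a small loop, because they traverse \(A_j\) segments in both directions.

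The main obstacle is the homology bookkeeping in the non-rough cases, where the curve hops between ovals, and in the quasi-frozen and smooth cases, where the chosen minima must combine into a single loop around the correct arc. I would first verify the winding numbers around each puncture of \(I_{\bl_n,-}\) by counting signed crossings of \(A_0\) on either side of the puncture; these must be zero. I would then argue that the residual class is a combination of the \(A_j\). Such a class is \(\sigma\)-invariant, while our cycles are \(\sigma\)-anti-invariant, so the residual class must vanish. The ascent statement follows by the same argument with \(\pm\) swapped, or by applying \(\sigma\) and reversing orientation.
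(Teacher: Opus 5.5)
\textbf{There is a genuine gap: you flip the sign of $I_{\bl_n}$, so your target homology class is wrong, and the key crossing pattern is never proved.}

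Your first reading was right, and the ``correction'' you then make breaks the argument. By definition, $I_{\bl_n}$ is the sign interval of $\sign_{\Rin_{\bl_n}}$ whose color is opposite to that of $\bl_n$. That is a white ($-$) interval, so $I_{\bl_n}\subset I_{\bl_n,-}$. This is not a convention you can flip. By Lemma~\ref{lem:sign_poles}, the $\zeta$-factor $1/(g_{\bl_n}\Psi_{\bm\beta})$ can have poles only at angles $\bm\alpha_e$ with $e\in I_{\bl_n,-}$. Hence $C(I_{\bl_n})$ is a sum of loops around punctures of $\Sigma\setminus\bigcup_{e\in I_{\bl_n,-}}\bm\alpha_e$. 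In Figure~\ref{fig:deformation_sd_sa} the green circles sit in the hollow (white) arcs, not the black ones. Under your reading, $C(I_{\bl_n})$ would encircle no punctures and be null-homologous there. But $C_{(x,y),\mathrm{desc}}$ meets $A_0$ once in each component of $A_{R,+}$, so it separates the two clusters of $-$ angles and is not null-homologous. Both your orientation target (``a loop around the $+$ arc containing $I_{\bl_n}$'') and your planned check (``winding numbers around the punctures of $I_{\bl_n,-}$ \ldots\ must be zero'') are therefore false. The correct target is a loop around the $-$ component of $A_0$ carrying the angles of $I_{\bl_n}$: winding one around those punctures, zero around the other $-$ cluster.

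Second, the real content of the lemma is not established. That content is: $C_{(x,y),\mathrm{desc}}$ crosses $A_0$ exactly once in each of the two components of $A_{R,+}$, and $C_{(x,y),\mathrm{asc}}$ exactly once in each component of $A_{R,-}$.
\begin{itemize}
\item Your rough-case justification (``the curve separates the $-$ arcs from the $+$ ones'') is not what happens. A curve crossing each $+$ component once separates the two $-$ components from each other.
\item The paper gets the crossing pattern from Lemma~\ref{lem:steepest_descent_curves}. Descent branches end in $A_{R,+}$ and ascent branches in $A_{R,-}$. The two curves meet only at $\zeta_0$ in the rough case and not at all otherwise. Since $\Sigma_+$ is planar, the two ascent branches lie on opposite sides of the descent arc, which forces its endpoints into different $+$ components.
\item The frozen and quasi-frozen cases also need the alternation of minima and maxima on $A_{(x,y)}$, and the split according to whether $A_{(x,y)}$ lies in $A_{R,+}$, in $A_{R,-}$, or in neither. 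The smooth case needs the two descent pieces from the two minima on $A_j$. Your non-rough paragraph only asserts the (sign-reversed) conclusion.
\end{itemize}

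Once the crossing pattern is in hand, your bookkeeping does finish the proof. $\sigma$-anti-invariance together with zero net crossing of each $A_j$ kills the $H_1(\Sigma)$ part, and crossings of $A_0$ between consecutive punctures give the relative windings. This is a more formal version of the paper's ``deform''.

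Finally, applying $\sigma$ does not turn the descent statement into the ascent one, since $\sigma_*C_{(x,y),\mathrm{desc}}=-C_{(x,y),\mathrm{desc}}$. The ascent case has to be redone with the signs swapped.
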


\begin{proof}
We distinguish cases according to the location of~$(x,y)$.
\begin{enumerate}
\item Assume first that~$(x,y)\in \mathcal G_j$ for some~$j=1,\dots,g$, so that~$A_{(x,y)}=A_j$. The curves~$C_{(x,y),\mathrm{desc}}$ and~$C_{(x,y),\mathrm{asc}}$ divide~$\Sigma$ into four parts. The four zeros of~$\lambda_c(x,y)$ on $A_j$ alternate between local minima and local maxima. By Lemma~\ref{lem:steepest_descent_curves},~$C_{(x,y),\mathrm{desc}}$ and~$C_{(x,y),\mathrm{asc}}$ do not intersect, and so the same lemma implies that~$C_{(x,y),\mathrm{desc}}$ intersects~$A_0$ once in each component of~$A_{\cR,+}$, and~$C_{(x,y),\mathrm{asc}}$ intersects~$A_0$ once in each component of~$A_{\cR,-}$. 
\item If~$(x,y)\in \liq$, so that~$\Omega(x,y)\in \Sigma_+^{\circ}$, a similar argument shows that~$C_{(x,y),\mathrm{desc}}$ intersects~$A_0$ once in each component of~$A_{\cR,+}$, and~$C_{(x,y),\mathrm{asc}}$ intersects~$A_0$ once in each component of~$A_{\cR,-}$. 
\item Finally, if~$(x,y)\in \mathcal F_{(\alpha,\beta)}$ or~$(x,y)\in \mathcal Q_{(\alpha,\beta)}$ for some $(\alpha,\beta)\subset A_0$, there are two possibilities:
\begin{enumerate}
\item If~$A_{(x,y)} \subset A_{\cR,+}$ (resp. $A_{(x,y)} \subset A_{\cR,-}$), then there is one local minimum (resp. local maximum) and two local maxima (resp. local minima) in $A_{(x,y)}$, and the types of extrema alternate along~$A_{(x,y)}$. By the same argument as above, the curve~$C_{(x,y),\mathrm{desc}}$ (resp. $C_{(x,y),\mathrm{asc}}$) goes between~$A_{(x,y)}$ and the opposite component of~$A_{\cR,+}$ (resp. $A_{\cR,-}$), and~$C_{(x,y),\mathrm{asc}}$ (resp. $C_{(x,y),\mathrm{desc}}$) consists of two components, going between the two components of~$A_{\cR,-}$ (resp. $A_{\cR,+}$) and~$A_{(x,y)}$. 
\item If~$A_{(x,y)}$ is contained in neither~$A_{\cR,+}$ nor~$A_{\cR,-}$, then there is one local minimum and one local maximum in~$A_{(x,y)}$. By the same argument as above,~$C_{(x,y),\mathrm{desc}}$ (resp. $C_{(x,y),\mathrm{asc}}$) joins~$A_{(x,y)}$ and the opposite component of~$A_{\cR,+}$ (resp. $A_{\cR,-}$). 
\end{enumerate}
\end{enumerate}

In each case, the description of the curves given above together with Lemma~\ref{lem:asymp_intervals} implies that, if we choose the orientation correctly,~$C_{(x,y),\mathrm{desc}}$ (resp. $C_{(x,y),\mathrm{asc}}$) can be deformed to~$C(I_{\bl_n})$ (resp. $C(I_{\wh_n})$) in~$\Sigma \backslash \cup_{e\in I_{{\bl_n},-}}\bm \alpha_e$ (resp. $\Sigma \backslash \cup_{e\in I_{{\wh_n},+}}\bm \alpha_e$); see Figure~\ref{fig:ascent/descent_contours}.

\end{proof}

\begin{remark}\label{rem:technical_lines}
{Note that if~\((x,y)\) lies in a frozen region \(\mathcal F_{(\alpha,\beta)}\) at a concave vertex as in Figure~\ref{fig:local-configurations}(B), the steepest descent and ascent curves are fundamentally different depending on which side of the dashed line 
the point \((x,y)\) lies on; see Figure~\ref{fig:ascent/descent_contours}(B)--(C). When \((x,y)\in \mathcal F_{(\alpha,\beta)}\cap\left(\line_{e(\alpha)}\cup \line_{e(\beta)}\right)\), the analysis therefore becomes sensitive to the precise locations of \(\bl_n\) and \(\wh_n\). This is the reason that we exclude this case in our analysis.}
\end{remark}

\begin{proof}[Proof of Theorem~\ref{thm:inverse_convergence}]
We use the convention in Remark~\ref{rem:inverse_kasteleyn_signs}, so that
\begin{equation}\label{eq:inverse_n}
\kast_{\bl_n,\wh_n}^{-1}=\iint_{C(I_{\wh_n})\prec C(I_{\bl_n})}\omega_{\bl_n,\wh_n} - \langle \partial I_{\bl_n}\cap I_{\wh_n},\mathsf{A}_{\bl_n,\wh_n}\rangle.
\end{equation} 
Let~$I_{\bl_n}=[\ell_{\bl_n},r_{\bl_n}]$. Then, for sufficiently large $n$,
\begin{equation}
- \langle \partial I_{\bl_n}\cap I_{\wh_n},\mathsf{A}_{\bl_n,\wh_n}\rangle
=\mathbf 1_{\{\ell_{\bl_n}\in I_{\wh_n}\}}\mathsf A^{\ell_{\bl_n}}_{\bl_n,\wh_n}
-\mathbf 1_{\{r_{\bl_n}\in I_{\wh_n}\}}\mathsf A^{r_{\bl_n}}_{\bl_n,\wh_n}
=\mathsf{A}_{\bl_n,\wh_n}^{\ell_{\bl_n}}.
\end{equation}
{Indeed, by the choice of~$I_{\wh_n}$ and~$I_{\bl_n}$ in Remark~\ref{rem:inverse_kasteleyn_signs}, and by Lemma~\ref{lem:asymp_intervals},~$r_{\bl_n}\notin I_{\wh_n}$. If~$\ell_{\bl_n}\notin I_{\wh_n}$, $\sign_{\dR_{\bl_n}}(e)=+$ and~$\sign_{\dR_{\wh_n}}(e)=-$, where~$e$ is the edge immediately to the left of~$\ell_{\bl_n}$, so~$\ell_{\bl_n}\in \sector_{\bl_n,\wh_n}$ by Lemma~\ref{lem:sector}, and, hence,~$\mathsf{A}_{\bl_n,\wh_n}^{\ell_{\bl_n}}=0$.}


By Lemma~\ref{lem:descent_curves}, we may deform~$C(I_{\wh_n})$ to~$C_{(x,y),\mathrm{asc}}$ and~$C(I_{\bl_n})$ to~$C_{(x,y),\mathrm{desc}}$ in the double contour integral in~\eqref{eq:inverse_n} without crossing any poles of $\omega_{\bl_n,\wh_n}$ other than the diagonal $\zeta = \eta$. Thus, the contribution of this deformation is precisely the diagonal residue along the curve~$C_{\ell_{\bl_n}}^{\zero}$ defined in Definition~\ref{def:contours_whole_plane}. Therefore,
\begin{equation}
\kast_{\bl_n,\wh_n}^{-1}=\int_{C_{(x,y),\mathrm{asc}}}\int_{C_{(x,y),\mathrm{desc}}}\omega_{\bl_n,\wh_n} + \frac{1}{2 \pi \i} \int_{C_{\ell_{\bl_n}}^{\zero}}g_{\bl_n,\wh_n}+\mathsf{A}_{\bl_n,\wh_n}^{\ell_{\bl_n}}.
\end{equation} 
Since
\begin{equation}
\frac{1}{2 \pi \i}  \int_{C_{\ell_{\bl_n}}^{\zero}}g_{\bl_n,\wh_n}+\mathsf{A}_{\bl_n,\wh_n}^{\ell_{\bl_n}}=\mathsf{A}_{\bl_n,\wh_n}^{\zero} = \mathsf{A}_{\bl,\wh}^{\zero},
\end{equation}
it remains to prove that the double contour integral tends to zero as~$n\to\infty$.

Recall the action function \(\Lambda_c( \cdot ;x,y)\) from Definition~\ref{def:action_function}. Expanding all the factors, we write the kernel~\eqref{eq:kernel} as
\begin{equation}\label{eq:form_double_integral} \omega_{\bl_n,\wh_n}=\frac{1}{(2\pi\i)^2}\frac{\theta(-t+\bm D_{\bm\beta^n}+\eta-\zeta)\theta(t+\eta+\dabel(\wh_n))\theta(-t+\zeta-\dabel(\bl_n))}{E(\zeta,\eta)\theta(-t+\bm D_{\bm \beta^n})} \frac{\prod_{\alpha\in \bm \alpha}E(\alpha,\eta)^{\left(\dabel(\wh_n)+\bm D_{\bm \beta^n}\right)_\alpha}}{\prod_{\alpha\in \bm \alpha}E(\alpha,\zeta)^{\left(\dabel(\bl_n)+\bm D_{\bm \beta^n}\right)_\alpha}}. \end{equation}
By~\eqref{eq:prime_form_action_asymptotic}, the prime-form factor in~\eqref{eq:form_double_integral} is
\[
\exp \Bigl(
n\bigl(\Lambda_c(\eta;x,y)-\Lambda_c(\zeta;x,y)\bigr)+O(1)
\Bigr).
\]
Since \(d\Lambda_c(\cdot;x,y)=\lambda_c(x,y)\) and \(\lambda_c(x,y)\) is imaginary-normalized, the real part \(\Re\Lambda_c(\cdot;x,y)\) is well-defined on \(\curve\), and
\begin{equation}
\Re \Lambda_c(\zeta;x,y)-\Re \Lambda_c(\eta;x,y)=\Re \int_{\zeta_0}^\zeta\lambda_c(x,y)-\Re \int_{\zeta_0}^\eta\lambda_c(x,y).
\end{equation}

It follows that the curves \(C_{(x,y),\mathrm{asc}}\) and \(C_{(x,y),\mathrm{desc}}\) are steepest ascent and descent contours for the action function \(\Lambda_c(\cdot;x,y)\). Standard arguments (that go back to~\cite{Oko03}), using that the first factor in~\eqref{eq:form_double_integral} is bounded in~$n$ since the ovals are compact, show that 
\begin{equation}
\int_{C_{(x,y),\mathrm{asc}}}\int_{C_{(x,y),\mathrm{desc}}}\omega_{\bl_n,\wh_n}\to 0
\end{equation}
as~$n\to \infty$, which proves the theorem. 
\end{proof}

\subsection{The limit shape}\label{sec:limit_shape}
Let~$h_M$ be the height function corresponding to a dimer cover $M$ defined on~$\graphbetan$ with the extremal dimer cover~$M_0:=M_{u_0}$ corresponding to~$u_0\in V(N)$ as reference dimer cover. We choose the reference face~$\f_0\in F(\graphpl)\backslash F(\graphbetan)$ to be adjacent to~$\e_{u_0}\in E(\graphbetan)$, where~$\e_{u_0}$ is the intersection of two boundary zig-zag paths as in Section~\ref{sec:def_AZ_graphs}. The limit shape is the appropriate scaled large~$n$ limit of the height function: Let~$(j_n,k_n)=n(x,y)+\Ordo(1)$ for some~$(x,y) \in \domain^\circ$ and set~$\f_n=\f_0+(j_n,k_n)\in F(\graphbetan)$. The limit shape~$\ls$ is defined as
\begin{equation}
\ls(x,y):=\lim_{n\to\infty}\EE\left[\frac{1}{n}h_M(\f_n)\right].
\end{equation}
For dimer models with periodic edge weights, it is well known that the limit shape exists, is the solution to a variational problem, and that~$\frac{1}{n}h_M\to \ls$ in probability as~$n\to \infty$~\cite{CKP00, Gor21, KOS06, Kuc17}. This is also expected to be true for Fock's weights with periodic angles~\cite{BBS24}. 

We now give an integral formula for the limit shape. 
\begin{theorem}\label{thm:limit_shape}
{Assume that the angle function is periodic.}
For~$(x,y)\in \domain^\circ~$, 
\begin{equation}\label{eq:height_fn}
\ls(x,y)=-\frac{1}{2\pi\i}\int_{C_{u_0}^{\zero}}\lambda_c(x,y),
\end{equation}
where~$C_{u_0}^{\zero}$ is the curve going from~$\sigma(\zero)$ to~$\zero$ crossing~$A_0$ only once in the component of~$A_0\backslash \zz$ corresponding to~$u_0$ as in Definition~\ref{def:contours_whole_plane}. The point~$\zero\in \Sigma_+$ is defined as in Definition~\ref{def:zero} if~$(x,y)$ lies in a phase region and defined by continuity otherwise.
\end{theorem}

The proof starts by rewriting the expectation of the height function in a form that is suitable for steepest descent analysis, similar to the one carried out in Section~\ref{sec:steepest}. 
The key identity that allows for this simplification in our setting is given in the following proposition. This identity was proved in~\cite{BBRdT26} in the study of perfect t-embeddings of Aztec diamonds with Fock's weights. For the convenience of the reader, we include the proof from~\cite{BBRdT26} in Appendix~\ref{appendix:telescopic}.

\begin{proposition}[\cite{BBRdT26}]\label{prop:telescopic}
Let~$\f_+,\f_-\in F(\graphbetan)$ be adjacent faces with~$\wh\bl\in E(\graphbetan)$ as their common adjacent edge, and let~$\alpha$ and~$\beta$ be the strands containing~$\wh\bl$, as in Figure~\ref{fig:fock_kast}. Then
\begin{equation}
E(\zeta,\eta)\frac{g_{\wh}(\eta)}{g_{\bl}(\zeta)}\kast_{\wh,\bl}=G_{\f_+}(\zeta,\eta)-G_{\f_-}(\zeta,\eta),
\end{equation} 
where
\begin{equation}
G_{\f}(\zeta,\eta):=\frac{\theta(-t-\dabel(\f)+\zeta-\eta)}{\theta(t+\dabel(\f))}\frac{g_{\f}(\eta)}{g_{\f}(\zeta)},
\end{equation}
for all faces~$\f\in F(\graphbetan)$ and all~$\zeta,\eta\in \Sigma$.
\end{proposition}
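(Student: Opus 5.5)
The plan is to prove the identity by expanding both sides in terms of the local data at the edge $\wh\bl$ and reducing it to Fay's trisecant identity. First I will express every $g$ relative to one face. Let $\f_+$ and $\f_-$ be the faces on either side of the edge, as in Figure~\ref{fig:fock_kast}. Then $\wh$ and $\bl$ are each separated from $\f_\pm$ by the strands $\alpha$ or $\beta$, and the definitions in Section~\ref{sec:form_g} give
\[
g_{\wh}=\frac{\theta(t+\zeta+\dabel(\wh))}{E(\cdot,\zeta)}g_{\f_\pm}, \qquad g_{\bl}=\frac{E(\cdot,\zeta)}{\theta(-t+\zeta-\dabel(\bl))}g_{\f_\pm},
\]
with the appropriate strand in the prime form. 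I will also use the discrete Abel map relations: $\dabel(\f_+)-\dabel(\f_-)$ equals $\pm(\alpha-\beta)$, while $\dabel(\wh)=\dabel(\f_\pm)-(\text{strand})$ and $\dabel(\bl)=\dabel(\f_\pm)+(\text{strand})$. With these, the ratios $g_{\f_\pm}(\eta)/g_{\f_\pm}(\zeta)$ and $g_\wh(\eta)/g_\bl(\zeta)$ all become a common factor $g_{\f_-}(\eta)/g_{\f_-}(\zeta)$ multiplied by explicit ratios of prime forms and theta functions at the points $\alpha,\beta,\zeta,\eta$. After dividing out this common factor, both sides depend only on $\alpha,\beta,\zeta,\eta$ and the vector $e:=t+\dabel(\f_-)$.

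The next step is to substitute Fock's Kasteleyn entry $\kast_{\wh,\bl}=E(\alpha,\beta)/(\theta(t+\dabel(\f_-))\theta(t+\dabel(\f_+)))$ from~\eqref{eq:kast_fock}. Clearing the denominators should turn the claim into a four-term relation of the form
\[
E(\zeta,\eta)E(\alpha,\beta)\theta(e+\cdots)\theta(e+\cdots)
= E(\alpha,\eta)E(\zeta,\beta)\theta(\cdots)\theta(\cdots)
- E(\alpha,\zeta)E(\eta,\beta)\theta(\cdots)\theta(\cdots).
\]
This is exactly Fay's trisecant identity~\cite{Fay73} for the four points $\alpha,\beta,\zeta,\eta$. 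I will rely on the parity of $\theta$ and the antisymmetry of $E$ to align the arguments with those in the identity. If it helps, I can instead use the degenerate form of this identity that underlies Lemma~\ref{lem:fays}.

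The main obstacle is the bookkeeping rather than any conceptual difficulty. I need to track the correct signs of the strands in $\dabel$, the $\pm t$ conventions, and the branches of the multivalued prime forms, so that the theta arguments match Fay's identity precisely. As an independent check, I will compare divisors in $\zeta$, viewing both sides as sections of the same line bundle as in Lemma~\ref{lem:kernel_is_a_form}. I will then verify the identity in the genus-zero case, where it reduces to the partial-fraction identity
\[
(\eta-\zeta)(\beta-\alpha)=(\eta-\alpha)(\beta-\zeta)-(\zeta-\alpha)(\beta-\eta).
\]
This check will fix the overall sign.
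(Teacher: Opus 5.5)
Your proposal is correct and follows essentially the same route as the paper. The paper factors out the $g$ of one adjacent face (it uses $\f_+$, you use $\f_-$), substitutes Fock's Kasteleyn entry, and recognizes the resulting three-term relation as Fay's trisecant identity with $z=-t-\dabel(\f_+)$ and $(a,b,c,d)=(\alpha,\eta,\zeta,\beta)$, finishing with evenness of $\theta$ and antisymmetry of $E$. Your suggested fallback to the degenerate identity behind Lemma~\ref{lem:fays} would not suffice, though: it is only the $\eta\to\zeta$ limit and cannot recover the identity for general $\zeta\neq\eta$, so the full trisecant identity is needed.
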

\begin{proof}[Proof of Theorem~\ref{thm:limit_shape}]
We will consider points~$(x,y)\in \domain^\circ$ satisfying the following assumption.

\begin{assumption}\label{ass:limit}
Let~$(x,y)\in \mathcal D_c^\circ$ be a point in a phase region. If $(x,y)\in \mathcal F_{(\alpha,\beta)}$, assume also that $(x,y)\notin \line_{e(\alpha)}\cup \line_{e(\beta)}.$
\end{assumption}

In particular, we will prove the limit
\begin{equation}
\lim_{n\to\infty}\EE\left[\frac{1}{n}h_M(\f_n)\right]=-\frac{1}{2\pi\i}\int_{C_{u_0}^{\zero}}\lambda_c(x,y),
\end{equation}
for~$(x,y)$ satisfying Assumption~\ref{ass:limit}. 
Since~$\ls$ is a continuous function--indeed, Lipschitz continuous--the statement follows.

By definition of the height function; see Section~\ref{sec:height_function}, and by Theorems~\ref{thm:kenyon_inv_K} and~\ref{thm:gibbs_measure}, we have
\begin{equation}\label{eq:height_sum}
\EE\left[h_M(\f_n)\right]=\sum_{\e\in E(\graphbetan)}(\e \wedge \gamma_n^*)\EE[\one_{\e\in M}-\one_{\e\in M_0}]
=\sum_{\e=\bl\wh\in E(\graphbetan)}(\e \wedge \gamma_n^*)\left(\kast^{-1}_{\bl,\wh}\kast_{\wh,\bl}-\mathsf{A}_{\bl,\wh}^{u_0}\kast_{\wh,\bl}\right),
\end{equation}
where~$\gamma_n^*$ is a dual path in~$\graphbetan^*$ from~$\f_0$ to~$\f_n$, and~$\mathsf{A}_{\bl,\wh}^{u_0}$ is the whole-plane inverse Kasteleyn matrix defining the Gibbs measure associated with~$u_0$.

We will first compute the limit of the expected height difference between two faces in the same chamber. 
We use the convention for~$\kast_{\bl,\wh}^{-1}$ from Remark~\ref{rem:inverse_kasteleyn_signs}. Let~$\e=\bl\wh$ and let~$I_{\bl}=[\ell_{\bl},r_\bl]$. By Proposition~\ref{prop:telescopic} and definition of~$\mathsf A^{\zeta}_{\bl,\wh}$,
\begin{multline}\label{eq:height_one_step}
\EE[\one_{\e\in M}-\one_{\e\in M_0}]
=\iint_{C(I_{\wh})\prec C(I_{\bl})}\omega_{\bl,\wh}\kast_{\wh,\bl}+\left(\mathsf A^{\ell_{\bl}}_{\bl,\wh}-\mathsf A^{u_0}_{\bl,\wh}\right)\kast_{\wh,\bl}
=\frac{1}{2\pi\i}\int_{C([\ell_{\bl},u_0])}g_{\bl,\wh}\kast_{\wh,\bl} \\
+\frac{1}{(2\pi\i)^2}\iint_{C(I_{\wh})\prec C(I_{\bl})}\frac{\theta(-t+\bm D_{\bm \beta^n}+\eta-\zeta)}{\theta(-t+\bm D_{\bm \beta^n})E(\zeta,\eta)^2}\left(G_{\f_+}(\zeta,\eta)-G_{\f_-}(\zeta,\eta)\right)\frac{\Psi_{\bm \beta}(\eta)}{\Psi_{\bm \beta}(\zeta)},
\end{multline}
where~$\f_-,\f_+\in F(\graphbetan)$ are as in Figure~\ref{fig:fock_kast}.

More generally, let $\f_-,\f_+\in F(\graphbetan)$ be any two faces in the same chamber~$\dR$, and let~$\gamma^*$ be a dual path in~$\dR$ going from~$\f_-$ to~$\f_+$ keeping white vertices to the left. Then 
\begin{multline}\label{eq:height_change_chamber}
\EE\left[h_M(\f_+)-h_M(\f_-)\right]=\sum_{\e\in E(\dR)}(\e \wedge \gamma^*)\EE[\one_{\e\in M}-\one_{\e\in M_0}]
=\sum_{\e=\bl\wh\in E(\dR)}\frac{(\e \wedge \gamma^*)}{2\pi\i}\int_{C([\ell_{\bl_{\dR}},u_0])}g_{\bl,\wh}\kast_{\wh,\bl} \\
+\frac{1}{(2\pi\i)^2}\iint_{C(I_{\wh_{\dR}})\prec C(I_{\bl_{\dR}})}\frac{\theta(-t+\bm D_{\bm \beta^n}+\eta-\zeta)}{\theta(-t+\bm D_{\bm \beta^n})E(\zeta,\eta)^2}\left(G_{\f_+}(\zeta,\eta)-G_{\f_-}(\zeta,\eta)\right)\frac{\Psi_{\bm \beta}(\eta)}{\Psi_{\bm \beta}(\zeta)},
\end{multline}
where~$\bl_{\dR}\in B(\dR)$ and~$\wh_{\dR}\in W(\dR)$ are fixed vertices, and we use that the $G_{\f}(\zeta,\eta)$-terms telescope.

We consider first the single integral. If~$\e=\bl \wh \in E(\dR)$, then Lemma~\ref{lem:fays} implies that
\begin{equation}
\frac{1}{2\pi\i}\int_{C([\ell_{\bl_{\dR}},u_0])}g_{\bl,\wh}\kast_{\wh,\bl}=\frac{1}{2\pi\i}\int_{C([\ell_{\bl_{\dR}},u_0])}\omega_{\beta-\alpha},
\end{equation}
where~$\alpha$ and~$\beta$ are the two strands containing~$\e$ as in Figure~\ref{fig:fock_kast}. Summing over~$\gamma^*$ yields
\begin{equation}\label{eq:slope_chamber}
\sum_{\e=\bl\wh\in E(\dR)}\frac{(\e \wedge \gamma^*)}{2\pi\i}\int_{C([\ell_{\bl_{\dR}},u_0])}g_{\bl,\wh}\kast_{\wh,\bl}
=\frac{1}{2\pi\i}\int_{C([\ell_{\bl_{\dR}},u_0])}\omega_{\dabel(\f_+)-\dabel(\f_-)}.
\end{equation}
This is the height difference between~$\f_+$ and~$\f_-$ for the periodic dimer cover $M_{\ell_{\bl_{\dR}}}$ of $\graphpl$ with reference dimer cover $M_0$. We note that
\begin{equation}\label{eq:difference_forms}
\omega_{\dabel(\f_+)-\dabel(\f_-)}=\omega_{\dabel(\f_+)+\bm D_{\bm\beta^n}-\dabel(\f_-)-\bm D_{\bm\beta^n}}
=\omega_{\dabel(\f_+)+\bm D_{\bm\beta^n}}-\omega_{\dabel(\f_-)+\bm D_{\bm\beta^n}}.
\end{equation}

Now we turn to the double contour integral in~\eqref{eq:height_change_chamber}. For~$\f\in F(\graphbetan)$, let
\begin{equation}\label{eq:form_double_integral_height}
\omega_{\f}
:=\frac{1}{(2\pi\i)^2}\frac{1}{E(\zeta,\eta)^2}\frac{\theta(-t+\bm D_{\bm \beta^n}+\eta-\zeta)}{\theta(-t+\bm D_{\bm \beta^n})}\frac{\theta(-t-\dabel(\f)+\zeta-\eta)}{\theta(t+\dabel(\f))}
\frac{\prod_{\alpha\in \bm \alpha}E(\alpha,\eta)^{\left(\dabel(\f)+\bm D_{\bm \beta^n}\right)_\alpha}}{\prod_{\alpha\in \bm \alpha}E(\alpha,\zeta)^{\left(\dabel(\f)+\bm D_{\bm \beta^n}\right)_\alpha}},
\end{equation}
so that the double integral may be written as
\[
\iint_{C(I_{\wh_{\dR}})\prec C(I_{\bl_{\dR}})} (\omega_{\f_+} - \omega_{\f_-}).
\]

For~$n=1,2,\dots$, let~$\dR_n\subset \graphbetan$ be a chamber and assume that
~$\sign_{\dR_n}(e)$ is independent of~$n$ if~$n$ is large enough. Let~$\f_n=\f_0+(j_n,k_n)\in F(\dR_n)$ with~$(j_n,k_n)=n(x,y)+\Ordo(1)$ as~$n\to\infty$, where~$(x,y)$ satisfy {Assumption~\ref{ass:limit}}. Let~$\zero=\zero(x,y)$ be defined by~\eqref{eq:zeta_*}.

Comparing~\eqref{eq:form_double_integral_height} with~\eqref{eq:form_double_integral} makes it clear that we can reuse the steepest descent analysis performed in Section~\ref{sec:steepest}. We deform~$C(I_{\wh})$ to~$C_{(x,y),\mathrm{asc}}$ and~$C(I_{\bl})$ to~$C_{(x,y),\mathrm{desc}}$, where the curves are the steepest ascent and descent curves as in Section~\ref{sec:steepest}, and~$\wh=\wh_{\dR_n}$,~$\bl=\bl_{\dR_n}$. Similarly to the proof of Theorem~\ref{thm:inverse_convergence}, Lemma~\ref{lem:descent_curves} implies that the only contribution from such deformation comes from the pole at~$\zeta=\eta$ along the curve~$C^{\zero}_{\ell_{\bl}}$. The pole, however, is now a double pole instead of a simple pole, which leads to a very different contribution along this curve:  
\begin{multline}\label{eq:form_limit}
\iint_{C(I_{\wh})\prec C(I_{\bl})}\omega_{\f_{n}}
=\frac{1}{2\pi\i}\int_{C_{\ell_{\bl}}^{\zero}}\omega_{\dabel(\f_{n})+\bm D_{\bm \beta^n}}
+\int_{C_{(x,y),\mathrm{asc}}}\int_{C_{(x,y),\mathrm{desc}}}\omega_{\f_{n}} \\
+\frac{1}{2\pi\i}\sum_{k=1}^g\left(\frac{\partial \log \theta}{\partial z_k}(-t+\bm D_{\bm \beta^n})-\frac{\partial \log \theta}{\partial z_k}(-t-\dabel(\f_{n}))\right)\int_{C_{\ell_{\bl}}^{\zero}}\omega_k.
\end{multline}
Indeed, the residue at the double pole is computed by differentiating the regular part of the integrand in~\eqref{eq:form_double_integral_height} and then setting~$\zeta=\eta$. The derivative of the factor involving the prime forms gives the first term in~\eqref{eq:form_limit} by~\eqref{eq:omega_D_def}, whereas the derivative of the theta-function factor gives the third term. By the periodicity of~$\theta(z)$ for~$z\in\RR^g$, the third term is bounded in~$n$. The second term on the right-hand side of~\eqref{eq:form_limit} is likewise bounded in~$n$, by standard steepest descent arguments. Therefore, as~$n\to\infty$, the main contribution to~\eqref{eq:form_limit} comes from the first term on the right-hand side.

Let~$\f_{n,+}=\f_0+(i_{n,+},j_{n,+})\in F(\dR_n)$ and~$\f_{n,-}=\f_0+(i_{n,-},j_{n,-})\in F(\dR_n)$ with~$(i_{n,+},j_{n,+})=n(x_+,y_+)+\Ordo(1)$ and~$(i_{n,-},j_{n,-})=n(x_-,y_-)+\Ordo(1)$ as $n\to\infty$, for some~$(x_+,y_+),(x_-,y_-)\in \domain^\circ$ satisfying {Assumption~\ref{ass:limit}}. {The assumption on the limiting points is a technical condition ensuring that we can deform the integration contours to the corresponding steepest descent and ascent curves; see Remark~\ref{rem:technical_lines}.} 
Let~$\zeta_+=\zero(x_+,y_+)$ be defined by~\eqref{eq:zeta_*}, and similarly~$\zeta_-=\zero(x_-,y_-)$. Then, by~\eqref{eq:height_change_chamber},~\eqref{eq:slope_chamber},~\eqref{eq:difference_forms}, and~\eqref{eq:form_limit},
\begin{multline}
\EE\left[h_M(\f_{n,+})-h_M(\f_{n,-})\right]
=\frac{1}{2\pi\i}\int_{C([\ell_{\bl},u_0])}\omega_{\dabel(\f_{n,+})-\dabel(\f_{n,-})}+\frac{1}{2\pi\i}\int_{C_{\ell_{\bl}}^{\zeta_+}}\omega_{\dabel(\f_{n,+})+\bm D_{\bm \beta^n}} \\
-\frac{1}{2\pi\i}\int_{C_{\ell_{\bl}}^{\zeta_-}}\omega_{\dabel(\f_{n,-})+\bm D_{\bm \beta^n}}
+\Ordo(1)
=\frac{1}{2\pi\i}\int_{C_{u_0}^{\zeta_+}}\omega_{\dabel(\f_{n,+})+\bm D_{\bm \beta^n}} -\frac{1}{2\pi\i}\int_{C_{u_0}^{\zeta_-}}\omega_{\dabel(\f_{n,-})+\bm D_{\bm \beta^n}}+\Ordo(1),
\end{multline}
as~$n\to \infty$. By Lemma~\ref{lem:divisor_coefficients_asymptotic} we conclude that, as~$n\to \infty$,  
\begin{equation}\label{eq:height_limit}
\EE\left[h_M(\f_{n,+})-h_M(\f_{n,-})\right]
=-n\frac{1}{2\pi\i}\int_{C_{u_0}^{\zeta_+}}\lambda_c(x_+,y_+) + n\frac{1}{2\pi\i}\int_{C_{u_0}^{\zeta_-}}\lambda_c(x_-,y_-)+\Ordo(1).
\end{equation}
Thus, the height difference is consistent with~\eqref{eq:height_fn} when~$(x_+,y_+)$ and~$(x_-,y_-)$ lie in the same chamber. To treat the general case, we connect the two points by a curve and decompose the height difference into a sum of contributions from the chambers crossed by the curve. More precisely, we split the curve into long segments lying inside single chambers and short segments crossing the separating lines between chambers. The long segments are handled by~\eqref{eq:height_limit}, and we will show that the short segments contribute negligibly.

Let~$(x,y)$ {satisfy Assumption~\ref{ass:limit}} 
and let~$\f_n=\f_0+(j_n,k_n)\in F(\graphbetan)$ be such that~$(j_n,k_n)=n(x,y)+\Ordo(1)$ as~$n\to\infty$. Pick a curve~$\Gamma$ in~$\domain$ from~$(0,0)$ to~$(x,y)$, intersecting the lines~$\line_e$,~$e\in E(N)$, at a finite number of locations~$(x_k,y_k)$,~$k=1,\dots,m-1$. We assume that these intersections are transverse, that each point~$(x_k,y_k)$ lies on exactly one line~$\line_e$ with~$e\in E(N)$, and that the points~$(x_k,y_k)$ satisfy Assumption~\ref{ass:limit}. The last assumption is possible because the rough region is locally convex, so~$\arctic$ intersects each line~$\line_e$ only finitely many times.

If~$(x_k,y_k)\in \line_e$, let~$\cR_{k}$ and~$\cR_{k+1}$ be the 2-cells adjacent to the part of~$\line_e$ containing~$(x_k,y_k)$, so that~$\Gamma$ enters~$(x_k,y_k)$ from~$\cR_{k}$ and exits into~$\cR_{k+1}$. For large enough~$n$, Lemma~\ref{lem:asymp_intervals} implies that, for~$k=1,\dots,{m}$, there is a chamber~$\dR_{n,k}$ in~$\graphbetan$ such that
\begin{equation}
\sign_{\dR_{n,k}}(e)=\sign_{\cR_{k}}(e),
\end{equation}
for all~$e\in E(N)$. Let 
\begin{equation}
\f_{n,k,+}=\f_0+(i_{n,k,+},j_{n,k,+})\in F(\dR_{n,k})\quad \text{and} \quad \f_{n,k,-}=\f_0+(i_{n,k,-},j_{n,k,-})\in F(\dR_{n,k+1}),
\end{equation}
where 
\begin{equation}\label{eq:limit_faces_at_lines}
(i_{n,k,+},j_{n,k,+})=n(x_k,y_k)+\Ordo(1), \quad  (i_{n,k,-},j_{n,k,-})=n(x_k,y_k)+\Ordo(1) \quad \text{as} \quad n\to \infty.
\end{equation}
We also set~$\f_{n,0,+}=\f_0$,~$\f_{n,m,-}=\f_n$,
\begin{equation}
\f_{n,0,-}=\f_0+(i_{n,0,-},j_{n,0,-})\in F(\dR_{n,1})\quad \text{with} \quad (i_{n,0,-},j_{n,0,-})=\Ordo(1) \quad \text{as} \quad n\to \infty,
\end{equation}
and
\begin{equation}
\f_{n,m,+}=\f_0+(i_{n,m,+},j_{n,m,+})\in F(\dR_{n,m}) \quad \text{with} \quad (i_{n,m,+},j_{n,m,+})=n(x,y)+\Ordo(1) \quad \text{as} \quad n\to \infty.
\end{equation}
For~$k=1,\dots,m$, we define the dual path~$\gamma_{n,k,\mathrm{l}}^*$ contained in~$\dR_{n,k}$ going from~$\f_{n,k-1,-}$ to~$\f_{n,k,+}$, and for~$k=0,\dots,m$, we define the dual path~$\gamma_{n,k,\mathrm{s}}^*$ going from~$\f_{n,k,+}$ to~$\f_{n,k,-}$. {Here~$\mathrm l$ and~$\mathrm s$ stand for long and short respectively.} {The concatenation}
\[
\begin{tikzcd}[
  column sep=1.2em,
  row sep=1.4em,
  cells={nodes={inner sep=1pt}}
]
\f_0=\f_{n,0,+}
  \arrow[rr, "\gamma_{n,0,\mathrm{s}}^*"]
  \arrow[dr]
&& \f_{n,0,-}
  \arrow[rr, "\gamma_{n,1,\mathrm{l}}^*"]
  \arrow[dl]
&& \f_{n,1,+}
  \arrow[rr, "\gamma_{n,1,\mathrm{s}}^*"]
  \arrow[dr]
&& \f_{n,1,-}
  \arrow[rr, "\gamma_{n,2,\mathrm{l}}^*"]
  \arrow[dl]
&& \cdots
  \arrow[rr, "\gamma_{n,m,\mathrm{l}}^*"]
&& \f_{n,m,+}
  \arrow[rr, "\gamma_{n,m,\mathrm{s}}^*"]
  \arrow[dr]
&& \f_{n,m,-}=\f_n
  \arrow[dl]
\\
& (0,0)
&&&& (x_1,y_1)
&&& \cdots 
&&& (x,y)
\end{tikzcd}
\]
{of these paths defines a dual path $\gamma_n^*$ from~$\f_0$ to~$\f_n$, where the downward arrows indicate the points to which the faces converge in the scaling limit.}

We first bound the part of~\eqref{eq:height_sum} coming from the curves~$\gamma_{n,k,\mathrm{s}}^*$. By~\eqref{eq:limit_faces_at_lines}, we may pick~$\gamma_{n,k,\mathrm{s}}^*$ so that its length is uniformly bounded for all~$n$. We obtain
\begin{equation}
\sum_{\e\in E(\graphbetan)} (\e \wedge \gamma_{n,k,\mathrm{s}}^*)\EE[\one_{\e\in M}-\one_{\e\in M_0}]=\Ordo(1)
\end{equation}
as~$n\to \infty$, for~$k=0,\dots,m$.

We continue by summing over the curves~$\gamma_{n,k,\mathrm{l}}^*$. For~$k=1,\dots,m-1$, let~$\zeta_k=\zero(x_k,y_k)$ be given by \eqref{eq:zeta_*}. Let~${\zeta_0}\in A_0\backslash \zz$ be any point in the component corresponding to~$u_0$. Then, by~\eqref{eq:height_limit},
\begin{align*}
\frac{1}{n}\sum_{k=1}^m\EE\left[h_M(\f_{n,k,+})-h_M(\f_{n,k-1,-})\right] &
=\sum_{k=1}^m\left(-\frac{1}{2\pi\i}\int_{C_{u_0}^{\zeta_{k}}}\lambda_c(x_k,y_k) + \frac{1}{2\pi\i}\int_{C_{u_0}^{\zeta_{k-1}}}\lambda_c(x_{k-1},y_{k-1})+\Ordo(\frac 1 n)\right)\\
&=\frac{1}{2\pi\i}\int_{C_{u_0}^{\zeta_{0}}}\lambda_c(0,0)-\frac{1}{2\pi\i}\int_{C_{u_0}^{\zero}}\lambda_c(x,y)+\Ordo\left(\frac{1}{n}\right),
\end{align*}
as~$n\to \infty$. Since the curve~$C_{u_0}^{\zeta_{0}}$ does not enclose any angles, the above equality proves Theorem~\ref{thm:limit_shape}.
\end{proof}

Using the formulation~\eqref{eq:action_zw} for~$\lambda_c(x,y)$, we see that the slope of~$\ls$ is
\begin{equation}
\nabla \ls(x,y)=\left(-\frac{1}{2\pi\i}\int_{C_{u_0}^{\zero}} d\log z , -\frac{1}{2\pi\i}\int_{C_{u_0}^{\zero}}d \log w\right).
\end{equation}
The right-hand side agrees with the slope~\eqref{eq:slope_integral} of the Gibbs measure~$\PP^{\zero}$. We are led to the following corollary of Theorems~\ref{thm:inverse_convergence} and~\ref{thm:limit_shape}.

\begin{corollary}\label{cor:local_fluctuations}
{Assume that the angle function is periodic, and}
{suppose $(x,y) \in \domain^\circ$ {lies in a phase region and} satisfies Assumption~\ref{ass:limit}.} The local correlations in a neighborhood of~$(x,y)$ of the dimer model on~$\graphbetan$ defined by the Kasteleyn matrix \eqref{eq:kast_fock} converge to those of the Gibbs measure defined by the corresponding whole-plane Kasteleyn matrix with slope~$\nabla \ls(x,y)$.
\end{corollary}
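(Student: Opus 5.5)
The corollary combines Theorem~\ref{thm:inverse_convergence} with Theorem~\ref{thm:limit_shape} through Kenyon's formula (Theorem~\ref{thm:kenyon_inv_K}), so the plan is to show three things in turn: local correlations converge, the limit is a Gibbs measure, and its slope matches the limit shape.

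\textbf{Convergence of correlations.} Fix a finite set of edges $\e_i=\bl_i\wh_i$, $1\le i\le k$, in a fixed finite subgraph $\compact\subset\graphpl$, and translate them to $\e_{i,n}=\e_i+(j_n,k_n)$ with $(j_n,k_n)=n(x,y)+\Ordo(1)$. Theorem~\ref{thm:kenyon_inv_K} gives
\[
\PP(\e_{1,n},\dots,\e_{k,n}\in M)=\prod_{i=1}^k\kast_{\wh_{i,n},\bl_{i,n}}\det\bigl(\kast^{-1}_{\bl_{i,n},\wh_{j,n}}\bigr)_{1\le i,j\le k}.
\]
Since the angle function is periodic, Fock's weights are periodic up to the factors $\theta(t+\dabel(\f_\pm))$. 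I would first pass to a gauge in which the Kasteleyn entries on $\compact_n$ agree with those on $\compact$, or simply note that the products $\kast_{\wh,\bl}\kast^{-1}_{\bl,\wh}$ appearing in edge probabilities are gauge invariant. By Theorem~\ref{thm:inverse_convergence}, $\kast^{-1}_{\bl_{i,n},\wh_{j,n}}\to\mathsf A^{\zero}_{\bl_i,\wh_j}$ uniformly. The determinant is a finite polynomial in these entries, so the cylinder probabilities converge to $\prod_i\kast_{\wh_i,\bl_i}\det(\mathsf A^{\zero}_{\bl_i,\wh_j})$.

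\textbf{Identification of the limit.} By Theorem~\ref{thm:gibbs_measure}, this limit is $\PP^{\zero}(\e_1,\dots,\e_k\in M)$. Cylinder events of this form determine the measure, so the local statistics converge to $\PP^{\zero}$. The one subtlety in these first two steps is the $t$-dependence. Translating by $(j_n,k_n)$ shifts $\dabel$ by an element of $\ZZ^{\zz}$ whose Abel image moves $t$ within the real torus, which could make the Kasteleyn weights themselves oscillate with $n$. I expect to handle this by using only gauge-invariant combinations, or by passing to subsequences along which $t+\dabel(\f_n)$ converges; because $\mathsf A^{\zero}$ depends only on edge-local data, the limit does not depend on the subsequence.

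\textbf{Matching the slope.} Using \eqref{eq:action_zw}, differentiate \eqref{eq:height_fn} in $x$ and $y$. The terms coming from the movement of the endpoint $\zero$ vanish, because $\zero$ is either a zero of $\lambda_c(x,y)$ (in the rough region) or a point on a real oval along which the integral is locally constant (in the other phases). What remains is
\[
\nabla\ls(x,y)=\Bigl(-\tfrac{1}{2\pi\ii}\int_{C_{u_0}^{\zero}}d\log z,\ -\tfrac{1}{2\pi\ii}\int_{C_{u_0}^{\zero}}d\log w\Bigr),
\]
which is the displayed formula preceding the corollary. It coincides with the Gibbs slope $(s^{\zero},t^{\zero})$ in \eqref{eq:slope_integral}.

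\textbf{Main obstacle.} The step I expect to be hardest is justifying the differentiation under the integral in the slope computation, since the limit shape is known only to be Lipschitz and not obviously differentiable. The plan is to use the smoothness of $\zero$ in the interior of the rough region, which follows from Proposition~\ref{prop:diff}, together with local constancy of $\zero$'s component in the non-rough phases, applied at points satisfying Assumption~\ref{ass:limit}.
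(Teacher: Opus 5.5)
Your three steps are the paper's argument. Theorem~\ref{thm:kenyon_inv_K}, combined with Theorem~\ref{thm:inverse_convergence} and Theorem~\ref{thm:gibbs_measure}, identifies the local limit as $\PP^{\zero}$. Differentiating \eqref{eq:height_fn} via \eqref{eq:action_zw} then gives $\nabla\ls(x,y)=(s^{\zero},t^{\zero})$ as in \eqref{eq:slope_integral}.

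The step you call the main obstacle is routine, and your sketch of it is what the paper leaves implicit:
\begin{itemize}
\item on $\liq$, the point $\zero=\Omega(x,y)$ depends smoothly on $(x,y)$ by Proposition~\ref{prop:diff}, and the endpoint terms vanish because $\lambda_c(x,y)$ vanishes at both $\zero$ and $\sigma(\zero)$;
\item in the other phase regions, $C_{u_0}^{\zero}$ is a closed loop whose class in $\curve$ minus the angles is locally constant, so $\ls$ is affine there.
\end{itemize}

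What needs correcting is your discussion of the $t$-shift.

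For genuinely periodic weights there is nothing to do. The Abel image of $\dabel(\x+(j,k))-\dabel(\x)$ vanishes in $\jac(\curve)$, so $\kast_{\wh_n,\bl_n}=\kast_{\wh,\bl}$ and $\mathsf A^{\zero}_{\bl_n,\wh_n}=\mathsf A^{\zero}_{\bl,\wh}$. The latter identity is used tacitly at the end of the proof of Theorem~\ref{thm:inverse_convergence}. No gauge choice or subsequence is needed.

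If only the angle function is periodic and this Abel image is nonzero, neither of your fixes works.
\begin{itemize}
\item The shift changes face weights, not just a gauge. In the alternating product of Fock's entries around a face, the factors $\theta(t+\dabel(\f'))$ of the neighboring faces $\f'$ do not cancel. Hence no gauge makes the entries on $\compact_n$ agree with those on $\compact$, and passing to gauge-invariant combinations does not help.
\item Along a subsequence with $t+\mu\bigl(\dabel(\f_0+(j_n,k_n))\bigr)\to t_\infty$, the argument of Theorem~\ref{thm:inverse_convergence} gives convergence to the Gibbs measure of the whole-plane model with parameter $t_\infty$. That measure genuinely depends on $t_\infty$, so your claim that ``the limit does not depend on the subsequence'' is false in this case.
\end{itemize}
You should therefore drop that claim and state the first two steps for periodic weights, where they go through verbatim. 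In the quasi-periodic case, take the whole-plane matrix at the limiting parameter.
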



\begin{remark}\label{rem:fluct}
In the case of the Aztec diamond, for~$(x,y)\in \mathcal G_j$,~$j=1,\dots,g$, the part of the integral~\eqref{eq:height_fn} coming from the third term in expression~\eqref{eq:action_zw} for~$\lambda_c(x,y)$, {namely}
\begin{equation}
-\frac{1}{2\pi\i}\int_{B_j} \omega_{\bm D_c}, 
\end{equation}
with~$\bm D_c$ as in \eqref{eq:div_c}, turned out to be important in the description of the global height fluctuations~\cite[Equation (6)]{BN25}. It is therefore natural to expect that the same quantity will play a similar role in the global height fluctuations of AZ graphs.
\end{remark}

\appendix

\section{Tropical limits and simulations}\label{appendix:tropical}

In this appendix, we explain how certain astroidal domains arise as local pieces of the tropical limit of the Aztec diamond with periodic weights. This allows us to use domino shuffling on the Aztec diamond~\cite{Pro03} to simulate limit shapes in more general astroidal domains.

Let $\graphpl$ be a minimal periodic bipartite graph and let $p: \graphpl \to \graphtor$ denote the quotient map to the corresponding graph $\graphtor$ on the torus. There is a construction, called the \emph{spectral transform}~\cite{KO06}, which allows one to view any periodic choice of edge weights on $\graphpl$ (equivalently, edge weights on $\graphtor$) as gauge-equivalent to Fock's dimer model for an M-curve \(\curve\), \(t \in \jac(\curve)\), and angle function \(\nu\) \cite{Foc15, BCT22}. We apply this to the family of edge weights on $\graphtor$ of the form
\[
\wt(\e;\tau) = w(\e) \exp(\tau \mathcal E(\e)),
\]
where 
\[
w: E(\graphtor) \to \RR_{>0}, \qquad  \mathcal E:E(\graphtor) \to \RR,  \qquad \tau \in \RR_{>0}.
\]
The corresponding M-curve \(\curve_\tau\) is an explicit algebraic curve given by the vanishing locus of the \emph{characteristic polynomial}
\[
\mathsf P(z,w;\tau) := \sum_{(j,k) \in \ZZ^2} \pm \left(\sum_{\text{dimer covers $M$ on $\graphtor$}: (h^y_M,-h^x_M) = (j,k)} \left( \prod_{\e \in M} \wt(\e;\tau) \right) z^j w^k \right),
\]
which is a Laurent polynomial in the variables \(z,w\) with Newton polygon equal to \(N\)~\cite{KOS06}. Here \(\pm\) denotes a sign that depends only on \((j,k) \in \ZZ^2\) and $h_M^x$ and $h_M^y$ are the height changes of $M$ along dual paths with homology classes $(1,0)$ and $(0,1)$ respectively.

The limit as \(\tau \to \infty\) is called the tropical limit; see~\cite{IMStropical, MStropical} for a general introduction to tropical geometry. The \emph{tropical characteristic polynomial} is defined as
\begin{equation}\label{eq:trop_char_poly}
    \mathsf P^t(x,y) := \max_{(j,k) \in \ZZ^2} \left(\max_{\text{dimer covers $M$ on $\graphtor$}: (h^y_M,-h^x_M) = (j,k)} \left(\sum_{\e \in M}\mathcal E(\e) \right) +jx+ky \right).
\end{equation}

The fundamental theorem of tropical geometry~\cite{MKLtropical} states that, as \(\tau \to \infty\), the rescaled \emph{amoeba}
\[
\frac{1}{\tau} \{(\log|z|,\log|w|) \in \RR^2 : \mathsf P(z,w;\tau) = 0 \}
\]
converges in the Hausdorff metric to the \emph{tropical spectral curve}
\[
\curve^t := \{(x,y) \in \RR^2 : \text{the maximum over $(j,k) \in \ZZ^2$ in~\eqref{eq:trop_char_poly} is achieved at least twice}\}.
\]

The graph of the tropical characteristic polynomial induces a regular subdivision of the Newton polygon \(N\) which is dual to \(\curve^t\) (see~Figure~\ref{fig:trop_curve_subdivision}). The tropical curve is called \emph{smooth} if all \(2\)-cells of the subdivision are unimodular triangles. We illustrate, by means of an example, how the arctic curve can be computed in the tropical limit by a generalization of the results of~\cite{BBtropical}. Their setting was for smooth tropical curves, whereas here we consider a non-smooth tropical curve.

\begin{figure}[t]
\centering

\begin{subfigure}[c]{0.48\textwidth}
\centering
\begin{tikzpicture}[scale=1.2, baseline=(current bounding box.center),
  elab/.style={midway, sloped, font=\scriptsize, inner sep=1.2pt, text opacity=1},
]
     \fill[gray!30]
    (0,0) -- (6,0) -- (6,4) -- (0,4)-- cycle;
       \coordinate[bvert] (b11) at (1,4) {};
       \coordinate[bvert] (b12) at (3,4) {};
       \coordinate[bvert] (b13) at (5,4) {};

       \coordinate[bvert] (b21) at (1,2) {};
       \coordinate[bvert] (b22) at (3,2) {};
       \coordinate[bvert] (b23) at (5,2) {};

       \coordinate[bvert] (b31) at (1,0) {};
       \coordinate[bvert] (b32) at (3,0) {};
       \coordinate[bvert] (b33) at (5,0) {};

       \coordinate[wvert] (w11) at (0,3) {};
       \coordinate[wvert] (w12) at (2,3) {};
       \coordinate[wvert] (w13) at (4,3) {};
       \coordinate[wvert] (w14) at (6,3) {};

       \coordinate[wvert] (w21) at (0,1) {};
       \coordinate[wvert] (w22) at (2,1) {};
       \coordinate[wvert] (w23) at (4,1) {};
       \coordinate[wvert] (w24) at (6,1) {};

\draw [] (w11) edge node[elab, above]{$\exp({-\tau})$} (b11) edge node[elab, above]{$\gamma-\beta$} (b21);

\draw [] (w12) edge node[elab, above]{$\epsilon-\gamma$} (b11) edge node[elab, above]{$\beta-\epsilon$} (b21) edge node[elab, above]{$\gamma-\beta$} (b12) edge node[elab, above]{$\exp({-4 \tau})$} (b22);

\draw [] (w13) edge node[elab, above]{$\delta-\gamma$} (b12) edge node[elab, above]{$\beta-\delta$} (b22) edge node[elab, above]{$\gamma-\beta$} (b13) edge node[elab, above]{$\exp({- \tau})$} (b23);

\draw [] (w14) edge node[elab, above]{$\beta-\gamma$} (b13) edge node[elab, above]{$\exp({-4 \tau})$} (b23);

\draw [] (w21) edge node[elab, above]{$\delta-\gamma$} (b21) edge node[elab, above]{$\gamma-\alpha$} (b31);

\draw [] (w22) edge node[elab, above]{$\epsilon-\delta$} (b21) edge node[elab, above]{$\alpha-\epsilon$} (b31) edge node[elab, above]{$\delta-\beta$} (b22) edge node[elab, above]{$\beta-\alpha$} (b32);

\draw [] (w23) edge node[elab, above]{$\exp({-\tau})$} (b22) edge node[elab, above]{$\alpha-\delta$} (b32) edge node[elab, above]{$\delta-\alpha$} (b23) edge node[elab, above]{$\exp({- 4\tau})$} (b33);

\draw [] (w24) edge node[elab, above]{$\alpha-\delta$} (b23) edge node[elab, above]{$\exp({- \tau})$} (b33);
\end{tikzpicture}
\end{subfigure}
\hfill
\begin{subfigure}[c]{0.48\textwidth}
\centering
\begin{tikzpicture}[scale=1.2, baseline=(current bounding box.center),
  elab/.style={midway, sloped, font=\scriptsize, inner sep=1.2pt, text opacity=1},
]
     \fill[gray!30]
    (0,0) -- (6,0) -- (6,4) -- (0,4)-- cycle;
       \coordinate[bvert] (b11) at (1,4) {};
       \coordinate[bvert] (b12) at (3,4) {};
       \coordinate[bvert] (b13) at (5,4) {};

       \coordinate[bvert] (b21) at (1,2) {};
       \coordinate[bvert] (b22) at (3,2) {};
       \coordinate[bvert] (b23) at (5,2) {};

       \coordinate[bvert] (b31) at (1,0) {};
       \coordinate[bvert] (b32) at (3,0) {};
       \coordinate[bvert] (b33) at (5,0) {};

       \coordinate[wvert] (w11) at (0,3) {};
       \coordinate[wvert] (w12) at (2,3) {};
       \coordinate[wvert] (w13) at (4,3) {};
       \coordinate[wvert] (w14) at (6,3) {};

       \coordinate[wvert] (w21) at (0,1) {};
       \coordinate[wvert] (w22) at (2,1) {};
       \coordinate[wvert] (w23) at (4,1) {};
       \coordinate[wvert] (w24) at (6,1) {};

\draw [] (w11) edge node[elab, above]{$\gamma-\beta$} (b21);

\draw [] (w12) edge node[elab, above]{$\epsilon-\gamma$} (b11) edge node[elab, above]{$\beta-\epsilon$} (b21) edge node[elab, above]{$\gamma-\beta$} (b12);

\draw [] (w13) edge node[elab, above]{$\delta-\gamma$} (b12) edge node[elab, above]{$\beta-\delta$} (b22) edge node[elab, above]{$\gamma-\beta$} (b13);

\draw [] (w14) edge node[elab, above]{$\beta-\gamma$} (b13);

\draw [] (w21) edge node[elab, above]{$\delta-\gamma$} (b21) edge node[elab, above]{$\gamma-\alpha$} (b31);

\draw [] (w22) edge node[elab, above]{$\epsilon-\delta$} (b21) edge node[elab, above]{$\alpha-\epsilon$} (b31) edge node[elab, above]{$\delta-\beta$} (b22) edge node[elab, above]{$\beta-\alpha$} (b32);

\draw [] (w23) edge node[elab, above]{$\alpha-\delta$} (b32) edge node[elab, above]{$\delta-\alpha$} (b23);

\draw [] (w24) edge node[elab, above]{$\alpha-\delta$} (b23);
\end{tikzpicture}
\end{subfigure}

\caption{A \((2\times 3)\)-fundamental domain of the square lattice with a family of edge weights (left), and the corresponding limiting graph obtained by deleting the edges whose weights vanish in the tropical limit (right). {This particular choice of weights was obtained via the tropical inverse spectral transform developed in the forthcoming work~\cite{GG2}.}
}
\label{fig:2x3_square lattice}
\end{figure}

Consider the \((2\times 3)\)-fundamental domain of the square lattice with the family of edge weights shown in Figure~\ref{fig:2x3_square lattice}(left). The Newton polygon \(N\) and the function \(c:E(N)\to \RR\) for the scaling limit of the Aztec diamond, are given by
\begin{center}
\begin{tikzpicture}[scale=0.7, baseline=(current bounding box.center)]
    \coordinate (p1) at (0,0);
    \coordinate (p2) at (2,0);
    \coordinate (p3) at (2,3);
    \coordinate (p4) at (0,3);

    \node[bvert] at (p1) {};
    \node[bvert] at (p2) {};
    \node[bvert] at (p3) {};
    \node[bvert] at (p4) {};
    \node[bvert] at (0,1) {};
    \node[bvert] at (1,1) {};
    \node[bvert] at (1,0) {};
    \node[bvert] at (1,3) {};
    \node[bvert] at (0,2) {};
    \node[bvert] at (1,2) {};
    \node[bvert] at (2,2) {};
    \node[bvert] at (2,1) {};
      
    \draw[] (p1) -- node[below]{$0$} (p2);
    \draw[] (p3) -- node[above]{$\frac 1 2$} (p4);
    \draw[] (p2) -- node[right]{$0$} (p3);
    \draw[] (p4) -- node[left]{$-\frac 1 3$} (p1);
\end{tikzpicture}
\end{center}

\begin{figure}
\centering

\begin{subfigure}[c]{0.58\textwidth}
\centering
\begin{tikzpicture}[
    scale=0.5,
    baseline=(current bounding box.center),
    line cap=round,
    line join=round,
    edgelab/.style={fill=white, inner sep=1.5pt, font=\scriptsize},
    vtx/.style={circle, fill=black, inner sep=1 pt}
]

    \draw[step=1, very thin, gray!20] (-5.5,-10.5) grid (7.5,5.5);

    \coordinate (O)  at (0,0);
    \coordinate (A)  at (-1,1);
    \coordinate (B)  at (1,1);
    \coordinate (C)  at (0,-5);
    \coordinate (D)  at (4,-4);
    \coordinate (L)  at (-5,0);
    \coordinate (AL) at (-5,1);
    \coordinate (AU) at (-1,5);
    \coordinate (BR) at (7,1);
    \coordinate (BU) at (1,5);
    \coordinate (CL) at (-5,-5);
    \coordinate (E)  at (1,-6);
    \coordinate (ED) at (1,-10);
    \coordinate (F)  at (3,-6);
    \coordinate (FD) at (3,-10);
    \coordinate (G)  at (4,-5);
    \coordinate (GR) at (7,-5);
    \coordinate (DR) at (7,-4);

    \draw[black, very thick]
        (O) -- (A) node[pos=.55, above right=0.5pt, edgelab] {$1$}
        -- (AL) node[pos=.55, above=4pt, edgelab] {$\infty$};

    \draw[black, very thick]
        (A) -- (AU) node[pos=.55, left=5pt, edgelab] {$\infty$};

    \draw[black, very thick]
        (O) -- (B) node[pos=.55, below right=0.5pt, edgelab] {$1$}
        -- (BR) node[pos=.55, above=4pt, edgelab] {$\infty$};

    \draw[black, very thick]
        (B) -- (BU) node[pos=.55, right=5pt, edgelab] {$\infty$};

    \draw[black, very thick]
        (O) -- (L) node[pos=.55, below=4pt, edgelab] {$\infty$};

    \draw[black, very thick]
        (O) -- (C) node[pos=.45, left=5pt, edgelab] {$5$}
        -- (CL) node[pos=.55, above=4pt, edgelab] {$\infty$};

    \draw[black, very thick]
        (C) -- (E) node[pos=.5, below=5pt, edgelab] {$1$}
        -- (ED) node[pos=.55, right=5pt, edgelab] {$\infty$};

    \draw[black, very thick]
        (E) -- (F) node[pos=.5, below=5pt, edgelab] {$2$}
        -- (FD) node[pos=.55, right=5pt, edgelab] {$\infty$};

    \draw[black, very thick]
        (F) -- (G) node[pos=.5, right=5pt, edgelab] {$1$}
        -- (GR) node[pos=.55, below=4pt, edgelab] {$\infty$};

    \draw[black, very thick]
        (G) -- (D) node[pos=.5, right=5pt, edgelab] {$1$}
        -- (DR) node[pos=.55, above=4pt, edgelab] {$\infty$};

    \draw[black, very thick]
        (O) -- (D) node[pos=.55, above right=4pt, edgelab] {$4$};

    \foreach \P in {O,A,B,C,D,E,F,G}
        \node[vtx] at (\P) {};

\end{tikzpicture}
\end{subfigure}
\hfill
\begin{subfigure}[c]{0.30\textwidth}
\centering
\begin{tikzpicture}[scale=1,  baseline=(current bounding box.center)]
    \coordinate (p1) at (0,0);
    \coordinate (p2) at (2,0);
    \coordinate (p3) at (2,3);
    \coordinate (p4) at (0,3);

    \coordinate (a) at (0,1);
    \coordinate (b) at (0,2);
    \coordinate (c) at (1,0);
    \coordinate (d) at (1,1);
    \coordinate (e) at (1,2);
    \coordinate (f) at (1,3);
    \coordinate (g) at (2,1);
    \coordinate (h) at (2,2);

    \fill[gray!50] (a) -- (d) -- (h) -- (f) -- (b) -- cycle;
    \draw[] (a) -- (d) -- (h) -- (f) -- (b) -- cycle;

    \draw[] (p1) -- (p2) -- (p3) -- (p4) -- cycle;

    \draw[] (d) edge (g) edge (p1) edge (c) edge (p2) edge (g);

    \foreach \P in {p1,p2,p3,p4,a,b,c,d,e,f,g,h}
        \node[bvert] at (\P) {};
\end{tikzpicture}
\end{subfigure}

\caption{The tropical spectral curve with the integral lengths of its edges (left), and the dual regular subdivision of the Newton polygon with a shaded pentagonal $2$-cell (right).}
\label{fig:trop_curve_subdivision}
\end{figure}

The tropical curve for this family of edge weights is shown in Figure~\ref{fig:trop_curve_subdivision}(left).

\begin{figure}
\centering

\begin{subfigure}[c]{0.58\textwidth}
\centering
\begin{tikzpicture}[
    scale=0.5,
    baseline=(current bounding box.center),
    line cap=round,
    line join=round,
    edgelab/.style={fill=white, inner sep=1.5pt, font=\scriptsize},
    vtx/.style={circle, fill=black, inner sep=1pt},
    intedge/.style={black, very thick, -{Stealth[length=4pt,width=5pt]}},
    bdryedge/.style={black, very thick, -{Stealth[length=4pt,width=5pt]}},
    every label/.style={font=\scriptsize}
]

    \draw[step=1, very thin, gray!20] (-5.5,-10.5) grid (7.5,5.5);

    \coordinate[label=-135:{$\scriptsize O$}] (O)  at (0,0);
    \coordinate[label=135:{$\scriptsize A$}] (A)  at (-1,1);
    \coordinate[label=45:{$\scriptsize B$}] (B)  at (1,1);
    \coordinate[label=135:{$\scriptsize C$}] (C)  at (0,-5);
    \coordinate[label=90:{$\scriptsize D$}] (D)  at (4,-4);
    \coordinate (L)  at (-5,0);
    \coordinate (AL) at (-5,1);
    \coordinate (AU) at (-1,5);
    \coordinate (BR) at (7,1);
    \coordinate (BU) at (1,5);
    \coordinate (CL) at (-5,-5);
    \coordinate[label=90:{$\scriptsize{E}$}] (E)  at (1,-6);
    \coordinate (ED) at (1,-10);
    \coordinate[label=90:{$\scriptsize{F}$}] (F)  at (3,-6);
    \coordinate (FD) at (3,-10);
    \coordinate[label=45:{$\scriptsize{G}$}] (G)  at (4,-5);
    \coordinate (GR) at (7,-5);
    \coordinate (DR) at (7,-4);

    \draw[bdryedge]
        (A) -- (AL) node[pos=.55, above=4pt, edgelab] {$-\frac13$};
    \draw[bdryedge]
        (A) -- (AU) node[pos=.55, left=5pt, edgelab] {$\frac12$};
    \draw[bdryedge]
        (B) -- (BR) node[pos=.55, above=4pt, edgelab] {$0$};
    \draw[bdryedge]
        (B) -- (BU) node[pos=.55, right=5pt, edgelab] {$\frac12$};
    \draw[bdryedge]
        (O) -- (L) node[pos=.55, below=4pt, edgelab] {$-\frac13$};
    \draw[bdryedge]
        (C) -- (CL) node[pos=.55, above=4pt, edgelab] {$-\frac13$};
    \draw[bdryedge]
        (E) -- (ED) node[pos=.55, right=5pt, edgelab] {$0$};
    \draw[bdryedge]
        (F) -- (FD) node[pos=.55, right=5pt, edgelab] {$0$};
    \draw[bdryedge]
        (G) -- (GR) node[pos=.55, below=4pt, edgelab] {$0$};
    \draw[bdryedge]
        (D) -- (DR) node[pos=.55, above=4pt, edgelab] {$0$};

    \draw[intedge]
        (O) -- (A) node[pos=.52, above right=1pt, edgelab] {$\frac16$};
    \draw[intedge]
        (O) -- (B) node[pos=.52, below right=1pt, edgelab] {$\frac12$};
    \draw[intedge]
        (C) -- (O) node[pos=.45, left=6pt, edgelab] {$\frac{3}{14}$};
    \draw[intedge]
        (D) -- (O) node[pos=.53, above right=4pt, edgelab] {$\frac{5}{42}$};
    \draw[intedge]
        (C) -- (E) node[pos=.5, below left=5pt, edgelab] {$\frac{5}{42}$};
    \draw[intedge]
        (E) -- (F) node[pos=.5, below=5pt, edgelab] {$\frac{5}{42}$};
    \draw[intedge]
        (F) -- (G) node[pos=.5, below right=5pt, edgelab] {$\frac{5}{42}$};
    \draw[intedge]
        (G) -- (D) node[pos=.5, left=5pt, edgelab] {$\frac{5}{42}$};

    \foreach \P in {O,A,B,C,D,E,F,G}
        \node[vtx] at (\P) {};
\end{tikzpicture}
\end{subfigure}
\hfill
\begin{subfigure}[c]{0.34\textwidth}
\centering
\begin{tikzpicture}[
    scale=0.28,
    baseline=(current bounding box.center),
    line cap=round,
    line join=round,
    cell/.style={fill=gray!50, draw=black},
    lab/.style={font=\scriptsize,  inner sep=1pt}
]

\coordinate (At) at (-14,-21);
\coordinate (Bt) at (0,-21);
\coordinate (Ct) at (-14,-9);
\coordinate (Dt) at (0,-5);
\coordinate (Et) at (-5,0);

\draw (-14,-21) -- (0,-21) -- (0,0) -- (-14,0) -- cycle;

\coordinate (P1) at (-14,-21);
\coordinate (P2) at (-7,-14);
\coordinate (P3) at (-8,-13);
\coordinate (P4) at (-4,-9);
\coordinate (P5) at (-14,-9);

\draw(P2) -- (Bt);
\draw (P3) -- (Dt);

\draw[cell] (P1) -- (P2) -- (P3) -- (P4) -- (P5) -- cycle;

\draw (Ct) -- (Et);
\draw (Et) -- (Dt);

\node[lab, below left=2pt]  at (At) {$A$};
\node[lab, below right=2pt] at (Bt) {$B$};
\node[lab, left=2pt]        at (Ct) {$C$};
\node[lab, right=2pt]       at (Dt) {$D=G$};
\node[lab, above=2pt]       at (Et) {$E=F$};
\node[lab] at (-9.8,-12.2) {$O$};

\end{tikzpicture}
\end{subfigure}

\caption{The harmonic \(1\)-form on \(\curve^t\) (left), and the image of \((\Omega^{-1})^t\) in \(\mathcal D_c\) (right).}
\label{fig:curve_and_diamond}
\end{figure}

A \emph{discrete \(1\)-form} \(\omega\) on \(\curve^t\) is an antisymmetric function on the oriented edges of \(\curve^t\). It is called \emph{harmonic} if it is
\begin{itemize}
    \item \emph{closed}, \emph{i.e.} for every face \(f\) of \(\curve^t\),
    \[
    \sum_{e\subset \partial f} |e|_{\ZZ} \omega(e)=0,
    \]
    where the edges in \(\partial f\) are oriented counterclockwise, and
    \item \emph{co-closed}, \emph{i.e.} for every vertex \(u\in V(\curve^t)\),
    \[
    \sum_{e=uv} \omega(e)=0,
    \]
    where the sum is over all edges incident to \(u\), each oriented from \(u\) to \(v\).
\end{itemize}

Given any outflows on the unbounded edges of \(\curve^t\) that sum to \(0\), there is a unique harmonic \(1\)-form with those boundary values. Taking the boundary values given by the function $c$, we obtain the harmonic \(1\)-form shown in Figure~\ref{fig:curve_and_diamond}(left).

The map \(\Omega^{-1}\) tropicalizes, in the limit $\tau \to \infty$, to the map
\[
(\Omega^{-1})^t:\curve^t \to \mathcal D_c
\]
sending \(v \in V(\curve^t)\) to the point \((x,y) \in \RR^2\) such that 
\begin{equation}\label{eq:trop_1_form_xy}
    \frac{1}{|e|_\ZZ} \langle (x,y), e \rangle + \omega(e)=0
\end{equation}
for every edge \(e\) of \(\curve^t\) incident to \(v\), and sending non-boundary edges of \(\curve^t\) to the corresponding line segments between the images of their endpoints. Its image is shown in Figure~\ref{fig:curve_and_diamond}(right). Note that the degree-\(5\) vertex \(O\) maps to a pentagonal astroidal domain $\mathcal D_O$ bounded by the lines defined by~\eqref{eq:trop_1_form_xy}. The condition that $\omega$ is co-closed at $O$ is equivalent to the admissibility condition for $\mathcal D_O$.

Letting $\tau \to \infty$ and deleting edges with zero weight, we obtain the graph shown on the right-hand side of Figure~\ref{fig:2x3_square lattice}, which is a minimal graph for the pentagonal Newton polygon shown shaded in Figure~\ref{fig:trop_curve_subdivision}(right). Its M-curve $\curve_\infty$ is given by the vanishing of the \emph{initial polynomial}
\[
 \lim_{\tau \to \infty} \mathsf P(z,w;\tau).
\]
Our choice of weights is such that $\curve_\infty$ has genus $0$, and the weights $\wt(\e;\infty)$ are isoradial weights as in Section~\ref{sec:pentagon_example}. As in that section, the prime form is given by 
\[
E(\zeta,\eta)=\frac{\eta-\zeta}{\sqrt{d\zeta}\sqrt{d\eta}}.
\]
Thus, the functions in~\eqref{eq:imff} can be written explicitly and globally as
\[
f_1(\zeta) = -\sum_{\alpha \in \zz} b_{e (\alpha)} \log(\zeta-\alpha),\qquad  f_2(\zeta) = \sum_{\alpha \in \zz} a_{e (\alpha)} \log(\zeta-\alpha), \qquad f_3(\zeta) = \sum_{\alpha \in 
\zz} c{(e(\alpha))} \log(\zeta-\alpha).
\]
Choosing the angles 
\[
\alpha = 1,\qquad
\beta = e^{\pi \ii/25},\qquad
\gamma = e^{4\pi \ii/5},\qquad
\delta = e^{6\pi \ii/5},\qquad
\epsilon = e^{8\pi \ii/5},
\]
and using the parametrization~\eqref{eq:fsol}, we obtain the arctic curve in \(\mathcal D_O\) shown in Figure~\ref{fig:tropical_astroidal_arctic_curve}(right). For comparison, a simulation of the Aztec diamond is shown in Figure~\ref{fig:tropical_astroidal_arctic_curve}(left).

\section{Proof of Proposition~\ref{prop:telescopic}}\label{appendix:telescopic}

{The following proof is reproduced from~\cite{BBRdT26}.}
By the product form~\eqref{eq:g_prod} of \(g\), we have
\[
g_{\x}=g_{\f_+}\,g_{\f_+,\x},
\qquad
\x\in\{\bl,\wh,\f_-,\f_+\}.
\]
Canceling the \(g_{\f_+}\)-factors, it suffices to prove that
\begin{equation}\label{eq:fay_cons}
\begin{aligned}
E(\zeta,\eta){\frac{g_{\f_+,\wh}(\eta)}{g_{\f_+,\bl}(\zeta)}}\kast_{\wh,\bl}
&=
\frac{\theta(-t-\dabel(\f_+)+\zeta-\eta)}
     {\theta(t+\dabel(\f_+))}
\\
&\quad-
\frac{\theta(-t-\dabel(\f_-)+\zeta-\eta)}
     {\theta(t+\dabel(\f_-))}
\frac{E(\beta,\zeta)}{E(\alpha,\zeta)}
\frac{E(\alpha,\eta)}{E(\beta,\eta)}.
\end{aligned}
\end{equation}
By definition,
\begin{equation*}
\begin{aligned}
E(\zeta,\eta){\frac{g_{\f_+,\wh}(\eta)}{g_{\f_+,\bl}(\zeta)}}\kast_{\wh,\bl}
&=
\frac{
\theta(-t-\dabel(\bl)+\zeta)\,
\theta(t+\dabel(\wh)+\eta)\,
E(\alpha,\beta)\,
E(\zeta,\eta)}
{
\theta(t+\dabel(\f_+))\,
\theta(t+\dabel(\f_-))\,
E(\alpha,\zeta)\,
E(\beta,\eta)}.
\end{aligned}
\end{equation*}

We now apply Fay's trisecant identity~\cite{Fay73} (see also~\cite[Theorem~27]{BCT22}): for \(z\in\CC^g\) and \(a,b,c,d\in\curve\),
\begin{equation*}
\begin{aligned}
&\theta(z+c-a)\theta(z+d-b)E(c,b)E(a,d)
+\theta(z+c-b)\theta(z+d-a)E(c,a)E(d,b)
\\
&\qquad\qquad
=\theta(z+c+d-a-b)\theta(z)E(c,d)E(a,b).
\end{aligned}
\end{equation*}
Dividing by \(\theta(z)\theta(z+d-a)E(c,a)E(d,b)\), we obtain
\begin{equation*}
\begin{aligned}
&\frac{\theta(z+c-a)\theta(z+d-b)E(c,b)E(a,d)}
      {\theta(z)\theta(z+d-a)E(c,a)E(d,b)}
\\
&\qquad=
\frac{\theta(z+c+d-a-b)E(c,d)E(a,b)}
     {\theta(z+d-a)E(c,a)E(d,b)}
-\frac{\theta(z+c-b)}{\theta(z)}.
\end{aligned}
\end{equation*}
Now set
\[
z=-t-\dabel(\f_+),\qquad
a=\alpha,\qquad
b=\eta,\qquad
c=\zeta,\qquad
d=\beta.
\]
Using
\[
z-a=-t-\dabel(\bl),\qquad
z+d=-t-\dabel(\wh),\qquad
z+d-a=-t-\dabel(\f_-),
\]
we get
\begin{equation*}
\begin{aligned}
&\frac{
\theta(-t-\dabel(\bl)+\zeta)\,
\theta(-t-\dabel(\wh)-\eta)\,
E(\zeta,\eta)\,
E(\alpha,\beta)}
{
\theta(-t-\dabel(\f_+))\,
\theta(-t-\dabel(\f_-))\,
E(\zeta,\alpha)\,
E(\beta,\eta)}
\\
&\qquad=
\frac{
\theta(-t-\dabel(\f_-)+\zeta-\eta)\,
E(\zeta,\beta)\,
E(\alpha,\eta)}
{
\theta(-t-\dabel(\f_-))\,
E(\zeta,\alpha)\,
E(\beta,\eta)}
-
\frac{\theta(-t-\dabel(\f_+)+\zeta-\eta)}
     {\theta(-t-\dabel(\f_+))}.
\end{aligned}
\end{equation*}

Finally, using that the Riemann theta function is even and that the prime form is antisymmetric, the previous identity simplifies to~\eqref{eq:fay_cons}. This proves the proposition.
\qed{}

\bibliographystyle{plain}
\bibliography{bibliotek}

\end{document}